\let\mathcal=\CMcal
\DeclareSymbolFont{yhlargesymbols}{OMX}{yhex}{m}{n}
\DeclareMathAccent{\yhwidehat}{\mathord}{yhlargesymbols}{"62}
\newcommand\reallywidehat[1]{\arraycolsep=0pt\relax%
\begin{array}{c}
\stretchto{
  \scaleto{
    \scalerel*[\widthof{\ensuremath{#1}}]{\kern-.5pt\bigwedge\kern-.5pt}
    {\rule[-\textheight/2]{1ex}{\textheight}} 
  }{\textheight} %
}{0.5ex}\\           
#1\\                 
\rule{-1ex}{0ex}
\end{array}
}
\begin{document}
\def\sbt{\raisebox{1.2pt}{$\scriptscriptstyle\,\bullet\,$}}

\def\alp{\alpha}
\def\bet{\beta}
\def\gam{\gamma}
\def\del{\delta}
\def\eps{\epsilon}
\def\zet{\zeta}
\def\tht{\theta}
\def\iot{\iota}
\def\kap{\kappa}
\def\lam{\lambda}
\def\sig{\sigma}
\def\ome{\omega}
\def\vep{\varepsilon}
\def\vth{\vartheta}
\def\vpi{\varpi}
\def\vrh{\varrho}
\def\vsi{\varsigma}
\def\vph{\varphi}
\def\Gam{\Gamma}
\def\Del{\Delta}
\def\Tht{\Theta}
\def\Lam{\Lambda}
\def\Sig{\Sigma}
\def\Ups{\Upsilon}
\def\Ome{\Omega}
\def\vka{\varkappa}
\def\vDe{\varDelta}
\def\vSi{\varSigma}
\def\vTh{\varTheta}
\def\vGm{\varGamma}
\def\vOm{\varOmega}
\def\vPi{\varPi}
\def\vPh{\varPhi}
\def\vPs{\varPsi}
\def\vUp{\varUpsilon}
\def\vXi{\varXi}

\def\bdse{\boldsymbol{e}}
\def\bdsf{\boldsymbol{f}}
\def\bdsg{\boldsymbol{g}}
\def\bdsh{\boldsymbol{h}}
\def\bdsF{\boldsymbol{F}}
\def\bdscF{\boldsymbol{\calf}}
\def\bdscU{\boldsymbol{\calu}}

\def\bdalp{\boldsymbol{\alp}}
\def\bdTht{\boldsymbol{\Tht}}
\def\bdsig{\boldsymbol{\sig}}
\def\bdpi{\boldsymbol{\pi}}
\def\bdome{\boldsymbol{\ome}}
\def\bdmu{\boldsymbol{\mu}}
\def\bdkap{\boldsymbol{\kap}}
\def\ulQ{\underline{Q}}
\def\ulk{\underline{k}}
\def\ulu{\underline{u}}
\def\ulv{\underline{v}}
\def\uleps{\underline{\eps}}
\def\ulchi{\underline{\chi}}
\def\ulkap{\underline{\kap}}

\def\frka{{\mathfrak a}}    \def\frkA{{\mathfrak A}}
\def\frkb{{\mathfrak b}}    \def\frkB{{\mathfrak B}}
\def\frkc{{\mathfrak c}}    \def\frkC{{\mathfrak C}}
\def\frkd{{\mathfrak d}}    \def\frkD{{\mathfrak D}}
\def\frke{{\mathfrak e}}    \def\frkE{{\mathfrak E}}
\def\frkf{{\mathfrak f}}    \def\frkF{{\mathfrak F}}
\def\frkg{{\mathfrak g}}    \def\frkG{{\mathfrak G}}
\def\frkh{{\mathfrak h}}    \def\frkH{{\mathfrak H}}
\def\frki{{\mathfrak i}}    \def\frkI{{\mathfrak I}}
\def\frkj{{\mathfrak j}}    \def\frkJ{{\mathfrak J}}
\def\frkk{{\mathfrak k}}    \def\frkK{{\mathfrak K}}
\def\frkl{{\mathfrak l}}    \def\frkL{{\mathfrak L}}
\def\frkm{{\mathfrak m}}    \def\frkM{{\mathfrak M}}
\def\frkn{{\mathfrak n}}    \def\frkN{{\mathfrak N}}
\def\frko{{\mathfrak o}}    \def\frkO{{\mathfrak O}}
\def\frkp{{\mathfrak p}}    \def\frkP{{\mathfrak P}}
\def\frkq{{\mathfrak q}}    \def\frkQ{{\mathfrak Q}}
\def\frkr{{\mathfrak r}}    \def\frkR{{\mathfrak R}}
\def\frks{{\mathfrak s}}    \def\frkS{{\mathfrak S}}
\def\frkt{{\mathfrak t}}    \def\frkT{{\mathfrak T}}
\def\frku{{\mathfrak u}}    \def\frkU{{\mathfrak U}}
\def\frkv{{\mathfrak v}}    \def\frkV{{\mathfrak V}}
\def\frkw{{\mathfrak w}}    \def\frkW{{\mathfrak W}}
\def\frkx{{\mathfrak x}}    \def\frkX{{\mathfrak X}}
\def\frky{{\mathfrak y}}    \def\frkY{{\mathfrak Y}}
\def\frkz{{\mathfrak z}}    \def\frkZ{{\mathfrak Z}}

\def\cal{\fam2}
\def\cala{{\cal A}}
\def\calb{{\cal B}}
\def\calc{{\cal C}}
\def\cald{{\cal D}}
\def\cale{{\cal E}}
\def\calf{{\cal F}}
\def\calg{{\cal G}}
\def\calh{{\cal H}}
\def\cali{{\cal I}}
\def\calj{{\cal J}}
\def\calk{{\cal K}}
\def\call{{\cal L}}
\def\calm{{\cal M}}
\def\caln{{\cal N}}
\def\calo{{\cal O}}
\def\calp{{\cal P}}
\def\calq{{\cal Q}}
\def\calr{{\cal R}}
\def\cals{{\cal S}}
\def\calt{{\cal T}}
\def\calu{{\cal U}}
\def\calv{{\cal V}}
\def\calw{{\cal W}}
\def\calx{{\cal X}}
\def\caly{{\cal Y}}
\def\calz{{\cal Z}}

\def\AA{{\mathbb A}}
\def\BB{{\mathbb B}}
\def\CC{{\mathbb C}}
\def\DD{{\mathbb D}}
\def\EE{{\mathbb E}}
\def\FF{{\mathbb F}}
\def\GG{{\mathbb G}}
\def\HH{{\mathbb H}}
\def\II{{\mathbb I}}
\def\JJ{{\mathbb J}}
\def\KK{{\mathbb K}}
\def\LL{{\mathbb L}}
\def\MM{{\mathbb M}}
\def\NN{{\mathbb N}}
\def\OO{{\mathbb O}}
\def\PP{{\mathbb P}}
\def\QQ{{\mathbb Q}}
\def\RR{{\mathbb R}}
\def\SS{{\mathbb S}}
\def\TT{{\mathbb T}}
\def\UU{{\mathbb U}}
\def\VV{{\mathbb V}}
\def\WW{{\mathbb W}}
\def\XX{{\mathbb X}}
\def\YY{{\mathbb Y}}
\def\ZZ{{\mathbb Z}}

\def\bfa{{\mathbf a}}    \def\bfA{{\mathbf A}}
\def\bfb{{\mathbf b}}    \def\bfB{{\mathbf B}}
\def\bfc{{\mathbf c}}    \def\bfC{{\mathbf C}}
\def\bfd{{\mathbf d}}    \def\bfD{{\mathbf D}}
\def\bfe{{\mathbf e}}    \def\bfE{{\mathbf E}}
\def\bff{{\mathbf f}}    \def\bfF{{\mathbf F}}
\def\bfg{{\mathbf g}}    \def\bfG{{\mathbf G}}
\def\bfh{{\mathbf h}}    \def\bfH{{\mathbf H}}
\def\bfi{{\mathbf i}}    \def\bfI{{\mathbf I}}
\def\bfj{{\mathbf j}}    \def\bfJ{{\mathbf J}}
\def\bfk{{\mathbf k}}    \def\bfK{{\mathbf K}}
\def\bfl{{\mathbf l}}    \def\bfL{{\mathbf L}}
\def\bfm{{\mathbf m}}    \def\bfM{{\mathbf M}}
\def\bfn{{\mathbf n}}    \def\bfN{{\mathbf N}}
\def\bfo{{\mathbf o}}    \def\bfO{{\mathbf O}}
\def\bfp{{\mathbf p}}    \def\bfP{{\mathbf P}}
\def\bfq{{\mathbf q}}    \def\bfQ{{\mathbf Q}}
\def\bfr{{\mathbf r}}    \def\bfR{{\mathbf R}}
\def\bfs{{\mathbf s}}    \def\bfS{{\mathbf S}}
\def\bft{{\mathbf t}}    \def\bfT{{\mathbf T}}
\def\bfu{{\mathbf u}}    \def\bfU{{\mathbf U}}
\def\bfv{{\mathbf v}}    \def\bfV{{\mathbf V}}
\def\bfw{{\mathbf w}}    \def\bfW{{\mathbf W}}
\def\bfx{{\mathbf x}}    \def\bfX{{\mathbf X}}
\def\bfy{{\mathbf y}}    \def\bfY{{\mathbf Y}}
\def\bfz{{\mathbf z}}    \def\bfZ{{\mathbf Z}}

\def\scra{{\mathscr A}}
\def\scrb{{\mathscr B}}
\def\scrc{{\mathscr C}}
\def\scrd{{\mathscr D}}
\def\scre{{\mathscr E}}
\def\scrf{{\mathscr F}}
\def\scrg{{\mathscr G}}
\def\scrh{{\mathscr H}}
\def\scri{{\mathscr I}}
\def\scrj{{\mathscr J}}
\def\scrk{{\mathscr K}}
\def\scrl{{\mathscr L}}
\def\scrm{{\mathscr M}}
\def\scrn{{\mathscr N}}
\def\scro{{\mathscr O}}
\def\scrp{{\mathscr P}}
\def\scrq{{\mathscr Q}}
\def\scrr{{\mathscr R}}
\def\scrs{{\mathscr S}}
\def\scrt{{\mathscr T}}
\def\scru{{\mathscr U}}
\def\scrv{{\mathscr V}}
\def\scrw{{\mathscr W}}
\def\scrx{{\mathscr X}}
\def\scry{{\mathscr Y}}
\def\scrz{{\mathscr Z}}

\def\Zp{{\ZZ_p}}
\def\CE{{\mathcal{E}}}
\def\CF{{\mathcal{F}}}
\def\CA{{\mathcal{A}}}
\def\CK{{\mathcal{K}}}
\def\CL{{\mathcal{L}}}
\def\CO{{\mathcal{O}}}
\def\fp{{\frkp}}
\def\fg{{\frkg}}
\def\fk{{\frkk}}
\def\fh{{\frkh}}
\def\ra{{\rightarrow}}
\def\Spec{{\mathrm Spec}}
\def\tIg{{\widetilde{Ig}}}
\def\hSh{{\widehat{Sh}}}
\def\CV{{\mathcal{V}}}
\def\tCV{{\tilde{\CV}}}
\newcommand\dnv{\text{\dn{t}}}

\newcommand{\isoarrow}{{~\overset\sim\longrightarrow~}}

\def\phm{\phantom}
\def\smallstrut{\vphantom{\vrule height 3pt }}
\def\bdm #1#2#3#4{\left(
\begin{array} {c|c}{\ds{#1}}
 & {\ds{#2}} \\ \hline
{\ds{#3}\vphantom{\ds{#3}^1}} &  {\ds{#4}}
\end{array}
\right)}
\newcommand{\powerseries}[1]{\llbracket{#1}\rrbracket}

\def\norm#1{\lVert#1\rVert}
\newcommand{\pair}[2]{\langle#1, #2\rangle}

\def\GL{\mathrm{GL}}
\def\BC{\mathrm{BC}}
\def\PGL{\mathrm{PGL}}
\def\PG{\mathrm{P}G}
\def\SL{\mathrm{SL}}
\def\Mp{\mathrm{Mp}}
\def\GSp{\mathrm{GSp}}
\def\PGSp{\mathrm{PGSp}}
\def\Sp{\mathrm{Sp}}
\def\St{\mathrm{St}}
\def\SU{\mathrm{SU}}
\def\SO{\mathrm{SO}}
\def\U{\mathrm{U}}
\def\GU{\mathrm{GU}}
\def\O{\mathrm{O}}
\def\Mat{\mathrm{M}}
\def\Spec{\mathrm{Spec}}
\def\Gal{\mathrm{Gal}}
\def\Tr{\mathrm{T}}
\def\Nr{\mathrm{N}}
\def\tr{\mathrm{tr}}
\def\Ad{\mathrm{Ad}}
\def\As{\mathrm{As}}
\def\Pet{\mathrm{Pet}}
\def\new{\mathrm{new}}
\def\can{\mathrm{can}}
\def\Wh{\mathrm{Wh}}
\def\FC{\mathrm{FC}}
\def\FJ{\mathrm{FJ}}
\def\Fj{\mathrm{Fj}}
\def\Sym{\mathrm{Sym}}
\def\Her{\mathrm{Her}}
\def\sym{\mathrm{sym}}
\def\Lie{\mathrm{Lie}}
\def\supp{\mathrm{supp}}
\def\proj{\mathrm{proj}}
\def\Hom{\mathrm{Hom}}
\def\End{\mathrm{End}}
\def\Inj{\underline{\mathrm{Inj}}}
\def\Isom{\underline{\mathrm{Isom}}}
\def\Ker{\mathrm{Ker}}
\def\Res{\mathrm{Res}}
\def\res{\mathrm{res}}
\def\cusp{\mathrm{cusp}}
\def\temp{\mathrm{temp}}
\def\Irr{\mathrm{Irr}}
\def\rank{\mathrm{rank}}
\def\sgn{\mathrm{sgn}}
\def\diag{\mathrm{diag}}
\def\Wd{\mathrm{Wd}}
\def\nd{\mathrm{nd}}
\def\ord{\mathrm{ord}}
\def\Cl{\mathrm{Cl}}
\def\Hol{\mathrm{Hol}}
\def\Pet{\mathrm{Pet}}
\def\Pic{\mathrm{Pic}}
\def\Pol{\mathrm{Pol}}
\def\ar{\mathrm{ar}}
\def\ab{\mathrm{ab}}
\def\cls{\mathrm{cls}}
\def\d{\mathrm{d}}
\def\La{\langle}
\def\Ra{\rangle}
\newcommand{\one}{1\hspace{-0.25em}{\rm{l}}}
\def\addchar{{\boldsymbol\psi}}
\def\Abs{{\boldsymbol\alp}}
\def\bvep{{\boldsymbol \vep}}
\def\bnu{{\boldsymbol \nu}}
\def\bj{{\boldsymbol j}}
\def\cyc{\boldsymbol\varepsilon_{\rm cyc}}
\def\rmH{{\rm H}}
\def\vFJ{\overrightarrow{\FJ}}

\def\trs{\,^t\!}
\def\tri{\,^\iot\!}
\def\iu{\sqrt{-1}}
\def\oo{\hbox{\bf 0}}
\def\ono{\hbox{\bf 1}}
\def\smallcirc{\lower .3em \hbox{\rm\char'27}\!}
\def\AAf{\AA_\bff}
\def\thalf{\tfrac{1}{2}}
\def\bsl{\backslash}
\def\wtl{\widetilde}
\def\til{\tilde}
\def\Ind{\operatorname{Ind}}
\def\ind{\operatorname{ind}}
\def\cind{\operatorname{c-ind}}
\def\beq{\begin{equation}}
\def\eeq{\end{equation}}
\def\d{\mathrm{d}}
\def\lddots{\mathinner{\mskip1mu\raise1pt\vbox{\kern7pt\hbox{.}}\mskip2mu\raise4pt\hbox{.}\mskip2mu\raise7pt\hbox{.}\mskip1mu}}
\newcommand{\1}{1\hspace{-0.25em}{\rm{l}}}
\newcommand{\Ts}{{T^{sym}}}

\newcounter{one}
\setcounter{one}{1}
\newcounter{two}
\setcounter{two}{2}
\newcounter{thr}
\setcounter{thr}{3}
\newcounter{fou}
\setcounter{fou}{4}
\newcounter{fiv}
\setcounter{fiv}{5}
\newcounter{six}
\setcounter{six}{6}
\newcounter{sev}
\setcounter{sev}{7}

\newcommand{\shp}{\rm\char'43}

\def\lddots{\mathinner{\mskip1mu\raise1pt\vbox{\kern7pt\hbox{.}}\mskip2mu\raise4pt\hbox{.}\mskip2mu\raise7pt\hbox{.}\mskip1mu}}

\makeatletter
\def\varddots{\mathinner{\mkern1mu
    \raise\p@\hbox{.}\mkern2mu\raise4\p@\hbox{.}\mkern2mu
    \raise7\p@\vbox{\kern7\p@\hbox{.}}\mkern1mu}}
\makeatother

\def\today{\ifcase\month\or
 January\or February\or March\or April\or May\or June\or
 July\or August\or September\or October\or November\or December\fi
 \space\number\day, \number\year}

\makeatletter
\def\varddots{\mathinner{\mkern1mu
    \raise\p@\hbox{.}\mkern2mu\raise4\p@\hbox{.}\mkern2mu
    \raise7\p@\vbox{\kern7\p@\hbox{.}}\mkern1mu}}
\makeatother

\def\today{\ifcase\month\or
 January\or February\or March\or April\or May\or June\or
 July\or August\or September\or October\or November\or December\fi
 \space\number\day, \number\year}

\makeatletter
\@addtoreset{equation}{section}
\def\theequation{\thesection.\arabic{equation}}

\theoremstyle{plain}
\newtheorem{theorem}{Theorem}[section]
\newtheorem*{theorem_a}{Theorem A}
\newtheorem*{theorem_b}{Theorem B}
\newtheorem*{theorem_c}{Theorem C}
\newtheorem*{theorem_e}{Theorem}
\newtheorem*{corollary_a}{Corollary A}
\newtheorem{lemma}[theorem]{Lemma}
\newtheorem{proposition}[theorem]{Proposition}
\theoremstyle{definition}
\newtheorem{definition}[theorem]{Definition}
\newtheorem{conjecture}[theorem]{Conjecture}
\newtheorem{corollary}[theorem]{Corollary}
\theoremstyle{remark}
\newtheorem{remark}[theorem]{Remark}

\renewcommand{\thepart}{\Roman{part}}
\setcounter{tocdepth}{1}
\setcounter{section}{0} 

\title{$p$-adic $L$-functions for $\U(2,1)\times\U(1,1)$}
\author{Michael Harris}
\author{Ming-Lun Hsieh}
\author{Shunsuke Yamana}
\date{\today}
\subjclass[2010]{11F67, 11F33}
\address{Department of Mathematics, Columbia University, New York, NY 10027, USA}
\email{harris@math.columbia.edu}
\address{Institute of Mathematics, Academia Sinica, Taipei 10617, Taiwan
}
\email{mlhsieh@math.sinica.edu.tw}
\address{Department of Mathematics, Graduate School of Science, Osaka Metropolitan University, 3-3-138 Sugimoto, Sumiyoshi-ku, Osaka 558-8585, Japan}
\email{yamana@omu.ac.jp}

\begin{abstract}
We construct the five-variable $p$-adic $L$-function attached to Hida families on $\U(2,1)\times\U(1,1)$, interpolating the square-root of Rankin-Selberg $L$-values in the \emph{shifted piano} range. Our construction  relies on a new theta operator and its $p$-adic variation which  plays a role analogous to the classical Ramanujan-Serre theta operator in Hida's $p$-adic Rankin-Selberg method.
The interpolation formula, including the modified Euler factors at $p$ and at the real place, is consistent with the conjectural shape of $p$-adic $L$-functions predicted by Coates and Perrin-Riou.
\end{abstract}

\maketitle
\tableofcontents

\section{Introduction}\label{sec:1}

In the pioneering paper \cite{BDP}, Berolini, Darmon and Prasanna established a formula between the $p$-adic Abel-Jacobi image of generalizaed Heegner cycles and the values outside the $p$-adic interpolation of some anticyclotomic $p$-adic $L$-function. 
The aim of this project is to carry out  the first step toward a generalization of this work to the diagonal cycles of unitary groups and $p$-adic $L$-functions (cf. \cite{RSZ}). Using the $p$-adic Rankin-Selberg method, we construct a five-variable $p$-adic $L$-function interpolating a square root of the algebraic part of central values of the $L$-series attached to a pair of Hida families on the quasi-split unitary groups $\U(2,1)$ and $\U(1,1)$ in the shifted piano position. 
In contrast, the $p$-adic $L$-function in the piano position, in the sense of \cite[Definition 5.6]{GHL}, was constructed in \cite{HY}.

Moreover, we obtain the explicit interpolation formulae, which is consistent with the conjectural framework described in \cite{CPR,Coe,Coe2}.
A crucial novelty of our construction is the introduction of a new theta operator and its connection with complex differential operators. 

Let $E$ be an imaginary quadratic field. 
Throughout this paper we fix a prime number $p>3$ which {\it splits} in $E$ and an isomorphism $\iot_p:\CC\stackrel{\sim}{\to}\overline{\QQ}_p$, where $\overline{\QQ}_p$ is a fixed algebraic closure of $\QQ_p$. 
We write $\frkp$ for the prime ideal induced by the restriction of $\iot_p$ to $E$. 
We denote the rings of ad\`{e}les of $\QQ$ and $E$) by $\AA$ and $\EE$, and the rings of finite ad\`{e}les by $\widehat{\QQ}$ and $\widehat{E}$, respectively. 


\subsection{Galois representations associated to Hida families}\label{ssec:11}

Let $x\mapsto x^c$ be the non-trivial automorphism of $E$. 
Fix a Hermitian matrix $T$ in $\GL_n(E)$ of signature $(r,s)$. 
For $g\in\Mat_n(E)$ we define $g^\ddagger:=T^{-1}{}\trs g^c T$. 
The unitary group $\U(r,s)$ associated with $T$ is the algebraic group defined over $\QQ$ by setting 
\[\U(r,s)(R)=\{g\in \Mat_n(E\otimes_{\QQ} R)\mid g^\ddagger g=1\}\]
for any $\QQ$-algebra $R$. 

Fix a finite extension $F$ of $\QQ_p$ and denote its maximal compact subring by $\calo$. 
For each positive integer $n$, let $T_n\subset \GL_n$ be the diagonal torus. 
Let 
\[\Lam_n:=\calo\powerseries{T_n(\ZZ_p)}=\varprojlim_{m\geq 1}\calo\left[T_n(\ZZ/p^m\ZZ)\right] \]
be the completed group algebra, and $\bfI_n$ a semi-local and normal $\Lam_n$-algebra finite and flat over $\Lam_n$. 
We say that an $\calo$-algebra homomorphism $\ulQ :\bfI_n\to\overline{\QQ}_p$ is locally algebraic if its restriction to $T_n(\ZZ_p)$ is of the form $\ulQ(z_1,\dots,z_n)=\prod_{i=1}^nz_i^{k_{Q_i}}\eps_{Q_i}(z_i^{})$ with $(k_{Q_1},\dots,k_{Q_n})\in\ZZ^n$ and characters $\eps_{Q_i}:\ZZ_p^\times\rightarrow \overline{\QQ}_p^\times$ of finite order. 
We call $k_{\ulQ}=(k_{Q_1},\dots,k_{Q_n})\in\ZZ^n$ the weight of $\ulQ$ and $\eps_{\ulQ}=(\eps_{Q_1},\dots,\eps_{Q_n})$ the finite part of $\ulQ$. 
Let $\frkX_{\bfI_n}$ be the set of locally algebraic points in $\Spec\bfI_n(\overline{\QQ}_p)$. 
We say that $\ulQ\in\frkX_{\bfI_n}$ is dominant if $k_{Q_1}\leq k_{Q_2}\leq\dots\leq k_{Q_n}$. 
Let $\frkX_{\bfI_n}^+$ be the subset of locally algebraic points of dominant weights. 
Put 
\[\frkX_{\bfI_n}^\cls:=\{\ulQ\in \frkX_{\bfI_n}^+\mid k_{Q_n}-k_{Q_r}>r\}.\]

Let $N$ be a positive integer only divisible by primes $q\neq p$ split in $E$. 
Choose an ideal $\frkN$ of the ring $\frko_E$ of integers of $E$ such that $\frko_E/\frkN\simeq\ZZ/N\ZZ$.  
This ideal $\frkN$ shall be referred to as the tame level. 
Hida theory produces the $\Lam_n$-module $\bfS_\ord^{\U(r,s)}(\frkN,\Lam_n)$ of ordinary $\Lam_n$-adic cusp forms, which is free of finite rank equipped with a faithful action of the universal ordinary Hecke algebra $\bfT^{\U(r,s)}_{\ord}(N,\Lam_n)$ for the unitary group $\U(r,s)$. 
Roughly speaking, ordinary $\Lam_n$-adic cusp forms are $p$-adic families of $p$-ordinary cusp forms on $\U(r,s)$ invariant by the mirabolic subgroup of level $\frkN$. 
An $\bfI_n$-adic Hida family $\bdsf$ on $\U(r,s)$ is a non-zero Hecke eigenform in 
\[\bfS_\ord^{\U(r,s)}(\frkN,\bfI_n):=\bfS_\ord^{\U(r,s)}(\frkN,\Lam_n)\otimes_{\Lam_n}\bfI_n, \]
which induces a $\bfI_n$-algebra homomorphism $\lam_{\bdsf}:\bfT^{\U(r,s)}_{\ord}(N,\bfI_n)\to \bfI_n$. 

Denote the absolute Galois group of a field $L$ by $\Gam_L$ and its cyclotomic character by $\cyc$. 
Let $\frkm$ be a maximal ideal of $\bfI_n$. 
To each $\bfI_n$-adic Hida family $\bdsf$, one can associate the residual semisimple Galois representation $\bar\rho_{\bdsf}: \Gamma_E\to \GL_n(\bfI_n/\frkm)$. 
If $\bar\rho_{\bdsf}$ is absolutely irreducible, then we can further obtain the Galois representation $\rho_{\bdsf}:\Gamma_E\to \GL_n(\bfI_n)$ unramified outside primes dividing $Np$ and primes $l$ where $\U(r,s)(\QQ_l)$ is ramified (see \cite[Propositions 2.28, 2.29]{Geraghty} for more details). 
Denote by $V_{\bdsf}$ the free $\bfI_n$-module of rank $n$ on which $\Gamma_E$ acts via $\rho_{\bdsf}$. 

For each $\ulQ\in \frkX^\cls_{\bfI_n}$, the specialization $V_{\bdsf_{\ulQ}}:=V_{\bdsf}\otimes_{\bfI_n,\ulQ}\overline{\QQ}_p$ is the geometric $p$-adic Galois representation associated with some automorphic representation $\bdpi_{\ulQ}\simeq\otimes_v'\bdpi_{\ulQ,v}^{}$ of $\U(r,s)(\AA)$.  
Let $\frkX_{\bdsf}^\temp$ be the set of points $\ulQ\in\frkX_{\bfI_n}^\cls$ such that $\bdpi_{\ulQ}$ is everywhere tempered. 
The representation $V_{\bdsf}$ is \emph{conjugate self-dual} in the sense that
\[V_{\bdsf}^\vee\simeq V_{\bdsf}^c\otimes\cyc^{n-1}.\]
Moreover, by the local description of $p$-adic Galois representations \cite[Corollary 2.33]{Geraghty} at $p$ combined with \cite[Lemma 7.2]{TU99}, there exists a filtration $\{\mathrm{Fil}_i(V_{\bdsf}|_{\Gam_{E_\frkp}})\}_{i=1}^n$ of $\Gam_{E_\frkp}$-stable lattices 
\[\{0\}=\mathrm{Fil}_0(V_{\bdsf}|_{\Gam_{E_\frkp}})\subset \mathrm{Fil}_1(V_{\bdsf}|_{\Gam_{E_\frkp}})\subset \dots\subset \mathrm{Fil}_n(V_{\bdsf}|_{\Gam_{E_\frkp}})=V_{\bdsf}|_{\Gam_{E_\frkp}}\]
such that for every $\ulQ\in\frkX^+_{\bfI_n}$, the specialization $V_{\bdsf_{\ulQ}}|_{\Gam_{E_\frkp}}$ is Hodge-Tate and each graded piece \[\mathrm{gr}_i(V_{\bdsf_{\ulQ}}|_{\Gam_{E_\frkp}}):=\mathrm{Fil}_i(V_{\bdsf_{\ulQ}}|_{\Gam_{E_\frkp}})/\mathrm{Fil}_{i-1}(V_{\bdsf_{\ulQ}}|_{\Gam_{E_\frkp}})\] 
has Hodge-Tate weights 
\begin{align*}
-k_{Q_i}-s-i+1&\;\; (1\leq i\leq r), & 
-k_{Q_i}+r-i+1&\;\; (r+1\leq i\leq n). 
\end{align*}
Here the Hodge-Tate weight of $\QQ_p(1)$ is 1 in our convention. 
Likewise there exists a filtration $\{\mathrm{Fil}_i(V_{\bdsf}|_{\Gam_{E_{\frkp^c}}})\}_{i=1}^n$ of $\Gam_{E_{\frkp^c}}$-stable lattices in $V_{\bdsf}$ such that such that for every $\ulQ\in\frkX^+_{\bfI_n}$, each graded piece 
\[\mathrm{gr}_i(V_{\bdsf_{\ulQ}}|_{\Gam_{E_{\frkp^c}}})=\mathrm{Fil}_i(V_{\bdsf_{\ulQ}}|_{\Gam_{E_{\frkp^c}}})/\mathrm{Fil}_{i-1}(V_{\bdsf_{\ulQ}}|_{\Gam_{E_{\frkp^c}}})\] 
is Hodge-Tate of weight \begin{align*}
k_{Q_{n-i+1}}-r-i+1&\;\; (1\leq i\leq s), & 
k_{Q_{n-i+1}}+s+1-i&\;\; (s+1\leq i\leq n). 
\end{align*}


\subsection{The $L$-series for $\U(2,1)\times \U(1,1)$}\label{ssec:13}

Define the skew Hermitian matrices $S$ and $S'$ by 
\begin{align*}
S&=\begin{bmatrix} 0 & 0 & 1 \\ 0 & \sqrt{-D_E} & 0 \\ -1 & 0 & 0 \end{bmatrix}, & 
S'&=\begin{bmatrix} 0 & 1 \\ -1 & 0 \end{bmatrix}, 
\end{align*} 
where $D_E$ is the absolute value of the discriminant of $E$. 

Let $G=\U(S)$ and $H=\U(S')$ be the quasi-split unitary groups attached to $T$ and $T'$. 
Let $\calg=\GU(S)$ and $\calh=\GU(S')$ be the groups of unitary similitudes attached to $T$ and $T'$. 
Fix prime-to-$p$ natural numbers $N$ and $N'$ that satisfy the following condition: 
\beq\label{H1}
\text{all the prime factors of $NN'$ are split in $E$.} \tag{splt} 
\eeq
Fix a decomposition $N'\frko_E=\frkN'\frkN^{\prime c}$ such that $\frkN'+\frkN^{\prime c}=\frko_E$. 

Though it may be more natural to work with Shimura varieties attached to the unitary groups, which are not of PEL type (cf. \cite{H21,RSZ}), we start with Hida families 
\begin{align*}
\bdsf&\in\bfS^\calg_{\ord}(\frkN,\chi,\bfI_3), &
\bdsg&\in\bfS^\calh_{\ord}(\frkN',\chi',\bfI_2)
\end{align*}
on the unitary similitude groups $\calg$ and $\calh$, and restrict them to unitary groups $G$ and $H$. 
We further assume that the residual Galois representations $\bar\rho_{\bdsf}$ and $\bar\rho_{\bdsg}$ are both absolutely irreducible. Let $V_{\bdsf}$ and $V_{\bdsg}$ be the Galois representations of $\Gam_E$ associated with the Hida families $\bdsf$ and $\bdsg$ respectively. 
Consider the tensor product representation $V_{\bdsf\bdsg}:=V_{\bdsf}^{}\otimes V_{\bdsg}^\vee$ of rank six over the five variable Iwasawa algebra $\bfI_3\widehat\otimes_{\calo} \bfI_2$, where $V_{\bdsg}^\vee$ is the dual representation of $V_{\bdsg}$. 

Define the induced representation $\bfV$ of $\Gam_\QQ$ by 
\[\bfV:=\Ind_{\Gamma_E}^{\Gamma_\QQ}(V_{\bdsf\bdsg}^{}\otimes\cyc). \]
For each prime number $q$ we denote the Weil-Deligne group of $\QQ_q$ by $W_{\QQ_q}$. 
For each $\calq=(Q,Q')\in\frkX_{\bfI_3}^+\times\frkX_{\bfI_2}^+$, let $\bfV_\calq$ be the specialization of $\bfV$ at $\calq$ and define the complex $L$-series of the $p$-adic Galois representation $\bfV_\calq$ by the Euler product 
\[L(\bfV_\calq,s)=\prod_q L_q(\bfV_\calq,s) \]
of the local $L$-factors attached to $\mathrm{WD}_q(\bfV_\calq)\otimes_{\overline{\QQ}_p^{},\iot_p^{-1}}\CC$, where $\mathrm{WD}_q(\bfV_\calq)$ is the Weil-Deligne representation of $W_{\QQ_q}$ over $\overline{\QQ}_p$ associated to $\bfV_\calq$. 
Let \begin{align*}
(\lam_{Q_1},\lam_{Q_2},\lam_{Q_3})&=(-k_{Q_1},-k_{Q_2}-1;-k_{Q_3}+1);\\ 
(\mu_{Q_1'},\mu_{Q_2'})&=\biggl(-k_{Q_1'}-\frac{1}{2};-k_{Q_2'}+\frac{1}{2}\biggl),
\end{align*}
be the Harish-Chandra parameters of $\bdpi_{\ulQ}^{}$ and $\bdsig^\vee_{\ulQ'}$ respectively. Define the archimedean $L$-factor of $\bfV_\calq$ by 
\[\Gam(\bfV_\calq^{},s)=\prod_{i=1,2,3}\prod_{j=1,2}\Gamma_\CC\biggl(s+\frac{1}{2}+|\lam_{Q_i^{}}-\mu_{Q_j'}|\biggl),\]
where $\Gamma_{\CC}(s)=2(2\pi)^{-s}\Gamma(s)$. 
We are interested in the algebraic part of the value of $L(\bfV_\calq^{},s)$ at $s=0$. 
Note that $L(\bfV_\calq,0)$ are central values as $\bfV^\vee\otimes\cyc\simeq\bfV$ is self-dual. 
With the assumption \eqref{H1}, the specializations of the Hecke eigensystems $\ulQ\circ\lam_{\bdsf}$ and $\ulQ'\circ\lam_{\bdsg}$ determine unique unitary automorphic representations $\bdpi_{\ulQ}$ and $\bdsig_{\ulQ'}$ of $G(\AA)$ and $H(\AA)$, and we have
\[\Gam(\bfV_\calq^{},s)L(\bfV_\calq^{},s)=L\biggl(s+\frac{1}{2},\bdpi_{\ulQ}^{}\times\bdsig_{\ulQ'}^\vee\biggl).\] 
The right hand side is the {\it complete} automorphic $L$-function defined by 
\[L(s,\bdpi_{\ulQ}^{}\times\bdsig_{\ulQ'}^\vee)=L^{\GL}(s,\BC(\bdpi_{\ulQ}^{})\times \BC(\bdsig_{\ulQ'}^\vee)),\]
where $\BC(\bdpi_{\ulQ}^{})$ (resp. $\BC(\bdsig_{\ulQ'}^\vee)$) is the functorial lift of $\bdpi_{\ulQ}^{}$ (resp. $\bdsig_{\ulQ'}^\vee$) to an automorphic representation of $\Res_{E/\QQ}\GL_3(\AA)$ (resp. $\Res_{E/\QQ}\GL_2(\AA)$) (cf. \cite[Corollary 5.3]{Lab}). 
Here $\Res_{E/\QQ}\GL_n$ denotes the general linear group over an imaginary quadratic field $E$, regarded as an algebraic group over $\QQ$ by restricting scalars.
The $L$-function in the right hand side has been defined by the Rankin-Selberg convolution whose local and global analytic theories were established by Jacquet, Piatetski-Shapiro and Shalika in \cite{JPSS2}. 

The set of critical points in the shifted piano range is defined by 
\[\frkX^{\rm crit}_\bfV=\{(\ulQ,\ulQ')\in \frkX_{\bdsf}^{\temp}\times \frkX_{\bfI_2}^\cls\mid k_{Q_1^{}}\leq k_{Q_1'}\leq k_{Q_2^{}},\;k_{Q_3^{}}\leq k_{Q_2'}\}. \]


\subsection{The algebraicity of central values}\label{ssec:12}


Fix $(\ulQ,\ulQ')\in\frkX^{\rm crit}_\bfV$. 
Put 
\begin{align*}
\pi&:=\bdpi_{\ulQ}, & \ulk&=(k_1^{},k_2^{};k_3^{}):=k_{\ulQ}, \\
\sig&:=\bdsig_{\ulQ'}, & \ulk'&=(k_1';k_2'):=k_{\ulQ'}.
\end{align*}
Let $L(s,\sig,\Ad)$ and $L(s,\pi,\Ad)$ be the {\it complete} adjoint $L$-functions of $\sig$ and $\pi$, respectively. 
These are the Asai and twisted Asai $L$-functions of $\BC(\sig)$ and $\BC(\pi)$ (cf. Remark \ref{rem:71}). 
With the assumption \eqref{H1} we denote the conductor of $\pi$ by $\frkN_\pi$, and the conductor of $\sig$ by $\frkN_\sig$ (cf. \S \ref{ssec:82}).


Let $f^\circ\in S^G_{\ulk}(\frkN_\pi,\chi,\overline{\QQ})$ be a vector-valued \emph{holomorphic} $\overline{\QQ}$-rational newform associated to $\pi$ (see Definition \ref{def:57}). 
A perfect bilinear pairing $(\,,\,)_{\frkN_\pi}$ on the space of modular forms is introduced in \S \ref{ssec:711}.
We write $\Ome_\infty$ for the complex period of a CM elliptic curve introduced in \cite[(4.4 b)]{HT93} (see \S \ref{ssec:67} and (\ref{tag:65})).
For $a,b\in \CC$ we write $a\sim b$ if $b\neq 0$ and $a/b\in \overline{\QQ}$. 
We will prove the following theorem in in Section \ref{ssec:76}.   
\begin{theorem}
Notation being as above, we have 
\[L\left(\frac{1}{2},\pi\times\sig^\vee\right)\sim\biggl(\frac{\Ome_\infty}{\pi}\biggl)^{2(k_1'+k_2'-k_1-k_2-k_3^{})}\frac{L(1,\pi,\mathrm{Ad})}{(f^\circ,f^\circ)_{\frkN_\pi}}L(1,\sig,\mathrm{Ad})^2. \]
\end{theorem}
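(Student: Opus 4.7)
The plan is to realize $L(1/2, \pi \times \sig^\vee)$ as a Rankin--Selberg integral on $H = \U(1,1)$ embedded in $G = \U(2,1)$. Concretely, pair a weight-shifted version of the restriction of $f^\circ$ with an algebraic newform $g^\circ$ attached to $\sig$ and an auxiliary Eisenstein (or theta) series $E$ on $H$:
$$I(f^\circ, g^\circ, E) \;=\; \int_{[H]} (\delta^{s} f^\circ)|_{H}(h)\cdot \overline{g^\circ(h)}\cdot E(h)\,dh,$$
where $\delta^{s}$ is a Maass--Shimura differential operator raising the scalar weight by $s := k_1' + k_2' - k_1 - k_2 - k_3$, the shift dictated by the shifted piano condition $k_1 \le k_1' \le k_2$, $k_3 \le k_2'$. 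Unfolding this integral with the Whittaker/Fourier--Jacobi expansion of $f^\circ$ together with the standard Rankin--Selberg manipulations of Jacquet--Piatetski-Shapiro--Shalika type yields
$$I(f^\circ, g^\circ, E) \;=\; (\text{archimedean and bad local factors})\cdot L(1/2, \pi \times \sig^\vee),$$
with the archimedean constant $\overline{\QQ}^\times$-valued in the shifted piano range.

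Next, I would verify the transcendence content. The newform $f^\circ$ is $\overline{\QQ}$-rational; the operator $\delta^{s}$ maps holomorphic $\overline{\QQ}$-rational forms to nearly holomorphic forms whose Fourier--Jacobi coefficients are $\overline{\QQ}$-rational after division by $(\Ome_\infty/\pi)^{s}$, by Shimura's theory of CM periods for nearly holomorphic forms on unitary groups. Because the integrand couples $\delta^{s} f^\circ$ with $E$ (itself algebraic after normalization by a CM theta kernel built from $\sig$), the aggregate period that enters the identity is $(\Ome_\infty/\pi)^{2s}$, matching the exponent in the theorem. The Eisenstein/theta series $E$ is chosen $\overline{\QQ}$-rationally so that it absorbs the bad Euler factors cleanly.

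Finally, to isolate the $\sig$-component of the restricted integrand on $H$, I would apply the Ichino-type formula for the Petersson pairing on $\U(1,1)$: this produces one factor of $L(1, \sig, \Ad)$ from the Petersson norm of $g^\circ$. A second factor of $L(1, \sig, \Ad)$ arises from the Petersson norm of the theta/Eisenstein kernel $E$ (via Siegel--Weil, since $E$ is constructed from $\sig$). Dividing by $(f^\circ, f^\circ)_{\frkN_\pi}$ and inserting $L(1, \pi, \Ad)$ in the numerator reflects the algebraic ratio between the Petersson norm and the motivic period of $\pi$ on $\U(2,1)$, controlled by the local Whittaker data at places dividing $\frkN_\pi$. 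The main obstacle is the coordinated construction of the pair $(\delta^s, E)$: one needs an integral representation whose global unfolding is clean, whose archimedean weight-raising is precisely compensated by an algebraic Eisenstein/theta datum, and whose periods produce exactly the stated exponent of $\Ome_\infty/\pi$. This is precisely the role of the new theta operator introduced in the paper (together with its complex-analytic avatar $\delta^{s}$), which makes the required construction available in the shifted piano range and, simultaneously, provides the $p$-adic object that will interpolate these central values.
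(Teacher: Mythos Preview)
Your sketch has the right ingredients on the differential-operator side but the wrong integral representation, and this leads to an incorrect accounting of the two factors of $L(1,\sig,\Ad)$.

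There is no auxiliary Eisenstein or theta series $E$ in the paper's argument. The central object is the bare Gan--Gross--Prasad period
\[
\scrp_{K'}(\caly_+^{n_1^*}\vph\otimes\vph_{\sig^\vee})=\int_{H(\QQ)\bsl H(\AA)}\bigl(\caly_+^{n_1^*}\vph\bigl)(h)\,\vph_{\sig^\vee}(h)\,d_{K'}h,
\]
and its square is evaluated by the Ichino--Ikeda--Neal-Harris formula (Theorem~\ref{thm:71}), which directly produces the ratio
\[
\frac{L\bigl(\tfrac{1}{2},\pi\times\sig^\vee\bigr)}{L(1,\pi,\Ad)\,L(1,\sig,\Ad)}
\]
times local integrals; see (\ref{tag:77}). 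This is where the \emph{first} factor of $L(1,\sig,\Ad)$ enters: it sits in the denominator of the IINH formula and moves to the numerator when you solve for $L(1/2,\pi\times\sig^\vee)$. The \emph{second} factor comes from the Lapid--Mao formula (Corollary~\ref{cor:c1}) giving $\|\vph_\sig\|_{K'}^2\sim L(1,\sig,\Ad)$, which appears because the IINH formula is normalized by $\|\vph_\sig\|_{K'}^2$. Your proposed mechanism---one factor from $\|g^\circ\|^2$ and one from the Petersson norm of a Siegel--Weil kernel $E$---does not match, since no such $E$ is present.

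What you do get right is the archimedean story: the paper applies Shimura's Maass operator $\caly_+^{n_1^*}$ (with $n_1^*=k_2'-k_3$) to $\vph$, restricts to $H$, and uses Shimura's CM-period theory (Proposition~\ref{prop:62} and Lemma~\ref{lem:73}) to show that the holomorphic projection of $(2\pi\sqrt{-1}/\Omega_\infty)^{k_1'+k_2'-k_1-k_2-k_3}(\caly_+^{n_1^*}\vph)|_H$ is a $\overline{\QQ}$-rational form of weight $\ulk'$. This yields an algebraic constant $C_\vph$ with $(2\pi\sqrt{-1}/\Omega_\infty)^{k_1'+k_2'-k_1-k_2-k_3}\mathbf{1}_\sig\mathrm{Hol}(\caly_+^{n_1^*}\vph|_H)=C_\vph\,\vph_\sig$, and squaring gives the exponent $2(k_1'+k_2'-k_1-k_2-k_3)$ on $(\Omega_\infty/\pi)$. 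The relation $\Pet_K(\vPh_{\pi\otimes\pi^\vee}^{\calk_\infty})\sim(f^\circ,f^\circ)_{\frkN_\pi}$ from (\ref{tag:711}) supplies the remaining Petersson norm. So the overall architecture is: IINH formula $+$ Shimura's differential/CM-period theory $+$ Lapid--Mao, not a JPSS-type unfolding against a kernel.
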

 
In Appendix \ref{sec:d} we will see that this theorem is compatible with Theorem 5.19 of \cite{GHL}.


\subsection{The periods}\label{ssec:14}

Let $\addchar:\AA/\QQ\to\CC^\times$ be the additive character with the archimedean component $\addchar_\infty(z)=e^{2\pi\sqrt{-1}z}$ and $\addchar_p:\QQ_p\to\CC^\times$ the local component of $\addchar$ at the prime number $p$. 
Define the additive character $\addchar^E$ of $\EE/E$ by $\addchar^E(x)=\addchar(\Tr_{E/\QQ}(x/\sqrt{-D_E}))$. 

To make our interpolation formula more precise, we explain the periods for critical $L$-values associated with the Galois representation $\bfV_{\calq}$. 
Let $g^\circ\in S^H_{\ulk'}(\frkN_\sig,\chi',\overline{\QQ})$ be the normalized newform associated to $\sig$. 
Put
\[\Ome(\sig):=D_E^{-1}2^2(-2\sqrt{-1})^{k_2'-k_1'}\|g^\circ\|_{\frkN_\sig}^2\cale(\sig_p,\Ad,\addchar_p),\]
where $\|\cdot\|_{\frkN_\sig}$ denotes the normalized Petersson norm and $\cale(\sig_p,\Ad,\addchar_p)$ is the modified Euler $p$-factor for the adjoint motive attached to $\sig$, defined in Definition \ref{def:82}. 
The period $\Ome(\sig)$ is analogous to Hida's canonical period considered in \cite[Definition 3.12]{MH}. 
Corollary \ref{cor:c1} shows that 
\[L(1,\sig,\mathrm{Ad})\sim\Ome(\sig). \]

Let $f^\circ$ be the Picard newform associated with $\bdsf_{\ulQ}$ in the sense of Definition \ref{def:41}. 
In addition to $\Omega(\sig)$ we introduce a non-zero complex number $\Xi_{\bdsf_{\ulQ}}$ in \S \ref{ssec:711}. 
Roughly speaking, the period $\Xi_{\bdsf_{\ulQ}}$ is given by the ratio 
\[\Xi_{\bdsf_{\ulQ}}\sim\frac{L(1,\pi,\Ad)}{(f^\circ,f^\circ)_{\frkN_\pi}}. \]  

Let $f^{\rm gen}$ be the normalized vector-valued \emph{generic} newform in the global $L$-packet of $\pi$ with respect to $\addchar^E$.  
In \cite[Theorem 4.1]{Chen25}, Shih-Yu Chen shows that the Petersson norm $\La f^{\rm gen},f^{\rm gen}\Ra^{}_{\frkN_\pi}$ of $f^{\rm gen}$ is equal to the adjoint $L$-value $L(1,\pi,\Ad)$, using the Lapid-Mao conjecture. 
Therefore, $\Xi_{\bdsf_{\ulQ}}$ is essentially the ratio $\frac{\La f^{\rm gen},f^{\rm gen}\Ra^{}_{\frkN_\pi}}{(f^\circ,f^\circ)_{\frkN_\pi}}$ between the Petersson norms of normalized generic forms and holomorphic forms in the $L$-packet of $\pi$. The form $f^{\rm gen}$ is normalized so that the valued of the $\psi^E$-Whittaker function of highest weight component of $f^{\rm gen}$ at $\diag({\sqrt{-D_E},1,-\sqrt{-D_E}^{-1}})$ is given by \[K_{k_2-k_3+2}(4\sqrt{2}\pi),\]
where $K_s(z)$ is the modified $K$-Bessel function.



\subsection{The modified Euler factors}\label{ssec:15}
For $\calq=(\ulQ,\ulQ')\in\frkX^{\rm crit}_\bfV$ we view the algebraic number 
\[\biggl(\frac{2\pi\sqrt{-1}}{\Ome_\infty}\biggl)^{2(k_{Q_1'}+k_{Q_2'})-2(k_{Q_1}+k_{Q_2}+k_{Q_3^{})}}\frac{\Gam(\bfV_\calq^{},0)L(\bfV_\calq^{},0)}{\Xi_{\bdsf_{\ulQ}}^{}\Ome(\bdsig_{\ulQ'})^2}\] 
through the embedding $\iota_p$, as a $p$-adic number. 
We are interested in the $p$-adic behavior of this ratio when $\calq$ varies in $\frkX^{\rm crit}_\bfV$. A general conjecture on $p$-adic $L$-functions predicts that, after inserting a suitably modified Euler factor at $p$ (recalled below), this ratio varies $p$-adic analytically.

We consider the  rank three $\Gam_{E_\frkp}$-invariant and $\Gam_{E_{\frkp^c}}$-invariant subspaces of $V_{\bdsf\bdsg}$ by 
\begin{align*}
\mathrm{Fil}^+_\frkp\bfV_{\bdsf\bdsg}&=\mathrm{Fil}_1V_{\bdsf}^{}|_{\Gam_{E_\frkp}}\otimes V_{\bdsg}^\vee|_{\Gam_{E_\frkp}}+V_{\bdsf}^{}|_{\Gam_{E_\frkp}}\otimes \mathrm{Fil}_1V_{\bdsg}^\vee|_{\Gam_{E_\frkp}}; \\
\mathrm{Fil}^+_{\frkp^c}\bfV_{\bdsf\bdsg}&=\mathrm{Fil}_2V_{\bdsf}^{}|_{\Gam_{E_{\frkp^c}}}\otimes\mathrm{Fil}_1V_{\bdsg}^\vee|_{\Gam_{E_{\frkp^c}}},
\end{align*}
and define the six dimensional $\Gam_{\QQ_p}$-invariant subspace of $\bfV$ by 
\begin{align*}
\mathrm{Fil}^+\bfV&=\left(\mathrm{Fil}^+_\frkp\bfV_{\bdsf\bdsg}\oplus\mathrm{Fil}^+_{\frkp^c}\bfV_{\bdsf\bdsg}\right)\otimes\cyc.
\end{align*}
The pair $(\mathrm{Fil}^+\bfV,\frkX^{\rm crit}_\bfV)$ satisfies the Panchishkin condition in \cite[p.~217]{Gre} in the sense that all the Hodge-Tate numbers of $\mathrm{Fil}^+\bfV_\calq$ are positive but none of the Hodge-Tate numbers of $\bfV_\calq/\mathrm{Fil}^+\bfV_\calq$ is positive for $\calq\in\frkX^{\rm crit}_\bfV$. 

For each $p$-adic representation $V$ of $\Gamma_{\QQ_p}$, recall that the $\gamma$-factor $\gamma(V)$ is defined by 
\[\gamma(V)=\varepsilon({\rm WD}_p(V),\addchar_p)\frac{L_p(V^\vee,1)}{L_p(V,0)},\]
where $\varepsilon({\rm WD}_p(V),\addchar_p)$ is the local constant defined in \cite[\S 4]{Deligne73}. 
Following the recipe of Coates and Perrin-Riou, we define the modified factor at $p$ by 
\[\cale_p(\mathrm{Fil}^+\bfV_\calq)
=\frac{1}{\gam(\mathrm{Fil}^+\bfV_\calq)L_p(\bfV_\calq,0)}. \]
In the theory of $p$-adic $L$-functions we also need the modified factor $\cale_\infty(\bfV_\calq)$ at the archimedean place as observed by Deligne. 
It is explicitly given by
\[\cale_\infty(\bfV_\calq)=(-\sqrt{-1})^{k_{Q_1'}+k_{Q_2'}-k_{Q_1}^{}-k_{Q_2}^{}-k_{Q_3}^{}}. \]


\subsection{Five-variable $p$-adic $L$-functions}\label{ssec:16}

Given a ring $R$, we write $\mathrm{Frac}(R)$ for the total ring of fractions of $R$. 
Let $\Ome_\frkp$ be the $p$-adic CM period attached to an elliptic curve with complex multiplication by $E$ (cf. \S \ref{ssec:67} and \cite[(4.4a)]{HT93}). 
Put $\frkX_{\bdsf}'=\{\ulQ\in\frkX^{\rm cls}_\calr\;|\;\ulQ(\bdsf)\neq 0\}$. 
The subset $\frkX_{\bdsf}''$ consists of $\ulQ\in\frkX_{\bdsf}'$ such that $\bdsf_{\ulQ}$ is new outside $p$. 
For a technical reason we consider its subset 
\[\frkY^{\rm crit}_\bfV=\{(\ulQ,\ulQ')\in \frkX^{\rm crit}_\bfV\mid k_{Q_1^{}}=k_{Q_1'}\}. \]

\begin{theorem}\label{thm:11}
Let $M$ be the conductor of $\chi'$. 
We assume \eqref{H1} and assume that 
\beq
M^2\text{ is divisible by }N', \tag{$H_3$}
\eeq 
Then there is a unique element $\scrl_{\bdsf,\bdsg}\in\bfI_3\widehat{\otimes}_\calo\mathrm{Frac}(\bfI_2)$ such that 
\[\frac{\calq(\scrl_{\bdsf,\bdsg}^2)}{\Ome_\frkp^{2m_{\calq}}}
=\biggl(\frac{2\pi\sqrt{-1}}{\Ome_\infty}\biggl)^{2m_{\calq}}\frac{\Gam(\bfV_\calq^{},0)L(\bfV_\calq^{},0)}{\Xi_{\bdsf_{\ulQ}}\Ome(\bdsig_{\ulQ'})^2}\cale_\infty(\bfV_\calq)\cale_p(\mathrm{Fil}^+\bfV_\calq) \]
for $\calq=(\ulQ,\ulQ')\in\frkY^{\rm crit}_\bfV\cap(\frkX_{\bdsf}''\times\frkX_{\bfI_2}^\cls)$, 
where 
\[m_\calq=k_{Q_1'}+k_{Q_2'}-(k_{Q_1}+k_{Q_2}+k_{Q_3}). \]
\end{theorem}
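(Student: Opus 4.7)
The plan is to construct $\scrl_{\bdsf,\bdsg}$ by $p$-adically interpolating a Rankin--Selberg type integral that represents the square root of $L(\frac{1}{2}, \pi \times \sig^\vee)$ on $G\times H = \U(2,1)\times \U(1,1)$. The integral should pull back a suitable automorphic form on $G$ along the diagonal embedding $H\hookrightarrow G$ and pair it on $H$ against $g^\circ$ together with an Eisenstein series built from $\chi, \chi'$ (or rather an Eisenstein family, so as to $p$-adically interpolate in $\bdsg$). Because $\calq$ lies in the \emph{shifted} piano range, the weights on $G$ and on $H$ do not match, so one first has to apply a weight-raising differential operator to $f^\circ$ to bring the two sides of the pairing into compatible weights; the introduction advertises that a new such theta operator, analogous to the classical Ramanujan--Serre theta, has been constructed here together with its $p$-adic avatar. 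This is the linchpin of the proof, and the fact that the interpolation formula involves $\calq(\scrl_{\bdsf,\bdsg}^2)$ rather than $\calq(\scrl_{\bdsf,\bdsg})$ reflects that the construction is a self-pairing which produces a square root.

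Concretely I would proceed as follows. First, set up the global Rankin--Selberg integral and unfold it via \cite{JPSS2} to express $L(\frac{1}{2},\pi\times \sig^\vee)$ as a product of local zeta integrals. Second, compute these local integrals: at unramified places one recovers the local $L$-factor; at the archimedean place, combining the explicit formula for the Whittaker function at $\diag(\sqrt{-D_E},1,-\sqrt{-D_E}^{-1})$ with the theta-operator twist produces the factor $\Gam(\bfV_\calq,0)$ together with the period $(2\pi\sqrt{-1}/\Ome_\infty)^{2m_\calq}$ and $\cale_\infty(\bfV_\calq)$; at ramified finite primes one absorbs the local factor into the choice of test vectors and normalised pairing; at $p$, the computation on the ordinary line yields precisely the modified Euler factor $\cale_p(\mathrm{Fil}^+\bfV_\calq)$ by the recipe of Coates--Perrin-Riou, where the Panchishkin filtration $\mathrm{Fil}^+\bfV$ comes out of the matching between the ordinary projector $e_\ord$ and the Hodge--Tate structure on $V_{\bdsf}$ and $V_{\bdsg}$. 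Third, rephrase the entire construction in Hida-theoretic terms: replace $f^\circ, g^\circ$ by the Hida families $\bdsf,\bdsg$, the complex weight-raising operator by its $p$-adic counterpart, and the Petersson pairings by the algebraic pairing $(\,,\,)_{\frkN_\pi}$ combined with Hida's ordinary projector. This yields a canonical element of $\bfI_3 \widehat{\otimes}_\calo \mathrm{Frac}(\bfI_2)$.

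The passage from the analytic period $(2\pi\sqrt{-1}/\Ome_\infty)^{2m_\calq}$ to the $p$-adic period $\Ome_\frkp^{2m_\calq}$ is controlled by the comparison of \cite[(4.4)]{HT93}; the period factors $\Xi_{\bdsf_{\ulQ}}$ and $\Ome(\bdsig_{\ulQ'})^2$ are absorbed by dividing the algebraic pairing against $f^\circ$ (respectively by the self-pairing of $g^\circ$), matching the algebraicity Theorem above. The hypothesis $M^2\mid N'$ is used to ensure that the Eisenstein section has the correct tame level to pair against $\bdsg$ while remaining new at $\frkN'$, and the restriction to $k_{Q_1}=k_{Q_1'}$ inside $\frkY^{\rm crit}_\bfV$ forces the theta-operator orbit to stay inside a single Hida family rather than sweep across several. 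Uniqueness of $\scrl_{\bdsf,\bdsg}$ follows from the Zariski density of $\frkY^{\rm crit}_\bfV \cap (\frkX''_{\bdsf}\times \frkX^\cls_{\bfI_2})$ in $\Spec(\bfI_3 \widehat{\otimes}_\calo \mathrm{Frac}(\bfI_2))$.

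The principal obstacle is the construction and analysis of the new theta operator and, crucially, proving that its $p$-adic incarnation commutes appropriately with Hida's ordinary projector so that it may be inserted inside the Hida family $\bdsf$ without destroying the interpolation. Secondarily, matching the local zeta integral at $p$ on the ordinary line exactly to the Coates--Perrin-Riou modified factor $\cale_p(\mathrm{Fil}^+\bfV_\calq)$---including the $\gamma$-factor normalisation and the correct accounting of the two pieces $\mathrm{Fil}^+_\frkp$ and $\mathrm{Fil}^+_{\frkp^c}$---is a delicate explicit computation of Bessel and Whittaker integrals against the ordinary vector, and is where the Panchishkin structure must be checked directly.
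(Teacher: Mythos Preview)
Your overall architecture---interpolate a global period related to $L\bigl(\frac{1}{2},\pi\times\sig^\vee\bigr)$ by a local-global identity, with the new theta operator bridging the weight gap---is correct, but the mechanism you describe is not the one in the paper, and the discrepancy is structural.

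There is no Eisenstein series anywhere in the construction. The paper pairs the pullback to $H$ of a $p$-depleted, theta-twisted version of $\bdsf_{\ulQ}$ \emph{directly} against $\bdsg_{\ulQ'}^\vee$; this is the Gan--Gross--Prasad period $\scrp_{K'}$, not a Rankin--Selberg integral with an Eisenstein section. The link to the central $L$-value comes from the Ichino--Ikeda--Neal-Harris formula (Theorem~\ref{thm:71}), which expresses the product $\scrp(\vPh)\scrp(\vPh')$ for $\vPh\in\pi\otimes\sig^\vee$ and $\vPh'\in\pi^\vee\otimes\sig$ as the central value times local integrals. Taking $\vPh'=\vPh^\natural$ makes the two periods equal, and \emph{that} is why the interpolation is for $\scrl_{\bdsf,\bdsg}^2$: one builds $\Tht_{\bdsf,\bdsg}$ as the first Fourier coefficient of $\1_{\bdsg}J^{\chi'}(\bdsf)$ for an ordinary $\Lambda$-adic family $J^{\chi'}(\bdsf)$ on $H$ (\S\ref{ssec:69}), and Propositions~\ref{prop:67} and~\ref{prop:73} identify $\calq(\Tht_{\bdsf,\bdsg})^2$ with the right-hand side via IINH, after which $\scrl_{\bdsf,\bdsg}$ is obtained by dividing out interpolated local fudge factors $\sqrt{\frkf_{\bdpi_l,\bdsig_l^\vee}}$. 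The splitting lemma (Lemma~\ref{lem:81}) does bring JPSS integrals into the story, but only \emph{locally} at split primes, as a tool to evaluate the IINH local integral $I(\bfW_l\otimes\bfW_l^\natural)$; there is no global unfolding.

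Two further corrections. The hypothesis $(H_3)$, namely $N'\mid M^2$, amounts to $c(\sig_l)\le 2c(\ome_{\sig_l})$ at each $l\mid N'$; it is invoked in \S\ref{ssec:78} so that a formula from \cite{HY} for the JPSS integral of the $\bfU_{\frkN'}^{\chi'}$-twisted essential vector applies---it has nothing to do with Eisenstein levels. The restriction $k_{Q_1}=k_{Q_1'}$ defining $\frkY^{\rm crit}_\bfV$ is not about keeping a theta-orbit inside one family: the construction of $J^{\chi'}(\bdsf)$ in \S\ref{ssec:68} passes through the \emph{highest-weight} component $(\cdot)_0$ of the vector-valued Fourier--Jacobi expansion (Propositions~\ref{prop:64} and~\ref{prop:66}), whose $p$-integrality is what the Bannai--Kobayashi input controls; the other components are not handled, which is exactly the limitation flagged in Remark~\ref{rem:12}.
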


\begin{remark}\label{rem:12}
\begin{enumerate}
\item We note that the global root number 
\[\vep\biggl(\frac{1}{2},\bdpi_{\ulQ}^{}\times\bdsig_{\ulQ'}^\vee\biggl)=+1\text{ for }(\ulQ,\ulQ')\in\frkX^{\rm crit}_\bfV\] for $(\ulQ,\ulQ')\in\frkX^{\rm crit}_\bfV$ by (splt) and Remark \ref{rem:b1}(\ref{rem:b12}). 
\item\label{rem:121} The assumption ($H_3$) is technical. 
We denote the central character of $\bdsig_{\ulQ',l}$ by $\ome_{\bdsig_{\ulQ',l}}$. 
When $N'$ is odd, we can choose an auxiliary automorphic character $\vrh_\AA=\prod_v\vrh_v$ of $\U(1)(\AA)$ of finite order which ramifies precisely at prime factors of $N'$ so that 
\[c(\bdsig_{\ulQ',l}\otimes\vrh_l)=2c(\ome_{\bdsig_{\ulQ',l}\otimes\vrh_l})\] 
for all $\ulQ'\in\frkX^\cls_{\bfI_2}$ and prime factors $l$ of $N'$ by stability of the epsilon factor. 
Fix $\calq_0^{}=(\ulQ_0,\ulQ'_0)\in\frkY^{\rm crit}_\bfV\cap(\frkX_{\bdsf}'\times\frkX_{\bfI_2}^\cls)$. 
Let $f_\vrh$ and $g_\vrh$ be $p$-stabilized newforms associated to $\bdpi_{\ulQ_0}\otimes\vrh_\AA^{}$ and $\bdsig_{\ulQ'_0}\otimes\vrh_\AA^{}$. 
Theorem \ref{thm:41} allows us to lift $f_\vrh$ and $g_\vrh$ to Hida families $\bdsf_\vrh$ and $\bdsg_\vrh$ to which Theorem \ref{thm:11} can apply. 
The specialization $\calq_0^{}(\scrl_{\bdsf_\vrh,\bdsg_\vrh})$ involves the central value $\Gam(0,\bfV_{\calq_0})L(0,\bfV_{\calq_0})$. 

\item\label{rem:122} If $\ulQ\in\frkX_{\bfI_3}^\cls$ satisfies $k_{Q_1}<k_{Q_2}$, then the weights of $\bdpi_{\ulQ,\infty}$ are regular, and so by the endoscopic classification of cuspidal automorphic representations on $\U(2,1)$ \cite{Rog90} (see also \cite[3.2.3, p.~618]{BC04}) combined with the Ramanujan conjecture \cite[Theorem 1.2]{Cara12} we have $\ulQ\in\frkX_{\bdsf}^\temp$. 
\item\label{rem:123} Hsieh and Yamana \cite{HY} have constructed five-variable $p$-adic $L$-functions for the Rankin-Selberg convolution $\BC(\bdpi_{\ulQ}^{})\times \BC(\bdsig_{\ulQ'}^\vee)$ in the piano case, where the weights satisfy the interlacing relation 
\[k_{Q_1^{}}\leq k_{Q_1'}\leq k_{Q_2^{}}\leq k_{Q_2'}\leq k_{Q_3^{}}, \]
using Hida families on definite unitary groups $\U(3)\times\U(2)$. 
\item We expect the interpolation formula in Theorem \ref{thm:11} holds for all $(\ulQ,\ulQ')\in \frkX_{\bfV}^{\rm crit}$, but at this moment, we only obtain the interpolation formula at $\frkY^{\rm crit}_\bfV$ due to our limited understanding of the integral properties of vector-valued Fourier-Jacobi coefficients. \end{enumerate}
\end{remark}


\subsection{One-variable $p$-adic $L$-functions}\label{ssec:17}

Let $\ulk=(k_1^{},k_2^{};k_3^{})$ be a tuple of integers such that $k_1^{}\leq k_2^{}\leq k_3-2$. 
Let $\pi\simeq\otimes_v'\pi_v^{}$ be an irreducible tempered automorphic representation of $G(\AA)$ which is unramified at $p$ and all the non-split primes and such that $\pi_\infty$ is a discrete series or limit of discrete series with Blattner parameter $-\ulk$. 
We identify $\pi_p$ with an irreducible principal series $I(\nu_p,\rho_p,\mu_p)$ with unramified characters $\nu_p,\rho_p,\mu_p$ of $\QQ_p^\times$. 
Fix a $\overline{\QQ}$-rational $p$-stabilized newform $f_\pi^{\ord}\in S^G_{\ulk}(p\frkN_\pi,\chi,\overline{\QQ})$ associated to $\pi$ with respect to $\nu_p,\rho_p,\mu_p$, by which we mean that 
\begin{align*}
U_p(\alp_1)f_\pi^{\ord}&=p\nu_p(p)^{-1}f_\pi^{\ord}, & 
U_p(\bet_1)f_\pi^{\ord}&=p\mu_p(p)f_\pi^{\ord}
\end{align*}
(see Proposition \ref{prop:81}). 
When $\nu_p(p),\rho_p(p),\mu_p(p)$ are mutually different, one can obtain $f_\pi^{\ord}$ by applying the $p$-stabilization to a $\overline{\QQ}$-rational newform $f_\pi^\circ\in S^G_{\ulk}(\frkN_\pi,\chi,\overline{\QQ})$ associated to $\pi$ (cf. \cite{MY14,HY3}). 

Take a finite extension $F$ of $\QQ_p$ in such a way that $f_\pi^{\ord}\in S^G_{\ulk}(p\frkN_\pi,\chi,F)$ via $\iot_p$. 
Let $\calo$ be the maximal compact subring of $F$ and $\vpi$ a prime element of $\calo$. 
We may normalize $f_\pi^{\ord}$ so that $S^G_{\ulk}(p\frkN_\pi,\chi,\calo)$ contains $f_\pi^{\ord}$ but does not contain $\vpi^{-1}f_\pi^{\ord}$. 

Let $\bfI$ be a semi-local normal ring finite and flat over $\Lam=\calo\powerseries{\ZZ_p^\times}$. 
Let $\bdsg'\in\bfS_\ord^\calh(\frkN',\chi',\bfI)$ be a one-variable cuspidal Hida family on $\calh$ such that $\bdsg'_Q\in\bdse' S_{(k_1;k_Q)}^\calh(p^\ell\frkN',\chi'\eps_Q^\downarrow,\bfI(Q))$ for $Q\in\frkX_\bfI$ with $k_Q-k_1\geq 2$, where the character $\eps_Q^\downarrow$ is defined in (\ref{tag:81}).  
We denote the Galois representation of $\Gam_E$ associated to $\pi$ by $V_\pi$. 
Put 
\[\bfV':=\Ind_{\Gamma_E}^{\Gamma_\QQ}(V_\pi^{}\otimes V_{\bdsg'}^{}\otimes\cyc). \]
We write $\bdsig_Q\simeq\otimes'_v\bdsig_{Q,v}^{}$ for the irreducible cuspidal automorphic representation of $H(\AA)$ associated to $\bdsg'_Q$ in the sense of \S \ref{ssec:54}. 
Let 
\[\cale_p(\mathrm{Fil}^+\bfV_Q')=\cale(\pi_p^{},\bdsig_{Q,p}^\vee)\] 
be the modified factor relative to $\mu_p$ (see Definition \ref{def:72}). 
To remove the assumption ($H_3$), we use the simplified period $\Ome^{(N')}(\bdsig_Q)$ introduced in Definition \ref{def:74}. 

\begin{theorem}\label{thm:12}
Notation and assumption being as above, if all the prime factors of $N'$ are odd and split in $E$, then there exists a unique element $\call_{f_\pi^{\ord},\bdsg'}\in\mathrm{Frac}\bfI$ such that 
\[\frac{Q(\call_{f_\pi^{\ord},\bdsg'}^2)}{\Ome_\frkp^{2m_Q'}}=\biggl(\frac{2\pi\sqrt{-1}}{\Ome_\infty}\biggl)^{2m_Q'}\frac{\Gam(\bfV_Q',0)L(\bfV_Q',0)}{\Xi_{f_\pi^{\ord}}\Ome^{(N')}(\bdsig_Q)^2}\cale_\infty(\bfV_Q')\cale_p(\mathrm{Fil}^+\bfV_Q') \]
for $Q\in\frkX_\bfI$ with $k_Q\geq k_3$, where 
\[m_Q'=k_Q^{}-(k_2^{}+k_3^{}). \]
\end{theorem}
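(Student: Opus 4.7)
The plan is to deduce Theorem \ref{thm:12} from Theorem \ref{thm:11} via the twist-and-specialize argument sketched in Remark \ref{rem:12}(\ref{rem:121}). Since every prime factor $l$ of $N'$ is odd and split in $E$, stability of local $\vep$-factors produces a finite-order automorphic character $\vrh_\AA=\prod_v\vrh_v$ of $\U(1)(\AA)$ ramified precisely at the primes dividing $N'$ such that for all $Q\in\frkX_\bfI$ and $l\mid N'$ we have $c(\bdsig_{Q,l}\otimes\vrh_l)=2c(\ome_{\bdsig_{Q,l}\otimes\vrh_l})$, and similarly at the fixed form $\pi$ on the $\calg$-side. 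After simultaneously twisting $\pi$ and $\bdsg'$ by $\vrh_\AA$, the stabilized conductors of the twisted pair satisfy hypothesis $(H_3)$ of Theorem \ref{thm:11}.

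Next, I would invoke Theorem \ref{thm:41} to lift $f_\pi^\ord\otimes\vrh_\AA$ to an $\bfI_3$-adic Hida family $\bdsf_\vrh$ on $\calg$ passing through $f_\pi^\ord\otimes\vrh_\AA$ at a weight point $\ulQ_0\in\frkX_{\bfI_3}$, and to lift $\bdsg'\otimes\vrh_\AA$ to an $\bfI_2$-adic Hida family $\bdsg_\vrh$ on $\calh$ obtained as an $\bfI_2$-adic thickening of the one-variable family $\bdsg'\otimes\vrh_\AA$. Since $(H_3)$ now holds, Theorem \ref{thm:11} produces $\scrl_{\bdsf_\vrh,\bdsg_\vrh}\in\bfI_3\widehat{\otimes}_\calo\mathrm{Frac}(\bfI_2)$. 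Define
\[\call_{f_\pi^\ord,\bdsg'}\in\mathrm{Frac}\,\bfI\]
to be the image of $\ulQ_0(\scrl_{\bdsf_\vrh,\bdsg_\vrh})$ under the natural map $\mathrm{Frac}(\bfI_2)\to\mathrm{Frac}\,\bfI$ induced by the embedding of the one-variable weight torus for $\bdsg'$ inside the two-variable weight torus for $\bdsg_\vrh$. Uniqueness is forced by Zariski density of the set of classical points $Q\in\frkX_\bfI$ with $k_Q\geq k_3$.

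At each such classical $Q$, the interpolation identity is verified by matching factor by factor against Theorem \ref{thm:11} applied at $(\ulQ_0,Q)\in\frkY^{\rm crit}_{\bfV}$ for the twisted Galois representation. The equality of complete $L$-series $L(s,(\pi\otimes\vrh_\AA)\times(\bdsig_Q\otimes\vrh_\AA)^\vee)=L(s,\pi\times\bdsig_Q^\vee)$ identifies the central value appearing in Theorem \ref{thm:11} with $\Gam(\bfV_Q',0)L(\bfV_Q',0)$. Because $\vrh_\AA$ is unramified at $p$, the Panchishkin filtration and its associated $p$-adic modifications are invariant under the twist, so $\cale_p(\mathrm{Fil}^+\bfV_{(\ulQ_0,Q)})=\cale(\pi_p,\bdsig_{Q,p}^\vee)=\cale_p(\mathrm{Fil}^+\bfV_Q')$ as in Definition \ref{def:72}; likewise $\cale_\infty$ and the factor $\Ome_\frkp^{2m_Q'}/(2\pi\sqrt{-1}/\Ome_\infty)^{2m_Q'}$ are insensitive to finite-order twists. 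The simplified period $\Ome^{(N')}(\bdsig_Q)$ of Definition \ref{def:74} is designed precisely so that the ratio $\Ome(\bdsig_Q\otimes\vrh_\AA)^2/\Ome^{(N')}(\bdsig_Q)^2$ reduces to a product of algebraic local factors at primes dividing $N'$ that is independent of $Q$; similarly $\Xi_{\bdsf_{\vrh,\ulQ_0}}/\Xi_{f_\pi^\ord}$ is a $Q$-independent algebraic constant. Absorbing these finitely many algebraic constants into the fraction-field representative of $\call_{f_\pi^\ord,\bdsg'}$ gives the stated formula.

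The main obstacle I expect is the uniformity of the period comparison: one must show that the local contributions at $l\mid N'$ of $\Ome(\bdsig_Q\otimes\vrh_\AA)$ stabilize and can be absorbed into $\Ome^{(N')}(\bdsig_Q)$ by a constant that does not depend on the $p$-adic weight variable $Q$. This rests on the stability choice of $\vrh_\AA$, which forces the local components $\bdsig_{Q,l}\otimes\vrh_l$ to lie in a principal-series family of fixed shape, decoupling the local behaviour from the global variation in $Q$. A secondary technical point is to ensure that $\ulQ_0(\scrl_{\bdsf_\vrh,\bdsg_\vrh})$ is a well-defined nonzero element of $\mathrm{Frac}(\bfI_2)$; this is controlled by the nonvanishing $\ulQ_0(\bdsf_\vrh)\ne 0$, which places $\ulQ_0$ in $\frkX_{\bdsf_\vrh}''$ so that the slice of the five-variable interpolation through $\ulQ_0$ is genuinely interpolated.
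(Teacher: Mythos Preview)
Your twist-by-$\vrh_\AA$ strategy and the period comparison at primes dividing $N'$ are correct and match what the paper does in \S\ref{ssec:714}. But the paper does \emph{not} reduce Theorem~\ref{thm:12} to Theorem~\ref{thm:11} by lifting $f_\pi^\ord$ to an $\bfI_3$-adic family and specializing. Instead it builds a one-variable theta element directly from the fixed classical form $f_\pi^\ord$: one constructs $\calj^{\chi'}(f_\pi^\ord)\in\bfS^H_\ord(\frkN',\chi',\Lam)$ from the $p$-adic measures attached to the Fourier--Jacobi coefficients of $\bfU_{\frkp^c}^0\bfU^{\chi'}_{\frkN'}f_\pi^\ord$ (the machinery of \S\ref{sec:6}, specialized to a single classical $f$), sets $\Theta_{f_\pi^\ord,\bdsg'}=\FC^1_{\frko_E}(\1_{\bdsg'}\calj^{\chi'}(f_\pi^\ord))$, and proves the interpolation formula under $(H_3)$ by repeating the local computations of \S\ref{ssec:713}. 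The twist by $\vrh_\AA$ enters only afterward, and even then one stays entirely in the one-variable setting: the twisted specialization $g'_\vrh$ is lifted via Theorem~\ref{thm:41} to a one-variable family $\bdsg'_\vrh\in\bfS^\calh_\ord(\frkM^2,\chi'\vrh^{-2},\bfI)$, and the already-established one-variable case is applied to the pair $(f_{\pi\otimes\vrh}^\ord,\bdsg'_\vrh)$.

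Your route through Theorem~\ref{thm:11} has a genuine gap. Invoking Theorem~\ref{thm:41} to lift $f_\pi^\ord\otimes\vrh_\AA$ to an $\bfI_3$-adic family requires the weight point $\ulQ_0$ to lie in $\frkX^\cls_{\bfI_3}$, i.e.\ $k_3-k_2>2$; but Theorem~\ref{thm:12} is stated for $k_3-k_2\geq 2$, and the boundary case $k_3=k_2+2$ (where $\pi_\infty$ is a limit of discrete series) is explicitly singled out in the remark following the theorem as a case of interest. The paper's direct construction needs nothing about $f_\pi^\ord$ on the $\calg$-side beyond the $\calu(\frkp^c)$-eigenvalue condition used to define $\bfU_{\frkp^c}^0$, so it covers this case; indeed the remark after Theorem~\ref{thm:12} stresses that no full $p$-ordinarity on $\pi$ is imposed, which is precisely what a lift to a $\calg$-Hida family would require. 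There is also a secondary technical issue with your construction: the ``natural map $\mathrm{Frac}(\bfI_2)\to\mathrm{Frac}\,\bfI$'' you invoke is not a map of fraction fields but the attempted extension of a specialization $\bfI_2\to\bfI$ fixing the first weight variable, and this is well-defined on $\scrl_{\bdsf_\vrh,\bdsg_\vrh}$ only if its $\bfI_2$-denominator does not vanish identically along that slice, which would need separate justification.
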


\begin{remark}
\begin{enumerate}
\item The most surprising and confusing point in our construction of the theta element $\call_{f_\pi^{\ord},\bdsg'}$ is that it does not impose any ordinary hypothesis on $\pi$, even though  the Panchishkin condition requires $\pi$ to be $\frkp^c$-ordinary, namely, a $\Gam_{E_\frkp}$-stable lattice $\mathrm{Fil}_1V_\pi$ with Hodge-Tate weight $-k_1-1$ to exists. 
Note that 
\[\mathrm{Fil}_2V_\pi^c=\{u\in V_\pi^c\;|\;\La v,u\Ra=0\text{ for }v\in \mathrm{Fil}_1V_\pi\}, \]
where $\La\;,\;\Ra:V_\pi^{}\otimes V_\pi^c\to\cyc^{-2}$ is a perfect pairing. 
Actually, we need the $\frkp^c$-ordinary hypothesis when we vary $\eps_{Q_1'}$
\item When $k_1=k_2=k_3-2$, the range of interpolation includes the points $Q$ such that $k_Q=k_1+2$, i.e., the restriction of $\bdsg'_Q$ to $\GL_2(\AA)$ is an elliptic cusp form of weight $2$. 
The specialization of the five-variable $p$-adic $L$-function at such points will be interesting (cf. \cite{DD14,DD17}). 
\end{enumerate}
\end{remark}

We give a brief outline of the construction of $\call_{f^{\ord}_\pi,\bdsg'}$ and a proof of the interpolation formula. 
The main ingredients are followings:
\begin{itemize}
\item geometric theory of Fourier-Jacobi coefficients of Picard modular forms;
\item Shimura's theory of differential operators;
\item Bannai-Kobayashi theory of algebraic CM theta functions;
\item the Ichino--Ikeda--Neal-Harris (IINH) formula. 
\end{itemize}
Below we describe these ingredients and their role in more detail. 


\subsection{Fourier-Jacobi coefficients of Picard modular forms}

For simplicity we suppose that $k_1=k_2=0$ and $\pi_p$ is unramified. 
Put $k:=k_3$ and $\ulk_0=(0,0;k)$. 
The Hermitian symmetric domain associated to $G(\RR)$ is 
\begin{align*}
\frkD&=\biggl\{Z=\begin{bmatrix} \tau \\ w \end{bmatrix}\in\CC^2\;\biggl|\;\eta(Z)>0\biggl\}, & 
\eta(Z):=\sqrt{-1}(\tau^c-\tau)-\frac{ww^c}{\sqrt{D_E}}. 
\end{align*}
The group $G(\RR)$ acts on $\frkD$ by $\alp\begin{bmatrix} Z \\ 1 \end{bmatrix}=\begin{bmatrix} \alp Z \\ 1 \end{bmatrix}\mu(\alp,Z)$ for $\alp\in G(\RR)$ and $Z\in\frkD$. 
Put $q:=e^{2\pi\sqrt{-1}\tau}$.
The space $S_{\ulk_0}^G(p^\ell\frkN,\chi,\CC)$ consists of functions $f:\frkD\times G(\widehat{\QQ})\to\CC$ which are holomorphic in $Z$, satisfy 
\[f(\gam Z,\gam g u)=\chi(u)^{-1}\mu(\gam,Z)^kf(Z,g) \]
for $\gam\in G(\QQ)$, $g\in G(\widehat{\QQ})$, $u\in K_0(p^\ell\frkN)$, and have Fourier-Jacobi expansions 
\[f\biggl(\begin{bmatrix} \tau \\ w \end{bmatrix},\bfd(a)\biggl)=\sum_{m\in\scrs_\frka^+}\FJ^m_\frka(w,f)q^m \]
for finite id\`{e}les $a\in\widehat{E}^\times$ such that $a_\frkl=1$ for $\frkl|pN$, where $\scrs_\frka^+:=(\frka\frka^c)^{-1}\cap\QQ^\times_+$, $\bfd(a):=\diag(a^c,1,a^{-1})$ and $\frka$ is the fractional ideal of $\frko_E$ associated to $a$. 
It is important to note that Fourier-Jacobi coefficients are CM theta series, namely, there is a lattice $L_0\subset E$ such that for every $l\in\frka L_0$ 
\[\FJ^m_\frka(w+l,f)=e^{\frac{2\pi m}{\sqrt{D_E}}\bigl(w+\frac{l}{2}\bigl)l^c}\FJ^m_\frka(w,f). \]

Let $\overline{\ZZ}_{(p)}$ denote the integral closure of $\ZZ_{(p)}=\QQ\cap\ZZ_p$ in $\overline{\QQ}$. 
Let $f\in S_{\ulk_0}^G(p^\ell\frkN,\chi,\CC)$ be $\overline{\ZZ}_{(p)}$-integral in the sense of Definition \ref{def:33}. 
Then $\FJ^m_\frka(w,f)$ can be identified with an integral section of a pullback of Poincar\`{e} bundle of a CM elliptic curve over $\overline{\ZZ}_{(p)}$ by Proposition \ref{prop:31} (cf. \cite{Lan12}). 

The holomorphic function $w\mapsto \FJ^m_\frka\bigl(\frac{w}{\Ome_\infty},f\bigl)$ has algebraic Taylor coefficients, and moreover, the coefficients $\biggl(\dfrac{\Ome_\frkp}{\Ome_\infty}\biggl)^n\dfrac{\partial^n\FJ^m_\frka}{\partial w^n}(0,f)$ are $p$-integral by the work \cite{BK10Duke} of Bannai and Kobayashi. 
We associate to this $p$-integral power series the $p$-adic measure $\d\mu(\widetilde{\FJ}_\frka^m(f)_0^{\Ome_\infty})$ on $\ZZ_p$. 


\subsection{Shimura's differential operator}

Shimura has introduced a theory of nearly homomorphic modular forms and differential operators on Hermitian symmetric domains in his paper \cite{Shimura86}. 
His differential operator $\del_{\ulk}$ preserves near holomorphy and $\overline{\QQ}$-rationality. 
At the same time, the first named author gave an equivalent (and more general) construction in \cite{Harris86}, using algebraic geometry. 

If $f\in S_{\ulk_0}^G(p^\ell\frkN,\chi,\overline{\QQ})$, then $\del_{\ulk_0}f$ is a $\overline{\QQ}$-rational nearly holomorphic modular form valued in the dual space of the global holomorphic tangent space $\frkT=\CC\frac{\partial}{\partial\tau}\oplus\CC\frac{\partial}{\partial w}$ of $\frkD$. 
Let $\frkH$ be the upper half plane. 
We regard $\frkH$ as a subdomain of $\frkD$ via the map $\tau\mapsto\begin{bmatrix} \tau \\ 0 \end{bmatrix}$ which is compatible with the embedding $\iot:H\hookrightarrow G$ defined by  
\[\iota\left(\begin{bmatrix} a & b \\ c & d \end{bmatrix}\right)
=\begin{bmatrix}a & 0 & b\\
0 & 1 & 0 \\
c & 0 & d \end{bmatrix}. \]

Appendix \ref{sec:a} computes iteration of this operator and evaluates it at $w=0$ to produce a nearly holomorphic cusp form on $\frkH$ of weight $(0,k+n)$. 
More precisely, we have 
\begin{align*}
\del_{\ulk_0}^nf\biggl(\frac{\partial}{\partial w},\dots,\frac{\partial}{\partial w}\biggl)\biggl|_\frkH
&=\frac{\eta(Z)^{-k}}{(2\pi\sqrt{-1})^n}\left(\frac{\partial}{\partial w}-\frac{w^c}{\sqrt{-D_E}}\frac{\partial}{\partial\tau}\right)^n\eta(Z)^kf(Z)\biggl|_{w=0} \\
&\equiv(2\pi\sqrt{-1})^{-n}\frac{\partial^nf}{\partial w^n}\biggl(\begin{bmatrix} \tau \\ 0 \end{bmatrix}\biggl)\pmod{y^{-1}},
\end{align*}
where we write $y$ for the imaginary part of $\tau$. 
We denote the holomorphic projection from the space of nearly holomorphic modular forms on $\frkH$ by $\Hol$, and Hida's ordinary projection on $p$-adic modular forms on $H$ by $\bdse'$. 
We deduce an important identity 
\[\bdse'\Hol\biggl(\del_{\ulk_0}^nf\biggl(\frac{\partial}{\partial w},\dots,\frac{\partial}{\partial w}\biggl)\biggl|_H\biggl)=\bdse'\biggl((2\pi\sqrt{-1})^{-n}\frac{\partial^nf}{\partial w^n}\biggl|_H\biggl) \]
from the simple relation stated in \cite[(3), p.~311]{Hid93}. 

Shimura's differential operators can be reformulated algebro geometrically in terms of the Gauss-Manin connection (cf. Section 6 of \cite{Eischen12}). 


\subsection{The $\Theta_\frkp$-operator}

Given $w\in\EE$ and $z\in\AA$, we define an element of $G(\AA)$ by 
\[\bfn(w,z)=\begin{bmatrix} 1 & w^c & z+\frac{1}{2}\sqrt{-D_E}ww^c \\ 0 & 1 & \sqrt{-D_E}w \\ 0 & 0 & 1 \end{bmatrix}. \]
Fix a sufficiently small ideal $\frkc$ of $\frko_E$ prime to $\frkp$. 
Take a set $\calx_n$ of representatives for $\frkp^{-n}\frkc/\frkc$. 
Section \ref{sec:6} associates the operator 
\beq
\bdTht_\frkp^\phi f(Z,g)=\frac{1}{p^n}\sum_{x\in\calx_n}\sum_{z\in (\ZZ_p/p^n\ZZ_p)^\times}\phi(z)\addchar_p(zx_\frkp)f(Z,g\bfn(x_\frkp,0)) \label{tag:11}
\eeq
to an $\calo$-valued function $\phi$ on $(\ZZ_p/p^n\ZZ_p)^\times$. 
Here we view $x$ as a principal ad\`{e}le and denote its $\frkp$-component by $x_\frkp$. 
Building on the work \cite{BK10Duke} of Bannai-Kobayashi, we show that $\bdTht_\frkp^\phi$ can be interpreted in terms of $p$-adic translation by $\frkp$-power torsion points by the pull-back of theta functions to the formal group of a CM elliptic curve and see that 
\[\int_{\ZZ_p^\times}\phi(x)\,\d\mu(\widetilde{\FJ}_\frka^m(f))=\FJ^m_\frka(0,\bdTht_\frkp^\phi f) \]
for locally constant functions $\phi$ on $\ZZ_p^\times$. 
In particular, the right hand side of (\ref{tag:11}) is independent of $n$ and bounded. 
Furthermore, we have 
\[\int_{\ZZ_p}x^n\,\d\mu(\widetilde{\FJ}_\frka^m(f))=\biggl(\frac{\Ome_\frkp}{\Ome_\infty}\biggl)^n\frac{\partial^n\FJ^m_\frka}{\partial w^n}(0,f)\]
for non-negative integers $n$. 
We obtain an element $\calj^m_\frka(f)\in\Lam$ such that for $Q\in\frkX_\Lam$ with $k_Q\geq k$ 
\[Q(\calj^m_\frka(f))=\biggl(\frac{\Ome_\frkp}{\Ome_\infty}\biggl)^{k_Q-k}\frac{\partial^{k_Q-k}\FJ^m_\frka}{\partial w^{k_Q-k}}(0,\bdTht_\frkp^{\eps_Q}f). \]

Now we can define formal power series $\calj_\frka(f)\in\Lam_2\powerseries{q}$ by 
\[\calj_\frka(f)=\lim_{j\to\infty}\sum_{m\in\scrs_\frka^+}\calj^{mp^{j!}}_\frka(f)q^m. \]
The set $\{\calj_\frka(f)\}_{\frka\in C_E}$ forms a $p$-adic family $\calj(f)$ of $p$-ordinary cusp forms on $H$ such that 
for $Q\in\frkX_\Lam$ with $k_Q\geq k$
\[Q(\calj(f))=\biggl(\frac{2\pi\sqrt{-1}\Ome_\frkp}{\Ome_\infty}\biggl)^{k_Q-k}\bdse'\Hol\biggl(\del_{\ulk_0}^{k_Q-k}\bdTht_\frkp^{\eps_Q}f\biggl(\frac{\partial}{\partial w},\dots,\frac{\partial}{\partial w}\biggl)\biggl|_H\biggl). \]


\subsection{The Ichino-Ikeda formula}

Let $\bdsg'\in\bfS_\ord^\calh(\frkN',\chi',\bfI)$ be the one-variable cuspidal Hida family on $\calh$. 
Note that the restriction of $\bdsg'$ to $\GL_2$ is a usual Hida family of elliptic cusp forms. 
We denote its restriction to $H$ also by $\bdsg'$ and write $\1_{\bdsg'}\in\bfT^H_{\ord}(N',\bfI)\otimes_\bfI\mathrm{Frac}\bfI$ for the idempotent corresponding to $\bdsg'$. 
For simplicity we assume that $N'=1$ and $\chi'=1$. 

Define $\vsi^{(p)}_{}=(\vsi^{(p)}_l)\in G(\widehat{\QQ})$ by 
\begin{align*}
\vsi&=\begin{bmatrix}
1 & 0 & 0 \\ 
0 & 0 & 1 \\
0 & 1 & 0   
\end{bmatrix}, &
\vsi^{(p)}_l&=\begin{cases}
\imath_\frkl^{-1}(\vsi) &\text{if $l$ splits in $E$ and differs from $p$, }\\
\ono_3 &\text{otherwise. } 
\end{cases} 
\end{align*}
Define the $\frkp^c$-depletion of $f_\pi^{\ord}$ by 
\[[\bfU_{\frkp^c}^0f_\pi^{\ord}](Z,g)=\frac{1}{p\mu_p(p)}\sum_{y\in(\ZZ_p/p\ZZ_p)^\times}\sum_{z\in \ZZ_p/p\ZZ_p}f_\pi^{\ord}\left(Z,g\imath_\frkp^{-1}\left(\begin{bmatrix} p & y & z \\ 0 & 1 & 0 \\ 0 & 0 & 1 \end{bmatrix}\right)\right). \] 
Let $\calf(Z,g)=[\bfU_{\frkp^c}^0f_\pi^{\ord}](Z,g\vsi^{(p)})$. 
Then $\calj(\calf)\in \bfS_\ord^H(\frko_E,\bfI)$. 
Writing the spectrum decomposition of $J(\calf)$, we define the theta element by  
\[\call_{f_\pi^{\ord},\bdsg'}:=\text{the first Fourier coefficient of }\1_{\bdsg'} \calj(\calf). \]
The evaluation of $\call_{f_\pi^{\ord},\bdsg'}$ at $Q\in\frkX_\Lam$ with $k_Q\geq k$ is roughly given by
\[\biggl(\frac{2\pi\sqrt{-1}\Ome_\frkp}{\Ome_\infty}\biggl)^{k_Q-k}\frac{\Big\La\del_{\ulk_0}^{k_Q-k}\bdTht_\frkp^{\eps_Q}\calf\Big(\frac{\partial}{\partial w},\dots,\frac{\partial}{\partial w}\Big)\Big|_H,\bdsg'_Q\Big\Ra}{\|\bdsg'_Q\|_{\frkN_{\bdsig_Q}}^2}, \]
where $\La\;,\;\Ra$ is the Petersson pairing on $H$. 

The Ichino-Ikeda conjecture has been formulated for orthogonal groups in \cite{II} as a refinement of the global Gross--Prasad conjecture formulated in \cite{GP92}. 
The unitary analogue was formulated in \cite{GGP,NHarris} and proved in \cite{Z,BLZZ,BCZ} (cf. Theorem \ref{thm:71}). 
The pairing above is the Gan-Gross-Prasad period integral, and by the celebrated Ichino--Ikeda--Neal-Harris (IINH) formula, its square is written as the product of the central $L$-value  $L\bigl(\frac{1}{2},\pi\times\bdsig_Q^\vee\bigl)$ and local zeta integrals. 

A big chunk of this paper is to make explicit evaluations of these local zeta integrals including at $p$ and the archimedean place. 
Appendix \ref{sec:b} computes the archimedean integral, which proves the algebraicity of the $L$-value mentioned above, together with Shimura's theory on $\overline{\QQ}$-rationality of nearly holomorphic modular forms. 
We will compute the local integral at $p$ in Section \ref{sec:8}. 
A local key ingredient is the splitting lemma which has been proved in \cite{LM,Z} (see Lemma \ref{lem:81}). 
This lemma relates the local integral to a square of the JPSS integral at split primes. 

When $N'>1$, we will apply the operator $\bfU^{\chi'}_{\frkN'}$ (see \S \ref{ssec:63}). 


\subsection{Theta operators}

We conclude this introduction by remarking that in addition to the explicit evaluation of local integrals, a novelty of this work is the introduction of a new theta operator $\bdTht_\frkp^\phi$. 
Since $\phi\mapsto\bdTht_\frkp^\phi f$ is a $p$-adic measure on $\ZZ_p^\times$ valued in the space of $p$-adic modular forms, we can speak of operators $\bdTht_\frkp^n:=\bdTht_\frkp^{\phi_n}$ for integers $n$, where $\phi_n(z)=z^n$ $(z\in\ZZ_p^\times)$. 

This theta operator acts directly on Fourier-Jacobi coefficients, different from the one studied in \cite{SG16,SG19}, and plays a role analogous to the classical Ramanujan-Serre theta operator $\theta$ in Hida's $p$-adic Rankin-Selberg method for $\GL_2\times\GL_2$. 
For instance, if $g$ and $h$ are elliptic modular forms, then Hida has proved the miraculous identity 
\[\bdse_{\rm ord}(g\cdot\tht^n h)=\bdse_{\rm ord}\Hol(g\cdot\del^nh) \]
for non-negative integers $n$, where $\bdse_{\rm ord}$ is the Hida ordinary projector on $p$-adic modular forms on $\GL(2)$ and $\delta$ is the Maass-Shimura differential operator (see \cite[(2), p.~330]{Hid93}). 
We discover an analogous identity 
\[\frac{1}{\Ome_\frkp^n}\bdse'(\bdTht_\frkp^nf|_H)=\biggl(\frac{2\pi\sqrt{-1}}{\Ome_\infty}\biggl)^n\bdse'\Hol\biggl(\del_{\ulk_0}^nf\biggl(\frac{\partial}{\partial w},\dots,\frac{\partial}{\partial w}\biggl)\biggl|_H\biggl). \]

Our one-variable $p$-adic $L$-function in Theorem \ref{thm:12} roughly interpolates the value $\1_{\bdsg_k'}\bdse'(\bdTht_\frkp^{k-k_3}f|_H)$, where $\bdsg_k'$ is an elliptic modular of weight $k$ in a Hida family of elliptic modular forms. 
In a future work on the $p$-adic IINH formula for $\U(2,1)\times\U(1,1)$ we shall evaluate this one-variable $p$-adic $L$-function at $k=k_3-1$ and relate this value to the $p$-Abel-Jacobi image of the diagonal cycle in the product of a Picard modular surface and a modular curve. 

One may also ask about the relation between our theta operator and the $p$-adic differential operators constructed in \cite{EFMV}. 
The space of differential operators that share homogeneity properties with these operators is necessarily one-dimensional, so the two types of operators differ by a scalar multiple.  This can be determined by relating the Serre-Tate coordinates used at an ordinary point in \cite[\S 2.7.1]{EFMV} with the local coordinates at a boundary point (which also belongs to the ordinary locus).  This comparison also determines the relation between the element $\call_{f_\pi^{\rm ord},\bdsg'} \in \mathrm{Frac}\bfI$, constructed in Theorem 1.4, with the $L(F, \tau)$ defined in \cite[Theorem 6.6]{MHarris}, under a Gorenstein condition, as an element of the localized Hecke algebra.   We hope to clarify this relation in a future article.

\subsection*{Acknowledgement}
Yamana is partially supported by JSPS Grant-in-Aid for Scientific Research (C)23K03055, (B)19H01778, (A)22H00096. 
Harris was partially supported by NSF grants DMS-2001369 and DMS-2302208. 
This work was partly supported by MEXT Promotion of Distinctive Joint Research Center Program JPMXP0723833165 and Osaka Metropolitan University Strategic Research Promotion Project (Development of International Research Hubs). 


\section*{Notation}

We will fix some notation that will be used in what follows. 
Besides the standard symbols $\ZZ$, $\QQ$, $\RR$, $\CC$, $\ZZ_p$, $\QQ_p$ we denote by $\ZZ_{(p)}$ the valuation ring $\ZZ_p\cap\QQ$, by $\RR_+^\times$ and $\QQ^\times_+$ the groups of strictly positive real and rational numbers, and by $\CC^1$ the group of complex numbers of absolute value $1$. 
Given a field $F$, we write $\overline{F}$ for an algebraic closure of $F$. 
We denote the integral closure of $\ZZ_{(p)}$ in $\overline{\QQ}$ by $\overline{\ZZ}_{(p)}$. 
For a ring $R$ we put $R_{(p)}=R\otimes_\ZZ\ZZ_{(p)}$. 

Let $\AA$ be the rational ad\`{e}les ring and $\widehat{\QQ}$ its finite part. 
We denote by $\widehat{\ZZ}=\prod_q\ZZ_q$ the maximal compact subring of $\widehat{\QQ}$, and by $\widehat{\QQ}^{(p)}$ the restricted product $\prod'_q\QQ_q^{}$ over rational primes $q\neq p$.  
Given a place $v$ of $\QQ$, we write $\QQ_v$ for the completion of $\QQ$ with respect to $v$. 
We shall regard $\QQ_v^{}$, $\widehat{\QQ}^{(p)}$ (resp. $\QQ_v^\times$) as subgroups of $\AA$ (resp. $\AA^\times$) in a natural way. 
We denote by the formal symbol $\infty$ the real place of $\QQ$ and do not use $q,l$ for the infinite place. 

Define $\addchar_v:\QQ_v\to\CC^1$ by $\addchar_\infty(z)=e^{2\pi\sqrt{-1}z}$ for $z\in\RR$ and by $\addchar_p(x)=\addchar_\infty(-y)$ for $x\in\QQ_p$ with $y\in\ZZ[p^{-1}]$ such that $y-x\in\ZZ_p$. 
Then $\addchar=\prod_v\addchar_v$ defines a character of $\AA/\QQ$. 
We associate to $m\in\QQ$ the global additive character $\addchar^m$ defined by $\addchar^m(z)=\addchar(mz)$ for $z\in\AA$. 
For $a\in\QQ_q^\times$ we define an additive character $\addchar_q^a$ of $\QQ_q^{}$ by $\addchar_q^a(z)=\addchar_q(az)$ for $z\in\QQ_q$. \index{$\addchar$}

Let $\d z_v$ be the Haar measure on $\QQ_v$ self-dual with respect to the pairing $(z_v^{},z_v')\mapsto\addchar_v(z_v^{}z_v')$. 
Note that $\int_{\ZZ_q}\d z_q=1$ for each rational prime $q$ and that $\d z_\infty$ is the usual Lebesgue measure on $\RR$. 
Let $\d z=\prod_v\d z_v$.  
Then $\d z$ is the Haar measures on $\AA$ such that $\AA/\QQ$ has volume 1. 
Let $\d^\times t=\prod_v\d^\times t_v$ be the Haar measure on $\AA^\times$, where $\d^\times t_v$ is the Haar measure on $\QQ_v^\times$ normalized by $\int_{\ZZ_q^\times}\d^\times t_q=1$ if $v = q <\infty$, and $\d^\times t_\infty=\frac{\d t_\infty}{|t_\infty|}$. 

For positive integers $m,n$ and a commutative ring $R$ we denote $\Mat_{m,n}(R)$ the $R$-module of all $m\times n$-matrices with entries in $R$, by $\GL_m(R)$ the group of all invertible elements of $\Mat_m(R)= \Mat_{m,m}(R)$, by $T_m(R)$ the subgroup of diagonal matrices in $\GL_m(R)$, by $B_m(R)$ the subgroup of upper triangular matrices in $\GL_m(R)$, by $N_m(R)$ the subgroup of upper unipotent triangular matrices in $\GL_m(R)$. 
If $x_1,\dots,x_k$ are square matrices, then $\diag[x_1,\dots,x_k]$ denotes the matrix with $x_1,\dots,x_k$ in the diagonal blocks and 0 in all other blocks.

Let $E$ be an imaginary quadratic field of discriminant $-D_E$ with integer ring $\frko_E$. 
Put $\del=\sqrt{-D_E}$. 
Then $\frkd_E=\del\frko_E$ is the different of $E/\QQ$.  
Set 
\begin{align*}
\EE&=E\otimes_\QQ\AA, & 
E_v&=E\otimes_\QQ\QQ_v, &
\frko_{E,q}&=\frko_E\otimes_\ZZ\ZZ_q. 
\end{align*}
Denote by $\eps_{E/\QQ}=\prod_v\eps_{E_v/\QQ_v}$ the quadratic character of $\QQ^\times\bsl\AA^\times$ associated to $E$. 
We write $x\mapsto x^c$ for complex conjugation on $E$, i.e., the generator of $\Gal(E/\QQ)$, which induces an involution on $R\otimes_\ZZ E$ by $(r\otimes x)^c=r\otimes x^c$ for a ring $R$. 
Let $\trs x\in\Mat_{n,m}(R\otimes_\ZZ E)$ be the transpose of a matrix $x=(x_{ij})\in\Mat_{m,n}(R\otimes_\ZZ E)$ and put $x_{}^c=(x_{ij}^c)$. 
 
Once and for all we fix an odd rational prime $p$ that is split in $E$. 
Fix an algebraic closure $\overline{\QQ}$ of $\QQ$ containing $E$.
We can identify $E$ with the subfield of $\CC$ in exactly two ways. 
We fix an embedding $\iot_\infty:\overline{\QQ}\hookrightarrow\CC$ and an isomorphism $\iot_p:\CC\stackrel{\sim}{\to}\CC_p$, where $\CC_p$ is the completion of an algebraic closure of $\QQ_p$.   
We denote the restriction of $\iot_\infty$ to $E$ by the same symbol and write $\iot_\infty^c$ for the other injection of $E$ into $\CC$. 
We view $\iot_p\circ\iot_\infty$ as a $p$-adic place of $E$ and write $\frkp$ for the corresponding prime ideal of $\frko_E$. 
Let $\ord_p$ be the $p$-adic valuation on $\CC_p$ normalized so that $\ord_pp=1$. 

Given an algebraic number field $L$, we regard $L$ as a subfield in $\CC$ (resp. $\CC_p$) via $\iot_\infty$ (resp. $\iot_p\circ\iot_\infty$). 
We denote by $\frko_L$ the ring of integers of $L$, by $\Gam_L=\Gal(\overline{\QQ}/L)$ the absolute Galois group, and by $L_\ab$ the maximal abelian extension of $L$.
If $X$ is a scheme over $L$, then $\mathrm{R}_{L/\QQ}X$ is the restriction of scalar of $X$ from $L$ to $\QQ$. 

If $R$ is any $\frko_{E,(p)}$-algebra and $M$ is any $R$-module with a commuting $\frko_E$-action, then $M$ becomes an $\frko_E\otimes R$-module and we have a canonical type decomposition $M=M(\vSi)\oplus M(\vSi^c)$, where $M(\vSi)=e_\vSi M$ is the part of $M$ on which $\frko_E$ acts via the canonical homomorphism $\frko_E\hookrightarrow\frko_{E,(p)}\to R$, and $M(\vSi^c)=e_{\vSi^c}M$ is the part on which it acts via the conjugate homomorphism. 
The idempotents $e_\vSi,e_{\vSi^c}\in \frko_E\otimes_\ZZ\frko_E$ are explicitly given in (1.2) of \cite{SG16}. 
Since $p$ is split in $E$, we can view $\ZZ_p$ as an $\frko_{E,(p)}$-algebra via $\iot_p\circ\iot_\infty$. 
For an $\frko_E$-module $M$ we put 
\begin{align*}
M_p&=M\otimes_\ZZ\ZZ_p, & 
M_\frkp&=e_\vSi M_p, &
M_{\frkp^c}&=e_{\vSi^c}M_p. 
\index{$e_\vSi,e_{\vSi^c}$}
\end{align*}



\part{Geometric Theory}\label{part:1}

\section{Shimura varieties for unitary groups}


\subsection{Unitary groups}\label{ssec:21}

Let $r,s$ be non-negative integers such that $r\geq s$. 
Put $n=r+s$ and $t=r-s$. 
When $t\geq 1$, we fix $\gam_0\in\GL_t(E)$ such that 
\begin{enumerate}
\item[(C$_1$)] $-\sqrt{-1}\iot_\infty(\gam_0)$ is a positive definite Hermitian matrix;
\item[(C$_2$)] $\gam_0\in\GL_t(\frko_{E,(p)})$; 
\end{enumerate}
Let $V=E^n$ be an $E$-vector space equipped with a skew Hermitian form 
\begin{align*}
S_{\gam_0}(x,y)&=xS_{\gam_0}\trs y^c, & 
S_{\gam_0}&=\begin{bmatrix} 0 & 0 & -\ono_s \\ 0 & -\gam_0 & 0 \\ \ono_s & 0 & 0 \end{bmatrix}. 
\end{align*}
Define the alternating pairing $\La\;,\;\Ra_{r,s}$\index{$\La\;,\;\Ra_{r,s}$} on $V$ by $\La x,y\Ra_{r,s}=\Tr_{E/\QQ}(S_{\gam_0}(x,y))$. 
Then $\La\;,\;\Ra_{r,s}$ is Hermitian in the sense that $\La ax,y\Ra_{r,s}=\La x,a^c y\Ra_{r,s}$ for all $x,y\in V$ and $a\in E$. 
Let $\calg=\GU(V)=\GU(S_{\gam_0})$ be the unitary similitude group attached to the skew Hermitian space $(V,S_{\gam_0})$. 
As an algebraic group over $\QQ$, the $R$-points of $\calg$\index{$\calg$} are given by 
\[\calg(R)=\{g\in\GL_n(E\otimes_\QQ R)\;|\;gS_{\gam_0}\trs g^c=\nu(g)S_{\gam_0}\text{ for some }\nu(g)\in R^\times\} \]
for a $\QQ$-algebra $R$. 
The morphism $\nu:\calg\to\GG_{m/\QQ}$ is called the similitude character\index{$\nu$}. 
The unitary group $G=\U(V)=\U(S_{\gam_0})$\index{$G$} is defined by
\[G(R)=\{g\in\calg(R)\;|\;\nu(g)=1\}. \]

\begin{remark}\label{rem:20}
If $n$ is even, then $\nu(\calg(\AA))=\AA^\times$. 
If $n$ is odd, then $\nu(\calg(\AA))=\Nr_{E/\QQ}(\EE^\times)$ and hence $\calg(\AA)=\EE^\times G(\AA)$ (cf. Proposition 5.3 of \cite{Shimura97}). 
\end{remark}

Let $\{e_i\}_{i=1}^n$ be the standard basis of $V$. 
Put 
\begin{align*}
W_0&=\oplus_{i=s+1}^rEe_i, & 
Y_E&=\oplus_{i=1}^sEe_i, & 
X_E&=\oplus_{i=r+1}^nEe_i, & 
W&=Y_E\oplus X_E. 
\end{align*}
Then $Y_E$ and $X_E$ are maximal totally isotropic subspaces, which are non-degenerately paired by $S_{\gam_0}$. 
Let $H=\U(J_s)$ and $\calh=\GU(J_s)$, where $J_s=\begin{bmatrix} 0 & \ono_s \\ -\ono_s & 0 \end{bmatrix}$. \index{$\calh$}\index{$H$}
The natural inclusion $\iot:H\stackrel{\sim}{\to}\U(W)\subset G$ is given by 
\beq
\iot\left(\begin{bmatrix} A & B \\ C & D \end{bmatrix}\right)=\begin{bmatrix} A & 0 & B \\ 0 & \ono_t & 0 \\ C & 0 & D \end{bmatrix}, \label{tag:21}
\eeq
where $A,B,C,D$ have size $s$. \index{$\iot$}

Define the alternating pairing $\La\;,\;\Ra_{t,0}$ on $W_0$ by $\La x,y\Ra_{t,0}=-\Tr_{E/\QQ}(x\gam_0\trs y^c)$. 
Let $G_0=\U(W_0)$\index{$G_0$} and $\calg_0=\GU(W_0)$\index{$\calg_0$} be the unitary and unitary similitude groups of the definite skew Hermitian space $(W_0,\gam_0)$. 
Let $\calp=\calm\caln$\index{$\calp$} be stabilizer of $X_E$ in $\calg$, where $\caln$\index{$\calm$}\index{$\caln$} is the unipotent radical of $\calp$ and $\calm=\GL(X_E)\times\calg_0$ is the standard Levi subgroup of $\calp$. 
Here we regard $\GL(X_E)\times\calg_0$ as a subgroup of $\calg$ by the morphism 
\beq
(A,g_0)\mapsto\begin{bmatrix} A & & \\ & g_0 & \\ & & \nu(g_0)(\trs A^{-1})^c\end{bmatrix}. \label{tag:22}
\eeq


\subsection{Lattices and polarization}\label{ssec:22}
 
Fix an $\frko_E$-lattice $L_0$ in $W_0$ on which we impose the following conditions:
\begin{enumerate}
\item[($C_3$)] $L_0$ is integral in the sense that $\La L_0,L_0\Ra_{t,0}\subset\ZZ$;
\item[($C_4$)] $L_0\otimes\ZZ_p=\sum_{i=s+1}^r\frko_{E,p}\;e_i$. \index{$L_0$}
\end{enumerate}
Let $Y=\oplus_{i=1}^s\frko_Ee_i$ and $X^\vee=\oplus_{i=r+1}^n\frko_Ee_i$ be the standard $\frko_E$-lattices in the $E$-vector spaces $Y_E$ and $X_E$. 
Let 
\[M=Y\oplus L_0\oplus X^\vee\] 
be the $\frko_E$-lattice in $V$. 
Then $\La M,M\Ra_{r,s}\subset \ZZ$ and $M_p$ is self-dual with respect to the alternating form $\La\;,\;\Ra_{r,s}$ by (C$_2$). 

An ordered polarization of $M_p=M\otimes_\ZZ\ZZ_p$ is a pair $\Pol_p=\{N^{-1},N^0\}$ of sublattices of $M_p$ which are maximal totally isotropic submodules of $M_p$ and which are dual to each other with respect to $\La\;,\;\Ra_{r,s}$ and satisfy 
\begin{align*}
\rank N^{-1}_\frkp&=\rank N^0_{\frkp^c}=r, & 
\rank N^{-1}_{\frkp^c}&=\rank N^0_\frkp=s. 
\end{align*}
We define the standard polarization $\Pol_p^0=\{M^{-1},M^0\}$ by 
\begin{align*}
M^{-1}&=Y_\frkp\oplus L_{0,\frkp}\oplus Y_{\frkp^c}, &
M^0&=X^\vee_{\frkp^c}\oplus L_{0,\frkp^c}^{}\oplus X^\vee_\frkp,  
\end{align*}
following \cite{MH2}. 
Observe that 
\begin{align*}
M^{-1}_\frkp&=Y_\frkp\oplus L_{0,\frkp}, &
M^{-1}_{\frkp^c}&=Y_{\frkp^c}, &
M^0_\frkp&=X^\vee_\frkp, & 
M^0_{\frkp^c}&=X^\vee_{\frkp^c}\oplus L_{0,\frkp^c}^{}. 
\end{align*}

Given a lattice $L$ in $V$ and $g\in\GL_n(\widehat{E})$, we denote by $Lg$ the lattice in $V$ such that $(Lg)_q=L_qg_q$ for every rational prime $q$. 
Put 
\begin{align*}
K^0&=\{g\in\calg(\widehat{\QQ})\;|\;Mg=M\}, &  
K^0_q&=\{g\in\calg(\QQ_q)\;|\;M_qg=M_q\}. 
\end{align*}
Note that $K^0=\prod_qK^0_q$. \index{$K^0$}


\subsection{Abelian schemes}

An abelian variety $A$ is an algebraic group over a field $k$ whose underlying variety is complete and connected. 
Given a line bundle $\frkL$ on $A$, we define a morphism $\phi_\frkL:A\to A^\vee$ by $\phi_\frkL(x)=T_x^*\frkL\otimes\frkL^{-1}$, where $A^\vee=\Pic^0(A)$ denotes the dual abelian variety, $T_x$ is the translation by $x$ and $T_x^*$ denotes the pullback of the line bundle $\frkL$ by $T_x$. 
A polarization of $A$ is a homomorphism $\lam:A\to A^\vee$ such that $\lam=\phi_\frkL$ for some ample line bundle $\frkL$ on $A$ over $\bar k$. 
The poincar\'{e} bundle $P_A$ is a line bundle on $A\times A^\vee$ with the following properties:
\begin{enumerate}
\item For every $\alp\in A^\vee(\bar k)$ the line bundle $P_A|_{A\times\{\alp\}}$ represents that element of
$\Pic^0(A)(\bar k)$ given by $\alp$; 
\item $P_A|_{\{0\}\times A^\vee}$ is trivial. 
\end{enumerate}
A morphism $f:A\to B$ of abelian varieties gives a dual morphism $f^\vee:B^\vee\to A^\vee$, 
which satisfies $(1\times f^\vee)^*P_A\simeq (f\times 1)^*P_B$. 
Put $\End^0(A)=\End(A)\otimes\QQ$. 
The Rosati involution associated to a polarization $\lam$ is the involution on $\End^0(A)$ defined by $f\mapsto\lam^{-1}\circ f^\vee\circ\lam$. 

An abelian scheme $\scra$ over a base scheme $S$ is a smooth group scheme whose geometric fibres are abelian varieties. 
We denote by $\scra^\vee$ the dual abelian scheme.
A polarization of $\scra$ is a homomorphism $\lam:\scra\to\scra^\vee$ such that for every geometric point $s$ the induced homomorphism $\lam_s:\scra_s\to\scra_s^\vee$ is a polarization of $\scra_s$. 

A semi-abelian variety is a commutative group variety which is the extension of an abelian variety by a connected torus. 
A semi-abelian scheme is a smooth group scheme whose geometric fibres are semi-abelian varieties. 

When $S$ is locally noetherian and the prime number $p$ is invertible on it, we write $T_p(A)$ for the $p$-adic Tate module of $A$. 
When $S$ is a $\ZZ_{(p)}$-scheme, we similarly write $T^{(p)}(A)=\prod_{q\neq p}T_q(A)$. 

The Rosati involution of $\End(A)^0:=\End(A)\otimes\QQ$ induced by $\lam$ sends $f\in\End(A)^0$ to $\lam^{-1}\circ\hat f\circ\lam$. 
The Weil pairing $e_N:A[N]\times A^\vee[N]\to\bdmu_N$ composed with $\lam$ gives an alternating pairing $\La\;,\;\Ra_\lam:T^{(p)}(A)\times T^{(p)}(A)\to\widehat{\ZZ}^{(p)}$ after taking the limit with respect to $N$. 


\subsection{Shimura varieties associated to $\GU(r,s)$}

For a finite set $\Box$ of prime numbers we put 
\begin{align*}
\ZZ_{(\Box)}&=\QQ\cap\bigcap_{q\in\Box}\ZZ_q, &
\widehat{\ZZ}^{(\Box)}&=\prod_{q\notin\Box}\ZZ_q, &
\widehat{\QQ}^{(\Box)}&={\prod_{q\notin\Box}}'\QQ_q.
\end{align*} 
Let $U\subset K^0$ be an open compact subgroup of $\calg(\widehat{\QQ}^{(\Box)})$. 

Put $\scro=\frko_{E,(p)}$. 
An $\scro$-algebra $R$ is called a base ring, and similarly a scheme $S$ over $\Spec\scro$ is called a base scheme. 
Let $S$ be a locally noetherian connected $\scro$-scheme and $\bar s$ a geometric point of $S$. 
An $S$-quadruple $\underline{A}=(A,\iot,\bar\lam,\bar\eta^{(\Box)})_S$ of level $U^{(\Box)}$ consists of the following data:
\begin{itemize}
\item $A$ is an abelian scheme of dimension $r+s$ over $S$, 
\item $\iot:\frko_E\hookrightarrow\End_S(A)\otimes_\ZZ\ZZ_{(\Box)}$ is an embedding of $\ZZ_{(\Box)}$-algebra, 
\item $\lam:A\to A^\vee$ is a prime-to-$\Box$ polarization over $S$ and $\bar\lam$ the $\ZZ_{(\Box),+}$-orbit of $\lam$, where $\ZZ_{(\Box),+}=\ZZ_{(\Box)}\cap\QQ_+^\times$, 
\item $\bar\eta^{(\Box)}=\eta^{(\Box)}U$ is a $\pi_1(S,\bar s)$-invariant $U^{(\Box)}$-orbit of isomorphisms of $\frko_E$-modules $\eta^{(\Box)}:M\otimes\widehat{\ZZ}^{(\Box)}\stackrel{\sim}{\to} T^{(\Box)}(A_{\bar s}):=H_1(A_{\bar s},\widehat{\ZZ}^{(\Box)})$. 
\end{itemize}
Here we define $\eta^{(\Box)}u(x)=\eta^{(\Box)}(xu^{-1}\nu(u))$ for $u\in U$. 

In addition, the $S$-quadruple $\underline{A}$ satisfies the following conditions:
\begin{enumerate}
\item[(K$_1$)] The Rosati involution induced by $\lam$ is given by $\iot(a)\mapsto\iot(a^c)$, 
\item[(K$_2$)] $\eta^{(\Box)}$ takes the pairing $\La\;,\;\Ra_{r,s}$ on $M$ to a $\widehat{\QQ}^{(\Box)\times}$-multiple of $\La\;,\;\Ra_\lam$, 
\item[(K$_3$)] The determinant condition: for $b\in\frko_E$ we have 
\[\det(X-\iot(b)|_{\Lie A})=(X-\iot_\infty^c(b))^r(X-\iot_\infty^{}(b))^s\in\calo_S[X]. \]
\end{enumerate}
Two $S$-quadruples $\underline{A}=(A,\iot,\lam,\bar\eta^{(\Box)})_S$ and $\underline{A}'=(A',\iot',\lam',\bar\eta^{\prime(\Box)})_S$ are equivalent
if there is an isomorphism $\phi:A\to A'$ such that 
\begin{align*}
\phi\circ\iot&=\iot'\circ\phi, &
\phi^\vee\circ\bar\lam'\circ\phi&=\bar\lam, & 
\bar\eta^{\prime(\Box)}&=\phi\circ\bar\eta^{(\Box)}. 
\end{align*}
See Section 1.4 of \cite{Lan13} for the comparison with another moduli problem. 

Assume that $U$ is neat in the sense of Definition 1.4.1.8 of \cite{Lan13}. 
Let $\frkS_U^{(\Box)}(S)$ be the collection of equivalence classes of $S$-quadruple $\underline{A}=(A,\iot,\lam,\bar\eta^{(\Box)})_S$ of level $U^{(\Box)}$. 
Then $\frkS_U^{(\Box)}$ becomes a contravariant functor from the category of locally Noetherian $\scro$-schemes to the category of sets. 

The reflex field associated to the Shimura datum is contained in $E$ as $E/\QQ$ is normal. 
If $\Box=\emptyset$, then the functor $S\mapsto \frkS_U^{(\emptyset)}(S)$ is represented by a quasi-projective scheme $S_\calg(U)_{/E}$ over $E$ by the theory of Shimura and Deligne. 
If $\Box=\{p\}$ and $U=K^0_p\times U^{(p)}$, where $U^{(p)}$ is an open compact subgroup of $\calg(\widehat{\QQ}^{(p)})$, then $\frkS_U^{(p)}$ is representable by a quasi-projective scheme $S_\calg(U)_{/\scro}$ over $\scro$ by the work \cite{Kottwitz92} of Kottwitz. \index{$S_\calg(U)$}

Fix a smooth toroidal compactification $\overline{S}_\calg(U)_{/\scro}$ of $S_\calg(U)_{/\scro}$, which is a proper smooth scheme over $\scro$ containing $S_\calg(U)_{/\scro}$ as an open dense subscheme. 
The abelian scheme $\scra/S_\calg(U)$ extends with the $\frko_E$-action to a semi-abelian scheme $\scrg$ over the toroidal compactification $\overline{S}_\calg(U)$ by Theorem 6.4.1.1 of \cite{Lan13}. 
In addition, the universal quadruple $\underline{\scra}$ of level $U^{(p)}$ over $S_\calg(U)$ extends to a quadruple $\underline{\scrg}=(\scrg,\iot,\lam,\eta)$ over $\overline{S}_\calg(U)$. 


\subsection{Igusa schemes associated to $\GU(r,s)$}
Define the isomorphism $\imath_\frkp:G(\QQ_p)\to\GL_n(\QQ_p)$ by $\imath_\frkp(g)=g|_{V_\frkp}$. \index{$\imath_\frkp$}
We write $\imath_\frkp(g)=\begin{bmatrix} A & B \\ C & D \end{bmatrix}$ according to the standard filtration of  $M_\frkp=M_\frkp^{-1}\oplus M_\frkp^0$ defined in \cite[(1.6)]{MH}. 
Let $U=K^0_p\times U^{(p)}$ be a neat open compact subgroup of $\calg(\widehat{\QQ})$. 
For positive integers $\ell$ we define 
\begin{align*}
U^\ell&=\biggl\{u\in U\;\biggl|\;\imath_\frkp(u_p)\equiv\begin{bmatrix} \ono_r & * \\ 0 & \ono_s \end{bmatrix}\pmod{p^\ell}\biggl\}, \index{$U^\ell$}\\
U^\ell_1&=\{u\in U\;|\;\imath_\frkp(u_p)\in N_n(\ZZ_p)\pmod{p^\ell}\}. \index{$U^\ell_1$}
\end{align*} 

We introduce a notion of the Igusa scheme, which is a model of $S_\calg(U^\ell)$ over $\scro$ (see \cite[Lemma 2.3]{MH}). 
Let $\Pol_p=\{N^{-1},N^0\}$ be an ordered polarization of $M_p$. 
An $S$-quintuple $(\underline{A},j)_S=(A,\iot,\lam,\bar\eta^{(p)},j)_S$ of level $K^\ell$ with respect to $\Pol_p$ consists of an $S$-quadruple $\underline{A}$ of level $K^{(p)}$ and a level-$p^\ell$ structure $j$ of $A$ with respect to $\Pol_p$, which is a monomorphism $j:\bdmu_{p^\ell}\otimes_\ZZ N^0\hookrightarrow A[p^\ell]$ as $\frko_E$-group schemes over $S$. 

Two $S$-quintuples $(\underline{A},j)_S$ and $(\underline{A}',j')_S$ are equivalent if there is an isomorphism $\phi:\underline{A}\stackrel{\sim}{\to}\underline{A}'$ such that $\phi\circ j=j'$. 
Let $\frkI_{K,\ell,\Pol_p}^{(p)}(S)$ be the set of equivalence classes of $S$-quintuple $\underline{A}=(A,\iot,\lam,\bar\eta^{(p)})_S$ of level $U^\ell$ with respect to $\Pol_p$. 
The functor $S\mapsto\frkI_{U,\ell,\Pol_p}^{(p)}(S)$ is relatively representable over $S_\calg(U)_{/\scro}$. 
Recall the standard polarization $\Pol_p^0=\{M^{-1},M^0\}$ of $M_p$.  
Let $I^0_\calg(U^\ell)_{/\scro}$ be the scheme representing $\frkI_{K,\ell,\Pol_p^0}^{(p)}$ over $\scro$. \index{$I^0_\calg(U^\ell)$}


\begin{definition}
Put \[\bfH=\GL(M^0,\frko_{E,p})
=\GL_r(\ZZ_p)\times\GL_s(\ZZ_p). \index{$\bfH$}\] 
The group $\bfH$ acts on $I^0_\calg(U^\ell)$ over $\overline{S}_\calg(U)$ by $h\cdot j(m)=j(mh)$. 
Let 
\begin{align*}
\bfN&=N_r(\ZZ_p)\times N_s(\ZZ_p), & 
\bfT&=T_r(\ZZ_p)\times T_s(\ZZ_p) \index{$\bfN$}\index{$\bfT$}
\end{align*}
be subgroups of $\bfH$ with respect to the $p$-adic basis of $M^0$ defined in \S \ref{ssec:22}.  
Define $I^0_\calg(U^\ell_1)=I^0_\calg(U^\ell)/\bfN$. 
\end{definition}


\subsection{Hermitian symmetric domains}\label{ssec:26}
 
We hereafter suppose that $s\geq 1$.  
We follow the notation of \cite{Shimura97,MH2}. 
The Hermitian symmetric domain associated to $\calg(\RR)$ is given by 
\[\frkD_{r,s}=\biggl\{Z=\begin{bmatrix} \tau \\ w \end{bmatrix}\in\Mat_{r,s}(\CC)\biggl|\;\tau\in\Mat_s(\CC),\;w\in\Mat_{t,s}(\CC),\;\eta(Z)>0\biggl\}, \index{$\frkD_{r,s}$}\]
where 
\[\eta(Z)=\sqrt{-1}(\trs\tau^c-\tau-\trs w^c\gam_0^{-1}w). \]
Let $\calg(\RR)^+$ be the subgroup of $\calg(\RR)$ consisting of elements whose similitude factor is positive. 
The action of $\calg(\RR)^+$ on $\frkD_{r,s}$ is given by 
\begin{align*}
\alp(Z)&=\begin{bmatrix} (a\tau+bw+c)(h\tau+lw+d)^{-1} \\ (g\tau+ew+f)(h\tau+lw+d)^{-1} \end{bmatrix}, &
\alp&=\begin{bmatrix} a & b & c \\ g & e & f \\ h & l & d \end{bmatrix}\in\calg(\RR)^+
\end{align*}
with $a,d\in\Mat_s(\CC)$ and $e\in\Mat_t(\CC)$. 
The automorphy factors are defined by 
\begin{align*}
\lam(\alp,Z)&=\begin{bmatrix} h^c\trs\tau+d^c & h^c\trs w-\gam_0^cl^c \\ -(\gam_0^{-1}g)^c\trs\tau-(\gam_0^{-1}f)^c & -(\gam_0^{-1}g)^c\trs w+(\gam_0^{-1}e\gam_0)^c\end{bmatrix}, \\
\mu(\alp,Z)&=h\tau+lw+d, \qquad\qquad
J(\alp,Z)=(\lam(\alp,Z),\mu(\alp,Z)). \index{$\lam(\alp,Z)$}\index{$\mu(\alp,Z)$}\index{$J(\alp,Z)$}
\end{align*}

\begin{remark}
If $r+s$ is odd, then $\calg(\RR)^+=\calg(\RR)$ (see Remark \ref{rem:20}). 
\end{remark}

We define a maximal compact subgroup $\GL_m(\CC)$ by
\[\U_T=\{g\in\GL_m(\CC)\;|\;\trs gTg^c=T\} \]
for a positive definite Hermitian matrix $T$ of size $m$. 
Following \cite{Shimura97,Shimura00}, we define the origin of $\frkD_{r,s}$ and the standard maximal compact subgroup of $G(\RR)$ by
\begin{align*}
\bfi&=\begin{bmatrix} \sqrt{-1}\ono_s \\ \oo_{t,s}\end{bmatrix}\in\frkD_{r,s}, &
\calk_\infty&=\{\alp\in G(\RR)\;|\;\alp(\bfi)=\bfi\}. 
\end{align*}\index{$\bfi$}\index{$\calk_\infty$}
Put $T'=\begin{bmatrix} \ono_s & \\ & \frac{\sqrt{-1}}{2}\gam_0^c\end{bmatrix}$. \index{$T'$}
The isomorphism $\calk_\infty\simeq\U_{T'}\times\U_{{\bf 1}_s}$ is given by 
\beq
\alp\mapsto J^-(\alp,\bfi):=(\overline{\lam(\alp,\bfi)},\mu(\alp,\bfi)) \label{tag:23}
\eeq 
by (\ref{tag:a5}). \index{$J^-(\alp,\bfi)$}
Recall that 
\beq
\det\alp\cdot \det\lam(\alp,Z)=\det\mu(\alp,Z) \label{tag:24}
\eeq
(see (6.3.9) of \cite{Shimura97}). 


\subsection{Complex uniformization}\label{sec:uniformization}

Put $\calg(\QQ)^+=\calg(\QQ)\cap\calg(\RR)^+$. 
Let $U$ be a neat open compact subgroup of $\calg(\widehat{\QQ})$. 
We denote the point represented by $(Z,g)\in\frkD_{r,s}\times \calg(\widehat{\QQ})$ in the quotient 
\[M_\calg(\frkD_{r,s},U)=\calg(\QQ)^+\bsl\frkD_{r,s}\times \calg(\widehat{\QQ})/U, \]
which is a complex manifold, by $[Z,g]\in M_\calg(\frkD_{r,s},U)$. 

We now recall the construction of the universal quadruple $\underline{\scra}$ of level $U^{(p)}$ over $S_\calg(U)/\CC$. 
Define the $E\otimes_\QQ\CC$-module $\CC^{r,s}:=\CC^r\oplus\CC^s$ so that $e_+(\CC^{r,s})=\CC^s$ and $e_-(\CC^{r,s})=\CC^r$, where $\CC^r$ and $\CC^s$ are regarded as spaces of row vectors. 
We define the map $c_{r,s}:\CC^{r,s}\to\CC^{r,s}$ by
\begin{align*}
(u,u')c_{r,s}&=(u^c,u') &(u&\in\CC^r,\;u'\in\CC^s), 
\end{align*}
where $u^c=(u_1^c,\dots,u_r^c)$ is the complex conjugation of $u=(u_1,\dots,u_r)\in\CC^r$. 

The embedding $\iot_\infty$ induces an isomorphism $E\otimes_\QQ\RR\simeq\CC$ by which we regard $V\otimes_\QQ\RR$ as a $\CC$-vector space of row vectors according to
the $E$-basis $\{e_i\}_{i=1}^n$. 
Assuming that $s>0$, we put 
\[B(Z)=\begin{bmatrix} \trs\tau^c & \trs w^c & \tau \\ 0 & \gam_0 & w  \\ \ono_s & 0 & \ono_s\end{bmatrix}. \index{$B(Z)$}\]
For each $Z\in\frkD_{r,s}$ we define the map $p_Z:V\otimes_\QQ\RR\simeq\CC^{r,s}$ by $p_Z(v)=vB(Z)c_{r,s}$. 
Then $V$ acts on $(Z,u)\in\frkD_{r,s}\times\CC^{r,s}$ by 
\[v\cdot(Z,u)=(Z,p_Z(v)+u). \]
We define a left action of $\calg$ on $V$ by $g*v:=vg^{-1}\nu(g)$. 
Put 
\begin{align*}
M_{[g]}&=g*M, & 
M_{[g]}(Z)&=p_Z(M_{[g]}). 
\end{align*}
We associate to each point $(Z,g)\in\frkD_{r,s}\times \calg(\widehat{\QQ})$ a $\CC$-quadruple $\underline{\cala}(V)_g(Z)=(\scra(V)_g(Z),\bar\lam_Z,\iot_\CC,\bar\eta_g)$ of level $U$ in the following way: 
\begin{itemize}
\item the abelian variety $\scra(V)_g(Z):=\CC^{r,s}/M_{[g]}(Z)\simeq V\otimes_\QQ\RR/M_{[g]}$; 
\item $\bar\lam_Z$ is the $\QQ^\times_+$-orbit of the polarization induced by the unique Riemann form $\La\;,\;\Ra_Z$ on $\CC^{r,s}$ such that $\La p_Z(v),p_Z(v')\Ra_Z=\La v,v'\Ra_{r,s}$ for $v,v'\in V$; 
\item $\iot_\CC:\frko_E\hookrightarrow\End_\CC\scra(V)_g(Z)$ is induced by the $\frko_E$-action on $V$ via $p_Z$;
\item the level structure $\eta_g:M\otimes\widehat{\ZZ}\stackrel{\sim}{\to}M_{[g]}\otimes\widehat{\ZZ}^{(p)}=H_1(\scra(V)_g(Z),\widehat{\ZZ})$ is defined by $\eta_g(x)=g*x$ for $x\in M$. 
\end{itemize}

Since 
\beq
p_{\alp(Z)}(v)=p_Z(v\alp) J(\alp,Z)^{-1}=p_Z(\alp^{-1}*v)\nu(\alp)J(\alp,Z)^{-1} \label{tag:25}
\eeq
for $\alp\in \calg(\RR)^+$ by (\ref{tag:a4}), we see that 
\[\underline{\cala}(V)_{\gam g}(\gam(Z))\sim\underline{\cala}(V)_g(Z)\]
for $\gam\in\calg(\QQ)^+$. 
Actually, $\underline{\cala}(V)_g(Z)$ is equivalent to $\underline{\cala}(V)_{g'}(Z')$ if and only if $[g,Z]=[g',Z']$. 
Moreover, a $\CC$-quadruple is equivalent to $\underline{\cala}(V)_g(Z)$ for some $(Z,g)$ (see Theorem 4.8 of \cite{Shimura00}). 
Therefore the set of complex points of $S_\calg(U)$ is identified with $M_\calg(\frkD_{r,s},U)$.

We define the prime-to-$p$ level structure $\eta_g^{(p)}$ similarly. 
As for the level structure at $p$ we fix a primitive $p^\ell$th root $\zet=e^\frac{2\pi\sqrt{-1}}{p^\ell}$ and define
\[j_\zet:M^0\otimes\bdmu_{p^\ell}\simeq M^0\otimes\ZZ/p^\ell\ZZ\hookrightarrow\scra(V)_g(Z)[p^\ell]=M_{[g]}\otimes\ZZ/p^\ell\ZZ\]
by $j_\zet(x^0)=g*x^0$ for $x^0\in M^0$, where $\zet$ is used to pin down an isomorphism $\zet^{-1}:\bdmu_{p^\ell}\simeq\ZZ/p^\ell\ZZ$. 


\subsection{Igusa schemes associated to $\U(r,s)$}

We consider a subvariety associated to an open subgroup of $\calg(\widehat{\QQ})$ containing $G(\widehat{\QQ})$.  
We follow the notion introduced in Section 2.5 of \cite{MH} (cf. Appendix A.1 of \cite{MHarris}). 
Let $\Cl_\QQ^+(U)$ be a set of representatives of the group $\QQ^\times_+\bsl\widehat{\QQ}^\times/\nu(U)$ in $\widehat{\QQ}^{(p)\times}$. 
For $\bfc\in\Cl_\QQ^+(U)$ we consider the following condition:
\beq 
\text{$\eta^{(p)}$ takes the pairing $\La\;,\;\Ra_{r,s}$ on $M$ to $u\La\;,\;\Ra_\lam$ for some $u\in\bfc\nu(U)$.} \tag{K$_{2,\bfc}$}
\eeq 
We consider the functor 
\[\frkI_{U,\ell,\Pol_p^0,\bfc}^{(p)}(S)=\biggl\{(A,\iot,\lam,\bar\eta^{(p)},j)_S\;\biggl|\;
\begin{matrix} (A,\iot,\lam,\bar\eta^{(p)})_S\in\frkS_U^{(p)}(S) \\ 
\lam\text{ and }\eta^{(p)}\text{ satisfy (K$_{2,\bfc}$)}
\end{matrix}\biggl\}/\sim\] 
It is proved in \cite[p.~136]{Hid04} that the equivalence class of $(A,\iot,\lam,\bar\eta^{(p)},j)_S$ is independent of the choice of $\lam$ in $\bar\lam$. 
Then $\frkI_{U,\ell,\Pol_p^0,\bfc}^{(p)}$ is represented by a scheme over $\frko_{E_U,(p)}$, which we denote by $I^0_G(U,\bfc)$. 
The set of complex points of $I^0_G(U,\bfc)$ is given by $M_G(\frkD_{r,s},{^\bfc U})$, where we write $^\bfc U=g_\bfc^{} Ug_\bfc^{-1}\cap G(\widehat{\QQ})$, taking an element $g_\bfc\in\calg(\widehat{\QQ})$ such that $\nu(g_\bfc)=\bfc$. 
As explained in \S 4.2.1 of \cite{Hid04} for Hilbert modular varieties, we have 
\[S_\calg(U)=\bigsqcup_{\bfc\in\Cl_\QQ^+(U)}S_G(U,\bfc). \] 
We will write $I^0_G(U)$ for $I^0_G(U,\bfc)$ if $\bfc=1$. 


\subsection{Morphisms between Igusa schemes}\label{ssec:pullback}

Put $L_1=Y\oplus X^\vee$. 
Recall the decompositions 
\begin{align*}
(V,\La\;,\;\Ra_{r,s})&=(W,\La\;,\;\Ra_{s,s})\oplus (W_0,\La\;,\;\Ra_{t,0}), &
M&=L_1\oplus L_0.  
\end{align*}
Define the hyperspecial open compact subgroups of $\calg_0(\QQ_p)$ and $H(\QQ_p)$ by   
\begin{align*}
U^{L_0}_p&=\{g\in\calg_0(\QQ_p)\;|\;(L_0\otimes\ZZ_p)g=L_0\otimes\ZZ_p\}, \\
U^{L_1}_p&=\{h\in H(\QQ_p)\;|\;(L_1\otimes\ZZ_p)h=L_1\otimes\ZZ_p\}. 
\end{align*}
Define the subgroup $\calh'$ of $\calh\times\calg_0$ by 
\[\calh'=\{(h,g)\in\calh\times\calg_0\;|\;\nu(h)=\nu(g)\}. \]
We naturally view $\calh'$ as a subgroup of $\calg$. 
When we view $H$ and $\calg_0$ as subgroups of $\calg$ by the morphisms (\ref{tag:21}) and (\ref{tag:22}), the subgroup $\calh'$ is identified with $H\calg_0$. 
Take open compact subgroups $U_{L_0}^{}=U^{L_0}_p\times U_{L_0}^{(p)}$ of $\calg_0(\widehat{\QQ})$ and $U_{L_1}=U^{L_1}_p\times U_{L_1}^{(p)}$ of $H(\widehat{\QQ})$ so that $U_{L_1}U_{L_0}\subset U$. 
For simplicity we let $t=1$.
Define an open compact subgroup $\calu_{L_1}$ of $\calh(\widehat{\QQ})$ by 
\[\calu_{L_1}=\biggl\{\begin{bmatrix} \ono_s & \\ & u\ono_s\end{bmatrix}k\;\biggl|\;u\in\nu(U_{L_0}),\;k\in U_{L_1}\biggl\}. \]

Then we have a natural morphism 
\[i_{W,W_0}:I_H^0(\calu_{L_1}^\ell)_{/\scro}\times I_{G_0}^0(U_{L_0}^\ell)_{/\scro}\to I_G^0(U^\ell)_{/\scro}\]
(see (2.12) of \cite{MH}).  
Fix $\underline{E}_0^{}=(E_0^{},\iot_0^{},\lam_0^{},\bar\eta_0^{(p)},j_0^{})_S\in I^0_{G_0}(U_{L_0})$. 
We restrict $i_{W,W_0}$ to obtain a morphism $\iot_W:I^0_H(\calu_{L_1}^\ell)_{/\scro}\to I^0_G(U^\ell)_{/\scro}$, which is given by 
\[\iot_W([(B,\iot,\lam,\eta^{(p)}\calu_{L_1}^\ell,j)_S])=[(B\times E_0^{},\iot\times\iot_0^{},\lam\times\lam_0^{},(\eta_{}^{(p)}\times\eta_0^{(p)})U^\ell,j\times j_0)_S]. \index{$\iot_W$}\]

We give an explicit expression of the morphism $\iot_W$ in terms of the complex uniformization constructed in \S \ref{sec:uniformization}.  
Let $E_0$ be an elliptic curve with CM by $E$ together with a complex uniformization $E_0(\CC)\stackrel{\sim}{\to}\CC(\vSi^c)/(L_0\gam_0)^c$ by which we define the level structure $\eta_0:L_0\otimes\widehat{\ZZ}\stackrel{\sim}{\to}H_1(E_0^{},\widehat{\ZZ})$. 
As is well-known, $E_0^{}$ is defined over $\overline{\QQ}$, and extends to an abelian scheme over $\overline{\ZZ}_{(p)}$. 

Recall the embedding $\iot:H\hookrightarrow G$ defined in (\ref{tag:21}). 
Put $\scrh_\calg=\GL_r\times\GL_s$. 
Define the embeddings $\jmath:\frkD_{s,s}\hookrightarrow\frkD_{s+1,s}$ and $\iot':\scrh_\calh\hookrightarrow\scrh_\calg$ by 
\begin{align*}
\jmath(\tau)&=\begin{bmatrix} \tau \\ \oo \end{bmatrix}, &
\iot'(A,B)&=\biggl(\begin{bmatrix} A & \\ & 1 \end{bmatrix},B\biggl),  
\end{align*}
where $A,B$ have size $s$. \index{$\iot'$}\index{$\jmath$}
Then for $h\in H(\RR)$
\begin{align}
\iot(h)\jmath(\tau)&=\jmath(h\tau), &
J(\iot(h),\jmath(\tau))&=\iot'(J(h,\tau)). \label{tag:26}
\end{align}
Since 
\begin{align*}
\scra(V)_{\iot(h)}(\jmath(\tau))&=\scra(W)_h^{}(\tau)\times E_0^{}, & 
\lam_{\jmath(\tau)}&=\lam_\tau^{}\times\lam_0^{}, &
\eta_{\iot(h)}&=\eta_h\times\eta_0 
\end{align*}
for $\tau\in\frkD_{s,s}$ and $h\in H(\widehat{\QQ})$, we find that 
\beq
\iot_W(\underline{\cala}(W)_h(\tau))=\underline{\cala}(V)_{\iot(h)}(\jmath(\tau)). \label{tag:27}
\eeq


\section{Modular forms on unitary groups}


\subsection{Geometric modular forms}

We recall the theory of geometric modular forms and $p$-adic modular forms on unitary groups from the viewpoint of Katz and Hida. 
In order to introduce modular forms of non-scalar weight, we consider the algebraic group $\scrh=\GL_r\times\GL_s$ over $\ZZ$. \index{$\scrh$}

The Hodge bundle $\ome=\ome_{\scrg/\overline{S}_\calg(U)}:=e^*\Ome_{\scrg/\overline{S}_\calg(U)}$ is the pull-back of the relative cotangent sheaf $\Ome_{\scrg/\overline{S}_\calg(U)}$ of the universal semi-abelian scheme $\scrg$ over $\overline{S}_\calg(U)$ via the zero section $e$ of $\scrg/\overline{S}_\calg(U)$. 
Then $\ome$ is a locally free coherent $\calo_{\overline{S}_\calg(U)}$-module. 
The structure morphism $\frko_E\to\calo_{\overline{S}_\calg(U)}$ makes $\ome$ into an $\frko_E$-module. 
The embedding $\iot$ makes the relative tangent sheaf $\frkt_{\scrg/\overline{S}_\calg(U)}$ into an $\frko_E$-module. 
The action of $\frko_E$ on $\bdome\in\ome$ is defined by $(a\bdome)(l)=\bdome(\iot(a^c)l)$ for $a\in\frko_E$ and $l\in e^*\frkt_{\scrg/\overline{S}_\calg(U)}$. 

We can decompose $\ome=\ome(\vSi)\oplus\ome(\vSi^c)$.  
Define 
\begin{align*}
\cale_{\underline{\scrg}}^+&:=\Isom(\calo^r_{\overline{S}_\calg(U)},\ome(\vSi)), &
\cale_{\underline{\scrg}}^-&:=\Isom(\calo^s_{\overline{S}_\calg(U)},\ome(\vSi^c)). 
\end{align*}
Then $\cale_{\underline{\scrg}}^{}:=\cale_{\underline{\scrg}}^+\times\cale_{\underline{\scrg}}^-$ is an $\scrh$-torsor over $\overline{S}_\calg(U)$ with the structure map $\bar\pi:\cale_{\underline{\scrg}}\to\overline{S}_\calg(U)$. 
Let $S=\Spec R$ be a base scheme. 
An $R$-point in $\cale_{\underline{\scrg}}$ is a pair $(\underline{A},\bdome)$ which consists of an $R$-point $\underline{A}=(A,\iot,\lam,\bar\eta^{(p)})_S$ in $\overline{S}_\calg(U)$ and $\bdome=(\bdome^+;\bdome^-)$ with  
\begin{align*} 
\bdome^+&\in\cale_{\underline{A}}^+=\mathrm{Isom}(e_\vSi(\frko_E\otimes R^r),\ome_{A/R}(\vSi)), \\ 
\bdome^-&\in\cale_{\underline{A}}^-=\mathrm{Isom}(e_{\vSi^c}(\frko_E\otimes R^s),\ome_{A/R}(\vSi^c)). 
\end{align*}
We will write 
\begin{align*}
\bdome_i^+&=\bdome^+(e_i)\quad (1\leq i\leq r), & 
\bdome_i^-&=\bdome^-(e_i)\quad (r+1\leq i\leq r+s). 
\end{align*}
Giving an element $\bdome^+$ (resp. $\bdome^-$) is equivalent to giving an ordered basis $\{\bdome_i^+\}$ of $\ome_{A/R}(\vSi^c)$ (resp. $\{\bdome_i^-\}$ of $\ome_{A/R}(\vSi)$). 
The action of $h=(h^+,h^-)\in \scrh$ on $\bdome=(\bdome^+;\bdome^-)$ is given by $h\bdome(v^+,v^-)=(\bdome^+(v^+h^+),\bdome^-(v^-h^-))$. 

Let $(\rho,\call)$ be an algebraic representation of $\scrh$ defined over $\scro$.

\begin{definition} 
A geometric modular form $f$ of weight $\rho$ and level $U$ defined over $\scro$ is a rule, defined for all $\scro$-algebras $R$, which associates to a pair $(\underline{A},j,\bdome)$ of an $R$-quintuple $(\underline{A},j)$ together with $\bdome\in\cale_{\underline{A}}$ an element $f(\underline{A},j,\bdome)\in \call(R)$, satisfying the following conditions: 
\begin{enumerate}
\item[(G$_1$)] $f(\underline{A},j,\bdome)$ depends only on the $R$-isomorphism class of $(\underline{A},j,\bdome)$; 
\item[(G$_2$)] $f(\underline{A},j,h\bdome)=\rho(\trs h^{-1})f(\underline{A},j,\bdome)$ for every $h\in\scrh(R)$;
\item[(G$_3$)] $f(\underline{A}_{/R'},j_{/R'},\bdome_{/R'})=\vph(f(\underline{A},j,\bdome))$ for every $\scro$-algebra homomorphism $\vph:R\to R'$; 
\item[(G$_4$)] if $r=s=1$, then 
\[f(\frko_E\otimes\mathrm{Tate}(q),\iot_\can^{},\frko_E\otimes\lam_\can^{},\frko_E\otimes\eta_\can^{(p)},\frko_E\otimes j_\can^{},\frko_E\otimes\bdome_\can^{})\in R\powerseries{q}, \]
where $(\mathrm{Tate}(q),\lam_\can,\eta_\can^{(p)},j_\can,\bdome_\can)$ is the Tate elliptic curve $\GG_m/q^\ZZ$ with the canonical level structure $\eta_\can^{(p)}$ and the canonical differential $\bdome_\can$ over $\ZZ(\!(q)\!)$. 
\end{enumerate} 
\end{definition}

We sometimes write $f(\underline{A},j,\{\bdome^+_i\};\{\bdome^-_i\})$ instead of $f(\underline{A},j,\bdome)$.
We define 
\[\ome_\rho=\cale_{\underline{\scrg}}\times^\rho\call:=\cale_{\underline{\scrg}}\times\call/\sim, \]
where the relation $\sim$ is given by $(\bdome,l)\sim(h\bdome,\rho(\trs h^{-1})l)$ for $h\in\scrh$. 
For a base ring $R$ flat over $\ZZ_{(p)}$ we set 
\[\bfM_\rho^\calg(U^\ell_1,R):=\rmH^0(I^0_\calg(U^\ell_1)_{/R},\ome_\rho). \index{$\bfM_\rho^\calg(U^\ell_1,R)$}\]
A geometric modular form associates to every $\scro$-algebra $R$ an element of $\bfM_\rho^\calg(U^\ell_1,R)$, which is compatible with base change. 
If $R$ is a field, then  
\[\bfM_\rho^\calg(U^\ell_1,R)=\rmH^0(\overline{S}_\calg(U^\ell_1)_{/R},\ome_\rho)\] 
by the Koecher principle unless $r=s=1$ (cf. \cite[Proposition 4.5]{Shimura97}). 

Let $\cald$ be the Cartier divisor $\overline{S}_\calg(U^\ell_1)-S_\calg(U^\ell_1)$, equipped with its structure of reduced closed subscheme. 
The $R$-module of cusp forms on $\calg$ over $R$ is the submodule
\[\bfS_\rho^\calg(U^\ell_1,R):=\rmH^0(I^0_\calg(U^\ell_1)_{/R},\ome_\rho(-\cald)). \index{$\bfS_\rho^\calg(U^\ell_1,R)$}\]
The diagonal torus $\bfT$ of $\bfH$ acts on $\bfM_\rho^\calg(U^\ell_1,R)$ by 
\[[t].f(\underline{A},j,\bdome)=f(\underline{A},t\cdot j,\bdome). \]


\subsection{Classical modular forms on $\GU(r,s)$}\label{ssec:23}

Let $\Her^+_s(\QQ)$ (resp. $\Her^0_s(\QQ)$) be the set of positive definite (resp. positive semi-definite) Hermitian matrices of size $s$ over $E$. 

\begin{definition}\label{def:21}
Let $U$ be an open compact subgroup of $\calg(\widehat{\QQ})$. 
We call a function $f:\frkD_{r,s}\times \calg(\widehat{\QQ})\to\call(\CC)$ a modular form on $\calg$ of weight $\rho$ and level $U$ if it is holomorphic in $Z$ and satisfies 
\begin{align*}
f(\gam Z,\gam g u)&=\rho(\nu(\gam)^{-1}J(\gam,Z))f(Z,g) &
(\gam&\in \calg(\QQ)^+,\; g\in \calg(\widehat{\QQ}),\;u\in U), 
\end{align*}
and has a Fourier-Jacobi expansion 
\[f\biggl(\begin{bmatrix} \tau \\ w \end{bmatrix},\bet\biggl)=\sum_{B\in\Her_s^0(\QQ)}\overrightarrow{\FJ}^B_\bet(\trs w,f)e^{2\pi\sqrt{-1}\tr(B\tau)}. \]
A modular form $f$ is called a cusp form if $\overrightarrow{\FJ}^B_\bet(\trs w,f)=0$ unless $B\in\Her^+_s(\QQ)$. 
Let $M_\rho^\calg(U,\CC)$ (resp. $S_\rho^\calg(U,\CC)$) be the space of modular (resp. cusp) forms on $\calg$ of weight $\rho$ and level $U$. \index{$M_\rho^\calg(U,\CC),S_\rho^\calg(U,\CC)$}
\end{definition}

For $g\in \calg(\widehat{\QQ})$ and a function $\calf$ on $\frkD_{r,s}\times \calg(\widehat{\QQ})$ we set 
\[[r(g)\calf](Z,\bet)=\calf(Z,\bet g). \]
For a character $\chi$ of $U$ we set
\[S_\rho^\calg(U,\chi,\CC)=\{f\in S_\rho^\calg(U,\CC)\;|\;r(u)f=\chi(u)^{-1}f\text{ for }u\in U\}. \index{$S_\rho^\calg(U,\chi,\CC)$}\]

\begin{remark}\label{rem:22}
Put $\Gam^\bet=\calg(\QQ)^+\cap \bet U\bet^{-1}$ for $\bet\in\calg(\widehat{\QQ})$. 
Let $f\in M_\rho^\calg(U,\CC)$. 
Define a function $f_\bet$ on $\frkD_{r,s}$ by $f_\bet(Z)=f(Z,\bet)$. 
Then $f_\bet$ satisfies 
\[f_\bet(\alp Z)=\rho(\nu(\alp)^{-1}J(\alp,Z))f_\bet(Z) \]
for $\alp\in\Gam^\bet$. 
Let $\calb$ be a finite subset of $\calg(\widehat{\QQ})$ such that $\calg(\widehat{\QQ})=\bigsqcup_{\bet\in\calb}\calg(\QQ)\bet U$. 
One can identify the modular form $f$ with the collection of holomorphic functions $\{f_\bet\}_{\bet\in\calb}$ satisfying the functional equations above. 
\end{remark}

We denote the first $r$ complex coordinates of $\CC^{r,s}$ by $z_V^{(1)}(\vSi),\dots,z_V^{(r)}(\vSi)$, and the remaining $s$ complex coordinates of $\CC^{r,s}$ by $z_V^{(r+1)}(\vSi^c),\dots,z_V^{(r+s)}(\vSi^c)$. 
Let $\d\underline{z}_V^{}=\d\underline{z}_V^{}(\vSi)\cup\d\underline{z}_V^{}(\vSi^c)$ be an ordered basis of $\Ome_{\scra(V)_g(Z)}$, where 
\begin{align*}
\d\underline{z}_V^{}(\vSi)&=\{\d z_V^{(i)}(\vSi)\}_{i=1}^r, & 
\d\underline{z}_V^{}(\vSi^c)&=\{\d z_V^{(r+j)}(\vSi^c)\}_{j=1}^s. 
\end{align*}
We define $\bdome_g(Z)=(\bdome^+_g(Z),\bdome^-_g(Z))\in\cale_{\underline{\cala}(V)_g(Z)}$ by 
\begin{align*}
\bdome^+_g(Z)_i&=2\pi\sqrt{-1}\d z_V^{(i)}(\vSi) & &(1\leq i\leq r), \\
\bdome^-_g(Z)_i&=2\pi\sqrt{-1}\d z_V^{(i)}(\vSi^c) & &(r+1\leq i\leq r+s). 
\end{align*}

We see from (\ref{tag:25}) that for $\bet\in\calg(\QQ)^+$
\[\bdome_{\bet g}(\bet Z)=\nu(\alp)\trs J(\bet,Z)^{-1}\bdome_g(Z). \]
It follows that if $U$ is neat and $f\in\bfM^\calg_\rho(U,\CC)$, then 
\begin{align*}
f(\underline{\cala}(V)_{\bet g}(\bet Z),\bdome_{\bet g}(\bet Z))
=&f(\underline{\cala}(V)_g(Z),\nu(\alp)\trs J(\bet,Z)^{-1}\bdome_g(Z)) \\
=&\rho(\nu(\alp)^{-1}J(\alp,Z))f(\underline{\cala}(V)_g(Z),\bdome_g(Z)). 
\end{align*} 
Thus the function $(Z,g)\mapsto f(\underline{\cala}(V)_g(Z),\bdome_g(Z))$ is a modular form of weight $\rho$ and level $U$. 
Actually, this is a one-to-one correspondence. 

\begin{lemma}[{cf. \cite[Lemma 3.7]{MH}, \cite[Lemma 3.7]{CEFMV}}]\label{lem:cpx}
If $U$ is neat, then the association 
\[f\mapsto f(Z,g):=f(\underline{\cala}(V)_g(Z),\bdome_g(Z)) \]
defines an isomorphism $\bfM^\calg_\rho(U^\ell_1,\CC)\stackrel{\sim}{\to}M_\rho^\calg(U^\ell_1,\CC)$. 
\end{lemma}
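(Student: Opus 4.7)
The plan is to exhibit the stated map as a bijection by producing an explicit inverse, after verifying that the forward assignment $f\mapsto \tilde f(Z,g):=f(\underline{\cala}(V)_g(Z),\bdome_g(Z))$ lands in $M_\rho^\calg(U^\ell_1,\CC)$. For the automorphy property of $\tilde f$, I would combine three inputs already assembled in the excerpt: the equivalence $\underline{\cala}(V)_{\gam g u}(\gam Z)\sim \underline{\cala}(V)_g(Z)$ for $\gam\in\calg(\QQ)^+$ and $u\in U^\ell_1$ (descent under the action in \S\ref{sec:uniformization}), the scaling identity $\bdome_{\gam g}(\gam Z)=\nu(\gam)\trs J(\gam,Z)^{-1}\bdome_g(Z)$ derived from (\ref{tag:25}), and the torsor property (G$_2$). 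This is precisely the computation displayed just before the lemma. Holomorphy of $\tilde f$ in $Z$ follows because the construction of $\underline{\cala}(V)_g(Z)$ together with the basis $\bdome_g(Z)$ is holomorphic in $Z$, and $f$ is analytic as a section of a coherent sheaf on $I^0_\calg(U^\ell_1)_{/\CC}$.

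For surjectivity I would build the inverse as follows. Given $F\in M_\rho^\calg(U^\ell_1,\CC)$ and a $\CC$-quintuple $(\underline{A},j,\bdome)$ on the right-hand side of the Igusa problem, first invoke Shimura's uniformization theorem (Theorem 4.8 of \cite{Shimura00}) cited after (\ref{tag:25}) to pick a representative $(\underline{A},j)\sim(\underline{\cala}(V)_g(Z),j_\can)$ with $(Z,g)\in\frkD_{r,s}\times\calg(\widehat{\QQ})$; then write the given trivialization uniquely as $\bdome=h\bdome_g(Z)$ with $h\in\scrh(\CC)$, and define
\[
f(\underline{A},j,\bdome):=\rho(\trs h^{-1})F(Z,g).
\]
The automorphy of $F$ under $\calg(\QQ)^+$, combined with the $\bdome_g$-transformation above, shows that $f$ is independent of the choice of representative $(Z,g)$, right-$U^\ell_1$-invariance gives (G$_1$) and the descent along the level structure, and (G$_2$) is built into the formula. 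Condition (G$_3$), which a priori requires all $\scro$-algebras, is not needed here because we work over $\CC$; the comparison of coherent sheaves on $I_\calg^0(U^\ell_1)_{/\CC}$ is functorial in $\CC$-algebras automatically. Injectivity is immediate: if $\tilde f\equiv 0$, then $f$ vanishes on the set of pairs $(\underline{\cala}(V)_g(Z),\bdome_g(Z))$, hence on the whole torsor by the $\scrh(\CC)$-action (G$_2$), and therefore vanishes as a section of $\ome_\rho$ on $I_\calg^0(U^\ell_1)_{/\CC}$.

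The main obstacle is matching the two notions of Fourier--Jacobi expansion that sit on the two sides of the comparison: on the classical side the expansion $\sum_B\vFJ^B_\beta(\trs w,f)e^{2\pi\sqrt{-1}\tr(B\tau)}$ in Definition~\ref{def:21} is analytic, while on the geometric side it is forced by evaluating $f$ along the semi-abelian Tate/Mumford degeneration at each cusp together with the Koecher principle (available once $r+s\geq 3$, and circumvented by hypothesis when $r=s=1$ via (G$_4$)). To close this gap I would use the complex-analytic description of the semi-abelian boundary chart coming from the toroidal compactification $\overline{S}_\calg(U)$: the canonical differentials on the Tate object specialize to $\bdome_g(Z)$ in the limit, so pullback of $f$ to the formal completion at a cusp produces exactly the classical Fourier--Jacobi coefficients of $\tilde f$, term by term. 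This identification simultaneously establishes that cusp forms correspond to cusp forms and that the map is a bijection of $\CC$-vector spaces.
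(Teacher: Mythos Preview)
The paper does not give its own proof of this lemma: it records the computation that the forward map is well-defined (the display immediately preceding the lemma, using (\ref{tag:25}) and (G$_2$)) and then cites \cite[Lemma~3.7]{MH} and \cite[Lemma~3.7]{CEFMV} for the full statement. Your proposal reproduces exactly that forward verification and then supplies a sketch of the inverse construction and the boundary/Fourier--Jacobi comparison that the paper delegates to those references, so your approach is consistent with---and more detailed than---what the paper itself presents.
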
 

\begin{definition}\label{def:33}
Let $\calu=K^0_p\times\calu^{(p)}$ be a not (necessarily neat) open compact subgroup of $\calg(\widehat{\QQ})$ containing $U$. 
The space $\bfM^\calg_\rho\bigl(\calu^\ell_1,\CC\bigl)$ is defined as the subspace of $\bfM^\calg_\rho\bigl(U^\ell_1,\CC\bigl)$ that is the inverse image of $M^\calg_\rho\bigl(\calu^\ell_1,\CC\bigl)$. 
For an $\scro$-subalgebra $R$ of $\CC$ we regard $\bfM^\calg_\rho\bigl(\calu^\ell_1,R\bigl)$ as a submodule of $\bfM^\calg_\rho\bigl(\calu^\ell_1,\CC\bigl)$. 
We define $M_\rho^\calg\bigl(\calu^\ell_1,R\bigl)$ as the image of $\bfM^\calg_\rho\bigl(\calu^\ell_1,R\bigl)$ under the isomorphism in Lemma \ref{lem:cpx}, and define the submodule $S_\rho^\calg\bigl(\calu^\ell_1,R\bigl)$ similarly. \index{$M_\rho^\calg(U^\ell_1,R)$}
\end{definition}


\subsection{Algebraic representations of $\GL_r\times\GL_s$}\label{subsec:alg}

Let $\scrt=T_r\times T_s$ be the diagonal torus of $\scrh$ and $N_\scrh=N_r\times N_s$ a unipotent subgroup of $\scrh$. \index{$N_\scrh$}
Put $E'=E(\sqrt{-1})$. 
We enlarge the base ring $\scro$ to $\scro'=\frko_{E',(p)}$. \index{$\scro'$}
Then $T'\in\GL_r(\scro')$ by (C$_2$) (see \S \ref{ssec:26} for the definition of $T'$). 

Let $R$ be a an $\scro'$-algebra. 
Let $\ulk=(k_1,\dots k_r;k_{r+1},\dots,k_{r+s})\in\ZZ^{r+s}$ be a tuple with $k_1\leq k_2\leq \dots\leq k_r$ and $k_{r+1}\leq\dots\leq k_{r+s}$. 
We define an algebraic character $\ulk$ of $\scrt$ by 
\[\ulk(t)=t_1^{k_1}\cdots t_r^{k_r}\cdot t_{r+1}^{k_{r+1}}\cdots t_{r+s}^{k_{r+s}}\] 
for $t=\diag(t_1,\dots,t_{r+s})\in\scrt$. 
The irreducible algebraic representation $L_{\ulk}(R)$ can be realized as the schematical induction module 
\[L_{\ulk}(R)=\{f\in R[\GL_r\times\GL_s]\;|\;f(tug)=\ulk(t)f(g)\;(t\in\scrt,\; u\in N_\scrh)\}, \index{$L_{\ulk}(R)$}\]
where $R[\GL_r\times\GL_s]$ is the ring of polynomial functions on $\GL_r\times\GL_s$ with coefficients in $R$. 
Put $\Bbbk=(k_r,\dots,k_1;k_{r+s},\dots,k_{r+1})\in\ZZ^{r+s}$. 
Let $\scrh(R)$ act on $L_{\ulk}(R)$ by 
\[\rho_{\Bbbk}(h_1,h_2)f(g_1,g_2)=f(g_1h_1,g_2h_2). \]

Define the distinguished functional $l_{\ulk}:L_{\ulk}(R)\to R$ by $f\mapsto f(1)$. 
Then $l_{\ulk}$ is the vector of maximal weight $-\ulk$ in $L_{\ulk}^\vee(R):=\Hom (L_{\ulk}(R),R)$. 
On the other hand, we define $\bfv_{\ulk}\in \ZZ[\GL_r]$ by 
\[\bfv_{\ulk}=(\det X)^{-k_r}(\det Y)^{-k_{r+s}}\prod_{l=1}^{r-1} (\det X_{(l)})^{k_{l+1}-k_{l}}\prod_{m=1}^{s-1} (\det Y_{(l)})^{k_{m+r+1}-k_{m+r}}, \]
where $X=(X_{i,j})\in \Mat_r$ and $X_{(l)}:=(X_{r-l+i,j})_{1\leq i,j\leq l}$. 
We find that $\bfv_{\ulk}\in L_{\ulk^\vee}(\ZZ)^{N_\scrh}$ is a vector of the highest weight $-\ulk$, where we set 
\[\ulk^\vee=(-k_r,\dots,-k_1;-k_{r+s},\dots,-k_{r+1}). \]

If $R$ is an integral domain of characteristic zero and $F$ is its fractional field, then $L_{\ulk}(F)$ is an irreducible representation of $\scrh(F)$ and there is a unique isomorphism $ L_{\ulk^\vee}(F)\simeq L_{\ulk}^\vee(F)$ sending $\bfv_{\ulk}$ to $l_{\ulk}$.  Let  
\[\ell_{\ulk}:L_{\ulk^\vee}(F)\otimes L_{\ulk}(F)\to F \index{$\ell_{\ulk}$}\]
be the canonical invariant pairing such that $l_{\ulk}(f)=\ell_{\ulk}(\bfv_{\ulk}\otimes f)$. 

Suppose that $T'\in \GL_r(R)$. 
We define representations $\rho_{\ulk}$ of $\scrh(R)$ on $L_{\ulk}(R)$ by $\rho_{\ulk}(\alp,\bet)=\rho_\Bbbk(T^{\prime-1}\trs\alp^{-1}T',\bet)$ for $\alp\in\GL_r(R)$ and $\bet\in\GL_s(R)$. \index{$\rho_{\ulk}$}
When $R=\CC$, we view $\rho_{\ulk}$ as a representation of $\calk_\infty$ by pulling back via the homomorphism $J^-(-,\bfi):\calk_\infty\to\scrh(\CC)$. 
By definition we have $\rho_{\ulk}(J(\alp,\bfi))=\rho_{\Bbbk}(J^-(\alp,\bfi))$ for $\alp\in \calk_\infty$. 


\subsection{Automorphic forms on $\GU(r,s)$}\label{ssec:34}

We write $\scra(\calg)$ for the space of automorphic forms on $\calg$, \index{$\scra(\calg)$}i.e., smooth, $K$-finite, $\frkz$-finite, slowly increasing functions on $\calg(\QQ)\bsl\calg(\AA)$, where $\frkz$ is the centre of the universal enveloping algebra of the complexfied Lie algebra of $\calg(\RR)$. 
We call an autormophic form a cusp form if it is rapidly decreasing. 
We denote the space of cusp forms on $\calg$ by $\scra^0(\calg)$. \index{$\scra^0(\calg)$}

We associate to a $\call(\CC)$-valued function $f$ on $\frkD_{r,s}\times \calg(\widehat{\QQ})$ satisfying the relation in Definition \ref{def:21} a function $\vPh_\rho(f):\calg(\QQ)\bsl \calg(\AA)\to\call(\CC)$ by 
\begin{align*}
\varPhi_\rho(f)(g)&=\rho(\nu(g_\infty)J(g_\infty,\bfi)^{-1})f(g_\infty(\bfi),\hat g) 
\end{align*}
for $g=(g_\infty,\hat g)$ with $g_\infty\in \calg(\RR)^+$ and $\hat g\in \calg(\widehat{\QQ})$. \index{$\varPhi_\rho,\varPhi_{\ulk}$}
Note that 
\[\varPhi_\rho(f)(g\alp)=\rho(\nu(\alp)J(\alp,\bfi)^{-1})\varPhi_\rho(f)(g) \]
for $g\in \calg(\AA)$ and $\alp\in\calk_\infty$. 

When $R$ is an $\scro'$-algebra, $\rho=\rho_{\ulk}$ and $\call=L_{\ulk}(R)$, we will write 
\begin{align*}
\ome_{\ulk}&=\ome_{\rho_{\ulk}}, & 
\bfM_{\ulk}^\calg(U,R)&=\bfM_{\rho_{\ulk}}^\calg(U,R), &
M_{\ulk}^\calg(U,R)&=M_{\rho_{\ulk}}^\calg(U,R), \\
\vPh_{\ulk}&=\vPh_{\rho_{\ulk}}, &
\bfS_{\ulk}^\calg(U,R)&=\bfS_{\rho_{\ulk}}^\calg(U,R), &
S_{\ulk}^\calg(U,R)&=S_{\rho_{\ulk}}^\bfG(U,R) 
\end{align*}
for an open compact subgroup $U$ of $\calg(\widehat{\QQ})$. \index{$M_{\ulk}^\calg(U,R)$}\index{$S_{\ulk}^\calg(U,R)$}
We associate to $f\in M_{\ulk}^\calg(U,\CC)$ and $\bfv\in L_{\ulk^\vee}(\CC)$ a scalar valued cusp form $\vPh_{\ulk}(f)_\bfv\in\scra(\calg)$ defined by $\vPh_{\ulk}(f)_\bfv(g)=\ell_{\ulk}(\bfv\otimes\vPh_{\ulk}(f)(g))$. \index{$\vPh_{\ulk}(f)_\bfv$}
If $f\in M_{\ulk}^\calg(U,\CC)$ is not zero, then $\bfv\mapsto\vPh_{\ulk}(f)_\bfv$ gives a $\calk_\infty$-equivariant embedding $L_{\ulk^\vee}(\CC)\hookrightarrow\scra(\calg)$. 

Let $\pi\simeq\otimes_v'\pi_v$ be an irreducible cuspidal automorphic representation of $\calg(\AA)$. 
If $f^\circ\in S_{\ulk}^\calg(U,\overline{\QQ})$ gives an embedding $L_{\ulk^\vee}(\CC)\hookrightarrow\pi$ and if $f^\circ$ corresponds to a new vector of $\pi_q$ at all rational primes $q$ in a suitable sense (cf. Definition \ref{def:81} and \cite{AOY}), then we call $f^\circ$ a $\overline{\QQ}$-rational newform associated to $\pi$. \index{$f^\circ$}

\begin{remark}\label{rem:21}
We define the highest weight part $f_0$ of $f$ by $f_0(Z)=l_{\ulk}(f(Z))$. \index{$f_0,\vPh_{\ulk}(f)_0$}
Put $\vPh_{\ulk}(f)_0=\vPh_{\ulk}(f)_{\bfv_{\ulk}}$. 
Let $\calt_\infty$ be the subgroup which consists of $\alp\in \calk_\infty$ such that $J^-(\alp,\bfi)$ is diagonal. 
Observe that for $g\in \calg(\AA)$ and $t_\infty\in\calt_\infty$ 
\[\vPh_{\ulk}(f)_0(gt_\infty)=\ulk(J^-(t_\infty,\bfi))^{-1}\vPh_{\ulk}(f)_0(g). \]
\end{remark}


\subsection{$p$-adic modular forms}

Let $\WW=\WW(\overline{\FF}_p)$ be the ring of Witt vectors associated with $\overline{\FF}_p$. 
We view $\WW$ as an $\scro$-algebra. 
Put $\WW_m=\WW/p^m\WW$ for positive integers $m$. 
We will identify $\bfH$ (resp. $\bfN$, $\bfT$) with the group $\scrh(\ZZ_p)$ (resp. $N_\scrh(\ZZ_p)$, $\scrt(\ZZ_p)$) with respect to the $p$-adic basis of $M^0$ fixed in \S \ref{ssec:22}. 
Let $F$ be a number field which contains $E'$. 
We denote by $\scro_F=\frko_{F,(p)}$ the localization of $\frko_F$ at $p$ and the $\frkp$-adic completion of $\scro_F$ by $\calo$. 

The ordinary locus $T_{0,m}$ of $\overline{S}_\calg(U)_{/\WW_m}$ is the complement of the zero set of a lift of some power of the Hasse invariant. 
Since $p$ splits in $E$, it is known that $T_{0,m}$ is open and dense in $\overline{S}_\calg(U)_{/\WW_m}$ (cf. \cite{Wed99}). 
Set $T_{\ell,m}:=I_\calg(U^\ell)_{/\WW_m}$ for positive integers $\ell$. 
As we let $\ell$ vary, we obtain a tower of finite \'{e}tale and Galois coverings of $T_{0,m}$, called the Igusa tower. 
Then $T_{\infty,m}:=\varprojlim_\ell T_{\ell,m}$ is a Galois cover of $T_{0,m}$ with Galois group $\bfH$. 
We define 
\begin{align*}
V_{\ell,m}&=\rmH^0(T_{\ell,m/\calo},\calo_{T_{\ell,m}}). 
\end{align*}
The module of $p$-adic modular forms on $\calg$ of level $U^{(p)}$ is 
\[V(\calg,U)=\varprojlim_m \varinjlim_\ell V_{\ell,m}^\bfN. \index{$V(\calg,U)$}\]

Let $\ell\geq m$. 
Each element $f\in V_{\ell,m}$ can be viewed as a function $(\underline{A},j)\mapsto f(\underline{A},j)\in\WW_m$, where $(\underline{A},j)$ is a $\WW_m$-quintuple. 
Each element $h\in\scrh(\ZZ/p^\ell\ZZ)$ acts on elements $f\in V_{\ell,m}$ via $(h\cdot f)(\underline{A},j)=f(\underline{A},hj)$. 
When $A$ is $p$-ordinary, we get an isomorphism $j^\vee:A[p^\ell]^{\rm{et}}\simeq(\ZZ/p^\ell\ZZ)\otimes N^{-1}$ by taking the dual of $j:\bdmu_{p^\ell}\otimes_\ZZ N^0\hookrightarrow A[p^\ell]$ as $\frko_E$-group schemes. 
Since $\ome_{A/\calo}\simeq A[p^\ell]^{\rm{et}}\otimes\calo$ (see \cite[(9)]{CEFMV}), we can view $j^\vee$ as an element $\bdome(j)\in\cale_{\underline{A}}$ and define the canonical morphism
\begin{align*}
\rmH^0(T_{\ell,m}{}_{/\calo},\ome_{\ulk})&\to V_{\ell,m}^\bfN, & 
f&\mapsto\widehat{f}_0(\underline{A},j)=l_{\ulk}(f(\underline{A},j,\bdome(j))). 
\end{align*}
We call $\widehat{f}_0$ the $p$-adic avatar of $f$. \index{$\widehat{f}_0$}

Let $\zet$ be a character of $\scrt(\ZZ/p^\ell\ZZ)$. 
We define the locally algebraic character $\ulk_\zet$ of $\bfT$ by $\ulk_\zet(t)=\ulk(t)\zet(t)$. 
Assume that $\calo$ contains the values of $\zet$. 
Put  
\begin{align*}
\bfM_{\ulk}^\calg(U^\ell_1,\calo)&:=\bfM_{\ulk}^\calg(U^\ell_1,\scro_F)\otimes_{\scro_F}\calo, \\
\bfM_{\ulk_\zet}^\calg(U^\ell_0,\calo)&:=\{f\in\bfM_{\ulk}^\calg(U^\ell_1,\calo)\;|\;[t].f=\zet(t)f\text{ for }t\in\bfT\}. 
\end{align*}
We therefore have a map \index{$\bfM_{\ulk_\zet}^\calg(U^\ell_0,\calo)$}
\begin{align*}
\bfM_{\ulk_\zet}^\calg(U^\ell_0,\calo)&\to V(\calg,U)[\ulk_\zet], & 
f&\mapsto \widehat{f}_0,   
\end{align*}
where $V(\calg,U)[\ulk_\zet]$ is the $\ulk_\zet$-eigenspace of the torus $\bfT$ (cf. Remark \ref{rem:21}). 



\subsection{The $\calu_p$-operators}\label{ssec:25}

Define elements $\alp_i,\bet_j\in G(\QQ_p)$ by 
\begin{align*}
\alp_i&=\begin{bmatrix} \ono_{r+s-i} & \\ & p\ono_i \end{bmatrix}, &
\bet_j&=\begin{bmatrix} p^{-1}\ono_j & \\ & \ono_{r+s-j} \end{bmatrix}
\end{align*}
for $i=1,2,\dots,s$ and $j=1,2,\dots,r$. 
Put $\cali_p=\imath_\frkp^{-1}(U_1^\ell)$. 
Following Proposition 3.15\footnote{We here let $a_i=-k_i$ $(1\leq i\leq r)$ and $b_j=k_{r+j}$ $(1\leq j\leq s)$. 
Since the similitude group $\GU(r,s)$ is considered in \cite{MH2}, the factor $\ulk(\alp_v)^{-1}$ should be replaced by $\ulk(\alp_v)$. } of \cite{MH2}, we define the normalized $U_p$-operators 
\[\calu_p(\alp_i),\,\calu_p(\beta_j)\in C_c^\infty(\cali_p\bsl G(\QQ_p)/\cali_p)\] 
by 
\begin{align*}
\calu_p(\alp_i)&:=p^{-ri+\sum_{l=1}^ik_{r+s+1-l}}[\cali_p\imath_\frkp^{-1}(\alp_i)\cali_p], \\
\calu_p(\bet_j)&:=p^{-sj-\sum_{l=1}^jk_l}[\cali_p\imath_\frkp^{-1}(\beta_j)\cali_p]. \index{$\calu_p(\bet_1)$}
\end{align*}

Let $\bfT^\calg(N,R)$ be the universal Hecke algebra over a commutative ring $R$ defined by 
\[\bfT^\calg(N,R)=\bigotimes'_{l\nmid pN} C_c^\infty(\calk_l\backslash\calg(\QQ_l)/\calk_l,R)\bigotimes R\left[\calu_p(\alp_i),\calu_p(\beta_j)\right]_{\substack{i=1,\dots,s,\\j=1\dots,r}}. \]
As is well-known, $\bfT^\calg(N,\CC)$ is commutative and acts naturally on $M_{\ulk}^\calg(U_1^\ell,\CC)$. 
More explicitly, we have  
\begin{align*}
[\calu_p(\alp_i)f](Z,\bet)&=\frac{1}{p^{ri-\sum_{l=1}^ik_{r+s+1-l}}}\sum_{u\in N_{r+s}(\ZZ_p)/\alp_i^{-1}N_{r+s}(\ZZ_p)\alp_i^{}}f(Z,\bet \imath_\frkp^{-1}(u\alp_i^{-1})), \\
[\calu_p(\bet_j)f](Z,\bet)&=\frac{1}{p^{sj+\sum_{l=1}^jk_l}}\sum_{u\in N_{r+s}(\ZZ_p)/\bet_j^{-1}N_{r+s}(\ZZ_p)\bet_j^{}}f(Z,\bet \imath_\frkp^{-1}(u\bet_j^{-1}))
\end{align*}
for $f\in M_{\ulk}^\calg(U_1^\ell,\CC)$. 
The Hecke algebra $\bfT^\calg(N,\calo)$ also acts on the module $\bfS_{\ulk}^\calg(U_1^\ell;\calo)$ of cusp forms over $\calo$ and on the module of $p$-adic modular forms. 
We put $\calu_p=\prod_{i=1}^s\calu_p(\alp_i)\prod_{j=1}^r\calu_p(\bet_j)$ and define the projector 
\[\bdse=\lim_{m\to\infty}\calu_p^{m!}. \index{$\bdse$}\]
If $f\in M_{\ulk}^\calg(U_1^\ell,\CC)$, then we call $f$ an $p$-ordinary eigenform if $f$ is an eigenvector of $\calu_p$ with eigenvalue $a_p(f)$ such that $\iot_p(a_p(f))\in\overline{\ZZ}_p^\times$. 

\begin{definition}\label{def:23}
Let $\pi$ be an irreducible cuspidal automorphic representation of $\calg(\AA)$ generated by $\Phi_{\ulk}(f)_0$ with a Hecke eigenform $f\in S_{\ulk}^\calg(U,\overline{\QQ})$. 
Denote  the Harish-Chandra parameter of $\pi_\infty$ by $(\lam_1,\lam_2,\dots,\lam_n)$. 
Note that $\lam_j=-k_j+\frac{r-s+1}{2}-j$ for $1\leq j\leq r$ and $\lam_{r+i}=-k_{r+i}+\frac{r+s+1}{2}-i$ for $1\leq i\leq s$. 
Assume that $\pi_p|_{\GL_n(\QQ_p)}$ is a subquotient of a principal series $I(\mu_1,\mu_2,\dots,\mu_n)$ of $\GL_n(\QQ_p)$ with locally algebraic characters $\iot_p\circ\mu_i:\QQ_p^\times\to\CC_p^\times$. 
Put $\alp_i=\iot_p(\mu_i(p))$. 
We order $\mu_i$ so that $\ord_p\alp_1\geq\ord_p\alp_2\geq\cdots\geq\ord_p\alp_n$. 
We say that $\pi$ is $p$-ordinary with respect to $\iot_p$ if $\ord_p\alp_i=-\lam_{n-i+1}$ for $i=1,2,\dots,n$ (cf. Conjecture 4.1 of \cite{CPR}). 
\end{definition}

The ordinary vector $h_{\pi_p}^\ord\in\pi_p$ constructed in \S \ref{ssec:84} has the eigenvalue whose $p$-adic order is smallest. 
Proposition \ref{prop:81} shows that $h_{\pi_p}^\ord$ is the unique simultaneous eigenvector whose $\calu(\alp_i)$ and $\calu(\bet_j)$ eigenvalues are $p$-units for $1\leq i\leq s$ and $1\leq j\leq r$. 

\subsection{Theta functions}\label{ssec:28}

Let $W_0=E^t$. 
Put $\bfW_0:=W_0\otimes_\QQ\RR=\CC^t$. 
Let $\gam_0\in\Mat_t(E)$ be a matrix which satisfies $\trs\gam_0=\gam_0=-\gam_0^c$ and such that $\iot_\infty(-\sqrt{-1}\gamma_0)$ is positive negative.
We regard $W_0$ as a vector space over $\QQ$ and equip $\bfW_0$ with the real linear alternating form defined by 
\[\ll w_1,w_2\gg=\mathrm{T}_{E/\QQ}(w_2^c\gam_0^{-1}\trs w_1). \]


Let $L_0$ be an $\frko_E$-lattice in $W_0$ as in \S \ref{ssec:22}. 
Put 
\begin{align*}
L_0^c&=\{l^c\;|\;l\in L_0^{}\}, & 
L_0'&=L_0^c\gam_0^{}. \index{$L_0'$}
\end{align*}
For simplicity we will impose a stronger assumption 
\beq
\ll\;,\;\gg\text{ takes values in $2\ZZ$ on $L_0'$} \tag{$C_3^+$}
\eeq
(cf. (\ref{tag:33})). 
Define the positive definite Hermitian form on $\bfW_0$ by 
\[H_0(w_1,w_2)=2w_2^c\sqrt{-1}\gam_0^{-1}\trs w_1. \]
Note that 
\[E_0(w_1,w_2):={\rm Im}H_0(w_1,w_2)=\ll w_1,w_2\gg. \index{$E_0$, $H_0$}\]

Fix a fractional ideal $\frka$ of $\frko_E$ and a positive rational number $m$ such that $m\frka\frka^c\subset\frko_E$. 
Let $L_\frka$ be a lattice contained in $\frka L_0'$. 
Then $mE_0$ takes values in $2\ZZ$ on $L_\frka$. 
In particular, it is a Riemann form on $\bfW_0$ relative to $L_\frka$. 

Given $l\in L_\frka$, we define the function $e_l^m:\bfW_0\to\CC^\times$ by 
\[e_l^m(w)=e^{\pi mH_0\bigl(w+\frac{l}{2},l\bigl)}. \]
We define a line bundle $\frkL_\frka^m$ on the complex abelian variety $\calz_\frka^\circ(\CC):=\bfW_0/L_\frka$ by $L_\frka\backslash (\bfW_0\times \CC)$ with the action of $l\in L_\frka$ given by
\[l\cdot (w,x)=(w+l,e_l^m(w)x).\]

Let $(\rho,\call)$ be a finite dimensional representation of $\GL_{t+1}$.  
For $l\in\bfW_0$ and any $\call(\CC)$-valued function $\Tht$ on $\bfW_0$ we define 
\[[A_{m,\rho}(l)\Tht](w)=e^{-\pi mH_0\bigl(w+\frac{l}{2},l\bigl)}\rho\left(\begin{bmatrix} 1 & 0 \\ -\gam_0^{-1}\trs l^c & \ono_t \end{bmatrix}\right)\Tht(w+l). \index{$A_{m,\rho}(l)$}
\]
The operators $A_{m,\rho}(l)$ fulfill the basic commutation relation 
\beq
A_{m,\rho}(l_1)A_{m,\rho}(l_2)=e^{\pi\sqrt{-1}mE_0(l_2,l_1)}A_{m,\rho}(l_1+l_2). \label{tag:33}
\eeq
For any scalar-valued function $\tht$ on $\bfW_0$ we define 
\[[A_m(l)\tht](w)=e^{-\pi mH_0\bigl(w+\frac{l}{2},l\bigl)}\tht(w+l). \index{$A_m(l)$}\]

\begin{definition}[Theta functions]\label{def:24} 
The space $\bfT^m(L_\frka,\CC)$ consists of holomorphic functions $\tht$ on $\bfW_0$ which satisfy the functional equations $\tht(w+l)=e_l^m(w)\tht(w)$ for every $l\in L_\frka$. 
By definition we can identify $\bfT^m(L_\frka,\CC)$ with the space $\rmH^0(\calz_\frka^\circ(\CC),\frkL_\frka^m)$ of global sections. 
If $t=0$, then we define $\bfT^m(L_\frka,\CC)=\CC$. 

More generally, the space $\bfT^m_\rho(L_\frka,\CC)$ consists of $\call(\CC)$-valued holomorphic functions $\Tht$ on $\bfW_0$ such that $A_{m,\rho}(l)\Tht=\Tht$ for all $l\in L_\frka$. 
\end{definition}

Put $N_\scrh^1=\biggl\{\begin{bmatrix} 1 & 0 \\ * & \ono_t\end{bmatrix}\in\GL_{t+1}\biggl\}$. \index{$N_\scrh^1$}
Let $\call(R)_{N_\scrh^1}$ be the module of $N_\scrh^1(R)$-coinvariants for the action of $\rho$ in $\call(R)$. 
For $\Tht\in\bfT^m_\rho(L_\frka,\CC)$ we write $\Tht_{N_\scrh^1}(w)$ for the image of $\Tht(w)$ in $\call(\CC)_{N_\scrh^1}$. 
Then the function $w\mapsto\Tht_{N_\scrh^1}(w)$ belongs to $\bfT^m(L_\frka,\CC)\otimes_\CC\call(\CC)_{N^1_\scrh}$. 


\subsection{Analytic Fourier-Jacobi coefficients}\label{ssec:27}
 
In the rest of this section we shall restrict ourselves to the simple case $s=1$. 
Let $\calp=\calm\caln$ be the standard maximal parabolic subgroup of $\calg$ whose unipotent radical $\caln$ consists of matrices of the form 
\[\bfn(w,z)=\begin{bmatrix} 1 & -w^c\gamma_0^{-1} & z-\frac{1}{2}w^c\gam_0^{-1}\trs w \\ 0 & \ono_t & \trs w \\ 0 & 0 & 1 \end{bmatrix}\]
with $w\in W_0$ and $z\in\QQ$. \index{$\bfn(w,z)$}
Observe that 
\beq
\bfn(w_1,z_1)\bfn(w_2,z_2)=\bfn\left(w_1+w_2,z_1+z_2+\frac{1}{2}\ll w_1,w_2\gg\right). \label{tag:31}
\eeq
Thus $\caln$ is the Heisenberg group associated with $(W_0,\ll\;,\;\gg)$. 
\[g_0^{-1}\bfn(w,z)g_0^{}=\bfn(w\trs g_0^{-1},\nu(g_0) z) \]
We form the group $\cals=\calg_0\ltimes\caln$.  
For $\xi\in\EE^\times$ we put 
\[\bfd(\xi)=\begin{bmatrix} \xi^c & 0 & 0 \\ 0 & \ono_t & 0 \\ 0 & 0 & \xi^{-1}\end{bmatrix}\in G(\AA). \]


For $\vPh\in\scra(\calg)$ we put 
\begin{align*}
\vPh^m_\bet(b)&=\int_{\QQ\bsl\AA}\vPh(\bfn(0,z)b\bet)\overline{\addchar^m(z)}\,\d z
\end{align*}
for $b\in\calp(\AA)$, $\bet\in\calg(\widehat \QQ)$ and a positive rational number $m$. 


Given a finite id\`{e}le $\alp=(\alp_\frkq)\in\widehat{E}^\times$, we write $\alp\frko_E$ for the fractional ideal $\frka$ of $\frko_E$ such that $\frka_\frkq=\alp_\frkq\frko_{E_\frkq}$ for all prime ideals $\frkq$ of $\frko_E$. 
We will consider Fourier-Jacobi coefficients at the cusp labeled by $\bfd(\alp)\in\calm(\widehat{\QQ})$. 

\begin{definition}
Take a finite set $\Box$ of prime numbers so that $U\supset K^0_l$ if $l\notin\Box$. 
Let $\frka$ be a prime-to-$\Box$ fractional ideal of $\frko_E$.  
Take a finite id\`{e}le $\alp\in\widehat{E}^\times$ so that $\alp\frko_E=\frka$ and $\alpha_q=1$ for all $q\in\Box$. 
For $f\in S_{\ulk}^\calg(U,\CC)$ we write 
\begin{align*}
\overrightarrow{\FJ}^m_\frka(w,f)&=\overrightarrow{\FJ}^m_{\bfd(\alp)}(w,f), &
\FJ^m_\frka(f)_0^{}(w)&=l_{\ulk}\left(\overrightarrow{\FJ}^m_{\frka}(w,f)\right). 
\end{align*}
It is clear that $\overrightarrow{\FJ}^m_\frka(w,f)$ does not depend on the choice of $\alp$. \index{$\FJ^m_\frka(f)_0^{}$}
\end{definition} 


\begin{lemma}\label{lem:21}
Let $f\in M_\rho^\calg(U,\CC)$. 
We have 
\begin{multline*}
\vPh_\rho(f)_{g_0\bfd(\xi)\bet}^m(\bfn(w,0))\\
=e^{-\pi m\bigl(2\nu(g_0)^{-1}\xi\xi^c+\frac{1}{2}H_0(w,w)\bigl)}\rho\left(\begin{bmatrix} \xi^c & 0 \\ -\trs g_0^{}\gam_0^{-1}\trs w^c & \trs g_0 \end{bmatrix},\xi\right)\overrightarrow{\FJ}^m_\bet(w,f) 
\end{multline*} 
for $w\in \bfW_0$, $\xi\in E_\infty^\times$, $\bet\in\calg(\widehat{\QQ})$ and $g_0\in\calg_0(\RR)^+$. 
\end{lemma}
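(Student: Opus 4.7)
The approach is to unfold the integral defining the Fourier coefficient and match the outcome with the Fourier-Jacobi expansion of $f$ from Definition \ref{def:21}. Using the multiplication rule (\ref{tag:31}) to simplify $\bfn(0,z)\bfn(w,0)=\bfn(w,z)$, we have
\[\vPh_\rho(f)^m_{g_0\bfd(\xi)\bet}(\bfn(w,0))=\int_{\QQ\bsl\AA}\vPh_\rho(f)(\bfn(w,z)g_0\bfd(\xi)\bet)\overline{\addchar^m(z)}\,\d z.\]
Splitting $z=(z_\infty,z_\text{fin})\in\RR\oplus\widehat{\QQ}$, the archimedean component of the argument is $\bfn(w,z_\infty)g_0\bfd(\xi)$ and its finite component is $\bfn(0,z_\text{fin})\bet$. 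A step-by-step application of the action formula from \S\ref{ssec:26} to $\bfi$, first under $\bfd(\xi)$, then $g_0$, then $\bfn(w,z_\infty)$, yields
\[(\bfn(w,z_\infty)g_0\bfd(\xi))(\bfi)=\begin{bmatrix}\sqrt{-1}\xi\xi^c/\nu(g_0)-\tfrac{1}{2}w^c\gam_0^{-1}\trs w+z_\infty\\ \trs w\end{bmatrix}.\]

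Next, the cocycle identity $J(g_1g_2,Z)=J(g_1,g_2(Z))J(g_2,Z)$ factorizes $J(g_\infty,\bfi)$ into three pieces, each independent of $z_\infty$, so the factor $\rho(\nu(g_0)J(g_\infty,\bfi)^{-1})$ pulls out of the integral. The key algebraic step is verifying that this prefactor matches the right side. A direct block computation from the formulas of \S\ref{ssec:26} produces $\lam(g_\infty,\bfi)$ as a block lower-triangular matrix with diagonal blocks $\nu(g_0)\xi^{-c}$ and $(\gam_0^{-1}g_0\gam_0)^c$ and lower-left block $-\gam_0^{-c}\trs w^c\nu(g_0)\xi^{-c}$. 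Inverting and multiplying by $\nu(g_0)$, then applying the skew-Hermitian relation $\gam_0^c=-\gam_0$ together with the identity $\trs g_0=\nu(g_0)\gam_0^{-1}g_0^{-c}\gam_0$ (obtained from the similitude relation $g_0\gam_0\trs g_0^c=\nu(g_0)\gam_0$), one reduces the result to $\begin{bmatrix}\xi^c & 0\\-\trs g_0\gam_0^{-1}\trs w^c & \trs g_0\end{bmatrix}$. Similarly $\mu(g_\infty,\bfi)=\nu(g_0)\xi^{-1}$, giving the second entry $\nu(g_0)\mu(g_\infty,\bfi)^{-1}=\xi$.

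Finally, substitute the Fourier-Jacobi expansion of $f$ into $f(g_\infty(\bfi),\bfn(0,z_\text{fin})\bet)$; since $\bfn(0,\widehat\ZZ)$ lies in the level subgroup after conjugation by $\bet$, the value of $f(Z,\bfn(0,z_\text{fin})\bet)$ is independent of $z_\text{fin}$, so the integration over $z_\text{fin}\in\widehat\ZZ$ collapses to evaluation at $\bet$. Integrating the remaining $z_\infty$ against $\overline{\addchar^m_\infty(z_\infty)}=e^{-2\pi\sqrt{-1}mz_\infty}$ over the fundamental domain $[0,1)$ selects the $m$-th term of the expansion, yielding $\overrightarrow{\FJ}^m_\bet(w,f)e^{2\pi\sqrt{-1}m\tau_0}$ with $\tau_0=\sqrt{-1}\xi\xi^c/\nu(g_0)-\tfrac{1}{2}w^c\gam_0^{-1}\trs w$. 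The exponential rewrites as $e^{-\pi m(2\nu(g_0)^{-1}\xi\xi^c+\frac{1}{2}H_0(w,w))}$ via the definition $H_0(w,w)=2w^c\sqrt{-1}\gam_0^{-1}\trs w$, completing the identification. The main technical obstacle is the block-matrix bookkeeping for the automorphy factor in the middle paragraph; the adelic-to-archimedean Fourier reduction is routine.
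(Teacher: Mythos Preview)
Your archimedean computation—the point $\alp(\bfi)$ and the block-triangular inversion producing the $\rho$-factor in the statement—matches the paper's direct matrix calculation, and the overall strategy (substitute the Fourier--Jacobi expansion, then extract the $m$-th coefficient) is the same. The gap is in the final step you called routine.

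The claim that $\bfn(0,\widehat\ZZ)$ lies in $\bet U\bet^{-1}$ is unjustified: the lemma imposes no hypothesis on $\bet\in\calg(\widehat\QQ)$ or on $U$, and already for $U=K^0$ and $\bet=\bfd(a)$ one computes $\bfd(a)^{-1}\bfn(0,z)\bfd(a)=\bfn(0,\Nr_{E/\QQ}(a)^{-1}z)$, which lies in $K^0$ for every $z\in\widehat\ZZ$ only when $\Nr_{E/\QQ}(a)^{-1}\in\widehat\ZZ$. Separately, the integral $\int_0^1 e^{2\pi\sqrt{-1}(m'-m)z_\infty}\,dz_\infty$ does not isolate $m'=m$, since the Fourier--Jacobi frequencies lie in $\QQ$, not in $\ZZ$; and you have silently dropped the finite-place factors $\overline{\addchar_l(mz_l)}$, which are nontrivial on $\widehat\ZZ$ once $m\notin\ZZ$. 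One can repair the explicit-integration argument by passing to a fundamental domain $[0,N)\times L$ with $L$ open compact in $\widehat\QQ$ small enough that $\bfn(0,L)\subset\bet U\bet^{-1}$ and $N$ clearing the relevant denominators, but this is not the one-line reduction you wrote.

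The paper avoids this entirely. Having expressed $\vPh_\rho(f)(\bfn(w,x)g_0\bfd(\xi)\bet)$ for $x\in\RR$ (so the finite part of the argument is just $\bet$) via the Fourier--Jacobi expansion as
\[
\rho(\ldots)\sum_{m'}\overrightarrow{\FJ}^{m'}_\bet(w,f)\,e^{2\pi\sqrt{-1}m'(\tau_0+x)},
\]
one simply reads off the adelic Fourier coefficient: the Fourier series of $z\mapsto\vPh_\rho(f)(\bfn(w,z)g_0\bfd(\xi)\bet)$ on $\QQ\bsl\AA$, restricted to the dense real line $z=(x,0)$, is $\sum_{m'}\hat F(m')e^{2\pi\sqrt{-1}m'x}$, and matching the two expansions gives $\hat F(m)$ directly with no fundamental-domain bookkeeping.
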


\begin{proof}
Let $x\in\RR$. 
Put
\[\alp=\bfn(w,x)g_0\bfd(\xi)=\begin{bmatrix} \xi^c & -w^c\gam_0^{-1}g_0^{} & \frac{\nu(g_0)}{\xi}(x-\frac{1}{2}{w^c\gam_0^{-1}\trs w}) \\ 0 & g_0 & \frac{\nu(g_0)}{\xi}\trs w \\ 0 & 0 & \frac{\nu(g_0)}{\xi}\end{bmatrix}. \]
We remind the reader of the assumption $\trs\gam_0^{}=\gam_0^{}=-\gam_0^c$. 
Since 
\begin{align*}
\alp(\bfi)&=\begin{bmatrix} \frac{\xi\xi^c}{\nu(g_0)}\sqrt{-1}-\frac{w^c\gam_0^{-1}\trs w}{2}+x \\ \trs w \end{bmatrix}, \\
\lam(\alp,\bfi)&=\nu(g_0)\begin{bmatrix} \frac{1}{\xi^c} & 0 \\ \frac{1}{\xi^c}\gam_0^{-1}\trs w^c & \trs g_0^{-1} \end{bmatrix}, &
\mu(\alp,\bfi)&=\frac{\nu(g_0)}{\xi},  
\end{align*}
we see that $\vPh_\rho(f)(\bfn(w,x)g_0\bfd(\xi)\bet)$ equals 
\[\rho\left(\begin{bmatrix} \xi^c & 0 \\ -\trs g_0^{}\gam_0^{-1}\trs w^c & \trs g_0 \end{bmatrix},\xi\right)f\biggl(\begin{bmatrix} \frac{\xi\xi^c}{\nu(g_0)}\sqrt{-1}-\frac{w^c\gam_0^{-1}\trs w}{2}+x \\ \trs w \end{bmatrix},\bet\biggl) \]
by definition. 
Now one can easily check the stated identity. 
\end{proof}
Given a $\ZZ$-lattice $L$, we write $\widehat{L}=L\otimes_\ZZ\widehat{\ZZ}$. 
When $L$ is small, the set  
\begin{align*}
\caln(L)
&=\{\bfn(l,z)\;|\;l\in\widehat{L},\;z\in\widehat{\ZZ}\} \index{$\caln(L)$}
\end{align*}
is an open compact subgroup of $\caln(\widehat{\QQ})$. 
It is worth noting that $\caln(L_0')\subset K^0$. 
For an integral ideal $\frka$ of $\frko_E$ we put 
\[\scrs_\frka^+=(\frka\frka^c)^{-1}\cap\QQ^\times_+. \index{$\scrs_\frka^+$}\]

\begin{proposition}\label{prop:FJ}
Let $f\in M_\rho^\calg(U,\CC)$. 
Assume that $U$ contains $\caln(L_0')$. 
Let $a\in\widehat{E}^\times$. 
Put $\frka=a\frko_E$. 
\begin{enumerate}
\item $\overrightarrow{\FJ}^m_\frka(w,f)=0$ unless $m\in\scrs_\frka^+\cup\{0\}$. 
\item $\overrightarrow{\FJ}^m_\frka(w,f)\in\bfT^m_\rho(\frka L_0',\CC)$ and $\FJ^m_\frka(f)_0^{}\in\bfT^m(\frka L_0',\CC)$ for $m\in\QQ_+^\times$.  
\end{enumerate}
\end{proposition}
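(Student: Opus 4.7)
The plan is to derive both parts from the left-$\calg(\QQ)$-invariance of $\vPh_\rho(f)$ combined with right-invariance under $\caln(L_0')\subset U$. I restrict $\vPh_\rho(f)$ to the Heisenberg subgroup $\caln$ and expand along its centre $Z(\caln)=\{\bfn(0,z):z\in\AA\}\simeq\QQ\bsl\AA$; the $m$-th Fourier component is $\vPh_\rho(f)^m_\bet$, which Lemma \ref{lem:21} identifies, up to an explicit Gaussian and $\rho$-factor, with $\overrightarrow{\FJ}^m_\bet(w,f)$. For part (i), the intersection $\caln(L_0')\cap Z(\caln)=\bfn(0,\widehat\ZZ)$ produces, under conjugation by $\bfd(\alp)$, the subgroup $\bfn(0,\alp\alp^c\widehat\ZZ)$; right-invariance of $\vPh_\rho(f)$ under $\caln(L_0')$ then forces the central character $\addchar^m$ of $\vPh_\rho(f)^m_{\bfd(\alp)}$ to be trivial on $\alp\alp^c\widehat\ZZ$, giving $m\in(\frka\frka^c)^{-1}\cap\QQ$. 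Non-negativity of $m$ is immediate from the holomorphic Fourier-Jacobi expansion of $f$ in Definition \ref{def:21}, whose index set is $\Her^0_s(\QQ)$; for $s=1$ this reads $m\geq 0$, yielding $m\in\scrs_\frka^+\cup\{0\}$.

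For part (ii) I first note that holomorphy of $\overrightarrow{\FJ}^m_\frka(w,f)$ in $w$ is inherited from holomorphy of $f$. The theta transformation $A_{m,\rho}(l)\overrightarrow{\FJ}^m_\frka=\overrightarrow{\FJ}^m_\frka$ for $l\in\frka L_0'$ will be obtained by combining left-invariance under $\bfn(-l,0)\in\caln(\QQ)$ with right-invariance under a specific element of $\caln(L_0')\subset U$. Left-translation by $\bfn(-l,0)$ produces, via the Heisenberg relation (\ref{tag:31}), the archimedean phase from $\bfn(-l,0)_\infty\bfn(l+w,z_\infty)_\infty=\bfn(w,z_\infty-\tfrac{1}{2}\ll l,w\gg)_\infty$, but contaminates each finite place $q$ with a factor $\bfn(-l,z_q)\bfd(\alp_q)$. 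The key point is that $l\in\frka L_0'$ means $\alp_q^{-1}l\in(L_0')_q$ at every finite place, so the conjugation identity $\bfd(\alp_q)\bfn(\alp_q^{-1}l,0)=\bfn(l,0)\bfd(\alp_q)$ yields $\bfn(-l,z_q)\bfd(\alp_q)=\bfn(0,z_q)\bfd(\alp_q)\bfn(-\alp_q^{-1}l,0)$, and right-invariance under $u:=(1_\infty,(\bfn(-\alp_q^{-1}l,0))_q)\in\caln(L_0')\subset U$ then eliminates the trailing factor. Substituting $z\mapsto z+c$ with $c_\infty=-\tfrac{1}{2}\ll l,w\gg$ (zero at finite places) in the Fourier integral yields the adelic identity $\vPh_\rho(f)^m_{\bfd(\alp)}(\bfn(l+w,0))=e^{-\pi\sqrt{-1}m\ll l,w\gg}\vPh_\rho(f)^m_{\bfd(\alp)}(\bfn(w,0))$.

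Applying Lemma \ref{lem:21} to both sides and dividing out the Gaussian and $\rho$-factors converts this into a transformation law for $\overrightarrow{\FJ}^m_\frka$. The Gaussian ratio simplifies via $\tfrac{1}{2}(H_0(l+w,l+w)-H_0(w,w))=\Re H_0(w,l)+\tfrac{1}{2}H_0(l,l)$ and combines with the phase $e^{-\pi\sqrt{-1}m\ll l,w\gg}=e^{\pi\sqrt{-1}m\Im H_0(w,l)}$ into $e^{\pi m H_0(w+l/2,l)}=e_l^m(w)$, while the $\rho$-factor ratio inverts the lower-unipotent matrix occurring in $A_{m,\rho}(l)$; together these reproduce $A_{m,\rho}(l)\overrightarrow{\FJ}^m_\frka=\overrightarrow{\FJ}^m_\frka$. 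The scalar statement $\FJ^m_\frka(f)_0\in\bfT^m(\frka L_0',\CC)$ then follows by applying $l_\ulk$: the twist $\alp\mapsto T'^{-1}\trs\alp^{-1}T'$ in the definition of $\rho_\ulk$ carries $N_\scrh^1$ into the upper unipotent $N_r$, on which $l_\ulk$ is invariant under $\rho_\Bbbk$; hence the matrix factor in $A_{m,\rho}(l)$ acts trivially on $\FJ^m_\frka(f)_0$, leaving only the scalar multiplier $e_l^m(w)$. The main obstacle I anticipate is the careful adelic bookkeeping in the central step: one must simultaneously track Heisenberg commutators at the archimedean and finite places, and verify that the integrality hypothesis $l\in\frka L_0'$ is precisely what permits absorbing the finite-place remainder into $U$ via right-invariance---absorption that fails for more general $l\in W_0(\QQ)$, as it should, since otherwise the cocycle structure of the $A_{m,\rho}(l)$ would force $\overrightarrow{\FJ}^m_\frka\equiv 0$ for $m\neq 0$.
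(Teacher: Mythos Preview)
Your proposal is correct and follows essentially the same approach as the paper: for (1) you use right-invariance under the central part of $\caln(L_0')$ conjugated by $\bfd(a)$, and for (2) you combine left-invariance under $\bfn(-l,0)\in\caln(\QQ)$, the Heisenberg commutator \eqref{tag:31}, the conjugation $\bfn(l,0)\bfd(a)=\bfd(a)\bfn(a^{-1}l,0)$, and right-invariance under $\caln(L_0')\subset U$, then invoke Lemma~\ref{lem:21} to translate the resulting identity for $\vPh_\rho(f)^m_{\bfd(a)}$ into the theta relation for $\overrightarrow{\FJ}^m_\frka$. The paper compresses your explicit archimedean/finite splitting into the single identity $\vPh_\rho(f)^m_{\bfd(a)}(\bfn(w+l_\infty,0))=e^{-\pi\sqrt{-1}m\ll l_\infty,w\gg}\vPh_\rho(f)^m_{\bfd(a)}(\bfn(w-l_\bff,0))$ and handles the scalar statement via the general remark (just before the proposition) that passing to $N_\scrh^1$-coinvariants sends $\bfT^m_\rho$ into $\bfT^m\otimes\call_{N_\scrh^1}$, which is exactly your observation that $l_{\ulk}$ kills the lower-unipotent $\rho$-factor in $A_{m,\rho}(l)$.
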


\begin{proof}
If $z\in\widehat{\Nr_{E/\QQ}(\frka)}$, then 
\begin{align*}
\overrightarrow{\FJ}^m_\frka(w,f)
&=\overrightarrow{\FJ}^m_{\bfd(a)\bfn(0,\Nr_{E/\QQ}(a)^{-1}z)}(w,f)\\
&=\overrightarrow{\FJ}^m_{\bfn(0,z)\bfd(a)}(w,f)
=\addchar^m(z)\overrightarrow{\FJ}^m_\frka(w,f), 
\end{align*}
which proves (1). 
Given $l\in W_0\otimes_\QQ\AA$, we denote its finite and infinite parts by $l_\bff$ and $l_\infty$. 
If $l\in W_0$, then 
\[\vPh_\rho(f)_{\bfd(a)}^m(\bfn(w+l_\infty,0))=e^{-\pi\sqrt{-1}m\ll l_\infty,w\gg}\vPh_\rho(f)_{\bfd(a)}^m(\bfn(w-l_\bff,0)) \]
by (\ref{tag:31}). 
It follows from Lemma \ref{lem:21} that  
\[\overrightarrow{\FJ}^m_{\bfn(-l_\bff,0)\bfd(a)}(w,f)=e^{-\pi mH_0\bigl(w+\frac{l_\infty}{2},l_\infty\bigl)}\rho\left(\begin{bmatrix} 1 & 0 \\ -\gam_0^{-1}\trs l_\infty^c & \ono_t \end{bmatrix}\right)\overrightarrow{\FJ}^m_\frka(w+l_\infty,f). \]
If $l\in\frka L_0'$, then since $a^{-1}l_\bff\in\widehat{L_0'}$, we have $\bfn(-a^{-1}l_\bff,0)\in U$ by assumption, which completes the proof of (2). 
\end{proof}


\subsection{Integral theta functions}\label{ssec:39}

Suppose that $t>0$. 
By the standard theory of abelian varieties with complex multiplication there is a CM abelian variety $B_0^{}$ defined over a number field such that $B_0^{}(\CC)=\bfW_0^{}/L_0'$ after extending scalars to $\CC$ via $\iot_\infty$. 
Assume that $B_0^{}$ can be extended to an abelian scheme $\scrb_0^{}$ over $\scro$. 

We write $\bfW_0^*$ for the space of $\CC$-antilinear forms on $\bfW_0$. 
Define the $\CC$-linear map $\Lam_0:\bfW_0^{}\to\bfW_0^*$ by $\Lam_0(w):w'\mapsto H_0(w,w')$. 
Let $\scrb_0^\vee$ be the dual abelian scheme of $\scrb_0^{}$. 
It is important to note that $\scrb_0^\vee(\CC)=\bfW_0^*/L_0^{\prime\perp}$, where $L_0^{\prime\perp}:=\{l\in \bfW_0^*\;|\; {\rm Im} l(L_0')\subset\ZZ\}$. 
Denote by $\lambda_0:\scrb_0^{}\to \scrb_0^\vee$ the polarization such that $\lam_{0/\CC}$ is induced by $\Lam_0$. 

Let $\scrp_0$ be the Poincar\'e bundle over $\scrb_0^{}\times \scrb_0^\vee$. 
After scalar extension to $\CC$ the complex line bundle $\scrp_{0/\CC}$ is characterized by the Riemann form 
\[E_{\scrp_0}((z_1,l_1),(z_2,l_2))={\rm Im}l_1(z_2)-{\rm Im}l_2(z_1)\]
for $(z_i,l_i)\in\bfW_0^{}\times\bfW_0^*$ $(i=1,2)$ relative to $L_0'\times L_0^{\prime\perp}$. 
Set 
\[L_\frka=\frka L_0'\cap\Lam_0^{-1}(\frkd_E\frka L_0^{\prime\perp}). \]
Let $\calz_\frka^\circ$ be the CM abelian scheme such that $\calz_\frka^\circ(\CC)=\bfW_0/L_\frka$. \index{$\calz_\frka^\circ$}
Given a positive rational number $m$ such that $m\Nr_{E/\QQ}(\frka)\subset \ZZ$, we write $m=\sum_i\Tr_{E/\QQ}(a_i^{}b_i^c)$ with $a_i\in \frka^{-1}$ and $b_i\in \frka^{-1}\frkd_E^{-1}$. 
Then the map 
\begin{align*}
[m]&:\bfW_0^{}\to \bfW_0^{}\times\bfW_0^*, & 
w&\mapsto \sum_i(a_iw,\Lam_0(b_iw))
\end{align*}
induces a morphism
\[[m]: \calz_\frka^\circ\rightarrow\scrb_0^{}\times\scrb_0^\vee. 
\]
We redefine the line bundle $\frkL_\frka^m$ by  
\[\frkL_\frka^m:=[m]^*\scrp_0. \]
The Riemann form associated with the complex line bunle $\frkL_\frka^m{}_{/\CC}$ is given by 
\begin{align*}
E_{\frkL_\frka^m}(w_1,w_2)
&=\sum_iE_{\scrp_0}((a_iw_1,\lam_0(b_iw_1)),(a_iw_2,\lam_0(b_iw_2)))\\
&=\sum_i(E_0(b_iw_1,a_iw_2)-E_0(b_iw_2, a_iw_1))
=mE_0(w_1,w_2). 
\end{align*}
We consider the $\scro$-module of global sections $\rmH^0(\calz_\frka^\circ,\frkL_\frka^m)\subset \bfT^m(L_\frka,\CC)$. 

\begin{definition}\label{def:25}
For an $\scro$-algebra $\calo$ we put 
\[\bfT^m(L_\frka,\calo):=\rmH^0(\calz_\frka^\circ,\frkL_\frka^m)\otimes_\scro\calo. \index{$\bfT^m(L_\frka,\calo)$}\]
This provides an integral structure of $\bfT^m(L_\frka,\CC)$ in the sense that  
\[\bfT^m(L_\frka,\calo)\otimes_\calo\CC=\bfT^m(L_\frka,\CC)\]
when $\calo$ is a subring of $\CC$. 
When $t=0$, we set $\bfT^m(L_\frka,\calo)=\calo$. 
\end{definition}





\subsection{Arithmetic Fourier-Jacobi coefficients}\label{ssec:310}

Recall that 
\begin{align*}
Y&=\frko_E^{}e_1, & 
X^\vee&=\frko_Ee_n. 
\end{align*}
Let $\alp$ be a finite id\`{e}le $\alp\in\widehat{E}^\times$ such that $\alp_q=1$ for all $q\in\Box$. 
Put   
\begin{align*}
\bet&=\bfd(\alp), & 
\frka&=\alp\frko_E, &  
Y_\bet^{}&=Y\bet^{-1}=(\frka^c)^{-1} e_1, & 
X_\bet^\vee&=X^\vee \bet^{-1}=\frka e_n.  
\end{align*}
Let 
\[X_\bet^{}=\{y\in Y_E\;|\;\La y,X_\bet^\vee\Ra_{r,1}\subset\ZZ\}=\frkd_E^{-1}(\frka^c)^{-1}e_1\] 
be the $\ZZ$-dual of $X_\bet^\vee$. 
We denote by $i:Y_\bet=(\frka^c)^{-1} e_1\hookrightarrow X_\bet=\frkd_E^{-1}(\frka^c)^{-1} e_1$ the inclusion map. 
Let $I_\bet$ be the subgroup in $X_\bet\otimes_\ZZ Y_\bet$ generated by
\begin{align*}
&x\otimes i(x')-x'\otimes i(x); &
&xb\otimes y-x\otimes b^cy & 
&(x,x'\in X_\bet,\;y\in Y_\bet,\;b\in\frko_E)
\end{align*}
Let $\scrs_\bet=S(X_\bet\otimes_\ZZ Y_\bet)$ be the maximal free quotient of the group $X_\bet\otimes_\ZZ Y_\bet/I_\bet$. 
The $\ZZ$-dual $\scrs_\bet^\vee=\Hom_\ZZ(\scrs_\bet,\ZZ)$ is the module of $\ZZ$-valued symmetric and Hermitian bilinear forms on $\frkd_E^{-1}(\frka^c)^{-1}\times(\frka^c)^{-1}$. 
Therefore $\scrs_\bet^\vee$ is isomorphic to the fractional ideal $\Nr_{E/\QQ}(\frka)=(\frka\frka^c)\cap\QQ$, and hence $\scrs_\bet$ is the fractional ideal $\Nr_{E/\QQ}(\frka)^{-1}$. 
Let 
\[\scrr^0_{\bet,\ell}=\biggl\{\sum_{0\leq m\in p^{-\ell}\scrs_\bet}\tht^m\,q^m\;\Big|\;\tht^m\in\bfT^m(L_\frka,\scro)\biggl\} \]
be the ring of formal $q$-series with $m$th coefficients in $\bfT^m(L_\frka,\scro)$. 

Put $U_\calp^\bet=\bet U\bet^{-1}\cap\calg_0(\widehat{\QQ})$. 
Let $(\scrb_0,\bar\lam_0,\iot_0,\bar\eta)$ be the quadruple associated to $[\ono_{\calg_0}]\in S_{\calg_0}(U_\calp^\bet)$, where $\ono_{\calg_0}$ is the identity element of $\calg(\widehat{\QQ})$. 
Following \S 2.7.4 of \cite{MH2}, we consider the group scheme 
\[\calz_\frka
=\underline{\Hom}_{\frko_E}(X_\bet,\scrb_0^\vee)\times_{\underline{\Hom}_{\frko_E}(Y_\bet,\scrb_0^\vee)}\underline{\Hom}_{\frko_E}(Y_\bet,\scrb_0), \]
which consists of pairs $(c,c^\vee)\in\underline{\Hom}_{\frko_E}(X_\bet,\scrb_0^\vee)\times\underline{\Hom}_{\frko_E}(Y_\bet,\scrb_0)$ such that $c(i(y))=\lam_0(c^\vee(y))$ for $y\in Y_\bet$. 
After scalar extension to $\CC$ we have  
\[\calz_\frka(\CC)=\{(w',w)\in\bfW_0^*/\frkd_E\frka L_0^{\prime\perp}\times\bfW_0/\frka L_0'\;|\;w'-\Lam_0(w)\in\frka L_0^{\prime\perp}\}. \]
We redefine $\calz_\frka^\circ$ as the connected component of $\calz_\frka^{}$. \index{$\calz_\frka^\circ$}
Then $\calz_\frka^\circ(\CC)=\bfW_0/L_\frka$ in view of the exact sequence 
\[0\to \bfW_0/L_\frka\to\calz_\frka(\CC)\to\frka L_0^{\prime\perp}/(\frkd_E\frka L_0^{\prime\perp}+\Lam_0(\frka L_0'))\to 0. \] 

Let $\underline{\scrm}_\bet$ be an $\scrr^0_\bet$-quintuple of level $U$, called the Mumford quadruple at the cusp labeled by $\bet$. 
Fix a $p^\ell$-level structure $\jmath_\scrm$ on $\scrm_\bet$ lifted from the tautological $p^\ell$-level structure $\jmath_{\scrb_0}$ on $\scrb_0$ and the natural
one on $X_\bet^\vee\otimes\GG_m$. 
Then $(\underline{\scrm}_\bet,\jmath_\scrm)$ is an $\scrr^0_{\bet,\ell}$-quintuple, called the Mumford quintuple. 

Let $\d\underline{z}_{W_0}^{}=\d\underline{z}_{W_0}^{}(\vSi)$ be the ordered basis of $\Ome_{\scrb_0}$. 
There are an $\frko_E\otimes_\ZZ\scro$-basis $\bdome_{\scrb_0}$ of $\Ome_{\scrb_0}$ and a pair $(\Ome_{\calg_0,\infty},\Ome_{\calg_0,\frkp})\in\GL_t(\CC)\times\GL_t(\WW)$ such that 
\[\Ome_{\calg_0,\infty}\d\underline{z}_{W_0}^{}=\bdome_{\scrb_0}=\Ome_{\calg_0,\frkp}\bdome(\jmath_{\scrb_0}) \index{$\Ome_\infty$}\index{$\Ome_\frkp$}\]
(see \S 2.8 of \cite{MH2}), where $\WW=\WW(\overline{\FF}_p)$ is the $p$-adic completion of the ring of integers of the maximal unramified extension of $\QQ_p$. 
Assuming that $\Ome_{\calg_0,\infty}$ and $\Ome_{\calg_0,\frkp}$ can be taken to be diagonal, we write 
\begin{align*}
\Ome_{\calg_0,\infty}&=\begin{bmatrix} \Ome_{1,\infty} & & \\ & \ddots & \\ & & \Ome_{t,\infty}\end{bmatrix}, & 
\Ome_{\calg_0,\frkp}&=\begin{bmatrix} \Ome_{1,\frkp} & & \\ & \ddots & \\ & & \Ome_{t,\frkp}\end{bmatrix}. 
\end{align*}

There is an exact sequence of $\frko_E$-modules 
\[0\to\Ome_{\scrb_0}\to\Ome_{\scrm_\bet}\to\Ome_{X_\bet^\vee\otimes\GG_m}\to 0. \]
Let $\d^\times t$ be the canonical $\frko_E$-basis of $\Ome_{X_\bet^\vee\otimes\GG_m}$. 
We denote a canonical lifting of $e_{\vSi^c}\d^\times t$ in $\Ome_{\scrm_\bet}$ by $\d^\times t_-$. 
Choose a lifting $\d^\times t'_+$ of $e_\vSi\d^\times t$ in $\Ome_{\scrm_\bet}$. 
Then $\bdome_\scrm:=(\d^\times t'_+,\bdome_{\scrb_0};\d^\times t_-)\in\cale_{\underline{\scrm}_\bet}$. 

When $f\in\bfM^\calg_{\ulk}(U^\ell_1,R)$ with an $\scro$-subalgebra $R$ of $\CC$, we can consider the analytic Fourier-Jacobi expansion of $f(Z,g):=f(\underline{\cala}(V)_g(Z),\bdome_g(Z))\in M_{\ulk}^\calg(U^\ell_1,R)$ (see Lemma \ref{lem:cpx}). 
We have the following important result:  

\begin{proposition}\label{prop:31}
Let $f\in\bfM^\calg_{\ulk}(U^\ell_1,R)$. 
Then 
\[\prod_{j=1}^t\biggl(\frac{\Ome_{j,\infty}}{2\pi\sqrt{-1}}\biggl)^{k_{j+1}}\cdot\; \FJ^m_\frka(f)_0^{}\in\bfT^m(L_\frka,R). \]
\end{proposition}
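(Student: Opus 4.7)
The plan is to interpret the rescaled Fourier--Jacobi coefficient $\prod_{j=1}^{t}(\Ome_{j,\infty}/(2\pi\sqrt{-1}))^{k_{j+1}}\cdot\FJ^m_\frka(f)_0^{}$ as the $m$-th coefficient of the \emph{arithmetic} (Mumford) $q$-expansion of $f$, which by construction takes values in $\bfT^m(L_\frka,R)$. Proposition \ref{prop:FJ} already yields $\FJ^m_\frka(f)_0^{}\in\bfT^m(L_\frka,\CC)$; the content of the proposition is that after the indicated period correction the coefficient is algebraic and defined over $R$.

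First, I would evaluate the geometric modular form $f$ on the Mumford quintuple $(\underline{\scrm}_\bet,\jmath_\scrm,\bdome_\scrm)$ at the cusp labelled by $\bet=\bfd(\alp)$. By the very construction of the toroidal compactification and the Mumford $q$-expansion principle (which identifies local sections of $\ome_{\ulk}$ near the cusp with elements of $\scrr^0_{\bet,\ell}\otimes_\scro L_{\ulk}$), the expression $f(\underline{\scrm}_\bet,\jmath_\scrm,\bdome_\scrm)$ defines an element of $\scrr^0_{\bet,\ell}\otimes_\scro L_{\ulk}(R)$ whose $m$-th coefficient lies in $\bfT^m(L_\frka,R)\otimes_\scro L_{\ulk}(R)$. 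Applying the highest-weight functional $l_{\ulk}$ yields a scalar arithmetic $q$-expansion whose $m$-th coefficient, call it $\calf^m_\frka$, lies in $\bfT^m(L_\frka,R)$.

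Next, I would identify the complex uniformization of $\underline{\scrm}_\bet$ over $\CC$ with the cusp of $\underline{\cala}(V)_\bet(Z)$ and compare the two ordered bases $\bdome_\scrm$ and $\bdome_\bet(Z)$ of differentials. The $L_0$-block of $\bdome_\scrm$ equals $\bdome_{\scrb_0}=\Ome_{\calg_0,\infty}\,\d\underline{z}_{W_0}$, whereas the corresponding entries of $\bdome_\bet(Z)$ are $2\pi\sqrt{-1}\,\d\underline{z}_{W_0}$; the $Y$- and $X^\vee$-blocks contribute only unit factors to the comparison since $\d^\times t_\pm$ are canonically normalized. Hence the two bases differ, on the $L_0$-directions, by the diagonal matrix $\diag(\Ome_{j,\infty}/(2\pi\sqrt{-1}))_{j=1}^{t}$. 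Invoking the covariance $f(\underline{A},j,h\bdome)=\rho_{\ulk}(\trs h^{-1})f(\underline{A},j,\bdome)$ together with the fact that $l_{\ulk}$ selects the highest weight---so that rescaling the $j$-th $L_0$-basis vector by $\lam_j$ multiplies the scalar $l_{\ulk}$-component by $\prod_j\lam_j^{k_{j+1}}$---produces the identity
\[\calf^m_\frka=\prod_{j=1}^{t}\Bigl(\frac{\Ome_{j,\infty}}{2\pi\sqrt{-1}}\Bigr)^{k_{j+1}}\cdot\FJ^m_\frka(f)_0^{},\]
from which the proposition follows.

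The main obstacle is the bookkeeping in the last two steps: matching the theta trivialization of $\frkL_\frka^m=[m]^*\scrp_0$ given in \S\ref{ssec:39} with the analytic Fourier--Jacobi trivialization coming from Lemma \ref{lem:21}, and verifying that the $\rho_{\ulk}$-action produces the exponents $k_{j+1}$ with the correct sign. This requires carefully unwinding how the functional $l_{\ulk}$ on $L_{\ulk}(R)$ transforms under $\scrt$, how the matrix $T'$ entering the definition $\rho_{\ulk}(\alp,\bet)=\rho_\Bbbk(T'^{-1}\,{}^t\alp^{-1}T',\bet)$ acts on diagonal rescalings arising from the $L_0$-block alone, and confirming that the $Y$- and $X^\vee$-entries of $\bdome_\scrm$ versus $\bdome_\bet(Z)$ indeed match (so no hidden factor involving $k_1$ or $k_{r+s}$ appears).
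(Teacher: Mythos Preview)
Your proposal is correct and follows essentially the same approach as the paper: evaluate $f$ on the Mumford quintuple $(\underline{\scrm}_\bet,\jmath_\scrm,\bdome_\scrm)$, observe that the resulting arithmetic $q$-expansion has coefficients in $\bfT^m(L_\frka,R)\otimes L_{\ulk}(R)$, and then compare the arithmetic basis $\bdome_\scrm$ with the analytic one $\bdome_\bet(Z)$ via the diagonal period matrix $\diag(\Ome_{j,\infty}/(2\pi\sqrt{-1}))$ on the $L_0$-block.

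The one refinement worth noting is how the paper handles the point you flag in your last paragraph, namely the non-canonicity of $\d^\times t'_+$. Rather than arguing directly that the $Y$- and $X^\vee$-blocks ``contribute only unit factors'', the paper passes to the $N_\scrh^1$-coinvariants of $L_{\ulk}$ before applying $l_{\ulk}$; since $l_{\ulk}$ factors through these coinvariants, the image is independent of the choice of lift $\d^\times t'_+$ and depends only on $\bdome_{\scrb_0}$. The key comparison is then recorded as the identity
\[\overrightarrow{\FJ}^m_{\bet}(f)_{N_\scrh^1}=\rho_{\ulk}\biggl(\begin{bmatrix} 1 & \\ & (2\pi\sqrt{-1})^{-1}\Ome_{\calg_0,\infty}\end{bmatrix},1\biggl)\overrightarrow{\FJ}^m_{\bet,1}(f)_{N_\scrh^1},\]
from which the result follows by applying $l_{\ulk}$. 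This is exactly your covariance computation, but the coinvariant framing cleanly disposes of the bookkeeping you were worried about.
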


\begin{proof}
Let $h\in\bfH$. 
We evaluate $f$ at $(\underline{\scrm}_\bet,h^{-1}\jmath_\scrm,\bdome_\scrm)$ to obtain 
\[f(\underline{\scrm}_\bet,h^{-1}\jmath_\scrm,\bdome_\scrm)\in L_{\ulk}(\scrr^0_{\bet,\ell})\otimes_\scro R. \]
Its image $f(\underline{\scrm}_\bet,\jmath_\scrm,\bdome_\scrm)_{N_\scrh^1}$ in $L_{\ulk}(\scrr^0_{\bet,\ell})_{N_\scrh^1}\otimes_\scro R$ only depends on the choice
of $\bdome_{\scrb_0}$. 
We write 
\[f(\underline{\scrm}_\bet,h^{-1}\jmath_\scrm,\bdome_\scrm)_{N_\scrh^1}=\sum_m\overrightarrow{\FJ}^m_{\bet,h}(f)_{N_\scrh^1}\,q^m, \]
where  
\[\overrightarrow{\FJ}^m_{\bet,h}(f)_{N_\scrh^1}\in L_{\ulk}(R)_{N_\scrh^1}\otimes_\scro\bfT^m(L_\frka,\scro). \]

We presently have the following relation between analytic and arithmetic Fourier-Jacobi coefficients: 
\beq
\overrightarrow{\FJ}^m_{h\bet}(f)_{N_\scrh^1}=\rho_{\ulk}\biggl(\begin{bmatrix} 1 & \\ & (2\pi\sqrt{-1})^{-1}\Ome_{\calg_0,\infty}\end{bmatrix},1\biggl)\overrightarrow{\FJ}^m_{\bet,h}(f)_{N_\scrh^1} \label{tag:35}
\eeq 
(cf. \cite{Lan12}, \cite[(3.12)]{MH2}). 
We complete the proof by letting $h=1$ and applying the evaluation map $l_{\ulk}$. 
\end{proof}

We define the Fourier-Jacobi expansion of a $p$-adic modular form $\calf\in V(\calg,U)$ at the $p$-adic cusp $(\bet,h)$ by 
\[\calf(\underline{\scrm}_\bet,h^{-1}\jmath_\scrm)=\sum_m\widehat{\FJ}^m_{\bet,h}(\calf)\,q^m\in\scrr^0_{\bet,\infty}, \]
where $\scrr^0_{\bet,\infty}=\varprojlim_\ell\scrr^0_{\bet,\ell}\otimes_\scro\scro_\frkp$. 
When $\calf$ is the $p$-adic avatar of a classical modular form, we have the following relation between analytic and $p$-adic Fourier-Jacobi coefficients:

\begin{corollary}\label{cor:31}
Let $f\in\bfM^\calg_{\ulk}(U^\ell_1,R)$. 
Then 
\[\widehat{\FJ}^m_{\bet,1}(\widehat{f}_0)=\prod_{j=1}^t\biggl(\frac{\Ome_{j,\infty}}{2\pi\sqrt{-1}\Ome_{j,\frkp}}\biggl)^{k_{j+1}}\cdot\; \FJ^m_\frka(f)_0^{}. \]
\end{corollary}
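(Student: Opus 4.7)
The plan is to compare the $p$-adic differential basis $\bdome(\jmath_\scrm)$ coming from the canonical isomorphism $\ome_{A/\calo}\simeq A[p^\ell]^{\rm{et}}\otimes\calo$ with the basis $\bdome_\scrm=(\d^\times t'_+,\bdome_{\scrb_0};\d^\times t_-)$ used in Proposition \ref{prop:31}, and then chain this comparison with (\ref{tag:35}).

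First I would expand the $p$-adic Fourier-Jacobi series of $\widehat{f}_0$ at the cusp $(\bet,1)$. By the very definition of the $p$-adic avatar,
\[\widehat{f}_0(\underline{\scrm}_\bet,\jmath_\scrm)=l_{\ulk}\bigl(f(\underline{\scrm}_\bet,\jmath_\scrm,\bdome(\jmath_\scrm))\bigr),\]
so that $\widehat{\FJ}^m_{\bet,1}(\widehat{f}_0)$ is obtained by extracting the $q^m$-coefficient after applying $l_{\ulk}$ to $f(\underline{\scrm}_\bet,\jmath_\scrm,\bdome(\jmath_\scrm))_{N^1_\scrh}\in L_{\ulk}(\scrr^0_{\bet,\infty})_{N^1_\scrh}$. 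The vanishing of higher $N^1_\scrh$-components under $l_{\ulk}$ (since $l_{\ulk}$ factors through $L_\ulk(R)_{N^1_\scrh}$) ensures that only the coinvariant part contributes.

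Next I would transport the basis. By construction, the toric components $\d^\times t'_+$ and $\d^\times t_-$ of $\bdome_\scrm$ agree with the corresponding canonical components of $\bdome(\jmath_\scrm)$ arising from the tautological $p^\ell$-level structure on $X_\bet^\vee\otimes\GG_m$, while the abelian component $\bdome_{\scrb_0}$ is related to $\bdome(\jmath_{\scrb_0})$ by the scalar $\Ome_{\calg_0,\frkp}$ defined in \S\ref{ssec:310}. Under the assumed diagonality, this means
\[\bdome_\scrm=h_\frkp\cdot\bdome(\jmath_\scrm),\qquad h_\frkp=\Bigl(\begin{bmatrix}1& \\ &\Ome_{\calg_0,\frkp}\end{bmatrix},1\Bigr)\in\bfH.\]
Applying (G$_2$) and then $l_{\ulk}$, the same argument that extracts the factor $\prod_{j=1}^t\Ome_{j,\infty}^{k_{j+1}}$ from $\rho_\ulk(\mathrm{diag}(1,\Ome_{\calg_0,\infty}),1)$ in the proof of Proposition \ref{prop:31} shows that
\[l_{\ulk}\bigl(f(\underline{\scrm}_\bet,\jmath_\scrm,\bdome(\jmath_\scrm))_{N^1_\scrh}\bigr)=\prod_{j=1}^t\Ome_{j,\frkp}^{-k_{j+1}}\cdot l_{\ulk}\bigl(f(\underline{\scrm}_\bet,\jmath_\scrm,\bdome_\scrm)_{N^1_\scrh}\bigr).\]

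Finally, I would combine this with (\ref{tag:35}) at $h=1$: the right-hand side, after applying $l_{\ulk}$ and extracting the $q^m$-coefficient, equals $\prod_{j=1}^t(2\pi\sqrt{-1}/\Ome_{j,\infty})^{k_{j+1}}\cdot\FJ^m_\frka(f)_0$. Substituting this into the displayed identity yields the claimed formula.

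The main obstacle is the careful bookkeeping in the third step: verifying that the action of $\rho_\ulk$ at the element $h_\frkp$, together with the convention $\rho_\ulk(\alp,\bet)=\rho_\Bbbk(T'^{-1}\trs\alp^{-1}T',\bet)$ and the fact that $l_\ulk$ is the highest-weight-$(-\ulk)$ vector, produces precisely the exponent $-k_{j+1}$ on each $\Ome_{j,\frkp}$. This is exactly the computation already carried out implicitly in Proposition \ref{prop:31}; once one has that, the corollary follows by transitivity of the basis change $\bdome(\jmath_\scrm)\leadsto\bdome_\scrm\leadsto\bdome_g(Z)$.
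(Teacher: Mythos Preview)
Your proposal is correct and follows essentially the same approach as the paper: compare $\bdome(\jmath_\scrm)=(\d^\times t'_+,\bdome(\jmath_{\scrb_0});\d^\times t_-)$ with $\bdome_\scrm=(\d^\times t'_+,\bdome_{\scrb_0};\d^\times t_-)$ via the defining relation $\bdome_{\scrb_0}=\Ome_{\calg_0,\frkp}\,\bdome(\jmath_{\scrb_0})$, use (G$_2$) together with the highest-weight property of $l_{\ulk}$ to extract the factor $\prod_{j=1}^t\Ome_{j,\frkp}^{-k_{j+1}}$, and then invoke (\ref{tag:35}) at $h=1$. The paper's proof is exactly this computation written out line by line, so your outline is a faithful sketch of it.
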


\begin{proof}
Since $\bdome(\jmath_\scrm)=(\d^\times t'_+,\bdome(\jmath_{\scrb_0});\d^\times t_-)$, we have 
\begin{align*}
\widehat{f}_0(\underline{\scrm}_\bet,h^{-1}\jmath_\scrm)
&=l_{\ulk}(f(\underline{\scrm}_\bet,h^{-1}\jmath_\scrm,\bdome(\jmath_\scrm)))\\
&=l_{\ulk}(f(\underline{\scrm}_\bet,h^{-1}\jmath_\scrm,(\d^\times t'_+,\Ome_{\calg_0,\frkp}^{-1}\bdome_{\scrb_0};\d^\times t_-)))\\
&=l_{\ulk}\biggl(\rho_{\ulk}\biggl(\begin{bmatrix} 1 & \\ & \Ome_{\calg_0,\frkp}\end{bmatrix},1\biggl)f(\underline{\scrm}_\bet,\jmath_\scrm,\bdome_\scrm)\biggl)\\
&=\prod_{j=1}^t\Ome_{j,\frkp}^{-k_{j+1}}\cdot\;l_{\ulk}(f(\underline{\scrm}_\bet,h^{-1}\jmath_\scrm,\bdome_\scrm)), 
\end{align*}
from which we get $\widehat{\FJ}^m_{\bet,h}(\widehat{f}_0)=\prod_{j=1}^t\Ome_{j,\frkp}^{-k_{j+1}}\cdot\;l_{\ulk}(\overrightarrow{\FJ}^m_{\bet,h}(f))$ for $h\in\bfH$. 
The proof is complete by (\ref{tag:35}). 
\end{proof}




\section{Hida theory for $\GU(r,1)$}


In this section we continue to assume $s=1$ and $n=r+1$. 

\subsection{Open compact subgroups}
Let $l$ be a split prime, i.e., $l\frko_E=\frkl\frkl^c$. 
Then $V\otimes_\QQ\QQ_l=V_\frkl^{}\oplus V_{\frkl^c}$, where $V_\frkl$ and $V_{\frkl^c}$ are vector spaces over $\QQ_l$ and in duality under $\La\;,\Ra_{r,s}$. 
The projection $g\mapsto g|_{V_\frkl}$ gives an isomorphism $\imath_\frkl:G(\QQ_l)\stackrel{\sim}{\to}\GL_n(\QQ_l)$. 
Fix a maximal compact subgroup $\calk=\prod_q\calk_q^{}$ of $\calg(\widehat{\QQ})$ such that $\imath_\frkl(\calk_l)=\GL_{n}(\frko_{E,\frkl})$ for every split prime $l$. 

Define open subgroups of $\GL_n(\frko_{E,\frkl})$ by \index{$\calk_0^{(n)}(\frkp^\ell)$}\index{$\calk_1^{(n)}(\frkl^k)$}
\begin{align*}
\calk_0^{(n)}(\frkl^k)&=\{(g_{ij})\in\GL_n(\frko_{E,\frkl})\;|\;g_{nj}\in \frkl^k\text{ for }j=1,2,\dots,n-1\}, \\ 
\calk_1^{(n)}(\frkl^k)&=\{(g_{ij})\in\calk_0^{(n)}(\frkl^k)\;|\;g_{nn}-1\in \frkl^k\}. 
\end{align*}
Fix a positive integer $N$ whose prime factors are split in $E$. 
We take an ideal $\frkN$ of $\frko_E$ such that $\frko_E/\frkN\simeq\ZZ/N\ZZ$. 
Let  
\begin{align*}
\calk_0(\frkN)&=\{(g_q)\in\calk\;|\; \imath_\frkl(g_l^{})\in\calk_0^{(n)}(N\frko_{E,\frkl})\text{ for }l|N\}, \\ \calk_1(\frkN)&=\{(g_q)\in\calk\;|\; \imath_\frkl(g_l^{})\in\calk_1^{(n)}(N\frko_{E,\frkl})\text{ for }l|N\}
\end{align*} 
be open compact subgroups of $\calg(\widehat{\QQ})$. 

For each positive integer $\ell$ we define open subgroups of $\GL_n(\frko_{E,\frkp})$ by \index{$\cali_0^{(n)}(\frkp^\ell)$}
\begin{align*}
\cali_0^{(n)}(\frkp^\ell)&=\{(g_{ij})\in\GL_n(\frko_{E,\frkp})\;|\;g_{ij}\in \frkp^\ell\text{ for }i>j\}, \\ 
\cali_1^{(n)}(\frkp^\ell)&=\{(g_{ij})\in\cali_0^{(n)}(\frkp^\ell)\;|\;g_{ii}-1\in \frkp^\ell\text{ for }i=1,2,\dots,n\} 
\end{align*}
and open compact subgroups of $\calg(\widehat{\QQ})$ by \index{$\calk_0(p^\ell\frkN)$, $\calk_1(p^\ell\frkN)$}
\begin{align*}
\calk_0(p^\ell\frkN)&=\{(g_q)\in\calk_0(\frkN)\;|\; \imath_\frkp(g_p^{})\in \cali_0^{(n)}(\frkp^\ell)\}, \\ 
\calk_1(p^\ell\frkN)&=\{(g_q)\in\calk_1(\frkN)\;|\; \imath_\frkp(g_p^{})\in \cali_1^{(n)}(\frkp^\ell)\}. 
\end{align*}

Let $\calo$ be a $p$-adic ring. 
Put $\Lam=\calo\powerseries{\ZZ_p^\times}$. 
We say that an $\calo$-algebra homomorphism $Q:\Lam\to\overline{\QQ}_p$ is locally algebraic if its restriction to $\ZZ_p^\times$ is of the form $Q(t)=t^{k_Q}\eps_Q(t)$ with $k_Q\in\ZZ$ and a finite order character $\eps_Q$. 
Let $\frkX_\Lam$ be the set of locally algebraic homomorphisms of $\Lam$. 

Put $\Lam_n=\calo\powerseries{T_n(\ZZ_p)}$. \index{$\Lam_n$}
Let $\calr$ be a normal ring finite flat over $\Lam_n$. 
We say that an $\calo$-algebra homomorphism $\ulQ :\calr\to\overline{\QQ}_p$ is locally algebraic if its restriction to $T_n(\ZZ_p)$ is of the form $\ulQ(t)=\prod_{i=1}^nQ_i(t_i)$ with $Q_i\in\frkX_\Lam$ for $t=\diag(t_1,\dots,t_n)$. 
We write $\ulQ=(Q_1,Q_2,\dots,Q_n)$ and call $k_{\ulQ}:=(k_{Q_1},\dots,k_{Q_n})\in\ZZ^n$ the weight of $\ulQ$. \index{$k_{\ulQ}$, $\eps_{\ulQ}$}
Let $\frkX_\calr$ be the set of locally algebraic homomorphisms of $\calr$. 

For $\ulQ\in\frkX_\calr$ we put $\ell=\max\{1,c(\eps_{Q_1}),c(\eps_{Q_2}),\dots,c(\eps_{Q_n})\}$ and define finite order characters of $\calk_0(p^\ell\frkN)$ by
\[\eps_{\ulQ}(t_p)=\eps_{Q_1}(t_{p,1})\eps_{Q_2}(t_{p,2})\cdots\eps_{Q_n}(t_{p,n}), \]
where $\imath_\frkp(t_p)=\diag(t_{p,1},t_{p,2},\dots,t_{p,n})$ with $t_{p,i}\in\ZZ_p^\times$. 
Let $\wp_{\ulQ}$ be the ideal of $\calr$ corresponding to $\ulQ$ and $\calr(\ulQ)$ the image of $\calr$ under $\ulQ$. \index{$\wp_{\ulQ}$, $\calr(\ulQ)$}



\subsection{$\Lam_n$-adic families on $\GU(r,1)$}\label{ssec:29}

Let $F$ be a number field which contains $E'$ and the values of $\chi$. 
We denote the $p$-adic closure of $\iot_p(\iot_\infty(\frko_F))$ in $\CC_p$ by $\calo$. 
Put $\scro_F=\frko_{F,(p)}$. 
For a character $\chi$ of $\calk_0(p^\ell\frkN)$ we define 
\begin{align*}
S_{\ulk}^\calg(p^\ell\frkN,\chi,\scro_F)&=\{f\in S_{\ulk}^\calg(\calk_1(p^\ell\frkN),\scro_F)\;|\;r(u)f=\chi(u)^{-1}f\;(u\in\calk_0(p^\ell\frkN))\}, \\
S_{\ulk}^\calg(p^\ell\frkN,\chi;\calo)&=S_{\ulk}^\calg(p^\ell\frkN,\chi;\scro_F)\otimes\calo. 
\end{align*}
For a local ring $R$ finite flat over $\calo$ we set 
\[\bdse S_{\ulk}^\calg(p^\ell\frkN,\chi;R)=\bdse S_{\ulk}^\calg(p^\ell\frkN,\chi;\calo)\otimes R. \]

Let $\Box$ be the set of prime factors of $pN$. 
We denote the class number of $E$ by $h_E$. 
Fix a set $C_E:=\{\frka_1,\frka_2,\dots,\frka_{h_E}\}$ of representatives of the ideal class group of $E$ which consists of prime-to-$pN$ fractional ideals of $\frko_E$. 
Take a finite id\`{e}le $\alp_i\in\widehat{E}^\times$ so that $\alp_i\frko_E=\frka_i$ and $\alpha_q=1$ for all $q\in\Box$ for $i=1,2,\dots,h_E$. 

We consider the finite set 
\[\underline{\cald}(U)=\calg(\QQ)\bsl\calg(\widehat{\QQ})/U  \]
of double cosets. 
We can take the set $\cald(U)$ of representatives for $\underline{\cald}(U)$ from $\calm(\widehat{\QQ}^\Box)$ by strong approximation in special unitary groups. 
For $f\in S_{\ulk}^\calg(p^\ell\frkN,\chi;\calo)$ and $g\in\cald(U)$ we write 
\[\FJ^m_g(f)_0^{}(w)=l_{\ulk}\left(\overrightarrow{\FJ}^m_g(w,f)\right)=\ell_{\ulk}\left(\bfv_{\ulk}\otimes\overrightarrow{\FJ}^m_g(w,f)\right). \]
One can similarly define the CM abelian scheme $\calz_g^\circ$ and the line bundle $\frkL_g^m$ and extend Proposition \ref{prop:31} and Corollary \ref{cor:31} to Fourier-Jacobi coefficients $\FJ^m_g(f)_0^{}$ at those cusps (see \S 2.7.4 of \cite{MH2}). 

\begin{remark}\label{rem:42}
If $n$ is even and $\nu(U)=\widehat{\ZZ}^\times$, then we can take $\cald(U)$ from the set $\{\bfd(\alp_i)\}_{i=1}^{h_E}$. 
If $n$ is odd and $f\in S_{\ulk}^\calg(p^\ell\frkN,\chi;\calo)$ has central character, then $f$ is uniquely determined by $\overrightarrow{\FJ}_{\frka_i}^m(w,f)$ for $m\in\scrs_{\frka_i}^+$ and $i=1,2,\dots,h_E$ (see Remark \ref{rem:20}). 
\end{remark}


Taking Corollary \ref{cor:31} into account, we define the notion of $p$-adic families of cusp forms in terms of Fourier-Jacobi coefficients.  

\begin{definition}
An ordinary $\Lam_n$-adic cusp form $\bdsf$ on $\calg$ of tame level $\frkN$ and nebentypus $\chi$ is a collection of theta functions $\Tht_g^m\in\rmH^0(\calz_g^\circ,\frkL_g^m)\otimes_\scro\Lam_n^{}$ indexed by $g\in\cald(U)$ and $0<m\in\scrs_g$ 
 such that for each sufficiently regular weight $\ulQ\in\frkX_{\Lam_n}$ there is a unique ordinary Picard cusp form 
\[\bdsf_{\ulQ}\in \bdse S_{k_{\ulQ}}^\calg(p^{\ell_{\ulQ}}\frkN,\chi\eps_{\ulQ};\Lam_n(\ulQ))\] 
which satisfies
\[\prod_{j=1}^{r-1}\biggl(\frac{\Ome_{j,\infty}}{2\pi\sqrt{-1}\Ome_{j,\frkp}}\biggl)^{k_{Q_{j+1}}}\cdot\;\FJ_g^m(\bdsf_{\ulQ})_0^{}=\ulQ(\Tht_g^m). \]

\end{definition}
\begin{remark}
\begin{enumerate}
\item The above definition of $\Lam_n$-adic forms is a restatement of \cite[Definition 4.24]{MH2}. 
\item When $r=2$, we will write 
\begin{align*}
\Omega_\infty&=\Omega_{\calg_0,\infty}\in \CC^\times, & 
\Omega_\frkp&=\Omega_{\calg_0,\frkp}\in \WW^\times
\end{align*} 
for the complex and $p$-adic CM periods. \index{$\Ome_\infty$}\index{$\Ome_\frkp$}
We remark that $(\Omega_\infty,\Omega_\frkp)$ are precisely the periods defined in \cite[(4.4 a,b) page 211]{HT93} up to a $p$-adic unit. 
\end{enumerate}
\end{remark}


\subsection{Classicality}

Let $\bfS^\calg(\frkN,\chi,\Lambda_n)$ be the $\Lambda_n$-module of ordinary $\Lambda_n$-adic cusp forms on $\calg$ of tame level $\frkN$ and nebentypus $\chi$, which is equipped with the action of $\Lambda_n$-adic Hecke algebra $\bfT(N,\chi,\Lambda_n)$. 
The ordinary projector $\bdse=\displaystyle\lim_{m\to\infty}\calu_p^{m!}$ converges in $\End_{\Lambda_n}(\bfS^\calg(\frkN,\chi,\Lambda_n))$. 
Put 
\[\bfS_\ord^\calg(\frkN,\chi,\Lambda_n)=\bdse\bfS^\calg(\frkN,\chi,\Lambda_n). \]
Let $\bfT_{\ord}(N,\chi,\Lam_n)$ be the image of $\bfT(N,\chi,\Lambda_n)$ in $\End_{\Lambda_n}(\bfS_\ord^\calg(\frkN,\chi,\Lambda_n))$. 

For a normal ring $\calr$ finite flat over $\Lam_n$ we put 
\[\bfS^\calg_{\ord}(\frkN,\chi,\calr)=\bfS^\calg_{\ord}(\frkN,\chi,\Lambda_n)\otimes_{\Lam_n}\calr. \index{$\bfS^\calg_{\ord}(\frkN,\chi,\calr)$}\]
The set of classical points is defined by 
\[\frkX^\cls_\calr=\{\ulQ\in\frkX_\calr^{}\;|\;k_{Q_1}\leq\dots\leq k_{Q_r}<k_{Q_{r+1}}-r\}. \index{$\frkX^\cls_\calr$}\]

\begin{theorem}[Control theorem]\label{thm:41}
\begin{enumerate}
\item $\bfS_\ord^\calg(\frkN,\chi,\calr)$ is an $\calr$-module of finite rank. 
\item For $\ulQ\in \frkX^{\rm cls}_\calr$ the specialization induces an isomorphism 
\begin{align*}
\bfS_\ord^\calg(\frkN,\chi,\calr)/\wp_{\ulQ}&\stackrel{\sim}{\to} \bdse S_{k_{\ulQ}}^\calg(p^\ell\frkN,\chi\eps_{\ulQ};\calr(\ulQ)), \\
\bdsf&\mapsto \bdsf_{\ulQ}. \index{$\bdsf_{\ulQ}$}
\end{align*}
\end{enumerate}
\end{theorem}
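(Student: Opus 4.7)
The plan is to carry out the standard Hida-theoretic control theorem argument, using the Fourier-Jacobi formalism of Sections 3 and 4.2 to translate everything into statements about the Igusa tower and ordinary $p$-adic cusp forms. Concretely, Corollary \ref{cor:31} and the $q$-expansion principle at each cusp $g \in \cald(U)$ set up a Hecke-equivariant identification between the data $\{\Theta^m_g\}$ defining a $\Lambda_n$-adic family and a $\Lambda_n$-valued ordinary $p$-adic cusp form $\widehat{\bdsf} \in \bdse V(\calg,U)$ transforming under $\bfT$ by $\chi$ and the tautological $\Lambda_n$-character. Under this dictionary, $\bfS^\calg_{\ord}(\frkN,\chi,\Lam_n)$ is realized as a submodule of $\bdse V(\calg,U)$ cut out by the appropriate isotypic and cuspidality conditions, and the specialization map $\bdsf \mapsto \bdsf_{\ulQ}$ corresponds to $p$-adic evaluation at $\ulQ$.

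For statement (1), I would show that $\bdse V(\calg,U)^{\bfN,\,\chi}_{\cusp}$ is a finitely generated $\Lam$-module over $\Lam = \calo\powerseries{\bfT}$. The standard Hida argument proceeds through three steps: (a) $\rmH^i(T_{0,1},\omega_{\ulk}(-\cald)) = 0$ for $i \geq 1$, since $T_{0,1}$ is the complement of the vanishing locus of a lift of a power of the Hasse invariant (affine over the minimal compactification), which gives the flatness of $V_{\ell,m}^\bfN$ in $m$; (b) $\dim_{\overline{\FF}_p}(\bdse V_{\ell,1}^{\bfN,\,\chi})_{\cusp}$ is bounded independently of $\ell$, since the $\calu_p$-operator improves level; (c) Nakayama's lemma over $\Lam$ promotes this to finite generation integrally. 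Base changing along the finite flat map $\Lam_n \to \calr$ preserves finite generation, giving (1).

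For statement (2), injectivity of $\bfS^\calg_\ord(\frkN,\chi,\calr)/\wp_{\ulQ} \to \bdse S^\calg_{\ulk_{\ulQ}}(p^\ell\frkN,\chi\eps_{\ulQ};\calr(\ulQ))$ follows from the Fourier-Jacobi $q$-expansion principle together with the torsion-freeness of $\bfS^\calg_\ord(\frkN,\chi,\calr)$ over $\calr$ (consequence of (1) combined with the embedding $\bdsf \mapsto \prod_{g,m} \Theta^m_g$ into a direct product of finite free modules over the normal ring $\calr$): if all $\ulQ(\Theta_g^m)$ vanish then $\bdsf \in \wp_{\ulQ}\bfS^\calg_\ord$. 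Surjectivity is the substantive content: given an ordinary classical cusp form $f$ of weight $\ulk_{\ulQ}$ and appropriate nebentypus, one must lift its $p$-adic avatar $\widehat{f}_0 \in \bdse V(\calg,U)$ to a Hida family. This uses (1) together with Hida's interpolation argument: the finite-rank $\Lam_n$-module $\bfS^\calg_\ord(\frkN,\chi,\Lam_n)$ surjects after localization at the residual Hecke system of $f$, and classicality (see below) ensures this specializes to $\bdse S^\calg_{\ulk_{\ulQ}}$.

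The principal obstacle is the classicality statement: every $\bdse$-ordinary $p$-adic cusp form of dominant weight $\ulk$ satisfying the regularity hypothesis $k_r < k_{r+1} - r$ defining $\frkX^{\cls}_\calr$ arises as $\widehat{f}_0$ for a unique classical form $f$. This is where the precise gap $k_{r+1} - k_r > r$ enters: it is exactly the numerical condition that forces the ``small slope'' analytic continuation argument from the ordinary locus to the entire toroidal compactification to succeed, either via an Eichler-Shimura-type spectral sequence comparing de Rham and coherent cohomology, or via direct overconvergent analytic continuation à la Pilloni-Stroh and Bijakowski-Pilloni-Stroh in the unitary PEL setting. For $\calg = \GU(2,1)$, which is the case actually used in the body of the paper, this classicality is established in the prequel \cite{MH}, so the argument above reduces to routine bookkeeping in that case.
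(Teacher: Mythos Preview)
Your approach is essentially the same as the paper's: both defer (1) to Hida's established results and identify the classicality step (ordinary $p$-adic $\Rightarrow$ classical under the gap $k_{Q_{r+1}}-k_{Q_r}>r$) as the crux of (2), invoking the overconvergent methods of Pilloni and Bijakowski--Pilloni--Stroh. The paper is terser and sources the classicality in two explicit steps---ordinary of cohomological weight implies overconvergent by \cite{P12BSMF}, then overconvergent with the gap is classical by \cite{BPS}---so your final attribution of this step to the prequel \cite{MH} (which treats $\GL_2$ triple products, not unitary Shimura varieties) is misplaced.
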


\begin{proof} 
Part (1) is proved by Hida (see also Theorem 4.21 of \cite{MH2}). 
Part (2) is also due to Hida when $k_{\ulQ}$ is very regular (c.f. Corollary 4.23 of \cite{MH2}). 
In general, the specialization $\bdsf_{\ulQ}$ of $\bdsf$ at $\ulQ\in \frkX^\cls_\calr$ is an ordinary $p$-adic modular form. 
Ordinary $p$-adic modular forms of cohomological weight are overconvergent by Proposition 5.4 and Th\'eor\`eme A.3 of \cite{P12BSMF}, and any overconvergent modular form of weight $k_{\ulQ}=(k_{Q_1},\dots,k_{Q_r};k_{Q_{r+1}})$ with $k_{Q_{r+1}}-k_{Q_r}>r$ is classical by Th\'eor\`eme and Remarque, p.977 of \cite{BPS}. 
We therefore find that $\bdsf_{\ulQ}\in \bdse S_{k_{\ulQ}}^\calg(p^\ell\frkN,\chi\eps_{\ulQ};\calr(\ulQ))$ if $k_{Q_{r+1}}-k_{Q_r}>r$. 
\end{proof}

\begin{definition}[Hida families] 
We call a non-zero $\calr$-adic cusp form $\bdsf\in\bfS^\calg_{\ord}(\frkN,\chi,\calr)$ an $\calr$-adic Hida family if $\bdsf_{\ulQ}$ is a simultaneous eigenform of Hecke operators away from $pN$ for $\ulQ\in\frkX^{\rm cls}_\calr$. 
Let 
\[\frkX_{\bdsf}'=\{\ulQ\in\frkX^{\rm cls}_\calr\;|\;\ulQ(\bdsf)\neq 0\}\index{$\frkX_{\bdsf}'$}\] 
be a Zariski dense subset of the rigid generic fiber of $\mathrm{Spf}\calr$. 
\end{definition}

\begin{lemma}\label{lem:41}
Assume that the space of cuspidal automorphic forms on $\calg$ has multiplicity one. 
If $\bdsf\in \bfS^\calg_{\ord}(\frkN,\chi,\calr)$ is an $\calr$-adic Hida family, then $\vPh_{k_{\ulQ}}(\bdsf_{\ulQ})_0^{}$ (see Remark \ref{rem:21}) generates an irreducible $p$-ordinary cuspidal automorphic representation of $\calg(\AA)$ for $\ulQ\in\frkX_{\bdsf}'$. 
\end{lemma}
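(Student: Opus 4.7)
The plan is to proceed in three stages: produce a classical ordinary Hecke eigenform at $\ulQ$ via the control theorem, package it as a scalar automorphic form and use multiplicity one to extract irreducibility, and finally match Hecke eigenvalues to verify the $p$-ordinarity condition of Definition \ref{def:23}.

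First I would apply Theorem \ref{thm:41}(2): since $\ulQ\in\frkX_{\bdsf}'\subset\frkX^{\rm cls}_\calr$, the specialization map sends $\bdsf$ to a nonzero element
\[\bdsf_{\ulQ}\in\bdse S_{k_{\ulQ}}^\calg(p^{\ell_{\ulQ}}\frkN,\chi\eps_{\ulQ};\calr(\ulQ)),\]
and because $\bdsf$ is assumed to be a simultaneous eigenform for $\bfT^\calg(N,\calr)$ away from $pN$, so is $\bdsf_{\ulQ}$ for $\bfT^\calg(N,\calr(\ulQ))$ with eigensystem $\lam_{\bdsf}\otimes_{\calr,\ulQ}\mathrm{id}$. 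Then I would form $\vPh_{k_{\ulQ}}(\bdsf_{\ulQ})_0=\vPh_{k_{\ulQ}}(\bdsf_{\ulQ})_{\bfv_{k_{\ulQ}}}\in\scra^0(\calg)$; this is nonzero because $\bfv_{k_{\ulQ}}$ is the highest weight vector of $L_{k_{\ulQ}^\vee}$ and $\vPh_{k_{\ulQ}}(\bdsf_{\ulQ})\ne 0$ gives a $\calk_\infty$-equivariant embedding $L_{k_{\ulQ}^\vee}(\CC)\hookrightarrow\scra(\calg)$ as discussed in \S\ref{ssec:34}.

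Next let $\pi_{\ulQ}$ be the closed $\calg(\AA)$-subrepresentation of $\scra^0(\calg)$ generated by $\vPh_{k_{\ulQ}}(\bdsf_{\ulQ})_0$. Decompose its closure into irreducible cuspidal constituents; each constituent has the same system of Hecke eigenvalues away from $pN$ as $\bdsf_{\ulQ}$, since these eigenvalues are determined by the prime-to-$pN$ spherical Hecke action, which is preserved under taking subrepresentations. The multiplicity one hypothesis on $\calg$ then forces every such constituent to be isomorphic to a single fixed irreducible $\pi$ appearing with multiplicity one in the discrete spectrum; this gives $\pi_{\ulQ}\simeq\pi$ and hence irreducibility. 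For the $p$-ordinarity I would invoke the normalization of $\calu_p(\alp_i)$ and $\calu_p(\bet_j)$ in \S\ref{ssec:25}: since $\bdsf_{\ulQ}$ lies in the image of $\bdse=\lim\calu_p^{m!}$, all eigenvalues $\calu_p(\alp_i),\calu_p(\bet_j)$ acting on $\bdsf_{\ulQ}$ are $p$-adic units. By Proposition \ref{prop:81} such a vector in the local principal series $\pi_{\ulQ,p}$ is uniquely the ordinary vector $h_{\pi_{\ulQ,p}}^\ord$, and unwinding the normalization factors $p^{-ri+\sum_{l=1}^i k_{r+s+1-l}}$ and $p^{-sj-\sum_{l=1}^j k_l}$ translates the unit eigenvalue property to the precise matching $\ord_p\alp_i=-\lam_{n-i+1}$ of Satake parameters with the Harish-Chandra parameter required by Definition \ref{def:23}.

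The main obstacle is the multiplicity-one step applied in the setting of vector-valued holomorphic forms on the similitude group $\calg=\GU(r,1)$: one must check that the Hecke eigensystem at the spherical places, together with the archimedean $\calk_\infty$-type $\rho_{k_{\ulQ}^\vee}$ fixed by insisting that $\vPh_{k_{\ulQ}}(\bdsf_{\ulQ})$ embeds $L_{k_{\ulQ}^\vee}(\CC)$, is enough to single out a unique discrete automorphic representation when the ramified local components are merely constrained by being $\calk_1(p^{\ell_{\ulQ}}\frkN)$-typic with nebentypus $\chi\eps_{\ulQ}$. Granted multiplicity one as hypothesized, this follows from the standard strong multiplicity one argument; the bookkeeping around the $p$-adic normalization of $\calu_p(\alp_i),\calu_p(\bet_j)$ in the second step is essentially the content of Proposition \ref{prop:81}, so no new analytic input is needed beyond the results already in hand.
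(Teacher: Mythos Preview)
Your argument has a genuine gap at the irreducibility step. You write that multiplicity one ``forces every such constituent to be isomorphic to a single fixed irreducible $\pi$,'' and later that this ``follows from the standard strong multiplicity one argument.'' But multiplicity one and strong multiplicity one are different statements, and for unitary groups strong multiplicity one fails in general: two non-isomorphic cuspidal representations can share Hecke eigenvalues at all unramified places if they lie in the same A-packet. Knowing only that the ramified components are $\calk_1(p^{\ell_{\ulQ}}\frkN)$-typic with a given nebentypus, together with the archimedean $\calk_\infty$-type, does not by itself rule out distinct packet members.

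The paper closes this gap by pinning down the local component at \emph{every} place before invoking multiplicity one. The unramified Hecke eigenvalues determine the A-packet. At each prime $q\mid N$ the standing hypothesis (splt) says $q$ is split in $E$, so $\calg(\QQ_q)\simeq\GL_n(\QQ_q)$ and the local A-packet is a singleton; the same holds at the split prime $p$. At $\infty$ the constraint that $\til\pi_\infty$ contains the holomorphic $\calk_\infty$-type of highest weight $-k_{\ulQ}$ forces $\til\pi_\infty$ to be the holomorphic discrete series in its packet. Thus the isomorphism class of any irreducible constituent $\til\pi$ is completely determined by $\bdsf_{\ulQ}$, and \emph{now} multiplicity one implies the generated representation is $\til\pi$ itself. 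Your treatment of $p$-ordinarity via Proposition~\ref{prop:81} is fine; it is the packet argument at the finite ramified places that you are missing.
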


\begin{proof}
Let $\til\pi\simeq\otimes_v'\til\pi_v^{}$ be an irreducible constituent of the cuspidal automorphic representation generated by $\vPh_{k_{\ulQ}}(\bdsf_{\ulQ})_0^{}$. 
If $l$ and $Np$ are coprime, then $\til\pi_l$ is determined by the Hecke eigenvalues of $\bdsf_{\ulQ}$. 
Thus $\til\pi$ belongs to the A-packet associated to these eigenvalues. 
Moreover, $\til\pi_q$ is uniquely determined for each prime factor $q$ of $N$ by the assumption on $N$ as the associated local A-packet is a singleton for each split prime. 
Since $\til\pi_\infty$ is the holomorphic discrete series with highest weight $-k_{\ulQ}$, the equivalence class $\til\pi$ is determined by $\bdsf_{\ulQ}$. 
Thus $\vPh_{k_{\ulQ}}(\bdsf_{\ulQ})_0^{}$ generates an irreducible representation of $\calg(\AA)$ by the multiplicity one for unitary groups. 
\end{proof}

\begin{remark}\label{rem:41}
When $n=2$, the multiplicity one holds in view of the accidental isomorphism (see Remark \ref{rem:51}). 
Moreover, the group $\U(1,1)$ has multiplicity one (see \S 6.2 of \cite{LM15}). 
When $n=3$, the multiplicity one holds by Theorem 13.3.1 of \cite{Rog90} and Proposition \ref{prop:51}(\ref{prop:512}) below. 
\end{remark}

If $\bdsf$ is a Hida family, then $\bdsf_{\ulQ}$ is a simultaneous eigenform of the operators $\calu_p(\alp_1)$ and $\calu_p(\bet_j)$ $(1\leq j\leq r)$ for $\ulQ\in\frkX_{\bdsf}'$, and we denote by $\til\bdpi_{\ulQ}$ the cuspidal automorphic representation of $\calg(\AA)$ associated to $\bdsf_{\ulQ}$, which is $p$-ordinary with respect to $\iot_p$ (see Definition \ref{def:23}). \index{$\til\bdpi_{\ulQ}$}
Let $\frkN_{\til\bdpi_{\ulQ}}$ be the largest ideal of $\frko_E$ such that there exists a non-zero automorphic form in $\til\bdpi_{\ulQ}$ fixed by $\calk_1(\frkN_{\til\bdpi_{\ulQ}})$. \index{$N_\pi,\frkN_\pi$}
Such an automorphic form corresponds to an essential vector at each split prime $l$ (see Definition \ref{def:81}). 

\begin{definition}\label{def:41}
The subset $\frkX_{\bdsf}''$ consists of $\ulQ\in\frkX_{\bdsf}'$ such that $\bdsf_{\ulQ}$ is new outside $p$, namely, $\bdsf_{\ulQ}$ corresponds to an essential vector at every split prime $l\neq p$. \index{$\frkX_{\bdsf}''$}
Let us fix a factorization $\til\bdpi_{\ulQ}^{}\simeq\otimes_v'\til\bdpi_{\ulQ,v}^{}$. 
For $\ulQ\in\frkX_{\bdsf}''$ we can write $\vPh_{k_{\ulQ}}(\bdsf_{\ulQ})_0^{}=\otimes_vW_v$ by multiplicity one for new and ordinary vectors (see \cite[Theorem 5.3]{Hid04}), where $W_p=h_{\til\bdpi_{\ulQ},p}^{\ord}\in\til\bdpi_{\ulQ,p}$ is the normalized ordinary vector. 
We call $\bdsf_{\ulQ}^\circ\in S_{k_{\ulQ}}^\calg(\frkN_{\til\bdpi_{\ulQ}},\chi\eps_{\ulQ};\calr(\ulQ))$ the newform associated to $\bdsf_{\ulQ}^{}$ if $\vPh_{k_{\ulQ}}(\bdsf_{\ulQ}^\circ)_0^{}=W_p^\circ\otimes\bigotimes_{v\neq p}W_v^{}$, where $W_p\in\til\bdpi_{\ulQ,p}$ is the normalized essential vector. \index{$\bdsf_{\ulQ}^\circ$}
\end{definition}

\subsection{Algebraic representations of $\GL_2\times\GL_1$}\label{ssec:31}

In our later discussion we will specifically consider the case in which $r=2$ and $s=1$. 
We give an explicit expression of the schematical module $L_{\ulk}$ and the pairing $\ell_{\ulk}$ introduced in \S \ref{subsec:alg} in this case. 

For a commutative ring $R$ of characteristic $0$ and a non-negative integer $\kap$ we write $\call_\kap(R)$ for the space of two variable homogeneous polynomials of degree $\kap$ over $R$. 
The group $\GL_2(R)$ acts on this space $\call_\kap(R)$ by 
\[\rho_\kap(\alp)P(X,Y)=P((X,Y)\alp). \]
For $\lam\in\ZZ$ we will write $\rho_{(\kap+\lam,\lam)}=\rho_\kap\otimes\det^\lam$. 
Define the perfect pairing 
\[\ell_\kap:\call_\kap(R)\otimes \call_\kap(R)\to R\] 
by 
\beq
\ell_\kap(X^{j}Y^{\kap-j}\otimes X^{\prime i}Y^{\prime \kap-i})
=\begin{cases}
(-1)^i\binom{\kap}{i}^{-1} &\text{if $i+j=\kap$, }\\
0 &\text{if $i+j\neq \kap$. }
\end{cases}
\label{tag:41}
\eeq

Let $\gam''$ be a positive definite Hermitian maxtrix of size $2$. 
We view $\U_{\gam''}$ as a subgroup of $\GL_2(\CC)$ and regard $\call_\kap(\CC)$ as an irreducible representation of $\U_{\gam''}$ of dimension $\kap+1$. 
If $\sig$ is an irreducible representation of $\U_{\gam''}$ of dimension $\kap+1$, then there is an integer $\lam$ such that $\sig\simeq\rho_{(\kap+\lam,\lam)}$. 
Let 
\begin{align*}
\ulk&=(k_1,k_2;k_3)\in\ZZ^3, &
&k_1\leq k_2\leq k_3. 
\end{align*}
We write $\call_{\ulk}(R)$ for an $R$-module on which $\GL_2(R)$ acts as the representation $\rho_{(k_2,k_1)}$. \index{$\call_{\ulk}(R)$}
We define the representation $\rho_{\ulk}$ of $\GL_2(R)\times R^\times$ on $\call_{\ulk}(R)$ by 
\[\rho_{\ulk}(\alp,\bet)=\rho_{(k_2,k_1)}(\trs\alp^{-1})\bet^{k_3}
=(\det\alp)^{-k_1}\rho_\kap(\trs\alp^{-1})\bet^{k_3}\] 
for $\alp\in\GL_2(R)$ and $\bet\in R^\times$, where we put $\kap=k_2-k_1$. \index{$\rho_{\ulk}$}

When we view $\call_{\ulk}(\CC)$ as a representation of the maximal compact subgroup $\calk_\infty$ of $\U(2,1)$, it has highest weight $(k_2,k_1;k_3)$ (cf. (\ref{tag:a2})). 
Define the representation $\rho_{\ulk}^\vee$ of $\GL_2(R)\times R^\times$ on $\call_{\ulk}^\vee(R)=\call_\kap^{}(R)$ by 
\begin{align*}
\rho_{\ulk}^\vee(\alp,\bet)
&=(\det\alp)^{k_1}\rho_\kap^{}(J\alp J^{-1})\bet^{-k_3}, & 
J&=\begin{bmatrix} 
0 & 1 \\ -1 & 0 
\end{bmatrix}. 
\end{align*}
We write $\ell_{\ulk}=\ell_\kap$. 
Then for $\alp\in\GL_2(R)$ and $\bet\in R^\times$ and $P,Q\in \call_\kap(R)$ 
\[\ell_{\ulk}(\rho_{\ulk}^{}(\alp,\bet)P\otimes \rho_{\ulk}^\vee(\alp,\bet)Q)=\ell_{\ulk}(P\otimes Q). \index{$\ell_{\ulk}$}\]

\begin{remark}
When $r=2$, the isomorphism $\call_{\ulk}(R)\stackrel{\sim}{\to}L_{\ulk}(R)$ is given by 
\[P\mapsto f_P(g)=(\det g)^{k_1}P((0,1)g). \]
Observe that the isomorphism $\call_{\ulk^\vee}(R)\stackrel{\sim}{\to}L_{\ulk^\vee}(R)$ sends $X^{k_2-k_1}$ to $\bfv_{\ulk}$. 
\end{remark}

Let  
\[\bfP_{\ulk}=(X\otimes Y'-Y\otimes X')^\kap\in \call_{\ulk}^{}(R)\otimes\call^\vee_{\ulk}(R) \index{$\bfP_{\ulk}$}\]
be the vector $\bfP_{\ulk}$ fixed by the diagonal action of $\GL_2(R)$. 
One can easily show that
\beq
\ell_{\ulk}(\bfP_{\ulk})=\dim\rho_{\ulk}=\kap+1. \label{tag:34}
\eeq


\subsection{The restriction of Picard modular forms} 

Recall the morphism $\iot_W:I^0_H(\calu_{L_1}^\ell)\to I^0_G(U^\ell)$ constructed in \S \ref{ssec:pullback}. 
Let $\rho$ be an irreducible representation of $\scrh=\GL_r\times\GL_1$. 
Assume that the restriction of $\rho$ from $\scrh$ to $\scrh'=\GL_{r-1}\times\GL_1$ has an irreducible quotient isomorphic to $\rho'$. 
We denote by $\ome_{\rho'}$ the automorphic sheaf of weight $\rho'$ over $I^0_H(\calu_{L_1}^\ell)$. 
Fix a projection $\vpi_{\rho,\rho'}:\rho\to\rho'$. 
The morphism $\vpi_{\rho,\rho'}$ induces a morphism $r_{\rho,\rho'}:\iot_W^*\ome_\rho\to\ome_{\rho'}$ of sheaves over $I^0_H(\calu_{L_1}^\ell)$. 
See Section 6 of \cite{CEFMV} for a general framework for the unitary Shimura varieties. 
Section 2 of \cite{MHarris} studies the pullback 
\[r_{\rho,\rho'}\circ\iot_W^*:\bfM_\rho^\calg(U^\ell_1,R)=\rmH^0(I^0_\calg(U^\ell_1)_{/R},\ome_\rho)\to\rmH^0(I^0_H((\calu_{L_1})_1^\ell)_{/R},\ome_{\rho'})\] 
of global sections of the automorphic sheaves, which corresponds to the restriction $M_\rho^\calg(U^\ell_1,\CC)\to M_{\rho'}^H((\calu_{L_1})_1^\ell,\CC)$ in view of (\ref{tag:27}) if $R=\CC$. 

We will explicate this map in the case when 
\begin{align*} 
r&=2, & 
\rho&=\rho_{\ulk}, & \ulk&=(k_1,k_2;k_3), \\
& &
\rho'&=\rho_{\ulk'}, & \ulk'_i&=(k_1+i;k_3)
\end{align*} 
for $i=0,1,2,\dots,k_2-k_1$. 
Henceforth we will abbreviate
\begin{align*}
\kap&:=k_2-k_1, &
\frkD&:=\frkD_{2,1}, & 
\frkH&:=\frkD_{1,1}. \index{$\frkD,\frkH$}
\end{align*}
 
We write $f\in\bfM_{\ulk}^\calg(U^\ell_1,R)$ as 
\[f(\underline{A},j,\bdome)=\sum_{i=0}^\kap f_i(\underline{A},j,\bdome)X^iY^{\kap-i} \]
for $R$-quintuples $(\underline{A},j)\in I^0_G(U^\ell)$. 
Let 
\[f(Z,g):=f(\underline{\cala}(V)_g(Z),\bdome_g(Z))\in M_{\ulk}^\calg(U^\ell_1,R)\]
(see Lemma \ref{lem:cpx}). 
For $\bet\in\calg(\widehat{\QQ})$ and $m\in\QQ^\times_+$ we will write 
\[\overrightarrow{\FJ}^m_\beta(w,f)=\sum_{i=0}^\kap\FJ^m_\bet(f)_i(w)X^iY^{\kap-i}. \]
We now consider the pullback of $f$ via the morphism $\iot_W$. 

\begin{definition}
Fix a fractional ideal $\frka$ of $E$. 
Let $E_0(\CC)\stackrel{\sim}{\to}\CC(\vSi^c)/\iot_\infty^c(\frka)$ be a complex elliptic curve, which can be descended to an elliptic curve $E_0$ over a number field $F$ by the theory of complex multiplication. 
Since $E_0$ has potentially good reduction everywhere at all finite places of $F$ by Proposition VII.5.5 and Corollary C.11.1.1(b) of \cite{Sil09}, by enlarging the base field $F$, we can further descend $E_0$ to an abelian scheme $\scre_0$ over the ring $\frko_{F,\frkP}$ obtained by the localization $\frko_F$ of at some prime ideal $\frkP$ above $p$. 
Fix a N\'{e}ron differential $\bdome_0$ of $\scre_0$ defined over $\frko_{F,\frkP}$. 
The complex period $\Ome_\infty$ is an element of $\CC^\times/\overline{\QQ}$ defined by $\bdome_0=\Ome_\infty\d w$, where $\d w$ is the standard invariant differential on $\CC(\vSi^c)/\iot_\infty^c(\frka)$. 
\end{definition}

Define $\iot_W':\cale_{\underline{B}}\to\cale_{\iot_W(\underline{B})}$ by $\iot_W'(\bdome^+;\bdome^-)=(\bdome^+,\bdome_0;\bdome^-)$. 

\begin{proposition}\label{prop:32}
Let $f\in\bfM_{\ulk}^\calg(U^\ell_1,R)$. 
If $R$ is an $\scro$-subalgebra of $\CC$, then 
\[(2\pi\sqrt{-1}/\Ome_\infty)^{i-k_2}\FJ^m_\bet(f)_i(0)\in R\]
for $m\in\QQ^\times_+$, $\bet\in\calg(\widehat{\QQ})$, and $i=0,1,2,\dots,k_2-k_1$. 
\end{proposition}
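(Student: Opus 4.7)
The plan is to reduce to Proposition~\ref{prop:31} applied to the pullback of $f$ along the closed embedding $\iot_W:I^0_H(\calu_{L_1}^\ell)\hookrightarrow I^0_G(U^\ell)$ of \S\ref{ssec:pullback}: for $H=\U(1,1)$ the auxiliary rank $t'=r'-s'$ is zero, so the space of integral theta functions $\bfT^m(L_\frka,R)$ collapses to $R$ and integrality of the $0$-th Fourier--Jacobi coefficient is automatic.

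Concretely, given an $R$-point $(\underline{B},\bdome_B)=(\underline{B},\bdome_B^+;\bdome_B^-)$ of the $H$-Igusa scheme and the N\'{e}ron differential $\bdome_0$ on the integral model $\scre_0$, set
\[f_{\scre_0}(\underline{B},\bdome_B):=f(\iot_W(\underline{B}),\bdome_B^+,\bdome_0;\bdome_B^-)\in\call_\kap(R),\]
and decompose $f_{\scre_0}=\sum_{i=0}^\kap (f_{\scre_0})_i\,X^iY^{\kap-i}$. Because the second $+$-component is kept fixed at $\bdome_0$, the inclusion of tori $\scrh'=\GL_1\times\GL_1\hookrightarrow\scrh=\GL_2\times\GL_1$ is $(a,\bet)\mapsto(\diag(a,1),\bet)$, and the calculation
\[\rho_{\ulk}(\diag(a,1),\bet)\,X^iY^{\kap-i}=a^{-(k_1+i)}\bet^{k_3}\,X^iY^{\kap-i}\]
identifies $(f_{\scre_0})_i$ as an $R$-integral geometric modular form on $H$ of weight $\ulk'_i=(k_1+i;k_3)$. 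Applying Proposition~\ref{prop:31} then gives $\FJ^m_\frka((f_{\scre_0})_i)_0\in R$ for every $m\in\QQ_+^\times$ and every fractional ideal $\frka$.

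Next I would identify this $H$-side Fourier coefficient with the $G$-side one in the statement. By (\ref{tag:27}) and the definitions of the analytic frames, evaluating $f_{\scre_0}$ at $(\underline{\cala}(W)_h(\tau),\bdome_h(\tau))$ coincides with evaluating $f$ at $(\underline{\cala}(V)_{\iot(h)}(\jmath(\tau)),\bdome_{\iot(h)}(\jmath(\tau)))$ except that the second $+$-component is changed from $2\pi\sqrt{-1}\,\d z_V^{(2)}(\vSi)=\frac{2\pi\sqrt{-1}}{\Ome_\infty}\bdome_0$ to $\bdome_0$ itself. This amounts to the action of $h_0=(\diag(1,\Ome_\infty/(2\pi\sqrt{-1})),1)\in\scrh(\CC)$, and the same unwinding of $\rho_{\ulk}$ yields $\rho_{\ulk}(\trs h_0^{-1})X^iY^{\kap-i}=(2\pi\sqrt{-1}/\Ome_\infty)^{i-k_2}X^iY^{\kap-i}$. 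By (G$_2$) we obtain
\[(f_{\scre_0})_i(\underline{\cala}(W)_h(\tau),\bdome_h(\tau))=(2\pi\sqrt{-1}/\Ome_\infty)^{i-k_2}f_i(\jmath(\tau),\iot(h)),\]
where $f_i$ is the $i$-th component of $f(Z,g)$. Since the Fourier--Jacobi expansion on $H$ is just a $q$-expansion and $f(\jmath(\tau),\iot(h))=\sum_m\overrightarrow{\FJ}^m_{\iot(h)}(0,f)q^m$ by Proposition~\ref{prop:FJ}, comparing $m$-th coefficients gives $\FJ^m_\frka((f_{\scre_0})_i)_0=(2\pi\sqrt{-1}/\Ome_\infty)^{i-k_2}\FJ^m_{\bet}(f)_i(0)$ for $\bet=\iot(h_\frka)=\bfd(\alp)$, which is the desired integrality.

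For a general cusp $\bet\in\calg(\widehat\QQ)$ one reduces to $\bet\in\cald(U)\subset\calm(\widehat\QQ^\Box)=\GL(X_E)(\widehat\QQ^\Box)\times\calg_0(\widehat\QQ^\Box)$; the $\GL(X_E)$-component is reached by varying the id\`{e}le $\alp$ (hence $\frka$), while the $\calg_0$-component is reached by replacing $\scre_0$ with the integral model of a CM elliptic curve attached to a different fractional ideal. The main technical point is the change-of-frame computation producing the exponent $i-k_2$ of $2\pi\sqrt{-1}/\Ome_\infty$; this follows from a direct unravelling of the definitions of $\rho_{\ulk}$ and $\bdome_0$, but must be made compatibly with the vector-valued Fourier--Jacobi expansion (Proposition~\ref{prop:FJ}) and the relation (\ref{tag:35}) between analytic and arithmetic Fourier--Jacobi coefficients.
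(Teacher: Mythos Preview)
Your proposal is correct and follows essentially the same approach as the paper: both arguments pull $f$ back along $\iot_W$ using the N\'{e}ron differential $\bdome_0$, identify the $i$-th component as an $R$-integral geometric modular form on $H$ of weight $(k_1+i;k_3)$, and compute the frame-change factor $(2\pi\sqrt{-1}/\Ome_\infty)^{i-k_2}$ from (G$_2$). The only cosmetic difference is that you phrase the final step as ``apply Proposition~\ref{prop:31} with $t'=0$,'' whereas the paper writes out the evaluation at the Mumford quintuple for $H$ directly; these are the same computation.
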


\begin{proof}
Let $R$ be a $\overline{\ZZ}_{(p)}$-algebra. 
For an $R$-point $(\underline{B},\jmath)\in I^0_H(\calu_{L_1}^\ell)$ and one forms $\bdome^\pm\in\cale^\pm_{\underline{B}}$ defined over $R$, we have 
\[f_i(\iot_W(\underline{B},\jmath),\bdome^+,\bdome_0;\bdome^-)\in R. \]
Given $a,b,c\in R^\times$, we have 
\begin{align*}
&f(\iot_W(\underline{B},\jmath),a\bdome^+,b\bdome_0;c\bdome^-)\\
=&\rho_{\ulk}\biggl(\begin{bmatrix} a^{-1} & 0 \\ 0 & b^{-1} \end{bmatrix},c^{-1}\biggl)f(\iot_W(\underline{B},\jmath),\bdome^+,\bdome_0;\bdome^-)\\
=&\sum_{i=0}^{k_2-k_1} a^{k_1+i}b^{k_2-i}c^{-k_3}f_i(\iot_W(\underline{B},\jmath),\bdome^+,\bdome_0;\bdome^-)X^iY^{\kap-i}. 
\end{align*}
Thus $((\underline{B},\jmath),\bdome)\mapsto f_i(\iot_W^{}(\underline{B},\jmath),\iot_W'(\bdome))$ is a geometric modular form on $H$ of weight $(k_1+i;k_3)$. 

It follows from (\ref{tag:27}) that 
\begin{align*}
f_i(\jmath(\tau),\iot(h))&=f_i(\underline{\cala}(V)_{\iot(h)}(\jmath(\tau)),\bdome_{\iot(h)}(\jmath(\tau)))\\
&=f_i(\iot_W(\underline{\cala}(W)_h(\tau)),\bdome_h^+(\tau)_1,2\pi\sqrt{-1}\d w;\bdome_h^-(\tau)_2)\\
&=(2\pi\sqrt{-1}/\Ome_\infty)^{k_2-i}f_i(\iot_W^{}(\underline{\cala}(W)_h(\tau)),\iot_W'(\bdome_h(\tau))). 
\end{align*}
Let $(\underline{\scrm}_h,\jmath_{\scrm_h})$ be the Mumford quintuple at the cusp labeled by $h$, and $\bdome_{\scrm_h}=(\d^\times t'_+;\d^\times t_-)\in\cale_{\underline{\scrm}_h}$. 
Since the analytic and algebraic Fourier expansions coincide, we have 
\[(2\pi\sqrt{-1}/\Ome_\infty)^{i-k_2}\sum_m\FJ^m_{\iot(h)}(0)_iq^m=f_i(\iot_W^{}(\underline{\scrm}_h,\jmath_{\scrm_h}),\iot_W'(\bdome_{\scrm_h}))\in R\powerseries{q}, \]
which completes our proof. 
\end{proof}


\part{Analytic Theory}\label{part:2}


\section{Modular forms on $\U(1,1)$ and $\U(2,1)$}\label{sec:5}


\subsection{The basic setting}\label{ssec:51}

In this section we let $s=1$ and $\gam_0=\del$ in the setting of \S \ref{ssec:26}. 
Thus $G$ is a quasi-split unitary group in three variables. 
Let 
\begin{align*}
G&=\{g\in\Res_{E/\QQ}\GL_3\;|\;gS_\del\trs g^c=S_\del\}, & S_\del&=\begin{bmatrix} 0 & 0 & -1 \\ 0 & -\del & 0 \\ 1 & 0 & 0 \end{bmatrix}, \\ 
H&=\{h\in\Res_{E/\QQ}\GL_2\;|\;hJ\trs h^c=J\}, & J&=\begin{bmatrix} 0 & 1 \\ -1 & 0 \end{bmatrix}
\end{align*}
be quasi-split unitary groups in three and two variables. \index{$G$}\index{$H$}
The associated unitary similitude groups are denoted by $\calg=\GU(T)$ and $\calh=\GU(J)$. \index{$\calg$}\index{$\calh$}

We define the embedding $\iot:H\hookrightarrow G$ by 
\[\iot\left(\begin{bmatrix} a & b \\ c & d \end{bmatrix}\right)=\begin{bmatrix} a & 0 & b \\ 0 & 1 & 0 \\ c & 0 & d \end{bmatrix}. \index{$\iot$}\]
We regard $H$ as the subgroup of $G$ via $\iot$. 
Recall the embedding $\jmath:\frkH\hookrightarrow\frkD$ defined by $\tau\mapsto\begin{bmatrix} \tau \\ 0 \end{bmatrix}$ is compatible with the embedding $\iot$ in the sense of (\ref{tag:26}). \index{$\jmath$}
The reader is reminded that for $Z=\begin{bmatrix} \tau \\ w \end{bmatrix}\in\frkD$
\begin{align*}
\eta(Z)&=\sqrt{-1}(\tau^c-\tau-w^c\del^{-1} w), & 
\eta(\tau)&=\sqrt{-1}(\tau^c-\tau). 
\end{align*}

We will frequently add $'$ to the notation for various objects to indicate that they are attached to $T'$. 
Recall the standard Borel subgroups $\calp=\calm\caln$ of $\calg$ and $\calp'=\calm'\caln'$ of $\calh$. \index{$\calp$}\index{$\calm$}\index{$\caln$}
We denote the Borel subgroups of $G$ and $H$ by $\bfP=\bfM\caln$ and $\bfP'=\bfM'\caln'$, and the additive and multiplicative groups in one variable over $\QQ$ by $\GG_a=\Mat_1$ and $\GG_m=\GL_1$. \index{$\bfP$}\index{$\bfM$}
Define isomorphisms 
\begin{align*}
&\bfm:\Res_{E/\QQ}\GG_m\times\Res_{E/\QQ}\GG_m\stackrel{\sim}{\to}\calm, & 
&\bfm':\Res_{E/\QQ}\GG_m\times\GG_m\stackrel{\sim}{\to}\calm', \\
&\bfn:\Res_{E/\QQ}\GG_a\ltimes\GG_a\stackrel{\sim}{\to}\caln, & 
&\bfn':\GG_a\stackrel{\sim}{\to}\caln'
\intertext{by}
&\bfm(a,b)=\diag(a,b,(a^{-1})^cbb^c), &
&\bfm'(a,t)=\diag(a,t(a^{-1})^c), \\
&\bfn(w,z)=\begin{bmatrix} 1 & -\del^{-1} w^c & z-\frac{w^c w}{2\del} \\ 0 & 1 & w \\ 0 & 0 & 1 \end{bmatrix}, & 
&\bfn'(z)=\begin{bmatrix} 1 & z \\ 0 & 1 \end{bmatrix}. 
\end{align*}
We will write $\bfd(a)=\bfm(a^c,1)$ and $\bfd'(a)=\bfm'(a^c,1)$. \index{$\bfm(a,b)$}\index{$\bfn(w,z)$}\index{$\bfd(a)$}
Put 
\begin{align*}
K&=\calk\cap G(\widehat{\QQ}), & 
K_l&=\calk_l\cap G(\QQ_l), & 
K_i(p^\ell\frkN)&=\calk_i(p^\ell\frkN)\cap G(\widehat{\QQ})  
\end{align*} 
for $i=0,1$. \index{$K_0(p^\ell\frkN)$, $K_1(p^\ell\frkN)$}
Define the maximal compact subgroup $K'=\prod_l^{}K'_l$ of $H(\widehat{\QQ})$ by $K'_l=H(\QQ_l)\cap\GL_2(\frko_{E,l})$. 
We similarly define an open subgroup $K_i'(p^\ell\frkN')$ of $K'$. \index{$K_l^{}$, $K_l'$}


\subsection{Restriction from $\GU(1,1)$ to $\U(1,1)$}\label{ssec:52}

Fix a finite prime $q$ of $\QQ$. 
For $t\in\QQ_q^\times$ we define the character $\addchar_q^t:\QQ_q^{}\to\CC^1$ by $\addchar_q^t(x)=\addchar_q^{}(tx)$. 
Given an admissible representation $\sig$ of $H(\QQ_q)$, we put 
\begin{align*}
\Wh^t(\sig)&=\Hom_{\QQ_q}(\sig\circ\bfn',\addchar_q^t), & 
\Wh(\sig)&=\{t\in\QQ_q^\times\;|\;\Wh^t(\sig)\neq 0\}
\end{align*}
We say that $\sig$ is $\addchar_q$-generic if $1\in\Wh(\sig)$. 

We associate to characters $\mu:E_q^\times\to\CC^\times$ and $\rho:\QQ_q^\times\to\CC^\times$ the space $\til I'(\mu,\rho)$ which consists of functions $f':\calh(\QQ_q)\to\CC$ satisfying 
\[f'(\bfm'(a,t)uh)=\mu(a)\rho(t)|t^{-1}\Nr_{E_q/\QQ_q}(a)|_{\QQ_q}^{1/2}f'(h)\]
for $a\in E_q^\times$, $t\in\QQ_q^\times$, $u\in\caln'(\QQ_q)$ and $h\in\calh(\QQ_q)$, where $\calh$ acts by right translation.
We denote its restriction to $H(\QQ_q)$ by $I'(\mu)$.  

\begin{lemma}\label{lem:51}
Assume that $q$ does not split in $E$ and that $\mu,\rho$ are unitary. 
\begin{enumerate}
\item\label{lem:511} $\til I'(\mu,\rho)$ is irreducible. 
\item\label{lem:512} $I'(\mu)$ is reducible if and only if $\mu|_{\QQ_q^\times}=\eps_{E_q/\QQ_q}$. 
\item\label{lem:513} If $\mu|_{\QQ_q^\times}=\eps_{E_q/\QQ_q}$, then $I'(\mu)=\sig_+\oplus\sig_-$, where 
\begin{align*}
\Wh(\sig_+)&=\Nr_{E_q/\QQ_q}(E_q^\times), & 
\Wh(\sig_-)&=\QQ_q^\times\setminus\Nr_{E_q/\QQ_q}(E_q^\times). 
\end{align*} 
If $\mu$ is unramified, then $\sig_+$ admits a non-zero $K_q'$-invariant vector. 
\end{enumerate}
\end{lemma}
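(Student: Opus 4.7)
All three parts are instances of Mackey--Clifford theory applied to the extension $H(\QQ_q)\subset\calh(\QQ_q)$. The key structural fact is that $\calh=Z(\calh)\cdot H$, so $\calh(\QQ_q)/Z(\calh)(\QQ_q)\cdot H(\QQ_q)\simeq\QQ_q^\times/\Nr_{E_q/\QQ_q}(E_q^\times)\simeq\ZZ/2\ZZ$, the non-trivial character being $\eps_{E_q/\QQ_q}\circ\nu$; this is the only character of $\calh(\QQ_q)$ that can twist an irreducible representation without altering its restriction to $H$. For (\ref{lem:511}), I would apply the standard reducibility criterion for normalized induction from the Borel $\calp'$ of the quasi-split rank-one group $\calh$: reducibility forces $\chi_{\mu,\rho}$ evaluated on the coroot of the unique simple root to equal $|\cdot|^{\pm 1}$. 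Taking $\alp^\vee(s)=\bfm'(s,1)$ for $s\in\QQ_q^\times$ gives $\chi_{\mu,\rho}(\alp^\vee(s))=\mu|_{\QQ_q^\times}(s)$, which is unitary and hence cannot equal $|\cdot|^{\pm 1}$, so $\til I'(\mu,\rho)$ is irreducible.

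For (\ref{lem:512}), Clifford theory combined with (\ref{lem:511}) shows that $\til I'(\mu,\rho)|_H$ is reducible (and then decomposes into exactly two summands) iff $\til I'(\mu,\rho)\otimes(\eps_{E_q/\QQ_q}\circ\nu)\simeq \til I'(\mu,\rho)$. A direct calculation on the induced model identifies the twist with $\til I'(\mu,\rho\cdot\eps_{E_q/\QQ_q})$, so by (\ref{lem:511}) the isomorphism amounts to $(\mu,\rho\eps_{E_q/\QQ_q})$ lying in the Weyl orbit of $(\mu,\rho)$. Computing the non-trivial Weyl reflection on the Levi character yields
\[
w\cdot(\mu,\rho)\;=\;(\bar\mu^{-1},\,\rho\cdot\mu|_{\QQ_q^\times}),
\]
and matching components reduces the condition to $\mu|_{\QQ_q^\times}=\eps_{E_q/\QQ_q}$ (the auxiliary condition $\mu=\bar\mu^{-1}$ being implied, since $\mu\bar\mu=\mu|_{\QQ_q^\times}\circ\Nr_{E_q/\QQ_q}=\eps_{E_q/\QQ_q}\circ\Nr_{E_q/\QQ_q}=1$).

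For (\ref{lem:513}), I would analyze the Whittaker model. Conjugation by $\bfm'(a,t)$ scales the argument of $\bfn'$ by $t^{-1}\Nr_{E_q/\QQ_q}(a)$, so $\calh(\QQ_q)$ acts transitively on the non-trivial characters $\addchar_q^t$ of $\caln'(\QQ_q)\simeq\QQ_q$, while $H(\QQ_q)$ has exactly two orbits indexed by $\QQ_q^\times/\Nr_{E_q/\QQ_q}(E_q^\times)$. The unique (up to scalar) $\addchar_q^t$-Whittaker functional on $\til I'(\mu,\rho)$ therefore restricts non-trivially to exactly one summand $\sig_\pm$, determined by the class of $t$; labelling $\sig_+$ as the summand carrying the $\addchar_q^1$-Whittaker functional yields the stated $\Wh(\sig_\pm)$. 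For the $K_q'$-invariance, pick $\rho$ unramified: the spherical vector $\phi_0\in\til I'(\mu,\rho)^{\calk_q}$ is then $K_q'$-fixed, and by the Casselman--Shalika formula its $\addchar_q^1$-Whittaker integral is non-zero, placing $\phi_0\in\sig_+$ and hence $\sig_+^{K_q'}\neq 0$.

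\textbf{Main obstacle.} The crux lies in (\ref{lem:512}): recognising the twist as a principal series with an explicit inducing character, and executing the Weyl computation cleanly enough that the matching condition collapses to $\mu|_{\QQ_q^\times}=\eps_{E_q/\QQ_q}$. Part (\ref{lem:511}) is a necessary input but standard once the rank-one reducibility criterion is invoked, and the Whittaker bookkeeping in (\ref{lem:513}) is then a short orbit calculation.
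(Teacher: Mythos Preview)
Your argument is correct and takes a more self-contained route than the paper's. For (\ref{lem:511}) the paper argues via Remark~\ref{rem:51}(\ref{rem:511}): the restriction of $\til I'(\mu,\rho)$ to $\GL_2(\QQ_q)$ is the unitary principal series $I(\rho\cdot\mu|_{\QQ_q^\times},\rho)$, which is irreducible because the ratio $\mu|_{\QQ_q^\times}$ is unitary and hence cannot equal $|\cdot|^{\pm 1}$. Your coroot computation recovers exactly this ratio, so the two arguments coincide in content; just be careful that the abstract ``rank-one reducibility criterion'' you invoke is really the $\GL_2$ criterion transported through Remark~\ref{rem:51}, not a general fact about quasi-split groups---applied na\"ively to $H=\U(1,1)$ itself it would contradict part (\ref{lem:512}).

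For (\ref{lem:512}) and (\ref{lem:513}) the approaches genuinely diverge. The paper simply cites the Kudla--Sweet classification for (\ref{lem:512}) and, for (\ref{lem:513}), identifies $\sig_+$ as the theta lift of the trivial representation of the orthogonal group of the norm form of $E_q$, from which the $K_q'$-spherical vector in the unramified case is immediate. Your Clifford-theory argument for (\ref{lem:512})---twisting by $\eps_{E_q/\QQ_q}\circ\nu$, identifying the twist as $\til I'(\mu,\rho\eps_{E_q/\QQ_q})$, and matching against the Weyl conjugate $(\bar\mu^{-1},\rho\cdot\mu|_{\QQ_q^\times})$---together with your Whittaker-orbit analysis for (\ref{lem:513}), gives an elementary proof that avoids both external references. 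The theta identification in the paper buys a conceptual reason why the spherical vector sits in $\sig_+$ rather than $\sig_-$; your Casselman--Shalika computation reaches the same conclusion directly, and (like the paper's own proof, which explicitly restricts to $E_q/\QQ_q$ unramified) is cleanest when $K_q'$ is hyperspecial.
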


\begin{proof}
Since the restriction of $\til I'(\mu,\rho)$ to $\GL_2(\QQ_q)$ is the principal series associated to a unitary character of $T_2(\QQ_q)$, it is irreducible and so is $\til I'(\mu,\rho)$. 
Part (\ref{lem:512}) is a special case of Theorems 1.1, 1.2 and Lemma 4.4 of \cite{KS}. 
We will prove (\ref{lem:513}). 
Note that $\sig_+$ is a theta lift of the trivial representation of the orthogonal group associated to the norm form of $E_q$. 
If $E_q/\QQ_q$ is unramified, then it admits a non-zero $K_q'$-invariant vector. 
\end{proof}

\begin{remark}\label{rem:51}
\begin{enumerate}
\item\label{rem:511} The group $\calh$ is the quotient of $\GL_2\times\Res_{E/\QQ}\GL_1$ by $\GL_1$ embedded diagonally (cf. \cite[Lemma 4.4]{Shimura97}, \cite[\S 3.1]{H93}). 
An irreducible representation of $\calh(\QQ_v)$ is given by a pair $(\sig^\star,\chi)$, where $\chi$ is a character of $E_v^\times$ and $\sig^\star$ is an irreducible representation of $\GL_2(\QQ_v)$ whose central character is $\chi^{-1}|_{\QQ_v^\times}$. 
The functorial lift of $(\sig^\star,\chi)$ is the base change of $\sig^\star$ to $\GL_2(E_v)$ twisted by $\chi$. 
\item\label{rem:512} Put $\til\sig=\til I'(\mu,\rho)$. 
The restriction of $\til\sig$ to $\GL_2(\QQ_q)$ is the principal series $I(\rho\cdot\mu|_{\QQ_q^\times},\rho)$. 
If $E_q\simeq\QQ_q\oplus\QQ_q$ and if we write $\mu=(\mu_1,\mu_2)$, then $\til\sig\circ\imath_\frkq^{-1}\simeq I(\mu_1^{},\mu_2^{-1})$. 
Observe that $\imath_\frkq^{-1}(h)=(h,J\trs h^{-1}J^{-1})=(h,(\det h)^{-1}h)$ for $h\in\GL_2(\QQ_\frkq)$, where $J=\begin{bmatrix} 0 & 1 \\ -1 & 0 \end{bmatrix}$. \index{$J$}
\end{enumerate}
\end{remark}

\begin{proposition}\label{prop:51}
\begin{enumerate}
\item\label{prop:511} An irreducible representation of $\calg(\QQ_q)$ remains irreducible upon restriction to $G(\QQ_q)$.  
\item\label{prop:512} An irreducible automorphic representation $\til\pi$ of $\calg(\AA)$ remains irreducible upon restriction to $G(\AA)$. 
Moreover, if $\til\vph\in\til\pi$ is not zero, then its restriction to $G(\AA)$ is not zero. 
\end{enumerate}
\end{proposition}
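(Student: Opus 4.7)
The plan rests on the observation, already recorded in Remark 2.0, that since $n=3$ is odd, every similitude factor of $\calg$ is a norm from $E$, so one has decompositions
\[\calg(\QQ_q)=Z(\calg)(\QQ_q)\cdot G(\QQ_q),\qquad \calg(\AA)=Z(\calg)(\AA)\cdot G(\AA),\]
where $Z(\calg)\simeq\Res_{E/\QQ}\GG_m$ denotes the center of $\calg$ (the scalar matrices). To verify the local decomposition, given $g\in\calg(\QQ_q)$, the relation $(\det g)(\det g)^c=\nu(g)^n$ shows that $\nu(g)^n\in\Nr_{E_q/\QQ_q}(E_q^\times)$; since $\QQ_q^\times/\Nr_{E_q/\QQ_q}(E_q^\times)$ has order one or two and $n=3$ is odd, already $\nu(g)=\Nr_{E_q/\QQ_q}(a)$ for some $a\in E_q^\times$. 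Setting $z:=a\ono_3\in Z(\calg)(\QQ_q)$ makes $z^{-1}g\in\ker(\nu)=G(\QQ_q)$. The adelic factorization follows by the same argument place by place.

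With this in hand, part (\ref{prop:511}) is immediate. If $\til\sig$ is an irreducible smooth representation of $\calg(\QQ_q)$, then Schur's lemma forces $Z(\calg)(\QQ_q)$ to act through a character; hence any $G(\QQ_q)$-stable subspace of $\til\sig$ is automatically $Z(\calg)(\QQ_q)$-stable, and therefore $\calg(\QQ_q)$-stable, so it equals $0$ or $\til\sig$. The first assertion of part (\ref{prop:512}) follows by the same reasoning, with $Z(\calg)(\AA)$ acting through the central character $\omega_{\til\pi}$.

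For the second assertion of (\ref{prop:512}), suppose for contradiction that some nonzero $\til\vph\in\til\pi$ vanishes identically on $G(\AA)$. For an arbitrary $g\in\calg(\AA)$, the decomposition above gives $g=hz$ with $h\in G(\AA)$ and $z\in Z(\calg)(\AA)$, and the central-character property yields
\[\til\vph(g)=\til\vph(hz)=\bigl(\til\pi(z)\til\vph\bigr)(h)=\omega_{\til\pi}(z)\,\til\vph(h)=0,\]
so $\til\vph\equiv 0$ on all of $\calg(\AA)$, contradicting the hypothesis.

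The only step with any content is the decomposition $\calg=Z(\calg)\cdot G$, which is special to odd $n$; once that is secured, both parts reduce to the formal observation that the center of an irreducible representation acts by a scalar. I do not anticipate any genuine obstacle.
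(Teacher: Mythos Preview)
Your proof is correct. The decomposition $\calg(\QQ_q)=Z(\calg)(\QQ_q)\cdot G(\QQ_q)$ (and its adelic version), already noted in Remark~\ref{rem:20}, together with Schur's lemma, indeed reduces both parts to the observation that any $G$-stable subspace is automatically $\calg$-stable.

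Your route differs from the paper's in emphasis. For (\ref{prop:511}) the paper simply invokes Theorem~9(a) of \cite{AP}, a general restriction result for similitude groups that does not rely on $n$ being odd; your argument is more elementary and self-contained but is specific to odd $n$. For the first assertion of (\ref{prop:512}) the paper argues via the local statement at every place (including $\infty$) together with the tensor-product decomposition $\til\pi\simeq\otimes_v'\til\pi_v$, whereas you apply the center trick once globally. For the second assertion of (\ref{prop:512}) both arguments ultimately rest on the same decomposition $\calg(\AA)=\EE^\times G(\AA)$: the paper phrases it as ``the restriction map $\til\pi\to\scra(G)$ is nonzero and $G(\AA)$-equivariant, hence injective since $\til\pi|_{G(\AA)}$ is irreducible,'' while you unwind this directly through the central character. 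Your approach is cleaner for the case at hand; the paper's citation would survive in settings where $\calg\ne Z(\calg)\cdot G$.
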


\begin{proof}
Part (\ref{prop:511}) is a special case of Theorem 9(a) of \cite{AP}. 
The result stays same for $\calg(\RR)$, from which the first part of (\ref{prop:512}) follows. 
The restriction gives a non-zero $G(\AA)$-equivariant map $\til\pi\to\scra(G)$, which is injective (cf. Remark \ref{rem:20}). 
\end{proof}


\subsection{Hida families on $\U(1,1)$}\label{ssec:53}

\begin{definition}\label{def:51}
We call a function $g:\frkH\times H(\widehat{\QQ})\to\CC$ a modular form on $H$ of weight $\ulk'=(k_1';k_2')$, level $p^\ell\frkN'$ and nebentypus $\chi'$ if it is holomorphic in $\tau$ and satisfies 
\beq
g(\gam\tau,\gam hu)=\chi'(u)^{-1}\lam(\gam,\tau)^{-k_1'}\mu(\gam,\tau)^{k_2'}g(\tau,h) \label{tag:51}
\eeq
for $\gam\in H(\QQ)$, $h\in H(\widehat{\QQ})$ and $u\in K_0'(p^\ell\frkN')$. 
A modular form $g$ is called a cusp form if it has a Fourier-Jacobi expansion 
\[g(\tau,h)=\sum_{m\in\QQ^\times_+}\FC^m_h(g)e^{2\pi\sqrt{-1}m\tau} \] 
for every $h\in H(\widehat{\QQ})$. 
We write $S_{\ulk'}^H(p^\ell\frkN',\chi',\CC)$ for the space of cusp forms on $H$ of weight $\ulk'$, level $p^\ell\frkN'$ and nebentypus $\chi'$. 
\end{definition}

\begin{remark}\label{rem:52}
By (\ref{tag:24}) the identity (\ref{tag:51}) is equivalent to the following identity
\[g(\gam\tau,\gam hu)=\chi'(u)^{-1}(\det\gam)^{k_1'}\mu(\gam,\tau)^{k_2'-k_1'}g(\tau,h). \]
\end{remark}

Recall the automorphic form $\vPh_{\ulk'}(g)$ on $H$ associated to $g$ defined by 
\[\vPh_{\ulk'}(g)(h)=\lam(h_\infty,\sqrt{-1})^{k_1'}\mu(h_\infty,\sqrt{-1})^{-k_2'}g(h_\infty(\sqrt{-1}),\hat h)\]
for $h=(h_\infty,\hat h)$ with $h_\infty\in H(\RR)$ and $\hat h\in H(\widehat{\QQ})$.  


Let $\Box'$ be the set of prime factors of $pN'$.  

\begin{definition}\label{def:52}
For any subring $\calo$ of $\CC$ the space $S_{\ulk'}^H(p^\ell\frkN',\chi';\calo)$ consists of $g\in S_{\ulk'}^H(p^\ell\frkN',\chi',\CC)$ such that $\FC^m_\alp(g)\in\calo$ for every $\alp\in \bfM'(\widehat{\QQ}^{\Box'})$. 
For a local ring $R$ finite flat over $\calo$ we set 
\[S_{\ulk'}^H(p^\ell\frkN',\chi',R)=S_{\ulk'}^H(p^\ell\frkN',\chi',\calo)\otimes_\calo R. \index{$S_{\ulk'}^H(p^\ell\frkN',\chi',R)$}\]
\end{definition}

We write $\alp=\bfd'(a)\in \bfM'(\widehat{\QQ}^{\Box'})$ and put $\frka=a\frko_E$. 
Since $\FC^m_\alp(g)$ depends only on the fractional ideal $\frka$, we will denote it by $\FC^m_\frka(g)$. \index{$\FC^m_\frka(g)$}
Recall that 
\[[\calu_p(\bet_1)g](\tau,\alp)=p^{-1-k_1'}\sum_{z\in \ZZ_p/p\ZZ_p}f\biggl(\tau,\alp\imath_\frkp^{-1}\biggl(\begin{bmatrix} p & z \\ 0 & 1 \end{bmatrix}\biggl)\biggl)\]
(see \S \ref{ssec:25}). 
Take a positive integer $j$ so that $(\frkp^c)^j$ is a principal ideal. 
Let $\xi$ be a generator of $(\frkp^c)^j$. 
Then we can show that 
\[\FC^m_\frka(\calu_p(\bet_1)^jg)=p^{-jk_1'}(\xi^c)^{k_1'}\xi^{-{k_2'}}\FC^{mp^j}_\frka(g) \]
for $m\in\scrs_\frka^+$ at the cost of replacing $j$ with its suitable multiple if necessary (cf. Lemma \ref{lem:31}). 
It follows that 
\beq
\FC^m_\frka(\bdse'g)=\lim_{j\to\infty}\FC^{mp^{j!}}_\frka(g).  \label{tag:52}
\eeq

\begin{definition}\label{def:53}
An ordinary $\Lambda_2$-adic cusp form $\bdsg$ of tame level $\frkN'$ and nebentypus $\chi'$ is a collection of $A_\frka^m\in\Lam_2$  
indexed by $0<m\in(\frka\frka^c)^{-1}$ and fractional ideals $\frka$ of $\frko_E$ prime to $pN'$ such that for each $\ulQ'\in\frkX_{\calr'}^\cls$ 
\[\bdsg_{\ulQ'}(\tau,\alp):=\sum_m\ulQ'(A^m_\frka)q^m\in\bdse'S_{k_{\ulQ'}}^H(p^{\ell_{\ulQ'}}\frkN',\chi'\eps_{\ulQ'};\calr'(\ulQ')). \]
We will also write $\FC^m_\frka(\bdsg)=A^m_\frka$. 
Let $\bfS^H_{\ord}(\frkN',\chi',\Lambda_2)$ be the $\Lambda_2$-module of those $\Lambda_2$-adic cusp forms. 
\end{definition}

Let $\calr'$ be a normal ring finite flat over $\Lam_2$. 
Put 
\[\bfS^H_{\ord}(\frkN',\chi',\calr')=\bfS^H_{\ord}(\frkN',\chi',\Lambda_3)\otimes\calr'. \index{$\bfS^H_{\ord}(\frkN',\chi',\calr')$}\]
Let $\bdsg\in\bfS^\calh_{\ord}(\frkN',\chi',\calr')$ be an $\calr'$-adic Hida family on $\calh$. 
Since 
\beq
\calh(\AA)=\calh(\QQ)\RR^\times H(\AA)\calk'_0(p^\ell\frkN'), \label{tag:53}
\eeq
the restriction of $\bdsg_{\ulQ'}$ to $\frkH\times H(\widehat{\QQ})$ is not zero and denoted also by $\bdsg_{\ulQ'}$. 
We therefore view $\bdsg$ as an element of $\bfS^H_{\ord}(\frkN',\chi',\calr')$. 


\subsection{Automorphic representations associated to Hida families}\label{ssec:54}

In view of Remark \ref{rem:51}(\ref{rem:511}) we can associate to a Hida family 
\[\bdsg=\{A^m_\frka\}_{\frka\in C_E,\;m\in\scrs_\frka^+}\in\bfS^\calh_{\ord}(\frkN',\chi',\calr')\] 
an $\calr'$-adic Galois representation $\rho_{\bdsg}:\Gamma_E\to \GL_2(\calr')$ unramified outside primes dividing $N'p$.
For $\ulQ'\in\frkX_{\calr'}^\cls$ the specialization $\rho_{\bdsf}\otimes_{\calr',\ulQ'}\overline{\QQ_p}$ is associated with a $p$-ordinary irreducible cuspidal automorphic representation $\til\bdsig_{\ulQ'}\simeq\otimes'_v\til\bdsig_{\ulQ',v}$ of $\calh(\AA)$. 
Let $\bdsig_{\ulQ'}\simeq\otimes'_v\bdsig_{\ulQ',v}$ be the irreducible component of the restriction of $\til\bdsig_{\ulQ'}$ to $H(\AA)$ such that $\bdsig_{\ulQ',\infty}$ is a holomorphic discrete series and such that $\bdsig_{\ulQ',q}$ is $\addchar_q$-generic for all finite primes $q$. \index{$\bdsig_{\ulQ'}$}

The ordinary $\calr'$-adic cuspidal Hecke algebra $\bfT^H_{\ord}(N',\chi',\calr')$ is defined as the $\calr'$-subalgebra of $\End_{\calr'}\bfS_{\ord}^H(\frkN',\chi',\calr')$ generated over $\calr'$ by the Hecke operators in $C_c^\infty(K'_l\backslash H(\QQ_l)/K'_l,\calo)$ with $l\nmid pN'$ and $\calu_p'$. 

\begin{proposition}\label{prop:52}
Let $\bdsg\in\bfS^\calh_{\ord}(\frkN',\chi',\calr')$ be an $\calr'$-adic Hida family on $\calh$. 
Then $\bdsg$ is an eigenform for all operators in $\bfT^H_{\ord}(N',\chi',\calr')$. 
If $\ulQ'\in\frkX_{\bdsg}'$, then $\bdsig_{\ulQ'}$ is generated by $\bdsg_{\ulQ'}$. 
\end{proposition}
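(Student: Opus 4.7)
The plan is to prove both assertions by descending from the $\calh$-side, where $\bdsg$ is an eigenform by hypothesis, to the $H$-side, using the decomposition (\ref{tag:53}) together with Proposition~\ref{prop:51} and Lemma~\ref{lem:51}.

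First I would establish the eigenform statement by an interpolation argument. By the definition of an $\calr'$-adic Hida family, each specialization $\bdsg_{\ulQ'}$ with $\ulQ'\in\frkX_{\calr'}^{\cls}$ is a simultaneous eigenvector for all prime-to-$Np$ Hecke operators on $\calh$, together with the ordinary operator at $p$. The generators of $\bfT^H_{\ord}(N',\chi',\calr')$ at each prime $l\nmid pN'$ can be realized as restrictions to $H(\QQ_l)$ of bi-$K'_l$-invariant Hecke kernels on $\calh(\QQ_l)$, and $\calu_p'$ agrees on $\bdsg_{\ulQ'}|_{H(\AA)}$ with its $\calh$-counterpart by construction. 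Each $H$-Hecke operator therefore acts on $\bdsg_{\ulQ'}$ by a scalar in $\calr'(\ulQ')$, and since $\bfS^H_{\ord}(\frkN',\chi',\calr')$ is finitely generated over $\calr'$ while $\frkX_{\calr'}^{\cls}$ is Zariski dense in $\Spec\calr'$, these scalars assemble into a single $\calr'$-algebra homomorphism $\lam_\bdsg:\bfT^H_{\ord}(N',\chi',\calr')\to\calr'$, which is the desired eigensystem.

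For the representation-theoretic assertion, fix $\ulQ'\in\frkX_\bdsg'$ and let $\til\bdsig_{\ulQ'}$ be the irreducible $p$-ordinary cuspidal automorphic representation of $\calh(\AA)$ generated by $\vPh_{k_{\ulQ'}}(\bdsg_{\ulQ'})$. By (\ref{tag:53}) the restriction of $\bdsg_{\ulQ'}$ to $\frkH\times H(\widehat{\QQ})$ is non-zero, so the associated automorphic form on $H(\AA)$ generates a non-zero $H(\AA)$-subrepresentation $\bdsig^\dagger$ of $\til\bdsig_{\ulQ'}|_{H(\AA)}$. I would then identify $\bdsig^\dagger$ with $\bdsig_{\ulQ'}$ place by place: at the archimedean place the holomorphy convention in \S\ref{ssec:54} isolates a unique irreducible summand; at a split finite prime $q$ the restriction to $H(\QQ_q)\simeq\GL_2(\QQ_q)$ remains irreducible; and at a non-split prime $q$, which is unramified for $\bdsig_{\ulQ',q}$ by \eqref{H1}, Lemma~\ref{lem:51}(\ref{lem:513}) shows that the spherical vector supplied by $\bdsg_{\ulQ'}$ lies in the component $\sig_+$, which coincides with the $\addchar_q$-generic piece $\bdsig_{\ulQ',q}$ singled out in \S\ref{ssec:54}.

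The main obstacle is the non-split step. One must ensure that the local vector furnished by $\bdsg_{\ulQ'}$ lands in $\sig_+$ rather than in $\sig_-$ consistently across all non-split primes, ruling out any contribution from the non-generic component that would enlarge $\bdsig^\dagger$. This is where \eqref{H1} is essential: it guarantees that $\bdsig_{\ulQ',q}$ is unramified at every non-split $q$, so the local data is spherical and Lemma~\ref{lem:51}(\ref{lem:513}) applies directly to give the required local identification. Once this local-global compatibility is secured, $\bdsig^\dagger=\bdsig_{\ulQ'}$, completing the proof.
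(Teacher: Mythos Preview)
Your treatment of the second assertion is correct and mirrors the paper: the local component of $\bdsg_{\ulQ'}$ at a non-split prime $q$ is $K'_q$-spherical, and Lemma~\ref{lem:51}(\ref{lem:513}) forces it into $\sig_+=\bdsig_{\ulQ',q}$; combined with the easy statements at split primes and at infinity this gives $\bdsg_{\ulQ'}\in\bdsig_{\ulQ'}$, and irreducibility finishes.

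The first assertion is where the proposal fails, precisely at non-split primes. Your claim that generators of $\bfT^H_{\ord}$ at $l\nmid pN'$ are restrictions of bi-$K'_l$-invariant kernels on $\calh(\QQ_l)$, and that therefore they act by scalars on $\bdsg_{\ulQ'}$, does not hold up. Even if one can match kernels as functions, convolution over $H(\QQ_q)$ is a different operator from convolution over $\calh(\QQ_q)$. Concretely, when $\til\bdsig_{\ulQ',q}|_{H(\QQ_q)}=\sig_+\oplus\sig_-$ is reducible (Lemma~\ref{lem:51}(\ref{lem:512})), an $H$-Hecke operator $T_q$ typically acts by distinct scalars on $\sig_+$ and $\sig_-$, whereas every $\calh(\QQ_q)$-Hecke operator acts by a single scalar on the irreducible $\til\bdsig_{\ulQ',q}$. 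So the $\calh$-eigenform hypothesis alone does not force $T_q$ to act by a scalar on $\bdsg_{\ulQ'}$, and nothing in your argument rules this out.

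The paper supplies the missing ingredient: by Remark~\ref{rem:51} combined with the rigidity principle of \cite[\S 3.2(1)]{MH}, the reducibility condition $\mu|_{\QQ_q^\times}=\eps_{E_q/\QQ_q}$ is satisfied at only finitely many $\ulQ'\in\frkX^{\cls}_{\calr'}$. At all other classical points $\til\bdsig_{\ulQ',q}|_H$ is irreducible, hence $T_q$ acts by a scalar there, and Zariski density of the remaining points interpolates this to an element $\bdalp_q\in\calr'$ with $\bdsg|T_q=\bdalp_q\bdsg$.
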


\begin{proof}
Fix a non-split prime $q$. 
Let $T_q\in C_c^\infty(K'_q\backslash H(\QQ_q^{})/K'_q,\calo)$. 
In view of Remark \ref{rem:51} and the rigidity stated in \cite[\S 3.2(1)]{MH} there are only finitely many points $\ulQ'\in\frkX^\cls_{\calr'}$ such that $\til\bdsig_{\ulQ',q}$ is reducible. 
Hence there is an element $\bdalp_q\in\calr^{\prime\times}$ such that $\bdsg_{\ulQ'}|T_q=\ulQ'(\bdalp_q)\bdsg_{\ulQ'}$ for all but finitely many points $\ulQ'\in\frkX^\cls_{\calr'}$, which shows that $\bdsg|T_q=\bdalp_q\bdsg$. 

Let $\ulQ'\in\frkX_{\bdsg}'$. 
Then $\bdsg_{\ulQ'}\in \bdse'S_{k_{\ulQ'}}^H(p^{\ell_{\ulQ'}}\frkN',\chi'\eps_{\ulQ'};\calr'(\ulQ'))$ is not zero. 
Put 
\[\til\bdsig_{\ulQ',q}^{K'_q}=\{v\in\til\bdsig_{\ulQ',q}\;|\;\til\bdsig_{\ulQ',q}(k)v=v\text{ for }k\in K_q'\}. \]
Since $\til\bdsig_{\ulQ',q}^{K'_q}\subset\bdsig_{\ulQ',q}^{}$ for every non-split prime $q$ by Lemma \ref{lem:51}(\ref{lem:513}), we conclude that $\bdsg_{\ulQ'}\in\bdsig_{\ulQ'}$. 
\end{proof}

We define the global $\addchar^a$-Whittaker functional on $\scra^0(H)$ or $\scra^0(\calh)$ by 
\[W_{\addchar^a}(\vph)=\int_{\QQ\bsl\AA}\vph\left(\begin{bmatrix} 1 & x \\ 0 & 1 \end{bmatrix}\right)\overline{\addchar(ax)}\,\d x \]
for $a\in\QQ^\times$. 
Let $\sig$ be an irreducible automorphic representation of $H(\AA)$. 
We say that $\sig$ is $\addchar^a$-generic if there is $\vph\in\sig$ such that $W_{\addchar^a}(\vph)\neq 0$. 
By definition $\bdsig_{\ulQ'}$ is $\addchar$-generic. 

\begin{proposition}\label{prop:53}
If $\ulQ'\in\frkX_{\bdsg}''$, then $\FC^1_{\frko_E}(\bdsg_{\ulQ'})\neq 0$. 
\end{proposition}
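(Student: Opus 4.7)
The plan is to relate $\FC^1_{\frko_E}(\bdsg_{\ulQ'})$ to the global $\addchar$-Whittaker functional and then argue local-global non-vanishing. By the standard unfolding (combining Definition \ref{def:51} with the $\vPh_{\ulk'}$-construction and the decomposition \eqref{tag:53}), the coefficient at the trivial ideal class admits a presentation
\[\FC^1_{\frko_E}(\bdsg_{\ulQ'}) = c_\infty \cdot W_\addchar(\vPh_{\ulk'}(\bdsg_{\ulQ'}))(1),\]
where $c_\infty$ is an explicit non-zero archimedean normalizing factor arising from evaluation at $\tau=\sqrt{-1}$ together with the automorphy factor $\lam(1,\sqrt{-1})^{k_1'}\mu(1,\sqrt{-1})^{-k_2'}$. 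So the proposition reduces to $W_\addchar(\vPh_{\ulk'}(\bdsg_{\ulQ'}))(1)\neq 0$.

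Since $\bdsig_{\ulQ'}$ was chosen $\addchar_q$-generic at every finite place in \S \ref{ssec:54}, uniqueness of Whittaker models gives a factorization $\vPh_{\ulk'}(\bdsg_{\ulQ'})=\otimes'_v\vph_v$ and $W_\addchar(\vPh_{\ulk'}(\bdsg_{\ulQ'}))(1)=\prod_v W_v(1)$. I would next identify the local vectors: at non-split $l\nmid N'$ the form $\bdsg_{\ulQ'}$ is $K_l'$-spherical, at split $l$ dividing $N'$ (all prime factors of $N'$ split by \eqref{H1}) the hypothesis $\ulQ'\in\frkX_{\bdsg}''$ together with Definition \ref{def:41} forces $\vph_l$ to be a (rescaled) essential new vector of $\bdsig_{\ulQ',l}$, at $p$ the $\calu_p$-ordinarity pins $\vph_p$ down to the ordinary vector $h_{\bdsig_{\ulQ',p}}^{\ord}$, and at $\infty$ the holomorphy of $\bdsg_{\ulQ'}$ forces $\vph_\infty$ to be the lowest $\calk_\infty$-type vector in the holomorphic discrete series. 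For the first three non-archimedean types the non-vanishing of $W_v(1)$ is classical (Casselman--Shalika at unramified primes and Casselman's theory of new vectors for essential vectors), and at $\infty$ the Whittaker value is a non-zero explicit exponential.

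The one delicate step is verifying $W_p(1)\neq 0$ for the ordinary vector. This is the main obstacle: the $\calu_p$-ordinary eigenvector is not the new vector in general, and one must check that the $p$-stabilization procedure does not kill the Whittaker value at $1$. To handle it I would compute $W_p$ directly on the ordinary line inside the principal series $I(\nu_p,\rho_p,\mu_p)$ exhibiting $h_{\bdsig_{\ulQ',p}}^{\ord}$ (cf.\ Proposition \ref{prop:81}), using the explicit Jacquet integral for induced representations; the leading coefficient of the resulting Whittaker function at $1$ is a non-zero scalar times $\mu_p(p)^{\ell}$ (or the analogous Atkin--Lehner factor) and in particular does not vanish.

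Alternatively, and more cheaply, one can bypass the local analysis at $p$: by Remark \ref{rem:51}\eqref{rem:511}, $\bdsig_{\ulQ'}$ restricts to an irreducible cuspidal representation of $\GL_2(\AA)$, and Proposition \ref{prop:52} implies that $\bdsg_{\ulQ'}$ is a non-zero simultaneous Hecke eigenform whose restriction to $\frkH\times\GL_2(\widehat\QQ)$ is a non-zero classical $p$-ordinary $p$-stabilized Hecke eigenform. For any such eigenform on $\GL_2$ the first Fourier coefficient at the identity is non-zero by strong multiplicity one (equivalently, by $\addchar$-genericity together with the fact that the Whittaker model of a generic $\GL_2$ representation is one-dimensional and detected by evaluation at $1$). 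Combining this with the relation between the $\GL_2$ and $H$ Fourier coefficients under the decomposition in Remark \ref{rem:51}\eqref{rem:511} yields $\FC^1_{\frko_E}(\bdsg_{\ulQ'})\neq 0$.
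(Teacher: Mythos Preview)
Your main argument is correct and essentially reproduces the paper's proof: both relate $\FC^1_{\frko_E}(\bdsg_{\ulQ'})$ to the global $\addchar$-Whittaker value at the identity, factorize via uniqueness of Whittaker functionals, and check non-vanishing place by place (essential vectors at split $l\mid N'$, spherical vectors via Casselman--Shalika at unramified primes, the stabilized ordinary vector at $p$, lowest $K$-type at $\infty$). The paper's treatment of $W_p(\ono_2)\neq 0$ simply invokes the explicit form of the $p$-stabilized vector (Remark~\ref{rem:83}), which is exactly what your Jacquet-integral computation would produce. One organizational nit: your case list skips split primes $l\nmid N'p$, but these are unramified and fall under the spherical case.

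Your alternative route, however, has a genuine gap. The paper itself flags (Remark~\ref{rem:53}(\ref{rem:533})) that the restriction $\bdsg_{\ulQ'}^\star$ to $\GL_2$ need not be primitive even when $\ulQ'\in\frkX_{\bdsg}''$: the inclusion $\GL_2\hookrightarrow\calh$ and the isomorphism $\imath_\frkl:H(\QQ_l)\simeq\GL_2(\QQ_l)$ are different identifications, so the $H$-conductor and the $\GL_2$-conductor may differ (cf.\ Remark~\ref{rem:51}(\ref{rem:512})). For an oldform on $\GL_2$ the first Fourier coefficient can vanish. Your appeal to ``strong multiplicity one'' does not repair this, and the parenthetical claim that the Whittaker model is ``detected by evaluation at $1$'' is false: uniqueness of Whittaker functionals says the space of intertwiners is one-dimensional, not that every non-zero Whittaker function is non-zero at the identity. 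So the shortcut does not stand on its own; the place-by-place analysis, including at $p$, is genuinely required.
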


\begin{proof}
Put $W(\til h)=W_{\addchar}(\bdsig_{\ulQ'}(\til h)\vPh_{k_{\ulQ'}}(\bdsg_{\ulQ'}))$ for $\til h=(\til h_v)\in\calh(\widehat{\AA})$. 
By assumption we have a factorization $W(\til h)=\prod_vW_v(\til h_v)$ by the uniqueness of $\addchar$-Whittaker functionals. 
 
For split primes $l$ we view $\bdsig_{\ulQ',l}$ as a representation of $\GL_2(\QQ_l)$ via $\imath_\frkl$. 
Since $\caln'(\QQ_l)=\imath_\frkl^{-1}(N_2(\QQ_l))$ (cf. Remark \ref{rem:51}), the restriction of $W_l$ to $H(\QQ_l)$ is a local Whittaker function associated to an essential vector in $\bdsig_{\ulQ',l}$ for split primes $l\neq p$, and hence $W_l(\ono_2)\neq 0$ (cf. Definition \ref{def:81}).
Since $\bdsg_{\ulQ'}$ is a stabilized vector at $p$ (see Remark \ref{rem:83}), we have $W_p(\ono_2)\neq 0$. 
By (splt) and the formula for spherical Whittaker functions $W_q(\ono_2)\neq 0$ for non-split primes $q$. 
We conclude that $W_{\addchar}(\vPh_{k_{\ulQ'}}(\bdsg_{\ulQ'}))\neq 0$ as claimed. 
\end{proof}

For a split prime $l$ we denote by $c_l=c(\sig_l)$ the exponent of the conductor of $\sig_l$ (see Definition \ref{def:81}). 

\begin{definition}\label{def:54}
Put $\frkN_\sig=\prod_{\frkl|\frkN'}\frkl^{c_l}$. 
Let $g^\circ_\sig\in S_{\ulk'}^H(\frkN_\sig,\chi';\CC)$ be the newform associated to $\sig$, normalized so that $\FC^1_{\frko_E}(g_\sig^\circ)=1$. \index{$f^\circ$}
We call the prime-to-part of $\frkN_\sig$ the tame conductor of $\sig$. \index{$g^\circ_\sig$, $\vph_\sig^{}$}

Put 
\begin{align*}
N_\sig&=\Nr(\frkN_\sig), & 
\vph_\sig^{}&=\vPh_{\ulk'}(g_\sig^\circ),  
\end{align*}
where $\Nr(\frkN_\sig)$ denotes the absolute norm of $\frkN_\sig$. \index{$N_\pi,\frkN_\pi$}
\end{definition}

\begin{definition}\label{def:55}
Let $\ulQ'\in\frkX_{\calr'}^\cls$. 
We call $\bdsg_{\ulQ'}^{\ord}$ the normalized $p$-stabilized newform associated to $\bdsig_{\ulQ'}$ if it generates $\bdsig_{\ulQ'}$, is new outside $p$ and is a $\calu_p'$-eigenform with unit eigenvalue, and satisfies $\FC^1_{\frko_E}(\bdsg_{\ulQ'}^{\ord})=1$. \index{$\bdsg_{\ulQ'}^{\ord},\vph_{\bdsig_{\ulQ'}}^{\ord}$}
Put 
\[\vph_{\bdsig_{\ulQ'}}^{\ord}:=\vPh_{k_{\ulQ'}}(\bdsg_{\ulQ'}^{\ord})\in\bdsig_{\ulQ'}. \]
\end{definition}

\begin{remark}\label{rem:53}
\begin{enumerate}
\item\label{rem:531} We see from Lemma \ref{lem:51}(\ref{lem:513}) and Proposition \ref{prop:52} that if $\til\bdsig_{\ulQ',q}$ is reducible and $m\notin\Nr_{E_q/\QQ_q}(E_q^\times)$, then $\FC^m_\frka(\bdsg_{\ulQ'})=0$. 
\item\label{rem:532} If $\ulQ'\in\frkX_{\bdsg}''$, then $\bdsg_{\ulQ'}^{\ord}:=\FC^1_{\frko_E}(\bdsg_{\ulQ'})^{-1}\bdsg_{\ulQ'}^{}$. 
\item\label{rem:533} Let $\bdsig_{\ulQ'}^\star$ be the restriction of $\bdsig_{\ulQ'}^{}$ to $\GL_2(\AA)$. 
The restriction $\bdsg^\star$ of $\bdsg$ to $\frkH\times\GL_2(\widehat{\QQ})$ is a Hida family on $\GL_2$ of tame level $\frkN'$. 
However, $\bdsg_{\ulQ'}^\star$ may not be primitive even if $\ulQ'\in\frkX_{\bdsg}''$ (cf. Remark \ref{rem:51}(\ref{rem:512})). 
Nevertheless, Proposition \ref{prop:53} holds.  
\end{enumerate}
\end{remark}


\subsection{Line bundles and Fourier-Jacobi coefficients}

Let $\Box$ be the set of prime factors of $pN$. 

\begin{definition}\label{def:56}
Let $U$ be an open compact subgroup of $G(\widehat{\QQ})$ and $\chi$ a character of $U$. 
We call a function $f:\frkD\times G(\widehat{\QQ})\to\call_{\ulk}(\CC)$ a modular form on $G$ of weight $\ulk$, level $U$ and nebentypus $\chi$ if it is holomorphic in $Z$ and satisfies 
\[f(\gam Z,\gam g u)=\chi(u)^{-1}\rho_{\ulk}(J(\gam,Z))f(Z,g) \]
for $\gam\in G(\QQ)$, $g\in G(\widehat{\QQ})$ and $u\in U$. 
A modular form $f$ is called a cusp form if it has a Fourier-Jacobi expansion 
\[f\biggl(\begin{bmatrix} \tau \\ w \end{bmatrix},\alp\biggl)=\sum_{m\in\QQ^\times_+}\overrightarrow{\FJ}^m_\alp(w,f)e^{2\pi\sqrt{-1}m\tau} \] 
for $\alp\in \bfM(\widehat{\QQ}^\Box)$. 
Let $S_{\ulk}^G(U,\chi,\CC)$ be the space of cusp forms on $G$ of weight $\ulk$, level $U$ and nebentypus $\chi$. 
\end{definition}


Let $a\in\widehat{E}^\times$ be a finite id\`{e}le which satisfies $a_\frkq=1$ for prime ideals $\frkq$ dividing $pN$. 
Since $\overrightarrow{\FJ}^m_{\bfd(a)}(w,f)$ depends only on the fractional ideal $\frka=a\frko_E$, we will denote it by $\overrightarrow{\FJ}^m_\frka(w,f)$. 

For $g\in G(\widehat{\QQ})$ and a function $\calf$ on $\frkD\times G(\widehat{\QQ})$ we set 
\[[r(g)\calf](Z,\bet)=\calf(Z,\bet g). \]
Namely, $r(g)\calf$ is the right translation of $\calf$ by $g$. 
Taking Proposition \ref{prop:32} into account, we will rewrite an $\call_{\ulk}(\CC)$-valued function $\calf$ on $\frkD\times G(\widehat{\QQ})$ as 
\[\calf(Z,\bet)=\sum_{i=0}^\kap\calf_i(Z,\bet)(-1)^i\binom{\kap}{i}X^iY^{\kap-i} \index{$f_0,\vPh_{\ulk}(f)_0$}\]
(cf. (\ref{tag:41})). 
We will also rewrite 
\beq
\overrightarrow{\FJ}_\frka^m(w,f)=\sum_{i=0}^\kap\FJ_\frka^m(f)_i^{}(w)(-1)^i\binom{\kap}{i}X^iY^{\kap-i}. \index{$\FJ^m_\frka(f)_0^{}$}\label{tag:54}
\eeq

We define the different of $E$ by $\frkD_E$. 
Fix a prime-to-$\frkp$ fractional ideal $\frka$ of $E$ such that  
\[\frkf(\frka,m)=\QQ\cap m\del^{-1}\frkD_E^{-1}\Nr(\frka)\]
is integral, where $\Nr(\frka)$ denotes the absolute norm of $\frka$. 
Let 
\[mH_0(w,w')=2m\sqrt{-1}\del^{-1} ww^{\prime c} \index{$E_0$, $H_0$}\]
be a positive Hermitian form on $\CC$ whose imaginary part is a Riemann form $mE_0=m\mathrm{Im}H_0$ on an elliptic curve $E_\frka(\CC)=\CC/\iot_\infty(\frka)$ and given by the alternating form $m\ll\;, \;\gg$ defined in \S \ref{ssec:28} with $\gam_0=\del$. 
We define a semi-character $\alp_\frka:\frka\to\{\pm1\}$ as in \S 7.2.3 of \cite{MH2} and a cocycle $l\mapsto e_l$ of $\frka$ with values in the group of invertible holomorphic functions on $\CC$ by
\[e_l(w)=\alp_\frka(l)e^{\pi mH_0\bigl(w+\frac{l}{2},l\bigl)} \]
to which we associate a line bundle $\call^m_\frka=L(mH_0,\alp_\frka)$ on $E_\frka(\CC)$ as the quotient of the trivial bundle $\CC\times\CC$ over $\frka$ by the action of $\frka$ given by $l\cdot(w,z)=(w+l,e_l(w)z)$. \index{$E_\frka$}

\begin{remark}\label{rem:31}
Put $n=m\Nr(\frka)$. 
The line bundle $\call^m_\frka$ is associated to the divisor $n[0]$ of $E_\frka$ (see Example 1.9 of \cite{BK10Duke}). 
Hence $\call^m_\frka$ is defined over an algebraic number field, symmetric and $\iot_\infty$-admissible in the sense of \cite{BK10Duke}. 
\end{remark}

The space of global sections of this line bundle $\call^m_\frka$ coincides with the space $\bfT^m_\frka(\CC)$ that consists of reduced theta functions $\tht$ on $\CC$ satisfying the equation 
\[\tht(w+l)=e_l(w)\tht(w)\]
for $l\in\frka$. 
The line bundle $\call^m_\frka$ is ample and $\dim\bfT^m_\frka(\CC)=f(\frka,m)D_E$, where $f(\frka,m)$ is the positive generator of the ideal $\frkf(\frka,m)$. 

By the theory of complex multiplication the pair $(E_\frka^{},\call^m_\frka)$ descends to $(\scre_\frka^{},\scrl^m_\frka)$ over a discrete valuation ring $R_F=\overline{\ZZ}_p\cap F$ for some number field $F$ so that $(\scre_\frka^{},\scrl^m_\frka)\otimes_{R_F}\CC\simeq(E_\frka^{},\call^m_\frka)$, where we use the fixed isomorphism $\iot_p:\CC\simeq\CC_p$. 
We similarly define $\bfT^m_\frka(\calo)$ for any $R_F$-algebra $\calo$. \index{$\bfT^m_\frka(\calo)$}

Let $L_0'$ be an ideal of $\frko_E$. \index{$L_0'$}
Therefore hereafter we will rewrite $\frkb_0^{}:=L_0'$. \index{$\frkb_0$}
The following criterion is due to Finis (see Lemma 3.1 of \cite{Finis}). 

\begin{lemma}[Finis]\label{lem:Finis}
Let $\frka$ be an ideal of $\frko_E$ and $\tht\in\bfT^m_\frka(\CC)$. 
Then $\tht\in\bfT^m_\frka(R_F)$ if and only if $[A_m(l)\tht](0)\in R_F$ for all $l\in E$. 
\end{lemma}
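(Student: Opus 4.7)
The key reformulation is that $[A_m(l)\tht](0)=e^{-\pi mH_0(l/2,l)}\tht(l)$, so the condition in the statement is nothing but a weighted integrality condition on the values of $\tht$ at arbitrary points $l\in E$. Since the relation $A_m(l)\tht=\alp_\frka(l)\tht$ for $l\in\frka$ (which follows directly from the quasi-periodicity defining $\bfT^m_\frka(\CC)$) shows that $[A_m(l)\tht](0)$ depends only on the class of $l$ in $E/\frka$, the hypothesis is really a statement about the values of $\tht$ at all torsion points of the CM elliptic curve $E_\frka$. Recall moreover that $\bfT^m_\frka(R_F)=\rmH^0(\scre_\frka,\scrl^m_\frka)$ is a free $R_F$-module of rank $d=f(\frka,m)D_E=\dim_\CC\bfT^m_\frka(\CC)$, with $\bfT^m_\frka(R_F)\otimes_{R_F}\CC=\bfT^m_\frka(\CC)$.

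For the forward direction, suppose $\tht\in\bfT^m_\frka(R_F)$. The point $l\in E$ reduces mod $\frka$ to a torsion point $P_l\in\scre_\frka(\overline{R_F})$; after enlarging $F$ so that $P_l$ is defined over $R_F$ (a harmless operation since the conclusion involves only membership in $R_F$), the factor $e^{-\pi mH_0(l/2,l)}$ is exactly the analytic trivialization of $\scrl^m_\frka$ along the translate by $P_l$, so $[A_m(l)\tht](0)$ is the value of the integral section $\tht$ at $P_l$ in the trivialization induced by translation from the rigidification at $0$. Since $\scrl^m_\frka$ and $P_l$ are defined over $R_F$, this value lies in $R_F$.

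For the converse, I plan to exhibit a finite set $l_1,\dots,l_d\in E$ such that the evaluation map
\[\mathrm{ev}:\bfT^m_\frka(\CC)\to\CC^d,\qquad\tht\mapsto\bigl([A_m(l_i)\tht](0)\bigr)_{i=1}^d\]
restricts to an \emph{isomorphism of $R_F$-modules} $\bfT^m_\frka(R_F)\stackrel{\sim}{\to}R_F^d$. Granted such $l_i$'s, the reverse direction is immediate: if $\tht$ satisfies the hypothesis then $\mathrm{ev}(\tht)\in R_F^d$, and applying the inverse of the $R_F$-integral matrix $\bigl([A_m(l_i)\tht_j](0)\bigr)_{i,j}$ (for any fixed $R_F$-basis $\tht_1,\dots,\tht_d$ of $\bfT^m_\frka(R_F)$) recovers the coordinates of $\tht$ in $R_F$. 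To construct such a set, I would choose $l_1,\dots,l_d$ to be coset representatives for $\scrm/\frka\subset E/\frka$, where $\scrm$ is the preimage in $E$ of a maximal isotropic subgroup of $\scre_\frka[n]$ with respect to the Weil pairing attached to $\scrl^m_\frka$ (for $n$ an integer with $n\scrm\subset\frka$). Classical theta theory then identifies $\mathrm{ev}$ with the realization of the irreducible Heisenberg representation of $\scrl^m_\frka$ in its ``Schr\"odinger'' model on functions on $\scrm/\frka$, which is an isomorphism over $\CC$; see the commutation relation \eqref{tag:33}.

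The main obstacle is to show that $\mathrm{ev}$ is integral in both directions, i.e.\ that the determinant of the matrix above lies in $R_F^\times$. This reduces to a Gauss-sum computation: up to nonzero normalizing factors that are $R_F$-units, the determinant is a Gauss sum attached to the finite Heisenberg group of $\scrl^m_\frka$, whose order divides a power of $n\cdot\Nr(\frka)\cdot m\cdot D_E$. Since $p$ splits in $E$ (so $p\nmid D_E$) and $\frka$ is prime to $p$, after enlarging $R_F$ to contain the relevant roots of unity this Gauss sum becomes a $p$-adic unit, by the standard fact that Gauss sums of order prime to $p$ lie in the unit group of the relevant ring of integers. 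The residual condition at primes away from $p$ is automatic from the forward direction (which shows $\mathrm{ev}$ sends $\bfT^m_\frka(R_F)$ into $R_F^d$) together with the equality of ranks over $\CC$. This completes the plan.
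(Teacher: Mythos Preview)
The paper does not prove this lemma itself; it is quoted from Finis \cite[Lemma~3.1]{Finis}. Your forward direction has the right shape, though pinning down $[A_m(l)\tht](0)$ as an \emph{integral} evaluation requires care when $l$ lies outside the kernel $K(\scrl^m_\frka)$ (then $T_l^*\scrl^m_\frka\not\simeq\scrl^m_\frka$, and one must also rigidify the degree-zero twist at~$0$).

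Your converse contains a genuine error. If the $l_i$ represent a \emph{subgroup} $\scrm/\frka$ of order $d=\deg\scrl^m_\frka$, then $\mathrm{ev}$ is typically not injective even over $\CC$: since $\scrl^m_\frka\simeq\calo_{\scre_\frka}(d[0])$ by Remark~\ref{rem:31}, a nonzero section with divisor $\sum_{P\in\scrm/\frka}[P]$ lies in $\ker\mathrm{ev}$ whenever $\sum_P P=0$ in $\scre_\frka$, and this sum vanishes for any subgroup of odd order (pair $P$ with $-P$). Thus evaluation at a Lagrangian is not the Schr\"odinger intertwiner you have in mind. Even after relocating the $l_i$ to general position, your unit-determinant claim assumes $p\nmid d$; but the paper invokes the lemma (end of the proof of Proposition~\ref{prop:64}) for spaces such as $\bfT^m_{\frka(\frkp^c)^j\frkb_0}$, where $p\mid d$. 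A cleaner fix uses that $R_F$ is a DVR: if $\tht\notin\bfT^m_\frka(R_F)$, rescale to $\tht_0=\vpi^k\tht$ integral with $\bar\tht_0\neq 0$; then $\bar\tht_0$ is nonvanishing at some prime-to-$p$ torsion point, which lifts to some $l\in E$, and by the forward direction $[A_m(l)\tht_0](0)$ reduces to that nonzero value, hence lies in $R_F^\times$, forcing $[A_m(l)\tht](0)=\vpi^{-k}[A_m(l)\tht_0](0)\notin R_F$.
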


\begin{proposition}
If $f\in M^\calg_{\ulk}(p^\ell\frkN,R_F)$, then for $\frka\in C_E$ 
\[\biggl(\frac{\Ome_\infty}{2\pi\sqrt{-1}}\biggl)^{k_2}\FJ^m_\frka(f)_0^{}\in\bfT^m_{\frka\frkb_0}(R_F). \]
\end{proposition}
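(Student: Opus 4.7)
Since $r=2$ and $s=1$, we have $t=r-s=1$, so Proposition \ref{prop:31} specializes to a single factor and yields immediately
\[
\biggl(\frac{\Ome_\infty}{2\pi\sqrt{-1}}\biggl)^{k_2}\FJ^m_\frka(f)_0^{}\in\bfT^m(L_\frka,R_F),
\]
where $\bfT^m(L_\frka,R_F)$ is the space of Definition \ref{def:25} attached to the line bundle $\frkL_\frka^m=[m]^*\scrp_0$ on $\calz_\frka^\circ$. The task is therefore to identify this integral structure with the integral structure $\bfT^m_{\frka\frkb_0}(R_F)$ defined in Section \ref{ssec:39}.

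The first step is geometric. With $t=1$, $\bfW_0\simeq\CC$ and $\scrb_0$ is the CM elliptic curve $\scre_{\frkb_0}=\CC/\iot_\infty(\frkb_0)$. Under the identification $\bfW_0^*\simeq\CC$ obtained by evaluating antilinear forms at $1$, the map $\Lam_0\colon w\mapsto H_0(w,\cdot)$ is multiplication by $2\sqrt{-1}/\del$, and a direct computation gives $L_0'^\perp=\del^{-1}\frkb_0^{-1}$ (up to the identifications). Unwinding $L_\frka=\frka L_0'\cap\Lam_0^{-1}(\frkd_E\frka L_0'^\perp)$ one finds $L_\frka=\frka\frkb_0$, so $\calz_\frka^\circ(\CC)=\CC/\frka\frkb_0=E_{\frka\frkb_0}(\CC)$, and this identification descends to $R_F$ by the theory of complex multiplication. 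The Riemann form of $\frkL_\frka^m=[m]^*\scrp_0$ was computed in Section \ref{ssec:39} to be $mE_0$, which matches the Appell-Humbert data $(mH_0,\alp_{\frka\frkb_0})$ of $\scrl^m_{\frka\frkb_0}$, so $\frkL_\frka^m$ and $\scrl^m_{\frka\frkb_0}$ are translates of one another by a two-torsion point; after absorbing this into the normalization of $\bdome_{\scrb_0}$ (which only affects things up to a sign), the two line bundles become isomorphic over $R_F$.

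The subtlest point is that one must match integral structures, not merely line bundles. I would verify this by applying the Finis criterion of Lemma \ref{lem:Finis}: it suffices to show
\[
\biggl[A_m(l)\biggl(\frac{\Ome_\infty}{2\pi\sqrt{-1}}\biggl)^{k_2}\FJ^m_\frka(f)_0^{}\biggl](0)\in R_F\qquad\text{for every }l\in E.
\]
Using the translation formula of Proposition \ref{prop:FJ}(2), the left-hand side is (up to the semi-character $\alp_{\frka\frkb_0}$) the value at $w=0$ of the Fourier-Jacobi coefficient $\overrightarrow{\FJ}^m_{\bfn(-l_\bff,0)\bfd(a)}(w,f)$, whose $R_F$-integrality follows from Proposition \ref{prop:31} applied to the translated cusp (after enlarging the level to absorb $\bfn(-l_\bff,0)$). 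The main obstacle is bookkeeping with the semi-character $\alp_{\frka\frkb_0}$: one must check that the sign coming from the cocycle $e_l$ of Section \ref{ssec:39} agrees with the one arising from the Poincaré bundle construction of $[m]^*\scrp_0$, so that the two integral structures coincide on the nose rather than up to a scalar. Once this comparison is in hand, Finis's criterion closes the argument.
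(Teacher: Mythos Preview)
Your proposal contains the right ingredients---Proposition~\ref{prop:31} for integrality in $\bfT^m(L_\frka,R_F)$ and the Finis criterion (Lemma~\ref{lem:Finis}) to land in $\bfT^m_{\frka\frkb_0}(R_F)$---but it takes a considerably more circuitous route than the paper, and the detours are where you flag unresolved obstacles.

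The paper's argument is three lines. First, Proposition~\ref{prop:FJ} already gives $\tht:=\bigl(\frac{\Ome_\infty}{2\pi\sqrt{-1}}\bigr)^{k_2}\FJ^m_\frka(f)_0$ as a theta function for the lattice $\frka L_0'=\frka\frkb_0$, so $\tht\in\bfT^m_{\frka\frkb_0}(\CC)$. Second, Proposition~\ref{prop:31} gives $\tht\in\bfT^m(L_\frka,R_F)$. Third, the key observation you are missing: the Finis criterion is \emph{lattice-independent}. The condition ``$[A_m(l)\tht](0)\in R_F$ for all $l\in E$'' involves only the function $\tht$ and points of $E$, not the lattice. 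Since $\tht$ is an $R_F$-integral section of $\frkL^m_\frka$ on $\calz_\frka^\circ$, its translates by torsion points evaluated at the origin already lie in $R_F$; equivalently, Finis's criterion applied to the $L_\frka$-structure certifies these values. Then Lemma~\ref{lem:Finis} applied to $\frka\frkb_0$ gives $\tht\in\bfT^m_{\frka\frkb_0}(R_F)$.

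This bypasses both of your obstacles. You do not need the equality $L_\frka=\frka\frkb_0$; the paper only asserts the inclusion $\frka\frkb_0\subset L_\frka$ (giving an isogeny between the two elliptic curves), and even that is mostly expository. You also do not need to match semi-characters or revisit Fourier--Jacobi coefficients at translated cusps $\bfn(-l_\bff,0)\bfd(a)$: the integrality of $[A_m(l)\tht](0)$ is already packaged in the statement $\tht\in\bfT^m(L_\frka,R_F)$. Your second approach would ultimately work, but it reproves by hand something that Proposition~\ref{prop:31} already delivers.
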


\begin{proof}
Recall the line bundle $\frkL^m_\frka$ on the elliptic curve $\calz_\frka^\circ$ associated to $\frka\in C_E$ introduced in Sections \ref{ssec:39} and \ref{ssec:310}. 
Since $\frka\frkb_0\subset L_\frka$, there is a natural isogeny $\calz_\frka^\circ\to \scre_{\frka\frkb_0}$. 
Put $\tht=\Big(\frac{\Ome_\infty}{2\pi\sqrt{-1}}\Big)^{k_2}\FJ^m_\frka(f)_0^{}$. 
Proposition \ref{prop:FJ} states that $\tht\in\bfT^m_{\frka\frkb_0}(\CC)$. 
Since $\tht\in\bfT^m(L_\frka,R_F)$ by Proposition \ref{prop:31}, Lemma \ref{lem:Finis} shows that $\tht\in\bfT^m_{\frka\frkb_0}(R_F)$.                        
\end{proof}


\subsection{Hida families on $\U(2,1)$}\label{ssec:55}

We define the notion of arithmeticity of cusp forms on $G$ by means of the arithmetic property of the Fourier-Jacobi coefficients.  See \cite{Lan13} for the general arithmetic theory of Fourier-Jacobi expansions on PEL type Shimura varieties.

\begin{definition}\label{def:58}
Let $U$ be an open compact subgroup of $G(\widehat{\QQ})$ which contains $\caln(\frkb_0)$. 
For any $\scro$-subalgebra $\calo$ of $\CC$ the space $S_{\ulk}^G(U,\chi,\calo)$ consists of $f\in S_{\ulk}^G(U,\chi,\CC)$ such that for every $\frka\in C_E$
\[\biggl(\frac{\Ome_\infty}{2\pi\sqrt{-1}}\biggl)^{k_2}\FJ^m_\frka(f)_0^{}\in\bfT^m_{\frka\frkb_0}(\calo). \] 
For a $p$-adic ring $R$ finite flat over $\calo$ we set 
\[S_{\ulk}^G(U,\chi;R):=\Ome_\frkp^{k_2} S_{\ulk}^G(U,\chi;\calo)\otimes R. \index{$S_{\ulk}^G(U,\chi;R)$}\]
Put 
\begin{align*}
S_{\ulk}^G(U;R)&=S_{\ulk}^G(U,1;R), & 
S_{\ulk}^G(p^\ell\frkN,\chi;R)&=S_{\ulk}^G(K_0(p^\ell\frkN),\chi;R). 
\end{align*}
\end{definition} 

Let $\calr$ be a normal ring finite flat over $\Lam_3$. 
Let 
\[\bdsf=\{\Tht^m_\frka\}_{\frka\in C_E,\;m\in\scrs_\frka^+}^{}\in\bfS^\calg_{\ord}(\frkN,\chi,\calr)\] 
be an $\calr$-adic Hida family on $\calg$ (cf. Remark \ref{rem:42}). 
Denote the restriction of $\bdsf_{\ulQ}$ to $\frkD\times G(\widehat{\QQ})$ also by $\bdsf_{\ulQ}\in\bdse S_{k_{\ulQ}}^G(p^{\ell_{\ulQ}}\frkN,\chi\eps_{\ulQ};\calr(\ulQ))$ for $\ulQ\in\frkX_\calr^\cls$. 
The reader is reminded that when $\alp=\bfd(a)\in \bfM(\widehat{\QQ}^\Box)$ and $\frka=a\frko_E$, 
\beq
\ulQ(\bdsf):=\sum_{m\in\scrs_\frka^+}\ulQ(\Tht^m_\frka)\,q^m=\biggl(\frac{\Ome_\infty}{2\pi\sqrt{-1}\Ome_\frkp}\biggl)^{k_{Q_2}}\bigl(\bdsf_{\ulQ}\bigl)_0^{}(Z,\alp). \label{tag:55}
\eeq

For $\ulQ\in\frkX_{\bdsf}'$ Lemma \ref{lem:41} associated to $\bdsf_{\ulQ}$ an irreducible cuspidal automorphic representation $\til\bdpi_{\ulQ}$ of $\calg(\AA)$. 
We denote by $\bdpi_{\ulQ}$ its restriction to $G(\AA)$, which is irreducible as guaranteed by Proposition \ref{prop:51}(\ref{prop:512}). \index{$\bdpi_{\ulQ}$}

Let $\pi\simeq\otimes_v'\pi_v^{}$ be an irreducible cuspidal tempered automorphic representation of $G(\AA)$ whose archimedean part $\pi_\infty$ is a holomorphic discrete series with minimal weight $-\ulk$ and such that $\pi_q$ admits a non-zero $K_q$-invariant vector for every non-split prime $q$. 
For each split prime $l$ we denote the conductor of $\pi_l$ by $c(\pi_l)$ in the sense of (\ref{tag:82}). 

\begin{definition}\label{def:57}
Put $N_\pi=\prod_ll^{c(\pi_l)}$. 
We take ideals $\frkN_\pi$ of $\frko_E$ such that $\frko_E/\frkN_\pi\simeq\ZZ/N_\pi\ZZ$. \index{$N_\pi,\frkN_\pi$}
Then there exists $f^\circ\in S_{\ulk}^G(\frkN_\pi^{},\chi,\overline{\QQ})$ such that $\bfv\mapsto\vPh_{\ulk}(f^\circ)_\bfv$ gives a $\calk_\infty$-equivariant embedding $\call_{\ulk}^\vee(\CC)\hookrightarrow\pi$. 
We call $f^\circ$ a $\overline{\QQ}$-rational newform associated to $\pi$. \index{$f^\circ$}
We call $f^\circ_0$ a highest weight newform associated to $\pi$. 
Note that $f^\circ$ is defined up to multiplication by $\overline{\QQ}^\times$. 
\end{definition} 



\section{Theta operators on Picard modular forms}\label{sec:6}


\subsection{The $\calu(\frkp^c)$-operator}

Recall that 
\begin{align*}
\bfn(w,z)&=\begin{bmatrix} 1 & -\del^{-1} w^c & z-\frac{w^c w}{2\del} \\ 0 & 1 & w \\ 0 & 0 & 1 \end{bmatrix}, & 
\bfd(\xi)&=\begin{bmatrix} \xi^c & 0 & 0 \\ 0 & 1 & 0 \\ 0 & 0 &  \xi^{-1}\end{bmatrix}, \\
\bet_1&=\begin{bmatrix} p^{-1} & 0 & 0 \\ 0 & 1 & 0 \\ 0 & 0 & 1 \end{bmatrix}, & 
\alp_1&=\begin{bmatrix} 1 & 0 & 0 \\ 0 & 1 & 0 \\ 0 & 0 & p \end{bmatrix}. 
\end{align*}
for $w\in\EE$, $z\in\AA$ and $\xi\in\EE^\times$. 
We will write
\begin{align*}
\calu(\frkp)&=\calu_p(\alp_1), &
\calu(\frkp^c)&=\calu_p(\bet_1). \index{$\calu(\frkp^c)$}
\end{align*} 
We see from \S \ref{ssec:25} that 
\[[\calu(\frkp^c)f](Z,\bet)=p^{-1-k_1}\sum_{y,z\in\ZZ_p/p\ZZ_p}f\left(Z,\bet\imath_\frkp^{-1}\left(\begin{bmatrix} p & y & z \\ 0 & 1 & 0 \\ 0 & 0 & 1 \end{bmatrix}\right)\right). \]

Let $f\in S_{\ulk}^G(p^\ell\frkN,\chi;\calo)$ be a Hecke eigenform.  
We write $\pi$ for the cuspidal automorphic representation associated to $f$. 

\begin{definition}\label{def:31}
We say that $\pi$ is $\frkp^c$-ordinary with respect to $\iot_p$ if the local representation $\pi_p$, viewed as a representation of $\GL_3(\QQ_p)$, is a subquotient of a principal series $I(\nu_p,\rho_p,\mu_p)$ of $\GL_3(\QQ_p)$ (see \S \ref{ssec:84} for notation), and  
\[k_1=\ord_p\iot_p(\mu_p(p))\leq\min\{\ord_p\iot_p(\rho_p(p)), \ord_p\iot_p(\nu_p(p))\}. \]
\end{definition} 

\begin{remark}\label{rem:35}
Put $\gam_p=\mu_p(p)$. 
There exists a cusp form $f\in\pi$ such that 
\[\calu(\frkp^c)f=p^{-k_1}\gam_pf \]
(see Proposition \ref{prop:81}). 
\end{remark}


\subsection{The $\bfU_{\frkp^c}$-operator}\label{ssec:66}

We denote by $\calc(\ZZ_p^\times,\calo)$ the space of $\calo$-valued locally constant functions on $\ZZ_p^\times$. We write $\mathrm{Dist}(\ZZ_p^\times,\calo)$ for the space of all $\calo$-linear functionals on $\calc(\ZZ_p^\times,\calo)$. 
As is well-known, there is the canonical ring isomorphism between $\mathrm{Dist}(\ZZ_p^\times,\calo)$ and $\Lam=\calo\powerseries{\ZZ_p^\times}$. 

Let $\frkb_0$ be a sufficiently small prime-to-$\frkp$ integral ideal of $\frko_E$ with respect to the fixed set $C_E$ of representatives of the ideal class group of $E$.   
Take a set $\calb_j$ of representatives for $(\frko_E/(\frkp^c)^j)^\times$ from $\frkb_0$. 
Let $\breve\phi\in\calc(\ZZ_p^\times,\calo)$. 
Let $j$ be sufficiently large so that $\breve\phi$ factors through the quotient $\ZZ_p^\times\to(\ZZ_p/p^j\ZZ_p)^\times$. 
Then we view it as a function on $(\frko_E/\frkp^j)^\times$. 
Given $b\in\EE$ and a place $v$ of $E$, we here denote by $b_v\in E_v$ the $v$-component of $b$ and write $b_p^{}=(b_\frkp,b_{\frkp^c})\in E_p$.  

\begin{definition}
Let $f\in S_{\ulk}^G(p^\ell\frkN,\chi;\calo)$ be such that $\calu(\frkp^c)f=p^{-k_1}\gam_pf$. 
When $\ord_p\iot_p(\gam_p)=k_1$, we define the $\frkp^c$-depletion of $f$ with respect to $\breve\phi$ by 
\[[\bfU_{\frkp^c}^{\breve\phi} f](Z,g)=\frac{1}{p^j\gam_p^j}\sum_{b\in \calb_j}\sum_{z\in \ZZ_p/p^j\ZZ_p}\breve\phi(b^c)f\left(Z,g\imath_\frkp^{-1}\left(\begin{bmatrix} p^j & b_{\frkp^c} & z \\ 0 & 1 & 0 \\ 0 & 0 & 1 \end{bmatrix}\right)\right). \index{$\bfU_{\frkp^c}^{\breve\phi}$}\]
The definition of $\bfU_{\frkp^c}^{\breve\phi}f$ is independent of the choice of $j$ by Lemma \ref{lem:83}. 
\end{definition}

\begin{proposition}\label{prop:64}
Let $\calo$ be a $\scro$-subalgebra of $\CC$ and $f\in S_{\ulk}^G(p^\ell\frkN,\chi,\calo)$ satisfies $\calu(\frkp^c)f=p^{-k_1}\gam_pf$ with $\ord_p\iot_p(\gam_p)=k_1$. 
Let $\frka\in C_E$. 
Take a sufficiently small open compact subgroup $U$ of $\widehat{E}^\times$ and a natural number $j$ so that we can write $\imath_\frkp^{-1}(\bet_1^{-j})=\bfd(\xi_\bff u)$ with $\xi\in E^\times$ and $u\in U$. 
Then we have $\FJ^m_\frka(w,\bfU_{\frkp^c}^{\breve\phi}f)=0$ unless $m\in\scrs_\frka^+$, in which case
\[\FJ^m_\frka(\bfU_{\frkp^c}^{\breve\phi}f)_0^{}(w)=\frac{(\xi^c)^{k_1}}{\gam_p^j\xi^{k_3}}\sum_{b\in\calb_j}\breve\phi(b^c)\biggl[A_{mp^j}\left(\del\frac{b}{\xi}\right)\FJ^{mp^j}_\frka(f)_0^{}\biggl]\left(\frac{w}{\xi}\right). \]
In particular, $\Big(\frac{\Ome_\infty}{2\pi\sqrt{-1}}\Big)^{k_2}\FJ^m_\frka(\bfU_{\frkp^c}^{\breve\phi}f)_0^{}\in\bfT^m_{\frka(\frkp^c)^j\frkb_0}(\calo)$.  
\end{proposition}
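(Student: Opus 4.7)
The plan is to unwind $[\bfU_{\frkp^c}^{\breve\phi} f](Z,\bfd(a))$ directly from its definition and recognize the resulting sum as a Fourier-Jacobi expansion using two transformation rules: the $\caln$-action on Fourier-Jacobi coefficients, encoded in Proposition~\ref{prop:FJ}(2), which produces the operator $A_{mp^j}(\del b/\xi)$; and the $\bfd(\xi)$-action, computed in Lemma~\ref{lem:21}, which produces the scalar $(\xi^c)^{k_1}/\xi^{k_3}$.

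First I would factor the matrix $\imath_\frkp^{-1}\!\left(\begin{smallmatrix} p^j & b_{\frkp^c} & z \\ 0 & 1 & 0 \\ 0 & 0 & 1 \end{smallmatrix}\right)$ appearing in the definition as a product of a unipotent element $N_{b,z}\in\caln(\QQ_p)$ and $\imath_\frkp^{-1}(\bet_1^{-j})$, and substitute the hypothesis $\imath_\frkp^{-1}(\bet_1^{-j})=\bfd(\xi_\bff u)$. Using the conjugation identity $\bfd(\xi)^{-1}\bfn(w,z)\bfd(\xi)=\bfn(w/\xi,z/\Nr_{E/\QQ}(\xi))$, which follows from a direct computation with $\bfd(\xi)=\diag(\xi^c,1,\xi^{-1})$, the right translate rewrites as $f\bigl(Z,\bfd(a\xi)\bfn(w(b)/\xi,z')u\bigr)$, with $u\in U$ absorbed into the level of $f$. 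Strong approximation together with the $\caln(\frkb_0)$-invariance of $f$ allow the purely $\frkp^c$-local element $w(b)$ to be taken as the global element $-\del b\in\frkb_0$ for $b\in\calb_j$, so the translation argument becomes $-\del b/\xi$.

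Second, at the cusp $\bfd(a\xi)$ (corresponding to the fractional ideal $\frka(\frkp^c)^j$, equivalent to $\frka$ via multiplication by $\xi$), I would apply Lemma~\ref{lem:21} to identify the Fourier-Jacobi coefficient: the $\bfd(\xi)$-action contributes the scalar $(\xi^c)^{k_1}/\xi^{k_3}$ to the highest-weight component $l_{\ulk}$ via $\rho_{\ulk}$ (cf.~\S\ref{ssec:31}), the Fourier-Jacobi argument rescales as $w\mapsto w/\xi$, and the $\caln$-translation by $\bfn(-\del b/\xi,\cdot)$ acts on $\FJ^{m'}_\frka(f)_0$ as the operator $A_{m'}(\del b/\xi)$ by Proposition~\ref{prop:FJ}(2). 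The remaining sum $\sum_{z\in\ZZ_p/p^j\ZZ_p}\addchar^{m'}(z/\Nr_{E/\QQ}(\xi))$ vanishes unless $p^j\mid m'$, projecting onto $m'=mp^j$ and contributing a factor $p^j$ that cancels the $1/p^j$ in the definition, leaving the claimed factor $\gam_p^{-j}$. The vanishing for $m\notin\scrs_\frka^+$ is inherited from the corresponding vanishing for $f$ via its own Fourier-Jacobi expansion.

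The integrality assertion then follows from Definition~\ref{def:58} applied to $f$, which gives $(\Ome_\infty/(2\pi\sqrt{-1}))^{k_2}\FJ^{mp^j}_\frka(f)_0\in\bfT^{mp^j}_{\frka\frkb_0}(\calo)$, together with the fact that $A_{mp^j}(\del b/\xi)$ sends $\bfT^{mp^j}_{\frka\frkb_0}(\calo)$ into $\bfT^m_{\frka(\frkp^c)^j\frkb_0}(\calo)$: this is verified via Lemma~\ref{lem:Finis}, since $\del b/\xi\in\frka(\frkp^c)^j\frkb_0$ for $b\in\frkb_0$ and $\xi\frko_E=(\frkp^c)^j$. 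The main obstacle will be the bookkeeping in the first two steps: tracking how the single global element $\xi\in E^\times$ simultaneously controls the cusp label, the rescaling $w\mapsto w/\xi$, the translation $-\del b/\xi$, and the representation-theoretic scalar $(\xi^c)^{k_1}/\xi^{k_3}$, and verifying independence from the choices of $\xi$ (up to $u\in U$) and of the representatives $b\in\calb_j$, the latter because a change $b\mapsto br$ by a $\frkp^c$-unit $r$ is a mere relabeling that is absorbed into $\breve\phi(b^c)$ while leaving $\del b/\xi$ unchanged modulo the level.
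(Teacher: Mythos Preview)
Your proposal is correct and follows the same approach as the paper. The paper isolates the central translation formula as a separate Lemma~\ref{lem:31} (proved by passing to the automorphic form $\vPh_\rho(f)$ on $G(\AA)$ and applying Lemma~\ref{lem:21} twice), then specializes to $l=\del b$ with $b\in\calb_j$, sums over $b$ and $z$, extracts the $l_{\ulk}$-component, and uses Lemma~\ref{lem:Finis} for the integrality clause exactly as you outline.
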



\subsection{Proof of Proposition \ref{prop:64}}

When the class number of $E$ is greater than one, it is complicated to describe the action of $\calu(\frkp^c)$ on Fourier-Jacobi expansions. 
One can use the following result to compute the action of a suitable power $\calu(\frkp^c)^j$ in terms of Fourier-Jacobi coefficients. 

\begin{lemma}\label{lem:31}
Let $f\in S_{\ulk}^G(p^\ell\frkN,\chi,\CC)$ and $a\in\widehat{E}^\times$. Let $\frka=a\frko_E\in C_E$. 
Take a sufficiently small open compact subgroup $U$ of $\widehat{E}^\times$ and a natural number $j$ so that we can write $\imath_\frkp^{-1}(\bet_1^{-j})=\bfd(\xi_\bff u)$ with $\xi\in E^\times$ and $u\in U$. 
Let $z\in\QQ_p$ and $l\in\frkb_0$. 
Then for a positive rational number $m$ 
\begin{multline*}
\vFJ^m_{\bfd(a)\bfn(l_{\frkp^c},z)\imath_\frkp^{-1}(\bet_1^{-j})}(w,f)\\
=\frac{\addchar^m(z)}{\xi^{k_3}}e^{\pi\sqrt{-1}m\del^{-1} l^c(2w-l)}\rho_{\ulk}\left(\begin{bmatrix} (\xi^{-1})^c & 0 \\ \del^{-1}(\xi^{-1}l)^c & 1 \end{bmatrix},1\right)\vFJ^{mp^j}_\frka(\xi^{-1}(w-l),f). 
\end{multline*}
\end{lemma}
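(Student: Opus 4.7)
The plan is to reduce the formula to two applications of Lemma~\ref{lem:21}, bridged by the left-$G(\QQ)$-invariance of $\vPh_{\ulk}(f)$. First, I would apply Lemma~\ref{lem:21} with $g_0=1$ and trivial archimedean twist to unfold the left-hand side: for $\bet=\bfd(a)\bfn(l_{\frkp^c},z)\bfd(\xi_\bff u)$,
\[
\vFJ^m_\bet(w,f)=e^{2\pi m+\frac{\pi m}{2}H_0(w,w)}\rho_{\ulk}\biggl(\begin{bmatrix}1&0\\-\del^{-1}w^c&1\end{bmatrix},1\biggl)^{-1}\int_{\QQ\bsl\AA}\vPh_{\ulk}(f)(\bfn(w,z_0)\bet)\overline{\addchar^m(z_0)}\,\d z_0,
\]
and the target $\vFJ^{mp^j}_\frka(\xi^{-1}(w-l),f)$ admits an analogous presentation. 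The core task is to transform the first integrand into the second, tracking prefactors.

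The global element I would insert into the first integrand via left-invariance $\vPh_{\ulk}(f)(\gam h)=\vPh_{\ulk}(f)(h)$ is $\gam=\bfd(\xi)\bfn(-l,0)\in G(\QQ)$: the $\bfd(\xi)$ factor plays off $\bfd(\xi_\bff u)$ through the decomposition $\bfd(\xi a\xi_\bff u)=\bfd(\xi)\bfd(a)\bfd(\xi_\bff u)$ in $G(\AA)$, which is a consequence of the relation $\xi a\xi_\bff u=\xi\cdot a\cdot\xi_\bff u$ in $\EE^\times$, while $\bfn(-l,0)$ turns the $\frkp^c$-local translation $l_{\frkp^c}$ into a genuine global translation by $l$. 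Using the Heisenberg law~\eqref{tag:31}, the conjugation identity $\bfd(\zeta)^{-1}\bfn(\tilde w,\tilde z)\bfd(\zeta)=\bfn(\zeta^{-1}\tilde w,\Nr(\zeta)^{-1}\tilde z)$, and the fact that $\Nr_{E/\QQ}(\xi)=p^j$ (forced by $v_{\frkp^c}(\xi)=j$ and the local unit conditions on $\xi$ elsewhere, which come from the hypothesis $\imath_\frkp^{-1}(\bet_1^{-j})=\bfd(\xi_\bff u)$ with $U$ chosen sufficiently small), I would commute factors to rewrite the integrand in the form $\vPh_{\ulk}(f)(\bfn(\xi^{-1}(w-l),\text{new }z_0)\bfd(\xi^{-1})_\infty\bfd(a)\cdot(\text{stabilizer unit at }p))$. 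The change of variable $z_0\mapsto p^jz_0$ then shifts the Fourier index from $m$ to $mp^j$, and the residual adelic unit $\bfd(u)_p$ is absorbed by the right-stabilizer of $f$.

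Applying Lemma~\ref{lem:21} in the opposite direction, with $g_0=1$ and $\xi_{\mathrm{Lem}}=\xi^{-1}_\infty\in E_\infty^\times$, re-identifies the transformed integrand as $\vFJ^{mp^j}_\frka(\xi^{-1}(w-l),f)$ multiplied by the matrix factor $\rho_{\ulk}(\begin{bmatrix}(\xi^{-1})^c&0\\-\del^{-1}(\xi^{-1})^c(w-l)^c&1\end{bmatrix},\xi^{-1})$ and a Gaussian in $H_0(\xi^{-1}(w-l),\xi^{-1}(w-l))$. Cross-multiplying all prefactors, using $H_0(\xi^{-1}u,\xi^{-1}v)=\Nr(\xi)^{-1}H_0(u,v)$ and the expansion $H_0(w,w)-H_0(w-l,w-l)=H_0(w,l)+H_0(l,w)-H_0(l,l)$ along with the Heisenberg bracket $\ll l,w\gg$ collected from the commutations, should produce the stated exponential $e^{\pi\sqrt{-1}m\del^{-1}l^c(2w-l)}$. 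The scalar $\xi^{-k_3}$ is the similitude contribution of $\rho_{\ulk}(\cdot,\xi^{-1})$ (cf.~\S\ref{ssec:31}); the matrix $\begin{bmatrix}(\xi^{-1})^c&0\\ \del^{-1}(\xi^{-1}l)^c&1\end{bmatrix}$ arises from composing the two Lemma~\ref{lem:21} matrices and absorbing the $-l$ shift in the second row; and the $\addchar^m(z)$ factor is the unshifted $z$-translation in the central direction.

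The main obstacle will be the disciplined bookkeeping of prefactors: a single misplaced complex conjugation on $l$ or $\xi$, or an error in reconciling the archimedean and adelic contributions of $\Nr(\xi)=p^j$, would destroy the clean closed form. A second subtle point is identifying the archimedean factor $\bfd(\xi^{-1})_\infty$ that emerges after invariance with the $\xi_{\mathrm{Lem}}=\xi^{-1}_\infty$ input of Lemma~\ref{lem:21}, which uses the adelic decomposition $\bfd(\xi^{-1})=\bfd(\xi^{-1})_\infty\cdot\bfd(\xi^{-1})_\bff$ together with invariance under the global $\bfd(\xi^{-1})\in G(\QQ)$.
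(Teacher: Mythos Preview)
Your plan is correct and matches the paper's proof essentially step for step: both apply Lemma~\ref{lem:21} to unfold the left-hand side, use left $G(\QQ)$-invariance of $\vPh_{\ulk}(f)$ under $\bfn(-l,0)$ and $\bfd(\xi)$ together with the Heisenberg law~\eqref{tag:31} and the conjugation rule for $\bfd(\zeta)$ (using $\Nr_{E/\QQ}(\xi)=p^j$), absorb finite units into the right $K_0(p^\ell\frkN)$-invariance, perform the change of variable in the integration, and then apply Lemma~\ref{lem:21} in the reverse direction with $\xi_{\mathrm{Lem}}=\xi^{-1}_\infty$ to recover $\vFJ^{mp^j}_\frka$. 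The only difference is presentational: the paper carries out the adelic manipulation directly on the integrand without naming the global element $\gamma$, whereas you make the insertion of $\gamma=\bfd(\xi)\bfn(-l,0)$ explicit.
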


\begin{proof}
Let $\xi=g_0=1$ and $\bet=\bfd(a)\bfn(l_{\frkp^c},z)\imath_\frkp^{-1}(\bet_1^{-j})$ in Lemma \ref{lem:21}. 
Then 
\begin{multline*}
\vPh(f)^m_{\bfd(a)\bfn(l_{\frkp^c},z)\imath_\frkp^{-1}(\bet_1^{-j})}(\bfn(w,0))\\
=e^{-\pi m(2+\sqrt{-1}w^c\del^{-1} w)}\rho_{\ulk}\left(\begin{bmatrix} 1 & 0 \\ -\del^{-1} w^c & 1 \end{bmatrix}\right)\FJ^m_{\bfd(a)\bfn(l_{\frkp^c},z)\imath_\frkp^{-1}(\bet_1^{-j})}(w,f). 
\end{multline*} 
Since $\frka$ and $p$ are coprime, the left hand side equals 
\begin{align*}
&\int_{\QQ\bsl\AA}\varPhi(f)(\bfn(w,x)\bfd(a)\bfn(l_{\frkp^c},z)\imath_\frkp^{-1}(\bet_1^{-j}))\overline{\addchar^m(x)}\,\d x\\
=&\int_{\QQ\bsl\AA}\varPhi(f)(\bfn(w+l_{\frkp^c},x+z)\imath_\frkp^{-1}(\bet_1^{-j})\bfd(a))\overline{\addchar^m(x)}\,\d x\\
=&\int_{\QQ\bsl\AA}\varPhi(f)\biggl(\bfn\biggl(w-l_\infty,x+z-\frac{\ll l_\infty,w\gg}{2}\biggl)\bfd(\xi_\bff)\bfd(a)\biggl)\overline{\addchar^m(x)}\,\d x. 
\end{align*} 
Since the ideal of $\frko_E$ generated by $\xi$ is $(\frkp^c)^j$, we have $\Nr_{E/\QQ}(\xi)=p^j$. 
The integral above equals  
\begin{align*}
&\addchar^m\biggl(z-\frac{\ll l_\infty,w\gg}{2}\biggl)\int_{\QQ\bsl\AA}\varPhi(f)(\bfn(\xi_\infty^{-1}(w-l_\infty),p^{-j}x)\bfd(a)\bfd(\xi_\infty^{-1}))\overline{\addchar^m(x)}\,\d x\\
=&\addchar^m(z)e^{\pi m\sqrt{-1}\Tr_{E/\QQ}(l^c\del^{-1} w)}\vPh(f)^{mp^j}_{\bfd(\xi_\infty^{-1})\bfd(a)}(\bfn(\xi_\infty^{-1}(w-l_\infty),0)) \\
=&\addchar^m(z)e^{\pi m\sqrt{-1}\del^{-1}(l^cw-lw^c)}e^{-\pi m(2+\sqrt{-1}(w-l)^c \del^{-1} (w-l))}\\
&\times\rho_{\ulk}\left(\begin{bmatrix} (\xi^{-1})^c & 0 \\ -\del^{-1}(\xi^{-1} (w-l))^c & 1 \end{bmatrix},\xi^{-1}\right)\FJ^{mp^j}_\frka(\xi^{-1}(w-l),f)
\end{align*}
again by Lemma \ref{lem:21}. 
\end{proof}

We put $\Ome=\frac{\Ome_\infty}{2\pi\sqrt{-1}}$ to simplify notation. 
Granted (\ref{tag:35}), we normalize the arithmetic $m$th Fourier-Jacobi coefficient of $f$ at $\bet\in G(\widehat{\QQ})$ by 
\[\vec{\tht}^m_\bet(w,f)=\rho_{\ulk}\biggl(\begin{bmatrix} 1 & \\ & \Ome^{-1}\end{bmatrix},1\biggl)\overrightarrow{\FJ}^m_\bet(w,f).\]
When $\bet=\bfd(a)$ and $\frka=a\frko_E$, we write 
\[\vec{\tht}^m_\bet(w,f)=\vec{\tht}^m_\frka(w,f)=\sum_{i=0}^\kap\tht_\frka^m(f)_i^{}(w)(-1)^i\binom{\kap}{i}X^iY^{\kap-i}. \]
Then $\tht_\frka^m(f)_i^{}(w)=\Ome^{k_2-i}\FJ_\frka^m(f)_i^{}(w)$ and $\tht_\frka^m(f)_i^{}(0)\in\calo$ by Proposition \ref{prop:32}. 
Proposition \ref{prop:FJ} shows that for every $l\in\frka\frkb_0$
\[e^{-\pi m\sqrt{-1}l^c\del^{-1}(2w+l)}\rho_{\ulk}\left(\begin{bmatrix} 1 & 0 \\ -\del^{-1}\frac{l^c}{\Ome} & 1 \end{bmatrix},1\right)\vec{\tht}^m_\frka(w+l)=\vec{\tht}^m_\frka(w). \]

Recall the operator $A_m(l)$ defined for functions $\Tht$ on $\CC$ by 
\[[A_m(l)\Tht](w)=e^{-\pi m\sqrt{-1}l^c\del^{-1}(2w+l)}\Tht(w+l)\index{$A_m(l)$}\]
(see \S \ref{ssec:28}). 
It follows from Lemma \ref{lem:31} that 
\begin{align*}
&\vec{\tht}^m_{\bfd(a)\bfn(-l_{\frkp^c},z)\imath_\frkp^{-1}(\bet_1^{-j})}(w,f)\\
=&\frac{\addchar^m(z)}{\xi^{k_3}}e^{-\pi\sqrt{-1}m\del^{-1} l^c(2w+l)}\rho_{\ulk}\left(\begin{bmatrix} (\xi^{-1})^c & 0 \\ -\frac{(\xi^{-1}l)^c}{\del\Ome} & 1 \end{bmatrix},1\right)\vec{\tht}^{mp^j}_\frka(\xi^{-1}(w+l),f)\\
=&\addchar^m(z)\frac{(\xi^{k_1})^c}{\xi^{k_3}}\sum_{i=0}^\kap\left[A_{mp^j}\left(\frac{l}{\xi}\right)\tht^{mp^j}_\frka(f)_i\right]\left(\frac{w}{\xi}\right)\binom{\kap}{i}(-\xi^cX)^i\biggl(Y+\frac{l^cX}{\del \Ome}\biggl)^{\kap-i}. 
\end{align*}

We are now ready to prove Proposition \ref{prop:64}. 
For our choice of $\calb_j$ we can apply this formula to $l=\del b$ with $b\in\calb_j$ to get 
\begin{align*}
&\vec{\tht}^m_{\bfd(a)\bfn(-\del b_{\frkp^c},z)\imath_\frkp^{-1}(\bet_1^{-j})}(w,f)\\
=&\addchar^m(z)\frac{(\xi^{k_1})^c}{\xi^{k_3}}\sum_{i=0}^\kap\left[A_{mp^j}\left(\frac{\del b}{\xi}\right)\tht^{mp^j}_\frka(f)_i\right]\left(\frac{w}{\xi}\right)\binom{\kap}{i}(-\xi^cX)^i\biggl(Y-\frac{b^cX}{\Ome}\biggl)^{\kap-i}. 
\end{align*}
Since 
\[\bfn(-\del b_{\frkp^c},z)\imath_\frkp^{-1}(\bet_1^{-j})=\imath_\frkp^{-1}\left(\begin{bmatrix} p^j & b_{\frkp^c} & z \\ 0 & 1 & 0 \\ 0 & 0 & 1 \end{bmatrix}\right), \]
we have  
\begin{multline}
\frac{\gam_p^j\xi^{k_3}}{(\xi^{k_1})^c}\vec{\tht}^m_\frka(w,\bfU_{\frkp^c}^{\breve\phi}f) \label{tag:61}\\
=\sum_{i=0}^\kap\sum_{b\in\calb_j}\breve\phi(b^c)\biggl[A_{mp^j}\left(\del\frac{b}{\xi}\right)\tht^{mp^j}_\frka(f)_i\biggl]\left(\frac{w}{\xi}\right)\binom{\kap}{i}(-\xi^c X)^i\biggl(Y-\frac{b^c}{\Ome}X\biggl)^{\kap-i}
\end{multline}
if $m$ is $p$-integral.   
Clearly, $\vec{\tht}^m_\frka(w,\bfU_{\frkp^c}^{\breve\phi}f)=0$ if $m$ is not $p$-integral. 
We obtain 
\[\frac{\gam_p^j\xi^{k_3}}{(\xi^{k_1})^c}\tht^m_\frka(\bfU_{\frkp^c}^{\breve\phi}f)_0^{}(w)\\
=\sum_{b\in\calb_j}\breve\phi(b^c)\biggl[A_{mp^j}\left(\del\frac{b}{\xi}\right)\tht^{mp^j}_\frka(f)_0^{}\biggl]\left(\frac{w}{\xi}\right) \]
by looking at the coefficient of $Y^\kap$. 
Since $A_{mp^j}(l)$ ($l\in E$) preserves the module of $p$-integral theta functions by Lemma \ref{lem:Finis}, we conclude that $\tht^m_\frka(\bfU_{\frkp^c}^{\breve\phi}f)_0^{}\in\bfT^m_{\frka(\frkp^c)^j\frkb_0}(\calo)$. \qed


\subsection{The $\bfU^{\chi'}_{\frkN'}$-operator}\label{ssec:63}

Let $\chi'$ be a Dirichlet character of conductor $N'$ whose prime factors are split in $E$ and distinct from $p$. 
Take an ideal $\frkN'$ of $\frko_E$ such that $\frko_E/\frkN'\simeq\ZZ/N'\ZZ$. 
To construct $p$-adic families having tame level $\frkN'$ and nebentypus $\chi'$, we apply the $\bfU^{\chi'}_{\frkN'}$-operator which we introduce in \cite{HY} by following the construction \cite{Schmidt} of $p$-adic $L$-functions for $\GL_3\times\GL_2$. 
We define 
\[\bfU^{\chi'}_{\frkN'}f(g)=\sum_{i,j\in(\widehat{\ZZ}/N'\widehat{\ZZ})^\times}\sum_{y\in\widehat{\ZZ}/N^{\prime 2}\widehat{\ZZ}}
\chi'(ij)f\left(g\cdot\vsi_{NN'}\begin{bmatrix} 1 & \frac{i}{N'} & \frac{y}{N^{\prime2}} \\ 0 & 1 & \frac{j}{N'} \\ 0 & 0 & 1 \end{bmatrix}\right), \index{$\bfU^{\chi'}_{\frkN'}$}\]
where $\vsi_{NN'}=(\vsi_{NN',l})\in G(\widehat{\QQ})$ is defined by 
\[\vsi_{NN',l}=\begin{cases}
 \ono_3 &\text{if $l$ does not divide $NN'$, }\\
\imath_\frkl^{-1}(\vsi) &\text{if $l|NN'$. }
\end{cases}\]
This operator $\bfU^{\chi'}_{\frkN'}$ has the following properties: 
\begin{itemize}
\item the restriction of $\bfU^{\chi'}_{\frkN'}f$ to $H(\widehat{\QQ})$ has an appropriate $K$-type; 
\item the local integrals at prime factors of $N'$ attached to $\bfU^{\chi'}_{\frkN'}f$ and newforms of level $\frkN'$ have a simple formula (see \S \ref{ssec:78}). 
\end{itemize}


\subsection{Shimura's differential operators}\label{ssec:61}
Put 
\begin{align*}
\frkT&=\Mat_{2,1}(\CC), & 
\xi(Z)&=\sqrt{-1}\begin{bmatrix} \tau^c-\tau & -w \\ w^c & -\del\end{bmatrix}, &
\vXi(Z)&=(\xi(Z),\eta(Z)) \index{$\vXi(Z),\xi(Z),\eta(Z)$}
\end{align*} 
for $Z=\begin{bmatrix} \tau \\ w \end{bmatrix}\in\frkD$. 
We write $Ml_n(\frkT,V)$ for the vector space of all $\CC$-multilinear maps of $\frkT\times\cdots\times\frkT$ ($n$ copies) into $V$. 
We define the representation $(\rho\otimes\upsilon^n,Ml_n(\frkT,V))$ of $\GL_2(\CC)\times\CC^\times$ by 
\[[(\rho\otimes\upsilon^n)(a,b)h](u_1,\dots,u_n)=\rho(a,b)h(\trs au_1b,\dots,\trs au_nb)\]
for $h\in Ml_n(\frkT,V)$ and $u_1,\dots,u_n\in\frkT$. 

Given $f\in S^G_\rho(p^\ell\frkN,\chi,\CC)$, we define a $Ml_1(\frkT,V)$-valued function $Df$ on $\frkD\times G(\widehat{\QQ})$ by 
\begin{align*}
[Df(u)](Z,\bet)&=u_{11}\frac{\partial f}{\partial\tau}(Z,\bet)+u_{21}\frac{\partial f}{\partial w}(Z,\bet), \\ 
[Cf(u)](Z,\bet)&=[Df(\trs\xi(Z)u\eta(Z))](Z,\bet)
\end{align*}
for $\bet\in G(\widehat{\QQ})$ and $u=\begin{bmatrix} u_{11} \\ u_{21} \end{bmatrix}\in\frkT$. 
Given a non-negative positive integer $n$, we define $D^nf$ and $C^nf$ by
\begin{align*}
D^nf&=D(D^{n-1}f), & D^0f&=f, &
C^nf&=C(C^{n-1}f), & C^0f&=f. 
\end{align*}
Define $Ml_n(\frkT,V)$-valued function $D_\rho^nf$ on $\frkD\times G(\widehat{\QQ})$ by 
\[D_\rho^nf=(\rho\otimes\upsilon^n)(\vXi)^{-1}C^n(\rho(\vXi)f). \index{$D_\rho$}\]
Then $D_\rho^nf$ satisfies
\[D_\rho^nf(\gam Z,\gam\bet k)=(\rho\otimes\upsilon^n)(J(\gam,Z))D_\rho^nf(Z,\bet) \]
for $\gam\in G(\QQ)$, $\bet\in G(\widehat{\QQ})$ and $k\in \calk_1(p^\ell\frkN)$, and 
\begin{align*}
D_\rho^{n+1}&=D_{\rho\otimes\upsilon^n}^{}D_\rho^n=D^n_{\rho\otimes\tau}D_\rho^{} 
\end{align*}
by Proposition 12.10 of \cite{Shimura00}. 
Moreover, $D_\rho^nf$ is nearly holomorphic in the sense that $\varPhi_{\rho\otimes\upsilon^n}(D_\rho^nf)$ is is annihilated by some power of every generator of the antiholomorphic Lie algebra $\frkp_-$. 
Put
\begin{align*}
D_{\ulk}&=D_{\rho_{\ulk}}, & 
\del_{\ulk}^n&=(2\pi\sqrt{-1})^{-n}D_{\ulk}^n. \index{$D_{\ulk},\del_{\ulk}$}
\end{align*}

\begin{remark}\label{rem:23}
When $r=s=1$, the operator $\del_{(k_1;k_2)}^n$ coincides with the Maass-Shimura differential operator $\del_{k_2-k_1}^n$ on $\SL_2$ (cf. (\ref{tag:24})). 
\end{remark}

Strictly speaking, Shimura deals with only the tube domain $\frkD_{r,r}$  in \cite{Shimura00}. 
When $r\neq s$, he considers bounded forms $\frkB_{r,s}$ of the symmetric spaces instead of the unbounded domains $\frkD_{r,s}$. 
The proof is axiomatic and works generally. 
We will prove the analogous results for $\frkD_{r,s}$ in \S \ref{ssec:a4} by using the isomorphism $\frkt:\frkB_{r,s}\stackrel{\sim}{\to}\frkD_{r,s}$, which will be used for the calculation of the archimedean integral in Appendix \ref{sec:b}. 


\subsection{Differential operators on $\U(2,1)$}\label{ssec:64}

\begin{definition}
Put $q^m=e^{2\pi m\sqrt{-1}}$. 
For $\ulk'=(k_1';k_2')$ with $k_1'<k_2'$ and a non-negative integer $t$ we denote by $\scrs^{[t]}_{\ulk'}(p^\ell\frkN',\chi')$ the
space of all $\CC$-valued functions $f$ on $\frkH\times H(\widehat{\QQ})$ satisfying the following two conditions:
\begin{itemize}
\item for $\gam\in H(\QQ)$, $\bet\in H(\widehat{\QQ})$ and $k\in K_1'(p^\ell\frkN')$ 
\[f(\gam \tau,\gam\bet k)=\chi'(k)^{-1}\rho_{\ulk'}(J(\gam,\tau))f(\tau,\bet)\]  
\item for every $\bet\in H(\widehat{\QQ})$ there exists $N_\bet\in\NN$ and $\FC^m_\nu(\bet)\in\CC$ such that
\[f(\tau,\bet)=\sum_{\nu=0}^t\sum_{m=1}^\infty (\pi\eta(\tau))^{-\nu}\FC^m_{\nu,\bet}(f)q^{m/N_\bet}. \] 
\end{itemize}
We call $f\in \scrs^{[t]}_{\ulk'}(p^\ell\frkN',\chi')$ a nearly holomorphic cusp form on $H$ of weight $\ulk'$, level $p^\ell\frkN'$, degree $t$ and nebentypus $\chi'$. 
For a subfield $\Ome$ of $\CC$ we say that $f$ is $\Ome$-rational if $\FC^m_{\nu,\bet}(f)\in\Ome$ for all $m,\nu,\bet$.
The space $\scrs^{[t]}_{\ulk'}(p^\ell\frkN',\chi',\Ome)$ consists of $\Ome$-rational elements of $\scrs^{[t]}_{\ulk'}(p^\ell\frkN',\chi')$. \index{$\scrs^{[t]}_{\ulk'}(p^\ell\frkN',\chi',\Ome)$}
\end{definition}

The $\overline{\QQ}$-rationality is characterized in terms of values at CM-points. 
The following proposition is a special case of Theorem 14.12(1) of \cite{Shimura00}. 

\begin{proposition}[Shimura]\label{prop:61}
The space $\scrs^{[t]}_{\ulk'}(p^\ell\frkN',\chi',\overline{\QQ})$ consists of all elements $f\in \scrs^{[t]}_{\ulk'}(p^\ell\frkN',\chi')$ such that $\frac{f(\tau)}{(\Ome_\infty/\pi)^{k_2'-k_1'}}\in\overline{\QQ}$ for every $\tau\in\frkH\cap E$. 
\end{proposition}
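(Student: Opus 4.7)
The plan is to deduce Proposition 6.1 as a direct specialization of Shimura's Theorem 14.12(1) of \cite{Shimura00} to the group $H=\U(1,1)$ with the weight representation $\rho_{\ulk'}$, identifying Shimura's abstract CM-period factor with the concrete expression $(\Ome_\infty/\pi)^{k_2'-k_1'}$ used in our conventions.

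First I would recall Shimura's general criterion: for a nearly holomorphic form $f \in \scrs^{[t]}_{\ulk'}(p^\ell\frkN',\chi')$, $\overline{\QQ}$-rationality of the $q$-expansion is equivalent to $\rho_{\ulk'}(\Ome(\tau_0))^{-1} f(\tau_0) \in \overline{\QQ}$ for every CM point $\tau_0 \in \frkH \cap E$, where $\Ome(\tau_0)$ is a period matrix attached to the CM elliptic curve uniformized by $\tau_0$. The ``only if'' direction uses that the Maass-Shimura operators of \S 6.1 preserve $\overline{\QQ}$-rationality on $q$-expansions, together with the fact that the near-holomorphy factor $(\pi\eta(\tau_0))^{-\nu}$ is $\overline{\QQ}$-proportional to a power of $\Ome_\infty/\pi$ at a CM point (via the standard Chowla--Selberg relations), so that each term $\FC^m_{\nu,\bet}(f) (\pi\eta(\tau))^{-\nu} q^m$ contributes algebraically after the requisite normalization. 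The ``if'' direction rests on the density of CM points under Hecke correspondences: if the normalized values are algebraic at every CM point in a single Hecke orbit, then by combining with the transformation law for $\rho_{\ulk'}$ one recovers algebraicity of all $q$-expansion coefficients.

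Next I would compute $\rho_{\ulk'}(\Ome(\tau_0))$ explicitly. At a CM point $\tau_0 \in \frkH \cap E$, the lattice $\ZZ+\ZZ\tau_0$ is $\frko_E$-isomorphic to a prime-to-$p$ fractional ideal $\frka$, and under the normalized uniformization $\CC/(\ZZ+\ZZ\tau_0) \simeq E_\frka=\CC/\iot_\infty(\frka)$ the scalar comparison factor is precisely the CM period $\Ome_\infty$ defined before Proposition 3.2. The $\U(1,1)$-weight factor $\rho_{\ulk'}(J(\alp,\tau_0)) = \lam(\alp,\tau_0)^{-k_1'}\mu(\alp,\tau_0)^{k_2'}$ at a CM isogeny $\alp$, combined with the determinant relation (2.4) to trade $\lam$ for $\mu$, collapses to $(\Ome_\infty/(2\pi\sqrt{-1}))^{k_2'-k_1'}$ times an algebraic factor. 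Since $\pi$ and $2\pi\sqrt{-1}$ differ by an algebraic factor, Shimura's criterion becomes $f(\tau_0)/(\Ome_\infty/\pi)^{k_2'-k_1'} \in \overline{\QQ}$, as asserted.

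The main obstacle will be matching Shimura's period convention on the bounded symmetric domain $\frkB_{1,1}$ used throughout \cite{Shimura00} to the upper half-plane convention for $\frkH$ used here, via the Cayley isomorphism $\frkt:\frkB_{r,s}\stackrel{\sim}{\to}\frkD_{r,s}$ mentioned at the end of \S 6.1. Because $\frkt$ has Jacobian in $E'$, this change of domain introduces only factors in $\overline{\QQ}^\times$, so the exponent $k_2'-k_1'$ of $\Ome_\infty/\pi$ is preserved. Once this matching is carried out, Proposition 6.1 follows directly from Shimura's theorem without further input.
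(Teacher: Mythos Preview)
Your proposal is correct and takes the same approach as the paper: both reduce Proposition~6.1 to Theorem~14.12(1) of \cite{Shimura00}. The paper in fact gives no proof at all beyond the one-line attribution ``a special case of Theorem 14.12(1) of \cite{Shimura00}'', so your elaboration on the period identification goes further than the paper does for this statement. Your computation is consistent with (and slightly anticipates) what the paper carries out in the proof of Proposition~6.2, where it identifies Shimura's period symbol via $p_E(\iot_\infty,\iot_\infty)=\Ome_\infty/\pi$ using (11.23) of \cite{Shimura00}; that route through Shimura's period symbol $p_E$ is a cleaner way to pin down the exponent than your lattice comparison, but both arrive at the same place up to $\overline{\QQ}^\times$. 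Your middle paragraph sketching the ``only if'' and ``if'' directions is unnecessary since you are ultimately citing Shimura, and the Hecke-orbit density argument you outline for the ``if'' direction is not quite how Shimura proceeds --- but this does not affect the validity of your reduction.
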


Put 
\begin{align*}
v_0&=\begin{bmatrix} 0 \\ 1 \end{bmatrix}, & 
\ulv_0^n&=(v_0,v_0,\dots,v_0)\in\frkT^n.  
\end{align*}
Proposition \ref{prop:a3} shows that 
\beq
[D_{\ulk}^nf(\ulv_0^n)]_i(\jmath(\tau))\\
=\sum_{j=0}^{\min\{n,\kap-i\}}\frac{(i+j)!}{i!(-\sqrt{-1}\eta(\tau))^j}\binom{n}{j}\frac{\partial^{n-j}f_{i+j}}{\partial w^{n-j}}(\jmath(\tau)). \label{tag:60}
\eeq
The computation of $\del_{\ulk}^nf(\ulv_0^n)\circ\jmath$ is ad hoc. 
So as not to interrupt the flow of of the body of this section, we will do it in Appendix \ref{sec:a}.

We now generalize Proposition \ref{prop:32}. 

\begin{proposition}\label{prop:62}
If $f\in S^G_{\ulk}(p^\ell\frkN,\chi,\overline{\QQ})$ with $\ulk=(k_1,k_2;k_3)$, then  
\[(2\pi\sqrt{-1}/\Ome_\infty)^{n+i-k_2}[\del_{\ulk}^nf(\ulv_0^n)]_i\circ\jmath\in \scrs^{[\min\{n,k_2-k_1-i\}]}_{(k_1+i;k_3+n)}(p^\ell\frkN,\chi|_{K_0'(p^\ell\frkN)},\overline{\QQ}) \]
for $i=0,1,2,\dots,k_2-k_1$ and a non-negative integer $n$. 
\end{proposition}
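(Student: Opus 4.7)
The statement bundles three assertions: a modular transformation law of weight $(k_1+i; k_3+n)$, near-holomorphy of degree $\min\{n, k_2-k_1-i\}$, and $\overline{\QQ}$-rationality with the normalizing factor $(2\pi\sqrt{-1}/\Ome_\infty)^{n+i-k_2}$. All three rest on formula \eqref{tag:60}. For the \emph{modularity}, the operator $D_{\ulk}^n$ differentiates only in $Z$, so $D_{\ulk}^n f$ inherits the equivariance
\[D_{\ulk}^n f(\gam Z, \gam g u) = \chi(u)^{-1}(\rho_{\ulk}\otimes\upsilon^n)(J(\gam, Z))\, D_{\ulk}^n f(Z, g)\]
for $\gam\in G(\QQ)$ and $u\in K_0(p^\ell\frkN)$. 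Specializing $\gam=\iot(h)$ and $Z=\jmath(\tau)$, formula \eqref{tag:26} identifies $J(\iot(h),\jmath(\tau))$ with $(\diag(\lam(h,\tau),1), \mu(h,\tau))$. Evaluation at $\ulv_0^n$ produces the scalar $\mu(h,\tau)^n$ from the $\upsilon^n$ tensor factor (since $\trs\diag(\lam,1)\,v_0\,\mu=\mu v_0$), while $\rho_{\ulk}(\diag(\lam,1),\mu)$ acts on $X^i Y^{\kap-i}$ by $\lam^{-k_1-i}\mu^{k_3}$. Hence the $i$-th component acquires the automorphy factor $\lam(h,\tau)^{-k_1-i}\mu(h,\tau)^{k_3+n}$, i.e., weight $(k_1+i;k_3+n)$ on $H$, with level and nebentypus descending via $\iot(K_0'(p^\ell\frkN))\subset K_0(p^\ell\frkN)$. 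The \emph{near-holomorphy} is then immediate from \eqref{tag:60}, which displays the right-hand side as a polynomial in $\eta(\tau)^{-1}$ of degree at most $\min\{n,k_2-k_1-i\}$ with holomorphic coefficients $\partial^{n-j}f_{i+j}/\partial w^{n-j}$ evaluated on $\jmath$; cuspidality of $f$ transfers to each coefficient's $q$-expansion.

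For the \emph{$\overline{\QQ}$-rationality and normalization}, I would combine Shimura's theorem that $\del_{\ulk}^n$ preserves $\overline{\QQ}$-rationality of nearly holomorphic Picard modular forms (cited in \S\ref{ssec:61}) with Proposition \ref{prop:61} for $H$, which measures rationality at a CM point $\tau_0\in\frkH\cap E$ against the period $(\Ome_\infty/\pi)^{k_3+n-k_1-i}$. The image $\jmath(\tau_0)\in\frkD$ is a CM point with the same $E$-action, so $\del_{\ulk}^n f(\jmath(\tau_0))$ is $\overline{\QQ}$-rational up to the period $(\Ome_\infty/(2\pi\sqrt{-1}))^{k_2}$ that normalizes the $\overline{\QQ}$-structure on $S^G_{\ulk}$ per Definition \ref{def:58} and Proposition \ref{prop:32}. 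Taking the ratio of the two period normalizations, and accounting for the $(2\pi\sqrt{-1})^{-n}$ factor in $\del_{\ulk}^n=(2\pi\sqrt{-1})^{-n}D_{\ulk}^n$, yields exactly the stated factor $(2\pi\sqrt{-1}/\Ome_\infty)^{n+i-k_2}$.

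The \emph{main obstacle} is the precise period bookkeeping in the last step. On $G$, $\overline{\QQ}$-rationality is phrased through Fourier-Jacobi coefficients (Definition \ref{def:58}, Proposition \ref{prop:32}), whereas Proposition \ref{prop:61} uses CM-point values on $H$. Transferring across $\jmath$ requires tracking how each $\partial/\partial w$ in \eqref{tag:60} shifts the normalization by one factor of $\Ome_\infty^{-1}$: this reflects the fact that Taylor coefficients of the theta functions $\FJ^m_\frka(f)_i(w)$ at $w=0$ become $\overline{\QQ}$-rational only after rescaling $w$ by $\Ome_\infty$, through the Bannai-Kobayashi theory used elsewhere in the paper. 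Summing $n$ such derivative contributions with the base $k_2$-period produces the power $n+i-k_2$ of $\Ome_\infty/(2\pi\sqrt{-1})$ that must be compensated, confirming the stated factor.
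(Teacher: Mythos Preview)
Your treatment of modularity and near-holomorphy is correct and matches the paper: the transformation law is exactly Proposition~\ref{prop:a2}, and the degree bound is read off from \eqref{tag:60}.

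For the $\overline{\QQ}$-rationality, you have the right overall strategy---Shimura's preservation theorem for $\del_{\ulk}^n$ combined with the CM-point criterion of Proposition~\ref{prop:61}---and this is also what the paper does. But your period bookkeeping is off, and your fallback via Bannai--Kobayashi has a gap. The claim that ``$\del_{\ulk}^n f(\jmath(\tau_0))$ is $\overline{\QQ}$-rational up to the period $(\Ome_\infty/(2\pi\sqrt{-1}))^{k_2}$'' conflates the Fourier--Jacobi normalization of the highest weight component of $f$ (Definition~\ref{def:58}) with the CM-point period of the full vector $\del_{\ulk}^n f$; the latter depends on all of $k_1,k_2,k_3$ and $n$, not just $k_2$. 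The paper does not attempt any ratio argument of that form. Instead it stays entirely inside Shimura's CM framework on $G$: one writes down an explicit ring embedding $h:Y=E^3\hookrightarrow\Mat_3(E)$ with common fixed point $Z_0=\jmath(\tau_0)$, reads off $\lam(h(a),Z_0)=\diag(\iot_\infty^c(a_1),\iot_\infty^c(a_2))$ and $\mu(h(a),Z_0)=\iot_\infty(a_3)$, and then identifies the Shimura period via the period symbol $p_E$ as $\frkp(Z_0)=\bigl(\diag(\Ome_\infty/\pi,\Ome_\infty/\pi),\,\Ome_\infty/\pi\bigr)$. Shimura's Theorem~14.7 then gives $(\Ome_\infty/\pi)^{-2n}\rho_{\ulk}(\frkp(Z_0)^{-1})\del_{\ulk}^n f(\ulv_0^n)(Z_0)\in\call_{\ulk}(\overline{\QQ})$; extracting the $i$-th component yields $(\Ome_\infty/\pi)^{k_1+k_2-k_3-2n}[\del_{\ulk}^n f(\ulv_0^n)]_i(\jmath(\tau_0))\in\overline{\QQ}$, and matching this against Proposition~\ref{prop:61} for weight $(k_1+i;k_3+n)$ produces the stated factor.

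Your Bannai--Kobayashi heuristic---each $\partial/\partial w$ costs one $\Ome_\infty^{-1}$---would give a direct Fourier-coefficient proof via \eqref{tag:60}, but it requires that every component $\FJ^m_\frka(f)_{i+j}$ be an algebraic \emph{theta function} (not merely have algebraic value at $w=0$), so that Theorem~\ref{thm:31} applies to its derivatives. The paper only establishes this for the highest weight piece (Proposition~\ref{prop:31}); Proposition~\ref{prop:32} controls only the value at $w=0$ for the other components. That gap would have to be closed separately, which is why the paper bypasses it via the explicit CM-period computation.
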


\begin{remark}
This is a special case of Remark 7.10.3 of \cite{Harris86}. 
\end{remark}

\begin{proof}
Proposition \ref{prop:a2} says that $[\del_{\ulk}^nf(\ulv_0^n)]_i\circ\jmath$ is a nearly holomorphic cusp form on $H$ of weight $(k_1+i;k_3+n)$. 
It has degree at most $\min\{n,k_2-k_1-i\}$ by (\ref{tag:60}). 

To prove the $\overline{\QQ}$-rationality, we use the notation of Sections 11 and 26.3 of Shimura's book \cite{Shimura00}. 
Section 11.2 of \cite{Shimura00} defines the $\overline{\QQ}$-rationality of modular forms on unitary groups over CM fields in terms of values at CM points. 
Since  CM points are related to $\overline{\QQ}$-quadruples, one can show that elements of $S^G_{\ulk}(p^\ell\frkN,\chi,\overline{\QQ})$ are $\overline{\QQ}$-rational in the sense of Shimura. 
Moreover, Theorem 14.7 of \cite{Shimura00} shows that the value of $\del_{\ulk}^nf(\ulv_0^n)$ at any CM point is algebraic. 

Let $Y=E\oplus E\oplus E$ be a (split) CM-algebras. 
Put $Y^u=\{a\in Y\;|\;aa^c=1\}$. 
Fix $\tau_0\in\frkH\cap E$. 
Put $Z_0=\begin{bmatrix} \tau_0 \\ 0 \end{bmatrix}\in\frkD$. 
Define an $E$-linear ring injection $h:Y\to\Mat_3(E)$ by 
\[h(a_1,a_2,a_3)=B(Z_0)\begin{bmatrix} a_1 & 0 & 0 \\ 0 & a_2 & 0 \\ 0 & 0 & a_3 \end{bmatrix}B(Z_0)^{-1}. \]
Since $S_\del^{}\trs h(a_1^{},a_2^{},a_3^{})^cS_\del^{-1}
=h(a_1^c,a_2^c,a_3^c)$ by (6.1.7) and (6.1.8) of \cite{Shimura97}, we have $h(Y^u)\subset G(\QQ)$.  
Moreover, $Z_0$ is the common fixed point of $h(Y^u)$, and 
\begin{align*}
\lam(h(a_1,a_2,a_3),Z_0)&=\begin{bmatrix} \iot_\infty^c(a_1) & 0 \\ 0 & \iot_\infty^c(a_2) \end{bmatrix}, &
\mu(h(a_1,a_2,a_3),Z_0)&=\iot_\infty^{}(a_3)
\end{align*}
for $(a_1,a_2,a_3)\in Y^u$. 

Let $J_E=\{\iot_\infty^{},\iot_\infty^c\}$ be the set of all isomorphisms of $E$ into $\CC$ and $I_E$ the free abelian group generated by $J_E$. 
Shimura introduced the period symbol $p_E:I_E\times I_E\to\CC^\times/\overline{\QQ}^\times$ which is linear with respect to both of the arguments. 
Actually, $p_E(\iot_\infty^{},\iot_\infty^{})$ coincides with the geometric period of a CM elliptic curve.

Let $\vPh=(\iot_\infty^c,\iot_\infty^c,\iot_\infty^{})$ be a ring homomorphism of $Y$ into $\CC$. 
Then 
\beq 
p_Y(\iot_\infty^c,\vPh)=p_Y(\iot_\infty^{},\vPh)=p_E(\iot_\infty^{},\iot_\infty^{})=\Ome_\infty/\pi \label{tag:65}
\eeq
by (11.23) of \cite{Shimura00}. 
We conclude that 
\begin{align*}
\frkp(Z_0)&=\biggl(\begin{bmatrix} p_Y(\iot_\infty^c,\vPh) & 0 \\ 0 & p_Y(\iot_\infty^c,\vPh) \end{bmatrix},p_Y(\iot_\infty^{},\vPh)\biggl)=\biggl(\begin{bmatrix} \frac{\Ome_\infty}{\pi} & 0 \\ 0 & \frac{\Ome_\infty}{\pi} \end{bmatrix},\frac{\Ome_\infty}{\pi}\biggl). 
\end{align*}
Since $\upsilon(\frkp(Z_0))\ulv_0=\Ome_\infty^2/\pi^2$ (see \S \ref{ssec:a2} for the definition of the action $\upsilon$ of $\GL_2(\CC)\times\CC^\times$ on $\frkT=\Mat_{2,1}(\CC)$), we have 
\[(\Ome_\infty/\pi)^{-2n}\rho_{\ulk}(\frkp(Z_0)^{-1})\del_{\ulk}^nf(\ulv_0^n)(Z_0)\in\call_{\ulk}(\overline{\QQ}). \]
Equivalently, 
\[(\Ome_\infty/\pi)^{k_1+k_2-k_3-2n}[\del_{\ulk}^nf(\ulv_0^n)]_i(\jmath(\tau_0))\in\overline{\QQ} \]
for $i=0,1,2,\dots,k_2-k_1$. 
Our proof is now complete by Proposition \ref{prop:61}. 
\end{proof}

We write $\Hol:\scrs^{[t]}_{\ulk'}(p^\ell\frkN',\chi')\to S_{\ulk'}^H(p^\ell\frkN',\chi')$ for the holomorphic projection. 
Recall that if $2t<k_2'-k_1'$ and $f\in\scrs^{[t]}_{\ulk'}(p^\ell\frkN',\chi')$, then $\Hol(f)\in S_{\ulk'}^H(p^\ell\frkN',\chi')$ is the unique cusp form such that 
\[f-\Hol(f)=\sum_{\nu=1}^t\del_{(k_1'+\nu;k_2'-\nu)}^\nu h_\nu\] 
with $h_\nu\in S_{(k_1'+\nu;k_2'-\nu)}^H(p^\ell\frkN',\chi')$ (cf. Remark \ref{rem:23}). 

\begin{proposition}\label{prop:63}
Notations and assumptions being as in Proposition \ref{prop:62}, we let $\bet=\bfd(a)$ and put $\frka=a\frko_E$. 
Then
\[\bdse'\frac{\Hol([\del_{\ulk}^nf(\ulv_0^n)]_i\circ\jmath)(\tau,\bet)}{(2\pi\sqrt{-1}/\Ome_\infty)^{k_2-i-n}}=\lim_{j\to\infty}\biggl(\frac{2\pi\sqrt{-1}}{\Ome_\infty}\biggl)^{i-k_2}\sum_{m\in\QQ^\times_+}\frac{\partial^n\FJ^{mp^{j!}}_\frka(f)_i}{\Ome_\infty^n\;\partial w^n}(0)q^m. \]
\end{proposition}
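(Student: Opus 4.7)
The plan is to combine the explicit evaluation (\ref{tag:60}) for $[\del_{\ulk}^n f(\ulv_0^n)]_i\circ\jmath$ with the $\U(1,1)$-analogue of Hida's identity [Hid93, (3), p.~311] relating Shimura's operator $\del$ to Serre's theta operator $\tht=q\,d/dq$ after applying $\bdse'\Hol$, and then invoke (\ref{tag:52}) to rewrite $\bdse'$ as a $p$-adic limit of Fourier coefficients.

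Dividing (\ref{tag:60}) by $(2\pi\sqrt{-1})^n$ and writing $F:=[\del_{\ulk}^n f(\ulv_0^n)]_i\circ\jmath$ yields
$$F(\tau,\bet)=\frac{1}{(2\pi\sqrt{-1})^n}\frac{\partial^n f_i}{\partial w^n}(\jmath(\tau),\bet)+\sum_{j=1}^{\min\{n,\kap-i\}}\frac{(i+j)!\binom{n}{j}}{i!\,(2\pi\sqrt{-1})^n(-\sqrt{-1}\eta(\tau))^j}\frac{\partial^{n-j}f_{i+j}}{\partial w^{n-j}}(\jmath(\tau),\bet),$$
with $\eta(\jmath(\tau))=2y$. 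By Proposition \ref{prop:62}, $F$ is a $\overline\QQ$-rational nearly holomorphic cusp form on $H$ of weight $(k_1+i;k_3+n)$ and degree $\le\min\{n,\kap-i\}$, so Shimura's structure theorem decomposes it uniquely as $F=\sum_{\nu\ge0}\del^\nu F_\nu$ with $F_\nu$ $\overline\QQ$-rational holomorphic cusp forms. Hida's identity gives $\bdse'\Hol(\del^\nu F_\nu)=\bdse'(\tht^\nu F_\nu)$, and (\ref{tag:52}) computes its $m$-th coefficient as $\lim_{k\to\infty}(mp^{k!})^\nu\FC^{mp^{k!}}_\bet(F_\nu)$, which vanishes in $\calo$ for $\nu\ge1$ because $\ord_p(p^{\nu k!})\to\infty$. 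Therefore $\bdse'\Hol(F)=\bdse'(F_0)$. Applying the same vanishing to the $y$-expansion $F=\sum_\nu(2y)^{-\nu}\widetilde F_\nu$ (obtained by re-expanding each $\del^\nu F_\nu$ in powers of $y^{-1}$, whose $(2y)^0$-coefficient is $\tht^\nu F_\nu$) gives $\bdse'(F_0)=\bdse'(\widetilde F_0)$, and from the display above $\widetilde F_0=(2\pi\sqrt{-1})^{-n}\partial^n f_i/\partial w^n\circ\jmath$, since every $j\ge 1$ summand contributes only to $(2y)^{-j}$ with $j\ge1$. Combining,
$$\bdse'\Hol\bigl([\del_{\ulk}^n f(\ulv_0^n)]_i\circ\jmath\bigr)(\tau,\bet)=\bdse'\bigl((2\pi\sqrt{-1})^{-n}\partial^n f_i/\partial w^n\circ\jmath\bigr)(\tau,\bet).$$

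To conclude, differentiating the Fourier-Jacobi expansion (\ref{tag:54}) of $f$ termwise gives
$$\frac{\partial^n f_i}{\partial w^n}(\jmath(\tau),\bet)=\sum_{m\in\QQ_+^\times}\frac{\partial^n\FJ^m_\frka(f)_i}{\partial w^n}(0)\,q^m,$$
and (\ref{tag:52}) identifies the $\bdse'$-image of this elliptic cusp form as $\lim_{j\to\infty}\sum_m\partial^n\FJ^{mp^{j!}}_\frka(f)_i/\partial w^n(0)\,q^m$. Dividing both sides by $(2\pi\sqrt{-1}/\Ome_\infty)^{k_2-i-n}$ and absorbing $\Ome_\infty^n$ into the denominator of the derivative produces the asserted formula. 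The main obstacle is justifying the vanishing $\bdse'(\tht^\nu F_\nu)=0$ in $\calo$ for $\nu\ge1$: one must verify that the Gamma-ratio coefficients $\Gamma(k'-\nu-1)/\Gamma(k'-1)$ (with $k'=k_3+n-(k_1+i)$) arising in the change of basis between Shimura's decomposition and the $y$-expansion are $p$-integral, so that the $p^{\nu k!}$ vanishing is not cancelled by $p$-divisible denominators. For $p>3$ and the weights at hand this $p$-integrality is automatic, so the identity passes to the $p$-adic limit.
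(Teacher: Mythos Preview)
Your approach is the same as the paper's: write $F=\sum_\nu\del^\nu F_\nu$ by Shimura's structure theorem, observe that the $(2y)^0$-coefficient of $F$ equals $\sum_\nu\tht^\nu F_\nu$ (since the top term $a=\nu$ in the cited Hida expansion of $\del^\nu_k$ has coefficient $1$), and kill the $\nu\ge1$ terms by $\bdse'\circ\tht=0$. Two remarks. First, your derivation of $\bdse'\Hol(F)=\bdse'(F_0)$ is a detour: $\Hol(F)=F_0$ is the very definition of the holomorphic projection, so no vanishing argument is needed there. Second, your closing worry about $p$-integrality of the Gamma-ratios is a non-issue: those ratios appear only in the $(2y)^{-\nu}$-coefficients $\widetilde F_\nu$ for $\nu\ge1$, which never enter the argument. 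What you actually use is $\widetilde F_0=\sum_\nu\tht^\nu F_\nu$ together with $\bdse'\tht=0$, and the latter follows from $U_p\tht=p\,\tht U_p$ on $q$-expansions, requiring only that each $F_\nu$ have $p$-adically bounded Fourier coefficients---which they do, being $\overline\QQ$-rational holomorphic cusp forms by Shimura's decomposition.
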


\begin{remark}\label{rem:32} 
Proposition 4.8 of \cite{MHarris} says that the left hand side coincides with the $p$-adic differential operator $\Theta^n$.
\end{remark}

\begin{proof}
Put $\calf=\Big(\frac{2\pi\sqrt{-1}}{\Ome_\infty}\Big)^{n+i-k_2}[\del_{\ulk}^nf(\ulv_0^n)]_i\circ\jmath$. 
Observe that 
\[2\min\{n,k_2-k_1-i\}<k_3+n-k_1-i. \]
We can write 
\beq
\calf=\Hol(\calf)+\sum_{\nu=1}^{\min\{n,k_2-k_1-i\}}\del_{(k_1'+\nu;k_2'-\nu)}^\nu h_\nu \label{tag:62}
\eeq 
with $h_\nu\in S_{(k_1+i+\nu;k_3+n-\nu)}^H(p^\ell\frkN',\chi|_{K_0'(p^\ell\frkN)})$. 
Recall the relation 
\[\del^\nu_k=\sum_{a=0}^\nu{\nu\choose a}\frac{\Gam(\nu+k)}{\Gam(a+k)}(-4\pi y)^{a-\nu}\left(\frac{1}{2\pi\sqrt{-1}}\frac{\partial}{\partial z}\right)^a \]
(see \cite[(3), p.~311]{Hid93} and Remark \ref{rem:23}). 
Since 
\[[D_{\ulk}^nf(\ulv_0^n)]_i(\jmath(\tau))=\frac{\partial^nf_i}{\partial w^n}(\jmath(\tau))\pmod{\eta(\tau)^{-1}}\]
by (\ref{tag:60}), we see that $\Hol(\calf)$ equals
\[\biggl(\frac{2\pi\sqrt{-1}}{\Ome_\infty}\biggl)^{i-k_2}\sum_{m\in\QQ^\times_+}\frac{\partial^n\FJ^m_\frka(f)_i}{\Ome_\infty^n\;\partial w^n}(0)q^m-\sum_{\nu=1}^{\min\{n,k_2-k_1-i\}}\tht^\nu h_\nu\]
by equating the constant terms of the identity (\ref{tag:62}) as a polynomial in $\frac{1}{\eta(\tau)}=\frac{1}{2y}$. 
Here $\tht$ stands for Serre's operator $\tht(\sum_ma_mq^m)=\sum_mma_mq^m$. 
Since $\bdse' \circ\tht=0$, we get the stated formula in view of (\ref{tag:52}). 
\end{proof}

\begin{remark}\label{rem:33}
Define a representation $\rho'_{\ulk}$ of $\GL_2(\CC)\times\CC^\times$ by 
\begin{align*}
\rho'_{\ulk}(g,t)&=\rho_{\ulk}^{}(\sqrt{T'}g\sqrt{T'}^{-1},t), &
\sqrt{T'}&=\begin{bmatrix} 1 & 0 \\ 0 & \sqrt{\frac{|\del|}{2}} \end{bmatrix}. 
\end{align*}
Here $|\del|=\sqrt{D_E}$. 
Note that $\rho'_{\ulk}(\overline{\lam(\alp,\bfi)},1)=\rho'_{\ulk}(\trs \lam(\alp,\bfi)^{-1},1)$ for $\alp\in\calk_\infty$ by the isomorphism (\ref{tag:23}).  
Set 
\[f'=\rho_{\ulk}(\sqrt{T'},1)f. \]
Define the differential operator $D_{\rho_{\ulk}'}$ in the same way as in \S \ref{ssec:a2} but with 
\begin{align*}
\xi'(Z)&=\sqrt{T'}^{-1}\xi(Z)\sqrt{T'}^{-1}, & 
u'&=\sqrt{T'}u 
\end{align*}
for $u\in\frkT$. 
Observe that 
\[[\del_{\rho_{\ulk}'}^nf'(\ulv_0^{\prime n})]_i=(\rho_{\ulk}\otimes\upsilon^n)(\sqrt{T'},1)[\del_{\ulk}^nf(\ulv_0^n)]_i=\biggl(\frac{2}{|\del|}\biggl)^\frac{k_2-i-n}{2}[\del_{\ulk}^nf(\ulv_0^n)]_i. \]
\end{remark}


\subsection{The work of Bannai and Kobayashi}\label{ssec:67}

We fix a number field $F$ containing $E$ and that for every $\alp\in E$ the morphism $\iot(\alp)$ is also defined over $F$, where $\iot:E\hookrightarrow\End(E_\frka)\otimes\QQ$ is an embedding. 
Fix a nonzero invariant differential one form $\ome_{E_\frka}$ defined over $F$ such that $\iot(\alp)^*\ome_{E_\frka}=\iot_\infty(\alp)\ome_{E_\frka}$ for $\alp\in E$. 
Fix the complex period $\Ome_\infty\in\CC^\times$ so that the pullback of $\ome_{E_\frka}$ via $\CC/\Gam_\frka\to E_\frka(\CC)$ is $\d w$, where $\Gam_\frka:=\iot_\infty(\frka)\Ome_\infty$ is called a period lattice of $(E_\frka,\ome_{E_\frka})$ and $w$ is the canonical basis of $\CC$. \index{$\Ome_\infty$}
We associate to $\tht\in\bfT^m_\frka(\CC)$ a reduced theta function $\tht^{\Ome_\infty}$ on $\CC/\Gam_\frka$ by \[\tht^{\Ome_\infty}(w):=\tht(w/\Ome_\infty). \]
Bannai and Kobayashi have proved that it has algebraic Taylor coefficients. 

\begin{theorem}[{\cite[Theorem 2.9]{BK10Duke}}]\label{thm:31}
If $\tht\in \bfT^m_\frka(\overline{\QQ})$, then for every $l\in E$
\[\Ome_\infty^{-n}\frac{\partial^nA_m(l)\tht}{\partial w^n}(0)\in\overline{\QQ}. \] 
\end{theorem}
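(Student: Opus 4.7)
The plan is to prove algebraicity by passing from the analytic realization of $\tht$ to the algebraic realization of $\scrl^m_\frka$ on $\scre_\frka$, and then expanding along the formal group at the origin using the N\'{e}ron differential $\bdome_0$ as an algebraic parameter.

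First I would exploit the hypothesis $\tht\in \bfT^m_\frka(\overline{\QQ})$, which by the definition given right before the theorem means exactly that $\tht$ comes from a global section $s\in \mathrm{H}^0(\scre_\frka,\scrl^m_\frka)$ defined over $\overline{\QQ}$. Next I would introduce a canonical $\overline{\QQ}$-rational formal parameter $t$ at the identity of $\scre_\frka$ such that $\mathrm{d}t=\bdome_0$. By Remark \ref{rem:31} the line bundle $\scrl^m_\frka$ is symmetric and $\iot_\infty$-admissible, in particular rigidified over $\overline{\QQ}$ at the origin, so the pullback of $s$ to the formal group $\widehat{\scre}_\frka=\mathrm{Spf}\,\overline{\QQ}\powerseries{t}$ is a power series $\widehat s(t)=\sum_{n\geq 0}a_n t^n$ with $a_n\in\overline{\QQ}$. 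The definition $\bdome_0=\Ome_\infty\,\mathrm{d}w$ means that under the analytic uniformization the algebraic parameter $t$ corresponds to $\Ome_\infty w$ up to a higher-order unit in $1+w\overline{\QQ}\powerseries{w}$. Consequently $\frac{1}{n!}\frac{\partial^n\tht}{\partial w^n}(0)=\Ome_\infty^n\cdot(\text{algebraic})$, which gives the desired statement when $l=0$.

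Second, I would handle the twist by $A_m(l)$ for a general $l\in E$. Because $\scre_\frka$ has complex multiplication by $\frko_E$, the complex point $\iot_\infty(l)\in\CC$ projects to a torsion point $P_l\in\scre_\frka(\overline{\QQ})$. I claim that under the algebraic/analytic dictionary the operator $A_m(l)$ corresponds to pullback by translation $T_{P_l}\colon \scre_\frka\to\scre_\frka$ composed with the canonical $\overline{\QQ}$-rational isomorphism $\sigma_l\colon T_{P_l}^*\scrl^m_\frka\simeq \scrl^m_\frka$ provided by symmetry and $\iot_\infty$-admissibility: the automorphy factor $e^{-\pi m H_0(w+l/2,l)}$ appearing in $A_m(l)$ is precisely the complex cocycle rescaling the naive translate of $\tht$ back into the space of sections, matching the algebraic trivialization $\sigma_l$ under the comparison. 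Granting this, $A_m(l)\tht$ descends to the algebraic section $\sigma_l\circ T_{P_l}^*(s)\in\mathrm{H}^0(\scre_\frka,\scrl^m_\frka)(\overline{\QQ})$, and applying Step~1 to this algebraic section yields
\[
\Ome_\infty^{-n}\frac{\partial^n A_m(l)\tht}{\partial w^n}(0)\in\overline{\QQ}.
\]

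The main obstacle is the verification in the second step. At face value $A_m(l)$ involves a transcendental exponential factor, and a priori there is no reason for $T_{P_l}^*\scrl^m_\frka\simeq\scrl^m_\frka$ to hold over $\overline{\QQ}$ in a manner compatible with the complex cocycle $e_l$. What makes it work is the interplay of (i) symmetry of $\scrl^m_\frka$ and the choice of an $\iot_\infty$-admissible rigidification at $0$, (ii) the fact that $P_l$ is torsion so that $T_{P_l}$ is an automorphism of $\scre_\frka/\overline{\QQ}$, and (iii) the careful normalization of Riemann forms and semi-characters made in \S\ref{ssec:39}; checking that these three ingredients combine to identify $A_m(l)$ with an algebraic operation on sections is exactly the content of the Bannai–Kobayashi comparison theorem \cite{BK10Duke}, and one simply invokes that result to conclude.
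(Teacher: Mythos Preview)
The paper does not supply its own proof of this statement: it is recorded as a citation of \cite[Theorem~2.9]{BK10Duke}, with Remark~\ref{rem:36} adding only that the translation $A_m(l)$ by a torsion point preserves algebraicity (again referring back to \cite{BK10Duke,MH}). Your two-step sketch---algebraic Taylor expansion along the formal group using the N\'eron differential for $l=0$, then reduction of general $l\in E$ to the $l=0$ case via the algebraicity of translation by torsion points on $(\scre_\frka,\scrl^m_\frka)$---is exactly the shape of the Bannai--Kobayashi argument and is consistent with how the paper packages the result.

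One small caution: in your final paragraph you close the loop by invoking ``the Bannai--Kobayashi comparison theorem \cite{BK10Duke}'' to justify that $A_m(l)$ corresponds to an algebraic operation on sections. Since the theorem you are proving \emph{is} \cite[Theorem~2.9]{BK10Duke}, this is circular unless you are pointing to a logically prior statement in that paper (their construction of the algebraic translation operator on sections of symmetric, admissibly rigidified line bundles). If you want a self-contained argument, you should spell out directly that symmetry of $\scrl^m_\frka$ gives $T_{P_l}^*\scrl^m_\frka\simeq\scrl^m_\frka$ over $\overline{\QQ}$ and that the $\iota_\infty$-admissible rigidification pins down this isomorphism compatibly with the analytic cocycle $e_l$, rather than deferring that verification back to \cite{BK10Duke}.
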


\begin{remark}\label{rem:36}
We define the translation $A^{\Ome_\infty}_m(l)$ by $l\in E$ so that 
\[A^{\Ome_\infty}_m(l)\tht^{\Ome_\infty}=[A_m^{}(l)\tht]^{\Ome_\infty}. \index{$A^{\Ome_\infty}_m(l)$}\]
It is explained in \cite{BK10Duke,MH} that the translation $A^{\Ome_\infty}_m(l)$ by a torsion point $l$ preserves the algebraicity. 
Consequently, Definition \ref{def:25} coincides with the notion of algebraicity of theta functions in \cite{BK10Duke}. 
\end{remark}

Take a Weierstrass model
\[\scre_\frka: y^2=4x^3-g_2x-g_3\]
which is good for the prime ideal $\frkP$ of $R_F$. 
Let $F_\frkP$ be the completion of $F$ relative to $\frkP$. 
Denote by $\calo_{F_\frkP}$ the maximal compact subring of $F_\frkP$. 
Let $\widehat{\scre_\frka}$ be the formal group associated to $\scre_\frka$ with respect to the parameter $t=-\frac{2x}{y}$. 
We denote by $\lam(t)$ the formal logarithm of $\widehat{\scre_\frka}$, which is a power series giving a homomorphism of
formal groups $\widehat{\scre_\frka}\to\widehat{\GG}_a$, $z =\lam(t)$ and is normalized so that $\lam'(0) = 1$. 
Since the coefficients of the Taylor expansion of $\tht^{\Ome_\infty}(w)$ are algebraic, we can consider its formal composition $\widehat{\tht^{\Ome_\infty}}(t)$ with $w=\lam(t)$. 

Bannai and Kobayashi proved the $\frkp$-integrality of the Taylor expansion of reduced theta functions associated to meromorphic sections of line bundles of CM abelian varieties with respect to formal group parameters. 
Moreover, they computed the $p$-adic translation by $\frkp$-power torsion points. 
We denote the ring of integers of $\CC_p$ by $\calo_{\CC_p}$ and the maximal ideal of $\CC_p$ by $\frkm_{\CC_p}$. 
Let 
\[\wp_p:\widehat{\scre_\frka}(\frkm_{\CC_p})_\mathrm{tor}\to E_\frka(\CC_p)_\mathrm{tor}=E_\frka(\overline{\QQ})_\mathrm{tor}=E_\frka(\CC)_\mathrm{tor}\stackrel{\sim}{\to}E/\frka\]
be the morphism obtained by the complex uniformization $\wp:E_\frka(\CC)\simeq \CC/\frka$. 
We denote the set of $\frkp^n$-power torsion point of the formal group $\widehat{\scre_\frka}$ by  $\widehat{\scre_\frka}[\frkp^n]$. 
When $\calg$ is a formal group, we write $\oplus_\calg$ for the formal addition. 

\begin{theorem}[{\cite[Propositions 2.15, 2.16, 2.20]{BK10Duke}}]\label{thm:32}
Let $\tht\in \bfT^m_\frka(\calo)$. 
\begin{enumerate}
\item\label{thm:321} $\widehat{\tht^{\Ome_\infty}}(t)=\tht^{\Ome_\infty}(\lambda(t))\in\calo_\frkP\powerseries{t}$. 
\item\label{thm:322} Let $\vep$ be an element of $\frko_E$ such that $\vep\equiv1\pmod{\frkp^n}$ and $\vep\in 2(\frkp^c)^n$. 
Then $\widehat{\tht^{\Ome_\infty}}(t\oplus_{\widehat{\scre}_\frka} t_{\frkp^n})=A^{\Ome_\infty}_m\yhwidehat{(\vep\wp_p(t_{\frkp^n}))\tht^{\Ome_\infty}}(t)$ for every $t_{\frkp^n}\in\widehat{\scre_\frka}[\frkp^n]$. 
\end{enumerate}
\end{theorem}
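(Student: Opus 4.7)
The plan is to pass from the analytic theta function $\tht^{\Ome_\infty}$ to an algebraic section of the line bundle $\scrl_\frka^m$ on the good reduction model of $\scre_\frka$ at $\frkP$, and then read off its Taylor expansion in the formal parameter $t$. For part (1), the integrality hypothesis $\tht \in \bfT^m_\frka(\calo)$, together with Lemma \ref{lem:Finis} and the fact that $\scre_\frka$ has good reduction at $\frkP$, produces a global section $s_\tht$ of the integral extension of $\scrl_\frka^m$ to the N\'eron model $\scre_\frka/\calo_{F_\frkP}$. I would trivialize this line bundle on a formal neighborhood of the origin by a nowhere-vanishing integral section $s_0$, writing $s_\tht = f(t)\cdot s_0$ with $f(t)\in\calo_\frkP\powerseries{t}$, and then identify $f(t)$ with $\widehat{\tht^{\Ome_\infty}}(t)$ by comparing $s_0$ to the analytic trivialization given by the constant function $1$ on $\CC$. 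The normalization of $\ome_{E_\frka}$ as a N\'eron differential together with $\lam'(0)=1$ forces this comparison ratio to be a unit in $\calo_\frkP^\times$, whence $\widehat{\tht^{\Ome_\infty}}(t) = \tht^{\Ome_\infty}(\lam(t)) \in \calo_\frkP\powerseries{t}$.

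For part (2), I would directly compare the two sides as elements of $\calo_\frkP\powerseries{t}$. Analytically, $A_m^{\Ome_\infty}(l)\tht^{\Ome_\infty}$ implements the translation $w\mapsto w+l$ for $l\in E$, corrected by the quasi-character $e_l^m$ so that the result remains a section of $\scrl_\frka^m$. Geometrically, $t\mapsto t\oplus_{\widehat{\scre_\frka}}t_{\frkp^n}$ is translation by a $\frkp^n$-torsion point on the formal group, corresponding under $w=\lam(t)$ to analytic translation by a period $\lam(t_{\frkp^n}) \equiv \Ome_\infty\cdot\wp_p(t_{\frkp^n})\pmod{\Gam_\frka}$. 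The role of $\vep$ is to produce a specific lift of $\wp_p(t_{\frkp^n})\in E/\frka$ to an element of $E$: the condition $\vep\equiv 1\pmod{\frkp^n}$ preserves the value on the $\frkp$-part (which is where the torsion point lives, since $\widehat{\scre_\frka}$ sees only the $\frkp$-adic direction because $p$ splits), while $\vep\in 2(\frkp^c)^n$ kills the $\frkp^c$-component in the semi-character $\alp_\frka$ (the factor $2$ accommodates the $l/2$ appearing in $e_l^m(w)$). With this choice, both sides of the claimed identity are sections of the same line bundle with matching quasi-periodicities, and agreement at $t=0$ reduces to the direct verification that $\lam(t_{\frkp^n})$ and $\vep\wp_p(t_{\frkp^n})\cdot\Ome_\infty$ are congruent modulo $\Gam_\frka$.

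The main obstacle will be making the matching precise at the level of semi-characters, not merely up to a nonzero scalar. The cocycle relation
\[
A_m^{\Ome_\infty}(l_1)A_m^{\Ome_\infty}(l_2) = e^{\pi\sqrt{-1}mE_0(l_2,l_1)}A_m^{\Ome_\infty}(l_1+l_2)
\]
reduces the desired equality to a comparison of constants, but controlling that the lifting factor associated with $\vep$ is exactly $1$ rather than, say, a sign demands a careful use of the admissibility of $\scrl_\frka^m$ noted in Remark \ref{rem:31} and the definition of $\alp_\frka$. The two congruence conditions on $\vep$ are calibrated precisely for this cancellation: neither can be weakened (omitting the factor $2$, for instance, would leave behind a nontrivial sign from the semi-character), and verifying their joint sufficiency is the technical heart of the formula.
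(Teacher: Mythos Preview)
The paper does not prove this theorem; it is quoted verbatim from Bannai--Kobayashi \cite[Propositions~2.15, 2.16, 2.20]{BK10Duke}, with Remark~\ref{rem:37} indicating only the specialization needed (holomorphic sections, so $f=1$; and $v_0=0$ in their Proposition~2.20). There is therefore no in-paper argument to compare against.

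That said, your outline is a faithful sketch of how Bannai and Kobayashi actually proceed in \cite{BK10Duke}. For part~(1) they indeed interpret $\tht$ as an algebraic section of the integral line bundle on the N\'eron model and deduce integrality of the $t$-expansion from integrality of the trivialization; for part~(2) they compute the effect of formal-group translation by a $\frkp^n$-torsion point on the reduced theta function and identify it with the operator $A_m^{\Ome_\infty}$ applied at the lift $\vep\wp_p(t_{\frkp^n})$, with the two congruence conditions on $\vep$ serving exactly the purpose you describe (matching on the $\frkp$-side, killing the contribution from the $\frkp^c$-side and the semi-character). Your identification of the ``main obstacle''---tracking the semi-character $\alp_\frka$ and the factor of $2$ to get equality on the nose rather than up to a sign---is precisely the delicate point in their Proposition~2.20. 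So while there is nothing in the present paper to check against, your proposal is an accurate reconstruction of the cited proof.
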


\begin{remark}\label{rem:37}
More generally, Bannai and Kobayashi study reduced theta functions associated to meromorphic sections of $\frkL_\frka^m$. 
Since we consider only holomorphic sections, we can let $f=1$ in Propositions 2.15, 2.16 and Theorem 2.9 of \cite{BK10Duke}. 
We here apply Proposition 2.20 of \cite{BK10Duke} with $v_0=0$. 
\end{remark}

Let $\WW=\WW(\overline{\FF}_p)$ be the ring of Witt vectors with coefficients in $\overline{\FF}_p$. 
It is known that there exists an isomorphism $\Xi_p:\widehat{\scre_\frka}\simeq\widehat{\GG}_m$ of formal groups over $\WW$. 
We fix such an isomorphism $\Xi_p$, and then $\Xi_p$ is given by a power series 
\[\Xi_p(t)=\exp(\Ome_\frkp^{-1}\lam(t))-1, \]
where $\Ome_\frkp\in\WW^\times$ is a suitable $p$-adic period. \index{$\Ome_\frkp$}
We denote by $\Ups(T)$ the inverse of $\Xi_p$. 
Since $\Xi_p$ is an isomorphism, the coefficients of $\Ups(T)$ are also in $\WW$. 
We associate to $g(t)\in\WW\powerseries{t}$ another power series $g(\Ups(T))\in\WW\powerseries{T}$ on $\widehat{\GG}_m$. 

We briefly recall the bijection between the space $\mathrm{Dist}(\ZZ_p,\calo)$ of $p$-adic measures on $\ZZ_p$ and the ring of the power series $\calo\powerseries{T}$ given by the $p$-adic Mellin transform. See Section 4 of \cite{GTM121} for details. 
The bijection $g\mapsto\d\mu(g)$ is given by 
\[\int_{\ZZ_p}(1+T)^x\d\mu(g)=\sum_{k=0}^\infty T^k\int_{\ZZ_p}\binom{x}{k}\d\mu(g)=g(T).\]
The power series $g(T)$ is called the $p$-adic Mellin transform of $\d\mu(g)$. For every non-negative integer $n$ we have 
\beq
\int_{\ZZ_p}x^n\d\mu(g)=D_T^n g(0), \label{tag:63}
\eeq
where $D_T=(1+T)\frac{\d}{\d T}$. For an $\calo_{\CC_p}$-valued function $\phi$ on $\ZZ_p$ we define $\underline{\bdTht}_\frkp^\phi g\in \calo\powerseries{T}$ by
\[\underline{\bdTht}_\frkp^\phi g(T)=\int_{\ZZ_p}\phi(x)(1+T)^x\d\mu(g)=\sum_{k=0}^\infty T^k\int_{\ZZ_p}\phi(x)\binom{x}{k}\d\mu(g). \index{$\underline{\bdTht}_\frkp^\phi$}\]
If $\phi$ factors through $\ZZ_p\to\ZZ_p/p^n\ZZ_p$, then 
\beq
\underline{\bdTht}_\frkp^\phi g(T)=\frac{1}{p^n}\sum_{u\in\ZZ/p^n\ZZ}\sum_{\zet\in\mu_{p^n}}\zet^{-u}\phi(u)g(\zet(T+1)-1),\label{tag:64}
\eeq
where $\mu_{p^n}$ denotes the group of $p^n$th roots of unity in $\overline{\QQ}_p$. 

\subsection{The $\Theta_{\frkp}$-operator}

Let $f\in S_{\ulk}^G(p^\ell\frkN,\chi;\calo)$. 
Fix a locally constant function $\breve\phi\in\calc(\ZZ_p^\times,\calo)$. 
To simplify notation, we put \index{$\mho$}
\begin{align*}
f'&=\bfU_{\frkp^c}^{\breve\phi}\bfU^{\chi'}_{\frkN'}f, & 
\mho&=\frac{2\pi\sqrt{-1}\Ome_\frkp}{\Ome_\infty}. 
\end{align*} 

\begin{remark}
The period ratio $\frac{2\pi\sqrt{-1}\Ome_\frkp}{\Ome_\infty}$ is not an element of any ring. 
The symbol $\mho$ has to be interpreted as an operation rather than a number.
\end{remark}

Let $j$ be sufficiently large such that $\breve\phi$ factors through $(\ZZ_p/p^j\ZZ_p)^\times$. 
Put $\frkc=(\frkp^c)^j\frkb_0$. 
Proposition \ref{prop:64} shows that 
\[\tht^m_\frka(f'):=\frac{\FJ^m_\frka(f')_0^{}}{\mho^{k_2}}\in \bfT_{\frka\frkc}^m(\calo).\]
Therefore the reduced theta function $\tht^m_\frka(f')^{\Ome_\infty}$ on an elliptic curve $E_{\frka\frkc}(\CC)=\CC/\iot_\infty(\frka\frkc)\Ome_\infty$ has algebraic Taylor coefficients by Theorem \ref{thm:31}. 
Moreover, we define the power series
\beq
\widetilde{\FJ}^m_\frka(f')(T):=\yhwidehat{\tht^m_\frka(f')^{\Ome_\infty}}(\Ups(T)). \label{tag:67}
\eeq
Then $\widetilde{\FJ}^m_\frka(f')(T)$ is a power series with $p$-integral coefficients by Theorem \ref{thm:32}(\ref{thm:321}). 
Hence we can associate the $p$-adic measure $\d\mu(\widetilde{\FJ}^m_\frka(f'))$. 



\begin{definition}\label{D:Thop}
Let $U$ be an open compact subgroup of $G(\widehat{\QQ})$ which contains $\caln(\frkc)$. 
We define the theta operator $\bdTht_\frkp^\phi\calf$ for $\calf\in S_{\ulk}^G(U;\calo)$ with respect to a locally constant function $\phi\in\calc(\ZZ_p^\times,\calo)$ by 
\[[\bdTht_\frkp^\phi\calf](g)=\frac{1}{p^n}\sum_{x\in p^{-n}\ZZ_p/\ZZ_p}\sum_{z\in (\ZZ_p/p^n\ZZ_p)^\times}\phi(z)\addchar_p(zx)\calf\left(g\imath_\frkp^{-1}\left(\begin{bmatrix} 1 & 0 & 0 \\ 0 & 1 & x \\ 0 & 0 & 1 \end{bmatrix}\right)\right), \index{$\bdTht_\frkp^\phi$}\]
provided that $\phi$ factors through the quotient $\ZZ_p^\times\to(\ZZ_p/p^n\ZZ_p)^\times$. 
It is easy to check that this definition is independent of the choice of $n$ (cf. (\ref{tag:64})). 
\end{definition}
\begin{proposition}\label{prop:65}
If $\phi\in \calc(\ZZ_p^\times,\calo)$ is locally constant, then for $\calf\in S_{\ulk}^G(U;\calo)$ 
\[\widetilde{\FJ}^m_\frka(\bdTht_\frkp^\phi\calf)=\underline{\bdTht}_\frkp^\phi\widetilde{\FJ}^m_\frka(\calf). \]
\end{proposition}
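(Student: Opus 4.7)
The plan is to compute both sides of the claimed identity as double sums indexed by $\frkp^n$-torsion of the CM elliptic curve $E_{\frka\frkb_0}$ and its Pontryagin dual, and to match the two sums term-by-term via the Bannai--Kobayashi compatibility of formal translations on $\widehat{\scre_{\frka\frkb_0}}$ with the operators $A^{\Ome_\infty}_m$. Heuristically, $\bdTht_\frkp^\phi$ is a $p$-adic Fourier transform on $\frkp^n$-torsion translations of $\calf$; on Fourier-Jacobi coefficients it becomes a Fourier transform on étale $\frkp^n$-torsion of $E_{\frka\frkb_0}$, which Bannai--Kobayashi identifies with the Mellin-transform operator $\underline{\bdTht}_\frkp^\phi$ on power series over $\widehat{\GG}_m$.

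For the left-hand side, I would first check that
\[
\imath_\frkp^{-1}\!\begin{bmatrix}1 & 0 & 0 \\ 0 & 1 & x \\ 0 & 0 & 1\end{bmatrix} = \bfn(w_x, 0),
\]
where $w_x \in E_p = E_\frkp \oplus E_{\frkp^c}$ is the element with $w_{x,\frkp} = x$ and $w_{x,\frkp^c} = 0$. Combining the conjugation identity $\bfd(\alp)\bfn(w_x,0) = \bfn(\alp w_x, 0)\bfd(\alp)$ with the unipotent-translation formula from the proof of Proposition \ref{prop:FJ} --- namely $\overrightarrow{\FJ}^m_{\bfn(-l_\bff,0)\bfd(a)}(w,f) = e^{-\pi m H_0(w + l_\infty/2, l_\infty)}\rho(\cdots)\overrightarrow{\FJ}^m_\frka(w + l_\infty, f)$ --- one obtains
\[
\tht^m_\frka(\bdTht_\frkp^\phi\calf)^{\Ome_\infty}(w) = \frac{1}{p^n}\sum_{x,z}\phi(z)\addchar_p(zx)\bigl[A^{\Ome_\infty}_m(l_x)\tht^m_\frka(\calf)^{\Ome_\infty}\bigr](w),
\]
where $l_x \in \scre_{\frka\frkb_0}[\frkp^n]$ is the étale $\frkp^n$-torsion point corresponding to $x$ under the natural identification $p^{-n}\ZZ_p/\ZZ_p \simeq (\frka\frkb_0)\frkp^{-n}/(\frka\frkb_0)$ coming from the complex uniformization. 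Composing with $\lam$ and $\Xi_p$ as in (\ref{tag:67}) then realizes $\widetilde{\FJ}^m_\frka(\bdTht_\frkp^\phi\calf)(T)$ as the same double sum, transported to a power series on $\widehat{\GG}_m$.

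For the right-hand side, I would apply (\ref{tag:64}) to write
\[
\underline{\bdTht}_\frkp^\phi\widetilde{\FJ}^m_\frka(\calf)(T) = \frac{1}{p^n}\sum_{u,\zeta}\zeta^{-u}\phi(u)\,\widetilde{\FJ}^m_\frka(\calf)(\zeta(1+T)-1),
\]
and recognize $T\mapsto\zeta(1+T)-1$ as formal translation on $\widehat{\GG}_m$ by $\zeta - 1 \in \widehat{\GG}_m[p^n]$. Pulling back through $\Xi_p$ gives translation on $\widehat{\scre_{\frka\frkb_0}}$ by the unique formal $\frkp^n$-torsion point $t_\zeta$ with $\Xi_p(t_\zeta) = \zeta - 1$, and Theorem \ref{thm:32}(\ref{thm:322}) converts this formal translation into $A^{\Ome_\infty}_m(\vep\wp_p(t_\zeta))$ acting on the reduced theta function $\tht^m_\frka(\calf)^{\Ome_\infty}$. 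Matching the two double sums then reduces to the statement that the Weil pairing $\scre_{\frka\frkb_0}[\frkp^n]\times\widehat{\scre_{\frka\frkb_0}}[\frkp^n]\to\mu_{p^n}$ intertwines the pairing $(x,z)\mapsto\addchar_p(zx)$ on $p^{-n}\ZZ_p/\ZZ_p\times\ZZ/p^n\ZZ$ with the pairing $(t_\zeta,u)\mapsto\zeta^{-u}$, together with the identification $l_x = \vep\wp_p(t_\zeta)$ of torsion points. The main obstacle will be precisely this last step: pinning down that the normalizations of $\Ome_\frkp$ and $\Xi_p$ in \cite{BK10Duke} are chosen compatibly with $\addchar_p$ through the Weil pairing, and that the auxiliary factor $\vep$ in Theorem \ref{thm:32}(\ref{thm:322}) is absorbed consistently. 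Once this matching is verified, the two expressions coincide term-by-term and the identity follows.
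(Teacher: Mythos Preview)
Your proposal is correct and follows essentially the same route as the paper: compute $\FJ^m_\frka(\bdTht_\frkp^\phi\calf)_0$ as a sum of $A_m$-translates over $\frkp^{-n}\frkc/\frkc$, invoke Theorem~\ref{thm:32}(\ref{thm:322}) to identify formal-group translation on $\widehat{\scre_{\frka\frkc}}$ with the operator $A_m^{\Ome_\infty}$, and match against the right-hand side via (\ref{tag:64}). Two small corrections: the relevant lattice is $\frka\frkc$ with $\frkc=(\frkp^c)^j\frkb_0$, not $\frka\frkb_0$; and the $\frkp^n$-torsion points $t_{\frkp^n}$ lie in the \emph{formal} (connected) part of $\scre_{\frka\frkc}[p^\infty]$, not the \'etale part---this is precisely why Theorem~\ref{thm:32}(\ref{thm:322}) applies. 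The paper dispatches your ``main obstacle'' in one line by setting $\iot_n(t_{\frkp^n})=e^{-2\pi\sqrt{-1}\Tr_{E/\QQ}(\wp_p(t_{\frkp^n}))}$, after which the character match with $\addchar_p(z\Tr_{E_p/\QQ_p}(x_p))$ is immediate.
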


\begin{proof}
Take a set $\calx_n$ of representatives for $\frkp^{-n}\frkc/\frkc$. 
Given $x\in\EE$, we write its $p$-component by $x_p\in E_p$. 
Put 
\[\bfn'(x,0)=\imath_\frkp^{-1}\left(\begin{bmatrix} 1 & 0 & 0 \\ 0 & 1 & x \\ 0 & 0 & 1 \end{bmatrix}\right)\]
for $x\in\QQ_p$. 
By definition, we have\begin{align*}
&\FJ_\frka^m(\bdTht_\frkp^\phi\calf)_0^{}(w)\\
=&\frac{1}{p^n}\sum_{x\in p^{-n}\ZZ_p/\ZZ_p}\sum_{z\in (\ZZ_p/p^n\ZZ_p)^\times}\phi(z)\addchar_p(zx)\FJ_{\bfn'(x,0)\bfd(a)}^m(\calf)_0^{}(w) \\
=&\frac{1}{p^n}\sum_{x\in\calx_n}\sum_{z\in (\ZZ_p/p^n\ZZ_p)^\times}\phi(z)\addchar_p(z\Tr_{E_p/\QQ_p}(x_p))[A_m(-x)\FJ_\frka^m(\calf)_0^{}](w).  
\end{align*} 
Define an isomorphism $\iot_n:\widehat\scre_{\frka\frkc}[\frkp^n]\simeq\mu_{p^n}$ by $\iot_n(t_{\frkp^n})=e^{-2\pi\sqrt{-1}\Tr_{E/\QQ}(\wp_p(t_{\frkp^n}))}$.
Then Theorem \ref{thm:32}(\ref{thm:322}) shows that 
\[\yhwidehat{\tht_\frka^m(\bdTht_\frkp^\phi\calf)^{\Ome_\infty}}(t)\\
=\frac{1}{p^n}\sum_{t_{\frkp^n}\in\scre_{\frka\frkc}[\frkp^n]}\sum_{z\in (\ZZ_p/p^n\ZZ_p)^\times}\phi(z)\iot_n(t_{\frkp^n})^z\yhwidehat{\tht_\frka^m(\calf)^{\Ome_\infty}}(t\oplus_{\widehat{\scre}_{\frka\frkc}} t_{\frkp^n}). \] 
We conclude by (\ref{tag:64}) that 
\begin{align*}\widetilde{\FJ}_\frka^m(\bdTht_\frkp^\phi\calf)(T)
=&\frac{1}{p^n}\sum_{\zet\in\mu_{p^n}}\sum_{z\in (\ZZ_p/p^n\ZZ_p)^\times}\phi(z)\zet^z\cdot \widetilde{\FJ}_\frka^m(\calf)(T\oplus_{\widehat{\GG_m}}(\zet-1))\\=&\underline{\bdTht}_\frkp^\phi\widetilde{\FJ}_\frka^m(\calf), 
\end{align*}
as claimed. 
\end{proof}


\subsection{The $p$-adic interpolation of $\bdTht_\frkp^\phi$}

Let $J_\frka^{\chi',m}(f)\in\Lam_2$ correspond to the $\calo$-valued distribution 
\[(\phi,\breve\phi)\mapsto\int_{\ZZ_p^\times}\phi(x)\,\d\mu(\widetilde{\FJ}^m_\frka(\bfU_{\frkp^c}^{\breve\phi}\bfU^{\chi'}_{\frkN'}f)). \]
Namely, for $\ulQ=(Q_1,Q_2)\in\frkX_{\Lam_2}$ with $k_{\ulQ}=(0,0)$ and $\eps_{\ulQ}=(\eps_{Q_1},\eps_{Q_2})$, we have  
\[J_\frka^{\chi',m}(f)_{\ulQ}=\int_{\ZZ_p^\times}\eps_{Q_2}(x)\,\d\mu(\widetilde{\FJ}^m_\frka(\bfU_{\frkp^c}^{\eps_{Q_1}}\bfU^{\chi'}_{\frkN'}f)). \]

\begin{proposition}\label{prop:66}
If $k_{Q_1}=0$ and $k_{Q_2}\geq 0$, then  
\[J_\frka^{\chi',m}(f)_{\ulQ}=\mho^{k_{Q_2}-k_2}\frac{\partial^{k_{Q_2}}\FJ_\frka^m(\bdTht_\frkp^{\eps_{Q_2}}\bfU_{\frkp^c}^{\eps_{Q_1}}\bfU^{\chi'}_{\frkN'}f)_0^{}}{(2\pi\sqrt{-1})^{k_{Q_2}}\partial w^{k_{Q_2}}}(0). \]
\end{proposition}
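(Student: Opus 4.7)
The plan is to unwind the definition of $J_\frka^{\chi',m}(f)_{\ulQ}$ as an integral against the locally algebraic character $x\mapsto x^{k_{Q_2}}\eps_{Q_2}(x)$, convert this integral into a derivative at the origin via the $p$-adic Mellin transform, and then translate the derivative from the formal-group coordinate $T$ of $\widehat{\GG}_m$ back to the canonical coordinate $w$ on $\CC/\Gam_{\frka\frkc}$ via the isomorphism $\Xi_p$.

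First I would set $f'_{Q_1}:=\bfU_{\frkp^c}^{\eps_{Q_1}}\bfU^{\chi'}_{\frkN'}f$. Since $k_{Q_1}=0$, the character $\ulQ$ restricted to $T_2(\ZZ_p)=\ZZ_p^\times\times\ZZ_p^\times$ sends $(t_1,t_2)\mapsto\eps_{Q_1}(t_1)\eps_{Q_2}(t_2)t_2^{k_{Q_2}}$, so unwinding the definition of the measure $J_\frka^{\chi',m}(f)$ gives
\[J_\frka^{\chi',m}(f)_{\ulQ}=\int_{\ZZ_p^\times}x^{k_{Q_2}}\eps_{Q_2}(x)\,\d\mu(\widetilde{\FJ}^m_\frka(f'_{Q_1})).\]
Next I would apply Proposition~\ref{prop:65} with $\calf=f'_{Q_1}$ and $\phi=\eps_{Q_2}$ to obtain $\widetilde{\FJ}^m_\frka(\bdTht_\frkp^{\eps_{Q_2}}f'_{Q_1})=\underline{\bdTht}_\frkp^{\eps_{Q_2}}\widetilde{\FJ}^m_\frka(f'_{Q_1})$. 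By the defining formula for $\underline{\bdTht}_\frkp^{\eps_{Q_2}}$, the right-hand side is the Mellin transform of the distribution $\eps_{Q_2}(x)\,\d\mu(\widetilde{\FJ}^m_\frka(f'_{Q_1}))$ on $\ZZ_p$ supported on $\ZZ_p^\times$, so formula~(6.3) rewrites the integral as
\[J_\frka^{\chi',m}(f)_{\ulQ}=D_T^{k_{Q_2}}\widetilde{\FJ}^m_\frka(\bdTht_\frkp^{\eps_{Q_2}}f'_{Q_1})(0).\]

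It then remains to translate $D_T^{k_{Q_2}}$ applied at $T=0$ to $\widetilde{\FJ}^m_\frka(g)(T)=\widehat{\tht^m_\frka(g)^{\Ome_\infty}}(\Ups(T))$ into a $w$-derivative of $\FJ^m_\frka(g)_0^{}$ at $0$. For this I would use the formal-group isomorphism $\Xi_p(t)=\exp(\Ome_\frkp^{-1}\lam(t))-1$ recalled in \S\ref{ssec:67}: if $w=\lam(t)$ denotes the canonical additive coordinate, then $\log(1+T)=\Ome_\frkp^{-1}w$, so differentiating gives $D_T=(1+T)\tfrac{\d}{\d T}=\Ome_\frkp\tfrac{\d}{\d w}$ as operators on formal power series. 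Combining this with $\partial_w^n(\tht^{\Ome_\infty})(0)=\Ome_\infty^{-n}\partial_w^n\tht(0)$ (from $\tht^{\Ome_\infty}(w)=\tht(w/\Ome_\infty)$) and the Proposition~\ref{prop:64} normalization $\tht^m_\frka(g)=\FJ^m_\frka(g)_0^{}/\mho^{k_2}$ yields, for any valid $g$,
\[D_T^{n}\widetilde{\FJ}^m_\frka(g)(0)=\biggl(\frac{\Ome_\frkp}{\Ome_\infty}\biggl)^{\!n}\mho^{-k_2}\frac{\partial^{n}\FJ^m_\frka(g)_0^{}}{\partial w^{n}}(0)=\mho^{n-k_2}(2\pi\sqrt{-1})^{-n}\frac{\partial^{n}\FJ^m_\frka(g)_0^{}}{\partial w^{n}}(0),\]
using $\Ome_\frkp/\Ome_\infty=\mho/(2\pi\sqrt{-1})$. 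Taking $n=k_{Q_2}$ and $g=\bdTht_\frkp^{\eps_{Q_2}}f'_{Q_1}$ and substituting into the formula for $J_\frka^{\chi',m}(f)_{\ulQ}$ delivers the claimed identity.

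The only real subtlety is the period bookkeeping in the last step: one has to be careful tracking $\Ome_\frkp$ versus $\Ome_\infty$ when passing from the Mellin transform on $\widehat{\GG}_m$, through $\Xi_p$ to the rigid analytic reduced theta function $\tht^m_\frka(g)^{\Ome_\infty}$, and then back to the normalized complex theta function $\tht^m_\frka(g)=\FJ^m_\frka(g)_0^{}/\mho^{k_2}$ whose $w$-derivatives appear in the target formula. Everything else is a direct chaining of Proposition~\ref{prop:65}, the Mellin-transform identity~(6.3), and the definitions introduced in \S\ref{ssec:67}.
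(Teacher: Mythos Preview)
Your proof is correct and follows essentially the same route as the paper's own argument: unwind the definition of $J_\frka^{\chi',m}(f)_{\ulQ}$ as a moment integral, apply Proposition~\ref{prop:65} together with the Mellin-transform identity~(\ref{tag:63}) to reduce to $D_T^{k_{Q_2}}$ at $T=0$, and then use the change of coordinates $\lam(\Ups(T))=\Ome_\frkp\log(1+T)$ to convert $D_T$ into $\Ome_\frkp\,\partial_w$ and recover the $\FJ$-normalization via $\tht^m_\frka=\mho^{-k_2}\FJ^m_\frka(\cdot)_0$. Your period bookkeeping in the final step is spelled out more explicitly than in the paper, but the content is the same.
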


\begin{proof}
By the definition of $\underline{\bdTht}_\frkp^{\eps_{Q_2}}$ in \S \ref{ssec:67} we have 
\begin{align*}
J_\frka^{\chi',m}(f)_{\ulQ}
&=\int_{\ZZ_p^\times}x^{k_{Q_2}}\eps_{Q_2}(x)\,\d\mu(\widetilde{\FJ}^m_\frka(\bfU_{\frkp^c}^{\eps_{Q_1}}\bfU^{\chi'}_{\frkN'}f))\\
&=\int_{\ZZ_p}x^{k_{Q_2}}\,\d\mu(\underline{\bdTht}_\frkp^{\eps_{Q_2}}\widetilde{\FJ}^m_\frka(\bfU_{\frkp^c}^{\eps_{Q_1}}\bfU^{\chi'}_{\frkN'}f)). 
\end{align*}
Proposition \ref{prop:65} and (\ref{tag:63}) show that 
\[J_\frka^{\chi',m}(f)_{\ulQ}
=D_T^{k_{Q_2}}\widetilde{\FJ}^m_\frka(\bdTht_\frkp^{\eps_{Q_2}}\bfU_{\frkp^c}^{\eps_{Q_1}}\bfU^{\chi'}_{\frkN'}f)(0). \]
Since $\lam(\Ups(T))=\Ome_\frkp\log(1+T)$, we conclude that  
\[J_\frka^{\chi',m}(f)_{\ulQ}
=\mho^{-k_2}\Ome_\frkp^{k_{Q_2}}\frac{\partial^{k_{Q_2}}\FJ^m_\frka(\bdTht_\frkp^{\eps_{Q_2}}\bfU_{\frkp^c}^{\eps_{Q_1}}\bfU^{\chi'}_{\frkN'}f)_0^{}}{\Ome_\infty^{k_{Q_2}}\partial w^{k_{Q_2}}}(0) \]
as claimed. 
\end{proof}


\subsection{Construction of $p$-adic families}\label{ssec:68}
Put 
\[\frkY_{\Lam_2}^{\ulk}=\{(Q_1',Q_2')\in\frkX_{\Lam_2}^{}\;|\;k_1=k_{Q_1'}\leq k_2,\;k_3\leq k_{Q_2'}\}. \]
We write $\chi_p|_{K_0'(p^\ell)}=\eps_1^\uparrow\eps_3^\downarrow$, where 
\begin{align*}
\eps_1^\uparrow(u)&=\eps_1^{}(a), &
\eps_3^\downarrow(u)&=\eps_3^{}(d) 
\end{align*}
for $u=\imath_\frkp^{-1}\biggl(\begin{bmatrix} a & b \\ c & d\end{bmatrix}\biggl)\in K_0'(p^\ell)$. \index{$\ome^\downarrow$}

Proposition \ref{prop:66} gives an element $J^{\chi',\eps_1,\eps_3,m}_{\frka,\ulk}(f)\in\Lam_2$ such that 
\[\ulQ'(J_{\frka,\ulk}^{\chi',\eps_1,\eps_3,m}(f))
=\mho^{k_{Q_2'}-k_2-k_3}\frac{\partial^{k_{Q_2'}-k_3}\FJ_\frka^m(\bdTht_\frkp^{\eps_{Q_2'}\eps_3^{-1}}\bfU_{\frkp^c}^{\eps_{Q_1'}\eps_1^{-1}}\bfU^{\chi'}_{\frkN'}f)_0^{}}{(2\pi\sqrt{-1})^{k_{Q_2'}-k_3}\partial w^{k_{Q_2'}-k_3}}(0) \]
for $\ulQ'\in\frkY_{\Lam_2}^{\ulk}$. 
Now we define the formal power series $J^{\chi',\eps_1,\eps_3}_{\frka,\ulk}(f)\in\Lam_2\powerseries{q}$ by 
\[J^{\chi',\eps_1,\eps_3}_{\frka,\ulk}(f)=\lim_{j\to\infty}\sum_{m\in\scrs_\frka^+}J_{\frka,\ulk}^{\chi',\eps_1,\eps_3,mp^{j!}}(f)\,q^m. \] 
We will see that the limit makes sense in the proof of the following corollary. 


\begin{corollary}\label{cor:32}
Notation being as above, if $\ulQ'=(Q_1',Q_2')\in\frkY_{\Lam_2}^{\ulk}$, then $J^{\chi',\eps_1,\eps_3}_{\frka,\ulk}(f)_{\ulQ'}$ is the Fourier expansion at $\frka$ of 
\[\mho^{k_{Q_2'}-k_2-k_3}\bdse'\Hol\biggl(\biggl[\del_{\ulk}^{k_{Q_2'}-k_3}\bdTht_\frkp^{\eps_{Q_2'}\eps_3^{-1}}\bfU_{\frkp^c}^{\eps_{Q_1'}\eps_1^{-1}}\bfU^{\chi'}_{\frkN'}f(v_0^{k_{Q_2'}-k_3})\biggl]_0\circ\jmath\biggl). \] 
In particular, we have 
\[J^{\chi',\eps_1,\eps_2}_{\ulk}(f)\in \bfS^H_{\ord}(\frkN',\chi',\Lam_2). \]
\end{corollary}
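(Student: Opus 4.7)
The plan is to combine three existing results: Proposition~\ref{prop:66} for the specialization of each coefficient $J^{\chi',\eps_1,\eps_3,mp^{j!}}_{\frka,\ulk}(f)$ at $\ulQ'$, Proposition~\ref{prop:63} to identify the $\lim_{j\to\infty}$ of the summed coefficients with the Fourier expansion of $\bdse'\Hol$ applied to Shimura's nearly holomorphic form, and the $p$-adic continuity of $\ulQ'$ to push the limit through the specialization map. These three ingredients will both certify the interpolation formula and force the defining limit $\lim_{j\to\infty}\sum_m J^{\chi',\eps_1,\eps_3,mp^{j!}}_{\frka,\ulk}(f)\,q^m$ to converge in $\Lam_2\powerseries{q}$.

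\textbf{Key computation.} Fix $\ulQ'\in\frkY_{\Lam_2}^{\ulk}$, set $n:=k_{Q_2'}-k_3\geq 0$, and put $f':=\bdTht_\frkp^{\eps_{Q_2'}\eps_3^{-1}}\bfU_{\frkp^c}^{\eps_{Q_1'}\eps_1^{-1}}\bfU^{\chi'}_{\frkN'}f$. By Proposition~\ref{prop:66} applied with index $mp^{j!}$,
\[
\ulQ'\bigl(J^{\chi',\eps_1,\eps_3,mp^{j!}}_{\frka,\ulk}(f)\bigr)
=\mho^{\,n-k_2}\,\frac{\partial^{n}\FJ^{mp^{j!}}_\frka(f')_0^{}}{(2\pi\sqrt{-1})^{n}\,\partial w^{n}}(0).
\]
Summing over $m\in\scrs_\frka^+$ with weight $q^m$ and letting $j\to\infty$, I would invoke Proposition~\ref{prop:63} applied to $f'$ with $i=0$. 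Elementary cancellation of powers of $\Ome_\infty$ and $2\pi\sqrt{-1}$ converts the right-hand side into the $\frka$-Fourier expansion of $\mho^{\,n-k_2}\,\bdse'\Hol\bigl(\bigl[\del^{n}_{\ulk}f'(\ulv_0^n)\bigr]_0\circ\jmath\bigr)$, which, since $n-k_2=k_{Q_2'}-k_2-k_3$, is exactly the asserted expression. Continuity of $\ulQ'$ interchanges the limit with the specialization, giving the formula after identifying the left-hand limit with $\ulQ'(J^{\chi',\eps_1,\eps_3}_{\frka,\ulk}(f))$.

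\textbf{The in-particular statement and principal obstacle.} For $J^{\chi',\eps_1,\eps_3}_{\ulk}(f)\in\bfS^H_{\ord}(\frkN',\chi',\Lam_2)$, I would check the axioms of Definition~\ref{def:53}: integrality of $J^{\chi',\eps_1,\eps_3,m}_{\frka,\ulk}(f)\in\Lam_2$ is built into the $p$-adic measure construction underlying Proposition~\ref{prop:66}; for $\ulQ'\in\frkX^{\cls}_{\Lam_2}\cap\frkY_{\Lam_2}^{\ulk}$, Proposition~\ref{prop:62} shows that $\bigl[\del_{\ulk}^n f'(\ulv_0^n)\bigr]_0\circ\jmath$ is a $\overline{\QQ}$-rational nearly holomorphic cusp form on $H$ of weight $(k_1;k_3+n)=k_{\ulQ'}$, whose holomorphic projection is classical holomorphic and whose $\bdse'$-image lies in $\bdse'S^H_{k_{\ulQ'}}(p^{\ell_{\ulQ'}}\frkN',\chi'\eps_{\ulQ'};\calr'(\ulQ'))$, the nebentypus coming out as $\chi'\eps_{\ulQ'}$ by tracking how $\eps_1^{\uparrow}\eps_3^{\downarrow}$ cancels against the twisting characters in $\bfU_{\frkp^c}^{\eps_{Q_1'}\eps_1^{-1}}$ and $\bdTht_\frkp^{\eps_{Q_2'}\eps_3^{-1}}$. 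The main technical obstacle is justifying convergence of $\lim_{j\to\infty}$ in $\Lam_2\powerseries{q}$ itself, not only after specialization: this reduces to showing that the sequence $\{J^{\chi',\eps_1,\eps_3,mp^{j!}}_{\frka,\ulk}(f)\}_j$ is $p$-adically Cauchy in $\Lam_2$ for each fixed $m$, which I would deduce from Hida's stabilization of $\calu_p$-iterates underlying the ordinary projector, together with the Zariski density of the points of $\frkY_{\Lam_2}^{\ulk}$ in $\Spec\Lam_2$.
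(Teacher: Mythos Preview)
Your proposal is correct and follows essentially the same route as the paper: specialize each coefficient via Proposition~\ref{prop:66}, identify the limit over $j$ with the Fourier coefficients of $\bdse'\Hol$ using Proposition~\ref{prop:63} together with (\ref{tag:52}), and then verify the weight and nebentypus to land in $\bfS^H_{\ord}(\frkN',\chi',\Lam_2)$. One point where the paper is more precise than your sketch: for the nebentypus at $p^\ell\frkN'$, rather than informally ``tracking'' the cancellation of $\eps_1^\uparrow\eps_3^\downarrow$ against the twisting characters, the paper invokes the explicit local $K$-type computation of Corollary~\ref{cor:81} (for the $p$-part, giving the character $(\mu\chi'^{-1},\rho\chi\chi',\nu\chi^{-1})_0$ on the Iwahori) together with \cite[Proposition~6.7(i)]{HY} (for the tame part coming from $\bfU^{\chi'}_{\frkN'}$); you should cite these rather than leave the verification implicit.
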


\begin{proof}
Proposition \ref{prop:63} shows that 
\[\calf:=\Hol\biggl(\biggl[\del_{\ulk}^{k_{Q_2'}-k_3}\bdTht_\frkp^{\eps_{Q_2'}\eps_3^{-1}}\bfU_{\frkp^c}^{\eps_{Q_1'}\eps_1^{-1}}\bfU^{\chi'}_{\frkN'}f(v_0^{k_{Q_2'}-k_3})\biggl]_{k_{Q_1'}-k_1}\circ\jmath\biggl) \]
has weight $(k_{Q_1'};k_{Q_2'})$. 
Moreover, Proposition \ref{prop:66} says that 
\[\lim_{j\to\infty}\ulQ'(J_{\frka,\ulk}^{\chi',\eps_1,\eps_3,mp^{j!}}(f))=\mho^{k_{Q_2'}-k_2-k_3}\FC^m_\frka(\bdse'\calf) \]
by (\ref{tag:52}). 
The open compact subgroup $K_0'(p^\ell\frkN')$ acts on $\calf$ by the character $(\chi'\eps_{\ulQ})^{-1}$ in view of Corollary \ref{cor:81} and \cite[Proposition 6.7(i)]{HY}, which confirms that $J^{\chi',\eps_1,\eps_2}_{\ulk}(f)_{\ulQ'}\in\bdse'S^H_{k_{\ulQ'}}(p^\ell\frkN',\chi'\eps_{\ulQ'},\Lam_2(\ulQ'))$.   
\end{proof}


\subsection{Construction of theta elements}\label{ssec:69}

We normalize the integrals by 
\begin{align*}
\Pet_{K'}(\Phi)&=\int_{H(\QQ)\bsl H(\AA)}\Phi((h,h))\,\d_{K'}h, \\
\scrp_{K'}(\Phi')&=\int_{H(\QQ)\bsl H(\AA)}\Phi'((h,h))\,\d_{K'}h
\end{align*}
for $\Phi\in\scra^0(H\times H)$ and $\Phi'\in\scra^0(G\times H)$, where $\d_{K'}h$ is defined in \S \ref{ssec:73}. 
Let $\bfI_n$ be a normal ring finite flat over $\Lam_n=\calo\powerseries{T_n(\ZZ_p)}$. 
Let 
\begin{align*}
\bdsf&\in\bfS^\calg_{\ord}(\frkN,\chi,\bfI_3), & 
\bdsg&\in\bfS^\calh_{\ord}(\frkN',\chi',\bfI_2)
\end{align*} 
be Hida families. 
We define $J_\frka^{\chi'}(\bdsf)\in\bfI_3\widehat{\otimes}\bfI_2\powerseries{q}$ by
\[J_\frka^{\chi'}(\bdsf)_{\calq}=J_{\frka,k_{\ulQ}}^{\chi',\eps_{Q_1},\eps_{Q_3}}(\bdsf_{\ulQ})_{\ulQ'} \]
for $\calq=(\ulQ,\ulQ')\in\frkY^{\rm crit}_\bfV$. 
By (\ref{tag:55}) and (\ref{tag:67}) we have 
\[J_{\frka,k_{\ulQ}}^{\chi',\eps_{Q_1},\eps_{Q_3},m}(\bdsf_{\ulQ})_{\ulQ'}=\int_{\ZZ_p^\times}(Q_2'Q_3^{-1})(x)\,\d\mu(\yhwidehat{\FJ^m_\frka(\bfU_{\frkp^c}^{Q_1'Q_1^{-1}}\bfU^{\chi'}_{\frkN'}\ulQ(\bdsf))_0^{\Ome_\infty}}\circ\Upsilon). \]
Corollary \ref{cor:32} says that $\{J_\frka^{\chi'}(\bdsf)\}_{\frka\in C_E}$ gives an $\bfI_3\widehat{\otimes}\bfI_2$-adic cusp form
\[J^{\chi'}(\bdsf)\in\bfS^H_{\ord}(\frkN',\chi',\bfI_3\widehat{\otimes}\bfI_2). \]

We may assume that $\frko_E\in C_E$. 
Writing $\1_{\bdsg}\in\bfT^H_{\ord}(N',\chi',,\bfI_2)\otimes_{\bfI_2}\mathrm{Frac}\bfI_2$ for the idempotent corresponding to $\bdsg$ (cf. Proposition \ref{prop:52}), we define the theta element $\Tht_{\bdsf,\bdsg}\in\bfI_3\widehat{\otimes}\mathrm{Frac}\bfI_2$ attached to $\bdsf$ and $\bdsg$ by   
\[\Tht_{\bdsf,\bdsg}=\FC^1_{\frko_E}(\1_{\bdsg}J^{\chi'}(\bdsf)). \index{$\Tht_{\bdsf,\bdsg}$}\]
See Definition \ref{def:55} for the symbol $\vph_{\bdsig_{\ulQ'}}^\ord\in\bdsig_{\ulQ'}$. 
Let $\vph_{\bdsig_{\ulQ'}^\vee}^\flat\in\bdsig_{\ulQ'}^\vee$ be a $\calu'_p$-eigenform with $p$-unit eigenvalue which is anti-holomorphic and $K_1'(p^\ell\frkN')$-invariant. 

\begin{proposition}\label{prop:67}
If $\ell$ is sufficiently large, then 
\begin{align*}
&\calq(\Tht_{\bdsf,\bdsg})\Pet_{K'}\Big(\vph_{\bdsig_{\ulQ'}}^\ord\otimes\bdsig_{\ulQ'}^\vee(t_\ell)^{-1}\vph_{\bdsig_{\ulQ'}^\vee}^\flat\Big)\\
=&\mho^{m_\calq}\scrp_{K'}\biggl(\del_{k_{\ulQ}}^{k_{Q_2'}-k_{Q_3^{}}}\bdTht_\frkp^{\eps_{Q_2'}^{}\eps_{Q_3^{}}^{-1}}\bfU_{\frkp^c}^{\eps_{Q_1'}^{}\eps_{Q_1^{}}^{-1}}\bfU^{\chi'}_{\frkN'}\vPh(\bdsf_{\ulQ})_0\otimes\bdsig_{\ulQ'}^\vee(t_\ell)^{-1}\vph_{\bdsig_{\ulQ'}^\vee}^\flat\biggl)  
\end{align*}
for $\calq=(\ulQ,\ulQ')\in\frkY^{\rm crit}_\bfV\cap(\frkX_\calr^\cls\times\frkX_{\bdsg}'')$, where 
\[m_{\calq}=k_{Q_2'}-(k_{Q_2}^{}+k_{Q_3}^{}). \]
\end{proposition}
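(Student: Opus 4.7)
The plan is to unravel the definition of $\Tht_{\bdsf,\bdsg}$, apply Corollary \ref{cor:32} at the point $\calq$, and then identify the first Fourier coefficient of the Hida projector $\1_{\bdsg}$ with a Petersson pairing against $\vph_{\bdsig_{\ulQ'}^\vee}^\flat$.

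First I would specialize the definition
\[
\Tht_{\bdsf,\bdsg}=\FC^1_{\frko_E}(\1_{\bdsg}J^{\chi'}(\bdsf))
\]
at $\calq=(\ulQ,\ulQ')$. Since $\1_{\bdsg}$ is an idempotent in the $\bfI_2$-adic Hecke algebra, its specialization $\1_{\bdsig_{\ulQ'}}$ is the projector onto the $\bdsig_{\ulQ'}$-isotypic component of the space of ordinary $p$-adic forms on $H$. By construction of $J^{\chi'}(\bdsf)$ together with Corollary \ref{cor:32}, the specialization $\calq(J^{\chi'}(\bdsf))$ equals
\[
\mho^{m_\calq}\;\bdse'\Hol\!\left(\bigl[\del_{k_{\ulQ}}^{k_{Q_2'}-k_{Q_3}}\bdTht_\frkp^{\eps_{Q_2'}\eps_{Q_3}^{-1}}\bfU_{\frkp^c}^{\eps_{Q_1'}\eps_{Q_1}^{-1}}\bfU^{\chi'}_{\frkN'}\vPh(\bdsf_{\ulQ})_0(v_0^{k_{Q_2'}-k_{Q_3}})\bigr]_0\circ\jmath\right),
\]
where the exponent $m_\calq$ accounts for the additional shift $-k_3$ introduced in going from weight $k_{Q_3}$ to weight $k_{Q_2'}$. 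Denote the argument of $\bdse'\Hol$ above by $\calF$.

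Second, since $\bdsig_{\ulQ'}$ appears with multiplicity one in the relevant space (Lemma \ref{lem:41} and Remark \ref{rem:41}) and $\bdsg_{\ulQ'}^{\ord}$ has first Fourier coefficient $1$ at $\frko_E$ (Definition \ref{def:55}), the elementary Hecke-theoretic identity
\[
\FC^1_{\frko_E}\bigl(\1_{\bdsig_{\ulQ'}}F\bigr)=\frac{\Pet_{K'}(F\otimes\bdsig_{\ulQ'}^\vee(t_\ell)^{-1}\vph_{\bdsig_{\ulQ'}^\vee}^\flat)}{\Pet_{K'}(\vph_{\bdsig_{\ulQ'}}^\ord\otimes\bdsig_{\ulQ'}^\vee(t_\ell)^{-1}\vph_{\bdsig_{\ulQ'}^\vee}^\flat)}
\]
holds for every holomorphic $p$-ordinary $F$ of the right level, provided the denominator is non-zero and $\ell$ is large enough that $\vph_{\bdsig_{\ulQ'}^\vee}^\flat$ lies at level $K_1'(p^\ell\frkN_{\bdsig_{\ulQ'}})$. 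Applying this with $F=\bdse'\Hol(\calF)$ yields
\[
\calq(\Tht_{\bdsf,\bdsg})\,\Pet_{K'}\!\bigl(\vph_{\bdsig_{\ulQ'}}^\ord\otimes\bdsig_{\ulQ'}^\vee(t_\ell)^{-1}\vph_{\bdsig_{\ulQ'}^\vee}^\flat\bigr)=\mho^{m_\calq}\,\Pet_{K'}\!\bigl(\bdse'\Hol(\calF)\otimes\bdsig_{\ulQ'}^\vee(t_\ell)^{-1}\vph_{\bdsig_{\ulQ'}^\vee}^\flat\bigr).
\]

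Third, I would remove both $\Hol$ and $\bdse'$ inside the Petersson pairing. The form $\vph_{\bdsig_{\ulQ'}^\vee}^\flat$ is anti-holomorphic, so by Shimura's orthogonality between holomorphic projections and the orthogonal complement of holomorphic forms, $\Hol$ is adjoint to the inclusion of holomorphic forms and can be dropped against $\vph_{\bdsig_{\ulQ'}^\vee}^\flat$. Likewise, the ordinary projector $\bdse'$ is self-adjoint with respect to $\Pet_{K'}$ (in the sense that $\bdse'$ on one side matches the anti-ordinary projector on the other), and since $\vph_{\bdsig_{\ulQ'}^\vee}^\flat$ is itself a $\calu_p'$-eigenvector with $p$-unit eigenvalue, it is already anti-ordinary and the projector becomes the identity on it; this finally converts $\Pet_{K'}$ into $\scrp_{K'}$ in the notation of \S \ref{ssec:69}, yielding the desired identity.

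The main obstacle is verifying the two adjointness steps in the last paragraph with precise control of the normalizations: one must check that applying $\bdse'$ on the $\calF$-side is truly paired with the anti-ordinary projection acting as the identity on $\bdsig_{\ulQ'}^\vee(t_\ell)^{-1}\vph_{\bdsig_{\ulQ'}^\vee}^\flat$, which requires the twist by $t_\ell$ to be precisely the Atkin–Lehner-type element that swaps $\calu_p$ and its transpose at level $p^\ell$. The purpose of the hypothesis that $\ell$ be sufficiently large is exactly to ensure that all the forms involved -- the depleted form $\bfU_{\frkp^c}^{\eps_{Q_1'}\eps_{Q_1}^{-1}}\bdsf_{\ulQ}$, the theta-operator image, and $\vph_{\bdsig_{\ulQ'}^\vee}^\flat$ -- live at a common level $K_1(p^\ell\frkN)$ on which $t_\ell$ implements this involution cleanly.
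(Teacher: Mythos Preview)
Your proposal is correct and follows essentially the same route as the paper's proof. The paper argues in a slightly different order---it first records the adjointness relation $\Pet_{K'}(\calu_p'\vph,\bdsig_{\ulQ'}^\vee(t_\ell)^{-1}\vph')=\Pet_{K'}(\vph,\bdsig_{\ulQ'}^\vee(t_\ell)^{-1}\calu_p'\vph')$ (so that $\bdse'$ may be dropped against $\vph_{\bdsig_{\ulQ'}^\vee}^\flat$), then uses multiplicity one to write $\calq(\Tht_{\bdsf,\bdsg})\vph_{\bdsig_{\ulQ'}}^\ord=\1_{\bdsg_{\ulQ'}}\vPh(J^{\chi'}(\bdsf)_{\calq})$, pairs with $\bdsig_{\ulQ'}^\vee(t_\ell)^{-1}\vph_{\bdsig_{\ulQ'}^\vee}^\flat$, and invokes Corollary~\ref{cor:32}---but the ingredients are identical to yours: the specialization via Corollary~\ref{cor:32}, the multiplicity-one identification of $\FC^1_{\frko_E}\circ\1_{\bdsig_{\ulQ'}}$ with a ratio of Petersson pairings, and the removal of $\bdse'$ and $\Hol$ by adjointness against an anti-holomorphic ordinary eigenform. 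Your discussion of the role of $t_\ell$ in intertwining $\calu_p'$ with its transpose is exactly the mechanism the paper uses.
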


\begin{proof}
The proof is similar to that of Proposition 3.7 of \cite{MH}. 
Since 
\[\Pet_{K'}(\calu_p'\vph,\bdsig_{\ulQ'}^\vee(t_\ell)^{-1}\vph')=\Pet_{K'}(\vph,\bdsig_{\ulQ'}^\vee(t_\ell)^{-1}\calu_p'\vph'), \]
we have 
\[\Pet_{K'}(\bdse'\vph,\bdsig_{\ulQ'}^\vee(t_\ell)^{-1}\vph_{\bdsig_{\ulQ'}^\vee}^\flat)=\Pet_{K'}(\vph,\bdsig_{\ulQ'}^\vee(t_\ell)^{-1}\vph_{\bdsig_{\ulQ'}^\vee}^\flat). \]
Since $\vPh(\bdsg_{\ulQ'})=\FC^1_{\frko_E}(\bdsg_{\ulQ'})\vph_{\bdsig_{\ulQ'}}^\ord$ is a $p$-stabilized ordinary newform, we have 
\[\calq(\Tht_{\bdsf,\bdsg})\vph_{\bdsig_{\ulQ'}}^\ord=\1_{\bdsg_{\ulQ'}}\vPh(J^{\chi'}(\bdsf)_{\calq})\]
by multiplicity one for $\U(1,1)$, and for new and ordinary forms (see Remark \ref{rem:41}, \S \ref{ssec:82} and \cite[Proposition 2.2]{MH}). 
Note that $\FC^1_{\frko_E}(\bdsg_{\ulQ'})\neq 0$ by Proposition \ref{prop:53}. 
Taking the pairing with $\bdsig_{\ulQ'}^\vee(t_\ell)^{-1}\vph_{\bdsig_{\ulQ'}^\vee}^\flat\in\bdsig_{\ulQ'}^\vee$, we get 
\begin{align*}
&\calq(\Tht_{\bdsf,\bdsg})\Pet_{K'}\Big(\vph_{\bdsig_{\ulQ'}}^\ord\otimes\bdsig_{\ulQ'}^\vee(t_\ell)^{-1}\vph_{\bdsig_{\ulQ'}^\vee}^\flat\Big)\\
=&\Pet_{K'}\Big(\vPh(J^{\chi'}(\bdsf)_\calq)\otimes\bdsig_{\ulQ'}^\vee(t_\ell)^{-1}\vph_{\bdsig_{\ulQ'}^\vee}^\flat\Big) \\
=&\Pet_{K'}\Big(\vPh\bigl(J_{k_{\ulQ}}^{\chi',\eps_{Q_1},\eps_{Q_3}}(\bdsf_{\ulQ})_{\ulQ'}\bigl)\otimes\bdsig_{\ulQ'}^\vee(t_\ell)^{-1}\vph_{\bdsig_{\ulQ'}^\vee}^\flat\Big) \\
=&\mho^{m_\calq}\scrp_{K'}\biggl(\del_{k_{\ulQ}}^{k_{Q_2'}-k_{Q_3^{}}}\bdTht_\frkp^{\eps_{Q_2'}^{}\eps_{Q_3^{}}^{-1}}\bfU_{\frkp^c}^{\eps_{Q_1'}^{}\eps_{Q_1^{}}^{-1}}\bfU^{\chi'}_{\frkN'}\vPh(\bdsf_{\ulQ})_0^{}\otimes\bdsig_{\ulQ'}^\vee(t_\ell)^{-1}\vph_{\bdsig_{\ulQ'}^\vee}^\flat\biggl) 
\end{align*}
by Corollary \ref{cor:32}. 
\end{proof}

The theta element $\Theta_{\bdsf,\bdsg}$ is the five variable $p$-adic $L$-function of the title. 
Proposition \ref{prop:67} is the basis for the comparison of the square of the specialization of $\Theta_{\bdsf,\bdsg}$ at classical points with normalized central values of tensor product $L$-functions. 
This comparison is carried out in Section \ref{sec:7}, using the Ichino-Ikeda formula. 
The calculation of the non-archimedean Euler factors is carried out in Section \ref{sec:8}, and that of the archimedean Euler factor is completed in the appendices.


\section{The central value formulas}\label{sec:7}


\subsection{An outer involution}\label{ssec:71}

Put $I_{r,s}=\begin{bmatrix} \ono_r & \\ & -\ono_s \end{bmatrix}$. 
Suppose that $\gam_0^c=-\gam_0$, that is, $\trs\gam_0=\gam_0$.  
Since $S_{\gam_0}^c=-I_{r,s}S_{\gam_0}^{} I_{r,s}$, we can define an outer automorphism of $G$ by
\beq
g^\natural=I_{r,s}g^c I_{r,s}=S_{\gam_0}^{}I_{r,s}\trs g^{-1}I_{r,s}S_{\gam_0}^{-1}. \label{tag:71} \index{$g^\natural$}
\eeq
Observe that for $\alp\in G(\RR)$ and $Z\in\frkD_{r,s}$. 
\begin{align*}
&\alp^\natural(Z)=-\alp(-Z^c)^c, & 
&\lam(\alp^\natural,-Z^c)=\lam(\alp,Z)^c, &
&\mu(\alp^\natural,-Z^c)=\mu(\alp,Z)^c.  
\end{align*} 
It follows that $\calk_\infty^\natural=\calk_\infty^{}$, and for $k\in\calk_\infty$
\begin{align*}
\lam(k^\natural,\bfi)&=\trs(T'\lam(k,\bfi)T^{\prime-1})^{-1}, &
\mu(k^\natural,\bfi)&=\trs\mu(k,\bfi)^{-1}. 
\end{align*} 

Let $\rho=\rho_{\ulk}$. 
Put $T=(T',\ono_s)$. 
Define representations $\rho^c$ and $\rho^T$ by 
\begin{align*}
\rho^c(h)&=\rho(h^c), & 
\rho^T(h)&=\rho(\trs(T hT^{-1})^{-1})
\end{align*}
for $h\in\scrh(R)= \GL_r(R)\times\GL_s(R)$.  
Let $^\natural:L_{\ulk}(R)\stackrel{\sim}{\to}L_{\ulk^\vee}(R)$ such that \index{$P^\natural$}
\begin{align*}
(\rho_{\ulk}^T(h)P)^\natural&=\rho_{\ulk^\vee}(h)P^\natural, &
\ell_{\ulk}(P^\natural\otimes Q)=\ell_{\ulk}(Q^\natural\otimes P). 
\end{align*} 

Let $\chi$ be a character of $K_0(\frkN)$ whose restriction to $K_1(\frkN)$ is trivial.  
Given $f\in S_\rho^G(\frkN,\chi,\CC)$, we define a smooth function $f^\natural:\frkD_{r,s}\times G(\widehat{\QQ})\to L_{\ulk^\vee}(\CC)$ by 
\[f^\natural(Z,\bet)=f(-Z^c,\bet^\natural)^\natural. \index{$f^\natural$}\]

We define $\tau_\frkN^{}=(\tau_{\frkN,l}^{})\in G(\widehat{\QQ})$ by 
\begin{align*}
T_{\gam_0}^{}&=\begin{bmatrix} 0 & 0 & \ono_s \\ 0 & -\gam_0 & 0 \\ \ono_s & 0 & 0 \end{bmatrix}, &
\tau_{\frkN,l}&=\begin{cases}
\imath_\frkl^{-1}\Biggl(T_{\gam_0}\begin{bmatrix} N\ono_{r+s-1}  &  \\ & 1 \end{bmatrix}\Biggl) &\text{if $l|N$, }\\
\ono_{r+s} &\text{otherwise. }
\end{cases}
\end{align*}
Note that $(\tau_\frkN^{}K_0^{}(\frkN)\tau_\frkN^{-1})^\natural=K_0^{}(\frkN)$. 
Put 
\[\tau_\frkN f=(r(\tau_\frkN)f)^\natural. \]
Define the function $\vPh_{\rho^c}(\tau_\frkN f)$ as in \S \ref{ssec:34}.  
Then   
\[\vPh_{\rho^c}(\tau_\frkN f)(\gam g\alp k)=\rho_{\ulk^\vee}(J(\alp,\bfi))^{-1}\vPh_{\rho^c}(\tau_\frkN f)(g)\chi(k)^{-1} \]
for $\gam\in G(\QQ)$, $\alp\in\calk_\infty$ and $k\in K_0(\frkN)$. \index{$\tau_\frkN f$}


\subsection{A factorization of the dual representation}\label{ssec:72}

For a function on $\vph$ on $G(\AA)$ we define a function $\vph^\natural:G(\AA)\to\CC$ by $\vph^\natural(g)=\vph(g^\natural)$.
When $\vph$ is a cusp form on $G$, so is $\vph^\natural$. 
Let $\pi^\vee\simeq\otimes_v'\pi_v^\vee$ denote the contragredient representation of $\pi$. 
We define representations $\pi^\natural$ of $G(\AA)$ and $\pi_v^\natural$ of $G(\QQ_v)$ as the twists $\pi^\natural(g)=\pi(g^\natural)$ and $\pi_v^\natural(g_v^{})=\pi_v(g_v^\natural)$ for $g\in G(\AA)$ and $g_v\in G(\QQ_v)$. \index{$\pi^\natural$}
It is well-known that $\pi_v^\vee\simeq\pi_v^\natural$ (see \cite{MVW}). 
Since 
\[\vph^\natural(gh)=\vph((gh)^\natural)=(\pi^\natural(h)\vph)^\natural(g), \index{$\vph^\natural$}\]
we have $\{\overline{\vph}\;|\;\vph\in\pi\}=\{\vph^\natural\;|\;\vph\in\pi\}$ by the global multiplicity one for unitary groups (see Remark \ref{rem:41}), where the automorphic form $\overline{\vph}$ is defined by $\overline{\vph}(g)=\overline{\vph(g)}$. 

For every non-split prime $q$ we require that $\pi_q$ admits a non-zero $K_q$-invariant vector. 
For each split prime $l$ we denote the conductor of $\pi_l$ by $c(\pi_l)$ in the sense of (\ref{tag:82}). 
Let $N_\pi=\prod_ll^{c(\pi_l)}$ be the conductor of $\pi$. 
We take ideals $\frkN_\pi$ of $\frko_E$ such that $\frko_E/\frkN_\pi\simeq\ZZ/N_\pi\ZZ$. 

Let $\vph_\pi\in\pi$ be an essential vector with respect to $K_1(\frkN_\pi)$ which has highest weight in the minimal $\calk_\infty$-type (see Definition \ref{def:81}). 
Define the longest Weyl element $w_n\in\GL_n(F)$ by 
\begin{align*}
w_1&=1, & 
w_n&=\begin{bmatrix} 0 & 1 \\ w_{n-1} & 0 \end{bmatrix} &
(n&\geq 2). 
\end{align*}
When $l$ is split in $E$, we identify $G(\QQ_l)$ with $\GL_n(E_\frkl)$ via $\imath_\frkl$, realize $\pi_l$ in the Whittaker model $\scrw_{\addchar_l}(\pi_l)$ with respect to $\addchar_l$ and denote by $W_{\pi_l}\in\pi_l^{}$ the normalized essential Whittaker vector with respect to $\addchar_l^{}$, and by $W_{\pi_l^\vee}\in\pi_l^\vee$ the normalized essential Whittaker vector with respect to $\addchar_l^{-1}$. 
For $W\in\pi_l$ we define $W^\natural\in\pi_l^\vee$ by 
\beq
W^\natural(g)=W(w_n\trs g^{-1}T_{\gam_0}^{-1})=\pi_l^\vee(T_{\gam_0})\widetilde{W_l}(g) \index{$W^\natural$}\label{tag:72}
\eeq
(cf. (\ref{tag:71})) for  $g\in\GL_n(E_\frkl)$, where  
\begin{align*}
\widetilde{W_l}(g)&=W_l^{}(w_n\trs g^{-1}), & 
T_{\gam_0}^{}&=-S_{\gam_0}^{} I_{r,s}^{}\in\GL_n(E_\frkl). 
\end{align*}
It is important to note that 
\[\pi^\vee(h) W^\natural(g)=W^\natural(gh)=W((gh)^\natural)=(\pi^\natural(h)W)^\natural(g). \]

When $q$ remains a prime in $\frko_E$, we fix $\calk_q$-invariant vectors $W_{\pi_q}\in\pi_q$ and $W_{\pi_q^\vee}\in\pi_q^\vee$. 
Fix a highest weight vector $W_{\pi_\infty}$ in the minimal $\calk_\infty$-type of $\pi_\infty$ and a lowest weight vector $W_{\pi_\infty^\vee}$ in the minimal $\calk_\infty^{}$-type of $\pi_\infty^\vee$. 
Let $^\natural:\pi_v^\natural\simeq\pi_v^\vee$ be the $G(\QQ_v)$-equivariant isomorphism determined by $W_{\pi_v}^\natural=W_{\pi_v^\vee}$ for $v=q,\infty$. 

To apply the Ichino-Ikeda formula to $\vPh$ and $\vPh^\natural$, we need explicate the factorization of $\vPh^\natural$. 
Fix isomorphisms $\pi\simeq\otimes_v'\pi_v^{}$ and $\pi^\vee\simeq\otimes_v'\pi_v^\vee$ so that 
\begin{align*}
\vph_\pi^{}&=\otimes_v'W_{\pi_v}^{}, &
\vph_\pi^\natural&=\otimes_v'W_{\pi_v}^\natural. 
\end{align*}
Using this factorization $\pi^\vee\simeq\otimes_v'\pi_v^\vee$, we define a cusp form $\vph_{\pi^\vee}\in\pi^\vee$ by 
\[\vph_{\pi^\vee}=\otimes_v'W_{\pi_v^\vee}. \] 

The following result is nothing but Lemma 4.6 of \cite{HY}. 

\begin{lemma}\label{lem:71}
If $\vph=\otimes_v'W_v^{}\in\pi$ is factorizable and if $W_v=W_{\pi_v}$ for all non-split places $v$, then $\vph^\natural=\otimes_v'W_v^\natural$. 
\end{lemma}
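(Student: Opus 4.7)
The plan is to exhibit both sides as the images of $\vph$ under two $G(\AA)$-equivariant linear maps $\pi^\natural\to\pi^\vee$, and then to identify these two maps by Schur's lemma, using the normalization provided by $\vph_\pi$.

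First, I would verify that the assignment $\Psi_1\colon\pi^\natural\to\scra^0(G)$, $\vph\mapsto\vph^\natural$, is $G(\AA)$-equivariant. Indeed, for $h\in G(\AA)$ the identity
\[
(\pi^\natural(h)\vph)^\natural(g)=(\pi(h^\natural)\vph)(g^\natural)=\vph(g^\natural h^\natural)=\vph((gh)^\natural)=\vph^\natural(gh)
\]
shows equivariance with respect to the right regular action on $\scra^0(G)$. Since $\pi_v^\natural\simeq\pi_v^\vee$ at every place by the MVW involution and the space of cusp forms on $G$ satisfies multiplicity one (Remark \ref{rem:41}), the image of $\Psi_1$ coincides with the $\pi^\vee$-isotypic component of $\scra^0(G)$ and is in particular isomorphic to $\pi^\vee$.

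Second, at each split prime $l$ the formula (\ref{tag:72}) defines a $G(\QQ_l)$-equivariant isomorphism $\psi_l\colon\pi_l^\natural\isoarrow\pi_l^\vee$, $W\mapsto W^\natural$; this is a direct calculation from the definition of $W^\natural$. At each non-split place $v$ (including $v=\infty$), the irreducibility of $\pi_v^\natural$ and $\pi_v^\vee$ together with Schur's lemma forces the existence of a unique $G(\QQ_v)$-equivariant isomorphism $\psi_v\colon\pi_v^\natural\isoarrow\pi_v^\vee$ sending $W_{\pi_v}$ to $W_{\pi_v^\vee}$. Using the fixed isomorphisms $\pi\simeq\otimes'_v\pi_v$ and $\pi^\vee\simeq\otimes'_v\pi_v^\vee$ chosen in \S\ref{ssec:72}, the restricted tensor product $\Psi_2:=\otimes'_v\psi_v$ then defines a $G(\AA)$-equivariant isomorphism $\pi^\natural\isoarrow\pi^\vee$.

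Third, I would apply Schur's lemma to $\Psi_2^{-1}\circ\Psi_1\in\End_{G(\AA)}(\pi^\natural)$: since $\pi$ is irreducible, this endomorphism is a scalar $c$. To pin down $c$, I evaluate at $\vph_\pi=\otimes'_vW_{\pi_v}$. By the choice of the global factorizations one has $\Psi_1(\vph_\pi)=\vph_\pi^\natural=\otimes'_vW_{\pi_v}^\natural$, while $\Psi_2(\vph_\pi)=\otimes'_vW_{\pi_v}^\natural$ by the definition of each $\psi_v$; hence $c=1$. Therefore $\Psi_1=\Psi_2$ on all of $\pi^\natural$, and applying this equality to $\vph=\otimes'_vW_v$ with $W_v=W_{\pi_v}$ at every non-split place produces $\vph^\natural=\otimes'_vW_v^\natural$, as asserted. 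The only potentially delicate step is checking that $\Psi_1$ and $\Psi_2$ agree on the single vector $\vph_\pi$, but this is already built into the choice of factorization of $\pi^\vee$ made just before the statement, so no further verification is needed.
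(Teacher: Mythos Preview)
Your proposal is correct and is precisely the standard Schur-lemma argument one expects here. The paper itself gives no proof beyond the sentence ``The following result is nothing but Lemma 4.6 of \cite{HY}'', so your write-up effectively supplies the argument the paper defers to that reference: both $\vph\mapsto\vph^\natural$ and $\otimes_v'\psi_v$ are $G(\AA)$-intertwiners $\pi^\natural\to\pi^\vee$, irreducibility forces them to be proportional, and the normalization $\vph_\pi^\natural=\otimes_v'W_{\pi_v}^\natural$ built into the choice of factorization of $\pi^\vee$ pins down the scalar as $1$.
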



\subsection{Measures}\label{ssec:73}

We define a $G(\RR)$-invariant measure $\bfd Z$ on $\frkD_{r,s}$ by 
\beq
\bfd Z=\frac{2^{s(r+s)}}{\det\eta(Z)^{r+s}}\prod_{i=1}^r\prod_{j=1}^s\left[\frac{\sqrt{-1}}{2}\d Z_{ij}\land\d\overline{Z_{ij}}\right]. \index{$\bfd Z$}\label{tag:73}
\eeq 
We extend $\bfd Z$ to a Haar measure $\d g_\infty$ on $G(\RR)$ so that 
\beq
\int_G f(g_\infty(\bfi))\,\d g_\infty=\int_{\frkD_{r,s}}f(Z)\,\bfd Z. \index{$\d g_\infty$}\label{tag:74}
\eeq

From now on we let $r=2$, $s=1$ and $\gam_0=\del$. 
We view $H=\U(1,1)$ as a subgroup of $G$ via the embedding 
\[\iot\biggl(\begin{bmatrix} a & b \\ c & d \end{bmatrix}\biggl)=\begin{bmatrix} a & 0 & b \\ 0 & 1 & 0 \\ c & 0 & d \end{bmatrix}. \]

For each finite prime $l$ the local Haar measures $\d h_l$ and $\d g_l$ are defined so that maximal compact subgroups $K_l'$ and $K_l^{}$ have volume $1$. 
For arithmetic applications it is suitable to use the Haar measures $\d_{K'} h=\prod_v\d h_v$ and $\d_K g=\prod_v\d g_v$. \index{$\d_K g$}
We normalize the Petersson pairing by 
\[\Pet_K(\Phi)=\int_{G(\QQ)\bsl G(\AA)}\Phi((g,g))\,\d_K g \index{$\Pet_K(\Phi)$}\]
for $\Phi\in\scra^0(G\times G)$. 
We define the Petersson pairing $\Pet_{K'}$ on  $\scra^0(H\times H)$ relative to $\d_{K'}h$. 

Let $F$ be an algebraic number field. 
For each place $v$ of $F$ we normalize the Haar measure $\d a_v$ of the complete field $F_v$ in the following way:
if $F_v=\RR$, then $\d a_v$ is the Lebesgue measure; 
if $F_v=\CC$, then $\d a_v=\sqrt{-1}\d a_v^{}\d a_v^c$; 
if $v <\infty$, then $\int_{\frko_{F_v}}\d a_v=1$. 
Put $\d_F a=D_F^{-1/2}\prod_v\d a_v$, where $D_F$ is the absolute value of the discriminant of $F$. 
We define the Haar measure $\d^\times a_v$ of $F_v^\times$ by $\d^\times a_v=\zet_{F_v}(1)\frac{\d a_v}{|a_v|_{F_v}}$. 
Put $\zet_F(s)=\prod_v\zet_{F_v}(s)$ and $\xi_F(s)=D_F^{s/2}\zet_F(s)$. 
Denote the residue of $\xi_F(s)$ at $s=1$ by $\rho_F$.  
Then $\d^\times_F a=\rho_F^{-1}\prod_v\d^\times a_v$ is the Tamagawa measure. 

Let $\eps_{E/\QQ}$ be the quadratic Dirichlet character corresponding to $E/\QQ$. 
We denote the complete Dirichlet $L$-function associated to $\eps_{E/\QQ}$ by $L(s,\eps_{E/\QQ})$, and its finite part by $L_\bff(s,\eps_{E/\QQ})$. 
We define the Haar measure $\d t_v$ of $\U(1)(\QQ_v)$ as the quotient measure of the Haar measures of $E_v^\times$ and $\QQ_v^\times$. 
The Tamagawa measure of $\U(1)(\AA)$ is given by $\d^\tau\! t=D_E^{-1/2}L(1,\eps_{E/\QQ})^{-1}\prod_v\d t_v$. 

Let $\d^\tau\! g$ and $\d^\tau\! h$ denote the Tamagawa measures on $G(\AA)$ and $H(\AA)$. \index{$\d^\tau g$}
We choose the constants $C_H$ and $C_G$ so that 
\begin{align*}
\d^\tau\! h&=C_H\d_{K'} h, & 
\d^\tau\! g&=C_G\d_K g. 
\end{align*}

\begin{lemma}[\cite{Ichino07}]\label{lem:70}
We have 
\begin{align*}
C_H&=2^2D_E^{-1}L(1,\eps_{E/\QQ})^{-1}\zet_\QQ(2)^{-1}, \\
C_G&=2^3D_E^{-7/2}L(1,\eps_{E/\QQ})^{-1}\zet_\QQ(2)^{-1}L(3,\eps_{E/\QQ})^{-1}\lam_K, 
\end{align*}
where $\lam_K$ is a rational constant depending on $K$.  
\end{lemma}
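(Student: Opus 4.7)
The plan is to unwind the definition of the Tamagawa measures on $H(\AA)$ and $G(\AA)$, compute the volumes of the maximal compacts $K'_l$ and $K_l$ at each finite place with respect to the ``canonical'' local factors appearing in the Tamagawa decomposition, and then match the archimedean pieces against the measure $\d g_\infty$ fixed in \eqref{tag:73}--\eqref{tag:74}. Since neither group is simply connected, one should first pass to the derived subgroup (or work with $\SU$, whose Tamagawa number equals $1$) and then relate the two via the short exact sequence $1\to\SU\to\U\to\U(1)\to 1$, whose Tamagawa contribution of $\U(1)$ introduces the factor $D_E^{-1/2}L(1,\eps_{E/\QQ})^{-1}$ visible in both $C_H$ and $C_G$.

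First I would treat $H=\U(1,1)$. Its derived group is $\SU(1,1)\simeq\SL_2$, which has convergence factor $\zet_\QQ(2)^{-1}$ in the Tamagawa product. Choosing a $\QQ$-rational top form on $\SU(1,1)$ and normalizing the archimedean piece by the measure on $\frkH=\frkD_{1,1}$ induced from \eqref{tag:73}, one sees that at each finite prime the local volume $\mathrm{vol}(K'_l\cap\SL_2(\QQ_l))$ equals $\zet_{\QQ_l}(2)^{-1}$ up to the discriminant correction coming from the fact that $\U(1,1)/\QQ$ splits off a copy of $\Res_{E/\QQ}\GG_m/\GG_m\cong\U(1)$. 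Assembling the archimedean constant from the normalization \eqref{tag:73} (which for $s=r=1$ contributes a factor $2^2$ in front of the hyperbolic area) together with $\rho_E^{-1}$ and the $\U(1)$-contribution yields the claimed value of $C_H$.

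For $G=\U(2,1)$ the same strategy applies with $\SU(2,1)$ in place of $\SL_2$: the relevant convergence factors are $\zet_\QQ(2)^{-1}$ and $L(3,\eps_{E/\QQ})^{-1}$, and the $\U(1)$-factor again contributes $D_E^{-1/2}L(1,\eps_{E/\QQ})^{-1}$. The power of $D_E$ is tracked by pulling the rational top form back via an integral model; counting that $\SU(2,1)$ is $\QQ$-form of a $8$-dimensional group with discriminant contribution $D_E^{-3}$ (one $D_E^{-1/2}$ from each odd-indexed $L$-factor after completion, and the correction from choosing the $\frko_E$-integral model of the Hermitian lattice) accumulates to $D_E^{-7/2}$ in total. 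The archimedean factor, arising from the normalization \eqref{tag:73} on $\frkD=\frkD_{2,1}$, is a positive rational constant that depends only on the $\calk_\infty$-equivariant choice of top form on the compact dual; this is precisely the constant $\lam_K$, whose rationality is a consequence of Hirzebruch proportionality as used in \cite{Ichino07}.

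The main obstacle is not any single calculation but bookkeeping: matching the normalization of the top form on $G$ (used to define $\d^\tau g$ via the standard convergence factors) with the measure $\d g_\infty$ fixed by \eqref{tag:74}, so that the spurious powers of $D_E$, $\pi$ and $2$ end up precisely where the statement says. The cleanest way is to compare both sides at a single very carefully chosen test function (e.g.\ the characteristic function of $K^0\cdot K_\infty$) and to invoke Ichino's identical computation in \cite{Ichino07}, which handles $U(2,1)$ and contains the explicit value of $\lam_K$ up to a rational; one then verifies that the local volumes of $K_l^{}$ relative to the current integral model agree with the ones used there for every split and non-split prime, which is guaranteed by the hypothesis (C$_2$) on $\gam_0$ at $p$ and the fact that $\U(2,1)$ is quasi-split and unramified at all split primes.
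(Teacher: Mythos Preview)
Your overall strategy via the exact sequence $1\to\SU\to\U\to\U(1)\to 1$ is reasonable but differs from the paper's route, and your execution contains a concrete error regarding $\lam_K$.

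The paper does not pass to the derived group.  Instead it uses the Iwasawa decomposition: one writes $\d^\tau_{K'}h=\d^\tau\bfm'\,\d^\tau\bfn'\,\d k'$ and $\d^\tau_K g=\d^\tau\bfm\,\d^\tau\bfn\,\d k$ with the Tamagawa measures on the Borel factors, and invokes Ichino's volume formulas (Theorems~9.5 and~9.6 of \cite{Ichino07}) directly to obtain constants $C_H'$ and $C_G'$ relating $\d^\tau h$ to $\d^\tau_{K'}h$ and $\d^\tau g$ to $\d^\tau_K g$.  The remaining step is an explicit archimedean computation: writing $Z=\bfn(w,\Re\tau)\bfm(a,1)\bfi$ with $a=\sqrt{\eta(Z)/2}$ one checks that $\d g_\infty=2^{-3}\d\bfm\,\d\bfn\,\d k_\infty$ (and likewise $\d h_\infty=2^{-3}\d\bfm'\,\d\bfn'\,\d k_\infty'$).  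This $2^{-3}$, together with $\rho_E=D_E^{1/2}L(1,\eps_{E/\QQ})$ and the extra factor $D_E^{-1}L(1,\eps_{E/\QQ})^{-1}$ from the $\U(1)$ center in the $G$-case, gives the powers of $2$ and $D_E$ in the statement.

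Your identification of $\lam_K$ is wrong.  In the paper $\lam_K$ is a \emph{non-archimedean} correction: Ichino's formula is stated for a good maximal compact subgroup of $G(\AA)$, and replacing that by the open compact $K$ (which at ramified primes need not be hyperspecial) changes the product of local volumes by a rational factor.  It has nothing to do with the archimedean measure or Hirzebruch proportionality; the archimedean contribution is the explicit $2^{-3}$ just described and is the same for any $K$.  Your argument as written therefore misplaces the source of $\lam_K$, and the remaining bookkeeping of $D_E$-powers (``discriminant contribution $D_E^{-3}$'' for an $8$-dimensional group) is too imprecise to count as a computation.
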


\begin{proof}
Define the Tamagawa measures of $\bfM'(\AA)$, $\caln'(\AA)$, $\bfM(\AA)$, $\caln(\AA)$ by 
\begin{align*}
\d^\tau\!\bfm'(a)&:=\d_E^\times a, & 
\d^\tau\!\bfn'(z)&:=\d_\QQ z, \\
\d^\tau\!\bfm(a,t)&:=\d_E^\times a\d^\tau\! t, &
\d^\tau\!\bfn(w,z)&:=\d_Ew\d_\QQ z. 
\end{align*}
Take the Haar measure $\d k_v'$  (resp. $\d k_v^{}$) of $K_v'$ (resp. $K_v^{}$) of total volume $1$.  
Let $\d k'=\prod_v^{}\d k_v'$ and  $\d k=\prod_v^{}\d k_v^{}$.  
We define the Haar measures 
\begin{align*}
\d^\tau_{K'} h&=\d^\tau\!\bfm'\d^\tau\!\bfn'\,\d k', & 
\d^\tau_K g&=\d^\tau\!\bfm\,\d^\tau\!\bfn\,\d k. 
\end{align*}
We choose the constants $C_H'$ and $C_G'$ so that 
\begin{align*}
\d^\tau\! h&=C_H'\d_{K'}^\tau h, & 
\d^\tau\! g&=C_G'\d_K^\tau g. 
\end{align*}

Ichino computes the constant $C_G'$ for a good maximal compact subgroup of $G(\AA)$. 
When we replace it by our open compact subgroup $K$, we have only to change by a rational constant.
Theorems 9.5 and 9.6 of \cite{Ichino07} applied with $F=\QQ$, $m'=r-s$, $r_0=s$ give  
\begin{align*}
C_H'&=2^{-1}D_E^{-1/2}\zet_\QQ(2)^{-1}, &
C_G'&=D_E^{-1}\rho_E\zet_\QQ(2)^{-1}\xi(3,\eps_E)^{-1}\lam_K, 
\end{align*}
where we put  $\xi(s,\eps_{E/\QQ})=D_E^{s/2}L(s,\eps_{E/\QQ})$. 

We define the Haar measures of $\bfM'(\QQ_v)$, $\caln'(\QQ_v)$, $\bfM(\QQ_v)$, $\caln(\QQ_v)$ by 
\begin{align*}
\d\bfm'(a_v)&:=\d^\times a_v, & 
\d\bfn'(z_v)&:=\d z_v, \\
\d\bfm(a_v,t_v)&:=\d^\times a_v\d t_v, &
\d\bfn(w_v,z_v)&:=\d w_v\d z_v. 
\end{align*}
Recall the measure 
\[\bfd Z=\frac{2^3}{\eta(Z)^3}\biggl(\frac{\sqrt{-1}}{2}\d\tau\wedge\d\bar\tau\biggl)\biggl(\frac{\sqrt{-1}}{2}\d w\wedge\d\bar w\biggl)\index{$\bfd Z$}\]
on $\frkD$ defined in (\ref{tag:73}). 
Given $Z=\begin{bmatrix} \tau \\ w \end{bmatrix}\in\frkD$, we put $a=\sqrt{\eta(Z)/2}$. 
Since 
\[Z=\bfn(w,\Re\tau)\bfm(a,1)\bfi\] 
and the measure $\d t_\infty$ gives $\CC^\times/\RR^\times$ the volume 2, we have 
\[\d g_\infty=a^{-4}\d^\times a(\sqrt{-1}\d w\d w^c)\d\Re\tau\d k_\infty=2^{-3}\d\bfm\d\bfn\d k_\infty. \index{$\d g_\infty$}\]
Given $\tau\in\frkH$, we put $a=\sqrt{\Im\tau}$. 
Then
\[\d h_\infty=a^{-2}\d^\times a\d\Re\tau\d k_\infty'=2^{-3}\d\bfm'\d\bfn'\d k_\infty'. \]
We have seen that 
\begin{align*}
\d^\tau_{K'} h&=2^3\rho_E^{-1}\d_{K'}^{} h, & 
\d^\tau_K g&=2^3\rho_E^{-1}D_E^{-1}L(1,\eps_{E/\QQ})^{-1}\d_K^{} g. 
\end{align*}
It follows that $C_H^{}=2^3\rho_E^{-1}C_H'$ and $C_G^{}=2^3\rho_E^{-1}D_E^{-1}L(1,\eps_{E/\QQ})^{-1}C_G'$. 
Since $\rho_E^{}=D_E^{1/2}L(1,\eps_{E/\QQ})$, we get the formula for $C_H$. 
\end{proof}


\subsection{The Ichino-Ikeda formula}\label{ssec:74}

Let $\pi\simeq\otimes_v'\pi_v^{}$ be an irreducible cuspidal tempered automorphic representation of $G(\AA)$ and $\sig\simeq\otimes_v'\sig_v^{}$ an irreducible cuspidal tempered automorphic representation of $H(\AA)$. 
We define the product $L$-function associated to $\pi$ and $\sig^\vee$ by 
\[L(s,\pi\times\sig^\vee)=L^\GL(s,\BC(\pi)\times\BC(\sig^\vee)), \]
where $\BC(\pi)$ (resp. $\BC(\sig^\vee)$) is the functorial lift of $\pi$ (resp. $\sig^\vee$) to an automorphic representation of $\GL_3(\EE)$ (resp. $\GL_2(\EE)$). 
The right hand side is the {\it complete} $L$-function defined by Jacquet, Piatetski-Shapiro and Shalika in \cite{JPSS2}, and by Shahidi \cite{Shahidi83}. 
Assume that both $\pi$ and $\sig$ are tempered. 
Put 
\begin{align*}
\scrl(\pi\times\sig^\vee)&=\frac{L\left(\frac{1}{2},\pi\times\sig^\vee\right)}{L(1,\pi,\mathrm{Ad})L(1,\sig^\vee,\mathrm{Ad})}\prod_{i=1}^3L(i,\eps_{E/\QQ}^i), \\
\scrl(\pi_v^{}\times\sig_v^\vee)&=\frac{L\left(\frac{1}{2},\pi_v^{}\times\sig_v^\vee\right)}{L(1,\pi_v^{},\mathrm{Ad})L(1,\sig_v^\vee,\mathrm{Ad})}\prod_{i=1}^3L(i,\eps_{E_v/\QQ_v}^i), 
\end{align*}
where $L(s,\sig^\vee,\mathrm{Ad})$ denotes the adjoint $L$-series for $\sig^\vee$. 

\begin{remark}\label{rem:71} 
Let $\sig$ be an irreducible cuspidal automorphic representation of a unitary group in $n$ variables. 
If $n$ is even, then $L(s,\sig,\mathrm{Ad})=L(s,\BC(\sig),\As)$ is the Asai $L$-series. 
If $n$ is odd, then $L(s,\sig,\mathrm{Ad})=L(s,\BC(\sig),\As^-)$ is the twisted Asai $L$-series by Proposition 7.4 of \cite{GGP}. 
Hence if $l$ splits in $E$, then $L(s,\sig_l,\mathrm{Ad})=L^\GL(s,\sig_l^{}\times\sig_l^\vee)$, where $\sig_l$ is viewed as a representation of $\GL_n(\QQ_l)$ in the right hand side. 
\end{remark}
  
Put $\bfG=G\times H$. 
We introduce the two Petersson pairings  by 
\begin{align*}
\Pet(\vPh\otimes\vPh')&=\int_{\bfG(\QQ)\bsl\bfG(\AA)}\vPh(g,h)\vPh'(g,h)\,\d^\tau\! g\d^\tau\! h, \\
\Pet_{K,K'}(\vPh\otimes\vPh')&=\int_{\bfG(\QQ)\bsl\bfG(\AA)}\vPh(g,h)\vPh'(g,h)\,\d_K g\d_{K'}h
\end{align*}
for cusp forms $\vPh$ and $\vPh'$ on $\bfG$. 
We consider the integrals \index{$\scrp$, $\scrp_{K'}$}
\begin{align*}
\scrp(\vPh)&=\int_{H(\QQ)\bsl H(\AA)}\vPh((h,h))\,\d^\tau\! h, &
\scrp_{K'}(\vPh)=\int_{H(\QQ)\bsl H(\AA)}\vPh((h,h))\,\d_{K'} h. 
\end{align*}

Set $\vPi_v^{}=\pi_v^{}\otimes\sig_v^\vee$. 
Fix a $\bfG(\QQ_v)$-invariant local perfect pairing 
\[\La\!\La\quad\Ra\!\Ra_v^{}:\vPi_v^{}\otimes\vPi_v^\vee\to\CC.  \]
If $\pi_v$ and $\sig_v$ are tempered, then the integral
\[I(\bfW_1\otimes\bfW_2)=\int_{H(\QQ_v)}\La\!\La\vPi_v((h_v,h_v))\bfW_1\otimes\bfW_2\Ra\!\Ra_v^{}\,\d h_v  \]
is convergent for $\bfW_1\in\vPi_v^{}$ and $\bfW_2\in\vPi_v^\vee$. 

Ichino and Ikeda \cite{II} have refined the global Gross-Prasad conjecture and predicted an explicit relation between the central value and the period for orthogonal groups. The analogue of the Ichino-Ikeda conjecture for unitary groups was formulated in \cite{NHarris} and proved generally in \cite{BLZZ,BCZ}.

\begin{theorem}[\cite{Z,BLZZ,BCZ}]\label{thm:71}
Let $\pi$ (resp. $\sig$) be an irreducible tempered cuspidal automorphic representation of $G(\AA)$ (resp. $H(\AA)$). 
Put $\vPi=\pi\otimes\sig^\vee$. 
If $\vPh=\otimes_v'\bfW_v^{}\in\vPi$ and $\vPh'=\otimes_v'\bfW_v'\in\vPi^\vee$ are factorizable, then 
\[\frac{\scrp(\vPh)\scrp(\vPh')}{\Pet(\vPh\otimes\vPh')}
=C_H\frac{\scrl(\pi\times\sig^\vee)}{2^{\vka_\pi+\vka_\sig}}\prod_v\frac{I(\bfW_v^{}\otimes\bfW_v')}{\scrl(\pi_v\times\sig_v^\vee)\La\!\La \bfW_v^{}\otimes\bfW_v'\Ra\!\Ra_v^{}}, \]
where $2^{\vka_\pi}$ (resp. $2^{\vka_\sig}$) is the order of the component group associated to the $L$-parameter of $\pi$ (resp. $\sig$). 
\end{theorem}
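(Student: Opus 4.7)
The plan is to derive Theorem 7.1 as a direct consequence of the refined Gan-Gross-Prasad conjecture of Ichino-Ikeda type for unitary groups. The embedding $\iot\colon H \hookrightarrow G$ introduced in Section 5.1 realizes $(H,G)$ as the codimension-one Gan-Gross-Prasad pair attached to the Hermitian spaces $(W,J)\subset(V,S_\del)$, and the tempered hypothesis on $\pi$ and $\sig$ ensures that the local integral $I(\bfW_v \otimes \bfW_v')$ converges absolutely at every place and equals $\scrl(\pi_v\times\sig_v^\vee)\La\!\La\bfW_v\otimes\bfW_v'\Ra\!\Ra_v^{}$ at all but finitely many places where the data are unramified with the standard normalizations.

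Next I would invoke the theorem itself. Wei Zhang \cite{Z} proved the Ichino-Ikeda-Neal-Harris conjecture under mild local hypotheses, and these hypotheses have been removed in the tempered case by Beuzart-Plessis, Liu, Zhang and Zhu \cite{BLZZ}, with further technical refinements in \cite{BCZ}. All of these results are formulated for tempered cuspidal automorphic representations of general unitary Gan-Gross-Prasad pairs, and specialize directly to the present $(G,H) = (\U(2,1),\U(1,1))$ setting; the global component-group factor $1/2^{\vka_\pi+\vka_\sig}$ reflecting the endoscopic multiplicities in the Arthur parameter of $\pi\otimes\sig^\vee$ appears already on that side.

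The remaining task is to reconcile the measure normalizations with those of the cited references. The Ichino-Ikeda identity is standardly phrased in terms of Tamagawa measures, with a freedom in choosing the local factorization $\d^\tau\! h = \prod_v \d h_v$ so long as the local pairings $\La\!\La\;,\;\Ra\!\Ra_v^{}$ are compatible with the global pairing used to form $\Pet(\vPh\otimes\vPh')$. Here the local measures $\d h_v$ of Section 7.3 are chosen so that $K_v'$ has volume one at every finite place, while Lemma \ref{lem:70} records the resulting discrepancy $\d^\tau\! h = C_H \prod_v \d h_v$. Rescaling the product of local integrals by this ratio introduces the single global factor $C_H$ appearing in front of $\scrl(\pi\times\sig^\vee)/2^{\vka_\pi+\vka_\sig}$ on the right-hand side. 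The main obstacle in writing this out as a self-contained proof is nothing conceptual, but rather the careful comparison of local normalizations of Haar measures, matrix-coefficient pairings, and local $L$-factors between our setup and those of \cite{NHarris,BLZZ}; once these are aligned, the theorem follows verbatim.
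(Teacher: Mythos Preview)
Your proposal is correct and matches the paper's approach: Theorem~\ref{thm:71} is stated without proof in the paper and attributed directly to \cite{Z,BLZZ,BCZ}, so the only content is precisely what you identify---invoking the Ichino--Ikeda--Neal-Harris formula for the unitary Gan--Gross--Prasad pair $(G,H)$ and absorbing the measure discrepancy into the constant $C_H$ via Lemma~\ref{lem:70}.
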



\subsection{The Petersson norm on $\U(1,1)$}\label{ssec:75}

We will rewrite Theorem \ref{thm:71} in a form which is more convenient for our later application. 
We use the measures $\d_{K'}h$ and $\d_Kg$ to rewrite the formula as 
\[\frac{\scrp_{K'}(\vPh)\scrp_{K'}(\vPh')}{\Pet_{K,K'}(\vPh\otimes\vPh')}
=C_G\frac{\scrl(\pi\times\sig^\vee)}{2^{\vka_\pi+\vka_\sig}}\prod_v\frac{I(\bfW_v^{}\otimes\bfW_v')}{\scrl(\pi_v^{}\times\sig_v^\vee)\La\!\La \bfW_v^{}\otimes\bfW_v'\Ra\!\Ra_v^{}}. \]
We substitute the formula for $C_G$ to get 
\begin{multline}
\frac{\scrp_{K'}(\vPh)\scrp_{K'}(\vPh')}{\Pet_{K,K'}(\vPh\otimes\vPh')}\\
=2^{3-\vka_\pi-\vka_\sig}D_E^{-7/2}\lam_K\frac{L\left(\frac{1}{2},\pi\times\sig^\vee\right)}{L(1,\pi,\mathrm{Ad})L(1,\sig^\vee,\mathrm{Ad})} 
\prod_v\frac{I(\bfW_v^{}\otimes\bfW_v')}{\scrl(\pi_v\times\sig_v^\vee)\La\!\La \bfW_v^{}\otimes\bfW_v'\Ra\!\Ra_v^{}}. \label{tag:75}
\end{multline}

Let $\vPh'=\vPh^\natural$. 
Then $\scrp_{K'}(\vPh')=\scrp_{K'}(\vPh)$ and $\bfW_v'=\bfW_v^\natural$. 
Roughly speaking, this formula tells us that the square of the period integral $\scrp_{K'}(\vPh)$ is a product of the central value $L\bigl(\frac{1}{2},\pi\times\sig^\vee\bigl)$  and the local integrals $I(\bfW_v\otimes\bfW_v^\natural)$. 
Thanks to Proposition \ref{prop:67}, our task of proving the interpolation formula of $\Tht_{\bdsf,\bdsg}$ boils down to (i) choices of test vectors $\bfW_v$ and (ii) the explicit evaluation of the local integrals $I(\bfW_v\otimes\bfW_v^\natural)$. 
The purpose of this section is to carry out the step (i) and give the explicit formula of $I(\bfW_v\otimes\bfW_v^\natural)$. 
The computations of $I(\bfW_v\otimes\bfW_v^\natural)$ have been carried out in \cite{HY} except at the real and $p$-adic places. 
The details of the step (ii) are left to Section \ref{sec:8} for $v=p$ and to Appendix \ref{sec:b} for $v=\infty$. 

Let $\ulk=(k_1,k_2;k_3)$ and $\ulk'=(k_1';k_2')$ be sequences of integers such that $k_1^{}\leq k_1'\leq k_2^{}$ and $k_3^{}\leq k_2'$. 
We write $\tau_{\ulk}^\vee$ for the irreducible representation of $\calk_\infty$ whose highest weight is $-\ulk$. 
Let $\sig$ be an irreducible tempered cuspidal automorphic representation of $H(\AA)$ such that $\sig_\infty$ is a holomorphic discrete series with minimal $K$-type $-\ulk'$.   
Let $\pi$ be an irreducible tempered cuspidal automorphic representation of $G(\AA)$ such that $\pi_\infty$ is a holomorphic discrete series or limit of holomorphic discrete series with minimal $K$-type $\call_{\ulk}(\CC)^\vee$. 
For every non-split prime $q$ we assume that $\pi_q$ admits a non-zero $K_q$-invariant vector and $\sig_q$ admits a non-zero $K_q'$-invariant vector. 
Put $N=\prod_{l\neq p}l^{c(\pi_l)}$ and $N'=\prod_{l\neq p}l^{c(\sig_l)}$. 
We take ideals $\frkN$ and $\frkN'$ of $\frko_E$ such that $\frko_E/\frkN\simeq\ZZ/N\ZZ$ and $\frko_E/\frkN'\simeq\ZZ/N'\ZZ$. 
We define open subgroups $K_0'(p^\ell\frkN')$ of $K'$ as in Section \ref{ssec:51}. 

Lemma \ref{lem:51} implies that $\sig$ is $\addchar$-generic.
Thus $\sig^\vee$ is $\addchar^{-1}$-generic. 
We realize the local representation $\sig_v$ (resp. $\sig_v^\vee$) in its Whittaker model $\scrw_{\addchar_v}(\sig_v)$ (resp. $\scrw_{\addchar_v^{-1}}(\sig_v^\vee)$) with respect to $\addchar_v^{}$ (resp. $\addchar_v^{-1}$). 
For each place $v$ of $\QQ$ we normalize the vector $W_{\sig_v}\in\scrw_{\addchar_v}(\sig_v)$ in the following way:
if $v$ splits in $E$, then $W_{\sig_v}$ is the normalized essential vector (see Definition \ref{def:81});  
if $v$ is finite and does not split in $E$, then $W_{\sig_v}\in\scrw_{\addchar_v}(\sig_v)$ is the spherical vector normalized by $W_{\sig_v}(\ono_2)=1$; 
let $W_{\sig_\infty}\in\scrw_{\addchar_\infty}(\sig_\infty)$ be the lowest weight vector normalizedby $W_{\sig_\infty}(\ono_2)=e^{-2\pi}$. 
  
Let $\sig\simeq\otimes_v'\sig_v^{}$ and $\sig^\vee\simeq\otimes_v'\sig_v^\vee$ be the isomorphisms that are compatible with the factorization of the Whittaker functionals $W_{\addchar}$ and $W_{\addchar^{-1}}$ respectively. 
Namely, if $\vph=\otimes_vW_v\in\sig$ is factorizable, then $W_{\addchar}(\sig(h)\vph)=\prod_vW_v(h_v)$ for $h=(h_v)\in H(\AA)$. 
For $h\in H$ we have 
\[h^\natural=w_2\trs h^{-1}w_2=(\det h)^{-1}\begin{bmatrix} -1 & 0 \\ 0 & 1 \end{bmatrix}h\begin{bmatrix} -1 & 0 \\ 0 & 1 \end{bmatrix}. \index{$g^\natural$}\]

We will show that the factorization $\sig^\vee\simeq\otimes_v'\sig_v^\vee$ is compatible with $\natural$. 
Observe that 
\begin{align*}
W_{\addchar}(\sig(h)\vph^\natural)
=\int_{\QQ\bsl\AA}\vph\left(\begin{bmatrix} 1 & -z \\ 0 & 1 \end{bmatrix}h^\natural\right) 
\overline{\addchar(z)}\,\d_\QQ z\notag\\
=W_{\addchar^{-1}}(\sig^\vee(h^\natural)\vph)=\prod_vW_v(h^\natural_v). 
\end{align*} 
Let $\vph_\sig\in\sig$ be the newform associated to $\sig$ (see Definition \ref{def:54}). 
By uniquness of essential vectors the newform $\vph_\sig$ is factorizable, and by our normalization $\vph_\sig=\otimes_v'W_{\sig_v}$. 
If $l$ is split in $E$, then $W_l^\natural(h_l^{})=W_l^{}(h_l^\natural)$ for all $W_l\in\scrw_{\addchar_v}(\sig_v)$ by the definition (\ref{tag:72}) of $W_l^\natural$. 
If $v$ does not split in $E$, then $W_{\sig_v}^\natural(h_v^{})=W_{\sig_v}^{}(h_v^\natural)$ for our choice of $W_{\sig_v}$.   
Thus $\vph_\sig^\natural=\otimes_v'W_{\sig_v}^\natural$. 
It follows that $\vph_{\sig^\vee}=\otimes_v'W_{\sig_v^\vee}$ is the $K_0'(\frkN_\sig)$-fixed vector on which $K_\infty'$ acts by the character $\ulk'$, normalized by $W_{\addchar^{-1}}(\vph_{\sig^\vee})=e^{-2\pi}$. 


\begin{lemma}\label{lem:72} 
$\vph_{\sig^\vee}=\overline{\vph_\sig^{}}$. 
\end{lemma}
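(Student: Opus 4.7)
The plan is to show $\overline{\vph_\sig} \in \sig^\vee$ agrees with $\vph_{\sig^\vee}$ by checking they satisfy the same characterizing properties and invoking multiplicity one. Since $\sig$ is unitary, the map $\vph \mapsto \overline{\vph}$ is an $H(\AA)$-equivariant isomorphism from $\sig$ onto its complex conjugate representation, which is canonically identified with $\sig^\vee$, so $\overline{\vph_\sig}$ is a well-defined element of $\sig^\vee$. By multiplicity one for $\U(1,1)$ (Remark \ref{rem:41}) together with uniqueness of essential vectors at split primes and spherical vectors at non-split primes, there is a unique vector (up to scalar) in $\sig^\vee$ that is $K_0'(\frkN_\sig)$-fixed with the correct nebentypus, spans the lowest weight line of the minimal $K_\infty'$-type of $\sig_\infty^\vee$, and is normalized by $W_{\addchar^{-1}}(\cdot) = e^{-2\pi}$.

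I would first verify that $\overline{\vph_\sig}$ has the right equivariance. For $u \in K_0'(\frkN_\sig)$, the identity $\vph_\sig(hu) = \chi'(u)^{-1}\vph_\sig(h)$ together with the unitarity of $\chi'$ gives $\overline{\vph_\sig}(hu) = \chi'(u)\overline{\vph_\sig}(h)$, the correct transformation law of a vector in $\sig^\vee$ whose nebentypus is dual to that of $\sig$. At the archimedean place, $\vph_\sig = \vPh_{\ulk'}(g_\sig^\circ)$ transforms under right translation by $k_\infty \in K_\infty'$ through the unitary character $\lambda(k_\infty,\sqrt{-1})^{k_1'}\mu(k_\infty,\sqrt{-1})^{-k_2'}$, so $\overline{\vph_\sig}$ transforms by its inverse, which is precisely the character cutting out the minimal $K_\infty'$-type of the antiholomorphic discrete series $\sig_\infty^\vee$.

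The decisive step is the Whittaker normalization. Since
\[
W_{\addchar^{-1}}(\overline{\vph_\sig}) = \int_{\QQ\bsl\AA} \overline{\vph_\sig\biggl(\begin{bmatrix} 1 & x \\ 0 & 1 \end{bmatrix}\biggr)}\,\addchar(x)\,\d x = \overline{W_\addchar(\vph_\sig)},
\]
it suffices to show that $W_\addchar(\vph_\sig)$ is real and equal to $e^{-2\pi}$. This follows from the factorization $W_\addchar(\vph_\sig) = \prod_v W_{\sig_v}(\ono_2)$: at every non-archimedean place the essential or spherical Whittaker value is $1$ under the chosen normalization, while at the archimedean place $W_{\sig_\infty}(\ono_2) = e^{-2\pi}$ by stipulation. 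Compatibility with $\FC^1_{\frko_E}(g_\sig^\circ) = 1$ (the condition pinning down $\vph_\sig$) is then automatic once the factorization of $W_\addchar$ through $\sig \simeq \otimes_v' \sig_v$ is identified with the classical Fourier expansion of $g_\sig^\circ$ at the standard cusp. Thus $\overline{\vph_\sig}$ meets all three characterizing properties of $\vph_{\sig^\vee}$, and the lemma follows by uniqueness.

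The main obstacle is confirming that no hidden phase appears in relating $W_\addchar(\vph_\sig)$ to the archimedean normalization: the passage between the classical Fourier coefficient $\FC^1_{\frko_E}(g_\sig^\circ)$ and the adelic Whittaker value $W_\addchar(\vph_\sig)$ involves the automorphy factor at $\sqrt{-1}$ and the choice of lowest weight component $W_{\sig_\infty}$, and one must check that the composite is real and positive so that $W_\addchar(\vph_\sig)$ itself is a positive real number. Any unit complex phase here would propagate into the Whittaker comparison and spoil the equality on the nose.
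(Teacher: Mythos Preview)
Your proposal is correct and follows essentially the same approach as the paper: both arguments use multiplicity one to reduce to a scalar comparison, then fix that scalar via the identity $W_{\addchar^{-1}}(\overline{\vph_\sig}) = \overline{W_{\addchar}(\vph_\sig)} = e^{-2\pi} = W_{\addchar^{-1}}(\vph_{\sig^\vee})$. The concern you flag about a hidden phase is already handled by the paper's setup in \S\ref{ssec:75}, where the isomorphism $\sig \simeq \otimes_v' \sig_v$ is chosen to be compatible with the global Whittaker functional $W_\addchar$, so that $W_\addchar(\vph_\sig) = \prod_v W_{\sig_v}(\ono_2) = e^{-2\pi}$ holds by definition rather than requiring a separate classical-to-adelic comparison.
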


\begin{proof}
Since $\vph_{\sig^\vee}$ and $\overline{\vph_\sig^{}}$ are proportional by multiplicity one for the unitary group $H$ (see \S 6.2 of \cite{LM15}) and essential vectors for generic representations of $H(\QQ_l)\simeq\GL_2(\QQ_l)$, Lemma \ref{lem:72} follows from the observation that $W_{\addchar^{-1}}(\vph_{\sig^\vee})=e^{-2\pi}=\overline{W_{\addchar}(\vph_\sig^{})}$. 
\end{proof}

Taking Lemma \ref{lem:72} into account, we will write 
\[\|\vph_\sig\|_{K'}^2=\Pet_{K'}^{}(\vph_\sig^{}\otimes\vph_{\sig^\vee}). \index{$\|\vph_\sig\|_{K'}$}\]
For each place $v$ we define the $\bfG(\QQ_v)$-equivariant isomorphism $^\natural:\vPi_v^\natural\simeq\vPi_v^\vee$ by 
\[(W\otimes W')^\natural=W^\natural\otimes W^{\prime\natural}. \index{$W^\natural$}\]


\subsection{Algebraicity of central values}\label{ssec:76}

We denote the complexified Lie algebra of $G$ by $\hat\frkg_\CC$.  
Let $f$ be a Picard cusp form of weight $\ulk$ associated to $\pi$ defined over $\overline{\QQ}$. 
Put 
\begin{align*}
n_1^*&=k'_2-k^{}_3, & 
\bfv_i&=X^{k_2-k_1-i}Y^i
\end{align*}
for $i=0,1,2,\dots,k_2-k_1$. 
Let $\vph=\vPh_{\ulk}(f)_{\bfv_{k_1'-k_1^{}}}$. 
In Appendix \ref{sec:a} we define a differential operator $\del_{\ulk}$ associated to an element $\caly_+:=\frac{Y^\calc_+}{2\pi\sqrt{-1}}\in\hat\frkg_\CC$ such that the restriction of $\caly_+^{n_1^*}\vph$ to $H(\AA)$ has weight $\ulk'$ (cf. Proposition 2.6 of \cite{MHarris}). 

Assuming that $\vph$ is factorizable, we write $\vph\otimes\vph_\sig=\otimes_v'\bfW_v^{}$. 
Fix a non-zero $\calk_\infty$-invariant vector $W_\infty^{\calk_\infty}\in\pi_\infty^{}\otimes\pi_\infty^\vee$ and a non-zero $\calk_\infty'$-invariant vector $W_\infty^{\calk_\infty'}\in\sig_\infty^\vee\otimes\sig_\infty^{}$. 
Let 
\begin{align*} 
\vPh_{\pi\otimes\pi^\vee}^{\calk_\infty}&=W_\infty^{\calk_\infty}\otimes_l(W_{\pi_l}^{}\otimes W_{\pi_l^\vee})\in\pi\otimes\pi^\vee, \\
\vph_{\sig^\vee}\otimes\vph_\sig&=W_\infty^{\calk_\infty'}\otimes_l(W_{\sig_l^\vee}\otimes W_{\sig_l}^{})\in\sig^\vee\otimes\sig
\end{align*}
be $\calk_\infty$ and $\calk_\infty'$ invariant algebraic cusp forms. \index{$\vPh_{\pi\otimes\pi^\vee}^{\calk_\infty}$}

For each place $v$ we put 
\begin{align*}
\bfW_{\vPi_v}&=W_{\pi_v^{}}\otimes W_{\sig_v^\vee}\in\vPi_v, & 
\bfW_{\vPi_v^\vee}&=W_{\pi_v^\vee}\otimes W_{\sig_v^{}}\in\vPi_v^\vee. 
\end{align*}
Put 
\[I_\infty=\frac{I((\caly_+^{n_1^*}\otimes\mathrm{Id})\bfW_\infty\otimes((\caly_+^{n_1^*}\otimes\mathrm{Id})\bfW_\infty)^\natural)}{\scrl(\pi_\infty^{}\times\sig_\infty^\vee)\La\!\La W_\infty^{\calk_\infty^{}}\otimes W_\infty^{\calk_\infty'}\Ra\!\Ra_\infty^{}}.  \]
Proposition \ref{prop:b1} and Remark \ref{rem:a1} give
\beq
I_\infty=2^{2k_3^{}-2k_2'}\biggl(-\frac{2}{|\del|}\biggl)^{k_1'+k_2'-k_1^{}-k_2^{}-k_3^{}}. \label{tag:76}
\eeq

Since $\scrp_{K'}(\Phi)=\scrp_{K'}(\Phi^\natural)$ for $\Phi\in\scra^0(\bfG)$, one can deduce the following formula from Lemma \ref{lem:71} and (\ref{tag:75}):  
\begin{multline}
\frac{\scrp_{K'}(\caly_+^{n_1^*}\vph\otimes\vph_{\sig^\vee})^2}{\Pet_K\Big(\vPh_{\pi\otimes\pi^\vee}^{\calk_\infty}\Big)\|\vph_\sig\|^2_{K'}}\\
=\frac{D_E^{-7/2}\lam_K}{2^{\vka_\pi+\vka_\sig-3}}I_\infty\frac{L\left(\frac{1}{2},\pi\times\sig^\vee\right)}{L(1,\pi,\mathrm{Ad})L(1,\sig^\vee,\mathrm{Ad})}\prod_l\frac{I(\bfW_l^{}\otimes\bfW_l^\natural)}{\scrl(\pi_l^{}\times\sig_l^\vee)\La\!\La \bfW_{\vPi_l^{}}\otimes\bfW_{\vPi_l^\vee}\Ra\!\Ra_l^{}}. \label{tag:77}
\end{multline}

Let $\Ome_\infty$ be a complex period of a CM elliptic curve (see \S \ref{ssec:67}). 
Applying Proposition \ref{prop:62} with $i=k_1'-k_1^{}$ and $n=n_1^*$, we have the following lemma: 

\begin{lemma}\label{lem:73}
The holomorphic projection of the restriction of 
\[\biggl(\frac{2\pi\sqrt{-1}}{\Ome_\infty}\biggl)^{k_1'+k_2'-k_1^{}-k_2^{}-k_3^{}}\caly_+^{n_1^*}\vph\]
to $H(\AA)$ is associated to a $\overline{\QQ}$-rational form of weight $\ulk'$. 
\end{lemma}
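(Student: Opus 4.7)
The plan is to identify $\caly_+^{n_1^*}\vph|_{H(\AA)}$ with (a known multiple of) the classical function
\[[\del_{\ulk}^{n_1^*}f(\ulv_0^{n_1^*})]_{k_1'-k_1}\circ\jmath,\]
so that the lemma reduces to Proposition \ref{prop:62} followed by the fact that Shimura's holomorphic projection $\Hol$ preserves $\overline{\QQ}$-rationality.

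First I would make the differential identification. By the construction in Appendix \ref{sec:a}, the Lie-algebra element $\caly_+=Y_+^{\calc}/(2\pi\sqrt{-1})$ is precisely the normalization used to pass from the Maass--Shimura raising operator in the direction $v_0=\begin{bmatrix}0\\1\end{bmatrix}\in\frkT$ to the arithmetic operator $\del_{\ulk}$; iterating $n_1^*$ times produces the automorphic avatar of $\del_{\ulk}^{n_1^*}f(\ulv_0^{n_1^*})$. Since by definition $\vph=\vPh_{\ulk}(f)_{\bfv_{k_1'-k_1}}$ pairs with $\bfv_{k_1'-k_1}=X^{k_2-k_1'}Y^{k_1'-k_1}$, extracting the coefficient of $Y^{k_2-k_1}$ on the $H$-locus picks out exactly the component $[\del_{\ulk}^{n_1^*}f(\ulv_0^{n_1^*})]_{k_1'-k_1}$, evaluated along the embedding $\jmath:\frkH\hookrightarrow\frkD$. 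This matches the weight bookkeeping: under the restriction $\iot'$ of $\scrh_\calg$ to $\scrh_\calh$ in \S\ref{ssec:pullback}, the irreducible quotient of $\rho_{\ulk}\otimes\upsilon^{n_1^*}$ selected by the pair $(v_0,\bfv_{k_1'-k_1})$ is precisely the character $(k_1';k_2')=\ulk'$, so $\vPh_{\ulk'}$ intertwines the two descriptions up to a fixed $\rho_{\ulk}(\sqrt{T'},1)$-twist as in Remark \ref{rem:33}.

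Next I would apply Proposition \ref{prop:62} with $i=k_1'-k_1$ and $n=n_1^*=k_2'-k_3$. The hypothesis $k_1\leq k_1'\leq k_2$ ensures $0\leq i\leq k_2-k_1$, and $k_3\leq k_2'$ ensures $n\geq 0$. The proposition then produces a $\overline{\QQ}$-rational nearly holomorphic cusp form on $H$ of weight $(k_1+i;k_3+n)=(k_1';k_2')=\ulk'$ and degree $t=\min\{n_1^*,k_2-k_1'\}$, after multiplication by $(2\pi\sqrt{-1}/\Ome_\infty)^{n+i-k_2}=(2\pi\sqrt{-1}/\Ome_\infty)^{k_1'+k_2'-k_1-k_2-k_3}$, which is exactly the normalization appearing in the statement of the lemma. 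Finally, $\Hol$ carries $\scrs^{[t]}_{\ulk'}(p^\ell\frkN,\chi|_{K_0'(p^\ell\frkN)},\overline{\QQ})$ into $S_{\ulk'}^H(p^\ell\frkN,\chi|_{K_0'(p^\ell\frkN)},\overline{\QQ})$, because it is defined by the subtraction of the $\del$-derivatives of lower-weight holomorphic pieces (see \S\ref{ssec:64}), and this subtraction is intrinsic to Shimura's $\overline{\QQ}$-structure on nearly holomorphic forms (\cite[Theorem 14.12]{Shimura00}).

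The principal obstacle is the first step, namely the clean book-keeping identification of $\caly_+^{n_1^*}\vph|_{H(\AA)}$ with the coefficient $[\del_{\ulk}^{n_1^*}f(\ulv_0^{n_1^*})]_{k_1'-k_1}\circ\jmath$. This requires carefully tracking the $\rho_{\ulk}$-twists, the factor coming from the change of variables between $\xi(Z)$ and $\xi'(Z)$ of Remark \ref{rem:33}, and the precise relation between pairing with $\bfv_{k_1'-k_1}$ under $\ell_{\ulk}$ and reading off the $(k_1'-k_1)$-th coefficient in the expansion (\ref{tag:54}). Once this identification is established, the lemma follows at once from Proposition \ref{prop:62} and the $\overline{\QQ}$-rationality of $\Hol$.
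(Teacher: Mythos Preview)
Your proposal is correct and follows the same approach as the paper: the paper simply states before the lemma ``Applying Proposition \ref{prop:62} with $i=k_1'-k_1^{}$ and $n=n_1^*$, we have the following lemma,'' and you have correctly identified that $(k_1+i;k_3+n)=(k_1';k_2')=\ulk'$ and $n+i-k_2=k_1'+k_2'-k_1-k_2-k_3$ yields exactly the claimed normalization. Your additional discussion of the identification $\caly_+^{n_1^*}\vph|_{H(\AA)}\leftrightarrow[\del_{\ulk}^{n_1^*}f(\ulv_0^{n_1^*})]_{k_1'-k_1}\circ\jmath$ and the $\overline{\QQ}$-rationality of $\Hol$ (via \cite[Theorem 14.12]{Shimura00}) makes explicit what the paper leaves implicit.
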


The symbol $\Hol$ stands for the holomorphic projection from nearly holomorphic functions on $\frkH$. 
Observe that 
\[\scrp_{K'}(\caly_+^{n_1^*}\vph\otimes\vph_{\sig^\vee})=\scrp_{K'}(\1_\sig\Hol\;\caly_+^{n_1^*}\vph\otimes\vph_{\sig^\vee}), \]
where $\1_\sig$ denotes the projection from the space of holomorphic cusp forms on $H$ to the space of $\sig$-isotypic modular forms. 
Then $\caly_+^{n_1^*}\vph$ is an algebraic nearly holomorphic Picard modular form by Theorem 14.7 of \cite{Shimura00}. 
If the restriction of $\caly_+^{n_1^*}\vph$ to $H(\AA)$ has level $\frkN_\sig$, then Lemma \ref{lem:73} gives an algebraic number $C_\vph$ such that 
\[\biggl(\frac{2\pi\sqrt{-1}}{\Ome_\infty}\biggl)^{k_1'+k_2'-k_1^{}-k_2^{}-k_3^{}}\1_\sig\Hol\;\caly_+^{n_1^*}\vph=C_\vph^{}\vph_\sig. \] 
It follows that 
\begin{multline*}
\biggl(\frac{\Ome_\infty}{2\pi\sqrt{-1}}\biggl)^{2(k_1'+k_2'-k_1^{}-k_2^{}-k_3^{})}C_\vph^2\frac{\|\vph_\sig\|_{K'}^2}{\Pet_K\Big(\vPh_{\pi\otimes\pi^\vee}^{\calk_\infty}\Big)}\\
=\frac{D_E^{-7/2}\lam_K}{2^{\vka_\pi+\vka_\sig-3}}I_\infty\frac{L\left(\frac{1}{2},\pi\times\sig^\vee\right)}{L(1,\pi,\mathrm{Ad})L(1,\sig^\vee,\mathrm{Ad})}\prod_l\frac{I(\bfW_l^{}\otimes\bfW_l^\natural)}{\scrl(\pi_l^{}\times\sig_l^\vee)\La\!\La \bfW_{\vPi_l^{}}\otimes\bfW_{\vPi_l^\vee}\Ra\!\Ra_l^{}}. 
\end{multline*} 

Note that $L(1,\sig^\vee,\mathrm{Ad})=L(1,\sig,\mathrm{Ad})$. 
Since $\frac{L(1,\sig,\mathrm{Ad})}{\|\vph_\sig\|_{K'}^2}$ is an algebraic number by Corollary \ref{cor:c1}, we get the following result:

\begin{theorem}\label{cor:71} 
Notations and assumptions being as above, we have 
\[\biggl(\frac{2\pi\sqrt{-1}}{\Ome_\infty}\biggl)^{2(k_1'+k_2'-k_1^{}-k_2^{}-k_3^{})}\frac{\Pet_K\Big(\vPh_{\pi\otimes\pi^\vee}^{\calk_\infty}\Big)L\left(\frac{1}{2},\pi\times\sig^\vee\right)}{L(1,\pi,\mathrm{Ad})\|\vph_\sig\|_{K'}^4}\in\overline{\QQ} \]
\end{theorem}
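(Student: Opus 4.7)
The proof is a packaging exercise. The strategy is to rearrange the displayed identity immediately preceding the theorem---itself a consequence of the Ichino--Ikeda--Neal-Harris formula (\ref{tag:75}), combined with Lemma \ref{lem:73} and Shimura's theory of differential operators---in order to isolate the target expression. Multiplying through by $L(1,\sig,\Ad)/\|\vph_\sig\|_{K'}^2$ and using $L(1,\sig^\vee,\Ad)=L(1,\sig,\Ad)$, the quantity in the theorem equals
\[
\frac{L(1,\sig,\Ad)}{\|\vph_\sig\|_{K'}^2}\cdot C_\vph^2 \cdot \frac{2^{\vka_\pi+\vka_\sig-3}}{D_E^{-7/2}\lam_K\, I_\infty} \cdot \prod_l \frac{\scrl(\pi_l\times\sig_l^\vee)\La\!\La\bfW_{\vPi_l}\otimes\bfW_{\vPi_l^\vee}\Ra\!\Ra_l^{}}{I(\bfW_l^{}\otimes\bfW_l^\natural)},
\]
so the proof reduces to verifying algebraicity of each factor on the right.

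I would then check these factors one at a time. The ratio $L(1,\sig,\Ad)/\|\vph_\sig\|_{K'}^2$ lies in $\overline{\QQ}$ by Corollary \ref{cor:c1}. The period $C_\vph$ is algebraic because Lemma \ref{lem:73} exhibits both $(2\pi\sqrt{-1}/\Ome_\infty)^{k_1'+k_2'-k_1-k_2-k_3}\1_\sig\Hol(\caly_+^{n_1^*}\vph)$ and $\vph_\sig$ as $\overline{\QQ}$-rational vectors lying in a single $\sig$-isotypic line, so the proportionality $C_\vph$ is forced to be algebraic. The archimedean factor $I_\infty$ is the explicit algebraic expression (\ref{tag:76}); the constants $\lam_K$ and $D_E^{-7/2}$ are rational (by the construction in Lemma \ref{lem:70} and by definition, respectively); and $\vka_\pi,\vka_\sig$ are integers.

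The substantive step is then the finite product over rational primes $l$. At every $l$ where both $\pi_l$ and $\sig_l$ are unramified and spherical, the local zeta integral formula gives ratio $1$, so only finitely many primes contribute. At the remaining primes I would invoke the explicit local evaluations carried out with the essential Whittaker test vectors fixed in \S\ref{ssec:72}: at a split prime $l$ the Liu--Zhang splitting lemma (Lemma \ref{lem:81}) rewrites $I(\bfW_l\otimes\bfW_l^\natural)$ as a square of a Jacquet--Piatetski-Shapiro--Shalika zeta integral for $\GL_3\times\GL_2$, which with essential vectors produces $L^{\GL}(\frac{1}{2},\pi_l\times\sig_l^\vee)$ up to a rational normalization; at ramified non-split primes one similarly obtains an algebraic ratio. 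The main obstacle in converting this plan to a complete proof is therefore the local evaluation at bad primes, but those computations are precisely the subject of Section \ref{sec:8} and the references cited therein; once they are in hand the product over $l$ is a finite product of algebraic numbers, and the theorem follows.
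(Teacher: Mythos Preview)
Your approach is correct and mirrors the paper's: rearrange the displayed identity (derived from (\ref{tag:77}) together with Lemma~\ref{lem:73}), then invoke Corollary~\ref{cor:c1} for $L(1,\sig,\Ad)/\|\vph_\sig\|_{K'}^2$, the explicit archimedean formula (\ref{tag:76}) for $I_\infty$, and the algebraicity of $C_\vph$. Two minor corrections: $D_E^{-7/2}$ is algebraic, not rational; and by the standing hypothesis that $\pi_q,\sig_q$ are unramified at non-split $q$, there are no ``ramified non-split primes'' to worry about---the finitely many bad primes are all split, and the relevant algebraicity follows from the splitting lemma and the JPSS computations in \S\ref{ssec:77}--\S\ref{ssec:78} (not Section~\ref{sec:8}, which treats the $p$-depleted ordinary vector rather than the essential vector used here).
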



\subsection{An application of the splitting lemma}\label{ssec:77}

If $q$ and $pNN'$ are coprime, then 
\beq
\frac{I((W_{\pi_q}\otimes W_{\sig_q^\vee})\otimes(W_{\pi_q}\otimes W_{\sig_q^\vee})^\natural)}{\La W_{\pi_q},W_{\pi_q^\vee}\Ra_q^{}\La W_{\sig_q},W_{\sig_q^\vee}\Ra_q'}=\scrl(\pi_q^{}\times\sig_q^\vee) \label{tag:78}
\eeq
by \cite[Theorem 2.12]{NHarris} and \cite[Proposition C.1]{HY}. 
We put 
\begin{align}
B_{\pi_l}&=\La W_{\pi_l}^{},W_{\pi_l^\vee}^{}\Ra^{}_l, &
B_{\sig_l}&=\La W_{\sig_l}^{},W_{\sig_l^\vee}^{}\Ra_l', \notag\\
\calb_{\pi_l}&=\frac{\zet_l(3)}{L^\GL(1,\pi_l^{}\times\pi_l^\vee)}B_{\pi_l}, &
\calb_{\sig_l}&=\frac{\zet_l(2)}{L^\GL(1,\sig_l^{}\times\sig_l^\vee)}B_{\sig_l} \label{tag:79}
\end{align} 
for each split prime $l$, where the local pairings $\La\;,\;\Ra_l^{}$ and $\La\;,\;\Ra_l'$ are constructed by (\ref{tag:83}). 
It is well-known that $\calb_{\pi_l}=\calb_{\sig_l}=1$ if $\pi_l$ and $\sig_l$ are unramified. 
To avoid possible confusion, we recall that 
\begin{align*}
L(s,\pi_l^{},\mathrm{Ad})&=L^\GL(s,\pi_l^{}\times\pi_l^\vee), & 
L(s,\sig_l^\vee,\mathrm{Ad})&=L^\GL(s,\sig_l^{}\times\sig_l^\vee). 
\end{align*}
We regard $\pi_l$ and $\sig_l$ as representations of unitary groups in the left hand side and representations of general linear groups in the right hand side. 

Let $\vph=\otimes_v^{}W_v^{}\in\pi$ and $\vph'=\otimes_v^{}W_v'\in\sig^\vee$. 
If $W_l^{}=W_{\pi_l}$ and $W_l'=W_{\sig_l^\vee}$ unless $pNN'$ is divisible by $l$, then by (\ref{tag:77}) and (\ref{tag:78})
\begin{multline}
\frac{\scrp_{K'}(\caly_+^{n_1^*}\vph\otimes\vph')^2}{\Pet_K\Big(\vPh_{\pi\otimes\pi^\vee}^{\calk_\infty}\Big)} \label{tag:70}\\
=\frac{D_E^{-7/2}\lam_K}{2^{\vka_\pi+\vka_\sig-3}}I_\infty\frac{\|\vph_\sig\|^2_{K'}L\left(\frac{1}{2},\pi\times\sig^\vee\right)}{L(1,\pi,\mathrm{Ad})L(1,\sig,\mathrm{Ad})}\prod_{l|pNN'}\frac{I(W_l^{}\otimes W_l'\otimes(W_l^{}\otimes W_l')^\natural)}{\scrl(\pi_l^{}\times\sig_l^\vee)B_{\pi_l}B_{\sig_l}}. 
\end{multline}

Fix a split rational prime $l$. 
We regard $\pi_l$ and $\sig_l$ as representations of general linear groups via $\imath_\frkl^{}$ and $\imath_\frkl'$. 
Put $\zet_l(s)=(1-l^{-s})^{-1}$. 
Using the Whittaker models with respect to $\addchar_l$, we define the local pairings $\La\;,\;\Ra_l^{}$ and $\La\;,\;\Ra_l'$ by (\ref{tag:83}). 
Lemma \ref{lem:81} gives  
\begin{align*}
I(W_l^{}\otimes W_l'\otimes(W_l^{}\otimes W_l')^\natural)
&=J(\pi_l^{}(\vsi)W_l^{},\pi^\vee_l(\vsi)W_l^\natural,W_l^{\prime\natural},W_l')\\
&=\zet_l(1)Z\biggl(\frac{1}{2},\pi_l^{}(\vsi)W_l^{},W_l'\biggl)Z\biggl(\frac{1}{2},\pi^\vee_l(\vsi)W_l^\natural,W_l^{\prime\natural}\biggl)
\end{align*}
where
\[\vsi=\begin{bmatrix}
1 & 0 & 0 \\ 
0 & 0 & 1 \\
0 & 1 & 0   
\end{bmatrix}. \]
Since $\vsi T_\del=-\del\begin{bmatrix} -\del^{-1} w_2 & \\ & 1 \end{bmatrix}\vsi$, we have 
\begin{align*}
Z\biggl(\frac{1}{2},\pi^\vee_l(\vsi)W_l^\natural,W_l^{\prime\natural}\biggl)
&=Z\biggl(\frac{1}{2},\pi^\vee_l(\vsi T_\del)\widetilde{W_l^{}},\sig_l^{}(w_2)\widetilde{W_l'}\biggl)\\
&=\frac{\ome_{\sig_l}(-\del)}{\ome_{\pi_l}(-\del)}Z\biggl(\frac{1}{2},\pi^\vee_l(\vsi)\widetilde{W_l^{}},\widetilde{W_l'}\biggl)\\
&=\frac{\ome_{\sig_l}(-\del)}{\ome_{\pi_l}(-\del)}\gam^\GL\biggl(\frac{1}{2},\pi_l^{}\times\sig_l^\vee,\addchar_l^{}\biggl)Z\biggl(\frac{1}{2},\pi_l^{}(\vsi)W_l^{},W_l'\biggl) 
\end{align*}
by (\ref{tag:72}), 
and the invariance and the functional equation (\ref{tag:84}) of the JPSS integrals. 
We can rewrite the identity above as  
\[\frac{I(W_l^{}\otimes W_l'\otimes(W_l^{}\otimes W_l')^\natural)}{\scrl(\pi_l^{}\times\sig_l^\vee)B_{\pi_l}^{}B_{\sig_l}^{}}=\frac{\scri(W_l^{}\otimes W_l')}{(\ome_{\pi_l}^{}\ome_{\sig_l}^{-1})(-\del)\calb_{\pi_l} \calb_{\sig_l}}, \]
where 
\[\scri(W_l^{}\otimes W_l')=\gam^\GL\biggl(\frac{1}{2},\pi_l^{}\times\sig_l^\vee,\addchar_l^{}\biggl)\frac{Z\bigl(\frac{1}{2},\pi_l^{}(\vsi)W_l^{},W_l'\bigl)^2}{L\bigl(\frac{1}{2},\pi_l^{}\times\sig_l^\vee\bigl)}. \]

We use Corollary \ref{cor:c1} to rewrite (\ref{tag:70}) as  
\[\frac{\scrp_{K'}(\caly_+^{n_1^*}\vph\otimes\vph')^2}{\Pet_K\Big(\vPh_{\pi\otimes\pi^\vee}^{\calk_\infty}\Big)} \\
=\frac{D_E^{-5/2}\lam_K}{2^{\vka_\pi-1+k_2'-k_1'}}I_\infty\frac{L\left(\frac{1}{2},\pi\times\sig^\vee\right)}{L(1,\pi,\mathrm{Ad})}\prod_{l|pNN'}\frac{\scri(W_l^{}\otimes W_l')}{(\ome_{\pi_l}^{}\ome_{\sig_l}^{-1})(-\del)\calb_{\pi_l}}. \]

\begin{definition}[Periods on $\U(2,1)$]\label{def:71}
Put \index{$\Pet(\pi)$}
\begin{align*}
\Pet(\pi)&=2^{\vka_\pi}L(1,\pi,\Ad)\prod_{l|N_\pi}\calb_{\pi_l}, \\
\Xi_{\vPh_{\pi\otimes\pi^\vee}^{\calk_\infty}}&=(-2)^{k_1+k_2-k_3}\frac{\Pet(\pi)}{\Pet_K\Big(\vPh_{\pi\otimes\pi^\vee}^{\calk_\infty}\Big)}. 
\end{align*}
\end{definition}

We now get the following formula from (\ref{tag:76}):

\begin{corollary}\label{cor:72}
Let $\pi\simeq\otimes_v'\pi_v^{}$ be an irreducible cuspidal tempered automorphic representation of $G(\AA)$ whose archimedean part $\pi_\infty$ is a holomorphic discrete series or limit of holomorphic discrete series and such that $\pi_q$ admits a non-zero $K_q$-invariant vector for every non-split prime $q$.
Let $\sig\simeq\otimes_v'\sig_v^{}$ be an irreducible cuspidal tempered automorphic representation of $H(\AA)$ whose archimedean part $\sig_\infty$ is a holomorphic discrete series and such that $\sig_q$ admits a non-zero $K_q'$-invariant for every non-split prime $q$. 
Let 
\begin{align*}
\vph^\dagger&=\otimes_v^{}W_v^\dagger\in\pi, & 
\vph^\flat&=\otimes_v^{}W_v^\flat\in\sig^\vee. 
\end{align*}
If $W_l^\dagger=W_{\pi_l}$ and $W_l^\flat=W_{\sig_l}$ unless $pNN'$ is divisible by $l$, then  
\begin{align*}
&\scrp_{K'}(\caly_+^{n_1^*}\vph^\dagger\otimes\vph^\flat)^2\Xi_{\vPh_{\pi\otimes\pi^\vee}^{\calk_\infty}} \\
=&\frac{2^{2k_1'-2k_2'}(-1)^{k_1'+k_2'}}{|\del|^{k_1'+k_2'-k_1^{}-k_2^{}-k_3^{}}}2D_E^{-5/2}\lam_KL\left(\frac{1}{2},\pi\times\sig^\vee\right)\prod_{l|pNN'}\frac{\scri(W_l^\dagger\otimes W_l^\flat)}{(\ome_{\pi_l}^{}\ome_{\sig_l}^{-1})(-\del)}. 
\end{align*}
\end{corollary}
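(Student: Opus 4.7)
The plan is to start from the identity (\ref{tag:70}), which was derived from the Ichino--Ikeda formula (\ref{tag:75}), Lemma \ref{lem:71}, and the unramified local computation (\ref{tag:78}) applied at every prime $q\nmid pNN'$. At those good primes, the factorization hypothesis $W_l^{\dagger}=W_{\pi_l}$ and $W_l^{\flat}=W_{\sig_l}$ lets us pull out the factor $\scrl(\pi_l\times\sig_l^\vee)$, which in turn feeds into the global ratio $\scrl(\pi\times\sig^\vee)$ and produces the quotient $L(1/2,\pi\times\sig^\vee)/L(1,\pi,\Ad)L(1,\sig,\Ad)$ up to the elementary $\zet$-factors. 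So only the primes $l\mid pNN'$ remain to be handled, and this is where the splitting lemma enters.

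The main computational step is to evaluate the local ratio $I(W_l^{\dagger}\otimes W_l^\flat\otimes(W_l^{\dagger}\otimes W_l^\flat)^\natural)/\scrl(\pi_l\times\sig_l^\vee)B_{\pi_l}B_{\sig_l}$ for split primes $l\mid pNN'$. Using $\imath_\frkl$ and $\imath_\frkl'$ to identify $G(\QQ_l)$ and $H(\QQ_l)$ with $\GL_3(\QQ_l)$ and $\GL_2(\QQ_l)$ respectively, the splitting lemma (Lemma \ref{lem:81}) rewrites the local integral $I$ on the unitary side as a product of two JPSS integrals $Z(1/2,\pi_l(\vsi)W_l^{\dagger},W_l^\flat)\cdot Z(1/2,\pi_l^\vee(\vsi)W_l^{\dagger\natural},W_l^{\flat\natural})$ with the factor $\zet_l(1)$. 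The second JPSS integral is turned back into a copy of the first via the definition (\ref{tag:72}) of $\natural$ (which introduces the conjugation by $T_\del$ and hence the character value $\ome_{\sig_l}(-\del)/\ome_{\pi_l}(-\del)$) combined with the functional equation (\ref{tag:84}) for JPSS integrals, producing the $\gam^\GL(1/2,\pi_l\times\sig_l^\vee,\addchar_l)$ factor. Dividing by $\scrl(\pi_l\times\sig_l^\vee)B_{\pi_l}B_{\sig_l}$ and using the normalization (\ref{tag:79}) rewrites the result as $\scri(W_l^{\dagger}\otimes W_l^\flat)/[(\ome_{\pi_l}\ome_{\sig_l}^{-1})(-\del)\calb_{\pi_l}\calb_{\sig_l}]$; the $\zet_l(1)$ from the splitting lemma is absorbed in passing to $\scri$.

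With this replacement, (\ref{tag:70}) becomes
\[
\frac{\scrp_{K'}(\caly_+^{n_1^*}\vph^\dagger\otimes\vph^\flat)^2}{\Pet_K\Big(\vPh_{\pi\otimes\pi^\vee}^{\calk_\infty}\Big)}
=\frac{D_E^{-7/2}\lam_K}{2^{\vka_\pi+\vka_\sig-3}}I_\infty\frac{\|\vph_\sig\|^2_{K'}L(\tfrac12,\pi\times\sig^\vee)}{L(1,\pi,\Ad)L(1,\sig,\Ad)}\prod_{l\mid pNN'}\frac{\scri(W_l^{\dagger}\otimes W_l^\flat)}{(\ome_{\pi_l}\ome_{\sig_l}^{-1})(-\del)\calb_{\pi_l}\calb_{\sig_l}}.
\]
At this point I would invoke Corollary \ref{cor:c1} to cancel $\|\vph_\sig\|^2_{K'}L(1,\sig,\Ad)^{-1}$ against an explicit factor; the statement of Corollary \ref{cor:c1} also contains the remaining powers of $2$, $|\del|$, and the local factors $\calb_{\sig_l}$ attached to primes of $N_\sig$, which combine with $2^{\vka_\sig}$ to give the exponent $k_2'-k_1'$ that appears in $2^{\vka_\pi-1+k_2'-k_1'}$ in the denominator.

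Finally I would substitute the explicit value of $I_\infty$ computed in (\ref{tag:76}), and rearrange the remaining factors to match the right-hand side of the corollary. The rearrangement uses Definition \ref{def:71} to convert $L(1,\pi,\Ad)\prod_{l\mid N_\pi}\calb_{\pi_l}/\Pet_K\bigl(\vPh_{\pi\otimes\pi^\vee}^{\calk_\infty}\bigr)$ into $\Xi_{\vPh_{\pi\otimes\pi^\vee}^{\calk_\infty}}/[2^{\vka_\pi}(-2)^{k_1+k_2-k_3}]$, and the remaining product over $l\mid pNN'$ of $\calb_{\pi_l}$ restricts to $l\mid pNN'$ not dividing $N_\pi$, which is absorbed into the product in the statement. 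The numerical bookkeeping of the powers of $2$, the signs $(-1)^{k_1'+k_2'}$, and the $|\del|^{k_1'+k_2'-k_1-k_2-k_3}$ is the only delicate point, and I expect this is the main obstacle in finalizing the formula—not a deep one, but one where a careful side-by-side comparison of the exponents from (\ref{tag:76}), Corollary \ref{cor:c1}, and Definition \ref{def:71} must be performed.
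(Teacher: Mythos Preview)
Your approach is correct and essentially identical to the paper's: start from (\ref{tag:70}), apply the splitting lemma at primes $l\mid pNN'$ to convert the local Ichino--Ikeda integrals into $\scri(W_l^\dagger\otimes W_l^\flat)$, use Corollary \ref{cor:c1} to eliminate $\|\vph_\sig\|_{K'}^2/L(1,\sig,\Ad)$ together with the $\calb_{\sig_l}$ factors, absorb $L(1,\pi,\Ad)\prod_{l\mid N_\pi}\calb_{\pi_l}/\Pet_K(\vPh_{\pi\otimes\pi^\vee}^{\calk_\infty})$ into $\Xi_{\vPh_{\pi\otimes\pi^\vee}^{\calk_\infty}}$ via Definition \ref{def:71}, and then substitute (\ref{tag:76}) for $I_\infty$. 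Two small clarifications on your bookkeeping: the power of $|\del|$ comes entirely from $I_\infty$ in (\ref{tag:76}), not from Corollary \ref{cor:c1}; and the ratio $\prod_{l\mid N_\pi}\calb_{\pi_l}\big/\prod_{l\mid pNN'}\calb_{\pi_l}$ is exactly $1$ (since $\calb_{\pi_l}=1$ whenever $\pi_l$ is unramified), so nothing survives to be ``absorbed'' into the final product.
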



\subsection{The local integral at a ramified split prime $l$}\label{ssec:78}

Let $l\neq p$ be a rational prime that is split in $E$. 
We write $\ome_{\sig_l}$ for the central character of $\sig_l$, which is viewed as a character of $\QQ_l^\times$ via $\imath_\frkl$. 
Observe that 
\[\scri(\pi_l^{}(\vsi)W_l^{}\otimes W_l')=\vep^\GL\biggl(\frac{1}{2},\pi_l^{}\times\sig_l^\vee,\addchar_l^{}\biggl)\Biggl(\frac{Z\bigl(\frac{1}{2},W_l^{},W_l'\bigl)}{L^\GL\bigl(\frac{1}{2},\pi_l^{}\times\sig_l^\vee\bigl)}\Biggl)^2. \]

If $\sig_l$ is unramified, then the essential vector $W_{\sig_l^\vee}\in \scrw_{\addchar_l^{-1}}(\sig_l^\vee)$ is the normalized spherical vector and so by Th\'{e}orem\`{e} on p.~208 of \cite{JPSS}, we have 
\[Z(s,W_{\pi_l^{}}^{},W_{\sig_l^\vee})=L^\GL(s,\pi_l^{}\times\sig_l^\vee). \]
We see that  
\[\scri(\pi_l(\vsi)W_{\pi_l^{}}^{}\otimes W_{\sig_l^\vee})=\vep^\GL\left(\frac{1}{2},\pi_l^{}\times\sig_l^\vee,\addchar_l^{}\right)=:\frkf_{\pi_l^{},\sig_l^\vee}. \]

We consider the case when $\sig_l$ is ramified. 
Put $f_l=c(\ome_{\sig_l})$. 
We define $\bfU^{\ome_{\sig_l}^{-1}} W_{\pi_l}\in\scrw_{\addchar_l}(\pi_l)$ by 
\[\bfU^{\ome_{\sig_l}^{-1}} W_{\pi_l}=\sum_{i,j\in(\ZZ_l/l^{f_l}\ZZ_l)^\times}\sum_{y\in\ZZ_l/l^{2f_l}\ZZ_l}
\ome_{\sig_l}(ij)^{-1}\pi_l\left(\begin{bmatrix} 1 & \frac{i}{l^{f_l}} & \frac{y}{l^{2f_l}} \\ 0 & 1 & \frac{j}{l^{f_l}} \\ 0 & 0 & 1 \end{bmatrix}\right)W_{\pi_l}. \]
Put $\tau_{N'}'=\begin{bmatrix} 0 & 1 \\ N' & 0 \end{bmatrix}$. 
We consider the embedding 
\begin{align*}
\iot'&:\GL_2(F)\hookrightarrow\GL_3(F), & 
\iot'(g)&=\begin{bmatrix} g & \\ & 1 \end{bmatrix}. 
\end{align*}
Proposition 6.8 of \cite{HY} tells us that 
\[Z(s,\bfU^{\ome_{\sig_l}^{-1}} W_{\pi_l}^{},\sig_l^\vee(\tau_{N'}')W_{\sig_l^\vee})=\frac{l^{3c(\ome_{\sig_l})}\ome_{\sig_l}(l)^{2c(\ome_{\sig_l})}\vep\bigl(\frac{1}{2},\sig_l^\vee,\addchar_l^{}\bigl)}{\vep\bigl(\frac{1}{2},\ome_{\sig_l}^{-1},\addchar_l^{}\bigl)^2[\GL_2(\ZZ_l):\calk_0^{(2)}(l^{2c(\ome_{\sig_l})}\ZZ_l)]} \]
if $c(\sig_l)\leq 2c(\ome_{\sig_l})$. 
Put 
\begin{align*}
\bfU_{\frkN'}^{\ome_{\sig_l}^{-1}}W_{\pi_l}^{}&=\pi_l^{}(\iot'(\tau'_{N'})^{-1}\vsi)\U^{\ome_{\sig_l}^{-1}}W_{\pi_l}^{}, \\ 
\frkf_{\pi_l^{},\sig_l^\vee}&=\vep^\GL\left(\frac{1}{2},\pi_l^{}\times\sig_l^\vee,\addchar_l^{}\right)\frac{l^{6c(\ome_{\sig_l})}\ome_{\sig_l}(l)^{4c(\ome_{\sig_l})}\vep\bigl(\frac{1}{2},\sig_l^\vee,\addchar_l^{}\bigl)^2}{\vep\bigl(\frac{1}{2},\ome_{\sig_l}^{-1},\addchar_l^{}\bigl)^4L^\GL\bigl(\frac{1}{2},\pi_l^{}\times\sig_l^\vee\bigl)^2}. 
\end{align*}
It follows that 
\[\scri(\pi_l^{}(\vsi)\bfU_{\frkN'}^{\ome_{\sig_l}^{-1}}W_{\pi_l}^{},W_{\sig_l^\vee})=\frac{\frkf_{\pi_l^{},\sig_l^\vee}}{[\GL_2(\ZZ_l):\calk_0^{(2)}(l^{2c(\ome_{\sig_l})}\ZZ_l)]^2}. \]

Put $\frkM=\prod_{l|N'}\frkl^{c(\ome_{\sig_l})}$. 

\begin{corollary}\label{cor:73}
Notations and assumptions being as in Corollary \ref{cor:72}, if $c(\sig_l)\leq 2c(\ome_{\sig_l})$, $W_l^\flat=W_{\sig_l^\vee}$ and $W_l^\dagger=\pi_l^{}(\vsi)\bfU_{\frkN'}^{\ome_{\sig_l}^{-1}}W_{\pi_l}^{}$ for every $l\neq p$, then 
\begin{align*}
&|\del|^{k_1'+k_2'-k_1^{}-k_2^{}-k_3^{}}(\ome_{\pi_p}^{}\ome_{\sig_p}^{-1})(-\del)\scrp_{K'}(\caly_+^{n_1^*}\vph^\dagger\otimes\vph^\flat)^2\\
=&\frac{2^{1+2k_1'-2k_2'}(-1)^{k_1'+k_2'}}{D_E^{5/2}[K':K_0'(\frkM^2)]^2}\lam_K\frac{L\bigl(\frac{1}{2},\pi\times\sig^\vee\bigl)}{\Xi_{\vPh_{\pi\otimes\pi^\vee}^{\calk_\infty}}}\scri(W_p^\dagger\otimes W_p^\flat)\prod_{l|NN'}\frac{\frkf_{\pi_l^{},\sig_l^\vee}}{(\ome_{\pi_l}^{}\ome_{\sig_l}^{-1})(-\del)}. 
\end{align*}
\end{corollary}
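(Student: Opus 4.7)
The strategy is to substitute the choices of local test vectors specified in the hypothesis into the formula of Corollary \ref{cor:72}, evaluate each local factor $\scri(W_l^\dagger\otimes W_l^\flat)$ for $l\mid NN'$ using the explicit computations recorded in \S\ref{ssec:78}, and then rearrange to isolate the $p$-adic and archimedean contributions. First I would apply Corollary \ref{cor:72} to the factorizable vectors $\vph^\dagger=\otimes_v W_v^\dagger$ and $\vph^\flat=\otimes_v W_v^\flat$ with $W_l^\dagger=\pi_l(\vsi)\bfU^{\ome_{\sig_l}^{-1}}_{\frkN'}W_{\pi_l}$ and $W_l^\flat=W_{\sig_l^\vee}$ for every finite $l\neq p$; this reduces the task to computing the product of local factors over $l\mid pNN'$.

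Next I would split the product into the contribution at $l=p$ (which will remain as $\scri(W_p^\dagger\otimes W_p^\flat)$ in the final formula) and the contributions at primes $l\mid NN'$. For each such finite $l$, I would distinguish two cases according to the ramification of $\sig_l$. When $\sig_l$ is unramified (so $c(\ome_{\sig_l})=0$), the operator $\bfU^{\ome_{\sig_l}^{-1}}_{\frkN'}$ collapses to the identity (the sum in its definition has a single term and $\tau_{N',l}'\in K_l'$), so $W_l^\dagger=\pi_l(\vsi)W_{\pi_l}$ and the unramified essential computation in \S\ref{ssec:78} gives $\scri(W_l^\dagger\otimes W_l^\flat)=\frkf_{\pi_l,\sig_l^\vee}$. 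When $\sig_l$ is ramified, i.e.\ $l\mid N'$, the hypothesis $c(\sig_l)\leq 2c(\ome_{\sig_l})$ allows us to invoke Proposition 6.8 of \cite{HY} (reproduced in \S\ref{ssec:78}) and obtain
\[
\scri(W_l^\dagger\otimes W_l^\flat)=\frac{\frkf_{\pi_l,\sig_l^\vee}}{[\GL_2(\ZZ_l):\calk_0^{(2)}(l^{2c(\ome_{\sig_l})}\ZZ_l)]^2}.
\]

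The key bookkeeping step is to recognize that the product of these local indices repackages into the stated global index: since $\frkM=\prod_{l\mid N'}\frkl^{c(\ome_{\sig_l})}$ is a split ideal, we have
\[
\prod_{l\mid N'}[\GL_2(\ZZ_l):\calk_0^{(2)}(l^{2c(\ome_{\sig_l})}\ZZ_l)]=[K':K_0'(\frkM^2)],
\]
because the factors at non-split primes and at primes away from $N'$ are trivial. Collecting these evaluations gives
\[
\prod_{l\mid NN'}\scri(W_l^\dagger\otimes W_l^\flat)=\frac{\prod_{l\mid NN'}\frkf_{\pi_l,\sig_l^\vee}}{[K':K_0'(\frkM^2)]^2}.
\]

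Finally I would insert this expression into the formula of Corollary \ref{cor:72}, divide both sides by $\Xi_{\vPh_{\pi\otimes\pi^\vee}^{\calk_\infty}}$, and transfer the factors $|\del|^{k_1'+k_2'-k_1-k_2-k_3}$ and $(\ome_{\pi_p}\ome_{\sig_p}^{-1})(-\del)$ from the right hand side to the left hand side; the constants $2^{2k_1'-2k_2'}(-1)^{k_1'+k_2'}\cdot 2\cdot D_E^{-5/2}$ combine to the stated $2^{1+2k_1'-2k_2'}(-1)^{k_1'+k_2'}D_E^{-5/2}$, producing precisely the identity of Corollary \ref{cor:73}. The only nontrivial step is the case-by-case verification in \S\ref{ssec:78} that the operator $\pi_l(\vsi)\bfU^{\ome_{\sig_l}^{-1}}_{\frkN'}$ reduces to the essential/JPSS test vectors for which the local zeta integrals at split primes can be computed in closed form; this has already been carried out there, so the present corollary is a direct specialization and rearrangement.
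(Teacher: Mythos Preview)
Your proposal is correct and matches the paper's approach: the paper presents Corollary~\ref{cor:73} immediately after the local computations in \S\ref{ssec:78} without an explicit proof, precisely because it is the direct substitution you describe---insert the two local evaluations of $\scri$ into Corollary~\ref{cor:72} and assemble the index factors into $[K':K_0'(\frkM^2)]^2$. One small point: your claim that $\bfU_{\frkN'}^{\ome_{\sig_l}^{-1}}$ ``collapses to the identity'' when $\sig_l$ is unramified deserves slightly more care, since by the paper's local definition $\bfU_{\frkN'}^{\ome_{\sig_l}^{-1}}W_{\pi_l}=\pi_l(\iota'(\tau'_{N'})^{-1}\vsi)W_{\pi_l}$ and the extra element need not fix an essential vector at ramified $\pi_l$; the paper's intent (visible from the global operator $\bfU^{\chi'}_{\frkN'}$ in \S\ref{ssec:63} and the separate treatment of the unramified case in the first paragraph of \S\ref{ssec:78}) is that at $l\nmid N'$ one simply takes $W_l^\dagger=\pi_l(\vsi)W_{\pi_l}$, which is what you use anyway.
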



\subsection{The local integral at $p$ and the modified $p$-factor}\label{ssec:79}

Assume that $\pi_p$ is the irreducible generic constituent of $I(\nu_p,\rho_p,\mu_p)$ and that $\sig_p$ is the irreducible generic constituent of $I(\mu'_p,\nu'_p)$, where $\nu_p^{}$, $\rho_p^{}$, $\mu_p^{}$, $\mu_p'$, $\nu_p'$ are characters of $\QQ_p^\times$ (see \S \ref{ssec:84} for notations). 
Put 
\begin{align*}
\alp_p&=\nu_p^{}(p), & 
\bet_p&=\rho_p^{}(p), & 
\gam_p&=\mu_p^{}(p), & 
\alp'_p&=\mu'_p(p), & 
\bet'_p&=\nu'_p(p). 
\end{align*}
Assume that $\pi$ is $\frkp^c$-ordinary and $\sig$ is $p$-ordinary with respect to $\iot_p$, that is,  
\begin{align*}
&\iot_p(p^{-k_1}\gam_p); &
&\iot_p\bigl(p^{-k_2'+\frac{1}{2}}\alp_p'\bigl), &
&\iot_p\bigl(p^{-k_1'-\frac{1}{2}}\bet_p'\bigl)
\end{align*} 
are $p$-units, where $\ord_p\gam_p\leq\min\{\ord_p\alp_p,\ord_p\bet_p\}$. 
Let $W^\ord_{\pi_p}\in\pi_p$ and $W^\ord_{\sig_p}\in\sig_p$ be the ordinary vectors defined in \S \ref{ssec:84}. 
Define $W^\flat_{\sig_p^\vee}\in\sig_p^\vee$ by $W^\flat_{\sig_p^\vee}=(W^\ord_{\sig_p})^\natural$. 

\begin{definition}\label{def:72}
Define the modified $p$-factor $\cale(\pi_p^{},\sig_p^\vee)$ by 
\begin{multline*}
\frac{1}{\cale(\pi_p^{},\sig_p^\vee)}=L\biggl(\frac{1}{2},\pi_p^{}\times\sig_p^\vee\biggl)\\
\times\gam\biggl(\frac{1}{2},\pi_p\otimes\mu_p^{\prime-1},\addchar_p^{}\biggl)\gam\biggl(\frac{1}{2},\pi^\vee_p\otimes\nu'_p,\addchar^{-1}_p\biggl)\gam\biggl(\frac{1}{2},\mu_p^{}\nu_p^{\prime-1},\addchar_p^{}\biggl)^2. 
\end{multline*}
\end{definition}

Define the $\frkp^c$-depletion $\bfU_{\frkp^c}^{\mu_p^{}\nu_p^{\prime-1}}W_{\pi_p}^\ord$ of $W_{\pi_p}^{\ord}$ by 
\[\bfU_{\frkp^c}^{\mu_p^{}\nu_p^{\prime-1}}W_{\pi_p}^\ord=\sum_{y\in (\ZZ_p/p^j\ZZ_p)^\times}\sum_{z\in\ZZ_p/p^j\ZZ_p}\frac{(\mu_p^{}\nu_p^{\prime-1})(y)}{p^j\gam_p^j}\pi_p\left(\begin{bmatrix} p^j & y & z \\ 0 & 1 & 0 \\ 0 & 0 & 1 \end{bmatrix}\right)W_{\pi_p}^{\ord}, \]
and the $\frkp$-depletion $W_p^\dagger=\bdTht_\frkp^{\nu_p^{}\mu_p^{\prime-1}}\bfU_{\frkp^c}^{\mu_p^{}\nu_p^{\prime-1}}W_{\pi_p}^\ord$ by 
\[W_p^\dagger=\sum_{x\in\ZZ_p/p^n\ZZ_p}\sum_{a\in (\ZZ_p/p^n\ZZ_p)^\times}\frac{\addchar_p\bigl(\frac{ax}{p^n}\bigl)}{p^n(\mu_p^{-1}\nu_p')(a)}\pi_p\left(\begin{bmatrix} 1 & 0 & 0 \\ 0 & 1 & \frac{x}{p^n}\\ 0 & 0 & 1 \end{bmatrix}\right)\bfU_{\frkp^c}^{\mu_p^{}\nu_p^{\prime-1}}W_{\pi_p}^\ord, \]
where $n\in\NN$ is an arbitrarily fixed large natural number (cf. Lemma \ref{lem:83}). 

Put $t_\ell=\imath_\frkp^{\prime-1}\biggl(\begin{bmatrix} 0 & 1 \\ p^\ell & 0 \end{bmatrix}\biggl)\in H(\QQ_p)$. 
If $\ell$ is sufficiently large, then Theorem \ref{thm:81} below gives 
\begin{align}
&\scri(W^\dagger_p,\sig_p^\vee(t_\ell)^{-1}W^\flat_{\sig_p^\vee}) \label{tag:710}\\
=&\frac{\gam^\GL\bigl(\frac{1}{2},\pi_p^{}\times\sig_p^\vee,\addchar_p^{}\bigl)}{L\bigl(\frac{1}{2},\pi_p^{}\times\sig_p^\vee\bigl)}
\left(\frac{\zet_p(2)}{p^{\ell/2}\zet_p(1)}\nu_p'(p)^\ell\frac{\gam\bigl(\frac{1}{2},\mu_p^{-1}\nu_p',\addchar^{-1}_p\bigl)}{\gam\bigl(\frac{1}{2},\pi_p^{}\otimes\mu_p^{\prime-1},\addchar_p^{}\bigl)}\right)^2 \notag\\
=&\frac{\zet_p(2)^2}{p^\ell\zet_p(1)^2}\nu_p'(p)^{2\ell}\cale(\pi_p^{},\sig_p^\vee)
=[K':K_0'(\frkp^\ell)]^{-2}p^\ell\nu_p'(p)^{2\ell}\cale(\pi_p^{},\sig_p^\vee) \notag
\end{align}
in view of the multiplicativity and functional equation of the gamma factor.


\subsection{An explicit central value formula}\label{ssec:710}

We say that an irreducible representation of $G(\QQ_q)$ (resp. $H(\QQ_q)$) is unramified if it admits a non-zero $K_q$ (resp. $K_q'$) invariant vector.
Let $\pi\simeq\otimes'_v\pi_v^{}$ be an irreducible tempered cuspidal automorphic representation of $G(\AA)$ and $\sig\simeq\otimes'_v\sig_v^{}$ that of $H(\AA)$ satisfying the following conditions: 
\begin{enumerate}
\item[($H_1$)] $\pi_q$ and $\sig_q$ are unramified for every non-split rational prime $q$; 
\item[($H_2$)] $\sig_\infty$ is a holomorphic discrete series with minimal $K$-type $-(k_1';k_2')$ and $\pi_\infty$ is a holomorphic discrete series or limit of holomorphic discrete series with minimal $K$-type $-(k_1^{},k_2^{};k_3^{})$ such that 
\begin{align*}
k_1^{}&\leq k_1'\leq k_2^{}, & k_3^{}&\leq k_2'; 
\end{align*}
\item[($H_3$)] $c(\sig_l)\leq 2c(\ome_{\sig_l})$ for every split rational prime $l\neq p$;  
\item[($H_4$)] $\pi_p$ is a generic constituent of a principal series $I(\nu_p,\rho_p,\mu_p)$; \\
$\sig_p$ is a generic constituent of a principal series $I(\mu_p',\nu_p')$.  
\end{enumerate}

Recall that $N=\prod_{l\neq p}l^{c(\pi_l)}$ and $N'=\prod_{l\neq p}l^{c(\sig_l)}$, where $c(\pi_l)=c(\sig_l)=0$ if $l$ does not split in $E$. 
Put $M=\prod_{l|N'}l^{c(\ome_{\sig_l})}$. 
Take an ideal $\frkM$ of $\frko_E$ such that $\frko_E/\frkM\simeq\ZZ/M\ZZ$. 
We define $\vph^\dagger=\otimes_vW_v^\dagger\in\pi$ in the following way: 
Set 
\begin{align*}
W_\infty^\dagger&=W_\infty^{\vep_\sig}, & 
W_p^\dagger&=\bdTht_\frkp^{\nu_p^{}\mu_p^{\prime-1}}\bfU_{\frkp^c}^{\mu_p^{}\nu_p^{\prime-1}}W_{\pi_p}^\ord. 
\end{align*}
Let $W_q^\dagger=W_{\pi_q}^{}$ be a spherical or an essential vector unless $pNN'$ is divisible by $q$. 
For each prime factor $l$ of $NN'$ we set $W_l^\dagger=\pi_l^{}(\iot(\xi'_l)\vsi)\vTh^{\ome_{\sig_l}^{-1}}W_{\pi_l}^{}$.

Let $\vph_\sig^\ord=\otimes_v^{}W_v^\ord\in\sig$ be such that $W_v^\ord=W_{\sig_v}$ for $v\neq p$ and $W_p^\ord=W_{\sig_p}^\ord\in\sig_p^{}$ is the normalized $p$-stabilized new vector with respect to $\bet'=\nu_p'(p)$ (see Remark \ref{rem:83}). 
Let $\vph_{\sig^\vee}^\flat=\otimes_v^{}W_v^\flat\in\sig$ be such that $W_v^\flat=W_{\sig_v^\vee}^{}$ for $v\neq p$ and $W_p^\flat=W_{\sig_p^\vee}^\flat\in\sig_p^\vee$.  

\begin{definition}[The $p$-modified period on $\U(1,1)$]\label{def:73}
Put
\[\Ome(\sig)=2^{\vka_\sig}(\sqrt{-1})^{k_1'-k_2'}[K':K_0'(\frkN_\sig)]L(1,\sig,\Ad)\cale(\sig_p,\Ad,\addchar_p)\prod_{l|N_\sig}\calb_{\sig_l}, \]
where the modified adjoint $p$-factor $\cale(\sig_p,\Ad,\addchar_p)$ is as in Definition \ref{def:82}. 
\end{definition}

\begin{remark}\label{rem:72}
We renormalize the Petersson norm by setting 
\[\|\vph_\sig\|^2_{K'_0(\frkN_\sig^{})}=[K':K_0'(\frkN_\sig)]\|\vph_\sig\|^2_{K'}. \]
Corollary \ref{cor:c1} gives 
\[\Ome(\sig)=D_E^{-1}2^2(-2\sqrt{-1})^{k_2'-k_1'}\|\vph_\sig\|^2_{K'_0(\frkN_\sig^{})}\cale(\sig_p,\Ad,\addchar_p)\]
(cf. Definition 3.12 of \cite{MH}). 
\end{remark}

\begin{lemma}\label{lem:74}
If $\sig_p$ is $p$-ordinary, then 
\[\frac{\Pet_{K'}(\vph_\sig^\ord\otimes\sig^\vee(t_\ell^{-1})\vph_{\sig^\vee}^\flat)}{\mu_p'(-1)p^{\ell/2}\nu_p'(p)^\ell}
=\frac{\|\vph_\sig\|^2_{K'_0(\frkN_\sig^{})}\cale(\sig_p,\Ad,\addchar_p)}{[K':K_0'(\frkp^\ell\frkN')]}. \]
\end{lemma}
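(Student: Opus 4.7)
The strategy is to unfold the global Petersson pairing into an Euler product using the factorizations of $\vph_\sig^\ord$ and $\vph_{\sig^\vee}^\flat$ fixed in \S\ref{ssec:75}, then isolate the $p$-component where all the nontrivial work takes place.

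First, since $\vph_\sig^\ord$ and $\vph_{\sig^\vee}^\flat$ agree with $\vph_\sig$ and $\vph_{\sig^\vee}$ outside $p$, the standard Rankin–Selberg / Waldspurger-type factorization of Petersson norms (together with multiplicity one for $\U(1,1)$, see Remark \ref{rem:41}) gives a decomposition
\[
\Pet_{K'}(\vph_\sig^\ord\otimes\sig^\vee(t_\ell^{-1})\vph_{\sig^\vee}^\flat)
=\|\vph_\sig\|^2_{K'_0(\frkN_\sig^{})}\cdot\frac{1}{[K':K_0'(\frkp^\ell\frkN')]}\cdot\calb_p^\ord(\ell),
\]
where $\calb_p^\ord(\ell)$ is the ratio of the local $p$-pairing of $W^\ord_{\sig_p}\otimes\sig_p^\vee(t_\ell^{-1})W^\flat_{\sig_p^\vee}$ to the corresponding local pairing of the normalized new vectors; the index $[K':K_0'(\frkp^\ell\frkN')]$ tracks the change of measure from the new-form level to the deeper level where $\sig^\vee(t_\ell^{-1})\vph_{\sig^\vee}^\flat$ lives. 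Passing between $\|\cdot\|_{K'}^2$ and $\|\cdot\|_{K'_0(\frkN_\sig^{})}^2$ is precisely the content of Remark \ref{rem:72}, which rewrites $L(1,\sig,\Ad)$ in terms of $\|\vph_\sig\|^2_{K'_0(\frkN_\sig^{})}\cdot\cale(\sig_p,\Ad,\addchar_p)$ up to the archimedean factor $(-2\sqrt{-1})^{k_2'-k_1'}D_E^{-1}2^2$. Thus the lemma reduces to the identity
\[
\calb_p^\ord(\ell)=\mu_p'(-1)\,p^{\ell/2}\,\nu_p'(p)^\ell\cdot\cale(\sig_p,\Ad,\addchar_p),
\]
once all constants have been tallied.

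The key step is therefore the local computation at $p$. Realize $\sig_p\hookrightarrow I(\mu_p',\nu_p')$ and $\sig_p^\vee\hookrightarrow I(\mu_p^{\prime-1},\nu_p^{\prime-1})$ via $\imath_\frkp'$, and use the explicit description of $W^\ord_{\sig_p}$ from \S\ref{ssec:84} together with $W^\flat_{\sig_p^\vee}=(W^\ord_{\sig_p})^\natural$. I would first compute $\sig_p^\vee(t_\ell^{-1})W^\flat_{\sig_p^\vee}$: since $t_\ell=\imath_\frkp^{\prime-1}\Bigl(\begin{smallmatrix} 0 & 1 \\ p^\ell & 0\end{smallmatrix}\Bigr)$, writing $\begin{bmatrix} 0 & 1 \\ p^\ell & 0\end{bmatrix}=\begin{bmatrix} p^\ell & 0 \\ 0 & 1\end{bmatrix}w_2\begin{bmatrix} 1 & 0 \\ 0 & 1\end{bmatrix}$ exposes the action of the central twist times $w_2$ and converts the expression for $(W^\ord_{\sig_p})^\natural$ into a shifted new vector whose support is controlled by the principal-series model. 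The factor $p^{\ell/2}$ is the modulus-character contribution from $\diag(p^\ell,1)$, while $\nu_p'(p)^\ell$ comes from the $\mu_p'\otimes\nu_p'$-eigenvalue of the vector once we push the shift through the pairing; the sign $\mu_p'(-1)$ emerges from rewriting $w_2$ in the principal series realization (it is the intertwining sign appearing between the new vector and its $\natural$-image, as in the computation of Lemma \ref{lem:72} and the definition (\ref{tag:72})).

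Having done this, the remaining local ratio is the ``adjoint pairing'' of the normalized new vector $W_{\sig_p}$ against itself, which is exactly what $\cale(\sig_p,\Ad,\addchar_p)$ in Definition \ref{def:82} was designed to capture: the local ratio equals $\cale(\sig_p,\Ad,\addchar_p)$ by the analogue of the local IINH pairing formula at ordinary primes (this is the unitary $\GL(2)$-analogue of the computation carried out in Definition 3.12 and Corollary C.1 of \cite{MH}). The main obstacle is the careful bookkeeping of signs and powers of $p$ in this local computation: one must keep track of (i) the normalization of $W^\ord_{\sig_p}$ as a $\calu_p'$-eigenvector with unit eigenvalue, (ii) the effect of the involution $\natural$ on the ordinary filtration (which swaps $\mu_p'$ and $\nu_p'$ and introduces the $\mu_p'(-1)$ factor via the long Weyl element), and (iii) the measure normalization that produces $[K':K_0'(\frkp^\ell\frkN')]$. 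Once these are assembled, combining with the global factorization above and Remark \ref{rem:72} yields the stated equality.
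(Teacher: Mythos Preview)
Your approach is essentially the same as the paper's: factorize the global pairing so that only the $p$-component differs from $\|\vph_\sig\|^2_{K'}$, then evaluate that local ratio. The paper does this in two lines by writing
\[
\frac{\Pet_{K'}(\vph_\sig^\ord\otimes\sig^\vee(t_\ell^{-1})\vph_{\sig^\vee}^\flat)}{\Pet_{K'}(\vph_\sig\otimes\vph_{\sig^\vee})}
=\frac{\calb_{\sig_p^\ord}^{[\ell]}}{\calb_{\sig_p}}
\]
and then invoking Lemma~\ref{lem:82} (which is Corollary~6.6 of \cite{HY}) to get the right-hand side directly. You instead sketch redoing that local $\GL_2$ computation by hand---decomposing $t_\ell$, tracking the modulus character, extracting $\mu_p'(-1)$ from the Weyl element, etc.---which is unnecessary since the paper already packages all of this in Lemma~\ref{lem:82}. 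Your invocation of Remark~\ref{rem:72} is also slightly off: you bring in the relation between $\|\vph_\sig\|^2$ and $L(1,\sig,\Ad)\cale(\sig_p,\Ad,\addchar_p)$, but the lemma only needs the index renormalization $\|\vph_\sig\|^2_{K'_0(\frkN_\sig)}=[K':K_0'(\frkN_\sig)]\|\vph_\sig\|^2_{K'}$, combined with $[K':K_0'(\frkN_\sig)]\cdot[K_0'(\frkp^{c(\sig_p)}):K_0'(\frkp^\ell)]=[K':K_0'(\frkp^\ell\frkN')]$. So your plan is correct but more laborious than needed.
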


\begin{proof}
Since $\vph_\sig^\ord$ is obtained by replacing $W_{\sig_p}^{}$ with $W_{\sig_p}^\ord$ in the definition of $\vph_\sig$ and since $\vph_{\sig^\vee}^\flat$ is obtained by replacing $W_{\sig_p^\vee}^{}$ with $W_{\sig_p^\vee}^\flat$ in the definition of $\vph_{\sig^\vee}$, we have
\[\frac{\Pet_{K'}(\vph_\sig^\ord\otimes\sig^\vee(t_\ell^{-1})\vph_{\sig^\vee}^\flat)}{\Pet_{K'}(\vph_\sig\otimes\vph_{\sig^\vee})}
=\frac{\calb_{\sig_p^\ord}^{[\ell]}}{\calb_{\sig_p}}
=\frac{\mu_p'(-1)p^{\ell/2}\nu_p'(p)^\ell}{[K_0'(\frkp^{c(\sig_p)}):K_0'(\frkp^\ell)]}\cale(\sig_p,\Ad,\addchar_p)\] 
by Lemma \ref{lem:82} if $\ell$ is sufficiently large. 
\end{proof}

We will use the following formula to prove Theorem \ref{thm:11}. 

\begin{proposition}\label{prop:71}
Notations and assumptions being as in Corollary \ref{cor:73}, we set $W_p^\dagger=\bdTht_\frkp^{\nu_p^{}\mu_p^{\prime-1}}\bfU_{\frkp^c}^{\mu_p^{}\nu_p^{\prime-1}}W_{\pi_p}^\ord$. 
If $\ell$ is sufficiently large, then 
\begin{align*}
&|\del|^{k_1'+k_2'-k_1^{}-k_2^{}-k_3^{}}(\ome_{\pi_p}^{}\ome_{\sig_p}^{-1})(-\del)\frac{\scrp_{K'}(\del^{\ulk,\ulk'}\vph^\dagger\otimes\sig^\vee(t_\ell^{-1})\vph_{\sig^\vee}^\flat)^2}{\Pet_{K'}(\vph_\sig^\ord\otimes\sig^\vee(t_\ell^{-1})\vph_{\sig^\vee}^\flat)^2}\\
=&\frac{2^5}{D_E^{9/2}[K'_0(\frkN'):K_0'(\frkM^2)]^2}\lam_K\frac{L\bigl(\frac{1}{2},\pi\times\sig^\vee\bigl)}{\Xi_{\vPh_{\pi\otimes\pi^\vee}^{\calk_\infty}}\Ome(\sig)^2}\cale(\pi_p^{},\sig_p^\vee)\prod_{l|NN'}\frac{\frkf_{\pi_l^{},\sig_l^\vee}}{(\ome_{\pi_l}^{}\ome_{\sig_l}^{-1})(-\del)}.
\end{align*}
\end{proposition}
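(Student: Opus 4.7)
The plan is to assemble Proposition \ref{prop:71} from three ingredients already in hand: Corollary \ref{cor:73}, which handles all primes away from $p$ and repackages the central value formula with the specified test vectors at $l \mid NN'$; the $p$-adic local identity \eqref{tag:710}, which evaluates the split-prime integral $\scri(W_p^\dagger\otimes \sig_p^\vee(t_\ell)^{-1}W_{\sig_p^\vee}^\flat)$; and Lemma \ref{lem:74}, which evaluates the twisted Petersson norm $\Pet_{K'}(\vph_\sig^\ord \otimes \sig^\vee(t_\ell^{-1})\vph_{\sig^\vee}^\flat)$ in terms of $\|\vph_\sig\|^2_{K'_0(\frkN_\sig)}$ and $\cale(\sig_p,\Ad,\addchar_p)$. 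The proposition is then obtained by dividing the first identity by the square of the third and substituting the second.

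First I would apply Corollary \ref{cor:73} with $\vph^\flat$ replaced by $\sig^\vee(t_\ell^{-1})\vph_{\sig^\vee}^\flat$; this is legitimate because $t_\ell \in H(\QQ_p)$, so the local components at $l \neq p$ are unchanged and the hypothesis $W_l^\flat=W_{\sig_l^\vee}$ for $l\neq p$ remains satisfied. This presents $|\del|^{k_1'+k_2'-k_1-k_2-k_3}(\ome_{\pi_p}\ome_{\sig_p}^{-1})(-\del)\,\scrp_{K'}(\del^{\ulk,\ulk'}\vph^\dagger\otimes \sig^\vee(t_\ell^{-1})\vph_{\sig^\vee}^\flat)^2$ as an explicit multiple of $L(\tfrac{1}{2},\pi\times\sig^\vee)\,\scri(W_p^\dagger\otimes\sig_p^\vee(t_\ell)^{-1}W_{\sig_p^\vee}^\flat)\prod_{l|NN'}\frkf_{\pi_l,\sig_l^\vee}/(\ome_{\pi_l}\ome_{\sig_l}^{-1})(-\del)$. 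Next I would plug in \eqref{tag:710} for the $p$-local integral, introducing the factor $[K':K_0'(\frkp^\ell)]^{-2}p^{\ell}\nu_p'(p)^{2\ell}\cale(\pi_p,\sig_p^\vee)$. Finally, divide by $\Pet_{K'}(\vph_\sig^\ord\otimes\sig^\vee(t_\ell^{-1})\vph_{\sig^\vee}^\flat)^2$ and insert Lemma \ref{lem:74}: the factor $p^{\ell}\nu_p'(p)^{2\ell}$ cancels exactly against the one produced at $p$, while the index ratio simplifies as $[K':K_0'(\frkp^\ell\frkN')]^2/[K':K_0'(\frkp^\ell)]^2=[K':K_0'(\frkN')]^2$ because $\frkp$ and $\frkN'$ are coprime. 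The Petersson norm squared $\|\vph_\sig\|^4_{K'_0(\frkN_\sig)}\,\cale(\sig_p,\Ad,\addchar_p)^2$ is then rewritten via Remark \ref{rem:72} as an explicit scalar times $\Ome(\sig)^2$, so that $\Ome(\sig)^2$ appears in the denominator as required.

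The main obstacle is bookkeeping. I expect no new idea is needed, but several competing factors must be reconciled: powers of $2$ and $D_E$ coming from Corollary \ref{cor:73} versus from the formula for $\Ome(\sig)$ in Remark \ref{rem:72}; the sign $(-1)^{k_1'+k_2'}$ from Corollary \ref{cor:73} combined with $(\sqrt{-1})^{k_1'-k_2'}$ in the definition of $\Ome(\sig)$, which must collapse to what appears in Proposition \ref{prop:71}; the square of $\mu_p'(-1)$ from Lemma \ref{lem:74}, which equals $1$ and disappears; and the three competing level indices $[K':K_0'(\frkM^2)]$, $[K':K_0'(\frkN')]$, and $[K_0'(\frkN'):K_0'(\frkM^2)]$, which must be arranged so that only $[K_0'(\frkN'):K_0'(\frkM^2)]$ survives on the right. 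A careful audit of these prefactors—together with the verification that the differential operator $\del^{\ulk,\ulk'}$ acts as $\caly_+^{n_1^*}$ on the highest weight vector $\vph=\vPh_{\ulk}(f)_{\bfv_{k_1'-k_1}}$ used in Corollary \ref{cor:72}—completes the proof.
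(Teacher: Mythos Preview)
Your proposal is correct and follows essentially the same route as the paper: combine Corollary \ref{cor:73} (applied with $\vph^\flat$ translated by $t_\ell^{-1}$ at $p$) with \eqref{tag:710}, divide by the square of Lemma \ref{lem:74}, and collapse the Petersson norm and adjoint Euler factor into $\Ome(\sig)^2$ via Remark \ref{rem:72}. The only point to watch in your index bookkeeping is that the factor $[K':K_0'(\frkM^2)]^{-2}$ from Corollary \ref{cor:73} combines with $[K':K_0'(\frkp^\ell)]^{-2}$ from \eqref{tag:710} to give $[K':K_0'(\frkp^\ell\frkM^2)]^{-2}$, and then the ratio with $[K':K_0'(\frkp^\ell\frkN')]^2$ from Lemma \ref{lem:74} produces $[K_0'(\frkN'):K_0'(\frkM^2)]^{-2}$, not just $[K':K_0'(\frkN')]^2$ as you wrote.
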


\begin{proof}
Corollary \ref{cor:73} and (\ref{tag:710}) give 
\begin{align*}
&|\del|^{k_1'+k_2'-k_1^{}-k_2^{}-k_3^{}}(\ome_{\pi_p}^{}\ome_{\sig_p}^{-1})(-\del)\scrp_{K'}(\caly_+^{n_1^*}\vph^\dagger\otimes\sig^\vee(t_\ell^{-1})\vph_{\sig^\vee}^\flat)^2\\
=&\frac{2^{1+2k_1'-2k_2'}(-1)^{k_1'+k_2'}}{D_E^{5/2}[K':K_0'(\frkp^\ell\frkM^2)]^2}\lam_K\frac{L\bigl(\frac{1}{2},\pi\times\sig^\vee\bigl)}{\Xi_{\vPh_{\pi\otimes\pi^\vee}^{\calk_\infty}}}p^\ell\nu_p'(p)^{2\ell}\cale(\pi_p^{},\sig_p^\vee)\prod_{l|NN'}\frac{\frkf_{\pi_l^{},\sig_l^\vee}}{(\ome_{\pi_l}^{}\ome_{\sig_l}^{-1})(-\del)}. 
\end{align*}
By multiplying $\Pet_{K'}\bigl(\vph_\sig^\ord\otimes\sig^\vee(t_\ell^{-1})\vph_{\sig^\vee}^\flat\bigl)^{-2}
=\frac{[K':K_0'(\frkp^\ell\frkN')]^2p^{-\ell}\nu_p'(p)^{-2\ell}}{\|\vph_\sig\|^4_{K'_0(\frkN_\sig^{})}\cale(\sig_p,\Ad,\addchar_p)^2}$ we see that the left hand side of the stated identity is equal to 
\begin{align*}
&\frac{2^{1+2k_1'-2k_2'}(-1)^{k_1'+k_2'}}{D_E^{5/2}[K'_0(\frkN'):K_0'(\frkM^2)]^2}\lam_K\frac{L\bigl(\frac{1}{2},\pi\times\sig^\vee\bigl)}{\Xi_{\vPh_{\pi\otimes\pi^\vee}^{\calk_\infty}}\|\vph_\sig\|^4_{K'_0(\frkN_\sig^{})}\cale(\sig_p,\Ad,\addchar_p)^2}\\
&\times\cale(\pi_p^{},\sig_p^\vee)\prod_{l|NN'}\frac{\frkf_{\pi_l^{},\sig_l^\vee}}{(\ome_{\pi_l}^{}\ome_{\sig_l}^{-1})(-\del)}\\
=&\frac{2^5}{D_E^{9/2}[K'_0(\frkN'):K_0'(\frkM^2)]^2}\lam_K\frac{L\bigl(\frac{1}{2},\pi\times\sig^\vee\bigl)}{\Xi_{\vPh_{\pi\otimes\pi^\vee}^{\calk_\infty}}\Ome(\sig)^2}\cale(\pi_p^{},\sig_p^\vee)\prod_{l|NN'}\frac{\frkf_{\pi_l^{},\sig_l^\vee}}{(\ome_{\pi_l}^{}\ome_{\sig_l}^{-1})(-\del)} 
\end{align*}
by Remark \ref{rem:72}.
\end{proof}


\subsection{The Petersson norm on $\U(2,1)$}\label{ssec:711}

We shall translate the periods in Definition \ref{def:71} into more classical terminology. 
We retain the notations of \ref{ssec:71} and Remark \ref{rem:33}. 
Given $f\in S_{\ulk}^\calg(\frkN,\chi,\CC)$, we set $f'=\rho_{\ulk}(\sqrt{T'},1)f$. 
Put $\kap=k_2-k_1$. 
We write  
\[f'(Z,\bet)=\sum_{i=0}^\kap f'_i(Z,\bet)(-1)^i\binom{\kap}{i}X_1^iY_1^{\kap-i}, \]
where $(\rho_{\ulk},\call_{\ulk}(\CC))$ is realized on the space of homogeneous polynomials of degree $\kap$ in variables $X_1,Y_1$. 
We define $^\natural:\call_{\ulk}^{}(\CC)\stackrel{\sim}{\to}\call_{\ulk}^\vee(\CC)$ by \index{$P^\natural$}
\begin{align*}
P^\natural(X_2',Y_2')&=\rho_\kap(J)P(X_2',Y_2')=P(-Y_2',X_2'), & 
J&=\begin{bmatrix} 0 & 1 \\ -1 & 0 \end{bmatrix}, 
\end{align*}
where $(\rho_{\ulk}^\vee,\call_{\ulk}^\vee(\CC))$ is realized on the space of homogeneous polynomials of degree $\kap$ in variables $X_2',Y_2'$. 
Recall the involution $\tau_\frkN f=(r(\tau_\frkN)f)^\natural:\frkD\times G(\widehat{\QQ})\to\call_{\ulk}^\vee(\CC)$ defined in \S \ref{ssec:71}.
Observe that 
\[\tau_\frkN f'(Z,\bet)=\sum_{i=0}^\kap f_i'(-Z^c,(\bet\tau_\frkN)^\natural)\binom{\kap}{i}Y_2^{\prime i}X_2^{\prime\kap-i}. \]
Put $\tau_\frkN f=\rho_{\ulk}^\vee(\sqrt{T'},1)^{-1}\tau_\frkN f'$. 

We write $Z_G$ for the center of $G$. 
We define a perfect pairing
\[(\;,\;)_\frkN:S_{\ulk}^G(\frkN,\chi,\CC)\times S_{\ulk}^G(\frkN,\chi,\CC)\to\CC\]
by 
\begin{align*}
(h,f)_\frkN
&=\sum_{[\bet]\in G(\QQ)\bsl G(\widehat{\QQ})/ K_0(\frkN)}\int_{\Gam_{\frkN,\bet}\bsl\frkD}\frac{\ell_{\ulk}(\tau_\frkN h(Z,\bet)\otimes f(Z,\bet))}{\sharp(Z_G(\QQ)\cap\Gam_{\frkN,\bet})}\bfd Z\\
&=\sum_{[\bet]}\sum_{i=0}^\kap\binom{\kap}{i}\int_{\Gam_{\frkN,\bet}\bsl\frkD}\frac{h_i'(-Z^c,(\bet\tau_\frkN)^\natural)f_i'(Z,\bet)}{\sharp(Z_G(\QQ)\cap\Gam_{\frkN,\bet})}\bfd Z, 
\end{align*}
where $[\bet]$ means the double coset $G(\QQ)\bet K_0(\frkN)$ and 
\[\Gam_{\frkN,\bet}=G(\QQ)\cap \bet K_0(\frkN)\bet^{-1}. \]

Let $f^\circ\in S_{\ulk}^G(\frkN_\pi^{},\chi,\overline{\QQ})$ be a newform associated to $\pi$ in the sense of Definition \ref{def:57}. 
We identify $\call_{\ulk}^\vee(\CC)$ with the minimal $K$-type of $\pi_\infty$. 
Take a factorization $\pi\simeq\otimes'_v\pi_v^{}$ so that $\vPh_{\ulk}(f^\circ)_{\bfv_i}=\bfv_i\otimes_l W_{\pi_l}$. 
Then Lemma \ref{lem:71} gives 
\[\vPh_{\ulk^\vee}(f^{\circ\natural})_{\bfv_i}=\bfv_{\kap-i}\otimes_l W_{\pi_l}^\natural. \]
Moreover, we have 
\[\vPh_{\ulk^\vee}(\tau_{\frkN_\pi}f^\circ)_{\bfv_i}=\bfv_{\kap-i}\otimes_lW_{\pi_l^\vee}\prod_{l|N_\pi}\vep^\GL\biggl(\frac{1}{2},\pi_l,\addchar_l\biggl)^2\]
by (\ref{tag:72}) and Proposition \ref{prop:80}. 

Let 
\[\ell_{\ulk}\otimes\ell_{\ulk}:(\call_{\ulk}^{}(\CC)\otimes\call_{\ulk}^\vee(\CC))^\vee\otimes\call_{\ulk}^{}(\CC)\otimes\call_{\ulk}^\vee(\CC)\to\CC\]
be the perfect invariant pairing, where polynomials of $X_i^{},Y_i^{}$ are paired with those of $X_i',Y_i'$ for $i=1,2$. 
The linear map
\[\bfv_2^{}\otimes\bfv_1'\mapsto(\ell_{\ulk}\otimes\ell_{\ulk})(\bfv_2^{}\otimes\bfv_1'\otimes(\vPh_{\ulk}^{}(f^\circ)\otimes\vPh_{\ulk^\vee}(\tau_{\frkN_\pi}f^\circ)))\]
gives an embedding of $(\call_{\ulk}^{}(\CC)\otimes\call_{\ulk}^\vee(\CC))^\vee$ into $\pi\otimes\pi^\vee\subset\scra^0(G\times G)$. 
Now we construct the $\calk_\infty$-invariant cusp form $\vPh_{\pi\otimes\pi^\vee}^{\calk_\infty}\in\pi\otimes\pi^\vee$ in \S \ref{ssec:76}, using the polynomial
\[\bfP_{\ulk}=(X_2^{}\otimes Y_1'-Y_2^{}\otimes X_1')^\kap\in(\call_{\ulk}^{}(\CC)\otimes\call_{\ulk}^\vee(\CC))^\vee, \]
which is fixed by the diagonal action of $\GL_2(\CC)$. 
Define 
\[\vPh_{\pi\otimes\pi^\vee}^{\calk_\infty}=(\ell_{\ulk}\otimes\ell_{\ulk})(\bfP_{\ulk}^{}\otimes(\vPh_{\ulk}^{}(f^\circ)\otimes\vPh_{\ulk^\vee}(\tau_{\frkN_\pi}f^\circ)))\prod_{l|N_\pi}\vep^\GL\biggl(\frac{1}{2},\pi_l,\addchar_l\biggl)^{-2}.  \index{$\vPh_{\pi\otimes\pi^\vee}^{\calk_\infty}$}\]
Note that 
\[(\ell_{\ulk}\otimes\ell_{\ulk})(\bfP_{\ulk}^{}\otimes(\vPh_{\ulk}^{}(f^{\circ\prime})\otimes\vPh_{\ulk^\vee}(\tau_{\frkN_\pi}f^{\circ\prime})))=(\ell_{\ulk}\otimes\ell_{\ulk})(\bfP_{\ulk}^{}\otimes(\vPh_{\ulk}^{}(f^\circ)\otimes\vPh_{\ulk^\vee}(\tau_{\frkN_\pi}f^\circ)))\]
by the invariance of $\bfP_{\ulk}$. 
The Schur orthogonality relation gives 
\beq
\Pet_K\bigl(\vPh_{\pi\otimes\pi^\vee}^{\calk_\infty}\bigl)=\frac{(f^\circ,f^\circ)_{\frkN_\pi}}{[K:K_0(\frkN_\pi)]}\prod_{l|N_\pi}\vep^\GL\biggl(\frac{1}{2},\pi_l,\addchar_l\biggl)^{-2} \label{tag:711}
\eeq
by (\ref{tag:34}). 


\subsection{Proof of Theorem \ref{thm:11}}\label{ssec:713}

Let 
\begin{align*}
\bdsf&\in\bfS^\calg_{\ord}(\frkN,\chi,\calr), & 
\bdsg&\in\bfS^\calh_{\ord}(\frkN',\chi',\calr')
\end{align*} 
be Hida families. 
For $\ulQ\in\frkX_{\bdsf}'$ we write $\bdpi_{\ulQ}$ for the cuspidal automorphic representation of $G(\AA)$ associated with $\bdsf_{\ulQ}$. 
For $\ulQ'\in\frkX_{\calr'}^\cls$ we obtain the cuspidal automorphic representation $\bdsig_{\ulQ'}$ of $H(\AA)$ (see \S \ref{ssec:54}). 

\begin{definition}[modified Euler factors]
Choose $\mu_p^{}$, $\mu_p'$, $\nu_p'$ for $\bdpi_{\ulQ,p}$ and $\bdsig_{\ulQ',p}$ as in \S \ref{ssec:79}. 
For $\calq=(\ulQ,\ulQ')\in\frkX_\bfV^\mathrm{crit}$ we define the modified $p$-adic and archimedean factors by  
\begin{align*}
\cale_p(\mathrm{Fil}^+\bfV_\calq)&=\cale(\bdpi_{\ulQ,p}^{},\bdsig_{\ulQ',p}^\vee), \\
\cale_\infty(\bfV_\calq)&=(-\sqrt{-1})^{k_{Q_1'}+k_{Q_2'}-k_{Q_1}^{}-k_{Q_2}^{}-k_{Q_3}^{}}. 
\end{align*}
\end{definition}

\begin{definition}[A period on $\U(2,1)$]
Let $\bdsf_{\ulQ}^\circ$ be the vector-valued newform of weight $k_{\ulQ}$ associated to $\bdsf_{\ulQ}$. 
Following (\ref{tag:711}), for $\ulQ\in\frkX_{\bdsf}''$ we put 
\[\Xi_{\bdsf_{\ulQ}}=\frac{\Pet(\bdpi_{\ulQ})[K:K_0(\frkN_{\bdpi_{\ulQ}})]}{(-2)^{k_{Q_3}-k_{Q_1}-k_{Q_2}}(\bdsf_{\ulQ}^\circ,\bdsf_{\ulQ}^\circ)_{\frkN_{\bdpi_{\ulQ}}}}\prod_{l|N_{\bdpi_{\ulQ}}}\vep^\GL\biggl(\frac{1}{2},\bdpi_{\ulQ,l},\addchar_l\biggl)^2. \]
\end{definition}

\begin{proposition}\label{prop:73}
Notations and assumptions being as in Theorem \ref{thm:11}, we have  
\begin{align*}
\frac{\calq(\Tht_{\bdsf,\bdsg}^2)}{\Ome_\frkp^{2m_\calq}}
=&\biggl(\frac{2\pi\sqrt{-1}}{\Ome_\infty}\biggl)^{2m_\calq}\frac{L\bigl(\frac{1}{2},\bdpi_{\ulQ}^{}\times\bdsig_{\ulQ'}^\vee\bigl)}{\Xi_{\bdsf_{\ulQ}^{}}^{}\Ome(\bdsig_{\ulQ'})^2}\cale_\infty(\bfV_\calq)\cale_p(\mathrm{Fil}^+\bfV_\calq)\\
&\times\frac{2^5}{D_E^{9/2}}\lam_K\frac{\prod_{l|NN'}(\ome_{\bdpi_{\ulQ,l}}^{-1}\ome_{\bdsig_{\ulQ',l}}^{})(-\del)\frkf_{\bdpi_{\ulQ,l}^{},\bdsig_{\ulQ',l}^\vee}}{\calq([-\del\ono_3]_{\bfI_3}^{-1}[-\del\ono_2]_{\bfI_2}^{})[K'_0(\frkN'):K'_0(\frkM^2)]^2}
\end{align*}
for $\calq=(\ulQ,\ulQ')\in\frkY^{\rm crit}_\bfV\cap(\frkX_{\bdsf}''\times\frkX_{\bfI_2}^\cls)$. 
\end{proposition}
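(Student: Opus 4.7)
The plan is to combine Proposition 6.7 (which converts $\calq(\Tht_{\bdsf,\bdsg})$ into a period integral divided by a Petersson pairing) with Proposition 7.11 (which computes the squared ratio of the same integrals in terms of the central $L$-value and local data), and then translate the resulting bad-factor expression into the clean form of Proposition 7.13.

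I would start by squaring the identity of Proposition 6.7 and dividing by $\Ome_\frkp^{2m_\calq}$, obtaining
\[
\frac{\calq(\Tht_{\bdsf,\bdsg})^2}{\Ome_\frkp^{2m_\calq}}
=\biggl(\frac{2\pi\sqrt{-1}}{\Ome_\infty}\biggl)^{2m_\calq}\frac{\scrp_{K'}(\cdots)^2}{\Pet_{K'}(\cdots)^2}.
\]
The parameter $m_\calq$ of Proposition 6.7 agrees with that of Proposition 7.13 on $\frkY^{\rm crit}_\bfV$, since the defining constraint $k_{Q_1}=k_{Q_1'}$ kills the discrepancy. Next I would apply Proposition 7.11 with $\pi=\bdpi_{\ulQ}$ and $\sig=\bdsig_{\ulQ'}$ to solve for the ratio $\scrp_{K'}^2/\Pet_{K'}^2$: its hypotheses $(H_1)$--$(H_4)$ follow from (splt), the definition of $\frkX_\bfV^{\rm crit}$, the assumption imposed in Theorem 1.1, and the ordinarity at $p$ of the two Hida families. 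After this substitution the central $L$-value, the modified Euler factor $\cale(\bdpi_{\ulQ,p},\bdsig_{\ulQ',p}^\vee)=\cale_p(\mathrm{Fil}^+\bfV_\calq)$, the periods $\Xi_{\vPh_{\pi\otimes\pi^\vee}^{\calk_\infty}}$ and $\Ome(\bdsig_{\ulQ'})^2$, and the bad-prime local factors appear; the residual factor $|\del|^{m_\calq}(\ome_{\bdpi_{\ulQ,p}}\ome_{\bdsig_{\ulQ',p}}^{-1})(-\del)$ surviving on the left-hand side of Proposition 7.11 is then carried to the denominator.

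I would then identify $\Xi_{\vPh_{\pi\otimes\pi^\vee}^{\calk_\infty}}$ with $\Xi_{\bdsf_{\ulQ}}$. Comparing the two definitions and using identity (7.11), which expresses $\Pet_K(\vPh_{\pi\otimes\pi^\vee}^{\calk_\infty})$ in terms of $(f^\circ,f^\circ)_{\frkN_\pi}$, gives the identification once we specialize $f^\circ=\bdsf_{\ulQ}^\circ$. This is where the hypothesis $\ulQ\in\frkX_{\bdsf}''$ enters, ensuring that $\bdsf_{\ulQ}^\circ$ is well-defined and new away from $p$.

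The final and most delicate step is to convert the residual factor $|\del|^{m_\calq}(\ome_{\bdpi_{\ulQ,p}}\ome_{\bdsig_{\ulQ',p}}^{-1})(-\del)^{-1}$ into the product $\cale_\infty(\bfV_\calq)\,\calq([-\del\ono_3]_{\bfI_3}^{-1}[-\del\ono_2]_{\bfI_2})^{-1}$. Since $p$ splits in $E$, the element $-\del$ lies in $\frko_E^\times$ locally at $\frkp$, and the evaluation $\calq([-\del\ono_n]_{\bfI_n})=\prod_i Q_i(-\del)$ factors into a weight part $(-\del)^{\sum_i k_{Q_i}}$ and a nebentypus part $\eps_{\ulQ}(-\del)$. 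The weight part, combined with $|\del|^{m_\calq}$ and simplified via the identity $\iot_p\iot_\infty(\sqrt{-1}\del)=-\sqrt{D_E}$, produces the archimedean sign $\cale_\infty(\bfV_\calq)=(-\sqrt{-1})^{m_\calq}$; the nebentypus part matches $(\ome_{\bdpi_{\ulQ,p}}\ome_{\bdsig_{\ulQ',p}}^{-1})(-\del)$ by the relation between $\eps_{\ulQ},\eps_{\ulQ'}$ and the central characters built into the Hida family formalism of Section 2. This bookkeeping---tracing signs and $p$-adic character values through the $\iot_p\iot_\infty$ identifications, and reconciling Hodge--Tate weight shifts with nebentypus values at $-\del$---is the principal obstacle of the argument.
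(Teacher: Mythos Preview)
Your proposal is correct and follows essentially the same route as the paper: square Proposition~\ref{prop:67}, feed the resulting ratio $\scrp_{K'}(\cdots)^2/\Pet_{K'}(\cdots)^2$ into Proposition~\ref{prop:71}, and then match the residual $\del$-factors. The paper's proof is in fact terser than yours: it verifies $(H_1)$--$(H_4)$, invokes Propositions~\ref{prop:67} and~\ref{prop:71}, and stops at the intermediate expression containing $(-\sqrt{-1}\del)^{-m_\calq}(\ome_{\bdpi_{\ulQ,p}}^{-1}\ome_{\bdsig_{\ulQ',p}})(-\del)$, leaving both the identification $\Xi_{\vPh_{\pi\otimes\pi^\vee}^{\calk_\infty}}=\Xi_{\bdsf_{\ulQ}}$ (immediate from (\ref{tag:711}) and the definition in \S\ref{ssec:713}) and the conversion to $\cale_\infty(\bfV_\calq)\,\calq([-\del\ono_3]_{\bfI_3}^{-1}[-\del\ono_2]_{\bfI_2})^{-1}$ implicit. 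One small point you glossed over: the paper first replaces $\bdsg$ by a Hida family passing through the normalized $p$-stabilized newform at $\ulQ'$ (via Theorem~\ref{thm:41}), noting that the interpolation value is independent of this choice; this ensures the test vector on the $H$-side matches the $\vph_\sig^{\ord}$ appearing in Proposition~\ref{prop:71}.
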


\begin{proof}
Fix $\calq_0=(\ulQ,\ulQ')\in\frkY^{\rm crit}_\bfV\cap(\frkX_{\bdsf}''\times\frkX_{\bfI_2}^\cls)$. 
Put $\sig=\bdsig_{\ulQ'}$. 
Let $g$ be the normalized $p$-stabilized newform associated to $\sig$. 
Put $\vph_\sig^{\ord}=\vPh_{k_{\ulQ'}}(g)$. 
Theorem \ref{thm:41} allows us to lift $g$ to a Hida family $\bdsg$ on $\calh$. 
Notice that the interpolation formula is independent of the choice of $\bdsg$. 

Hypothesis ($H_1$) holds by (splt), ($H_2)$ holds by the definition of $\frkY^{\rm crit}_\bfV$, and ($H_4^{}$) holds by \cite[Theorem 5.3]{Hid04}. 
Thanks to Remark \ref{rem:12}(\ref{rem:122}), we can apply Proposition \ref{prop:71} to $\bdsf_{\ulQ}$ and $g$. 

If $\ell$ is sufficiently large, then 
\begin{align*}
&\left(\frac{\Ome_\infty}{2\pi\sqrt{-1}\Ome_\frkp}\right)^{2m_{\calq_0}}\calq_0(\Tht_{\bdsf,\bdsg}^2)\\
=&\frac{\scrp_{K'}\biggl(\del_{k_{\ulQ}}^{k_{Q_2'}-k_{Q_3^{}}}\bdTht_\frkp^{\eps_{Q_2'}^{}\eps_{Q_3^{}}^{-1}}\bfU_{\frkp^c}^{\eps_{Q_1'}^{}\eps_{Q_1^{}}^{-1}}\bfU^{\chi'}_{\frkN'}\vPh((\bdsf_{\ulQ})_0)\otimes\sig^\vee(t_\ell)^{-1}\vph_{\sig^\vee}^\flat\biggl)^2}{\Pet_{K'}\Big(\vph_{\sig}^\ord\otimes\sig^\vee(t_\ell)^{-1}\vph_{\sig^\vee}^\flat\Big)^2}\\
=&\frac{L\bigl(\frac{1}{2},\bdpi_{\ulQ}^{}\times\sig^\vee\bigl)}{\Xi_{\bdpi_{\ulQ}^{}}^{}\Ome(\sig)^2}(-\sqrt{-1}\del)^{k_{Q_1^{}}^{}+k_{Q_2^{}}^{}+k_{Q_3^{}}^{}-k_{Q_1'}^{}-k_{Q_2'}^{}}\bigl(\ome_{\bdpi_{\ulQ,p}}^{-1}\ome_{\sig_p}^{}\bigl)(-\del)\\
&\times\cale_p(\mathrm{Fil}^+\bfV_{\calq_0})\frac{2^5}{D_E^{9/2}}\lam_K\frac{\prod_{l|NN'}(\ome_{\bdpi_{\ulQ,l}}^{-1}\ome_{\sig_l}^{})(-\del)\frkf_{\bdpi_{\ulQ,l}^{},\sig_l^\vee}}{[K_0'(\frkN'):K_0'(\frkM^2)]^2}
\end{align*}
by Propositions \ref{prop:67} and \ref{prop:71}. 
\end{proof}

For each prime factor $l$ of $NN'$ Lemma 4.17 of \cite{HY} gives an element $\sqrt{\frkf_{\bdpi_l^{},\bdsig_l^\vee}}\in\bfI_3\widehat{\otimes}\bfI_2$ such that for $\calq=(\ulQ,\ulQ')\in\frkX^{\rm crit}_\bfV\cap(\frkX_{\bdsf}'\times\frkX_{\bfI_2}^\cls)$
\[\calq(\sqrt{\frkf_{\bdpi_l^{},\bdsig_l^\vee}})^2=\frkf_{\bdpi_{\ulQ,l}^{},\bdsig_{\ulQ',l}^\vee}. \]
We enlarge $\bfI_n$ so that it contains a square-root of $[-\del\ono_n]_{\bfI_n}$. 
Assume that $\bfI_3$ contains an element $\sqrt{\ome_{\bdpi_l^{}}(-\del)}$ which satisfies $\ulQ(\sqrt{\ome_{\bdpi_l^{}}(-\del)})=\sqrt{\ome_{\bdpi_{\ulQ,l}}(-\del)}$ and that  $\bfI_3$ contains a similar element $\sqrt{\ome_{\bdsig_l}(-\del)}$. 
Define the fudge factor $\sqrt{(\ome_{\bdpi}^{-1}\ome_{\bdsig}^{})(-\del)\frkf_{\bdpi,\bdsig^\vee}}\in\bfI_3\widehat{\otimes}\bfI_2$ by 
\[\sqrt{(\ome_{\bdpi}^{-1}\ome_{\bdsig}^{})(-\del)\frkf_{\bdpi,\bdsig^\vee}}=\prod_{l|NN'}\sqrt{(\ome_{\bdpi_l}^{-1}\ome_{\bdsig_l}^{})(-\del)\frkf_{\bdpi_l^{},\bdsig_l^\vee}}. \]

We normalize the theta element $\Tht_{\bdsf,\bdsg}$ by   
\[\scrl_{\bdsf,\bdsg}=\sqrt{\frac{D_E^{9/2}}{2^5\lam_K}}[-\del\ono_3]_{\bfI_3}^{-1/2}[-\del\ono_2]_{\bfI_2}^{1/2}\frac{[K_0'(\frkN'):K_0'(\frkM^2)]}{\sqrt{(\ome_{\bdpi}^{-1}\ome_{\bdsig}^{})(-\del)\frkf_{\bdpi,\bdsig}}}\Tht_{\bdsf,\bdsg}. \]
Theorem \ref{thm:11} now follows from Proposition \ref{prop:73}. 


\subsection{Proof of Theorem \ref{thm:12}}\label{ssec:714} 

\begin{definition}\label{def:74}
Let $\sig$ be an irreducible cuspidal automorphic representation of $H(\AA)$ whose archimedean part is a discrete series or limit of discrete series with Blattner parameter $-(k_1';k_2')$. 
For a positive integer $N'$ we put
\[\Ome^{(N')}(\sig)=2^{\vka_\sig}(\sqrt{-1})^{k_1'-k_2'}[K':K_0'(\frkN_\sig)]L^{(N')}(1,\sig,\Ad)\cale(\sig_p,\Ad,\addchar_p)\calb_{\sig_p}, \]
where $L^{(N')}(1,\sig,\Ad)$ is the partial adjoint $L$-series of $\sig$ with the archimdean factor but without Euler factors at primes dividing $N'$.
\end{definition}

We retain the notation from \S \ref{ssec:17}. 
The proof is sketched in the introduction. 
Denoting the characteristic function of $\ZZ_p^\times$ by $\phi_0$, we wrote $\bfU_{\frkp^c}^0=\bfU_{\frkp^c}^{\phi_0}$ there. 
We can assume that 
\[\tht^m_\frka(\bfU_{\frkp^c}^0f_\pi^{\ord})\in \bfT_{\frka\frkp^c\frkb_0}^m(\calo)\]
for every $m\in\scrs_\frka^+$ at the cost of replacing $f_\pi^{\ord}$ with its $p$-power multiple. 
We can define $\calj^{\chi'}(f_\pi^{\ord})\in \bfS^H_{\ord}(\frkN',\chi',\Lam)$ by
\[J^{\chi'}_\frka(f_\pi^{\ord})=\lim_{j\to\infty}\sum_{m\in\scrs_\frka^+}\calj_\frka^{\chi',mp^{j!}}(f_\pi^{\ord})\,q^m, \]
where $\calj_\frka^{\chi',m}(f_\pi^{\ord})\in\Lam$ corresponds to the $\calo$-valued distribution 
\[\phi\mapsto\int_{\ZZ_p^\times}\phi(x)x^{-k_3}\,\d\mu(\widetilde{\FJ}^m_\frka(\bfU_{\frkp^c}^0\bfU^{\chi'}_{\frkN'}f_\pi^{\ord})) \]
(see Corollary \ref{cor:32}). 

Let $\bdsg'\in\bfS_\ord^\calh(\frkN',\chi',\bfI)$ be a one-variable cuspidal Hida family on $\calh$ such that $\bdsg'_Q\in\bdse' S_{(k_1;k_Q)}^\calh(p^\ell\frkN',\chi'\eps_Q^\downarrow,\bfI(Q))$ for $Q\in\frkX_\bfI$ with $k_Q-k_1\geq 2$. 
In the first step we assume ($H_3$) and define the theta element $\Tht_{f_\pi^{\ord},\bdsg'}\in\mathrm{Frac}\bfI$ attached to $f_\pi^{\ord}$ and $\bdsg'$ by   
\[\Tht_{f_\pi^{\ord},\bdsg'}=\FC^1_{\frko_E}(\1_{\bdsg'}\calj^{\chi'}(f_\pi^{\ord})). \]
We normalize this theta element by   
\[\scrl_{f_\pi^{\ord},\bdsg'}=\sqrt{\frac{D_E^{9/2}}{2^5\lam_K}}[-\del]_\bfI^{1/2}\frac{[K_0'(\frkN'):K_0'(\frkM^2)]}{\sqrt{(-\del)^{k_2+k_3}(\ome_\pi^{-1}\ome_{\bdsig}^{})(-\del)\frkf_{\pi,\bdsig}}}\Tht_{f_\pi^{\ord},\bdsg'}, \]
where 
\[\sqrt{(\ome_\pi^{-1}\ome_{\bdsig}^{})(-\del)\frkf_{\pi,\bdsig^\vee}}=\prod_{l|NN'}\sqrt{(\ome_{\pi_l}^{-1}\ome_{\bdsig_l}^{})(-\del)\frkf_{\pi_l^{},\bdsig_l^\vee}}. \]
The same computation as in \S \ref{ssec:713} shows that for $Q\in\frkX_\bfI$ with $k_Q\geq k_3$
\[\frac{Q(\scrl_{f_\pi^{\ord},\bdsg'})^2}{\Ome_\frkp^{2m_Q'}}=\biggl(\frac{2\pi\sqrt{-1}}{\Ome_\infty}\biggl)^{2m_Q'}\frac{\Gam(\bfV_Q',0)L(\bfV_Q',0)}{\Xi_{f_\pi^{\ord}}\Ome(\bdsig_Q)^2}\cale_\infty(\bfV_Q')\cale_p(\mathrm{Fil}^+\bfV_Q'). \]

Next we remove the assumption ($H_3$). 
Let $M$ be a prime-to-$p$ {\it odd} integer divisibly only by prime factors of $N'$. 
Take an ideal $\frkM$ of $\frkr$ such that $\frkr/\frkM\simeq\ZZ/M\ZZ$. 
Fix a primitive character $\vrh$ of $(\frkr/\frkM)^\times$ such that $\chi'\vrh^{-2}$ has conductor $M$. 
We extend $\vrh$ to an automorphic character $\vrh_\AA=\prod_v\vrh_v$ of $\U(1)(\AA)$. 

Since $\bdsg_Q'$ has tame level $\frkN'$ for $Q\in \frkX^\cls_\bfI$, there exists a positive integer $A$ such that if $m\geq A$ and $M$ is divisible by $N^{\prime m}$, then $\bdsig_Q\otimes\vrh_\AA$ has tame conductor $M^2$ and $(\bdsig_{Q,l}\otimes\vrh_l^{})_u^{}$ is trivial for all $Q\in \frkX^\cls_\bfI$ and prime factors $l$ of $N'$ and $Q\in \frkX^\cls_\bfI$ by (\ref{tag:82}) and stability of the epsilon factor (cf. \cite{JS}). 
We enlarge $M$ so that $(\pi_l^{}\otimes\vrh_l)_u^{}$ is trivial for prime factors $l$ of $N'$.  
Then $B_{\bdsig_{Q,l}\otimes\vrh_l^{}}=B_{\pi_l\otimes\vrh_l}=1$ for all prime factors $l$ of $N'$ by Proposition 6.4 of \cite{HY}. 
It follows that 
\begin{align*} 
\Ome(\bdsig_Q\otimes\vrh_\AA)&=[K_0'(\frkN'):K_0'(\frkM^2)]\Ome^{(N')}(\bdsig_Q)\prod_{l|N'}\zet_l(2), \\ 
\Pet(\pi\otimes\vrh_\AA)&=2^{\vka_\pi}L(1,\pi,\Ad)\prod_{q|pN,q\nmid N'}\calb_{\pi_q}\prod_{l|N'}\frac{\zet_l(3)}{L^\GL(1,\pi_l^{}\times\pi_l^\vee)}. 
\end{align*}
Since 
\[\frac{\Pet_K\Big(\vPh^{\calk_\infty}_{(\pi\otimes\vrh_\AA)\otimes(\pi\otimes\vrh_\AA)^\vee}\Big)}{\Pet_K\Big(\vPh^{\calk_\infty}_{\pi\otimes\pi^\vee}\Big)}
=\prod_{l|N'}\frac{B_{\pi_l\otimes\vrh_l}}{B_{\pi_l^{}}}
=\prod_{l|N'}\frac{1}{B_{\pi_l^{}}}, \]
we have $\Xi_{\vPh^{\calk_\infty}_{(\pi\otimes\vrh_\AA)\otimes(\pi\otimes\vrh_\AA)^\vee}}=\Xi_{\vPh^{\calk_\infty}_{\pi\otimes\pi^\vee}}$. 

Fix $Q_0\in\frkX_\bfI$ with $k_{Q_0}\geq k_3$. 
Let 
\begin{align*}
f_{\pi\otimes\vrh}^{\ord}&\in\bdse S^G_{\ulk}(p\frkN_\pi\frkM^3,\chi\vrh^{-3},\calo), &
g_\vrh'&\in\bdse' S^H_{(k_1;k_{Q_0})}(p^\ell\frkM^2,\chi'\vrh^{-2},\calo)
\end{align*}
be $p$-stabilized newforms associated to $\pi\otimes\vrh_\AA^{}$ and $\bdsig_{Q_0}\otimes\vrh_\AA^{}$. 
We extend $g_\vrh'$ to a cusp form on $\calh$ by (\ref{tag:53}) and lift it to a Hida family $\bdsg_\vrh'\in\bfS^\calh_\ord(\frkM^2,\chi'\vrh^{-2},\bfI)$ by Theorem \ref{thm:41}. 
For our choice of $\vrh$ we see that $\bdsg_\vrh'$ satisfies ($H_3$). 
We have seen that 
\begin{align*}
\frac{Q(\scrl_{f_{\pi\otimes\vrh}^{\ord},\bdsg_\vrh'})^2}{\Ome_\frkp^{2m_Q'}}
&=\biggl(\frac{2\pi\sqrt{-1}}{\Ome_\infty}\biggl)^{2m_Q'}\frac{\Gam(0,\bfV_Q')L(0,\bfV_Q')}{\Xi_{f_{\pi\otimes\vrh}^{\ord}}\Ome(\bdsig_Q\otimes\vrh_\AA)^2}\cale_\infty(\bfV_Q')\cale_p(\mathrm{Fil}^+\bfV_Q')\\
&=\biggl(\frac{2\pi\sqrt{-1}}{\Ome_\infty}\biggl)^{2m_Q'}\frac{\Gam(0,\bfV_Q')L(0,\bfV_Q')\cale_\infty(\bfV_Q')\cale_p(\mathrm{Fil}^+\bfV_Q')}{\Xi_{f_\pi^{\ord}}\Ome^{(N')}(\bdsig_Q)^2[K_0'(\frkN'):K_0'(\frkM^2)]^2\prod_{l|N'}\zet_l(2)^2}
\end{align*}
for $Q\in\frkX_\bfI$ with $k_Q\geq k_3$. 
Therefore 
\[\call_{f_\pi^{\ord},\bdsg'}^2=[\calk_0'(\frkN'):\calk_0'(\frkM^2)]\scrl_{f_{\pi\otimes\vrh}^{\ord},\bdsg_\vrh'}\prod_{l|N'}\zet_l(2)\in\mathrm{Frac}\bfI_2 \]
satisfies the interpolation formula in Theorem \ref{thm:12} for $Q\in\frkX_\bfI$ with $k_Q\geq k_3$. 


\section{The non-archimedean computation}\label{sec:8}


\subsection{The Gauss sum}\label{ssec:81}

Let $F$ be a finite extension of $\QQ_p$ which contains the integer ring $\frko$ having a single prime ideal $\frkp$. 
We denote the order of the residue field $\frko/\frkp$ by $q$. 
The absolute value $\Abs_F=|\cdot|$ on $F$ is normalized via $|\vpi|=q^{-1}$ for any generator $\vpi$ of $\frkp$, where $q$ denotes the order of the residue field $\frko/\frkp$. 
We denote by $B_n$ the group of upper triangular matrices in $\GL_n(F)$, and by $N_n$ its unipotent radical. 
Let $\d g$ be the Haar measure on $\GL_n(F)$ giving $\GL_n(\frko)$ volume $1$. 

We choose a non-trivial additive character $\addchar$ of $F$ so that the maximal fractional ideal on which it is trivial is $\frko$. \index{$\addchar$}
We write $\cals(F)$ for the space of locally constant compactly supported functions on $F$. 
The Fourier transform of $\phi\in\cals(F)$ is defined by 
\[\widehat{\phi}(y)=\int_F\phi(x)\addchar(-xy)\,\d x. \index{$\widehat{\phi}$}\]
The measure $\d x$ is chosen so that $\widehat{\widehat{\phi}}(x)=\phi(-x)$. 
Put \index{$\scrn_n$}
\begin{align*}
\scrn_n&=N_n\cap\GL_n(\frko), & 
J&=\begin{bmatrix} 0 & 1 \\ -1 & 0 \end{bmatrix}. 
\end{align*}

Let $\chi$ be a character of $F^\times$. 
When $\chi$ is ramified, the conductor $c(\chi)$ of $\chi$ is defined as the smallest positive integer $n$ such that $\chi$ is trivial on $1+\frkp^n$. 
When $\chi$ is unramified, we set $c(\chi)=0$. 

If $c(\chi)\geq 1$, then the Gauss sum is defined by 
\[\frkg(\chi,\addchar)=\sum_{a\in(\frko/\frkp^{c(\chi)})^\times}\chi(a)^{-1}\addchar\biggl(-\frac{a}{\vpi^{c(\chi)}}\biggl). \index{$\frkg(\chi,\addchar)$}\]
When $c(\chi)=0$, we formally set $\frkg(\chi,\addchar)=1$. 

\begin{remark}\label{rem:81}
The Gauss sum is related to the root number in the following way: 
\[\vep\left(\frac{1}{2},\chi,\addchar^{-1}\right)
=q^{-c(\chi)/2}\chi(\vpi)^{c(\chi)}\frkg(\chi,\addchar). \]
\end{remark}


\subsection{Essential vectors}\label{ssec:82}

Let $\pi$ be an irreducible admissible generic representation of $\GL_{m+1}(F)$. 
We write $\scrw_{\addchar}(\pi)$ for the Whittaker model of $\pi$ with respect to the additive character $\addchar$ of $F$. 
Given an open compact subgroup $\Gam$ of $\GL_{m+1}(F)$ and its character $\calx:\Gam\to\CC^\times$, we put 
\[\scrw_{\addchar}(\pi,\Gam,\calx)=\{W\in \scrw_{\addchar}(\pi)\;|\;\pi(\gam)W=\calx(\gam)W\text{ for }\gam\in\Gam\}. \]

Assume that $n\geq 1$. 
For a positive integer $\ell$ the subgroup $\calk_0^{(m+1)}(\frkp^\ell)$ consists of matrices of the form \index{$\calk_0^{(n)}(\frkp^\ell)$}
\begin{align*}
&\begin{bmatrix} A & B \\ C & d\end{bmatrix} & 
(A&\in\GL_m(\frko),\;B\in\frko^m,\;\trs C\in(\frkp^\ell)^m,\; d\in\frko^\times). 
\end{align*}
When $\ell=0$, we set $\calk_0^{(m+1)}(\frkp^\ell)=\GL_{m+1}(\frko)$. 
Given a character $\ome$ of $\frko^\times$, we define the characters $\ome^\downarrow:\calk_0^{(m+1)}(\frkp^\ell)\to\CC^\times$ and $\ome^\uparrow:\calk_0^{(2)}(\frkp^\ell)\to\CC^\times$ by \index{$\ome^\downarrow$}
\begin{align}
\ome^\downarrow\left(\begin{bmatrix} A & B \\ C & d\end{bmatrix}\right)&=\ome(d), &
\ome^\uparrow\left(\begin{bmatrix} a & b \\ c & d\end{bmatrix}\right)&=\ome(a). \label{tag:81}
\end{align}

We write $\ome_\pi$ for the central character of $\pi$. \index{$c(\pi)$}
Let $c(\pi)$ denote the exponent of the conductor of $\pi$, i.e., the epsilon factor of $\pi$ is of the form  
\beq
\vep^\GL\left(s+\frac{1}{2},\pi,\addchar\right)=q^{-c(\pi)s}\vep^\GL\left(\frac{1}{2},\pi,\addchar\right). \label{tag:82}
\eeq
Th\'{e}orem\`{e} on p.~211 of \cite{JPSS} says that 
\[\dim \scrw_{\addchar}\bigl(\pi,\calk_0^{(m+1)}\bigl(\frkp^{c(\pi)}\bigl),\ome^\downarrow_\pi\bigl)=1. \] 
Theorem 3.1 of \cite{NM} enables us to normalize a basis vector of this one-dimensional space in the following way:  

\begin{definition}[essential vectors]\label{def:81}
There exists a unique vector $W_\pi\in\scrw_{\addchar}\bigl(\pi,\calk_0^{(m+1)}\bigl(\frkp^{c(\pi)}\bigl),\ome^\downarrow_\pi\bigl)$ which satisfies $W_\pi(\ono_{m+1})=1$. 
This vector $W_\pi$ is called a normalized essential Whittaker vector of $\pi$ with respect to $\addchar$. \index{$W_\pi$}
\end{definition}

Given $W\in\scrw_{\addchar}(\pi)$, we define $\widetilde{W}\in\scrw_{\addchar^{-1}}(\pi^\vee)$ by $\widetilde{W}(g)=W(w_n\trs g^{-1})$. \index{$\widetilde{W}$}
Put $\xi_{m,\ell}=\begin{bmatrix} \vpi^{-\ell}\ono_m & 0 \\ 0 & 1 \end{bmatrix}$. 
Then $\pi^\vee(\xi_{m,c(\pi)})\widetilde{W}_\pi$ is an essential vector of $\pi^\vee$. 
The following result is Proposition 6.2 of \cite{HY}. 

\begin{proposition}\label{prop:80}
Let $\pi$ be an irreducible admissible generic representation of $\GL_{m+1}(F)$. 
Let $W_{\pi^\vee}$ be the essential vector of $\pi^\vee$ with respect to $\addchar^{-1}$. 
Then $\pi^\vee(\xi_{m,c(\pi)})\widetilde{W}_\pi=\vep^\GL\bigl(\frac{1}{2},\pi,\addchar\bigl)^mW_{\pi^\vee}$. 
\end{proposition}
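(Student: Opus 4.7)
The plan is to combine the one-dimensionality of the space of essential vectors with the local functional equation of Jacquet--Piatetski-Shapiro--Shalika.

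First I would verify the equivariance: using the definition $\widetilde{W}_\pi(g)=W_\pi(w_{m+1}\trs g^{-1})$, one computes directly that if $W_\pi$ transforms by $\omega_\pi^\downarrow$ under the right action of $\calk_0^{(m+1)}(\frkp^{c(\pi)})$, then $\widetilde{W}_\pi$ transforms by the analogous character of $w_{m+1}\trs\calk_0^{(m+1)}(\frkp^{c(\pi)})^{-1}w_{m+1}^{-1}$. Conjugation by $\xi_{m,c(\pi)}$ rescales the off-diagonal block so that
$$\xi_{m,c(\pi)}^{-1}w_{m+1}\trs\calk_0^{(m+1)}\bigl(\frkp^{c(\pi)}\bigl)^{-1}w_{m+1}^{-1}\xi_{m,c(\pi)}=\calk_0^{(m+1)}\bigl(\frkp^{c(\pi)}\bigl),$$
and one checks that the character obtained is $\omega_{\pi^\vee}^\downarrow$. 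Combined with $c(\pi)=c(\pi^\vee)$, Th\'eor\`eme on p.~211 of \cite{JPSS} forces $\pi^\vee(\xi_{m,c(\pi)})\widetilde{W}_\pi=c\cdot W_{\pi^\vee}$ for some scalar $c$ depending only on $\pi$ and $\addchar$.

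To identify $c$, I would pair $\pi$ against an auxiliary unramified generic representation $\pi'$ of $\GL_m(F)$ and compare two evaluations of the Rankin--Selberg zeta integral. For the normalized spherical vector $W_{\pi'}$, which is simultaneously the essential vector, the basic JPSS identity gives $Z(s,W_\pi,W_{\pi'})=L(s,\pi\times\pi')$ and $Z(s,W_{\pi^\vee},W_{\pi^{\prime\vee}})=L(s,\pi^\vee\times\pi^{\prime\vee})$. The change of variables $h\mapsto\vpi^{c(\pi)}h$ under the integral converts $Z(1-s,\pi^\vee(\xi_{m,c(\pi)})\widetilde{W}_\pi,W_{\pi^{\prime\vee}})$ into $q^{mc(\pi)(s-1/2)}\omega_{\pi'}(\vpi)^{-c(\pi)}\cdot Z(1-s,\widetilde{W}_\pi,W_{\pi^{\prime\vee}})$, and the $\GL_{m+1}\times\GL_m$ functional equation identifies the latter with $\omega_{\pi'}(-1)^m\gamma^{\GL}(s,\pi\times\pi',\addchar)\cdot L(s,\pi\times\pi')$. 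Substituting $\pi^\vee(\xi_{m,c(\pi)})\widetilde{W}_\pi=c\cdot W_{\pi^\vee}$ on the left and cancelling $L(1-s,\pi^\vee\times\pi^{\prime\vee})$ using the definition of the gamma factor yields
$$c=q^{mc(\pi)(s-1/2)}\omega_{\pi'}(\vpi)^{-c(\pi)}\omega_{\pi'}(-1)^m\vep^{\GL}(s,\pi\times\pi',\addchar).$$

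The final step is to simplify. Since $\pi'$ is unramified, $c(\pi\times\pi')=mc(\pi)$, so $\vep^{\GL}(s,\pi\times\pi',\addchar)=q^{-mc(\pi)(s-1/2)}\vep^{\GL}(1/2,\pi\times\pi',\addchar)$; the $s$-dependent prefactor cancels, confirming the expected $s$-independence of $c$. Multiplicativity of epsilon factors for unramified twists gives $\vep^{\GL}(1/2,\pi\times\pi',\addchar)=\omega_{\pi'}(\vpi)^{c(\pi)}\vep^{\GL}(1/2,\pi,\addchar)^m$, so all the $\pi'$-dependent factors combine to $\omega_{\pi'}(-1)^m$. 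Choosing $\pi'$ so that $\omega_{\pi'}(-1)=1$ (e.g.\ the unramified principal series with trivial central character) then produces $c=\vep^{\GL}(1/2,\pi,\addchar)^m$.

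The main obstacle is getting the normalizations exactly right in the JPSS functional equation: one must carefully track how the shift by $\xi_{m,c(\pi)}$, the transpose-inverse intertwining $W\mapsto\widetilde{W}$, and the epsilon factor for unramified twists interact, so that the a priori $\pi'$-dependent quantities $\omega_{\pi'}(-1)^m$ and $\omega_{\pi'}(\vpi)^{\pm c(\pi)}$ collapse to the desired $\pi'$-independent answer. Once this bookkeeping is in place, the result follows cleanly, and the independence of $c$ from $\pi'$ provides a built-in consistency check anchoring both the numerical value and the sign.
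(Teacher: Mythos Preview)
The paper does not give a proof of this proposition; it simply records the statement and cites it as Proposition~6.2 of \cite{HY}. Your argument is correct and is in fact the standard proof: one-dimensionality of the essential line reduces the question to computing a scalar, and pairing against an unramified $\GL_m$ representation via the JPSS functional equation and the defining property $Z(s,W_\pi,W_{\pi'})=L^{\GL}(s,\pi\times\pi')$ of the essential vector (Th\'eor\`eme on p.~208 of \cite{JPSS}) pins down that scalar.

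One small remark on the final step: the maneuver of ``choosing $\pi'$ so that $\omega_{\pi'}(-1)=1$'' is harmless but unnecessary. Any unramified character of $F^\times$ is trivial on $\frko^\times$, hence on $-1$, so $\omega_{\pi'}(-1)=1$ holds automatically for every unramified $\pi'$. This also explains why the apparently $\pi'$-dependent factor $\omega_{\pi'}(-1)^m$ in your formula for $c$ causes no inconsistency.
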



\subsection{The JPSS integrals}\label{ssec:83}

Let $\pi$ be an irreducible admissible generic representation of $\GL_n(F)$. 
One can define an invariant perfect pairing 
\[\La\;,\;\Ra:\scrw_{\addchar}(\pi)\otimes\scrw_{\addchar^{-1}}(\pi^\vee)\to\CC\] 
by 
\beq
\La W_1,W_2\Ra=\int_{N_{n-1}\bsl\GL_{n-1}(F)}W_1\biggl(\begin{bmatrix} h & \\ & 1 \end{bmatrix}\biggl)W_2\biggl(\begin{bmatrix} h & \\ & 1 \end{bmatrix}\biggl)\,\d h. \index{$\La W_1,W_2\Ra$}\label{tag:83}
\eeq 

Let $m$ be a positive integer which is less than $n$. 
Put $l=n-m-1$. 
Let $\sig$ be an irreducible admissible generic representation of $\GL_m(F)$ whose central character is $\ome_\sig$. 
We associate to Whittaker functions $W\in \scrw_{\addchar}(\pi)$ and $W'\in\scrw_{\addchar^{-1}}(\sig^\vee)$ the local zeta integrals
\begin{align*}
Z(s,W,W')&=\int_{N_m\bsl G_m}W\biggl(\begin{bmatrix} h & \\ & \ono_{l+1} \end{bmatrix}\biggl)W'(h)|\det h|^{s-\frac{l+1}{2}}\,\d h, \\
\widetilde{Z}(s,\widetilde{W},\widetilde{W'})&=\int_{N_m\bsl G_m}\int_{\Mat_{l,m}(F)} \widetilde{W}\left(\begin{bmatrix} h & & \\ x & \ono_l & \\ & & 1 \end{bmatrix}\right)\widetilde{W'}(h)|\det h|^{s-\frac{l+1}{2}}\,\d x\d h, 
\end{align*}
which converge absolutely for $\Re s\gg 0$. 

We write $L^\GL(s,\pi\times\sig^\vee)$, $\vep^\GL(s,\pi\times\sig^\vee,\addchar)$ and $\gam^\GL(s,\pi\times\sig^\vee,\addchar)$ for the $L$, epsilon and gamma factors associated to $\pi$ and $\sig^\vee$. 
These local factors are studied extensively in \cite{JPSS2}. 
The gamma factor is defined as the proportionality constant of the functional equation 
\beq
Z(1-s,\pi^\vee(w_{n,m})\widetilde{W},\widetilde{W'})=\ome_\sig(-1)^{n-1}\gam^\GL(s,\pi\times\sig^\vee,\addchar)\widetilde{Z}(s,W,W'), \label{tag:84}
\eeq
where 
\[w_{n,m}=\begin{bmatrix} 
 \ono_r & \\ & w_{n-m} 
 \end{bmatrix}. \]
 
\begin{remark}\label{rem:82}
When we view $\pi$ and $\sig$ are representations of unitary groups over the split quadratic algebra $F\oplus F$, 
\[L(s,\pi\times\sig^\vee)=L^\GL(s,\pi\times\sig^\vee)L^\GL(s,\pi^\vee\times\sig). \]
When $m=1$ and $\chi$ is a character of $F^\times$, we will write 
\begin{align*}
L(s,\pi\otimes\chi)&=L^\GL(s,\pi\times\chi), \\   
\vep(s,\pi\otimes\chi,\addchar)&=\vep^\GL(s,\pi\times\chi,\addchar), \\
\gam(s,\pi\otimes\chi,\addchar)&=\gam^\GL(s,\pi\times\chi,\addchar). 
\end{align*}
\end{remark}
 
Let $\pi$ be an irreducible admissible tempered representation of $\GL_n(F)$ and $\sig$ that of $\GL_{n-1}(F)$. 
We consider the integral
\[J(W_1^{},W_2^{},W_1',W_2')=\int_{\GL_{n-1}(F)}\biggl\La\pi\biggl(\begin{bmatrix} h & \\ & 1 \end{bmatrix}\biggl)W_1^{},W_2^{}\biggl\Ra\La W_1',\sig^\vee(h)W_2'\Ra'\,\d h,  \]
which is convergent for 
\begin{align*}
W_1&\in\scrw_{\addchar}(\pi^{}), &
W_2&\in\scrw_{\addchar^{-1}}(\pi^\vee), &
W_1'&\in\scrw_{\addchar}(\sig^{}), & 
W_2'&\in\scrw_{\addchar^{-1}}(\sig^\vee). 
\end{align*}

The following result is called a splitting lemma and was proved by Wei Zhang in Proposition 4.10 of \cite{Z} by using the work of Lapid and Mao \cite{LM}. 
It is worth noting that Proposition 4.10 of \cite{Z} uses unnormalized local Haar measures (cf. \S 2.1 of \cite{Z}) while we here use normalized ones. 

\begin{lemma}\label{lem:81}
Notation being as above, we have 
\[J(W_1^{},W_2^{},W_1',W_2')=Z\biggl(\frac{1}{2},W_1^{},W_2'\biggl)Z\biggl(\frac{1}{2},W_2^{},W_1'\biggl)\prod_{i=1}^{m-2}\zet_F(i). \]
\end{lemma}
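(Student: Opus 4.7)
The strategy is to expand $J$ by substituting the integral realizations of the two invariant pairings $\langle\,,\,\rangle$ and $\langle\,,\,\rangle'$ from \eqref{tag:83}, then decouple the resulting multiple integral into a product of two JPSS zeta integrals. Writing the pairings as integrals over $N_{n-1}\backslash\GL_{n-1}(F)$ and $N_{n-2}\backslash\GL_{n-2}(F)$ respectively, the integral $J(W_1,W_2,W_1',W_2')$ becomes a triple integral of the form
\[
\int_{\GL_{n-1}(F)}\int_{N_{n-1}\backslash\GL_{n-1}(F)}\int_{N_{n-2}\backslash\GL_{n-2}(F)}\Phi(h,h_1,h_2)\,\d h_2\,\d h_1\,\d h,
\]
where $\Phi$ is built from $W_1,W_2,W_1',W_2'$ with the variable $h$ acting diagonally by right translation on the two $W$-factors and by left translation on the two primed factors. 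Here one should follow the normalization conventions that $\pi$, $\sigma$ are tempered and realized in their respective Whittaker models with respect to $\addchar$ and $\addchar^{-1}$.

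First I would perform the change of variables $h_1\mapsto h_1 h^{-1}$ in the middle integral, which absorbs the $h$-dependence of $\pi(\mathrm{diag}(h,1))W_1$ into $W_1$ evaluated at $h_1$, at the cost of replacing $h$ by $h^{-1}$ in the $h_2$-argument of $W_2'$ through the twisted factor $\sigma^\vee(h)W_2'$. After this manipulation the variable $h\in\GL_{n-1}(F)$ can be integrated out by folding it against the $h_2$-variable in $N_{n-2}\backslash\GL_{n-2}(F)$; the combinatorics is exactly the Bernstein–Zelevinsky–style unfolding underlying the JPSS functional equation. This is the step where temperedness is essential: absolute convergence of the original $J$-integral is guaranteed by Theorem \ref{thm:71}, but the rearranged iterated integrals must be regularized, and the justification goes through the stabilization procedure of Lapid–Mao \cite{LM}, which precisely matches our situation.

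Once the rearrangement is justified, the two surviving integrals separate along the embedding $\GL_{n-2}\hookrightarrow\GL_{n-1}\hookrightarrow\GL_n$ and become, up to an explicit constant, the two JPSS integrals
\[
Z\bigl(\tfrac{1}{2},W_1,W_2'\bigr)\quad\text{and}\quad Z\bigl(\tfrac{1}{2},W_2,W_1'\bigr),
\]
at the central point, for $\pi\times\sigma^\vee$ and $\pi^\vee\times\sigma$ respectively. The leftover constant is a volume factor that one computes by the Iwasawa decomposition and the standard formula for the volume of $\GL_k(\frko)$, producing the product $\prod_{i=1}^{m-2}\zet_F(i)$ of local zeta values; in our main case of interest $n=3$ this is simply $\zet_F(1)$, consistent with the factor appearing in \S\ref{ssec:77}.

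The main obstacle is not the algebraic manipulation but the analytic one: verifying that the unfolding is legitimate at $s=\tfrac{1}{2}$, where the JPSS integrals converge (by temperedness) but the intermediate double integrals do not converge absolutely. I would address this by first carrying out the computation for $\Re(s)\gg 0$ (replacing $\tfrac{1}{2}$ by a complex parameter $s$ via $|\det h|^{s-\tfrac{1}{2}}$ factors inserted along the diagonal), obtaining the identity as a meromorphic equality, and then specializing to $s=\tfrac{1}{2}$ using the absolute convergence of both sides guaranteed by the temperedness of $\pi$ and $\sigma$. This is precisely the route taken by Zhang in the proof of Proposition 4.10 of \cite{Z}, to which we may appeal directly.
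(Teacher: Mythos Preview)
Your proposal is correct and aligns with the paper's treatment: the paper does not give its own proof of this lemma but simply cites Proposition~4.10 of \cite{Z} (which in turn relies on Lapid--Mao \cite{LM}), exactly as you do at the end. Your sketch of the unfolding argument and the analytic justification is a reasonable summary of Zhang's method; the one point the paper flags that you might make more explicit is that Zhang's statement uses unnormalized Haar measures, and the product $\prod_{i=1}^{m-2}\zeta_F(i)$ arises precisely from converting to the normalized measures used here.
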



\subsection{Ordinary vectors of representations of $\GL_n(F)$}\label{ssec:84}

For a subset $S$ we write $\II_S$ for the characteristic function of $S$. 
For a compact subgroup $\Gam$ of $\GL_n(F)$ and a representation $(\pi,V)$ of $\GL_n(F)$ we let 
\[V^\Gam=\{v\in V\;|\;\pi(\gam)v=v\text{ for }\gam\in\Gam\}\] 
be the space of $\Gam$-invariant vectors in $V$. 
Let $\mu_1,\mu_2,\dots,\mu_n$ be quasi-characters of $F^\times$. 
The space $V$ of $\pi=I(\mu_1,\mu_2,\dots,\mu_n)$ consists of functions $h:\GL_n(F)\to\CC$ which satisfy
\begin{align*}
h(tug)&=h(g)\wp_n(t)^{1/2}\prod_{i=1}^n\mu_i(t_i), & 
\wp_n(t)&=\prod_{i=1}^n|t_i|^{n+1-2i}
\end{align*}
for $t=\diag[t_1,t_2,\dots,t_n]\in T_n$, $u\in N_n$ and $g\in\GL_n(F)$. 

We define a function $h^\mathrm{ord}$ on $B_nw_nN_n$ by 
\[h^{\ord}_\pi(tuw_nv)=\II_{\scrn_n}(v)\wp_n(t)^{1/2}\prod_{i=1}^n\mu_i(t_i) \index{$h^{\ord}_\pi$}\]
for $t=\diag[t_1,t_2,\dots,t_n]\in T_n$ and $u,v\in N_n$, where $\II_{\scrn_n}$ denotes the characteristic function of $\scrn_n=N_n\cap\GL_n(\frko)$. 
Since $B_nw_nN_n$ is the cell of the longest Weyl element $w_n$ in the Bruhat decomposition of $\GL_n(F)$, we can extend $h^{\ord}_\pi$ by zero to an element of $V$ 
(cf. \cite[(B), p.~138]{Cartier79Corvallis}). 
We call $h_\pi^\mathrm{ord}\in V^{\scrn_n}$ the normalized ordinary vector with respect to $(\mu_1,\mu_2,\dots,\mu_n)$.

Put $\alp_i=\begin{bmatrix} \ono_{n-i} & \\ & \vpi\ono_i \end{bmatrix}\in T_n$ for $i=1,2,\dots,n$. 
Define the operator $U_\frkp(\alp_i)$ on $V^{\scrn_n}$ by  
\[U_\frkp(\alp_i^{}) h=\sum_{u\in\scrn_n/\alp_i^{-1}\scrn_n\alp_i^{}}\pi(u\alp_i^{-1})h. \index{$U_\frkp(\bet_1)$}\]
Put $\bet_j=\frac{\alp_{n-j}}{\vpi}$ for $1\leq j\leq n-1$. 
We similarly define the operator $U_\frkp(\bet_j)$. 

\begin{proposition}\label{prop:81}
\begin{align*}
U_\frkp(\alp_i)h^{\ord}_\pi&=\frac{q^{\frac{i(n-i)}{2}}}{\prod_{l=1}^i\mu_l(\vpi)}h^{\ord}_\pi , &
U_\frkp(\bet_j)h^{\ord}_\pi&=\frac{q^{\frac{j(n-j)}{2}}}{\prod_{l=n-j+1}^n\mu_l(\vpi)^{-1}}h^{\ord}_\pi.  
\end{align*}
\end{proposition}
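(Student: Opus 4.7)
The plan is to exploit the characterization of $h^{\ord}_\pi$ as the unique vector in $V^{\scrn_n}$ whose support lies in the big Bruhat cell $B_n w_n N_n$ with value $1$ at $w_n$. Since both $U_\frkp(\alp_i)$ and $U_\frkp(\bet_j)$ preserve $V^{\scrn_n}$, it suffices to (i) show the translates remain supported on $B_n w_n N_n$, and (ii) compute their values at $g = w_n$. For (ii) I would parametrize the coset space $\scrn_n/\alp_i^{-1}\scrn_n\alp_i$ by matrices $u_x = \ono_n + \sum_{a \leq n-i < b} x_{ab} E_{ab}$ with $x_{ab} \in \frko/\vpi\frko$—this works because conjugation by $\alp_i^{-1}$ rescales precisely these ``mixed-block'' entries by $\vpi$, giving $q^{i(n-i)}$ cosets.

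The key computation is the Bruhat decomposition of $w_n u_x \alp_i^{-1}$. Using $w_n \alp_i^{-1} = \tilde\alp_i^{-1} w_n$ with $\tilde\alp_i^{-1} := \diag(\vpi^{-1}\ono_i,\ono_{n-i})$, I rewrite this as $(w_n u_x w_n^{-1}) \tilde\alp_i^{-1} w_n$ and match entries against the desired form $t u' w_n v$ with $t \in T_n$, $u' \in N_n$, $v \in \scrn_n$. A direct analysis—visible already in the $n=2,3$ cases and extending to general $n$ by the same scheme—forces $u' = \ono_n$ and $t = \tilde\alp_i^{-1}$, while showing that the would-be entries of $v$ are precisely $\vpi^{-1} x_{ab}$; integrality then selects only the trivial coset $x \equiv 0$. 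This yields
\[
U_\frkp(\alp_i) h^{\ord}_\pi(w_n) = \wp_n(\tilde\alp_i^{-1})^{1/2} \prod_{l=1}^i \mu_l(\vpi^{-1}) = q^{i(n-i)/2} \prod_{l=1}^i \mu_l(\vpi)^{-1},
\]
and a strictly parallel argument, using $w_n \bet_j^{-1} = \diag(\ono_{n-j},\vpi\ono_j) w_n$, produces the companion formula. Running the same coset localization at an arbitrary $g \in B_n w_n \scrn_n$—rather than only at $g = w_n$—shows that the supports are preserved as well, so that (i) follows and $U_\frkp(\alp_i) h^{\ord}_\pi$ is genuinely a scalar multiple of $h^{\ord}_\pi$.

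The main obstacle is the combinatorics of the Bruhat decomposition step: while each rank-$n$ case reduces to checking that the ``overflowing'' entries of $v$ are exactly the $\vpi^{-1} x_{ab}$, organizing this cleanly for all $i$ simultaneously requires careful indexing of which matrix entry absorbs which $x_{ab}$. One could bypass the direct computation by invoking the Iwahori-Hecke algebra action on $V^\cali$—$h^{\ord}_\pi$ is the eigenvector corresponding to the longest Weyl element and its Hecke eigenvalues are then forced by the Bernstein presentation—but the direct Bruhat argument is the most transparent way to pin down both the scalar and the normalization $\wp_n^{1/2}$ arising from the induced model.
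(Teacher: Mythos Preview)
Your proposal is correct and follows essentially the same approach as the paper: show the support of $U_\frkp(\alp_i)h^{\ord}_\pi$ stays in $B_n w_n \scrn_n$, then compute the eigenvalue at $w_n$ by observing that only the trivial coset contributes. The paper phrases the support argument slightly more directly---if $h^{\ord}_\pi(g u \alp_i^{-1}) \neq 0$ then $g \in B_n w_n \scrn_n \alp_i u^{-1} = B_n w_n \scrn_n$ since $\alp_i^{-1}\scrn_n\alp_i \subset \scrn_n$---and for the second formula it exploits the shortcut $U_\frkp(\bet_j) = U_\frkp(\alp_{n-j})\,\ome_\pi(\vpi)$ (from $\bet_j = \vpi^{-1}\alp_{n-j}$) rather than repeating the parallel computation you outline; this is marginally slicker but not a substantive difference.
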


\begin{proof}
Let $g\in\GL_n(F)$ be such that $[U_\frkp(\alp_i) h^{\ord}_\pi](g)\neq 0$. 
There exists $u\in\scrn_n$ such that $h^{\ord}_\pi(gu\alp_i^{-1})\neq 0$. 
We have $gu\alp_i^{-1}\in B_nw_n\scrn_n$, and hence $g\in B_nw_n\scrn_n$. 
By the characterization of $h^{\ord}_\pi$ we see that $h^{\ord}_\pi$ is an eigenvector of $U_\frkp(\alp_i)$ with eigenvalue 
\[[U_\frkp(\alp_i)h^{\ord}_\pi](w_n)=\sum_{u\in\scrn_n/\alp_i^{-1}\scrn_n\alp_i^{}}h^{\ord}_\pi(w_n^{}u\alp_i^{-1})=h^{\ord}_\pi(w_n^{}\alp_i^{-1}), \]
from which we obtain the first formula. 
Since $U_\frkp(\bet_j)=U_\frkp(\alp_{n-j})\ome_\pi(\vpi)$, where $\ome_\pi=\prod_{l=1}^n\mu_l$ is the central character of $\pi$, the second formula follows from the first.  
\end{proof}

\begin{definition}\label{def:82}
Let $\sig$ be the irreducible generic quotient of $I(\mu,\nu)$. 
\[\frac{1}{\cale(\sig,\Ad,\addchar)}=L^\GL(1,\sig_u^{}\times\sig_u^\vee)\gam(1,\mu^{-1}\nu,\addchar)
\times\begin{cases}
\frac{1}{\zet_F(1)^2} &\text{if $c(\sig)=0$, }\\
\frac{q^{c(\sig)}}{\zet_F(1)} &\text{if $c(\sig)>0$ }
\end{cases}\]
(cf. Definition 6.5 of \cite{HY}). 
Here we write $\sig_u$ for the unramified component of the first non-zero spherical Bernstein-Zelevinsky derivative of $\pi$ (see Definition 1.3 of \cite{NM} for the precise definition).
\end{definition}

Define the additive character of $N_n$ by 
\[\addchar(u)=\addchar(u_{1,2}+u_{2,3}+\cdots+u_{n-1,n})\]
for $u=(u_{i,j})\in N_n$.  
We redefine $\pi$ as the irreducible generic constituent of $I(\mu_1,\mu_2,\dots,\mu_n)$. 
For $h\in I(\mu_1,\mu_2,\dots,\mu_n)$ we define $W_{\addchar}(h)\in\scrw_{\addchar}(\pi)$ by 
\[W_{\addchar}(g,h)=\int_{N_n}h\left(w_nug\right)\overline{\addchar(u)}\,\d u. \]
Put $W^\ord_\sig=W_{\addchar}(h^\ord_\sig)$. 
For positive integers $\ell$ we define $t_\ell\in\GL_2(F)$ by 
\beq
t_\ell=\begin{bmatrix} 0 & 1 \\ \vpi^\ell & 0 \end{bmatrix}. \label{tag:85}
\eeq
By analogy with the formulas in (\ref{tag:79}), we set
\[\calb_{\sig^\ord}^{[\ell]}=\frac{\zet_F(2)}{L^\GL(1,\sig\times\sig^\vee)}\La W^\ord_\sig,\sig^\vee(t_\ell)^{-1}W^{\ord \natural}_\sig\Ra. \]

The following result is Corollary 6.6 of \cite{HY}. 

\begin{lemma}\label{lem:82}
Let $\sig$ be the irreducible generic quotient of $I(\mu,\nu)$. 
We denote the exponent of the conductor of $\sig$ by $c(\sig)$. 
If $\ell$ is sufficiently large, then 
\[\frac{\calb_{\sig^\ord}^{[\ell]}}{\calb_\sig}=\frac{q^{\ell/2}\nu(\vpi)^\ell}{[\calk_0^{(2)}(\frkp^{c(\sig)}):\cali_0^{(2)}(\frkp^\ell)]}\cale(\sig,\Ad,\addchar)\mu(-1). \]
\end{lemma}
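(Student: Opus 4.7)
The statement is essentially Corollary 6.6 of the earlier paper of Hsieh--Yamana \cite{HY}, so the approach is to verify that our normalizations match theirs and invoke that result; below I outline how the argument goes and where the various factors come from.

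First, I would relate the ordinary vector $W^\ord_\sig$ to the essential vector $W_\sig$. By Proposition \ref{prop:81} applied with $n=2$, $j=1$, and $(\mu_1,\mu_2)=(\mu,\nu)$, the vector $h^\ord_\sig$ is a $U_\frkp(\bet_1)$-eigenvector with eigenvalue $q^{1/2}\nu(\vpi)$. Writing $t_\ell^{-1}$ as $\vpi^{-\ell}\cdot(\bet_1^\ell\cdot u_\ell)$ for an element $u_\ell\in\cali_0^{(2)}(\frkp^\ell)$, iterating the eigenvalue $\ell$ times immediately produces the numerator $q^{\ell/2}\nu(\vpi)^\ell$ in the formula. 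Using the characterization of $h^\ord_\sig$ on the big cell $B_2w_2N_2$ together with the $p$-stabilization expressing $W^\ord_\sig$ as a linear combination of translates of the essential $W_\sig$, one obtains an explicit description of $W^\ord_\sig$ and of its $\natural$-conjugate.

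Second, I would unfold the invariant pairing from (\ref{tag:83}) as an integral over $N_1\bsl \GL_1(F)=F^\times$. Plugging in the explicit formula for $W^\ord_\sig$ on the torus, the pairing $\La W^\ord_\sig,\sig^\vee(t_\ell)^{-1}W^{\ord\natural}_\sig\Ra$ reduces to a Tate-type integral in the character $\mu^{-1}\nu$ supported in a shrinking neighborhood whose measure is exactly $[\calk_0^{(2)}(\frkp^{c(\sig)}):\cali_0^{(2)}(\frkp^\ell)]^{-1}$; this accounts for the index in the denominator. To extract the gamma factor $\gam(1,\mu^{-1}\nu,\addchar)$ that enters $\cale(\sig,\Ad,\addchar)$, I would compare the resulting Tate integral with the one defining $\calb_\sig=\frac{\zet_F(2)}{L^\GL(1,\sig\times\sig^\vee)}\La W_\sig,W_{\sig^\vee}\Ra$ via the Tate functional equation, using Proposition \ref{prop:80} to identify $\pi^\vee(\xi_{1,c(\sig)})\widetilde{W_\sig}$ with $\vep^\GL(\tfrac12,\sig,\addchar)W_{\sig^\vee}$. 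The case split on $c(\sig)=0$ versus $c(\sig)>0$ in Definition \ref{def:82} mirrors the split between the spherical and ramified Tate local factors, producing the prefactor $\zet_F(1)^{-2}$ or $q^{c(\sig)}\zet_F(1)^{-1}$ respectively, and the residual ratio $L^\GL(1,\sig_u\times\sig_u^\vee)^{-1}$ from the Bernstein--Zelevinsky derivative $\sig_u$.

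The main obstacle is bookkeeping. The sign $\mu(-1)$ arises from the interaction of the $T_\delta$-twist hidden in the definition of $^\natural$ (see (\ref{tag:72})) with the $\vpi^\ell$-scaling in $t_\ell$ and a central-character evaluation at $-1$; tracking it correctly, together with the precise power of $q$ in the index and keeping the normalized volume-one Haar measures consistent with the Tate functional equation throughout, is the nontrivial part. These bookkeeping checks are carried out in detail in Section 6 of \cite{HY}, and since the representation-theoretic setup for $\U(1,1)$ at a split prime is identical to that considered there (both reduce to $\GL_2(\QQ_p)$ via $\imath_\frkp'$), their proof applies verbatim once one notes that the $\natural$-twist used here agrees with the contragredient normalization used there up to the explicit factor $\mu(-1)$ traced above.
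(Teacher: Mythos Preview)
Your approach matches the paper's exactly: the paper does not prove this lemma at all but simply cites it as Corollary 6.6 of \cite{HY}, which is precisely what you do. Your additional outline of where each factor comes from goes beyond what the paper provides, but is consistent with the cited result.
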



\subsection{Depleted vectors of representations of $\GL_3(F)$}\label{ssec:85}

Let $\mu,\nu,\rho$ be characters of $F^\times$. 
The space $V$ of $\pi=I(\nu,\rho,\mu)$ consists of functions $h:\GL_3(F)\to\CC$ which satisfy
\[h\left(\begin{bmatrix} a & y & z \\ 0 & b & x \\ 0 & 0 & c \end{bmatrix}g\right)=\nu(a)\rho(b)\mu(c)|ac^{-1}|h(g)\]
for $a,b,c\in F^\times$, $x,y,z\in F$ and $g\in\GL_3(F)$. 

Let $\calp_{1,2}=\{(g_{ij})\in\GL_3(F)\;|\;g_{21}=g_{31}=0\}$ be a maximal parabolic subgroup of $\GL_3(F)$. \index{$\calp_{1,2}$}
Given $h''\in I(\rho,\mu)$ and $\phi_1,\phi_3\in\cals(F)$, we define $h_{\phi_1,h'',\phi_3}^{\nu,\rho,\mu}\in I(\nu,\rho,\mu)$ by 
\[h_{\phi_1,h'',\phi_3}^{\nu,\rho,\mu}\left(\begin{bmatrix} a & \\ & g \end{bmatrix}\begin{bmatrix} & 1 \\ \ono_2 & \end{bmatrix}\begin{bmatrix} 1 & 0 & z \\ 0 & 1 & x \\ 0 & 0 & 1 \end{bmatrix}\right)
=\frac{|a|\nu(a)}{|\det g|^{1/2}}h''(g)\phi_1(x)\phi_3(z) \]
for $g\in\GL_2(F)$, $a\in F^\times$, $x,z\in F$, and by setting $h_{\phi_1,h'',\phi_3}^{\nu,\rho,\mu}(g)=0$ unless $g\in\calp_{1,2}w_3N_3$. 
Given $\phi_2\in\cals(F)$, we define $h_{\phi_2}^{\rho,\mu}\in I(\rho,\mu)$ by
\[h_{\phi_2}^{\rho,\mu}\left(\begin{bmatrix} a & x \\ 0 & d \end{bmatrix}\begin{bmatrix} 0 & 1 \\ 1 & 0 \end{bmatrix}\begin{bmatrix} 1 & z \\ 0 & 1 \end{bmatrix}\right)
=|ad^{-1}|^{1/2}\rho(a)\mu(d)\phi_2(z)\]
for $d\in F^\times$ and by setting $h_{\phi_2}^{\rho,\mu}(g)=0$ unless $g\in B_2w_2N_2$. 
Put  
\[h_{\phi_1,\phi_2,\phi_3}^{\nu,\rho,\mu}=h_{\phi_1,h_{\phi_2}^{\rho,\mu},\phi_3}^{\nu,\rho,\mu}. \]
Bear in mind that the ordinary vector $h^\mathrm{ord}_\pi\in I(\nu,\rho,\mu)$ with respect to $(\nu,\rho,\mu)$ is given by
\[h^\mathrm{ord}_\pi=h_{\II_\frko,\II_\frko,\II_\frko}^{\nu,\rho,\mu}. \]

\begin{remark}\label{rem:83}
Given $\phi\in\cals(F)$ and $t\in F^\times$, we define $\phi^t\in\cals(F)$ by $\phi^t(x)=\phi(tx)$ for $x\in F$. 
One can show that for $a,b,c\in F^\times$ 
\[\pi\left(\begin{bmatrix} a & 0 & 0 \\ 0 & b & 0 \\ 0 & 0 & c \end{bmatrix}\right)h_{\phi_1,\phi_2,\phi_3}^{\nu,\rho,\mu}=\nu(c)\rho(b)\mu(a)|ac^{-1}|h_{\phi_1^{c/b},\phi_2^{b/a},\phi_3^{c/a}}^{\nu,\rho,\mu}\]
(cf. Remark 2.3 of \cite{HY2}). 
In particular, $h^\mathrm{ord}_\pi$ and $h_{\II_\frko}^{\rho,\mu}$ are eigenvectors of $U_p$-operators (cf. Proposition 5.4 of \cite{HY}). 
The vector $h_{\II_\frko}^{\rho,\mu}$ is called the normalized $p$-stabilized new vector with respect to $\mu(\vpi)$. 
\end{remark}


\subsection{$p$-depletion}\label{ssec:86}

Define subgroups of $\scrn_3$ by 
\begin{align*}
\scrn_{1,2}&=\left\{\left.\begin{bmatrix} 1 & y & z \\ 0 & 1 & 0 \\ 0 & 0 & 1 \end{bmatrix}\right|\;y,z\in\frko\right\}, & 
\scrn_{2,1}&=\left\{\left.\begin{bmatrix} 1 & 0 & z \\ 0 & 1 & x \\ 0 & 0 & 1 \end{bmatrix}\right|\;x,z\in\frko\right\}.  
\end{align*}
Fix a prime element $\vpi$ of $\frko$. 
The space $\calc(\frko^\times,\CC)$ consists of $\CC$-valued locally constant functions on $\frko^\times$. 
Given $\breve\phi\in\calc(\frko^\times,\CC)$, we define $\bfU_{\frkp^c}^{\breve\phi} h^{\ord}_\pi\in V^{\scrn_{2,1}}$ by 
\[\bfU_{\frkp^c}^{\breve\phi} h^{\ord}_\pi=\frac{1}{q^j\mu(\vpi)^j}\sum_{y\in (\frko/\frkp^j)^\times}\sum_{z\in\frko/\frkp^j}\breve\phi(y)\pi\left(\begin{bmatrix} \vpi^j & y & z \\ 0 & 1 & 0 \\ 0 & 0 & 1 \end{bmatrix}\right)h^{\ord}_\pi, \index{$\bfU_{\frkp^c}^{\breve\phi}$}\] 
where $j\in\NN$ is such that $\breve\phi$ factors through the quotient $\frko^\times\to(\frko/\frkp^j)^\times$. 

\begin{lemma}\label{lem:83}
The definition of $\bfU_{\frkp^c}^{\breve\phi} h^{\ord}_\pi$ is independent of the choice of $j$. 
\end{lemma}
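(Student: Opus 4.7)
The plan is to show that whenever $\breve\phi$ factors through $\frko^\times\to(\frko/\frkp^j)^\times$ for some $j\ge 1$, the $j$-th and $(j{+}1)$-st expressions for $\bfU_{\frkp^c}^{\breve\phi}h^\ord_\pi$ coincide; independence of $j$ then follows by iteration. Denote the defining sum at level $j$ by $S_j$, and write $A(a,y,z)$ for the matrix appearing inside $\pi(\cdot)$ in its definition. First I would expand $S_{j+1}$: parametrize each $y'\in(\frko/\frkp^{j+1})^\times$ as $y'=y+\vpi^j y''$ with $y$ running over chosen lifts of $(\frko/\frkp^j)^\times$ and $y''\in\frko/\frkp$, and similarly $z'=z+\vpi^j z''$ with $z\in\frko/\frkp^j$ (lifted) and $z''\in\frko/\frkp$. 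Since $j\ge 1$ these parametrizations are bijective with the correct cardinalities, and $\breve\phi(y')=\breve\phi(y)$ by hypothesis.

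Next, a direct multiplication yields the factorization
\[
A(\vpi^{j+1},\,y+\vpi^j y'',\,z+\vpi^j z'')=A(\vpi^j,y,z)\cdot A(\vpi,y'',z'').
\]
Substituting this into $S_{j+1}$ and moving the sum over $(y'',z'')$ past $\pi(A(\vpi^j,y,z))$ gives
\[
S_{j+1}=\frac{1}{q^{j+1}\mu(\vpi)^{j+1}}\sum_{y,z}\breve\phi(y)\,\pi(A(\vpi^j,y,z))\,V\,h^\ord_\pi,
\]
where $V:=\sum_{y'',z''\in\frko/\frkp}\pi(A(\vpi,y'',z''))$.

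The decisive step is identifying $V h^\ord_\pi$ with an eigenvalue action of the Hecke operator $U_\frkp(\bet_1)$. A short coset calculation with $\bet_1=\diag(\vpi^{-1},1,1)$ shows that $\bet_1^{-1}\scrn_3\bet_1$ consists of upper unipotent matrices whose $(1,2)$ and $(1,3)$ entries lie in $\vpi\frko$, so the matrices $\{A(\vpi,y'',z'')\}_{y'',z''\in\frko/\frkp}$ coincide with the transversal $\{u\bet_1^{-1}\mid u\in\scrn_3/\bet_1^{-1}\scrn_3\bet_1\}$ appearing in the definition of $U_\frkp(\bet_1)$ in Section \ref{ssec:84}. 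Hence $V=U_\frkp(\bet_1)$ on $\scrn_3$-fixed vectors, and Proposition \ref{prop:81} (with $n=3$, $j=1$) gives $V h^\ord_\pi=q\mu(\vpi)\,h^\ord_\pi$. The factor $q\mu(\vpi)$ cancels exactly one power of $q\mu(\vpi)$ in the denominator, reducing $S_{j+1}$ to $S_j$.

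The one place requiring genuine care is this last identification of $V$ with the transversal sum for $U_\frkp(\bet_1)$: one must keep the placement of $\bet_1^{-1}$ consistent with the definition of the Hecke operator, in particular noting that $A(\vpi,y'',z'')$ equals $u\,\bet_1^{-1}$ for the unipotent $u$ with entries $(y'',z'')$ in the top row. Once that bookkeeping is in place the remainder of the argument is purely mechanical, and I do not expect a serious obstacle.
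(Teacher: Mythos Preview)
Your proof is correct and essentially identical to the paper's. Both factor the matrix at level $j{+}1$ as a product of the level-$j$ matrix with $A(\vpi,y'',z'')$, recognize the inner sum $\sum_{y'',z''\in\frko/\frkp}\pi(A(\vpi,y'',z''))$ as acting on $h^{\ord}_\pi$ by the eigenvalue $q\mu(\vpi)$, and cancel this against the extra factor in the normalization; the paper cites Remark~\ref{rem:83} for the eigenvalue while you cite Proposition~\ref{prop:81}, but these are the same fact.
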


\begin{proof}
If $\breve\phi\in\calc(\frko^\times,\CC)$ factors through $\frko^\times\to(\frko/\frkp^j)^\times$, then 
\begin{align*}
&\sum_{y\in (\frko/\frkp^{j+1})^\times}\sum_{z\in\frko/\frkp^{j+1}}\breve\phi(y)\pi\left(\begin{bmatrix} \vpi^{j+1} & y & z \\ 0 & 1 & 0 \\ 0 & 0 & 1 \end{bmatrix}\right)h^{\ord}_\pi\\
=&\sum_{y\in (\frko/\frkp^j)^\times}\sum_{z\in \frko/\frkp^j}\breve\phi(y)\sum_{b,c\in \frko/\frkp}\pi\left(\begin{bmatrix} \vpi^j & y & z \\ 0 & 1 & 0 \\ 0 & 0 & 1 \end{bmatrix}\begin{bmatrix} \vpi & b & c \\ 0 & 1 & 0 \\ 0 & 0 & 1 \end{bmatrix}\right)h^{\ord}_\pi. 
\end{align*}
The inner sum is $q\mu(\vpi)\pi\left(\begin{bmatrix} \vpi^j & y & z \\ 0 & 1 & 0 \\ 0 & 0 & 1 \end{bmatrix}\right)h^{\ord}_\pi$ (cf. Remark \ref{rem:83}).  
\end{proof}

Let $\chi$ and $\chi'$ be characters of $\frko^\times$. 
We define an operator on $V^{\scrn_{1,2}}$ by 
\[\tht_{\frkp^c}^{\chi'} h=\sum_{y\in(\frko/\frkp)^\times}
\pi\left(\begin{bmatrix} 1 & \frac{y}{\vpi} & 0 \\ 0 & 1 & 0 \\ 0 & 0 & 1 \end{bmatrix}\right)h \]
if $c(\chi')=0$, and 
\[\tht_{\frkp^c}^{\chi'} h=\frkg(\chi^{\prime-1},\addchar)^{-1}\sum_{y\in(\frko/\frkp^{c(\chi')})^\times}
\chi'(y)\pi\left(\begin{bmatrix} 1 & \frac{y}{\vpi^{c(\chi')}} & 0 \\ 0 & 1 & 0 \\ 0 & 0 & 1 \end{bmatrix}\right)h \]
otherwise. 
For positive integers $\ell$ we define 
\[\calw_\ell h=q^{-\ell}\sum_{z\in\frko/\frkp^\ell}\pi\left(\begin{bmatrix} 1 & 0 & \frac{z}{\vpi^\ell} \\ 0 & 1 & 0 \\ 0 & 0 & 1 \end{bmatrix}\right)h. \]
Put $n'=\{1,c(\chi')\}$ and 
\[\vka_{\frkp^c}=\begin{bmatrix} \vpi & 0 & 0 \\ 0 & 1 & 0 \\ 0 & 0 & 1 \end{bmatrix}. \] 
Then we can write $\bfU_{\frkp^c}^{\chi'} h^{\ord}_\pi$ as 
\[\bfU_{\frkp^c}^{\chi'} h^{\ord}_\pi=\mu(\vpi)^{-n'}\frkg(\chi^{\prime-1},\addchar)\pi(\vka_{\frkp^c}^{n'})\calw_{n'}^{}\tht_{\frkp^c}^{\chi'} h^{\ord}_\pi. \]

Given $\phi\in\calc(\frko^\times,\CC)$, we define $\bdTht_\frkp^\phi:V^{\scrn_{2,1}}\to V$ by 
\[\bdTht_\frkp^\phi h=q^{-n}\sum_{x\in\frko/\frkp^n}\sum_{a\in (\frko/\frkp^n)^\times}\phi(a)\addchar\left(\frac{ax}{\vpi^n}\right)\pi\left(\begin{bmatrix} 1 & 0 & 0 \\ 0 & 1 & \frac{x}{\vpi^n}\\ 0 & 0 & 1 \end{bmatrix}\right)h, \index{$\bdTht_\frkp^\phi$}\]
provided that $\phi$ factors through the quotient $\frko^\times\to(\frko/\frkp^n)^\times$. 
Since 
\[\sum_{a\in (\frko/\frkp^{n+1})^\times}\phi(a)\addchar\left(\frac{ax}{\vpi^{n+1}}\right)=q\II_\frkp(x)\sum_{a\in (\frko/\frkp^n)^\times}\phi(a)\addchar\left(\frac{ax}{\vpi^{n+1}}\right), \]
the definition of $\bdTht_\frkp^\phi$ is independent of the choice of $n$. 
If $c(\chi)\geq 1$, then we define an operator on $\tht_\frkp^\chi:V^{\scrn_{2,1}}\to V$ by 
\[\tht_\frkp^\chi h=\frkg(\chi,\addchar)^{-1}\sum_{x\in(\frko/\frkp^{c(\chi)})^\times}
\chi(x)^{-1}\pi\left(\begin{bmatrix} 1 & 0 & 0 \\ 0 & 1 & \frac{x}{\vpi^{c(\chi)}} \\ 0 & 0 & 1 \end{bmatrix}\right)h,\]
and if $\chi$ is trivial, then 
\[\tht_\frkp^\chi h=h-q^{-1}\sum_{x\in\frko/\frkp}
\pi\left(\begin{bmatrix} 1 & 0 & 0 \\ 0 & 1 & \frac{x}{\vpi} \\ 0 & 0 & 1 \end{bmatrix}\right)h. \]

\begin{lemma}\label{lem:84}
$\bdTht_\frkp^\chi=\tht_\frkp^\chi$. 
\end{lemma}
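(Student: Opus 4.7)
The plan is to swap the order of summation in the definition of $\bdTht_\frkp^\chi h$ and evaluate the resulting Gauss-type inner sum. Take $n=\max\{1,c(\chi)\}$, which is a legitimate choice by the independence-of-$n$ comment following the definition of $\bdTht_\frkp^\phi$. Writing
\[
\bdTht_\frkp^\chi h = q^{-n}\sum_{x\in\frko/\frkp^n} S_\chi(x)\,\pi\!\left(\begin{bmatrix} 1 & 0 & 0 \\ 0 & 1 & x/\vpi^n\\ 0 & 0 & 1 \end{bmatrix}\right)h,\qquad S_\chi(x):=\sum_{a\in(\frko/\frkp^n)^\times}\chi(a)\addchar\!\left(\tfrac{ax}{\vpi^n}\right),
\]
it suffices to identify $S_\chi$ with the weights appearing in $\tht_\frkp^\chi h$.

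First I would treat the case $c(\chi)\geq 1$, so $n=c(\chi)$. The standard calculation with primitive characters shows that $S_\chi(x)=0$ unless $x\in\frko^\times$, in which case the substitution $a\mapsto ax^{-1}$ gives $S_\chi(x)=\chi(x)^{-1}\sum_{a}\chi(a)\addchar(a/\vpi^n)$. The final inner sum, after $a\mapsto -a$ and the definition of $\frkg(\chi^{-1},\addchar)$, equals $\chi(-1)\frkg(\chi^{-1},\addchar)$. Combining this with the standard Gauss-sum identity $\frkg(\chi,\addchar)\frkg(\chi^{-1},\addchar)=\chi(-1)q^{c(\chi)}$ yields $q^{-n}S_\chi(x)=\frkg(\chi,\addchar)^{-1}\chi(x)^{-1}$ for $x\in\frko^\times$, which matches the definition of $\tht_\frkp^\chi h$ exactly.

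For the trivial case $c(\chi)=0$, $n=1$, the inner sum $S_\chi(x)=\sum_{a\in(\frko/\frkp)^\times}\addchar(ax/\vpi)$ equals $q-1$ when $x\equiv 0\pmod\frkp$ and $-1$ when $x\in(\frko/\frkp)^\times$, by orthogonality of the nontrivial additive character on the residue field. Separating the two ranges gives
\[
\bdTht_\frkp^\chi h=q^{-1}(q-1)h-q^{-1}\sum_{x\in(\frko/\frkp)^\times}\pi\!\left(\begin{bmatrix} 1 & 0 & 0 \\ 0 & 1 & x/\vpi\\ 0 & 0 & 1 \end{bmatrix}\right)h=h-q^{-1}\sum_{x\in\frko/\frkp}\pi\!\left(\begin{bmatrix} 1 & 0 & 0 \\ 0 & 1 & x/\vpi\\ 0 & 0 & 1 \end{bmatrix}\right)h,
\]
which is precisely $\tht_\frkp^\chi h$ from the unramified half of its definition.

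There is no real obstacle here: the whole lemma is a bookkeeping exercise, and the only nontrivial input is the conductor-level Gauss-sum reciprocity $\frkg(\chi,\addchar)\frkg(\chi^{-1},\addchar)=\chi(-1)q^{c(\chi)}$, which is standard. The mild care required is in separating the $c(\chi)=0$ and $c(\chi)\geq 1$ cases cleanly, since the definitions of $\tht_\frkp^\chi$ have different shapes in the two regimes, and in confirming that the choice $n=\max\{1,c(\chi)\}$ is compatible with the well-definedness of $\bdTht_\frkp^\chi$.
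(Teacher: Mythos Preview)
Your proof is correct and follows essentially the same approach as the paper. The paper's argument invokes the two identities $\frkg(\chi,\addchar)\frkg(\chi^{-1},\addchar)=q^{c(\chi)}\chi(-1)$ and $\frkg(\chi^{-1},\addchar^{-y})=\II_{\frko^\times}(y)\chi(-y)^{-1}\frkg(\chi^{-1},\addchar)$ for the ramified case and dismisses the unramified case as easy; your computation of $S_\chi(x)$ is precisely the content of the second identity, and your explicit treatment of the $c(\chi)=0$ case fills in what the paper leaves to the reader.
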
 

\begin{proof}
If $c(\chi)\geq 1$, then the relation follows from the equalities  
\begin{align*}
\frkg(\chi,\addchar)\frkg(\chi^{-1},\addchar)&=q^{c(\chi)}\chi(-1), \\ 
\frkg(\chi^{-1},\addchar^{-y})&=\II_{\frko^\times}(y)\chi(-y)^{-1}\frkg(\chi^{-1},\addchar)
\end{align*} 
for $y\in\frko$. 
One can easily prove the case when $c(\chi)=0$. 
\end{proof}

\begin{definition}[$p$-depletion]\label{def:83}
Put 
\begin{align*}
n&=\max\{1,c(\chi)\}, & 
n'&=\max\{1,c(\chi')\}, & 
c&=n+n'.  
\end{align*}
For $\ell\geq c$ we define the $p$-depletion of $h^{\ord}_\pi$ by 
\begin{align*}
\bfU_{p,\ell}^{\chi,\chi'}h^{\ord}_\pi
&=\calw_{\ell-n'}^{}\bdTht_\frkp^\chi\bfU_{\frkp^c}^{\chi'} h^\mathrm{ord} \\
&=\mu(\vpi)^{-n'}\frkg(\chi^{\prime-1},\addchar)\cdot \pi(\vka_{\frkp^c})^{n'}\calw_\ell\bdTht_\frkp^\chi\tht_{\frkp^c}^{\chi'} h^\mathrm{ord}. 
\end{align*}
\end{definition}


\subsection{A description of the $p$-depleted vector}\label{ssec:87}

Define $\vph_\chi^{},\vph'_{\chi'}\in\cals(F)$ by 
\begin{align*}
\vph_\chi(x)&=\chi(x)\II_{\frko^\times}(x), &
\vph'_{\chi'}&=\begin{cases}
\vph_{\chi^{\prime-1}} &\text{if $c(\chi')\geq 1$, }\\
q\II_\frkp-\II_\frko &\text{if $c(\chi')=0$ }
\end{cases}
\end{align*}
for characters $\chi$ and $\chi'$ of $\frko^\times$. 

\begin{proposition}\label{prop:82}
If $\ell\geq n+n'$, then 
\begin{align*}
\bfU_{p,\ell}^{\chi,\chi'}h^{\ord}_\pi&=\mu(\vpi)^{-n'}\frkg(\chi^{\prime-1},\addchar)\cdot \pi(\vka_{\frkp^c}^{n'})h^\ddagger, & 
h^\ddagger&=q^{-\ell}h_{\widehat{\varphi_\chi},\widehat{\varphi_{\chi'}'},\II_{\frkp^{-\ell}}}^{\nu,\rho,\mu}. \index{$h^\ddagger$}
\end{align*}
\end{proposition}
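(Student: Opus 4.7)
The plan is to reduce the stated formula to three independent slot-wise Fourier identities, after identifying $h^{\ord}_\pi$ with an explicit induced vector. First I would apply Lemma~\ref{lem:84} to replace $\bdTht_\frkp^\chi$ by $\tht_\frkp^\chi$ and then use the second expression in Definition~\ref{def:83}; cancelling the common prefactor $\mu(\vpi)^{-n'}\frkg(\chi^{\prime-1},\addchar)\pi(\vka_{\frkp^c}^{n'})$ on both sides, the proposition becomes equivalent to
\[
\calw_\ell\,\tht_\frkp^\chi\,\tht_{\frkp^c}^{\chi'}\, h^{\ord}_\pi \;=\; q^{-\ell}\, h^{\nu,\rho,\mu}_{\widehat{\varphi_\chi},\;\widehat{\varphi'_{\chi'}},\;\II_{\frkp^{-\ell}}}.
\]

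Next I would verify the concrete identity $h^{\ord}_\pi = h^{\nu,\rho,\mu}_{\II_\frko,\II_\frko,\II_\frko}$. Both functions are supported on the big Bruhat cell $B_3 w_3 N_3$, are $\scrn_3$-invariant under right translation, and take value $1$ at $w_3$; the equality follows by comparing their Iwasawa decompositions on the parametric element $\bigl[\begin{smallmatrix}a & 0 \\ 0 & g\end{smallmatrix}\bigr]\, w'\, n$ used in the definition of $h^{\nu,\rho,\mu}_{\phi_1,\phi_2,\phi_3}$, together with the factorisation $w_3 = w' \cdot \mathrm{diag}(1,J)$.

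The heart of the argument is then to apply the three operators $\tht_{\frkp^c}^{\chi'}$, $\tht_\frkp^\chi$, $\calw_\ell$ in turn, showing at each stage that the vector remains of the form $h^{\nu,\rho,\mu}_{\phi_1,\phi_2,\phi_3}$ and that exactly one Schwartz factor is modified. Each operator is a finite weighted sum of right translations by matrices in a single root subgroup of $N_3$, namely $U_{12}$, $U_{23}$, and $U_{13}$ respectively, which on the parametric form act on the $\phi_2$, $\phi_1$, and $\phi_3$ slots. Tracking the Iwasawa decomposition of $\bigl[\begin{smallmatrix}a & 0 \\ 0 & g\end{smallmatrix}\bigr]\, w'\, n \cdot u$ for $u$ in each root subgroup and applying the Gauss-sum Fourier inversion identity, one checks that $\tht_{\frkp^c}^{\chi'}$ produces $\widehat{\varphi'_{\chi'}}$ in slot $2$, $\tht_\frkp^\chi$ produces $\widehat{\varphi_\chi}$ in slot $1$, and $\calw_\ell$ produces $q^{-\ell}\,\II_{\frkp^{-\ell}}$ in slot $3$ by plain averaging of right translates. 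Composing the three transformations starting from $(\II_\frko,\II_\frko,\II_\frko)$ then yields the claimed Schwartz triple.

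The main obstacle is the bookkeeping required to verify these three slot-level identities, especially the Gauss sum normalisations in $\tht_\frkp^\chi$ and $\tht_{\frkp^c}^{\chi'}$, and the split cases $c(\chi') = 0$ versus $c(\chi') \geq 1$ in the definition of $\varphi'_{\chi'}$. In addition, one must account for the non-commutation of $U_{12}$ and $U_{23}$ in $\GL_3(F)$, which produces $U_{13}$-commutators of the form $U_{13}(xy/\vpi^{n+n'})$ when the first two operators are re-ordered; the hypothesis $\ell \geq n + n'$ is crucial here, because it ensures that these commutator terms lie in the domain of the outer $U_{13}$-averaging operator $\calw_\ell$ and are absorbed harmlessly, producing no additional contribution to the final identity.
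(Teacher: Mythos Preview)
Your overall strategy matches the paper's: reduce to the identity $\calw_\ell\tht_\frkp^\chi\tht_{\frkp^c}^{\chi'}h^{\ord}_\pi=q^{-\ell}h^{\nu,\rho,\mu}_{\widehat{\varphi_\chi},\widehat{\varphi'_{\chi'}},\II_{\frkp^{-\ell}}}$, use the identification $h^{\ord}_\pi=h^{\nu,\rho,\mu}_{\II_\frko,\II_\frko,\II_\frko}$, and track the action of each operator on the Schwartz slots, with the Fourier identities $\tht_\frkp^\chi\II_\frko=\widehat{\varphi_\chi}$ and $\tht_{\frkp^c}^{\chi'}\II_\frko=\widehat{\varphi'_{\chi'}}$ (the paper's Lemma~\ref{lem:85}).

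However, your central claim that ``at each stage the vector remains of the form $h^{\nu,\rho,\mu}_{\phi_1,\phi_2,\phi_3}$ and exactly one Schwartz factor is modified'' is false for the first step. Right translation by $U_{12}(y/\vpi^{n'})$ does not act only on slot~2: commuting it past the parametric element $\begin{bmatrix}1&0&z\\0&1&x\\0&0&1\end{bmatrix}$ shifts the $z$-coordinate by $-xy/\vpi^{n'}$, so after $\tht_{\frkp^c}^{\chi'}$ alone the result carries the coupled factor $\II_\frko(z-xy/\vpi^{n'})$ rather than a product $\phi_1(x)\phi_3(z)$. You also misdiagnose the source: the problematic commutator is between $U_{12}$ from the operator and the \emph{parametric} $(2,3)$-entry $x$, not between the two theta operators themselves (in fact $U_{23}(b)U_{12}(a)$ has no $U_{13}$-component, so in your order the operators combine cleanly).

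The paper fixes this by first commuting $\calw_\ell$ past $\bdTht_\frkp^\chi$ (valid since $U_{13}$ and $U_{23}$ commute) and then treating $\calw_\ell\tht_{\frkp^c}^{\chi'}$ as a single unit (Lemma~\ref{lem:86}(\ref{lem:861})): the $\calw_\ell$-average over $z\mapsto z+c/\vpi^\ell$ absorbs the shift $-xy/\vpi^{n'}$ provided $\ell\geq n'+m$ where $\phi_1$ is supported on $\frkp^{-m}$ (here $m=0$), yielding $q^{-\ell}h^{\nu,\rho,\mu}_{\II_\frko,\tht_{\frkp^c}^{\chi'}h'',\II_{\frkp^{-\ell}}}$. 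Only then does one apply $\bdTht_\frkp^\chi$, which by Lemma~\ref{lem:86}(\ref{lem:862}) acts cleanly on slot~1. Your proposal can be salvaged along these lines, but as written the stage-by-stage product-form claim does not hold.
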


Define subspaces $\cals(\frko)$ and $\cals(F/\frko)$ of $\cals(F)$ by 
\begin{align*}
\cals(\frko)&=\{\phi\in\cals(F)\;|\;\phi(x)=0\text{ for }x\notin\frko\}, \\
\cals(F/\frko)&=\{\phi\in\cals(F)\;|\;\phi(x+c)=0\text{ for }x\in F\text{ and }c\in\frko\}. 
\end{align*} 
The operator $\tht_\frkp^\chi:\cals(F/\frko)\to\cals(F/\frko)$ is defined by 
\[\tht_\frkp^\chi\phi(x)=\phi(x)-q^{-1}\sum_{a\in \frko/\frkp}\phi(x+a\vpi^{-1}) \]
if $c(\chi)=0$, and by 
\[\tht_\frkp^\chi\phi(x)=\frkg(\chi,\addchar)^{-1}\sum_{a\in(\frko/\frkp^{c(\chi)})^\times}\chi(a)^{-1}\phi(x+a\vpi^{-c(\chi)}) \]
if $c(\chi)\geq 1$. 
We define the operator $\tht_{\frkp^c}^{\chi'}:\cals(F/\frko)\to\cals(F/\frko)$ by 
\[\tht_{\frkp^c}^{\chi'}\phi(x)=\sum_{a\in (\frko/\frkp)^\times}\phi(x+a\vpi^{-1}) \]
if $c(\chi')=0$, and by $\tht_{\frkp^c}^{\chi'}=\tht^{\chi^{\prime-1}}_\frkp$ if $c(\chi')\geq 1$. 
We shall prove the following lemma to prove Proposition \ref{prop:82}. 

\begin{lemma}\label{lem:85}
Let $\chi,\chi'$ be characters of $\frko^\times$. 
Let $\phi\in\cals(F/\frko)$. 
Then for $y\in F$ 
\begin{align*}
\widehat{\tht_\frkp^\chi\phi}(y)&=\varphi_\chi(-y)\cdot\widehat{\phi}(y), &
\widehat{\tht_{\frkp^c}^{\chi'}\phi}(y)&=\vph'_{\chi'}(-y)\cdot\widehat{\phi}(y). 
\end{align*}
\end{lemma}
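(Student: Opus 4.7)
The plan is to reduce both identities to elementary Gauss sum evaluations by exchanging the Fourier integral with the finite sum defining the theta operator. First I would record the basic translation formula
\[
\int_F \phi(x+c)\,\addchar(-xy)\,\d x \;=\; \addchar(cy)\,\widehat{\phi}(y) \qquad (c\in F,\ y\in F),
\]
together with the fact that $\widehat{\phi}$ is supported in $\frko$ whenever $\phi\in\cals(F/\frko)$ (this is immediate from Fourier inversion and the definition of $\cals(F/\frko)$). This last fact is crucial: it means we only need to evaluate the Gauss-type sums that will appear at points $y\in\frko$, which simplifies the case analysis considerably.

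Applying the translation formula term by term to the definition of $\tht_\frkp^\chi\phi$ factors out $\widehat{\phi}(y)$, reducing the first identity to
\[
\frkg(\chi,\addchar)^{-1}\sum_{a\in(\frko/\frkp^{c(\chi)})^\times}\chi(a)^{-1}\,\addchar\!\left(\tfrac{ay}{\vpi^{c(\chi)}}\right) \;=\; \vph_\chi(-y) \qquad (y\in\frko),
\]
with the convention that when $c(\chi)=0$ the sum collapses to $\sum_{a\in(\frko/\frkp)^\times}\addchar(0)-q^{-1}\sum_{a\in\frko/\frkp}\addchar(0)$-type expression coming from the alternate definition of $\tht_\frkp^\chi$. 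For $c(\chi)\ge 1$ and $y\in\frko^\times$, the substitution $a\mapsto -ay^{-1}$ converts the left-hand side into $\chi(-y)\frkg(\chi,\addchar)/\frkg(\chi,\addchar)=\chi(-y)=\vph_\chi(-y)$. For $c(\chi)\ge 1$ and $y\in\frkp$, I would decompose $a=a_0(1+\vpi^{c(\chi)-1}b)$ with $b\in\frko/\frkp$: the character $\chi$ is trivial on $1+\vpi^{c(\chi)-1}\frko/1+\vpi^{c(\chi)}\frko$, and summing over $b$ produces either a full sum $\sum_{a_0}\chi(a_0)^{-1}=0$ (when $y\in\frkp^{c(\chi)}$ so the additive character is trivial) or an inner sum $\sum_b\addchar(a_0 by/\vpi)$ that vanishes because $y/\vpi\notin\frko$ fails to occur---rather, an iterated version of this stratification argument; in every case the sum is zero, matching $\vph_\chi(-y)=0$. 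The case $c(\chi)=0$ is handled directly: the unramified version of $\tht_\frkp^\chi$ produces $\sum_{a\in\frko/\frkp}\addchar(ay/\vpi)-1$, which equals $q-1=\vph_\chi(-y)$ when $y\in\frkp$ and equals $-1=\vph_\chi(-y)$ when $y\in\frko^\times$ (here $\vph_\chi=\II_{\frko^\times}$).

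For the second identity, when $c(\chi')\ge 1$ we have $\tht_{\frkp^c}^{\chi'}=\tht_\frkp^{\chi^{\prime-1}}$ by definition, so the formula follows from the first part with $\chi$ replaced by $\chi^{\prime-1}$. When $c(\chi')=0$, the translation formula gives
\[
\widehat{\tht_{\frkp^c}^{\chi'}\phi}(y)\;=\;\biggl(\sum_{a\in(\frko/\frkp)^\times}\addchar(ay/\vpi)\biggr)\widehat{\phi}(y),
\]
and the bracketed sum equals $-1$ when $y\in\frko^\times$ and equals $q-1$ when $y\in\frkp$, which is exactly $\vph'_{\chi'}(-y)=q\II_\frkp(-y)-\II_\frko(-y)$ on the support $y\in\frko$ of $\widehat{\phi}$.

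The main obstacle is the vanishing of the twisted Gauss sum at intermediate valuations $y\in\vpi^k\frko^\times$ with $1\le k<c(\chi)$; this is standard but requires the careful stratification of $(\frko/\frkp^{c(\chi)})^\times$ described above. Everything else is a direct exchange of sum and integral.
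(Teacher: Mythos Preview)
Your strategy matches the paper's exactly: apply the translation identity to factor out $\widehat{\phi}(y)$, note that $\widehat{\phi}$ is supported in $\frko$, and evaluate the resulting character sum. Two execution errors need fixing.

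First, in the unramified case $c(\chi)=0$ of the first identity you have written the wrong operator. The definition is $\tht_\frkp^\chi\phi(x)=\phi(x)-q^{-1}\sum_{a\in\frko/\frkp}\phi(x+a\vpi^{-1})$, so the multiplier is $1-q^{-1}\sum_{a}\addchar(ay/\vpi)$, which is $0$ for $y\in\frkp$ and $1$ for $y\in\frko^\times$; that is precisely $\II_{\frko^\times}(y)=\vph_\chi(-y)$. Your expression $\sum_{a}\addchar(ay/\vpi)-1$ with values $q-1$ and $-1$ is the multiplier for $\tht_{\frkp^c}^{\chi'}$, and the claim that those values equal $\II_{\frko^\times}(-y)$ is internally inconsistent.

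Second, in the stratification for $y\in\frkp$ with $c(\chi)\ge 1$ you assert that $\chi$ is trivial on $(1+\frkp^{c(\chi)-1})/(1+\frkp^{c(\chi)})$. By the definition of the conductor this is false: $\chi$ is \emph{nontrivial} there, and it is that nontriviality that makes the argument work. Writing $a=a_0(1+\vpi^{c(\chi)-1}b)$, for $y\in\frkp$ the additive factor $\addchar(a_0 b y/\vpi)$ equals $1$, so the inner sum over $b$ becomes $\sum_{b\in\frko/\frkp}\chi(1+\vpi^{c(\chi)-1}b)^{-1}=0$ since this is a sum of a nontrivial character of $\frko/\frkp$.
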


\begin{proof}
Note that $\phi\in\cals(F/\frko)\Leftrightarrow\widehat{\phi}\in\cals(\frko)$. 
Observe that if $c(\chi)=0$, then 
\[\widehat{\tht_\frkp^\chi\phi}(y)=\widehat{\phi}(y)-q^{-1}\sum_{a\in \frko/\frkp}\addchar(ay\vpi^{-1})\widehat{\phi}(y)=\II_{\frko^\times}(y)\widehat{\phi}(y), \]
while if $c(\chi)\geq 1$, then 
\[\widehat{\tht_\frkp^\chi\phi}(y)
=\frkg(\chi,\addchar)^{-1}\sum_{a\in (\frko/\frkp^{c(\chi)})^\times}\chi(a)^{-1}\addchar(ay\vpi^{-c(\chi)})\widehat{\phi}(y)
=\chi(-y)\II_{\frko^\times}(y)\widehat{\phi}(y). \]
Thus we obtain the first identity. 
If $c(\chi')=0$, then 
\[\widehat{\tht_{\frkp^c}^{\chi'}\phi}(y)
=\sum_{a\in (\frko/\frkp)^\times}\addchar(ay\vpi^{-1})\widehat{\phi}(y)
=\vph_{\chi'}'(-y)\widehat{\phi}(y), \]
which proves the second identity. 
\end{proof} 

\begin{lemma}\label{lem:86}
Let $\phi_1\in\cals(F/\frko)$, $\phi_3\in\cals(F)$ and $h''\in I(\rho,\mu)$. 
\begin{enumerate}
\item\label{lem:861} Take $m$ so that $\phi_1(x)=0$ unless $x\in\frkp^{-m}$. 
If $\ell\geq n'+m$, then 
\[\calw_\ell\tht_{\frkp^c}^{\chi'} h^{\nu,\rho,\mu}_{\phi_1,h'',\II_\frko}=q^{-\ell}h^{\nu,\rho,\mu}_{\phi_1,\tht_{\frkp^c}^{\chi'}h'',\II_{\frkp^{-\ell}}}, \]
where $\tht_{\frkp^c}^{\chi'} h''(g)=\displaystyle\sum_{y\in(\frko/\frkp)^\times}
h''\left(g\begin{bmatrix} 1 & \frac{y}{\vpi} \\ 0 & 1 \end{bmatrix}\right)$ if $c(\chi')=0$, and 
\[\tht_{\frkp^c}^{\chi'} h=\frkg(\chi^{\prime-1},\addchar)^{-1}\sum_{y\in(\frko/\frkp^{c(\chi')})^\times}
\chi'(y)h''\left(g\begin{bmatrix} 1 & \frac{y}{\vpi^{c(\chi')}} \\ 0 & 1 \end{bmatrix}\right) \]
otherwise. 
\item\label{lem:862} $\bdTht_\frkp^\chi h^{\nu,\rho,\mu}_{\phi_1,h'',\phi_3}=h^{\nu,\rho,\mu}_{\tht_\frkp^\chi\phi_1,h'',\phi_3}$. 
\end{enumerate}
\end{lemma}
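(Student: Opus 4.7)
The heart of both assertions is a very concrete statement about how right-translation by a one-parameter unipotent subgroup acts on the Bruhat-cell parametrization of $h^{\nu,\rho,\mu}_{\phi_1,h'',\phi_3}$. My first step is to verify by direct multiplication in $\GL_3(F)$ the two identities
\[
p(a,g)\,w\,n(x,z)\cdot\begin{bmatrix}1 & u & 0\\ 0 & 1 & 0\\ 0 & 0 & 1\end{bmatrix}
=p\!\left(a,\,g\begin{bmatrix}1 & u\\ 0 & 1\end{bmatrix}\right) w\,n(x,\,z-ux),
\]
\[
p(a,g)\,w\,n(x,z)\cdot\begin{bmatrix}1 & 0 & 0\\ 0 & 1 & u\\ 0 & 0 & 1\end{bmatrix}
=p(a,g)\,w\,n(x+u,\,z),
\]
where $p(a,g)=\diag(a,g)$ and $w=\bigl[\begin{smallmatrix} & 1\\ \ono_2 & \end{smallmatrix}\bigr]$. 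Evaluating $h^{\nu,\rho,\mu}_{\phi_1,h'',\phi_3}$ at the right-hand sides gives a closed formula for the right-regular action of each of these subgroups, so both parts of the lemma reduce to scalar sum identities.

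For part (\ref{lem:861}), I would substitute $u=y/\vpi^{n'}$ into the first identity and insert it into the definition of $\tht_{\frkp^c}^{\chi'}$: the $\GL_2$-slot produces the operator $\tht_{\frkp^c}^{\chi'}h''$ in the statement, the $\phi_1$-slot is untouched, and $\II_\frko(z)$ is replaced by $\II_\frko(z-yx/\vpi^{n'})$. Averaging by $\calw_\ell$ further shifts $z\mapsto z+z_0/\vpi^\ell$ for $z_0\in\frko/\frkp^\ell$. The support condition on $\phi_1$ forces $yx/\vpi^{n'}\in\frkp^{-n'-m}\subset\frkp^{-\ell}$, from which
\[
q^{-\ell}\!\!\sum_{z_0\in\frko/\frkp^\ell}\!\!\II_\frko\bigl(z+z_0/\vpi^\ell-yx/\vpi^{n'}\bigr)=q^{-\ell}\II_{\frkp^{-\ell}}(z)
\]
independently of $y$. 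The $y$-sum and $z_0$-sum decouple, leaving exactly the claimed section $q^{-\ell}h^{\nu,\rho,\mu}_{\phi_1,\tht_{\frkp^c}^{\chi'}h'',\II_{\frkp^{-\ell}}}$.

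For part (\ref{lem:862}), the second translation identity reduces the assertion to the scalar identity
\[
q^{-n}\!\!\sum_{\substack{x_0\in\frko/\frkp^n\\ a\in(\frko/\frkp^n)^\times}}\!\!\chi(a)\,\addchar\!\bigl(ax_0/\vpi^n\bigr)\,\phi_1\bigl(x+x_0/\vpi^n\bigr)=[\tht_\frkp^\chi\phi_1](x),
\]
with $n=\max\{1,c(\chi)\}$; well-definedness of both sides uses that $\phi_1$ is $\frko$-periodic. Setting $u=x_0/\vpi^n$, which runs over $\vpi^{-n}\frko/\frko$, I would sort terms according to $\ord u\in\{-n,\ldots,0\}$ and compute the inner $a$-sum stratum by stratum. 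For $\ord u>-n$, writing $u=v\vpi^{-k}$ with $v\in\frko^\times$ and $k<n$, the character $a\mapsto\chi(a)\addchar(av\vpi^{-k})$ on $\frko^\times/(1+\frkp^n)$ restricts to $\chi$ on $1+\frkp^{n-1}$ (because $\addchar(\cdot v\vpi^{-k})$ is trivial there), hence is non-trivial since $c(\chi)=n$; these strata therefore vanish. Only $\ord u=-n$ survives and contributes $\chi(-1)\frkg(\chi^{-1},\addchar)=q^n\frkg(\chi,\addchar)^{-1}$, which matches the defining formulas of $\tht_\frkp^\chi$ after the short unramified ($c(\chi)=0$) case is checked directly. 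The main obstacle is the conductor-based orthogonality argument in the ramified stratum $\ord u>-n$; everything else is bookkeeping.
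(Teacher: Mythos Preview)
Your proposal is correct and follows essentially the same approach as the paper. The matrix identities you verify are precisely the ones underlying the paper's computation; for part~(\ref{lem:861}) the paper combines the $(1,2)$ and $(1,3)$ translations into a single product
\[
\begin{bmatrix} 1 & 0 & z \\ 0 & 1 & x \\ 0 & 0 & 1 \end{bmatrix}\begin{bmatrix} 1 & \frac{b}{\vpi^{n'}} & \frac{c}{\vpi^\ell} \\ 0 & 1 & 0 \\ 0 & 0 & 1 \end{bmatrix}
=\begin{bmatrix} 1 & \frac{b}{\vpi^{n'}} & 0 \\ 0 & 1 & 0 \\ 0 & 0 & 1 \end{bmatrix}
\begin{bmatrix} 1 & 0 & z-\frac{bx}{\vpi^{n'}}+\frac{c}{\vpi^\ell} \\ 0 & 1 & x \\ 0 & 0 & 1 \end{bmatrix},
\]
whereas you separate them, but the content is identical.

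The only genuine difference is in part~(\ref{lem:862}): the paper simply invokes Lemma~\ref{lem:84} (the operator identity $\bdTht_\frkp^\chi=\tht_\frkp^\chi$), while you re-derive the underlying Gauss sum identity by hand via the stratification by $\ord u$. Your argument is correct---the key point that the strata with $\ord u>-n$ vanish follows because the function $a\mapsto\chi(a)\addchar(au)$ transforms by the nontrivial character $\chi$ under left translation by $1+\frkp^{n-1}$ (note that this is not itself a multiplicative character, but the transformation property is what makes the sum over $(\frko/\frkp^n)^\times$ vanish). This is exactly what the proof of Lemma~\ref{lem:84} encodes via the Gauss sum identities $\frkg(\chi,\addchar)\frkg(\chi^{-1},\addchar)=q^{c(\chi)}\chi(-1)$ and $\frkg(\chi^{-1},\addchar^{-y})=\II_{\frko^\times}(y)\chi(-y)^{-1}\frkg(\chi^{-1},\addchar)$, so you have effectively reproved that lemma in situ rather than citing it.
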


\begin{remark}\label{rem:84} 
If $c(\chi')\geq 1$, then $h''\mapsto\tht_{\frkp^c}^{\chi'} h''$ coincides with the twisting operator associated to $\chi^{\prime-1}$ (cf. (2.12) of \cite{MH}). 
Clearly, $\tht_{\frkp^c}^{\chi'} h_{\phi_2}^{\rho,\mu}=h_{\tht_{\frkp^c}^{\chi'}\phi_2}^{\rho,\mu}$. 
\end{remark}

\begin{proof}
It suffices to show the equality as a function on the big cell because it is an open dense subset of $G$.   
Observe that 
\[\begin{bmatrix} 1 & 0 & z \\ 0 & 1 & x \\ 0 & 0 & 1 \end{bmatrix}\begin{bmatrix} 1 & \frac{b}{\vpi^{n'}} & \frac{c}{\vpi^\ell} \\ 0 & 1 & 0 \\ 0 & 0 & 1 \end{bmatrix}
=\begin{bmatrix} 1 & \frac{b}{\vpi^{n'}} & 0 \\ 0 & 1 & 0 \\ 0 & 0 & 1 \end{bmatrix}
\begin{bmatrix} 1 & 0 & z-\frac{bx}{\vpi^{n'}}+\frac{c}{\vpi^\ell} \\ 0 & 1 & x \\ 0 & 0 & 1 \end{bmatrix}. \]
Therefore, if $\ell\geq n'+m$ and $b\in\frko$, then we have
\begin{align*}
&\sum_{c\in\frko/\frkp^\ell}\pi\left(\begin{bmatrix} 1 & \frac{b}{\vpi^{n'}} & \frac{c}{\vpi^\ell} \\ 0 & 1 & 0 \\ 0 & 0 & 1 \end{bmatrix}\right)h^{\nu,\rho,\mu}_{\phi_1,h'',\II_\frko}\left(\begin{bmatrix} 1 & \\ & g \end{bmatrix}\begin{bmatrix} & 1 \\ \ono_2 & \end{bmatrix}\begin{bmatrix} 1 & 0 & z \\ 0 & 1 & x \\ 0 & 0 & 1 \end{bmatrix}\right)\\
=&\sum_{c\in\frko/\frkp^\ell}\frac{\phi_1(x)}{|\det g|^{1/2}}h''\biggl(g\begin{bmatrix} 1 & \frac{b}{\vpi^{n'}} \\ 0 & 1 \end{bmatrix}\biggl)\II_\frko\biggl(z-\frac{bx}{\vpi^{n'}}+\frac{c}{\vpi^\ell}\biggl)\\
=&\frac{\phi_1(x)}{|\det g|^{1/2}}h''\biggl(g\begin{bmatrix} 1 & \frac{b}{\vpi^{n'}} \\ 0 & 1 \end{bmatrix}\biggl)\II_{\frkp^{-\ell}}(z), 
\end{align*}
from which (\ref{lem:861}) follows. 
The proof for (\ref{lem:862}) is easier by Lemma \ref{lem:84}. 
\end{proof}

We are now ready to prove Proposition \ref{prop:82}. 
By the definition of $h_{\phi_1,\phi_2,\II_\frko}^{\nu,\rho,\mu}$ Lemma \ref{lem:86} gives 
\begin{align*}
\calw_\ell\bdTht_\frkp^\chi\tht_{\frkp^c}^{\chi'} h^{\nu,\rho,\mu}_{\phi_1,\phi_2,\II_\frko}
&=\bdTht_\frkp^\chi\calw_\ell\tht_{\frkp^c}^{\chi'} h^{\nu,\rho,\mu}_{\phi_1,h_{\phi_2}^{\rho,\mu},\II_\frko}\\
&=q^{-\ell}\bdTht_\frkp^\chi h^{\nu,\rho,\mu}_{\phi_1,h_{\tht_{\frkp^c}^{\chi'}\phi_2}^{\rho,\mu},\II_{\frkp^{-\ell}}}
=q^{-\ell}h^{\nu,\rho,\mu}_{\tht_\frkp^\chi\phi_1,\tht_{\frkp^c}^{\chi'}\phi_2,\II_{\frkp^{-\ell}}}.  
\end{align*}  
Since $\tht_\frkp^\chi\II_\frko=\widehat{\varphi_\chi}$ and $\tht_{\frkp^c}^{\chi'}\II_\frko=\widehat{\varphi'_{\chi'}}$ by Lemma \ref{lem:85}, we get the stated relation. 


\subsection{The $K$-type of $\bfU_{p,\ell}^{\chi,\chi'}h^{\ord}_\pi$}\label{ssec:88}

For a positive integer $\ell$ we define an open subgroup of $\GL_3(\frko)$ by 
\[\cali^{(3)}_0(\frkp^\ell)=\begin{bmatrix} \frko^\times & \frko & \frko \\ \frkp^\ell & \frko^\times & \frko \\ \frkp^\ell & \frkp^\ell & \frko^\times \end{bmatrix}. \index{$\cali_0^{(n)}(\frkp^\ell)$}\]
Define the characters $(\mu,\rho,\nu)_0$ of $\cali^{(3)}_0(\frkp^\ell)$ and $(\mu,\rho)_0$ of $\calk^{(2)}_0(\frkp^\ell)$ by 
\begin{align*}
(\mu,\rho,\nu)_0\left(\begin{bmatrix} a & y & z \\ v & b & x \\ w & u & c \end{bmatrix}\right)&=\mu(a)\rho(b)\nu(c), &
(\mu,\rho)_0\left(\begin{bmatrix} a & y \\ v & b \end{bmatrix}\right)&=\mu(a)\rho(b).  
\end{align*}

\begin{proposition}\label{prop:83}
Put $\phi_3=q^{-\ell}\II_{\frkp^{-\ell}}$. 
If $\ell\geq\max\{n,c(\nu)\}$ and $h''\in I(\rho,\mu)$ satisfies $\pi'(k')h''=(\chi_1,\chi_2)_0(k')h''$ for $k'\in\calk_0^{(2)}(\frkp^{2\ell})$, then for $k\in\cali_0^{(3)}(\frkp^{3\ell})$
\[\pi(k)h_{\widehat{\varphi_\chi},h'',\phi_3}^{\nu,\rho,\mu}=(\chi_1,\chi_2\chi,\nu\chi^{-1})_0(k)h_{\widehat{\varphi_\chi},h'',\phi_3}^{\nu,\rho,\mu}. \]
\end{proposition}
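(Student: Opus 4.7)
My plan is to verify the stated transformation law on each factor of the Iwahori decomposition
\[
\cali_0^{(3)}(\frkp^{3\ell}) \;=\; N_3^-(\frkp^{3\ell})\cdot T_3(\frko^\times)\cdot N_3(\frko),
\]
working throughout with an element of the open cell $\calp_{1,2} w_3 N_3$ of the form
\[
g \;=\; \begin{bmatrix} a & \\ & g' \end{bmatrix}\begin{bmatrix} & 1 \\ \ono_2 & \end{bmatrix}\begin{bmatrix} 1 & 0 & z_0 \\ 0 & 1 & x_0 \\ 0 & 0 & 1 \end{bmatrix},
\]
so that $h_0 := h_{\widehat{\varphi_\chi},h'',\phi_3}^{\nu,\rho,\mu}$ evaluated at $g$ is the explicit product $|a|\nu(a)|\det g'|^{-1/2} h''(g')\widehat{\varphi_\chi}(x_0)\phi_3(z_0)$. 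Since the right-invariance of $h_0$ under the Iwahori is equivalent to invariance on this dense cell, it suffices to check the claim there.

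For the torus $t=\diag(a_k,b_k,c_k)$ with $a_k,b_k,c_k\in\frko^\times$, one uses $w_3 \cdot \diag(a_k,b_k,c_k)=\diag(c_k,a_k,b_k)\cdot w_3$ and conjugation of $\diag(c_k,a_k,b_k)$ through $N_{z_0,x_0}$ to rewrite $gt$ in the standard form with $a$ replaced by $ac_k$, $g'$ by $g'\diag(a_k,b_k)$, $x_0$ by $x_0c_k/b_k$, and $z_0$ by $z_0c_k/a_k$. The transformation of $h''$ under $\diag(a_k,b_k)\in\calk_0^{(2)}(\frkp^{2\ell})$ contributes $\chi_1(a_k)\chi_2(b_k)$; the scaling relation $\widehat{\varphi_\chi}(xc_k/b_k)=\chi(b_k c_k^{-1})\widehat{\varphi_\chi}(x)$ (a direct change of variables using $\varphi_\chi(ux)=\chi(u^{-1})\varphi_\chi(x)$ for $u\in\frko^\times$) contributes $\chi(b_k)\chi(c_k)^{-1}$; invariance $\phi_3(zc_k/a_k)=\phi_3(z)$ is immediate; and the factor $\nu(ac_k)$ supplies $\nu(c_k)$. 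Combining, one reads off the desired eigenvalue $\chi_1(a_k)(\chi_2\chi)(b_k)(\nu\chi^{-1})(c_k)$.

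For the upper unipotent $n_+=\begin{bmatrix} 1 & y & z \\ 0 & 1 & x \\ 0 & 0 & 1 \end{bmatrix}$ with $y,z,x\in\frko$, the Bruhat-style rearrangement
\[
w_3 \begin{bmatrix} 1 & y & z' \\ 0 & 1 & x' \\ 0 & 0 & 1 \end{bmatrix}
\;=\;
\begin{bmatrix} 1 & 0 & 0 \\ 0 & 1 & y \\ 0 & 0 & 1 \end{bmatrix} w_3 \begin{bmatrix} 1 & 0 & z'-yx' \\ 0 & 1 & x' \\ 0 & 0 & 1 \end{bmatrix}
\]
shows that $gn_+$ has the same standard form as $g$ but with $g'$ replaced by $g'\begin{bmatrix} 1 & y \\ 0 & 1 \end{bmatrix}$, $x_0$ by $x_0+x$, and $z_0$ by $z_0+z-y(x+x_0)$. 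Invariance of $h_0$ then follows from: (i) invariance of $h''$ under $N_2(\frko)\subset\calk_0^{(2)}(\frkp^{2\ell})$; (ii) the identity $\widehat{\varphi_\chi}(x_0+x)=\widehat{\varphi_\chi}(x_0)$, which holds because $\varphi_\chi$ is supported in $\frko$ so $\addchar(-xt)=1$ for all $t$ in the support; (iii) $\phi_3(z_0+z-y(x+x_0))=\phi_3(z_0)$, which requires the shift to lie in $\frkp^{-\ell}$: since $z,yx\in\frko$ and $x_0\in\frkp^{-n}$ with $n\le\ell$, indeed $yx_0\in\frkp^{-n}\subset\frkp^{-\ell}$.

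The main obstacle is the lower-unipotent case. For $n_-=\begin{bmatrix} 1 & 0 & 0 \\ v & 1 & 0 \\ w & u & 1 \end{bmatrix}$ with $v,u,w\in\frkp^{3\ell}$, one first computes
\[
N_{z_0,x_0}\,n_-\,N_{z_0,x_0}^{-1}
\]
and estimates each entry using $z_0\in\frkp^{-\ell}$, $x_0\in\frkp^{-n}$, and $n\le\ell$; all off-diagonal entries land in $\frkp^{\ell}$ or better (the subdiagonal in fact in $\frkp^{2\ell}$), while diagonal entries are in $1+\frkp^{2\ell}$. One then decomposes $w_3\cdot N_{z_0,x_0}n_-$ back into the standard $P''\cdot w_3 \cdot N_{z_0'',x_0''}$ form, parallelling the upper-unipotent calculation but now with the "small" entries playing the role of $(x,y,z)\in\frko$. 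The resulting corrections to $g'$ lie in $\calk_0^{(2)}(\frkp^{2\ell})$ and preserve $h''$ up to the character $(\chi_1,\chi_2)_0$; the correction to $a$ lies in $1+\frkp^{c(\nu)}$ (using the hypothesis $\ell\ge c(\nu)$) and is absorbed by $\nu$; the shifts in $x_0$ land in $\frko$ (preserving $\widehat{\varphi_\chi}$); and the shifts in $z_0$ land in $\frkp^{-\ell}$ (preserving $\phi_3$). Since the character $(\chi_1,\chi_2)_0$ picked up from $g'\mapsto g'\cdot(\text{element of }\calk_0^{(2)}(\frkp^{2\ell}))$ evaluates trivially on the diagonal blocks in $1+\frkp^{2\ell}$, the full contribution from $n_-$ is trivial. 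The bookkeeping of all these matrix entries is tedious but forced by the inequalities $n\le\ell$ and $c(\nu)\le\ell$, and completes the proof.
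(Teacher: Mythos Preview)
Your argument is correct in substance, though it takes a different route from the paper. The key difference is the choice of Iwahori factorization: you decompose $k\in\cali_0^{(3)}(\frkp^{3\ell})$ with respect to the Borel as $N_3^-\cdot T_3\cdot N_3$, whereas the paper decomposes with respect to the maximal parabolic $P_{2,1}$, writing
\[
k \;=\; \frku^-(y)\begin{bmatrix} k' & \\ & d\end{bmatrix}\frku(b),\qquad \frku(b)=\begin{bmatrix}\ono_2 & b\\ & 1\end{bmatrix},\quad \frku^-(y)=\begin{bmatrix}\ono_2 & \\ \trs y & 1\end{bmatrix},
\]
with $b\in\frko^2$, $y\in(\frkp^{3\ell})^2$, $d\in\frko^\times$, and $k'\in\calk_0^{(2)}(\frkp^{3\ell})$. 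This choice is better adapted to the big-cell description $\calp_{1,2}\cdot\sigma\cdot\frku(\cdot)$ on which $h_0$ is defined. Consequently the paper's three cases are shorter: invariance under $\frku(b)$ is a one-line translation argument for $\Psi(x)=\phi_3(x_1)\widehat{\varphi_\chi}(x_2)$; the Levi block is handled by a scaling identity for $\Psi$ together with the hypothesis on $h''$; and the $\frku^-(y)$ case reduces to the clean two-variable identity
\[
\frku(x)\frku^-(-y)=\begin{bmatrix}\ono_2-x\trs y & 0\\ -\trs y & 1+\trs y(\ono_2-x\trs y)^{-1}x\end{bmatrix}\begin{bmatrix}\ono_2 & (\ono_2-x\trs y)^{-1}x\\ 0 & 1\end{bmatrix}.
\]
In particular, the $(2,1)$-entry $v$ of your $n_-$ is absorbed into $k'$ in the paper's setup and dealt with directly by the assumed transformation law for $h''$, so no separate estimate is needed for it.

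Your lower-unipotent sketch does go through: after conjugating by $N_{z_0,x_0}$ and factoring $M=\overline{p}\,N'$ with $\overline{p}\in\sigma^{-1}\calp_{1,2}\sigma$, one finds $z',x'\in\frkp^{\ell}$, the upper-left $2\times2$ block $A\in\calk_0^{(2)}(\frkp^{2\ell})$ with diagonal in $1+\frkp^{2\ell}$, and $d\in 1+\frkp^{2\ell}\subset 1+\frkp^{c(\nu)}$, which is exactly what is needed. One small slip: you write $\varphi_\chi(ux)=\chi(u^{-1})\varphi_\chi(x)$, but in fact $\varphi_\chi(ux)=\chi(u)\varphi_\chi(x)$; the Fourier-side conclusion $\widehat{\varphi_\chi}(uy)=\chi(u)^{-1}\widehat{\varphi_\chi}(y)$ that you actually use is nonetheless correct.
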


\begin{proof}
The proof is similar to that of \cite[Lemma 2.5]{HY}. 
We write 
\begin{align*}
k&=\frku^-(y)\begin{bmatrix} k' & \\ & d \end{bmatrix}\frku(b), &
\frku(b)&=\begin{bmatrix} \ono_2 & b \\ & 1 \end{bmatrix}, &
\frku^-(y)&=\begin{bmatrix} \ono_2 & \\ \trs y & 1 \end{bmatrix}
\end{align*}
for $b\in\frko^{\oplus 2}$, $\trs y\in(\frkp^{3\ell})^{\oplus 2}$, $d\in\frko^\times$ and $k'\in\calk_0^{(2)}(\frkp^{3\ell})$. 
Define $\Psi\in\cals(F^2)$ by $\Psi\Big(\begin{matrix} x_1 \\ x_2 \end{matrix}\Big)=\phi_3(x_1)\widehat{\vph_\chi}(x_2)$. 
Since $\Psi$ is invariant under the translation by elements of $\frko^{\oplus 2}$, we have 
\[\pi(\frku(b))h_{\widehat{\varphi_\chi},h'',\phi_3}^{\nu,\rho,\mu}=h_{\widehat{\varphi_\chi},h'',\phi_3}^{\nu,\rho,\mu}. \]
Put $\pi'=I(\rho,\mu)$. 
Since $\Psi(dk^{\prime-1}x)=\chi(d^{-1}k'_{2,2})\Psi(x)$ by (2.11) of \cite{HY},   
\[\pi\biggl(\begin{bmatrix} k' & \\ & d \end{bmatrix}\biggl)h_{\widehat{\varphi_\chi},h'',\phi_3}^{\nu,\rho,\mu}\biggl(\begin{bmatrix} 1 & \\ & g \end{bmatrix}\begin{bmatrix} & 1 \\ \ono_2 & \end{bmatrix}\frku(x)\biggl)
=\frac{\nu(d)\chi(k'_{2,2})\Psi(x)}{\chi(d)|\det g|^{1/2}}\pi'(k')h''(g) \]
by definition. 
We conclude that 
\[\pi\biggl(\begin{bmatrix} k' & \\ & d \end{bmatrix}\biggl)h_{\widehat{\varphi_\chi},h'',\phi_3}^{\nu,\rho,\mu}=(\chi_1,\chi_2\chi,\nu\chi^{-1})_0\biggl(\begin{bmatrix} k' & \\ & d \end{bmatrix}\biggl)h_{\widehat{\varphi_\chi},h'',\phi_3}^{\nu,\rho,\mu}. \]
If $h_{\widehat{\varphi_\chi},h'',\phi_3}^{\nu,\rho,\mu}(g\frku^-(y))\neq 0$, then there is an element $x\in(\frkp^{-\ell})^{\oplus 2}$ such that $g\frku^-(y)\in \calp_{1,2}\begin{bmatrix} & 1 \\ \ono_2 & \end{bmatrix}\frku(x)$. 
Since 
\begin{align*}
\frku(x)\frku^-(-y)
&=\begin{bmatrix} \ono_2-x\trs y & x \\ -\trs y & 1 \end{bmatrix}\\
&=\begin{bmatrix} \ono_2-x\trs y & 0 \\ -\trs y & 1+\trs y(\ono_2-x\trs y)^{-1}x \end{bmatrix}
\begin{bmatrix} \ono_2 & (\ono_2-x\trs y)^{-1}x \\ 0 & 1 \end{bmatrix}, 
\end{align*} 
we can write $g=\begin{bmatrix} 1 & \\ & g' \end{bmatrix}\begin{bmatrix} & 1 \\ \ono_2 & \end{bmatrix}\frku(x)$ with $x\in(\frkp^{-\ell})^{\oplus 2}$ and $g'\in\GL_2(F)$. 
Since $\ono_2+x\trs y\in\calk^{(2)}_0(\frkp^{2\ell})$, we see that 
\begin{align*}
&h_{\widehat{\varphi_\chi},h'',\phi_3}^{\nu,\rho,\mu}\biggl(\begin{bmatrix} 1 & \\ & g' \end{bmatrix}\begin{bmatrix} & 1 \\ \ono_2 & \end{bmatrix}\frku(x)\frku^-(y)\biggl)\\
=&\frac{\nu(1-\trs y(\ono_2+x\trs y)^{-1}x)}{|\det g'|^{1/2}}\pi'(\ono_2+x\trs y)h''(g')\Psi((\ono_2+x\trs y)^{-1}x)\\
=&|\det g'|^{-1/2}h''(g')\Psi(x)
=h_{\widehat{\varphi_\chi},h'',\phi_3}^{\nu,\rho,\mu}\biggl(\begin{bmatrix} 1 & \\ & g' \end{bmatrix}\begin{bmatrix} & 1 \\ \ono_2 & \end{bmatrix}\frku(x)\biggl)
\end{align*} 
by the assumption and definition above. 
\end{proof}

\begin{corollary}\label{cor:81}
Put $\ell=\max\{n,n',c(\nu),c(\rho),c(\mu)\}$. 
If $m\geq 3\ell$, then 
\[\pi(k)\bfU_{p,\ell}^{\chi,\chi'}h^{\ord}_\pi=(\mu\chi^{\prime-1},\rho\chi\chi',\nu\chi^{-1})_0(k)\bfU_{p,\ell}^{\chi,\chi'}h^{\ord}_\pi\] 
for every $k\in\cali^{(3)}_0(\frkp^m)$. 
\end{corollary}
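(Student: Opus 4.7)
The strategy is to combine Proposition~\ref{prop:82} with Proposition~\ref{prop:83}. By Proposition~\ref{prop:82},
\[
\bfU_{p,\ell}^{\chi,\chi'}h^{\ord}_\pi \;=\; \mu(\vpi)^{-n'}\frkg(\chi^{\prime-1},\addchar)\cdot\pi(\vka_{\frkp^c}^{n'})h^\ddagger,
\]
where $h^\ddagger = q^{-\ell}\,h^{\nu,\rho,\mu}_{\widehat{\varphi_\chi},\widehat{\varphi'_{\chi'}},\II_{\frkp^{-\ell}}}$. For $k\in\cali^{(3)}_0(\frkp^m)$ set $\tilde{k}:=\vka_{\frkp^c}^{-n'}k\vka_{\frkp^c}^{n'}$. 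Since the target character $(\mu\chi^{\prime-1},\rho\chi\chi',\nu\chi^{-1})_0$ depends only on the diagonal, which is preserved by this conjugation, the corollary reduces to the identity
\[
\pi(\tilde{k})h^\ddagger \;=\; (\mu\chi^{\prime-1},\rho\chi\chi',\nu\chi^{-1})_0(\tilde{k})\,h^\ddagger.
\]

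To feed Proposition~\ref{prop:83}, I first prove a $\GL_2$-analogue as a preliminary lemma: the vector $h_{\widehat{\varphi'_{\chi'}}}^{\rho,\mu}\in I(\rho,\mu)$ transforms under $\calk_0^{(2)}(\frkp^{2\ell})$ by the character $(\mu\chi^{\prime-1},\rho\chi')_0$. The proof mimics Proposition~\ref{prop:83} one dimension lower, using Iwahori factorization of $\calk_0^{(2)}(\frkp^{2\ell})$ into lower unipotent, torus and upper unipotent pieces, and verifying each piece on the big Bruhat cell of $\GL_2(F)$ via the transformation of $\widehat{\varphi'_{\chi'}}$ under dilation by $\frko^\times$ and its periodicity under translation by $\frko$, the latter coming from Fourier duality with $\supp\varphi'_{\chi'}\subset\frko$. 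Substituting this lemma into Proposition~\ref{prop:83} with $h''=h_{\widehat{\varphi'_{\chi'}}}^{\rho,\mu}$ and $(\chi_1,\chi_2)=(\mu\chi^{\prime-1},\rho\chi')$ yields
\[
\pi(k')h^\ddagger \;=\; (\mu\chi^{\prime-1},\rho\chi\chi',\nu\chi^{-1})_0(k')\,h^\ddagger \qquad (k'\in\cali^{(3)}_0(\frkp^{3\ell})).
\]

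The main obstacle is that $\tilde{k}$ generally lies not in $\cali^{(3)}_0(\frkp^{3\ell})$ but in the larger conjugated subgroup $\vka_{\frkp^c}^{-n'}\cali^{(3)}_0(\frkp^m)\vka_{\frkp^c}^{n'}$, whose $(1,2)$ and $(1,3)$ entries lie in $\vpi^{-n'}\frko$ rather than $\frko$. My plan is to decompose $\tilde{k}=\tilde{u}_+\tilde{k}_0$ with $\tilde{k}_0\in\cali^{(3)}_0(\frkp^{3\ell})$ and $\tilde{u}_+$ upper unipotent with entries in $\vpi^{-n'}\frko$; such a factorization exists because the lower-right $2\times 2$ block of $\tilde{k}$ is invertible modulo $\frkp^{3\ell}$, allowing $\tilde{u}_+$ to be determined by a small linear system. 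Since all characters involved have conductor at most $\ell$, the character values on $\tilde{k}$ and on $\tilde{k}_0$ agree, so applying the previous step to $\tilde{k}_0$ reduces the claim to $\pi(\tilde{u}_+)h^\ddagger=h^\ddagger$. A direct unwinding on the big Bruhat cell of $\GL_3(F)$ expresses this last identity in terms of simultaneous translations of the Schwartz data $(\widehat{\varphi_\chi},\widehat{\varphi'_{\chi'}},\II_{\frkp^{-\ell}})$; the support bounds $\supp\widehat{\varphi_\chi}\subset\frkp^{-n}$, $\supp\widehat{\varphi'_{\chi'}}\subset\frkp^{-n'}$ together with the hypothesis $\ell\geq n+n'$ ensure that products of the form $\vpi^{-n'}\frko\cdot\frkp^{-n}$ land in $\frkp^{-\ell}$ and are thus absorbed by $\II_{\frkp^{-\ell}}$. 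The most delicate point will be the translation of the argument of $\widehat{\varphi'_{\chi'}}$ by an element of $\vpi^{-n'}\frko$, which I expect to resolve by refining the factorization so that a nonzero $(2,3)$-entry of $\tilde{u}_+$ absorbs this translation into the Iwahori part $\tilde{k}_0$, in close analogy with the argument in \cite[Proposition 6.7]{HY}.
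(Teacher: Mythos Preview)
Your overall strategy—reduce via Proposition~\ref{prop:82}, verify the $\GL_2$ analogue for $h_{\widehat{\varphi'_{\chi'}}}^{\rho,\mu}$, then invoke Proposition~\ref{prop:83}—is exactly the paper's. You are also right that the paper's final clause ``which is equivalent to Corollary~\ref{cor:81} in view of Proposition~\ref{prop:82}'' hides a nontrivial step: Proposition~\ref{prop:83} controls $h^\ddagger$ under $\cali_0^{(3)}(\frkp^{3\ell})$, whereas the corollary concerns $\pi(\vka_{\frkp^c}^{n'})h^\ddagger$ under $\cali_0^{(3)}(\frkp^m)$, and conjugation by $\vka_{\frkp^c}^{n'}$ does not carry one group into the other.

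Your proposed resolution, however, cannot succeed. Pushing a general upper unipotent
$\tilde u_+=\begin{pmatrix}1&\alpha&\gamma\\0&1&\delta\\0&0&1\end{pmatrix}$
through the cell decomposition of $h^\ddagger$ gives
\[
h^\ddagger(g\tilde u_+)=\frac{|a|\nu(a)}{|\det g'|^{1/2}}\,h''\!\left(g'\begin{pmatrix}1&\alpha\\0&1\end{pmatrix}\right)\widehat{\varphi_\chi}(x+\delta)\,\phi_3\!\bigl(z+\gamma-\alpha(x+\delta)\bigr),
\]
so the $(1,2)$-entry $\alpha$ acts on $h''=h^{\rho,\mu}_{\widehat{\varphi'_{\chi'}}}$ by shifting the argument of $\widehat{\varphi'_{\chi'}}$ by~$\alpha$. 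Since $\varphi'_{\chi'}$ is supported on $\frko$ but never on $\frkp^{n'}$, its Fourier transform is $\frko$-periodic but not $\vpi^{-n'}\frko$-periodic; hence $\pi(\tilde u_+)h^\ddagger\neq h^\ddagger$ whenever $\alpha\in\vpi^{-n'}\frko\setminus\frko$. A $(2,3)$-entry $\delta$ only shifts the argument of $\widehat{\varphi_\chi}$ and leaves the $h''$-factor untouched, so it cannot absorb the problematic~$\alpha$. In other words the extra unipotent invariance you need is genuinely absent, and the corollary as literally stated for all of $\cali_0^{(3)}(\frkp^m)$ appears to be too strong: what the paper's argument actually yields is the transformation law under the conjugate subgroup $\vka_{\frkp^c}^{n'}\cali_0^{(3)}(\frkp^{3\ell})\vka_{\frkp^c}^{-n'}$. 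That suffices for the only application (the proof of Corollary~\ref{cor:32}), since the elements $\iot(k')$ for $k'\in K_0'(\frkp^M)$ have vanishing $(1,2)$-entry and therefore lie in this conjugate subgroup. (Note also that the value $\ell=\max\{n,n',\dots\}$ in the corollary is in tension with the standing hypothesis $\ell\geq n+n'$ of Definition~\ref{def:83} and Proposition~\ref{prop:82}, which you tacitly invoke.)
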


\begin{proof}
One can easily show that for every $k'\in\calk_0^{(2)}(\frkp^{2\ell})$
\[\pi'(k')h_{\widehat{\vph'_{\chi'}}}^{\rho,\mu}
=(\mu\chi^{\prime-1},\rho\chi')_0(k')h_{\widehat{\vph'_{\chi'}}}^{\rho,\mu}\]
 (see Remark \ref{rem:83}).  
Proposition \ref{prop:83} applied to $h''=h_{\widehat{\vph'_{\chi'}}}^{\rho,\mu}$ gives 
\[\pi(k)h^\ddagger=(\mu\chi^{\prime-1},\rho\chi\chi',\nu\chi^{-1})_0(k)h^\ddagger\]
for $k\in\cali_0^{(3)}(\frkp^{3\ell})$, which is equivalent to Corollary \ref{cor:81} in view of Proposition \ref{prop:82}. 
\end{proof}


\subsection{The zeta integral of $\bfU_{p,\ell}^{\chi,\chi'}h^{\ord}_\pi$}\label{ssec:89}

Let $\pi$ be the irreducible generic constituent of the principal series $I(\nu,\rho,\mu)$ and $\sig$ that of $I(\mu',\nu')$. 
Put $\ome_\sig=\mu'\nu'$. 
The element $t_\ell\in\GL_2(F)$ is defined in (\ref{tag:85}). 
Recall that $W_{\addchar}(h)\in\scrw_{\addchar}(\pi)$ is associated to $h\in I(\nu,\rho,\mu)$ by 
\[W_{\addchar}(g,h)=\int_{F^3}h\left(w_3\begin{bmatrix} 1 & y & z \\ 0 & 1 & x \\ 0 & 0 & 1 \end{bmatrix}g\right)\overline{\addchar(x+y)}\,\d x\d y\d z. \]
Similarly, $W_{\addchar}(h')\in\scrw_{\addchar}(\sig)$ is defined for $h'\in I(\mu',\nu')$. 

\begin{theorem}\label{thm:81}
Let $\chi=(\nu\mu^{\prime-1})|_{\frko^\times}$ and $\chi'=(\mu\nu^{\prime-1})|_{\frko^\times}$. 
Put 
\begin{align*}
&W^\dagger=W_{\addchar}(\bdTht_\frkp^\chi\bfU_{\frkp^c}^{\chi'} h^{\ord}_\pi), & 
&h^\ord_\sig=h_{\II_\frko}^{\mu',\nu'}, & 
&W^\ord_\sig=W_{\addchar}(h^\ord_\sig), & 
&W^\flat_{\sig^\vee}=(W^\ord_\sig)^\natural. 
\end{align*}
If $\ell$ is sufficiently large, then  
\[Z\left(\frac{1}{2},\pi(\vsi)W^\dagger,\sig^\vee(t_\ell)^{-1}W^\flat_{\sig^\vee}\right)^2
=\Biggl(\frac{\zet_F(2)}{q^{\ell/2}\zet_F(1)}\nu'(\vpi)^\ell\frac{\gam\bigl(\frac{1}{2},\mu^{-1}\nu',\addchar^{-1}\bigl)}{\gam\bigl(\frac{1}{2},\pi^{}\otimes\mu^{\prime-1},\addchar\bigl)}\Biggl)^2. \]
\end{theorem}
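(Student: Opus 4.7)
\textbf{Proof plan for Theorem \ref{thm:81}.}

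The plan is to express $W^\dagger$ explicitly via Proposition \ref{prop:82}, unfold the JPSS zeta integral, and then identify the resulting Tate integrals with the gamma factors appearing on the right-hand side. Since the formula is squared, keeping track of signs/sign characters need only be done up to $\pm 1$.

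First I would invoke Proposition \ref{prop:82}, which computes
\[
\bfU_{p,\ell}^{\chi,\chi'}h^{\ord}_\pi \;=\; \mu(\vpi)^{-n'}\frkg(\chi^{\prime-1},\addchar)\,\pi(\vka_{\mathfrak{p}^c}^{n'})\,h^{\ddagger},
\qquad
h^{\ddagger}=q^{-\ell}h_{\widehat{\varphi_\chi},\widehat{\varphi'_{\chi'}},\II_{\frkp^{-\ell}}}^{\nu,\rho,\mu}.
\]
Since $\bfU_{p,\ell}^{\chi,\chi'}=\mathcal{W}_{\ell-n'}\bdTht_\frkp^{\chi}\bfU_{\frkp^c}^{\chi'}$, the action of $\mathcal{W}_{\ell-n'}$ on the section becomes an average over a one-parameter subgroup in the $(1,3)$-entry; after multiplying by $\varsigma$ on the right and pairing with $\sig^\vee(t_\ell)^{-1}W^\flat_{\sig^\vee}$, the support of the latter function (concentrated on elements with lower-left entry in $\frkp^\ell$) ensures that this averaging acts as the identity for $\ell\gg 0$. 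Thus, up to the explicit scalar $\mu(\vpi)^{-n'}\frkg(\chi^{\prime-1},\addchar)$, we may replace $W^\dagger$ by the Whittaker function of $\pi(\vka_{\mathfrak{p}^c}^{n'})h^\ddagger$.

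Next I would unfold $Z\bigl(\tfrac{1}{2},\pi(\vsi)W^\dagger,\sig^\vee(t_\ell)^{-1}W^\flat_{\sig^\vee}\bigr)$ using the Bruhat cell decomposition of $h^\ddagger$: by construction, the section $h^\ddagger$ is supported on the big cell $\calp_{1,2}w_3N_3$ and its restriction factors through the three Schwartz data $\widehat{\varphi_\chi}$, $h^{\rho,\mu}_{\widehat{\varphi'_{\chi'}}}$, $\II_{\frkp^{-\ell}}$. The conjugation by $\vsi$ interchanges the second and third rows, so after a change of variable the triple integral defining $Z$ splits as a product:
\begin{enumerate}
\item[(a)] a one-dimensional Tate integral in the $(1,2)$-direction against $\widehat{\varphi_\chi}$,
\item[(b)] a $\GL_2$--Rankin--Selberg integral between the Whittaker function $W(h^{\rho,\mu}_{\widehat{\varphi'_{\chi'}}})$ and $W^\ord_\sig$,
\item[(c)] a Tate integral against $\II_{\frkp^{-\ell}}$ carrying the twist by $t_\ell$.
\end{enumerate}
Using the transformation property of $W^\ord_\sig$ under $T_2(F)$ (Proposition \ref{prop:81}) together with the definition of $t_\ell$, integral (c) contributes the factor $q^{-\ell/2}\nu'(\vpi)^\ell\,\zet_F(2)/\zet_F(1)$. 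Integral (a) is a Tate integral whose Tate functional equation (comparing $\widehat{\varphi_\chi}$ with $\varphi_\chi$) produces the gamma factor $\gamma(\tfrac{1}{2},\mu^{-1}\nu',\addchar^{-1})$ (after matching the character against the leading coordinate in $I(\nu,\rho,\mu)$ twisted by $\nu'^{-1}$).

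Integral (b) is the key piece: it is the GL$_2\times$GL$_2$ zeta integral for $h^{\rho,\mu}_{\widehat{\varphi'_{\chi'}}}$ against $W^\ord_\sig$, which by an application of the local functional equation for $I(\rho,\mu)\otimes\sigma^\vee$ and the identification $\chi'=(\mu\nu'^{-1})|_{\frko^\times}$ reduces to an unramified calculation whose value is $L(\tfrac{1}{2},I(\rho,\mu)\otimes\mu'^{-1})\cdot\gamma(\tfrac{1}{2},\pi\otimes\mu'^{-1},\addchar)^{-1}\cdot\frkg(\chi'^{-1},\addchar)^{-1}$ (the Gauss sum cancelling the one introduced by Proposition \ref{prop:82}). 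Multiplying (a), (b), (c) and squaring yields the asserted identity after cancellation of the $\mu(\vpi)^{\pm n'}$ and $\frkg(\chi^{\prime-1},\addchar)^{\pm 1}$ normalizations.

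The main obstacle will be step (b): identifying the $\GL_2\times\GL_2$ integral with $\gamma(\tfrac{1}{2},\pi\otimes\mu'^{-1},\addchar)^{-1}$ rather than the full $L/\gamma$ ratio. This requires exploiting the fact that $h^\ord_\sig$ is new outside $p$ and $p$-stabilized with respect to $\nu'(\vpi)$, together with the multiplicativity of the gamma factor (so that the $\mu\mu'^{-1}$-component corresponding to the $\bfU_{\frkp^c}$-depletion drops out, leaving the product of the two non-trivial components $\nu\mu'^{-1}$ and $\rho\mu'^{-1}$ which reassemble into $\gamma(\tfrac{1}{2},\pi\otimes\mu'^{-1},\addchar)$). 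All the cancellations then organize themselves precisely into the ratio on the right-hand side, and squaring eliminates ambiguities of sign characters such as $\mu(-1)$ and $\nu'(-1)$.
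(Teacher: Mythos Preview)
Your opening moves are correct: invoking Proposition~\ref{prop:82} to pass to $h^\ddagger$, and observing that the averaging $\calw_{\ell-n'}$ is absorbed by the support of $\sig^\vee(t_\ell)^{-1}W^\flat_{\sig^\vee}$ for $\ell\gg0$. These match the paper's first step in \S\ref{ssec:810}.

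The gap is in your unfolding. The JPSS integral $Z\bigl(\tfrac12,\pi(\vsi)W^\ddagger,\sig^\vee(t_\ell)^{-1}W^\flat_{\sig^\vee}\bigr)$ does \emph{not} split into a product of three independent integrals (a), (b), (c) indexed by the three Schwartz data $\widehat{\varphi_\chi},\widehat{\varphi'_{\chi'}},\II_{\frkp^{-\ell}}$; the Whittaker function $W^\ddagger$ is a genuine $\GL_3$-object and does not factor this way. In particular, piece~(b) is not a $\GL_2\times\GL_2$ Rankin--Selberg integral, and your proposed route to $\gam\bigl(\tfrac12,\pi\otimes\mu'^{-1},\addchar\bigr)$ via a $\GL_2$ functional equation plus multiplicativity does not materialize. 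Likewise, your attribution of $\gam\bigl(\tfrac12,\mu^{-1}\nu',\addchar^{-1}\bigr)$ to integral~(a) is wrong: the Tate integral of $\widehat{\widehat{\varphi_\chi}}$ against $\nu^{-1}\mu'$ is just a sign, since $\varphi_\chi$ is supported on units.

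What the paper actually does: after a matrix identity $\begin{bmatrix} t_\ell & \\ & 1\end{bmatrix}\vsi\,\vka_{\frkp^c}^{n'}=\begin{bmatrix}\vpi^{\ell+n'}\ono_2 & \\ & 1\end{bmatrix}\calj_{\ell+n'}$ and JPSS invariance, one reduces to $Z\bigl(s,\pi(\calj_\ell)W^\ddagger,W^\flat_{\sig^\vee}\bigr)$. Unfolding $W^\flat_{\sig^\vee}$ via $h^\ord_\sig$ and using the Iwasawa integration formula gives an iterated integral $\int_{F^\times}\widetilde Z(s,W_b,\mu'^{-1})\,\nu'(b)^{-1}|b|^{s-1}\,\d b$, and the crucial step is to apply the $\GL_3\times\GL_1$ functional equation~\eqref{tag:84} to $\widetilde Z(s,W_b,\mu'^{-1})$, which produces $\gam(s,\pi\otimes\mu'^{-1},\addchar)^{-1}$ as a single factor. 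The dual-side integral $Z(1-s,\pi^\vee(\vsi)\widetilde{W}_b,\mu')$ is then computed by the explicit formula of Lemma~\ref{lem:88} for $W_\addchar(h)$ on the anti-diagonal torus. After this, the remaining $b$-integral is a $\GL_2\times\GL_1$ zeta integral $Z(s,W'',\nu'^{-1})$, which together with the Gauss sum $\frkg(\chi'^{-1},\addchar)$ and the $L$-factor ratio assembles into $\gam\bigl(\tfrac12,\mu^{-1}\nu',\addchar^{-1}\bigr)$ via Remark~\ref{rem:81}.
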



\subsection{Proof of Theorem \ref{thm:81}}\label{ssec:810}

Since $t_\ell^{}\bfu(x)t_\ell^{-1}=\begin{bmatrix} 1 & 0 \\ \vpi^\ell x & 1 \end{bmatrix}$, we have 
\[Z(s,\pi(\vsi)W,\sig^\vee(t_\ell)^{-1}W')=Z(s,\pi(\vsi)\calw_mW,\sig^\vee(t_\ell)^{-1}W')\]
for $m\ll\ell$ and $W\in\scrw_{\addchar}(\pi)$. 
Put $W^\ddagger=W_{\addchar}(h^\ddagger)$. 
Then 
\[\calw_{\ell-n'}W^\dagger=\mu(\vpi)^{-n'}\frkg(\chi^{\prime-1},\addchar)\cdot \pi(\vka_{\frkp^c}^{n'})W^\ddagger\]
by Proposition \ref{prop:82}, and hence  
\[Z(s,\pi(\vsi)W^\dagger,\sig^\vee(t_\ell)^{-1}W')=\mu(\vpi)^{-n'}\frkg(\chi^{\prime-1},\addchar)Z(s,\pi(\vsi\vka_{\frkp^c}^{n'})W^\ddagger,\sig^\vee(t_\ell)^{-1}W'). \]

Observe that 
\[\left[\begin{array}{c|c} t_\ell & \\ \hline & 1 \end{array}\right]\vsi\vka_{\frkp^c}^{n'}=\left[\begin{array}{c|c} \vpi^{\ell+n'}\ono_2 & \\ \hline & 1 \end{array}\right]\calj_{\ell+n'}, \]
where 
\[\calj_\ell
=\left[\begin{array}{cc|c} 0 & 1 & 0 \\ \vpi^\ell & 0 & 0 \\ \hline 0 & 0 & 1 \end{array}\right]^{-1}\vsi
=\begin{bmatrix} 0 & 0 & \frac{1}{\vpi^{\ell}} \\ 1 & 0 & 0 \\ 0 & 1 & 0 \end{bmatrix}. \]
The invariance of the JPSS integral gives 
\[Z(s,\pi(\vsi\vka_{\frkp^c}^{n'})W^\ddagger,\sig^\vee(t_\ell)^{-1}W')=\ome_\sig(\vpi^{\ell+n'})Z(s,\pi(\calj_{\ell+n'})W^\ddagger,W'). \]
We conclude that 
\[Z(s,\pi(\vsi)W^\dagger,\sig^\vee(t_\ell)^{-1}W')=\frac{\ome_\sig(\vpi^{\ell+n'})}{\mu(\vpi)^{n'}}\frkg(\chi^{\prime-1},\addchar)Z(s,\pi(\calj_{\ell+n'})W^\ddagger,W'). \]

We deduce Theorem \ref{thm:81} from Remark \ref{rem:81} and the following lemma:  

\begin{lemma}\label{lem:87}
Notation being as in Theorem \ref{thm:81}, we have 
\[\frac{Z\bigl(\frac{1}{2},\pi(\calj_{\ell+n'})W^\ddagger,W^\flat_{\sig^\vee}\bigl)^2}{(\mu\ome_\sig^{-1})(\vpi)^{2n'}\frac{\zet_F(2)^2}{\zet_F(1)^2}}
=\frac{q^{-\ell-c(\chi')}\mu'(\vpi^{-2\ell})L\bigl(\frac{1}{2},\mu\nu^{\prime-1}\bigl)^2}{(\mu\nu^{\prime-1})(\vpi)^{2c(\chi')}\gam\bigl(\frac{1}{2},\pi^{}\otimes\mu^{\prime-1},\addchar\bigl)^2L\bigl(\frac{1}{2},\mu^{-1}\nu'\bigl)^2}. \]
\end{lemma}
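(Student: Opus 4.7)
The basic strategy is to convert the JPSS integral into an explicit integral against the Schwartz data $(\widehat{\vph_\chi},\widehat{\vph'_{\chi'}},\II_{\frkp^{-\ell}})$ appearing in $h^\ddagger$, and then to recognize the result as a product of Tate integrals whose ratios are the quoted $L$ and $\gamma$ factors.

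First, I will compute $W^\ddagger=W_\addchar(h^\ddagger)$ explicitly. Since $h^\ddagger=q^{-\ell}h^{\nu,\rho,\mu}_{\widehat{\vph_\chi},h^{\rho,\mu}_{\widehat{\vph'_{\chi'}}},\II_{\frkp^{-\ell}}}$ is supported on the big Bruhat cell $\calp_{1,2}w_3N_3$, the standard Jacquet integral computation (a double application of the formula that expresses the Whittaker function of an induced section in terms of the inducing datum) factors $W^\ddagger$ as an integral involving the Fourier transforms of $\widehat{\vph_\chi}$ and $\widehat{\vph'_{\chi'}}$, which by design are $\vph_\chi$ and $\vph'_{\chi'}$. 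I will then left-translate by $\calj_{\ell+n'}$; the shape of $\calj_{\ell+n'}$ (which cyclically permutes the basis while scaling the last coordinate by $\vpi^{-\ell}$) together with the support of $\II_{\frkp^{-\ell}}$ is exactly what is needed to land on the big cell, so only one Bruhat cell contributes.

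Next, I will unfold $Z\bigl(\frac{1}{2},\pi(\calj_{\ell+n'})W^\ddagger,W^\flat_{\sig^\vee}\bigl)$ against $W^\flat_{\sig^\vee}=(W^\ord_\sig)^\natural$, where $W^\ord_\sig=W_\addchar(h^{\mu',\nu'}_{\II_\frko})$ is the ordinary Whittaker function of $\sig$. Here $h^{\mu',\nu'}_{\II_\frko}$ is again supported on the big cell and its Whittaker function is the restricted Tate integral of $\II_\frko$, so the $h\in N_2\bsl\GL_2(F)$ variable in the JPSS integral can be opened using the Iwasawa decomposition. After a change of variables absorbing the scaling from $\calj_{\ell+n'}$, the integral separates as a product of three one-dimensional integrals in the $x,y,z$ coordinates of $N_3$. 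The $z$-integral collapses to volume $q^{-\ell}$ times $\mu'(\vpi^{-\ell})$, leaving a product of two Tate-type integrals: one pairing $\vph_\chi$ against a character of $F^\times$ and one pairing $\vph'_{\chi'}$ against another.

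The two remaining Tate integrals can each be evaluated by the local functional equation for $\GL_1$. For the $\vph_\chi$-integral, the character that pairs against $\vph_\chi$ (which is supported on $\frko^\times$ and equals $\chi=\nu\mu^{\prime-1}$ there) is $\mu^{-1}\nu'$, and the functional equation gives the factor $L(\frac{1}{2},\mu\nu^{\prime-1})/[\gam(\frac{1}{2},\mu^{-1}\nu',\addchar^{-1})L(\frac{1}{2},\mu^{-1}\nu')]$; for the $\vph'_{\chi'}$-integral, the corresponding ratio yields the remaining factors $\gam\bigl(\frac{1}{2},\rho\mu^{\prime-1},\addchar\bigl)\gam\bigl(\frac{1}{2},\mu\mu^{\prime-1},\addchar\bigl)$ times powers of $q$ and values of unramified characters at $\vpi$. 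Squaring the resulting identity and using the multiplicativity
\[\gam\biggl(\frac{1}{2},\pi\otimes\mu^{\prime-1},\addchar\biggl)=\gam\biggl(\frac{1}{2},\nu\mu^{\prime-1},\addchar\biggl)\gam\biggl(\frac{1}{2},\rho\mu^{\prime-1},\addchar\biggl)\gam\biggl(\frac{1}{2},\mu\mu^{\prime-1},\addchar\biggl)\]
together with the functional equation $\gam(s,\nu\mu^{\prime-1},\addchar)\gam(1-s,\mu^{-1}\nu',\addchar^{-1})=1$ collapses the product exactly to the right-hand side, modulo the $q^{-\ell-c(\chi')}$ and $(\mu\nu^{\prime-1})(\vpi)^{2c(\chi')}$ factors, which come from the normalizations of $\vph_\chi$ (Gauss sum) and of the volume of the domain of integration in $z$.

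The main obstacle will be step two: keeping track of the precise normalizations during the unfolding (in particular the twist $\natural$ on $W^\ord_\sig$ together with the scaling introduced by $\calj_{\ell+n'}$), and matching the Gauss sums arising from $\widehat{\vph_\chi}$ and $\widehat{\vph'_{\chi'}}$ against the gamma factors that appear after applying the functional equations. The two squared gamma factors must cancel against each other in exactly the right way so that the answer depends only on $\gam(\frac{1}{2},\pi\otimes\mu^{\prime-1},\addchar)$ and $\gam(\frac{1}{2},\mu^{-1}\nu',\addchar^{-1})$; verifying this bookkeeping is the delicate part of the argument.
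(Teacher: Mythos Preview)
Your strategy and the paper's agree in outline but diverge on the key step. The paper does not compute $\pi(\calj_{\ell+n'})W^\ddagger$ directly on $\iot(\GL_2)$ and then split into three $\GL_1$ Tate integrals. Instead, after substituting the integral expression for $W^\flat_{\sig^\vee}$ and using the $\GL_2$ integration formula, it recognizes the integral over $(a,x)\in F^\times\times F$ as the $\GL_3\times\GL_1$ zeta integral $\widetilde{Z}(s,W_b,\mu'^{-1})$ and applies the functional equation~(\ref{tag:84}) \emph{for $\GL_3\times\GL_1$} to convert this to $Z(1-s,\pi^\vee(\vsi)\widetilde{W}_b,\mu')$. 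This produces $\gam(s,\pi\otimes\mu'^{-1},\addchar)^{-1}$ in a single stroke and, crucially, lands on Whittaker values at \emph{diagonal} matrices, where Lemma~\ref{lem:88} gives an explicit product formula. The two remaining integrals $Z(1-s,\widehat{\phi_1},\nu^{-1}\mu')$ and $Z(s,\widehat{\phi_2},\mu\nu'^{-1})$ then evaluate to mere signs (or, when $c(\chi')=0$, to a single ratio $L(s,\mu\nu'^{-1})/L(1-s,\mu^{-1}\nu')$); they do not produce the individual $\GL_1$ gamma factors $\gam(\tfrac12,\nu\mu'^{-1})$, $\gam(\tfrac12,\rho\mu'^{-1})$, $\gam(\tfrac12,\mu\mu'^{-1})$ you were anticipating, so your multiplicativity step is unnecessary.

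The unjustified step in your plan is the claim that after translating by $\calj_{\ell+n'}$ the integral ``separates as a product of three one-dimensional integrals in the $x,y,z$ coordinates of $N_3$.'' The matrix $\begin{bmatrix}h&\\&1\end{bmatrix}\calj_{\ell+n'}$ is not diagonal, and without first flipping to the dual Whittaker model via~(\ref{tag:84}) you have no analogue of Lemma~\ref{lem:88} to evaluate $W^\ddagger$ there. Your route might be salvageable by a direct big-cell computation of $W^\ddagger$ at these nondiagonal points, but that is a genuine additional argument not present in your sketch, and even then the bookkeeping you flag as ``the delicate part'' would be substantially heavier than the paper's, which sidesteps it entirely by packaging all three $\GL_1$ contributions into the single $\GL_3\times\GL_1$ functional equation.
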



\subsection{Proof of Lemma \ref{lem:87}}\label{ssec:811}
Recall the local zeta integrals associated to Whittaker functions $W\in \scrw_{\addchar}(\pi)$ and a character $\mu'$ of $F^\times$ 
\begin{align*}
Z(s,W,\mu')&=\int_{F^\times}W\biggl(\begin{bmatrix} a & \\ & \ono_2 \end{bmatrix}\biggl)\mu'(a)|a|^{s-1}\,\d a, \\
\widetilde{Z}(s,W,\mu^{\prime-1})&=\int_{F^\times}\int_{F} W\left(\begin{bmatrix} a & & \\ x & 1 & \\ & & 1 \end{bmatrix}\right)\mu'(a)^{-1}|a|^{s-1}\,\d x\d a, 
\end{align*}
which converge absolutely for $\Re s\gg 0$. 
Fix $h''\in I(\rho,\mu)$. 
Put 
\begin{align*}
h&=h_{\phi_1,h'',\phi_3}^{\nu,\rho,\mu}, & 
h_\sig^\ord&=h^{\mu',\nu'}_{\II_\frko}, &
W''&=W_{\addchar}(h''), &
W&=\pi(\calj_\ell)W_{\addchar}(h).  
\end{align*}

Substituting the integral expression of $W^\flat_{\sig^\vee}=(W_{\addchar}(h^\ord_\sig))^\natural$, we get
\[Z(s,W,W^\flat_{\sig^\vee})=\int_{\GL_2(F)}W\biggl(\begin{bmatrix} g & \\ & 1 \end{bmatrix}\biggl)h^\ord_\sig\biggl(\begin{bmatrix} 0 & 1 \\ 1 & 0 \end{bmatrix}g^\natural\biggl)|\det g|^{s-\frac{1}{2}}\,\d g. \]
Recall the integration formula 
\[\int_{\GL_2(F)}\calf(g)\,\d g=\frac{\zet_F(2)}{\zet_F(1)}\int_{F^{\times 2}\times F^2}\calf\biggl(\begin{bmatrix} a & \\ x & b \end{bmatrix}\bfu(y)\biggl)\,\d y\frac{\d x\d a\d b}{|b|} \]
for an integrable function $\calf$ on $\GL_2(F)$ (cf. \cite[3.1.6, p. 206]{MV}). 
Since $g^\natural=w_2\trs g^{-1}w_2$, we see that $\frac{\zet_F(1)}{\zet_F(2)}Z(s,W,W^\flat_{\sig^\vee})$ equals 
\begin{align*}
&\int_{F^{\times 2}\times F^2}
W\left(\begin{bmatrix} \begin{bmatrix} a & \\ x & b \end{bmatrix}\bfu(y) & \\ & 1 \end{bmatrix}\right)\biggl|\frac{b}{a}\biggl|^{1/2}\II_\frko(y)\frac{|ab|^{s-\frac{1}{2}}}{\mu'(a)\nu'(b)}\,\d y\frac{\d x\d a\d b}{|b|}. 
\end{align*}
Since $\pi\biggl(\begin{bmatrix} \bfu(y) & \\ & 1\end{bmatrix}\biggl)W=W$ for $y\in\frko$ for sufficiently large $\ell$, we get 
\begin{align*}
\frac{\zet_F(1)}{\zet_F(2)}Z(s,W,W^\flat_{\sig^\vee})
=&\int_{F^{\times 2}\times F}
W\left(\left[\begin{array}{c|cc}  a & & \\ \hline  x & b & \\ & & 1 \end{array}\right]\right)\frac{|ab|^{s-1}}{\nu'(b)\mu'(a)}\,\d x\d a\d b\\
=&\int_{F^\times}\widetilde{Z}(s,W_b,\mu^{\prime-1})\nu'(b)^{-1}|b|^{s-1}\d b, 
\end{align*}
where 
\[W_b=\pi\left(\left[\begin{array}{c|cc} 1 & & \\ \hline  & b & \\ & & 1 \end{array}\right]\right)W. \]
By the functional equation (\ref{tag:84}) we get 
\[\widetilde{Z}(s,W_b,\mu^{\prime-1})
=\gam(s,\pi^{}\otimes\mu^{\prime-1},\addchar)^{-1}Z(1-s,\pi^\vee(\vsi)\widetilde{W}_b,\mu'). \]
In summary we have  
\[Z(s,W,W^\flat_{\sig^\vee})
=\frac{\frac{\zet_F(2)}{\zet_F(1)}}{\gam(s,\pi^{}\otimes\mu^{\prime-1},\addchar)}\int_{F^\times}Z(1-s,\pi^\vee(\vsi)\widetilde{W}_b,\mu')\frac{|b|^{s-1}}{\nu'(b)}\,\d b. \]

Observe that 
\begin{align*}
\pi^\vee(\vsi)\widetilde{W}_b(g)
=&\widetilde{W}_b(g\vsi)\\
=&W_b(w_3\trs g^{-1}\vsi)\\
=&W_{\addchar}\left(w_3\trs g^{-1}\vsi\begin{bmatrix}  1 & 0 & 0 \\  0 & b & 0 \\ 0 & 0 & 1 \end{bmatrix}\calj_\ell,h\right). 
\end{align*}
We have 
\[\pi^\vee(\vsi)\widetilde{W}_b\left(\begin{bmatrix} a & 0 & 0 \\ 0 & 1 & 0 \\ 0 & 0 & 1 \end{bmatrix}\right)
=W_{\addchar}\left(\left[\begin{array}{cc|c} b & 0 & 0 \\ 0 & 1 & 0 \\ \hline 0 & 0 & -(\vpi^\ell a)^{-1} \end{array}\right],h\right). \]
Now we need the following formula: 

\begin{lemma}\label{lem:88}
Let $a,b\in F^\times$. 
Then  
\[W_{\addchar}\left(\left[\begin{array}{cc|c} b & 0 & 0 \\ 0 & 1 & 0 \\\hline 0 & 0 & a^{-1} \end{array}\right],h\right)\\
=\widehat{\phi_3}(0)|b|^{1/2}W''\biggl(\begin{bmatrix} b & 0 \\ 0 & 1 \end{bmatrix}\biggl)\cdot |a|\nu(a)^{-1}\widehat{\phi_1}(a). \]
\end{lemma}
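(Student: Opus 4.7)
The plan is to compute the Whittaker integral directly by substituting the explicit definition of $h=h^{\nu,\rho,\mu}_{\phi_1,h'',\phi_3}$ after performing a Bruhat decomposition. Let $d=\mathrm{diag}(b,1,a^{-1})$. Writing out the definition,
\[
W_\addchar(d,h)=\int_{F^3}h\!\left(w_3\!\begin{bmatrix}1 & y & z\\ 0 & 1 & x\\ 0 & 0 & 1\end{bmatrix}d\right)\overline{\addchar(x+y)}\,\d x\,\d y\,\d z,
\]
so the first task is to put the argument of $h$ into the form $P\cdot w\cdot N$ with $P\in\calp_{1,2}$, $w=\begin{bmatrix} & 1\\ \ono_2 & \end{bmatrix}$ and $N\in N_3$, since $h$ vanishes outside $\calp_{1,2}wN_3$.

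A direct matrix multiplication gives
\[
w_3\!\begin{bmatrix}1 & y & z\\ 0 & 1 & x\\ 0 & 0 & 1\end{bmatrix}d=\begin{bmatrix}0 & 0 & a^{-1}\\ 0 & 1 & xa^{-1}\\ b & y & za^{-1}\end{bmatrix}.
\]
Solving the system of equations for a factorization $P\cdot w\cdot N$ with the free parameter chosen so that the $(2,2)$-entry of the Levi's $\GL_2$-part vanishes, one finds
\[
P=\begin{bmatrix}a^{-1} & 0 & 0\\ 0 & 0 & 1\\ 0 & b & y\end{bmatrix},\qquad N=\begin{bmatrix}1 & 0 & \frac{z-xy}{ab}\\ 0 & 1 & xa^{-1}\\ 0 & 0 & 1\end{bmatrix}.
\]
The Levi's $\GL_2$-block $g_0=\begin{bmatrix}0 & 1\\ b & y\end{bmatrix}$ has $|\!\det g_0|=|b|$, so the definition of $h$ yields
\[
h(w_3\bfn d)=\frac{\nu(a)^{-1}|a|^{-1}}{|b|^{1/2}}h''(g_0)\,\phi_1\!\bigl(xa^{-1}\bigr)\,\phi_3\!\bigl(\tfrac{z-xy}{ab}\bigr).
\]

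Now I would integrate iteratively. Integrating in $z$ with $u=(z-xy)/(ab)$ produces the factor $|ab|\,\widehat{\phi_3}(0)$ since $\int\phi_3(u)\,\d u=\widehat{\phi_3}(0)$. Integrating in $x$ with $u=xa^{-1}$ and collecting the $\overline{\addchar(x)}$ factor gives $|a|\,\widehat{\phi_1}(a)$. Combining constants leaves
\[
W_\addchar(d,h)=\nu(a)^{-1}|a|\,|b|^{1/2}\widehat{\phi_3}(0)\widehat{\phi_1}(a)\int_F h''\!\left(w_2\!\begin{bmatrix}b & y\\ 0 & 1\end{bmatrix}\right)\overline{\addchar(y)}\,\d y,
\]
upon rewriting $g_0=w_2\begin{bmatrix}b & y\\ 0 & 1\end{bmatrix}$ with the convention $w_2=\bigl(\begin{smallmatrix}0 & 1\\ 1 & 0\end{smallmatrix}\bigr)$ of the paper. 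The remaining integral is the definition of $W''(\mathrm{diag}(b,1))$, which completes the identity.

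The only real obstacle is the Bruhat decomposition of the $3\times 3$ matrix: one must fix the non-uniqueness in the $\calp_{1,2}\times N_3$ factorization (caused by the unipotent radical of the Levi in $\calp_{1,2}$) by a normalization, and then keep careful track of the scalars in the Jacobian of the change of variables for the $z$-integration versus the definition of the Fourier transform. Everything else is straightforward expansion; no analytic issue arises because $\phi_1$ and $\phi_3$ are Schwartz and $h''$ is locally constant, so convergence is automatic and the order of integration may be interchanged freely.
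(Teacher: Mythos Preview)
Your proof is correct and follows essentially the same route as the paper. The paper pre-substitutes $z\mapsto z+xy$ in the unipotent integral and works at $\mathrm{diag}(b,1,a)$, replacing $a$ by $a^{-1}$ at the very end, whereas you work directly at $\mathrm{diag}(b,1,a^{-1})$ and absorb the $xy$-shift into the $z$-change of variables; these are the same computation with the bookkeeping reshuffled. (Two cosmetic slips in your write-up: in your $g_0$ it is the $(1,1)$-entry that vanishes, not the $(2,2)$-entry; and the non-uniqueness of the $\calp_{1,2}\cdot w\cdot N_3$ factorization comes from the upper unipotent of the $\GL_2$-factor of the Levi, not the ``unipotent radical of the Levi'', which is trivial. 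Neither affects the argument.)
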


\begin{proof}
Since  
\[w_3\begin{bmatrix} 1 & y & z+xy \\ 0 & 1 & x \\ 0 & 0 & 1 \end{bmatrix}\begin{bmatrix} b & 0 & 0 \\ 0 & 1 & 0 \\ 0 & 0 & a \end{bmatrix}
=\begin{bmatrix} a & \\ & w_2\bfu(y)\begin{bmatrix} b & 0 \\ 0 & 1 \end{bmatrix} \end{bmatrix}\begin{bmatrix} & 1 \\ \ono_2 & \end{bmatrix}\begin{bmatrix} 1 & 0 & \frac{az}{b} \\ 0 & 1 & ax \\ 0 & 0 & 1 \end{bmatrix} \]
for $x,y,z\in F$, we get 
\begin{align*}
&W_{\addchar}\left(\begin{bmatrix} b & 0 & 0 \\ 0 & 1 & 0 \\ 0 & 0 & a \end{bmatrix},h\right)\\
=&\int_{F^3}h\left(\begin{bmatrix} a & \\ & w_2\bfu(y)\begin{bmatrix} b & 0 \\ 0 & 1 \end{bmatrix} \end{bmatrix}\begin{bmatrix} & 1 \\ \ono_2 & \end{bmatrix}\begin{bmatrix} 1 & 0 & \frac{a}{b}z \\ 0 & 1 & ax \\ 0 & 0 & 1 \end{bmatrix}\right)\overline{\addchar(x+y)}\,\d x\d y\d z\\
=&\int_{F^3}\frac{|a|\nu(a)}{|b|^{1/2}}h''\biggl(w_2\bfu(y)\begin{bmatrix} b & 0 \\ 0 & 1 \end{bmatrix}\biggl)\phi_1(ax)\phi_3\biggl(\frac{a}{b}z\biggl)\overline{\addchar(x+y)}\,\d x\d y\d z
\\
=&\int_{F^3}\frac{|b|^{1/2}\nu(a)}{|a|}h''\biggl(w_2\bfu(y)\begin{bmatrix} b & 0 \\ 0 & 1 \end{bmatrix}\biggl)\phi_1(x)\phi_3(z)\overline{\addchar\biggl(\frac{x}{a}+y\biggl)}\,\d x\d y\d z\\
=&\frac{|b|^{1/2}\nu(a)}{|a|}\widehat{\phi_1}(a^{-1})\widehat{\phi_3}(0)W''\biggl(\begin{bmatrix} b & 0 \\ 0 & 1 \end{bmatrix}\biggl). 
\end{align*} 
We obtain the stated formula by replacing $a$ by $a^{-1}$. 
\end{proof}

For $\phi\in\cals(F)$ and a character $\chi$ of $F^\times$ we define Tate's local integral by 
\[Z(s,\phi,\chi)=\int_{F^\times}\phi(a)\chi(a)|a|^s\,\d a. \] 

\begin{proposition}\label{prop:84}
$\frac{\zet_F(1)}{\zet_F(2)}Z(s,\pi(\calj_\ell)W_{\addchar}(h),W^\flat_{\sig^\vee})$ equals
\[\frac{\widehat{\phi_3}(0)q^{-\ell s}\mu'(-\vpi^{-\ell})}{\gam(s,\pi\otimes\mu^{\prime-1},\addchar)}Z(1-s,\widehat{\phi_1},\nu^{-1}\mu')Z(s,W'',\nu^{\prime-1}). \]
\end{proposition}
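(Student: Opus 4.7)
The plan is to assemble the intermediate formulas already derived in Sections 8.10 and 8.11. The proof of Lemma 8.7 established that
\[\frac{\zet_F(1)}{\zet_F(2)}Z(s,\pi(\calj_\ell)W_{\addchar}(h),W^\flat_{\sig^\vee})=\frac{1}{\gam(s,\pi\otimes\mu^{\prime-1},\addchar)}\int_{F^\times}\frac{Z(1-s,\pi^\vee(\vsi)\widetilde{W}_b,\mu')\,|b|^{s-1}}{\nu'(b)}\,\d b,\]
and the same passage computed $\pi^\vee(\vsi)\widetilde{W}_b(\diag(a,1,1))=W_{\addchar}(\diag(b,1,-(\vpi^\ell a)^{-1}),h)$. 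Lemma 8.8 then supplies a closed form for this value of $W_{\addchar}$. Proposition 8.4 is the clean packaging of these three ingredients.

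Substituting Lemma 8.8, with its third diagonal entry $a^{-1}$ played by $-(\vpi^\ell a)^{-1}$, gives
\[\pi^\vee(\vsi)\widetilde{W}_b(\diag(a,1,1))=\widehat{\phi_3}(0)|b|^{1/2}W''\biggl(\begin{bmatrix} b & 0 \\ 0 & 1 \end{bmatrix}\biggl)\frac{|\vpi^\ell a|\,\widehat{\phi_1}(-\vpi^\ell a)}{\nu(-\vpi^\ell a)}.\]
Plugging this into the defining integral $Z(1-s,\pi^\vee(\vsi)\widetilde{W}_b,\mu')=\int_{F^\times}\pi^\vee(\vsi)\widetilde{W}_b(\diag(a,1,1))\mu'(a)|a|^{-s}\,\d a$ and making the substitution $a'=-\vpi^\ell a$, the factors $|\vpi^\ell a|$ and $|a|^{-s}$ combine to $q^{-\ell s}|a'|^{1-s}$, while the characters telescope to $\mu'(-\vpi^{-\ell})\cdot(\nu^{-1}\mu')(a')$. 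The resulting $a'$-integral is precisely Tate's local integral $Z(1-s,\widehat{\phi_1},\nu^{-1}\mu')$, yielding
\[Z(1-s,\pi^\vee(\vsi)\widetilde{W}_b,\mu')=\widehat{\phi_3}(0)|b|^{1/2}W''\biggl(\begin{bmatrix} b & 0 \\ 0 & 1 \end{bmatrix}\biggl)\mu'(-\vpi^{-\ell})q^{-\ell s}Z(1-s,\widehat{\phi_1},\nu^{-1}\mu').\]

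Inserting this back into the outer $b$-integral, all the $b$-independent factors slide out, and the remaining integral $\int_{F^\times}W''(\diag(b,1))|b|^{s-1/2}\nu'(b)^{-1}\,\d b$ is by definition the $\GL_2\times\GL_1$ JPSS integral $Z(s,W'',\nu^{\prime-1})$. Collecting the prefactors produces the identity asserted in Proposition 8.4. No serious obstacle arises; the only points to watch are the tracking of the sign $-1$ inside $\mu'$ and $\nu$, and the consistent bookkeeping of the $|\vpi|$-factors through the change of variable.
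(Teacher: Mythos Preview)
Your proof is correct and follows essentially the same route as the paper: you invoke the intermediate identity from \S\ref{ssec:811} expressing $\frac{\zet_F(1)}{\zet_F(2)}Z(s,W,W^\flat_{\sig^\vee})$ as a $b$-integral of $Z(1-s,\pi^\vee(\vsi)\widetilde{W}_b,\mu')$, apply Lemma~\ref{lem:88} with the substitution $a\mapsto -\vpi^\ell a$ to evaluate the inner integral as a Tate integral, and then recognize the outer $b$-integral as $Z(s,W'',\nu^{\prime-1})$. The paper does exactly the same three steps in the same order.
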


\begin{proof}
By Lemma \ref{lem:88} we get 
\begin{align*}
&Z(1-s,\pi^\vee(\vsi)\widetilde{W}_b,\mu')\\
=&\widehat{\phi_3}(0)|b|^{1/2}W''\biggl(\begin{bmatrix} b & 0 \\ 0 & 1 \end{bmatrix}\biggl)\int_{F^\times}|-\vpi^\ell a|\nu(-\vpi^\ell a)^{-1}\widehat{\phi_1}(-\vpi^\ell a)\mu'(a)\,\frac{\d a}{|a|^s}\\
=&\widehat{\phi_3}(0)|b|^{1/2}W''\biggl(\begin{bmatrix} b & 0 \\ 0 & 1 \end{bmatrix}\biggl)\int_{F^\times}|a|\nu(a)^{-1}\widehat{\phi_1}(a)\frac{\mu'(-\vpi^{-\ell}a)}{q^{\ell s}|a|^s}\,\d a\\
=&q^{-\ell s}\mu'(-\vpi^{-\ell})\widehat{\phi_3}(0)|b|^{1/2}W''\biggl(\begin{bmatrix} b & 0 \\ 0 & 1 \end{bmatrix}\biggl)Z(1-s,\widehat{\phi_1},\nu^{-1}\mu'). 
\end{align*}
It follows that
\begin{align*}
&\int_{F^\times}Z(1-s,\pi^\vee(\vsi)\widetilde{W}_b,\mu')\frac{|b|^{s-1}}{\nu'(b)}\,\d b\\
=&q^{-\ell s}\mu'(-\vpi^{-\ell})\widehat{\phi_3}(0)Z(1-s,\widehat{\phi_1},\nu^{-1}\mu')Z(s,W'',\nu^{\prime-1})
\end{align*}
as claimed. 
\end{proof}

We are now ready to prove Lemma \ref{lem:87}.  
Set 
\begin{align*}
h''&=h_{\phi_2}^{\rho,\mu}, &
\phi_1&=\widehat{\varphi_\chi}, &
\phi_2&=\widehat{\varphi_{\chi'}'}, &
\chi&=(\nu\mu^{\prime-1})|_{\frko^\times}, & 
\chi'&=(\mu\nu^{\prime-1})|_{\frko^\times}. 
\end{align*}
By definition we see that 
\[W''\biggl(\begin{bmatrix} b & 0 \\ 0 & 1 \end{bmatrix}\biggl)=\frac{\mu(b)}{|b|^{1/2}}\int_Fh''(w_2\bfu(b^{-1}x))\overline{\addchar(x)}\,\d x=\mu(-b)|b|^{1/2}\widehat{\phi_2}(b)\]
for $b\in F^\times$. 
It follows that 
\[Z(s,W'',\nu^{\prime-1})=\mu(-1)Z(s,\widehat{\phi_2},\mu\nu^{\prime-1}). \]
Then 
\begin{align*}
Z(s,\widehat{\phi_1},\nu^{-1}\mu')&=(\nu\mu')(-1), \\ 
Z(s,\widehat{\phi_2},\mu\nu^{\prime-1})&=\begin{cases}
(\mu\nu')(-1) &\text{if $c(\chi')\geq 1$, }\\
\frac{(\mu\nu^{\prime-1})(\vpi)L(s,\mu\nu^{\prime-1})}{q^{s-1}L(1-s,\mu^{-1}\nu')} &\text{otherwise. }
\end{cases}
\end{align*}
Letting $\phi_3=q^{-\ell}\II_{\frkp^{-\ell}}$, we obtain 
\[Z(s,\pi(\calj_\ell)W^\ddagger,W^\flat_{\sig^\vee})
=\pm\frac{q^{-\ell s}\mu'(\vpi^{-\ell})\frac{\zet_F(2)}{\zet_F(1)}}{\gam(s,\pi^{}\otimes\mu^{\prime-1},\addchar)}
\times\begin{cases}
1 &\text{if $c(\chi')\geq 1$, }\\
\frac{(\mu\nu^{\prime-1})(\vpi)L(s,\mu\nu^{\prime-1})}{q^{s-1}L(1-s,\mu^{-1}\nu')} &\text{otherwise. }
\end{cases} \]
Lemma \ref{lem:87} follows by replacing $\ell$ by $\ell+n'$ in view of $n'=\max\{1,c(\chi')\}$.  


\part{Appendices}
\renewcommand{\thesection}{\Alph{section}}
\setcounter{section}{0}

\section{Differential operators on symmetric spaces}\label{sec:a}


\subsection{Bounded symmetric domains and factors of automorphy}\label{ssec:a1}

For a Hermitian matrix $X$ we write $X>0$ to indicate that $X$ is positive definite. 
For non-negative integers $r,s$ we define the Lie group $\U(r,s)$ by 
\begin{align*}
\U(r,s)&=\{g\in\GL_{r+s}(\CC)\;|\;gI_{r,s}\trs g^c=I_{r,s}\}, &
I_{r,s}&=\begin{bmatrix} \ono_r & \\ & -\ono_s \end{bmatrix}. 
\end{align*}
The group $\U(r,s)$ acts transitively on the bounded symmetric domain  
\[\frkB_{r,s}=\{z\in\Mat_{r,s}(\CC)\;|\;1_s-\trs z^c z>0\}  \index{$\frkB_{r,s}$}\] 
by 
\[gz=(Az+B)(Cz+D)^{-1}\]
where $z\in\frkB_{r,s}$ and $g=\begin{bmatrix} A & B \\ C & D \end{bmatrix}\in\U(r,s)$ with $D$ of size $r$. 

Let $\bfo\in\frkB_{r,s}$ be the zero matrix. 
Then 
\[K=\{g\in\U(r,s)\;|\;g\bfo=\bfo\}\]
is the maximal compact subgroup of $\U(r,s)$. 
We identify $\frkB_{r,s}$ with $\U(r,s)/K$ via the map $g\mapsto g\bfo$. 
Put 
\begin{align*}
&\lam(g,z)=B^c\trs z+A^c, & 
&\mu(g,z)=Cz+D, &
&J(g,z)=(\lam(g,z),\mu(g,z)), \\
&\xi(z',z)=\ono_r-z^{\prime c}\trs z, &
&\eta(z',z)=\ono_s-\trs z^{\prime c} z, &
&B(z)=\begin{bmatrix} \ono_r & z \\ \trs z^c & \ono_s \end{bmatrix}\\
&\xi(z)=\xi(z,z), &
&\eta(z)=\eta(z,z), &
&\vXi(z)=(\xi(z),\eta(z))
\end{align*}
for $z,w\in\frkB_{r,s}$ and $g\in\U(r,s)$. 
Recall the equalities  
\begin{align}
&gB(z)=B(gz)
\begin{bmatrix} \lam(g,z)^c & \\ & \mu(g,z)\end{bmatrix}, \label{tag:a1}\\
&\trs B(z')^c I_{r,s} B(z)=\begin{bmatrix} \xi(z',z)^c & z-z' \\ \trs z^{\prime c}-\trs z^c & -\eta(z',z)\end{bmatrix} \notag
\end{align}
from which we have for $g,g'\in\U(r,s)$ and $z\in\frkB_{r,s}$ 
\begin{align*}
&\lam(gg',z)=\lam(g,g' z)\lam(g',z), &
&\mu(gg',z)=\mu(g,g' z)\mu(g',z), \\
&\trs\lam(g,z)^c\xi(g z)\lam(g,z)=\xi(z), &
&\trs\mu(g,z)^c\eta(g z)\mu(g,z)=\eta(z). 
\end{align*} 
Therefore the isomorphism $K\simeq\U(r)\times\U(s)$ is given by $k\mapsto J(k,\bfo)$. 
Its inverse map is given by 
\beq
(A,D)\mapsto\begin{bmatrix} \trs A^{-1} & \\ & D \end{bmatrix}. \label{tag:a2}
\eeq 
The group $\GL_r(\CC)\times\GL_s(\CC)$ acts on $\frkT=\Mat_{r,s}(\CC)$ by 
\begin{align*}
(A,D)u&=Au\trs D &
(A&\in\GL_r(\CC),\; D\in\GL_s(\CC),\; u\in\Mat_{r,s}(\CC)). 
\end{align*}
By a representation $(\rho,V)$ of $\GL_r(\CC)\times\GL_s(\CC)$ we mean a pair formed by a finite dimensional complex vector space $V$ and a complex analytic homomorphism $\rho$ of $\GL_r(\CC)\times\GL_s(\CC)$ into $\GL(V)$. 
Given a $C^\infty$-manifold $M$, we write $C^\infty(M,V)$ for the space of all $V$-valued $C^\infty$ functions on $M$ and $C^\infty(\U(r,s),\rho)$ the space of all $V$-valued $C^\infty$ functions on $\U(r,s)$ such that for $g\in\U(r,s)$ and $k\in K$
\[h(g k)=\rho(J(k,\bfo))^{-1}h(g). \] 

For $g\in\U(r,s)$ and $f\in C^\infty(\frkB_{r,s},V)$ we define $f\|_\rho g\in C^\infty(\frkB_{r,s},V)$ by 
\[f\|_\rho g(z)=\rho(J(g,z))^{-1}f(gz). \]
We associate $f^\rho\in C^\infty(\U(r,s),\rho)$ to $f\in C^\infty(\frkB_{r,s},V)$ by 
\[f^\rho(g)=\rho(J(g,\bfo))^{-1}f(g\bfo)=f\|_\rho g(\bfo). \]
The map $f\mapsto f^\rho$ is a $\CC$-linear bijection of $C^\infty(\frkB_{r,s},V)$ onto $C^\infty(\U(r,s),\rho)$. 


\subsection{Differential operators on $\frkB_{r,s}$}\label{ssec:a2}

Denote the Lie algebra of $\U(r,s)$ by $\frkg$ and the subalgebra of $\frkg$ corresponding to $K$ by $\frkk$. 
The Lie algebra $\frkg$ acts on the space of $C^\infty$ functions on $\U(r,s)$ by 
\[[Yf](g)=\frac{\d}{\d t}f(g\exp(tY))\biggl|_{t=0} \]
for $Y\in\frkg$. 
This action is extended to that of the universal enveloping algebra $\frkU$ of the complexified Lie algebra $\frkg_\CC$. 
Let $\Ad$ denote the adjoint representation of $K$ on $\frkg_\CC$. 
For $k\in K$ the action of $\Ad(k)$ can be extended to $\frkU$, which we denote also by $\Ad(k)$. 

Since we have fixed complex structure of $\frkB_{r,s}=\U(r,s)/K$, we have the Harish-Chandra decomposition 
\[\frkg_\CC=\frkk_\CC\oplus\frkp_+\oplus\frkp_-, \]
where 
\begin{align*}
\frkg_\CC&=\Mat_{r+s}(\CC), & 
\frkk_\CC&=\biggl\{\begin{bmatrix} a & 0 \\ 0 & d \end{bmatrix}\biggl|\;a\in\Mat_r(\CC),\;d\in\Mat_s(\CC)\biggl\} 
\end{align*}
and the $\CC$-linear bijections $\iot_\pm$ of $\frkT=\Mat_{r,s}(\CC)$ onto $\frkp_\pm$ are defined by 
\begin{align*}
\iot_+(u)&=\begin{bmatrix} 0 & u \\ 0 & 0 \end{bmatrix}, &
\iot_-(u)&=\begin{bmatrix} 0 & 0 \\ \trs u & 0 \end{bmatrix}
\end{align*}
for $u\in\frkT$. 
Note that $Xh=-\d\rho(X)h$ for every $X\in\frkk$ and $h\in C^\infty(\rho)$. 
Since $[\frkp_\pm,\frkp_\pm]=\{0\}$, the symmetric algebra $\frkS(\frkp_\pm)$ can be embedded in $\frkU$ and is stable under $\Ad(k)$ for $k\in K$. 

We write $Ml_n(\frkT,V)$ for the vector space of all $\CC$-multilinear maps of $\frkT\times\cdots\times\frkT$ ($n$ copies) into $V$. 
We define the representation $(\rho\otimes\upsilon^n,Ml_n(\frkT,V))$ 
of $\GL_r(\CC)\times\GL_s(\CC)$ by 
\begin{align*}
[(\rho\otimes\upsilon^n)(a,b)h](u_1,\dots,u_n)&=\rho(a,b)h(\trs au_1b,\dots,\trs au_nb) \index{$\rho\otimes\upsilon^n$}
\end{align*}
for $(a,b)\in \GL_r(\CC)\times\GL_s(\CC)$, $h\in Ml_n(\frkT,V)$ and $u_1,\dots,u_n\in\frkT$. 

Given $f\in C^\infty(\frkB_{r,s},V)$, we define $Df\in C^\infty(\frkB_{r,s},Ml_1(\frkT,V))$ by 
\begin{align*}
[Df(u)](z)&=\sum_{i=1}^r\sum_{j=1}^su_{ij}\frac{\partial f}{\partial z_{ij}}(z), & 
[Cf(u)](z)&=[Df(\trs\xi(z)u\eta(z))](z)
\end{align*}
for $z=(z_{ij})\in\frkB_{r,s}$ and $u=(u_{ij})\in\frkT$. 
Given a non-negative positive integer $n$, we define $D^nf$ and $C^nf$ by
\begin{align*}
D^nf&=D(D^{n-1}f), & D^0f&=f, &
C^nf&=C(C^{n-1}f), & C^0f&=f. 
\end{align*}
These have values in $Ml_n(\frkT,V)$ in the sense that 
\[C^nf(u_1,\dots,u_n)=C[C^{n-1}f(u_1,\dots,u_{n-1})](u_n). \]
Define $D_\rho^nf\in C^\infty(\frkB_{r,s},Ml_n(\frkT,V))$ by 
\[D_\rho^nf=(\rho\otimes\upsilon^n)(\vXi)^{-1}C^n(\rho(\vXi)f). \]
Proposition 12.10 and (A8.6a) of \cite{Shimura00} say that 
\begin{align}
&D_\rho^{n+1}=D_{\rho\otimes\upsilon^n}^{}D_\rho^n=D^n_{\rho\otimes\tau}D_\rho^{}, \notag \\  
&D_\rho^n(f\|_\rho g)=(D_\rho^nf)\|_{\rho\otimes\upsilon^n}g, \notag \\
&\iot_+(u_1)\cdots\iot_+(u_n)f^\rho=[D_\rho^nf]^{\rho\otimes\upsilon^n}(u_1,\dots,u_n) \label{tag:a3}
\end{align}
for $f\in C^\infty(\frkB_{r,s},V)$, $g\in\U(r,s)$ and $u_1,\dots,u_n\in\frkT$. 


\subsection{Hermitian symmetric spaces}\label{ssec:a3}

We retain the notation in Sections \ref{ssec:21}--\ref{ssec:26}. 
Put 
\begin{align*}
\vXi(Z)&=(\xi(Z),\eta(Z)), &
B(Z)&=\begin{bmatrix} \trs\tau^c & \trs w^c & \tau \\ 0 & \gam_0 & w  \\ \ono_s & 0 & \ono_s \end{bmatrix}, \index{$B(Z)$} 
\index{$\vXi(Z),\xi(Z),\eta(Z)$}
\end{align*} 
where 
\begin{align*}
\xi(Z)&=\sqrt{-1}\begin{bmatrix} \tau^c-\trs\tau & -\trs w \\ w^c & -\trs\gam_0\end{bmatrix}, & 
\eta(Z)&=\sqrt{-1}(\trs\tau^c-\tau-\trs w^c\gam_0^{-1}w). 
\end{align*}
It is well-known that 
\begin{align}
&\alp B(Z)=B(\alp Z)
\begin{bmatrix} \lam(\alp,Z)^c & \\ & \mu(\alp,Z)\end{bmatrix}, \index{$\lam(\alp,Z)$}\index{$\mu(\alp,Z)$}\label{tag:a4} \\
&\trs B(Z')^c S_{\gam_0}^{-1} B(Z)=\begin{bmatrix} -\sqrt{-1}\xi(Z',Z)^c & Z-Z' \\ \trs(Z-Z')^c & \sqrt{-1}\eta(Z',Z)\end{bmatrix} \notag
\end{align}
(see (6.3.1) and (6.6.1) of \cite{Shimura97}) from which we have 
\begin{align}
&\lam(\alp\bet,Z)=\lam(\alp,\bet Z)\lam(\bet,Z), &
&\mu(\alp\bet,Z)=\mu(\alp,\bet Z)\mu(\bet,Z), \notag\\
&\trs\lam(\alp,Z)^c\xi(\alp Z)\lam(\alp,Z)=\xi(Z), &
&\trs\mu(\alp,Z)^c\eta(\alp Z)\mu(\alp,Z)=\eta(Z) \label{tag:a5}
\end{align}
for $\alp,\bet\in G$ and $Z\in\frkD_{r,s}$. 
Recall the maximal compact subgroup $\calk$ of $G$  
\begin{align*}
\calk&=\{\alp\in G\;|\;\alp\bfi=\bfi\}, &
\bfi=\begin{bmatrix} \sqrt{-1}\ono_s \\ \oo_{t,s} \end{bmatrix}\in\frkD_{r,s}.  
\end{align*}

Let $C^\infty(G,\rho)$ the space of all $V$-valued $C^\infty$ functions on $G$ that satisfy
\[h(g k)=\rho(J(k,\bfi))^{-1}h(g)\]
for $g\in G$ and $k\in\calk$. 
The map $f\mapsto f^\rho$ is a $\CC$-linear bijection of $C^\infty(\frkD_{r,s},V)$ onto $C^\infty(G,\rho)$. 
Given $g\in G$ and $f\in C^\infty(\frkD_{r,s},V)$, we define $f\|_\rho g\in C^\infty(\frkD_{r,s},V)$ by 
\[f\|_\rho g(Z)=\rho(J(g,Z))^{-1}f(gZ). \]
We define $f^\rho\in C^\infty(G,\rho)$ by 
\[f^\rho(g)=\rho(J(g,\bfi))^{-1}f(g\bfi)=f\|_\rho g(\bfi). \]

Given $f\in C^\infty(\frkD_{r,s},V)$, we define $Df\in C^\infty(\frkD_{r,s},Ml_1(\frkT,V))$ by 
\begin{align*}
[Df(u)](Z)&=\sum_{i,j=1}^su_{ij}\frac{\partial f}{\partial\tau_{ij}}(Z)+\sum_{i=1}^t\sum_{j=1}^su_{s+i,j}\frac{\partial f}{\partial w_{ij}}(Z), \\ 
[Cf(u)](Z)&=[Df(\trs\xi(Z)u\eta(Z))](Z)
\end{align*}
for $Z=\begin{bmatrix} \tau \\ w \end{bmatrix}\in\frkD_{r,s}$ and $u=(u_{ij})\in\frkT$. 
Given a non-negative positive integer $n$, we define $D^nf$ and $C^nf$ by
\begin{align*}
D^nf&=D(D^{n-1}f), & D^0f&=f, &
C^nf&=C(C^{n-1}f), & C^0f&=f. 
\end{align*}
Define $D_\rho^nf\in C^\infty(\frkD_{r,s},Ml_n(\frkT,V))$ by 
\[D_\rho^nf=(\rho\otimes\upsilon^n)(\vXi)^{-1}C^n(\rho(\vXi)f). \index{$D_\rho$}\]
Proposition 12.10 of \cite{Shimura00} shows that 
\begin{align}
D_\rho^{n+1}&=D_{\rho\otimes\upsilon^n}^{}D_\rho^n=D^n_{\rho\otimes\tau}D_\rho^{}, &  
D_\rho^n(f\|_\rho g)&=(D_\rho^nf)\|_{\rho\otimes\upsilon^n}g. \label{tag:a6}
\end{align}


\subsection{Translation}\label{ssec:a4}

We take $\bet\in\GL_t(\CC)$ such that $\sqrt{-1}\bet\trs\bet^c=2\gam_0$ and put 
\begin{align*} 
\calc&=\begin{bmatrix} \sqrt{-1}\ono_s & 0 & \sqrt{-1}\ono_s \\ 0 & \bet & 0 \\ -\ono_s & 0 & \ono_s \end{bmatrix},  
\end{align*}
following \cite[A2.2]{Shimura97}. 
Since $\calc I_{r,s}\trs\calc^c=-2\sqrt{-1}S_{\gam_0}$, we define the isomorphism 
\begin{align*}
\vph&:\U(r,s)\stackrel{\sim}{\to}G, & 
\vph(g)&=\calc g\calc^{-1}. 
\end{align*} 

Lemma A2.3 of \cite{Shimura97} gives a holomorphic bijection  
\begin{align*}
\frkt&:\frkB_{r,s}\stackrel{\sim}{\to}\frkD_{r,s}, & 
\frkt\left(\begin{bmatrix} z_1 \\ z_2 \end{bmatrix}\right)&=\begin{bmatrix} \sqrt{-1}(\ono_s+z_1)(\ono_s-z_1)^{-1} \\ \bet z_2(\ono_s-z_1)^{-1} \end{bmatrix}
\end{align*}
such that $\frkt(gz)=\vph(g)\frkt z$. 
Define holomorphic maps $\kap:\frkB_{r,s}\to\GL_r(\CC)$ and $\nu:\frkB_{r,s}\to\GL_s(\CC)$ by 
\[\calc B(z)=B(\frkt z)\begin{bmatrix} \kap(z)^c & \\ & \nu(z) \end{bmatrix} \]
(see (A2.3.4) of \cite{Shimura97}). 
We see from (\ref{tag:a1}) and (\ref{tag:a4}) that 
\begin{align*}  
&\lam(\vph(g),\frkt z)=\kap(gz)\lam(g,z)\kap(z)^{-1}, &
&\mu(\vph(g),\frkt z)=\nu(gz)\mu(g,z)\nu(z)^{-1}. 
\end{align*}
Moreover, the proof of Lemma A2.3 shows that
\begin{align*}  
2\xi(z',z)&=\trs\kap(z')^c\xi(\frkt z',\frkt z)\kap(z), &
2\eta(z',z)&=\trs\nu(z')^c\eta(\frkt z',\frkt z)\nu(z), 
\end{align*}
Put $\vka(z)=(\kap(z),\nu(z))$. 
We rewrite the formulas as 
\begin{align}
J(\vph(g),\frkt z)&=\vka(gz)J(g,z)\vka(z)^{-1}, & 
2\vXi(z)&=\trs\vka(z)^c\vXi(\frkt z)\vka(z). \label{tag:a7}
\end{align}

Let $f\in C^\infty(\frkD_{r,s},V)$. 
Define $\til f\in C^\infty(\frkD_{r,s},V)$ by 
\[\til f_\rho(Z)=\rho(\vka(\frkt^{-1}Z))f(Z). \]
Let $\hat\frkg$ denote the Lie algebras of $G$. 
Define the isomorphism $\frkg_\CC\stackrel{\sim}{\to}\hat\frkg_\CC$ by $X\mapsto\calc X\calc^{-1}$. 
For $u\in\frkT$ we define $\iot_+^\calc(u)\in\hat\frkg_\CC$ by $\iot_+^\calc(u)=\calc\iot_+^{}(u)\calc^{-1}$. 

\begin{proposition}\label{prop:a1}
Given $g\in\U(r,s)$ and $f\in C^\infty(\frkD_{r,s},V)$, we have 
\begin{align*}
\til f_\rho\|_\rho\vph(g)&=\widetilde{(f\circ\frkt\|_\rho g\circ\frkt^{-1})_\rho}, \\
(-2\sqrt{-1})^nD_\rho^n\til f_\rho&=\widetilde{(D_\rho^n(f\circ\frkt)\circ\frkt^{-1})_{\rho\otimes\upsilon^n}}. 
\end{align*}
\end{proposition}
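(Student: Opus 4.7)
My plan is to prove each identity in turn by transporting definitions from $\frkB_{r,s}$ to $\frkD_{r,s}$ through the biholomorphism $\frkt$; both identities rely essentially on the two transformation laws collected in (\ref{tag:a7}).

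For the first identity, I would carry out a direct computation. Substituting $Z=\frkt z$ on the left side and using $\vph(g)\frkt z=\frkt(gz)$ together with the factorization $J(\vph(g),\frkt z)=\vka(gz)J(g,z)\vka(z)^{-1}$ from the first half of (\ref{tag:a7}), one obtains
\[
(\til f_\rho\|_\rho\vph(g))(\frkt z)=\rho(\vka(z))\rho(J(g,z))^{-1}\rho(\vka(gz))^{-1}\til f_\rho(\frkt(gz)).
\]
Unfolding $\til f_\rho(\frkt(gz))=\rho(\vka(gz))f(\frkt(gz))$ causes the $\rho(\vka(gz))$ factors to cancel, leaving $\rho(\vka(z))(f\circ\frkt\|_\rho g)(z)$, which is exactly the value at $\frkt z$ of the right-hand side.

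For the second identity, I would induct on $n$ using the recursion $D_\rho^{n+1}=D_{\rho\otimes\upsilon^n}\circ D_\rho^n$ from (\ref{tag:a6}). The inductive step reduces the case $n+1$ to the base case $n=1$ applied with representation $\rho\otimes\upsilon^n$, so it suffices to prove the single-step identity $-2\sqrt{-1}\,D_\rho\til f_\rho=\widetilde{(D_\rho(f\circ\frkt)\circ\frkt^{-1})_{\rho\otimes\upsilon}}$. Two routes are available. The direct route unfolds $D_\rho=(\rho\otimes\upsilon)(\vXi)^{-1}\circ C\circ \rho(\vXi)$, performs the change of variables $Z=\frkt z$ (invoking the holomorphic Jacobian of $\frkt$), and applies the second half of (\ref{tag:a7}), which rearranges to $\vXi(\frkt z)=2(\trs\vka(z)^c)^{-1}\vXi(z)\vka(z)^{-1}$; the rescaling contributes the factor $2$, while the explicit $\sqrt{-1}$'s present in the definitions of $\xi(Z),\eta(Z)$ on $\frkD_{r,s}$ (Section \ref{ssec:a3}), but absent on $\frkB_{r,s}$, contribute the factor $-\sqrt{-1}$ via the contraction $\trs\xi u\eta$ inside $C$. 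A more conceptual route invokes (\ref{tag:a3}) to identify $D_\rho^n f$ with an iterated action of $\iot_+(u_i)$ on $\U(r,s)$, transports this along $\vph$ to $G$ using the first identity (so that $\iot_+(u)$ becomes $\iot_+^\calc(u)$), and extracts the overall scalar $(-2\sqrt{-1})^n$ from the same rescaling.

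The principal technical obstacle is tracking this scalar: one must verify that the interplay between the twist $\rho(\vka)$ in the definition of $\til f_\rho$, the conjugation by $\rho(\vXi)$ in $D_\rho$, the Jacobian of $\frkt$, and the $\xi,\eta$ contractions inside $C$ yields exactly $(-2\sqrt{-1})^n$ with no residual automorphy factor. This can be reduced to an explicit computation at the base points $\bfo\in\frkB_{r,s}$ and $\bfi\in\frkD_{r,s}$, where $\frkt$, $\vka$, and $\vXi$ admit closed-form expressions coming from the Cayley element $\calc$; at that point the verification degenerates to comparing two explicit formulas for the $\frkp_+$-action in the Harish-Chandra decomposition, from which the scalar $(-2\sqrt{-1})^n$ emerges unambiguously.
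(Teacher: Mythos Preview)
Your approach is essentially the paper's. The first identity is proved exactly as you propose. For the second, the paper also works directly from the definitions and (\ref{tag:a7}), but with two differences worth noting.

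First, the paper does not leave the Jacobian computation as a heuristic: it cites the identity
\[
D(f\circ\frkt)(\trs\kap(z)u\,\nu(z))=-2\sqrt{-1}\,[Df(u)]\circ\frkt
\]
from \cite[\S 26.2]{Shimura00}. The entire scalar $-2\sqrt{-1}$ comes from this Jacobian of $\frkt$, not from the $\sqrt{-1}$'s in the $\frkD_{r,s}$-definitions of $\xi,\eta$ as you suggest; the latter are already absorbed into the relation (\ref{tag:a7}), which contributes a factor of $4$ (one $2$ from each of $\xi$ and $\eta$) that later cancels against the $\upsilon^n$-twist. So your bookkeeping sketch is slightly off, though your fallback of verifying at the base point would of course recover the correct constant.

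Second, rather than inducting on $n$ via $D_\rho^{n+1}=D_{\rho\otimes\upsilon^n}D_\rho^n$, the paper iterates directly at the level of $C^n$: from the single-step relation $4\,C(f\circ\frkt)(u)=-2\sqrt{-1}\,[Cf(\kap(z)^c u\,\trs\nu(z)^c)]\circ\frkt$ one passes to $C^n$ immediately because $\kap(z)^c$ and $\nu(z)^c$ are anti-holomorphic and hence commute with the holomorphic differentiation in $C$. This is slightly cleaner than induction on $D_\rho^n$, since it avoids re-conjugating by $\rho(\vXi)$ at each step. Your ``conceptual route'' through (\ref{tag:a3}) and $\iot_+^\calc$ is not used in the proof itself; it appears instead as a corollary (Corollary \ref{cor:a1}(\ref{cor:a13})).
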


\begin{proof}
Let $g\in\U(r,s)$ and $z\in\frkB_{r,s}$. 
It follows from (\ref{tag:a7}) that 
\begin{align*}
\til f_\rho\|_\rho\vph(g)(\frkt z)
&=\rho(J(\vph(g),\frkt z))^{-1}\til f_\rho(\vph(g)\frkt z)\\
&=\rho(\vka(gz)J(g,z)\vka(z)^{-1})^{-1}\til f_\rho(\frkt(gz))
=\rho(\vka(z))(f\circ\frkt\|_\rho g(z)). 
\end{align*} 

Shimura has proved that
\[D(f\circ\frkt)(\trs\kap(z)u_1\nu(z))=-2\sqrt{-1}[Df(u_1)]\circ\frkt\]
for $u_1\in\frkT$ in \S 26.2 of \cite{Shimura00}. 
It follows from (\ref{tag:a7}) that 
\begin{align*}
4[C(f\circ\frkt)(u_1)](z)
&=4[D(f\circ\frkt)(\trs\xi(z)u_1\eta(z))](z)\\
&=[D(f\circ\frkt)(\trs\kap(z)\trs\xi(\frkt z)\kap(z)^c u_1\trs \nu(z)^c\eta(\frkt z)\nu(z))](\frkt z)\\ 
&=-2\sqrt{-1}[Cf(\kap(z)^c u_1\trs\nu(z)^c)](\frkt z). 
\end{align*}
Since $\kap(z)^c$ and $\nu(z)^c$ are anti-holomorphic, we have 
\[4^n[C^n(f\circ\frkt)(\ulu)](z)\\
=(-2\sqrt{-1})^n[C^nf(\kap(z)^c\ulu\trs\nu(z)^c)](\frkt z) \]
for $\ulu=(u_1,\dots,u_n)\in\frkT^n$. 
We get  
\begin{align*}
[D_\rho^n(f\circ\frkt)(\ulu)](z)
=&\rho(\Xi(\frkt z)\vka(z))^{-1}[C^n(\rho(\vXi)\til f_\rho)\circ\frkt(\trs\xi(z)^{-1}\ulu\eta(z)^{-1})](z)\\
=&(-2\sqrt{-1})^n\rho(\vka(z))^{-1}[D_\rho^n\til f_\rho(\trs\kap(z)^{-1}\ulu\nu(z)^{-1})](\frkt z)\\
=&(-2\sqrt{-1})^n(\rho\otimes\upsilon^n)(\vka(z))^{-1}[D_\rho^n\til f_\rho(\ulu)](\frkt z) 
\end{align*}
again by (\ref{tag:a7}) as claimed. 
\end{proof}

\begin{corollary}\label{cor:a1}
\begin{enumerate}
\item\label{cor:a11} $(f\circ\frkt)^\rho=\rho(\vka(\bfo))^{-1}(\til f_\rho)^\rho\circ\vph$.  
\item\label{cor:a12} $[D_\rho^n(f\circ\frkt)]^{\rho\otimes\upsilon^n}=(-2\sqrt{-1})^n(\rho\otimes\upsilon^n)(\vka(\bfo))^{-1}[D_\rho^n\til f_\rho]^{\rho\otimes\upsilon^n}\circ\vph$. 
\item\label{cor:a13} For $u_1,\dots,u_n\in\frkT$ 
\[\iot_+^\calc(u_1)\cdots\iot_+^\calc(u_n)(\til f_\rho)^\rho
=(-2\sqrt{-1})^n\upsilon^n(\vka(\bfo))^{-1}D_\rho^n\til f_\rho(u_1,\dots,u_n)^{\rho\otimes\upsilon^n}. \]
\end{enumerate} 
\end{corollary}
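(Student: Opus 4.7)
The three identities are formal consequences of Proposition \ref{prop:a1}, transported through the avatar formalism $f\mapsto f^\rho$ and the isomorphism $\vph\colon\U(r,s)\stackrel{\sim}{\to}G$. Each is deduced by evaluating the corresponding identity of Proposition \ref{prop:a1} at the base point and exploiting $\frkt(\bfo)=\bfi$.

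For (\ref{cor:a11}), I would specialize the first identity of Proposition \ref{prop:a1} to $z=\bfo$. By the definitions of the avatars, $(\til f_\rho)^\rho(\vph(g))=[\til f_\rho\|_\rho\vph(g)](\bfi)$ and $(f\circ\frkt)^\rho(g)=[(f\circ\frkt)\|_\rho g](\bfo)$, and the explicit computation already performed in the proof of the proposition shows that the left side reduces at $z=\bfo$ to $\rho(\vka(\bfo))(f\circ\frkt)^\rho(g)$. Rearranging yields (\ref{cor:a11}).

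For (\ref{cor:a12}), the same procedure applies to the second identity of Proposition \ref{prop:a1}, now with weight $\rho\otimes\upsilon^n$ and with the scalar $(-2\sqrt{-1})^n$ carried along. The relation (\ref{tag:a7}), $J(\vph(g),\bfi)=\vka(g\bfo)J(g,\bfo)\vka(\bfo)^{-1}$, is precisely what makes the factor $(\rho\otimes\upsilon^n)(\vka(g\bfo))$ inserted by $\til{\,\cdot\,}$ cancel against the matching piece in $(\rho\otimes\upsilon^n)(J(\vph(g),\bfi))^{-1}$, leaving exactly the required $(\rho\otimes\upsilon^n)(\vka(\bfo))^{-1}$ on the right.

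For (\ref{cor:a13}), I would apply (\ref{tag:a3}) to $(\til f_\rho)\circ\frkt\in C^\infty(\frkB_{r,s},V)$ to obtain the identity
\[\iot_+(u_1)\cdots\iot_+(u_n)[(\til f_\rho)\circ\frkt]^\rho=[D_\rho^n((\til f_\rho)\circ\frkt)]^{\rho\otimes\upsilon^n}(u_1,\dots,u_n)\]
on $\U(r,s)$, and then transport it to $G$ along $\vph$. Since $\iot_+^\calc(u)=\calc\iot_+(u)\calc^{-1}$, pullback by $\vph$ intertwines $\iot_+^\calc(u)$-differentiation on $G$ with $\iot_+(u)$-differentiation on $\U(r,s)$; combined with (\ref{cor:a11}) applied to $\til f_\rho$, the left side becomes $[\iot_+^\calc(u_1)\cdots\iot_+^\calc(u_n)(\til f_\rho)^\rho]\circ\vph$ up to a factor of $\rho(\vka(\bfo))$. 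Rewriting the right side via (\ref{cor:a12}) applied to $\til f_\rho$ produces the scalar $(-2\sqrt{-1})^n$ together with the factor $(\rho\otimes\upsilon^n)(\vka(\bfo))^{-1}$; the two $\rho(\vka(\bfo))$ pieces cancel, leaving the $\upsilon^n(\vka(\bfo))^{-1}$ recorded in the statement. The only real bookkeeping care, and the step where the main risk of error lies, is in (\ref{cor:a13}): tracking precisely how the single factor $\vka(\bfo)$ splits between the $\rho$ and $\upsilon^n$ tensor factors after the substitutions from (\ref{cor:a11}) and (\ref{cor:a12}), so that the $\rho$-pieces cancel exactly and only the $\upsilon^n$-piece survives.
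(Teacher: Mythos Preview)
Your proofs of (\ref{cor:a11}) and (\ref{cor:a12}) are correct and match the paper's: both follow by evaluating the identities of Proposition~\ref{prop:a1} at the origin, using $\frkt(\bfo)=\bfi$ and the cocycle relation (\ref{tag:a7}).

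Your argument for (\ref{cor:a13}) has a genuine slip. You apply (\ref{tag:a3}) to $(\til f_\rho)\circ\frkt$, which yields an identity whose left side is $\iot_+(u_1)\cdots\iot_+(u_n)[(\til f_\rho)\circ\frkt]^\rho$. You then claim this equals $[\iot_+^\calc(u_1)\cdots\iot_+^\calc(u_n)(\til f_\rho)^\rho]\circ\vph$ up to the constant $\rho(\vka(\bfo))$, invoking ``(\ref{cor:a11}) applied to $\til f_\rho$.'' But this is not so: a direct computation gives
\[
[(\til f_\rho)\circ\frkt]^\rho(g)=\rho(J(g,\bfo))^{-1}\rho(\vka(g\bfo))\,f(\frkt(g\bfo)),
\]
whereas $(\til f_\rho)^\rho(\vph(g))=\rho(\vka(\bfo))\rho(J(g,\bfo))^{-1}f(\frkt(g\bfo))$. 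The discrepancy is the factor $\rho(\vka(g\bfo))$ versus the constant $\rho(\vka(\bfo))$; since $\vka(g\bfo)$ varies with $g$, there is no constant relating the two, and applying (\ref{cor:a11}) with $\til f_\rho$ in the role of $f$ only produces $\widetilde{(\til f_\rho)}_\rho=\rho(\vka)^2 f$, which is not what you need.

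The fix (and the paper's route) is to apply (\ref{tag:a3}) to $f\circ\frkt$ rather than to $\til f_\rho\circ\frkt$. First use (\ref{cor:a11}) in its original form to write $(\til f_\rho)^\rho\circ\vph=\rho(\vka(\bfo))(f\circ\frkt)^\rho$; since $\rho(\vka(\bfo))$ is constant, the $\iot_+$-operators pass through it, and (\ref{tag:a3}) for $f\circ\frkt$ gives
\[
[\iot_+^\calc(\ulu)(\til f_\rho)^\rho]\circ\vph=\rho(\vka(\bfo))[D_\rho^n(f\circ\frkt)]^{\rho\otimes\upsilon^n}(\ulu).
\]
Now apply (\ref{cor:a12}) to convert back: the factor $\rho(\vka(\bfo))\cdot(\rho\otimes\upsilon^n)(\vka(\bfo))^{-1}=\upsilon^n(\vka(\bfo))^{-1}$ survives exactly as stated.
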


\begin{proof}
Since $\bfi=\frkt\bfo$, we obtain (\ref{cor:a11}) by specializing the first identify of Proposition \ref{prop:a1}. 
Proposition \ref{prop:a1} combined with (\ref{tag:a6}) gives 
\[(-2\sqrt{-1})^n(D_\rho^n\til f_\rho)\|_{\rho\otimes\upsilon^n}\vph(g)=\widetilde{(D_\rho^n(f\circ\frkt)\|_{\rho\otimes\upsilon^n}g\circ\frkt^{-1})}_{\rho\otimes\upsilon^n} \]
from which we obtain (\ref{cor:a12}). 
By definition  we have 
\[(\iot_+^\calc(u_1)\cdots\iot_+^\calc(u_n)(\til f_\rho)^\rho)\circ\vph=\iot_+^{}(u_1)\cdots\iot_+^{}(u_n)((\til f_\rho)^\rho\circ\vph). \]
The right hand side is 
\[\iot_+^{}(u_1)\cdots\iot_+^{}(u_n)(\rho(\vka(\bfo))(f\circ\frkt)^\rho)
=\rho(\vka(\bfo))[D_\rho^n(f\circ\frkt)]^{\rho\otimes\upsilon^n}(\ulu)\]
by (\ref{cor:a11}) and (\ref{tag:a3}), where $\ulu=(u_1,\dots,u_n)$. 
Since 
\[\rho(\vka(\bfo))[D_\rho^n(f\circ\frkt)](\ulu)^{\rho\otimes\upsilon^n}=(-2\sqrt{-1})^n\upsilon^n(\vka(\bfo))^{-1}D_\rho^n\til f_\rho(\ulu)^{\rho\otimes\upsilon^n}\circ\vph\]
by (\ref{cor:a12}), we have completed the proof of (\ref{cor:a13}). 
\end{proof}


\subsection{Differential operators on $\frkB_{2,1}$}\label{ssec:a5}

Letting $r=2$, $s=1$ and $\gam_0=\del$, we retain the notation of \S \ref{ssec:31}. 
Put 
\begin{align*}
v_0&=\begin{bmatrix} 0 \\ 1 \end{bmatrix}\in\frkT, &
Y_+&=\iot_+(v_0)=\left[\begin{array}{cc|c} 0 & 0 & 0 \\ 0 & 0 & 1 \\ \hline 0 & 0 & 0 \end{array}\right]\in\frkp_+.  
\end{align*} 
Let $\ulk=(k_1,k_2;k_3)$ and $\bfv_i=X^{\kap-i}Y^i\in \call_{\ulk}^\vee$ be a non-zero vector such that 
\[\rho^\vee_{\ulk}\biggl(\begin{bmatrix} a_1 & 0 \\ 0 & a_2 \end{bmatrix},a_3^{}\biggl)\bfv_i=a_1^{k_1+i}a_2^{k_2-i}a_3^{-k_3}\bfv_i\]
for $i=0,1,2,\dots,\kap$ and $\biggl(\begin{bmatrix} a_1 & 0 \\ 0 & a_2 \end{bmatrix},a_3\biggl)\in\GL_2(\CC)\times\CC^\times$ . 
Given $\bfv\in\call_{\ulk}^\vee$, we associate to $f\in C^\infty(\frkB_{2,1},V)$ and $h\in C^\infty(\U(2,1),\rho)$ functions $f_\bfv:\frkB_{2,1}\to\CC$ and $h_\bfv:\U(2,1)\to\CC$ defined by 
\begin{align*}
f_\bfv(z)&=\ell_\rho(\bfv\otimes f(z)), & 
h_\bfv(g)&=\ell_\rho(\bfv\otimes h(g)). 
\end{align*}
For $g\in \U(2,1)$ and $t=\diag[t_1,t_2,t_3]\in K$ we see that 
\[h_{\bfv_i}(gt)=\ell_\rho\biggl(\rho^\vee\biggl(\begin{bmatrix} t_1^{-1} & 0 \\ 0 & t_2^{-1}\end{bmatrix},t_3^{}\biggl)\bfv_i\otimes h(g)\biggl)=h_{\bfv_i}(g)t_1^{-k_1-i}t_2^{-k_2+i}t_3^{-k_3} \]
(cf. (\ref{tag:a2})). 
Since $\Ad(t)Y_+=t_2^{}t_3^{-1}Y_+$, we see that 
\[Y_+^nf^\rho_{\bfv_i}(gt)=(Y_+^nf^\rho)_{\bfv_i}(gt)=Y_+^nf^\rho_{\bfv_i}(g)t_1^{-k_1-i}t_2^{-k_2+i+n}t_3^{-k_3-n}. \]
It follows from (\ref{tag:a3}) that 
\[Y_+^nf^\rho=D_\rho^nf(v_0,v_0,\dots,v_0)^{\rho\otimes\upsilon^n}. \]


\subsection{The restriction to $\frkD_{1,1}$}\label{ssec:a7}

We retain the notation of \S \ref{ssec:pullback}. 
It follows from (\ref{tag:26}) that for $f\in C^\infty(\frkD_{2,1},V)$  
\begin{align*}
f\|_\rho \iot(h)\circ\jmath
&=f\circ\jmath\|_{\rho\circ\iot'}h, & 
f^\rho\circ\iot&=(f\circ\jmath)^{\rho\circ\iot'}. 
\end{align*}
Recall that for $Z=\begin{bmatrix} \tau \\ w \end{bmatrix}\in\frkD_{2,1}$
\begin{align*}
\eta(Z)&=\sqrt{-1}(\tau^c-\tau-w^c\del^{-1} w), & 
\eta(\tau)&=\sqrt{-1}(\tau^c-\tau)=\eta(\jmath(\tau)), \\
\xi(\jmath(\tau))&=\begin{bmatrix} \eta(\tau) & 0 \\ 0 & -\sqrt{-1}\del\end{bmatrix}, &
\vXi(\tau)&=(\xi(\tau),\eta(\tau)). 
\end{align*}
Put $\ulv_0^n=(v_0,v_0,\dots,v_0)\in\frkT^n$. \index{$\ulv_0^n$}
We see from (\ref{tag:a6}) that 
\begin{align}
[D_\rho^n(f\|_\rho\iot(h))(\ulv_0^n)](\jmath(\tau)) \label{tag:a8}
=&[D_\rho^nf\|_{\rho\otimes\upsilon^n}\iot(h)(\ulv_0^n)](\jmath(\tau)) \\
=&[(D_\rho^nf)\circ\jmath\|_{\rho\otimes\upsilon^n\circ\iot}h(\ulv_0^n)](\tau) \notag\\
=&\mu(h,\tau)^{-n}[D_\rho^nf(\ulv_0^n)]\circ\jmath\|_{\rho\circ\iot}h(\tau). \notag
\end{align}
Put $Y_+^\calc=\calc Y_+^{}\calc^{-1}\in\hat\frkg_\CC$. \index{$Y_+^\calc$}
We will consider the differential operator  
\[(Y_+^\calc)^nf^\rho=[D_\rho^nf]^{\rho\otimes\upsilon^n}(\ulv_0^n) \]
for $f\in C^\infty(\frkD_{2,1},V)$ (cf. Corollary \ref{cor:a1}(\ref{cor:a13})). 

We write $f\in C^\infty(\frkD_{2,1},\call_{\ulk}(\CC))$ as 
\[f(Z)=\sum_{i=0}^\kap f_i(Z)X^iY^{\kap-i}. \]

\begin{remark}\label{rem:a1}
We take $\bet=\sqrt{2|\del|}$. 
Then $\vka(\bfo)=\left(\begin{bmatrix} -1 & 0 \\ 0 & \sqrt{-2/|\del|} \end{bmatrix},1\right)$. 
We can rewrite Corollary \ref{cor:a1}(\ref{cor:a12}) as 
\[[D_{\rho_{\ulk}}^n\til f_\rho(\underline{v}_0^n)]_i^{\rho\otimes\upsilon^n}\circ\vph=\frac{(-\sqrt{-1})^{i+2k_1+k_2}}{(-2)^n}\sqrt{2/|\del|}^{n-k_2+i}[D_{\rho_{\ulk}}^n(f\circ\frkt)(\underline{v}_0^n)]_i^{\rho\otimes\upsilon^n} \]
for $i=0,1,2,\dots,\kap$. 
The dependance on $\del$ is compatible with Remark \ref{rem:33}. 
\end{remark}

Given $k,\ell\in\ZZ$, $g\in C^\infty(\frkD_{1,1},\CC)$ and $h\in H$, we define  
\[g\|_{(\ell,k)}h(\tau)=\lam(h,\tau)^\ell\mu(h,\tau)^{-k}g(h\tau). \]

\begin{proposition}\label{prop:a2}
For $h\in H$ we have 
\[[D_{\rho_{\ulk}}^n(f\|_{\rho_{\ulk}}\iot(h))(\ulv_0^n)]_i\circ\jmath=[D_{\rho_{\ulk}}^nf(\ulv_0^n)]_i\circ\jmath\|_{(k_1+i,k_3+n)}h. \]
\end{proposition}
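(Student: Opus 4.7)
The plan is to extract the proposition directly from the identity (\ref{tag:a8}), which is already established, by (i) using (\ref{tag:26}) to rewrite the slash action $\|_{\rho\circ\iot}h$ in terms of the standard automorphy factors $\lam(h,\tau),\mu(h,\tau)$ on $\frkH$, and (ii) computing the scalar by which $\rho_{\ulk}(\iot'(J(h,\tau)))^{-1}$ acts on each basis monomial $X^iY^{\kap-i}$ of $\call_{\ulk}(\CC)$. Taking the $i$-th coefficient then matches the right-hand side on the nose.

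Concretely, I would specialize (\ref{tag:a8}) at $Z=\jmath(\tau)$ and combine $\iot(h)\jmath(\tau)=\jmath(h\tau)$ and $J(\iot(h),\jmath(\tau))=\iot'(J(h,\tau))$ from (\ref{tag:26}) to write
\[[D_{\rho_{\ulk}}^n(f\|_{\rho_{\ulk}}\iot(h))(\ulv_0^n)](\jmath(\tau))=\mu(h,\tau)^{-n}\rho_{\ulk}(\iot'(J(h,\tau)))^{-1}[D_{\rho_{\ulk}}^nf(\ulv_0^n)](\jmath(h\tau)).\]
The prefactor $\mu(h,\tau)^{-n}$ comes from the $\upsilon^n$-part of the slash action, using multilinearity and the fact that $\trs\iot'(J(h,\tau))_a^{-1}v_0\,\mu(h,\tau)^{-1}=\mu(h,\tau)^{-1}v_0$, which follows from $v_0=\trs(0,1)$ and the block-diagonal shape $\iot'(A,B)=(\bigl[\begin{smallmatrix}A&0\\0&1\end{smallmatrix}\bigr],B)$. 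Next I would invoke the definition $\rho_{\ulk}(\alp,\bet)=(\det\alp)^{-k_1}\rho_\kap(\trs\alp^{-1})\bet^{k_3}$ from \S\ref{ssec:31} with $\alp=\bigl[\begin{smallmatrix}\lam(h,\tau)&0\\0&1\end{smallmatrix}\bigr]$ and $\bet=\mu(h,\tau)$. A direct computation gives
\[\rho_{\ulk}(\iot'(J(h,\tau)))^{-1}(X^iY^{\kap-i})=\lam(h,\tau)^{k_1+i}\mu(h,\tau)^{-k_3}X^iY^{\kap-i},\]
since $\det\bigl[\begin{smallmatrix}\lam&0\\0&1\end{smallmatrix}\bigr]=\lam$ and $\rho_\kap(\trs\bigl[\begin{smallmatrix}\lam^{-1}&0\\0&1\end{smallmatrix}\bigr])$ scales $X^iY^{\kap-i}$ by $\lam^{-i}$. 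Reading off the coefficient of $X^iY^{\kap-i}$ in the displayed formula now produces $\lam(h,\tau)^{k_1+i}\mu(h,\tau)^{-k_3-n}[D_{\rho_{\ulk}}^nf(\ulv_0^n)]_i(\jmath(h\tau))$, which by definition is $[D_{\rho_{\ulk}}^nf(\ulv_0^n)]_i\circ\jmath\|_{(k_1+i,k_3+n)}h(\tau)$.

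There is no real obstacle here: granted (\ref{tag:a8}), the proposition is a bookkeeping statement comparing the vector-valued automorphy factor of $G$ on the minimal $K_\infty$-type with the scalar factors of weight $(k_1+i,k_3+n)$ on $H$. The only places requiring genuine care are the transpose and inverse inside the definition of $\rho_{\ulk}$, and the block-diagonal form of $\iot'$ which isolates $\mu(h,\tau)$ (rather than $\lam(h,\tau)$) as the sole contribution to the $\upsilon^n$-twist applied to $\ulv_0^n$. Both are routine once written out.
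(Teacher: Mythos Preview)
Your argument is correct and follows exactly the same route as the paper: invoke (\ref{tag:a8}), then read off the $i$-th coefficient by computing how $\rho_{\ulk}(\iot'(J(h,\tau)))^{-1}$ scales $X^iY^{\kap-i}$. The paper's proof is simply a two-line version of what you wrote, citing (\ref{tag:a8}) and stating the resulting scalar $\lam(h,\tau)^{k_1+i}\mu(h,\tau)^{-n-k_3}$ directly.
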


\begin{proof}
We have 
\begin{align*}
[D_{\rho_{\ulk}}^n(f\|_{\rho_{\ulk}}\iot(h))(\ulv_0^n)]_i(\jmath(\tau))
&=\mu(h,\tau)^{-n}([D_{\rho_{\ulk}}^nf(\ulv_0^n)]\circ\jmath\|_{\rho_{\ulk}\circ\iot}h)_i(\tau)\\
&=\lam(h,\tau)^{k_1+i}\mu(h,\tau)^{-n-k_3}[D_{\rho_{\ulk}}^nf(\ulv_0^n)]_i\circ\jmath(h\tau) 
\end{align*}
by (\ref{tag:a8}).  
\end{proof}

\begin{lemma}\label{lem:a1}
Let $f\in C^\infty(\frkD_{2,1},V)$. 
We have 
\[[D_\rho^nf(\ulv_0^n)](\jmath(\tau))
=\rho(\vXi(\jmath(\tau)))^{-1}\left[\left(\frac{\partial}{\partial w}-\frac{w^c}{\del}\frac{\partial}{\partial\tau}\right)^n(\rho(\vXi)f)\right](\jmath(\tau)). \]
\end{lemma}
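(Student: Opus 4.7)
The plan is to unwind the definition $D_\rho^n f = (\rho\otimes\upsilon^n)(\vXi)^{-1}\,C^n(\rho(\vXi)f)$ directly. Setting $g := \rho(\vXi)f$ and using how $(\rho\otimes\upsilon^n)(\vXi)^{-1}$ acts on multilinear forms, the identity to prove becomes
\[
[D_\rho^n f(\ulv_0^n)](Z) \;=\; \rho(\vXi(Z))^{-1}\,[C^n g(Z)]\bigl(\trs\xi(Z)^{-1}v_0\eta(Z)^{-1},\ldots,\trs\xi(Z)^{-1}v_0\eta(Z)^{-1}\bigr).
\]
So I must (a) evaluate $C^n g$ at the direction $(\trs\xi^{-1}v_0\eta^{-1},\ldots)$ and (b) specialize to $Z = \jmath(\tau)$.

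First I would compute $\trs\xi(Z)^{-1}v_0\eta(Z)^{-1}$ explicitly. The inverse of the $2\times 2$ matrix $\xi(Z)/\sqrt{-1}$ has determinant $-\sqrt{-1}\del\eta(Z)^{-1}$ times $\sqrt{-1}$, and a direct calculation gives
\[
\trs\xi(Z)^{-1}v_0\eta(Z)^{-1} \;=\; \frac{1}{\del\eta(Z)^2}\begin{bmatrix} w^c \\ -(\tau^c-\tau)\end{bmatrix}.
\]
The crucial simplification is that at $Z = \jmath(\tau)$ (i.e.\ $w=0$), this vector becomes $\frac{\sqrt{-1}}{\del\eta(\tau)}\,v_0$, a scalar multiple of $v_0$. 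By $n$-linearity of $[C^n g](Z)$, the evaluation then reduces to $\left(\frac{\sqrt{-1}}{\del\eta(\tau)}\right)^n[C^n g(\jmath(\tau))](v_0,\ldots,v_0)$.

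Next, I would compute $[C^n g(\ulv_0^n)](Z)$ at a general point by induction. The base case is $\trs\xi(Z)v_0\eta(Z) = \sqrt{-1}\eta(Z)\begin{bmatrix} w^c \\ -\del\end{bmatrix}$, so writing $\vth := \tfrac{\partial}{\partial w} - \tfrac{w^c}{\del}\tfrac{\partial}{\partial\tau}$ one obtains $[Ch(v_0)](Z) = -\sqrt{-1}\del\,\eta(Z)\,\vth h(Z)$ for any smooth $V$-valued $h$. A one-line verification using the holomorphic derivatives $\partial_\tau\eta = -\sqrt{-1}$ and $\partial_w\eta = -\sqrt{-1}w^c/\del$ shows $\vth\eta = 0$, so $\vth$ commutes with multiplication by $\eta$. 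Induction then yields
\[
[C^n g(v_0,\ldots,v_0)](Z) \;=\; (-\sqrt{-1}\del)^n\,\eta(Z)^n\,\vth^n g(Z).
\]

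Combining the two computations at $Z = \jmath(\tau)$, the scalar $\left(\tfrac{\sqrt{-1}}{\del\eta(\tau)}\right)^n(-\sqrt{-1}\del)^n\eta(\tau)^n$ telescopes to $1$, leaving $\vth^n g(\jmath(\tau))$. Multiplying by $\rho(\vXi(\jmath(\tau)))^{-1}$ and substituting $g = \rho(\vXi)f$ gives exactly the claimed formula. The only substantive point is the commutation $[\vth,\eta]=0$, which is what keeps the naive induction on $n$ free of correction terms; everything else is bookkeeping with the $2\times 2$ matrix $\xi$ and multilinearity.
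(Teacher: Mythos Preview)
Your proof is correct and follows essentially the same approach as the paper: both compute $[Cf(v_0)](Z)=-\sqrt{-1}\del\,\eta(Z)\,\vth f(Z)$, use the key annihilation $\vth\eta=0$ (the paper's equation \eqref{tag:a9}) to iterate $C^n$ cleanly, and then combine with the scalar $\trs\xi(\jmath(\tau))^{-1}v_0\eta(\tau)^{-1}=\frac{\sqrt{-1}}{\del\eta(\tau)}v_0$ coming from $(\rho\otimes\upsilon^n)(\vXi)^{-1}$. The only cosmetic difference is that the paper writes the differential operator as $w^c\partial_\tau-\del\partial_w=-\del\vth$ and computes $C^n$ before unpacking the $\upsilon^n$-action, whereas you set up the multilinear framework first.
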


\begin{proof}
Observe that  
\begin{align*}
[Cf(v_0)](Z)&=\left[Df\left(\trs\xi(Z)\begin{bmatrix} 0 \\ 1 \end{bmatrix}\eta(Z)\right)\right](Z)\\
&=\sqrt{-1}\eta(Z)\left(w^c\frac{\partial}{\partial\tau}-\del\frac{\partial}{\partial w}\right)f(Z)
\end{align*}
for $Z=\begin{bmatrix} \tau \\ w \end{bmatrix}\in\frkD_{2,1}$. 
Since 
\beq
\left(w^c\frac{\partial}{\partial\tau}-\del\frac{\partial}{\partial w}\right)\eta(Z)=0, \label{tag:a9}
\eeq
we have 
\[[C^nf(\ulv_0^n)](Z)=(\sqrt{-1}\eta(Z))^n\left(w^c\frac{\partial}{\partial\tau}-\del\frac{\partial}{\partial w}\right)^nf(Z). \]

Since $\trs\xi(\jmath(\tau))^{-1}v_0\eta(\tau)^{-1}=\frac{\sqrt{-1}}{\del\eta(\tau)}v_0$, we have 
\[[D_\rho^nf(\ulv_0^n)](\jmath(\tau))
=\left(\frac{\sqrt{-1}}{\del\eta(\tau)}\right)^n\rho(\vXi(\tau))^{-1}[C^n(\rho(\vXi)f)(\ulv_0^n)](\jmath(\tau)), \]
which proves the stated formula. 
\end{proof}

\begin{proposition}\label{prop:a3}
Given $f\in C^\infty(\frkD_{2,1},\call_{\ulk}(\CC))$, we have 
\[[D_{\rho_{\ulk}}^nf(\ulv_0^n)](\jmath(\tau))\\
=\sum_{i=0}^\kap \sum_{j=0}^{\min\{n,i\}}\frac{i!}{(i-j)!}\binom{n}{j}\frac{\partial^{n-j}f_i}{\partial w^{n-j}}(\jmath(\tau))\frac{X^{i-j}Y^{\kap-i+j}}{(-\sqrt{-1}\eta(\tau))^j}. \]
In particular, 
\[[D_{\rho_{\ulk}}^nf(\ulv_0^n)](\jmath(\tau))\\
=\sum_{i=0}^\kap \frac{\partial^nf_i}{\partial w^n}(\jmath(\tau))X^iY^{\kap-i} \pmod{\eta(\tau)^{-1}}.\]
\end{proposition}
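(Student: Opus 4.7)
The starting point is Lemma~\ref{lem:a1}, which reduces the assertion to evaluating
\[\rho_{\ulk}(\vXi(\jmath(\tau)))^{-1}\bigl[\calx^n(\rho_{\ulk}(\vXi)f)\bigr](\jmath(\tau)),\qquad \calx:=\frac{\partial}{\partial w}-\frac{w^c}{\del}\frac{\partial}{\partial\tau},\]
with $\calx$ acting componentwise on $\call_{\ulk}(\CC)$-valued functions. My first step is to observe that $\partial_w$ and $\partial_\tau$ both annihilate $w^c$, so the two summands of $\calx$ commute. The binomial theorem then gives $\calx^n=\sum_{j=0}^n\binom{n}{j}(-w^c/\del)^j\,\partial_\tau^j\partial_w^{n-j}$, and evaluation at $w=0$ kills every $j\geq 1$ term through the scalar factor $w^c|_{w=0}=0$. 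Hence
\[[D_{\rho_{\ulk}}^n f(\ulv_0^n)](\jmath(\tau))=\rho_{\ulk}(\vXi(\jmath(\tau)))^{-1}\,\partial_w^n(\rho_{\ulk}(\vXi)f)\bigl|_{w=0}.\]

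Next, using $\det\xi(Z)=-\sqrt{-1}\del\,\eta(Z)$ together with $\rho_{\ulk}(\alp,\bet)=(\det\alp)^{-k_1}\rho_\kap(\trs\alp^{-1})\bet^{k_3}$, I factor
\[\rho_{\ulk}(\vXi(Z))=(-\sqrt{-1}\del)^{-k_1}\eta(Z)^{k_3-k_2}\rho_\kap(M(Z)),\qquad M(Z):=\eta(Z)\trs\xi(Z)^{-1}=\begin{bmatrix}1 & w^c/\del \\ -w/\del & -(\tau^c-\tau)/\del\end{bmatrix}.\]
Since $\partial_w\eta(Z)=-\sqrt{-1}w^c/\del$ vanishes at $w=0$ and $\partial_w^2\eta\equiv 0$, every positive-order $w$-derivative of $\eta(Z)^{k_3-k_2}$ vanishes at $\jmath(\tau)$, so the Leibniz rule collapses to
\[\partial_w^n(\rho_{\ulk}(\vXi)f)\bigl|_{w=0}=(-\sqrt{-1}\del)^{-k_1}\eta(\tau)^{k_3-k_2}\sum_{j=0}^n\binom{n}{j}\bigl(\partial_w^{\,j}\rho_\kap(M)\bigr)\bigl|_{w=0}\,\partial_w^{n-j}f\bigl|_{w=0}.\]

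Because $M$ depends on $w$ only through the entry $M_{21}=-w/\del$, a direct expansion of $\rho_\kap(M)(X^iY^{\kap-i})=(X-wY/\del)^i((w^cX-(\tau^c-\tau)Y)/\del)^{\kap-i}$ followed by evaluation at $w=0$ yields
\[\bigl(\partial_w^{\,j}\rho_\kap(M)\bigr)(X^iY^{\kap-i})\bigl|_{w=0}=\frac{i!}{(i-j)!}\Bigl(-\frac{1}{\del}\Bigr)^{\!j}\Bigl(\frac{\sqrt{-1}\eta(\tau)}{\del}\Bigr)^{\!\kap-i}X^{i-j}Y^{\kap-i+j}\]
for $0\leq j\leq i$, and zero for $j>i$.

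Finally, $\rho_{\ulk}(\vXi(\jmath(\tau)))^{-1}$ acts diagonally on the monomial basis by multiplying $X^{i-j}Y^{\kap-i+j}$ by $(-\sqrt{-1}\del)^{k_1+\kap-i+j}\eta(\tau)^{k_1+i-j-k_3}$. Combining the three factors and using $(\sqrt{-1})^{-1}=-\sqrt{-1}$ to simplify, all $\del$-powers cancel as forced by the $\GL_2(\CC)$-equivariance of the construction, and the remaining scalar collapses to $1/(-\sqrt{-1}\eta(\tau))^j$; this is exactly the claimed formula, and the supplementary congruence modulo $\eta(\tau)^{-1}$ is immediate because only the $j=0$ summand is free of $\eta(\tau)^{-1}$. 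The main obstacle is purely computational, amounting to the combinatorial bookkeeping of signs and powers of $\sqrt{-1}$ and $\del$; the total cancellation of $\del$-factors in the final answer provides an internal sanity check that no arithmetic error has been introduced.
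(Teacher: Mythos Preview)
Your proof is correct and follows essentially the same route as the paper's: both start from Lemma~\ref{lem:a1}, reduce $\calx^n$ to $\partial_w^n$ upon restriction to $w=0$, neutralize the $\eta(Z)^{k_3-k_2}$ factor, and finish with the Leibniz expansion on the polynomial part followed by application of $\rho_{\ulk}(\vXi(\jmath(\tau)))^{-1}$. The only organizational difference is that the paper invokes the identity $\calx\,\eta(Z)=0$ (equation~(\ref{tag:a9})) to pull the $\eta$-power through $\calx^n$ \emph{before} restricting to $w=0$, whereas you first reduce $\calx^n$ to $\partial_w^n$ via the binomial theorem and then use $\partial_w\eta|_{w=0}=0$; the two orderings are equivalent here, and your explicit binomial justification actually makes transparent a step the paper compresses into the phrase ``since $w^c=0$.''
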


\begin{proof}
Recall that $\xi(Z)=\begin{bmatrix} \eta(\tau) & -\sqrt{-1}w \\ \sqrt{-1} w^c & -\sqrt{-1}\del\end{bmatrix}$. 
Since 
\begin{align*}
\det\xi(Z)&=-\sqrt{-1}\del\eta(Z), & 
\trs\xi(Z)^{-1}&=\frac{1}{\del\eta(Z)}\begin{bmatrix} \del & w^c \\ -w & \sqrt{-1}\eta(\tau) \end{bmatrix}, 
\end{align*}
we have 
\[(\rho_{\ulk}(\vXi)f)(Z)=\eta(Z)^{k_3}\sum_{i=0}^\kap f_i(Z)\frac{(\del X-wY)^i(w^c X+\sqrt{-1}\eta(\tau)Y)^{\kap-i}}{(-\sqrt{-1}\del\eta(Z))^{k_1}(\del\eta(Z))^\kap}. \]
It follows again from (\ref{tag:a9}) that 
\begin{align*}
&\frac{(-\sqrt{-1})^{k_1}\del^{k_2}}{\eta(Z)^{k_3-k_2}}\left[\left(\frac{\partial}{\partial w}-\frac{w^c}{\del}\frac{\partial}{\partial\tau}\right)^n(\rho_{\ulk}(\vXi)f)\right](Z)\\
=&\sum_{i=0}^\kap \left(\frac{\partial}{\partial w}-\frac{w^c}{\del}\frac{\partial}{\partial\tau}\right)^nf_i(Z)(\del X-wY)^i(w^c X+\sqrt{-1}\eta(\tau)Y)^{\kap-i}. 
\end{align*}
Now we evaluate at $w=0$. 
Then since $w^c=0$, we get 
\begin{align*}
&\frac{(-\sqrt{-1})^{k_1}\del^{k_2}}{\eta(\tau)^{k_3-k_2}}\left[\left(\frac{\partial}{\partial w}-\frac{w^c}{\del}\frac{\partial}{\partial\tau}\right)^n(\rho_{\ulk}(\vXi)f)\right](\jmath(\tau))\Big|_{w=0}\\
=&\sum_{i=0}^\kap \frac{\partial^n}{\partial w^n}f_i(Z)(\del X-wY)^i(\sqrt{-1}\eta(\tau)Y)^{\kap-i}\Big|_{w=0}\\
=&\sum_{i=0}^\kap(\sqrt{-1}\eta(\tau)Y)^{\kap-i}\sum_{j=0}^n\binom{n}{j} \frac{\partial^{n-j}f_i}{\partial w^{n-j}}(Z)\frac{\partial^j}{\partial w^j}(\del X-wY)^i\Big|_{w=0}\\
=&\sum_{i=0}^\kap(\sqrt{-1}\eta(\tau)Y)^{\kap-i}\sum_{j=0}^{\min\{n,i\}}\binom{n}{j} \frac{\partial^{n-j}f_i}{\partial w^{n-j}}(\jmath(\tau))\frac{i!(-Y)^j}{(i-j)!}(\del X)^{i-j}. 
\end{align*}
Finally, we see from Lemma \ref{lem:a1} that $[D_{\rho_{\ulk}}^nf(\ulv_0^n)](\jmath(\tau))$ equals 
\begin{align*}
&\frac{(-\sqrt{-1}\del\eta(\tau))^{k_1}}{\eta(\tau)^{k_3}}\rho_\kap(\xi(\jmath(\tau)))\left[\left(\frac{\partial}{\partial w}-\frac{w^c}{\del}\frac{\partial}{\partial\tau}\right)^n(\rho_{\ulk}(\vXi)f)\right](\jmath(\tau))\\
=&\sum_{i=0}^\kap\frac{(\eta(\tau)\del Y)^{\kap-i}}{(\del\eta(\tau))^\kap}\sum_{j=0}^{\min\{n,i\}}\binom{n}{j} \frac{\partial^{n-j}f_i}{\partial w^{n-j}}(\jmath(\tau))\frac{i!(\sqrt{-1}\del Y)^j}{(i-j)!}(\del\eta(\tau)X)^{i-j}, 
\end{align*}
which proves our formula. 
\end{proof}


\section{Archimedean computations}\label{sec:b}


\subsection{Discrete series representations of $\U(r,s)$}\label{ssec:b1}

Put 
\[\1_n=(1,1,\dots,1)\in\ZZ^n. \]
Let $r$ and $s$ be non-negative integers such that $r+s=n$. 
Discrete series of $\U(r, s)$ are parametrized by Harish-Chandra parameters.
Let $(\lam,z)$ be a Harish-Chandra parameter for $\U(r, s)$, where $\lam\in\ZZ^n+\frac{n-1}{2}\1_n$ is a sequence of distinct integers or half integers and $z\in\{\pm 1\}^n$ is a sequence of $+$ and $-$ corresponding to each entry in $\lam$. 
Here the total number of $+$'s must be $r$ and the total number of $-$'s must be $s$. 

Let $D(\lam,z)$ denote the discrete series of $\U(r,s)$ with Harich-Chandra parameter $(\lam,z)$. 
We also interpret $z$ as a partition of $\lam$ into $\lam^+\in\bigl(\frac{1}{2}\ZZ\bigl)^r$ and $\lam^-\in\bigl(\frac{1}{2}\ZZ\bigl)^s$, and write $D_{(\lam^+;\lam^-)}=D(\lam,z)$. 


\subsection{Branching laws for $\U(r,s)\times\U(r-1,s)$}\label{ssec:b2}

\begin{definition}\label{definition:b1}
We say that two Harish-Chandra parameters $(\lam,z)$ and $(\mu,t)$ of $\U(r, s)$ and $\U(r-1, s)$ respectively, satisfy the GGP interlacing relation, if one can line up $\lam$ and $\mu$ in the descending ordering such that the corresponding sequence of signs from $z$ and $t$ only has the following eight adjacent pairs
\[(\oplus+),\;(+\oplus),\;(-\ominus),\;(\ominus-),\;(+-),\;(-+),\;(\oplus\ominus),\;(\ominus\oplus). \]
Here $\oplus$ and $\ominus$ represent $+1$ and $-1$ in $t$, and $\pm$ represents $\pm1$ in $z$.
\end{definition}

\begin{theorem}[He \cite{He}]\label{thm:b1}
$D(\mu,t)$ appears as a subrepresentation of the restriction of $D(\lam, z)$ to $\U(r-1,s)$ if and only if $(\lam, z)$ and $(\mu, t)$ satisfy the GGP interlacing relation.
\end{theorem}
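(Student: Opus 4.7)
The plan is to deduce He's branching law from the combination of Schmid's $L^2$-realization of discrete series, Blattner's formula for $K$-types, and the classical Gelfand--Tsetlin branching rule for compact unitary groups. I would begin by realizing $D(\lambda,z)$ as an $L^2$-Dolbeault cohomology group $H^{q(z)}_{L^2}(U(r,s)/T,\mathcal{L}_{\lambda-\rho})$ on the full flag variety, where the degree $q(z)$ is determined by the number of ``wrong-signed'' roots with respect to the partition $z$ of $\lambda$ into $\lambda^+ \in \bigl(\tfrac12\ZZ\bigr)^r$ and $\lambda^- \in \bigl(\tfrac12\ZZ\bigr)^s$. Equivalently, $D(\lambda,z)$ is the Vogan--Zuckerman $A_{\mathfrak{q}}(\lambda')$-module cohomologically induced from the $\theta$-stable Borel subalgebra $\mathfrak{q}$ encoded by $z$. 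This geometric realization controls both the infinitesimal character and the annihilator variety of $D(\lambda,z)$.

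The next step is to compute the $K'$-spectrum of $D(\lambda,z)|_{U(r-1,s)}$ in two ways, where $K' = U(r-1) \times U(s)$. On one hand, Blattner's formula gives the $K$-multiplicities of $D(\lambda,z)$ as an explicit alternating sum over the compact Weyl group $W_K$, and composing with the Gelfand--Tsetlin rule for $U(r) \supset U(r-1)$ (applied on the first factor of $K = U(r) \times U(s)$) yields the $K'$-multiplicities of the restriction. On the other hand, for each candidate $(\mu,t)$ satisfying the GGP interlacing, Blattner's formula applied to $D(\mu,t)$ itself gives its own $K'$-multiplicities. The eight allowed adjacent sign patterns in Definition~\ref{definition:b1} correspond bijectively to the admissible interleavings in Gelfand--Tsetlin: the pairs $(\oplus+), (+\oplus)$ record insertion into $\lambda^+$, the pairs $(\ominus-), (-\ominus)$ record insertion into $\lambda^-$, and the four mixed pairs record the signature changes allowed when a coordinate migrates between $\lambda^+$ and $\mu^-$ (or vice versa). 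Summing the Blattner expressions over all such $(\mu,t)$ and comparing term-by-term with the direct computation above would yield the desired multiplicity identity.

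The hard part is the passage from multiplicity matching to an actual \emph{embedding} $D(\mu,t) \hookrightarrow D(\lambda,z)|_{U(r-1,s)}$, since the restriction is not in general discretely decomposable (Kobayashi's admissibility criterion fails whenever $s\geq 1$ because the associated momentum map is not proper). Thus the $K'$-spectrum one computes contains both the discrete summands and the $K'$-spectrum of the continuous part, and mere multiplicity agreement does not isolate the former. To handle this, I would construct the embedding explicitly: identify the lowest $K'$-type of $D(\mu,t)$, locate the unique (up to scalar) copy of this $K'$-type in $D(\lambda,z)|_{U(r-1,s)}$ via the combinatorial matching above, and then generate a $(\mathfrak{g}',K')$-submodule from this vector. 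The key point is to verify that this submodule is irreducible with the correct infinitesimal character, which can be checked by comparing the action of the center $\mathfrak{z}(\mathfrak{g}')$ on the leading $K'$-type.

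Finally, the converse (necessity of the interlacing) would follow from the same multiplicity computation: if $(\mu,t)$ fails the GGP relation, then the lowest $K'$-type of $D(\mu,t)$ has zero multiplicity in $D(\lambda,z)|_{U(r-1,s)}$, so no embedding can exist. The whole argument should be organized as an induction on $s$, with the base case $s=0$ (both groups compact) being the classical Gelfand--Tsetlin rule and the inductive step following from the cohomological induction compatibility of the two sides.
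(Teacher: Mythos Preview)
The paper does not prove this theorem; it is stated with attribution to He \cite{He} and used as a black box in the archimedean computation of Appendix~\ref{sec:b}. There is thus no ``paper's own proof'' to compare against.

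Regarding your sketch on its own merits: you have correctly identified the central obstruction, namely that the restriction $D(\lambda,z)|_{\U(r-1,s)}$ is not discretely decomposable for $s\geq 1$, so $K'$-type multiplicities computed via Blattner and Gelfand--Tsetlin reflect the full restriction (discrete plus continuous spectrum) rather than the discrete summands alone. But your proposed workaround---locating a ``unique'' copy of the lowest $K'$-type of $D(\mu,t)$ inside $D(\lambda,z)$ and generating a $(\mathfrak{g}',K')$-submodule---does not close this gap. In a non-admissible restriction the $K'$-isotypic components are typically infinite-dimensional, so the uniqueness claim is unjustified; and even granting a distinguished vector, the submodule it generates need not be irreducible, nor need it sit inside the discrete part rather than the continuous spectrum. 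Verifying the infinitesimal character on a single $K'$-type does not resolve this, since many irreducible constituents of the continuous spectrum share the same infinitesimal character. He's actual argument proceeds quite differently, via an explicit analysis of intertwining operators and the structure of the restriction in a concrete model; the combinatorial $K'$-type matching you propose is a consequence of his result rather than a route to it.
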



\subsection{Archimedean local factors}\label{ssec:b3}
 
Let $\CC^1$ denote the group of complex numbers of absolute value $1$. 
Define the character $\bvep:\CC^\times\to\CC^1$ by $\bvep(x)=\frac{x}{|x|}$. 
We view it as a character of any unitary group via composition with the determinant character. 
Fix tuples $\lam_1>\cdots>\lam_n$ and $\mu_1>\cdots>\mu_{n-1}$ of half integers such that $\lam_i-\frac{n+1}{2}\in\ZZ$ and $\mu_j-\frac{n}{2}\in\ZZ$. 
Let 
\begin{align*}
(\lam,z)&=(\underset{\oplus}{\lam_1},\dots,\underset{\oplus}{\lam_r};\underset{\ominus}{\lam_{r+1}},\dots,\underset{\ominus}{\lam_n}), & 
(\mu,t)&=(\underset{+}{\mu_1},\dots,\underset{+}{\mu_{r-1}};\underset{-}{\mu_r},\dots,\underset{-}{\mu_{n-1}})
\end{align*} 
be Harish-Chandra parameters of $\U(V)$ and $\U(V')$. 
We identify the maximal compact subgroup $K$ of $\U(V)$ with $\U(r)\times\U(s)$ via the map 
\[\begin{bmatrix} A & \\ & D \end{bmatrix}\mapsto(A,D) \]
(cf. (\ref{tag:a2})).  
Let $\pi$ be a discrete series of $\U(r,s)$ with Harish-Chandra parameter $(\lam,z)$ and $\sig$ a discrete series of $\U(r-1,s)$ with Harish-Chandra parameter $(\mu,t)$. 
The minimal $K$-types of $\pi$ and $\sig$ are 
\begin{align*}
-\ulk=&\biggl(\lam_1+\frac{s-(r-1)}{2},\dots,\lam_r+\frac{s+(r-1)}{2};\\
&\quad\lam_{r+1}-\frac{r+(s-1)}{2},\dots,\lam_n-\frac{r-(s-1)}{2}\biggl), \\
-\ulk'=&\biggl(\mu_1+\frac{s-(r-2)}{2},\dots,\mu_{r-1}+\frac{s+(r-2)}{2};\\
&\quad\mu_r-\frac{r-1+(s-1)}{2},\dots,\mu_{n-1}-\frac{r-1-(s-1)}{2}\biggl). 
\end{align*}

The $L$-parameters of $\pi$ and $\sig$ restricted to $W_\CC=\CC^\times\subset W_\RR$ are given by
\begin{align*}
\phi_\pi|_{W_\CC}&=\bvep^{2\lam_1}\oplus\cdots\oplus\bvep^{2\lam_n}, &
\phi_\sig|_{W_\CC}&=\bvep^{2\mu_1}\oplus\cdots\oplus\bvep^{2\mu_{n-1}}.
\end{align*}
One can easily compute the adjoint $L$-factors, combining Remark \ref{rem:71} with \cite[Lemma 7.1]{Prasad92}.
The $L$-factors are defined by 
\begin{align}
L(s,\pi\times\sig^\vee)
=&\prod_{i=1}^n\prod_{j=1}^{n-1}\Gam_\CC(s+|\lam_i-\mu_j|), \notag\\
L(s,\pi,\Ad)
=&
\Gam_\RR(s+1)^n\prod_{i<j}\Gam_\CC(s+\lam_i-\lam_j), \notag\\
L(s,\sig^\vee,\Ad)
=&
\Gam_\RR(s+1)^{n-1}\prod_{i<j}\Gam_\CC(s+\mu_i-\mu_j), \label{tag:b0}
\end{align}
where $\Gam_\RR(s)=\pi^{-s/2}\Gam\bigl(\frac{s}{2}\bigl)$ and $\Gam_\CC(s)=2(2\pi)^{-s}\Gam(s)$. 
Suppose that 
\[\lam_1>\mu_1>\cdots>\mu_{r-1}>\lam_r; \qquad \lam_{r+1}>\mu_r>\cdots>\lam_n>\mu_{n-1}. \]
Then $\sig$ appears as a subrepresentation of $\pi|_{\U(r-1,s)}$, and 
\begin{align}
\frac{L\left(\frac{1}{2},\pi\times\sig^\vee\right)}{L(1,\pi,\mathrm{Ad})L(1,\sig^\vee,\mathrm{Ad})}=&\frac{\prod_{i=1}^n\prod_{j=1}^{n-1}\Gam\bigl(\frac{1}{2}+|\lam_i-\mu_j|\bigl)}{\prod_{i<j}\Gam(1+\lam_i-\lam_j)\prod_{i<j}\Gam(1+\mu_i-\mu_j)} \label{tag:b1}\\
&\times 2^{-n}(2\pi)^{\frac{n(n+1)}{2}-2\sum_{j=1}^s(\lam_{r+j}-\mu_{r-1+j})}. \notag
\end{align}


\subsection{The Ichino-Ikeda integral}\label{ssec:b4}

We let $V=W$ and equip the space $W=\CC^3$ with the Hermitian form on $(\;,\;)=\frac{1}{\sqrt{-1}}\La\;,\;\Ra_W$. 
Fix an orthogonal basis $\vep,\vep_0,\vep_1$ of $V$ so that 
\begin{align*}
(\vep,\vep)&=(\vep_0,\vep_0)=1, & 
(\vep_1,\vep_1)&=-1. 
\end{align*} 
Put 
\begin{align*}
V'&=\CC\vep+\CC\vep_1, & 
L&=\CC\vep, & 
L_0&=\CC\vep_0, & 
L_1&=\CC\vep_1, & 
L_2&=L+L_0. 
\end{align*} 
Define the unitary groups of $V$ and $V'$ by 
\begin{align*}
\U(V)&=\{g\in\End_\CC(V)\;|\;(gx_1,gx_2)=(x_1,x_2)\text{ for }x_1,x_2\in V\}, \\
\U(V')&=\{h\in\End_\CC(V')\;|\;(hy_1,hy_2)=(y_1,y_2)\text{ for }y_1,y_2\in V'\}. 
\end{align*}
We view $\U(V')$ as a subgroup of $\U(V)$. 
Let 
\begin{align*}
K&=\U(L_2)\times\U(L_1), & 
K'&=\U(L)\times\U(L_1)
\end{align*} 
be maximal compact subgroups of $\U(V)$ and $\U(V')$.
Let 
\begin{align*}
\pi&=D_{(\lam_1,\lam_2;\lam_3)}, & 
\sig&=D_{(\mu_1;\mu_2)}. 
\end{align*}
The minimal $K$-type of $\pi$ is $-(k_1,k_2;k_3):=(\lam_1,\lam_2+1;\lam_3-1)$. 
The minimal $K'$-type of $\sig$ is $-(k_1';k_2'):=\bigl(\mu_1+\frac{1}{2};\mu_2-\frac{1}{2}\bigl)$. 

We identify the space of the minimal $K$-type 
\[\frkH_{(\lam_1,\lam_2;\lam_3)}\simeq\rho_{(\lam_1,\lam_2+1)}\boxtimes\bvep^{\lam_3-1}=\rho_{(-k_1,-k_2)}\boxtimes\bvep^{-k_3} \index{$\rho_{\ulk}$}\] 
with $\call_{k_2-k_1}(\CC)$. 
Let $v_\pi^{(i)}\in\frkH_{(\lam_1,\lam_2;\lam_3)}$ and $\bar v_\pi^{(i)}\in\frkH_{(\lam_1,\lam_2;\lam_3)}^\vee$ be the vectors identified with 
\begin{align*}
v_\pi^{(i)}&=X^{k_2-k_1-i}Y^i, &
\bar v_\pi^{(j)}&=X^{\prime j}(-Y')^{k_2-k_1-j}. 
\end{align*}
Note that 
\begin{align*}
\pi(\iot(t_1,t_2))v_\pi^{(i)}&=t_1^{-k_1-i}t_2^{-k_3}v_\pi^{(i)}, &
\pi^\vee(\iot(t_1,t_2))\bar v_\pi^{(i)}&=t_1^{k_1+i}t_2^{k_3}\bar v_\pi^{(i)}
\end{align*} 
for $(t_1,t_2)\in K'$ and $i=0,1,2,\dots,k_2-k_1$. 
Let
\[\bfP_{\ulk}=(XY'-YX')^{k_2-k_1}\in\pi\otimes\pi^\vee \index{$\bfP_{\ulk}$}\]
be a $K$-invariant vector. 

Fix non-zero vectors $v_\sig\in\sig$ and $\bar v_{\sig^\vee}\in\sig^\vee$ such that for $t_1,t_2\in\CC^1$ 
\begin{align*}
\sig(t_1,t_2)v_\sig&=t_1^{-k_1'}t_2^{-k_2'}v_\sig, & 
\sig^\vee(t_1,t_2)\bar v_{\sig^\vee}&=t_1^{k_1'}t_2^{k_2'}\bar v_{\sig^\vee}. 
\end{align*} 
Let $^\natural:\pi^\natural\simeq\pi^\vee$ and $^\natural:\sig^\natural\simeq\sig^\vee$ be the equivariant isomorphisms determined by $v_\pi^{(0)\natural}=\bar v_{\pi^\vee}^{(0)}$ and $v_\sig^\natural=\bar v_{\sig^\vee}$. 
Fix perfect pairings $\ell_\pi:\pi\otimes\pi^\vee\to\CC$ and $\ell_\sig:\sig\otimes\sig^\vee\to\CC$. 
We consider the integral
\[J(W,W')=\int_{\U(V')}\ell_\pi(h,W\otimes W^\natural)\cdot\ell_{\sig^\vee}(h,W^{\prime\natural}\otimes W')\,\d h, \]
which converges absolutely for $W\in\pi$ and $W'\in\sig$. 
Set 
\begin{align*}
\frkv_\pi^{(i)}&=v_\pi^{(i)}\otimes\bar v_\pi^{(i)}, &
\frkv_\sig&=v_\sig\otimes\bar v_{\sig^\vee}, & 
\bar\frkv_{\sig^\vee}&=\bar v_{\sig^\vee}\otimes v_\sig. 
\end{align*} 

\begin{proposition}\label{prop:b1}
If the relation $\lam_1>\mu_1>\lam_2\geq\lam_3>\mu_2$ holds, then  
\[\frac{J(Y_+^{k_2'-k_3^{}}v_\pi^{(k_1'-k_1^{})},v_\sig)}{(2\pi)^{2n_1^*}\ell_{\ulk}^{}(\bfP_{\ulk}^{})\ell_{\sig^\vee}(\bar\frkv_{\sig^\vee}^{})}
=\Gam_\RR(2)^2\Gam_\RR(4)\frac{L\left(\frac{1}{2},\pi\times\sig^\vee\right)}{L(1,\pi,\mathrm{Ad})L(1,\sig^\vee,\mathrm{Ad})}. \]
\end{proposition}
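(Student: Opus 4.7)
The plan is to compute the archimedean integral $J(Y_+^{n_1^*} v_\pi^{(k_1'-k_1)}, v_\sig)$ by placing $\pi$ and $\sig$ in their holomorphic models and unfolding the integral against matrix coefficients. First, I would realize $\pi$ as (vector-valued) holomorphic functions on $\frkD_{2,1}$ of weight $\ulk$ and $\sig$ as (scalar-valued) holomorphic functions on $\frkH$ of weight $\ulk'$, following Shimura's construction. In these models the matrix coefficients $h \mapsto \ell_\pi(h, W \otimes W^\natural)$ against highest weight vectors admit closed-form expressions involving the automorphy factors $\lam$, $\mu$ and powers of $\eta$, and analogously for $\ell_{\sig^\vee}$.

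Second, by Appendix A the element $Y_+^{n_1^*} = (Y_+^\calc)^{n_1^*}$ acts on the holomorphic model via the differential operator $D_{\rho_{\ulk}}^{n_1^*}(\ulv_0^{n_1^*})$. Proposition A.3 specialized to $w=0$ shows that $[D_{\rho_{\ulk}}^{n_1^*} f(\ulv_0^{n_1^*})]_{k_1'-k_1} \circ \jmath$ is congruent to $(\partial/\partial w)^{n_1^*} f_{k_1'-k_1}(\jmath(\tau))$ modulo $\eta(\tau)^{-1}$. In particular the restriction of $Y_+^{n_1^*} v_\pi^{(k_1'-k_1)}$ to $\U(V')$ has $K'$-type $-(k_1';k_2')$, matching the minimal $K'$-type of $\sig$. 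Combined with He's branching law (Theorem B.1) applied to the interlacing $\lam_1 > \mu_1 > \lam_2 \geq \lam_3 > \mu_2$, which ensures $\sig \hookrightarrow \pi|_{\U(V')}$ with multiplicity one, this guarantees that the $\sig$-isotypic projection of the restricted vector is a non-zero scalar multiple of the highest weight vector $v_\sig$.

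Third, by Schur orthogonality on $\U(V')$ for the matrix coefficients, the integral $J$ collapses to the formal degree of $\sig$ times the natural pairing of the projected vector with $v_\sig$ and $\bar v_{\sig^\vee}$. This reduces the problem to evaluating a single explicit constant, which I would compute by integrating the resulting scalar-valued kernel over $\U(V') \simeq \U(1,1)$ using the Cartan decomposition. Equivalently, since $\U(V')/K'$ is biholomorphic to a disk, the integral becomes a beta-type integral on the disk against an explicit Bergman-like kernel, whose value reduces to a ratio of Gamma functions. A direct computation identifies this with $(2\pi)^{2n_1^*}\Gam_\RR(2)^2 \Gam_\RR(4)$ times the ratio of archimedean $L$-factors recorded in (B.1), specialized to $n=3$, $r=2$, $s=1$.

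The main obstacle will be tracking normalization constants consistently. The factor $(2\pi)^{2n_1^*}$ must arise from the normalization of the differential operator $Y_+^{n_1^*}$ on holomorphic functions (the factor $(2\pi\sqrt{-1})^{-n}$ entering $\del_{\ulk}^n$ in \S6.4), while $\Gam_\RR(2)^2 \Gam_\RR(4) = \pi^{-4}$ must emerge from the disk integral together with the normalization of the Haar measure on $\U(V')$. Comparing the Gamma-factor computation (B.1) with the explicit beta integral and checking that all powers of $2$, $\pi$, and $\sqrt{-1}$ match will be the technical heart of the argument. This is essentially the unitary analog, for holomorphic discrete series, of the archimedean Ichino-Ikeda computations carried out in the work of N.~Harris, Liu, and others in the low-rank GGP setting.
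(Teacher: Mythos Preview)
Your approach differs substantially from the paper's. Rather than working directly in holomorphic models on $\frkD$, the paper realizes both $\pi$ and $\sig$ as theta lifts from the compact group $\U(\HH)$ in the Fock model and applies a seesaw for $(\U(V),\U(\HH))$ versus $(\U(V')\times\U(L_0),\U(\HH)\times\U(\HH))$. This converts the integral over the non-compact $\U(V')\simeq\U(1,1)$ into a trilinear period $\int_{\SL(\HH)}\ell_{\tau_1^\vee}\ell_{\tau_2}\ell_{\tau_3}\,dh$ on the compact $\SL(\HH)$ for three explicit finite-dimensional representations $\tau_1,\tau_2,\tau_3$ of $\U(2)$ (Lemma~\ref{lem:b4}). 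That trilinear form is then evaluated via the unique $\SL_2$-invariant $P_{\underline{n}}\in\call_{n_1}\otimes\call_{n_2}\otimes\call_{n_3}$ (Lemma~\ref{lem:b5}), and a binomial identity from \cite{DO} closes the computation. The strategy follows Xue \cite{HX}.

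The gap in your proposal is step~3. You correctly observe that orthogonality of matrix coefficients reduces $J$ to $\d(\sig)^{-1}$ times a branching constant, but that constant is the entire content of the proposition, and you offer no mechanism to compute it beyond asserting that some integral over the disk is of ``beta type.'' Concretely, the matrix coefficient $h\mapsto\ell_\pi(\pi(h)W,W^\natural)$ for $W=Y_+^{n_1^*}v_\pi^{(n_2^*)}$ restricted to $\U(1,1)$ is not a standard reproducing kernel: $W$ lies outside the minimal $K$-type once $n_1^*>0$, and writing this coefficient down explicitly enough to integrate against the matrix coefficient of $\sig$ is not routine. The seesaw in the paper succeeds precisely because it trades this non-compact analytic computation for finite-dimensional linear algebra on $\U(2)$.
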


\begin{remark}\label{rem:b1}
\begin{enumerate}
\item\label{rem:b11} If the interlacing relation $\lam_1>\mu_1>\lam_2>\lam_3>\mu_2$ holds, then $\sig$ must satisfy $\mu_1^{}-\mu_2^{}\geq 2$ and hence $k_2'-k_1'\geq 3$. 
Proposition \ref{prop:b1} includes the case $k_2'-k_1'=2$, 
and in this case, $\pi$ is a limit of discrete series with minimal $K$-type $-(k_1^{},k_1';k_2')$ (cf. Remark \ref{rem:b2}). 
\item\label{rem:b12} We consider the additive character $\addchar^\CC_{-2}(x)=e^{2\pi(x^c-x)}$. 
Since 
\[\vep\left(\frac{1}{2},\bvep^{2\kap},\addchar^\CC_{-2}\right)
=\begin{cases}
-1 &\text{if $\kap>0$, }\\
1 &\text{if $\kap<0$ }
\end{cases}\]
for $\kap\in\frac{1}{2}\ZZ\setminus\ZZ$, if $\lam_1>\mu_1>\lam_2\geq\lam_3>\mu_2$, then 
\[\vep\left(\frac{1}{2},\pi\times\sig^\vee,\addchar^\CC_{-2}\right)
=\prod_{i,j}\vep\left(\frac{1}{2},\bvep^{2(\lam_i-\mu_j)},\addchar^\CC_{-2}\right)=1. \]
\end{enumerate}
\end{remark}


\subsection{Weil representations}\label{ssec:b5}

We will take the auxiliary additive character $\addchar':\RR\to\CC^1$ defined by $\addchar'(x)=e^{-\sqrt{-1}x}$ for $x\in\RR$ so that the correspondences in \S \ref{ssec:b8} hold. 
This choice makes our computations simpler. 
Given a Hermitian space $(V,(\;,\;))$ of signature $(r,s)$ and a skew Hermitian space $(\calh,\La\;,\;\Ra)$ of dimension $m$, we let $\WW=V\otimes_\CC \calh$ be the symplectic space over $\RR$ of dimension $2(r+s)m$ equipped with the symplectic form
\[\ll v_1\otimes w_1,v_2\otimes w_2\gg=\Tr_{\CC/\RR}((v_1,v_2)\La w_1,w_2\Ra)\]
for $v_1,v_2\in V$ and $w_1,w_2\in \calh$. 
Let $Sp(\WW)$ be the symplectic group of $\WW$ and $\Mp(\WW)$ the metaplectic $\CC^1$-cover of $Sp(\WW)$. 

Once we choose characters $\chi_V,\chi_\calh:\CC^\times\to\CC^1$ such that $\chi_V|_{\RR^\times}=\sgn^{r+s}$ and $\chi_\calh|_{\RR^\times}=\sgn^m$, the natural homomorphism $\U(V)\times\U(\calh)\to Sp(\WW)$ has a lift $\iot_{V,\calh,\chi_V,\chi_\calh,\addchar'}:\U(V)\times\U(\calh)\to\Mp(\WW)$. 
Composing this with the Weil representation $\Ome_{\addchar'}$ of $\Mp(\WW)$ relative to $\addchar'$, we obtain a representation 
\[\Ome_{V,\calh,\chi_V,\chi_\calh,\addchar'}=\Ome_{\addchar'}\circ\iot_{V,\calh,\chi_V,\chi_\calh,\addchar'} \]
of $\U(V)\times\U(\calh)$. 
When $n$ is even, we will take $\chi_\calh=1$ and write 
\[\Ome_{V,\calh}^{\chi_V,\addchar'}=\Ome_{V,\calh,\chi_V,1,\addchar'}^{}. \]
Let $\ome_{V,\calh}^{\chi_V,\addchar'}\subset\Ome_{V,\calh}^{\chi_V,\addchar'}$ denote the Harish-Chandra module. 


\subsection{Fock model}\label{ssec:b6}

Assume that $n$ is even and that $\U(\calh)$ is compact. 
Let $\calp=\calp_{V,\calh}^{\chi_V,\addchar'}$ be the Fock model of the Weil representation $\ome_{V,\calh}^{\chi_V,\addchar'}$ of $\U(V)\times\U(\calh)$ relative to the data $(\chi_V,1,\addchar')$, where $\calp_{V,\calh}^{\chi_V,\addchar'}$ is the space of polynomials in $(r+s)m$ variables $x_{ik},y_{jk}$ $(1\leq i\leq r,\;1\leq j\leq s,\; 1\leq k\leq m)$. 


Let $\chi_V=\bvep^{n_0}$ with $n_0\equiv r+s\pmod 2$. 
Then 
\[\ome_{V,\calh}^{\bvep^{n_0},\addchar'}(h)\varPhi\begin{pmatrix} x \\ y \end{pmatrix}=(\det h)^{(r-s+n_0)/2}\varPhi\begin{pmatrix} xh \\ y\trs h^{-1} \end{pmatrix} \]
for $h\in\U(\calh)$, $x\in\Mat_{r,m}(\CC)$ and $y\in\Mat_{s,m}(\CC)$. 
Let $\eps=1$ or $\eps=-1$ according to whether the Hermitian form $\frac{1}{\sqrt{-1}}\La\;,\;\Ra$ is positive or negative definite. 
Then for $t=(t_1,t_2)\in\U(r)\times\U(s)$ 
\[\ome_{V,\calh}^{\bvep^{n_0},\addchar'}(t)\varPhi\begin{pmatrix} x \\ y \end{pmatrix}=(\det t_1)^{\eps m/2}(\det t_2)^{-\eps m/2}\varPhi\begin{pmatrix} \trs t_1x \\ t_2^{-1}y \end{pmatrix}. \]
Observe that for $\bar x\in\Mat_{r,m}(\CC)$ and $\bar y\in\Mat_{s,m}(\CC)$
\begin{align*}
(\ome_{V,\calh}^{\bvep^{n_0},\addchar'})^\vee(h)\varPhi'\left(\begin{bmatrix} \bar x \\ \bar y \end{bmatrix}\right)&=(\det h)^{(r-s-n_0)/2}\varPhi'\left(\begin{bmatrix} \bar x\trs h^{-1} \\ \bar yh\end{bmatrix}\right), \\
(\ome_{V,\calh}^{\bvep^{n_0},\addchar'})^\vee(t)\varPhi'\left(\begin{bmatrix} \bar x \\ \bar y \end{bmatrix}\right)&=(\det t_1)^{-\eps m/2}(\det t_2)^{\eps m/2}\varPhi'\left(\begin{bmatrix} t_1^{-1}\bar x \\ \trs t_2\bar y \end{bmatrix} \right)
\end{align*}
(see Lemma 5.2 of \cite{KK}). 
Define an invariant pairing 
\[\ell_{\ome_{V,\calh}^{\bvep^{n_0},\addchar'}}:\calp_{V,\calh}^{\bvep^{n_0},\addchar'}\otimes(\calp_{V,\calh}^{\bvep^{n_0},\addchar'})^\vee\to\CC\] 
by 
\[\ell_{\ome_{V,\calh}^{\bvep^{n_0},\addchar'}}(Q\otimes P)=\pi^{-(r+s)m}\int_{\Mat_{r+s,m}(\CC)}Q(z)P(\bar z)e^{-\tr(z\trs\bar z)}\d z\d\bar z. \] 
One can easily see that 
\[\ell_{\ome_{V,\calh}^{\bvep^{n_0},\addchar'}}(Q\otimes P)=\left[Q\begin{pmatrix} \frac{\partial}{\partial x_{ik}} \\ \frac{\partial}{\partial y_{jk}} \end{pmatrix}P\right]\begin{pmatrix} 0 \\ 0 \end{pmatrix},  \]
using polar coordinates. 
Following \cite[p.~38]{KV}, we put 
\[\Del_{ij}=\sum_{k=1}^m\frac{\partial^2}{\partial x_{ik}\partial y_{jk}}. \] 
The space of pluriharmonic polynomials is defined by 
\[\frkH_{V,\calh}^{\chi_V,\addchar'}=\{\calf\in\calp_{V,\calh}^{\chi_V,\addchar'}\;|\;\Del_{ij}\calf=0\;(1\leq i\leq r,\;1\leq j\leq s)\}. \]


\subsection{Theta correspondence}\label{ssec:b8}

We write $\HH$ for the Hamilton quaternion algebra over $\RR$. 
Recall that 
\begin{align*}
\HH&=\CC\oplus\CC\cdot {\boldsymbol j}, & 
\bj^2&=-1, & 
\bj^{-1}\cdot\sqrt{-1}\cdot\bj&=-\sqrt{-1}. 
\end{align*}
We denote the main involution of $\HH$ also by $\bar\cdot$ and set $\bnu(x)=x\bar x$ for $x\in\HH$. 
Define a skew Hermitian form $\La\;,\;\Ra$ on $\HH$ by letting $\La x,y\Ra$ be the projection of $\sqrt{-1}x\bar y$ onto the $\CC$-factor via the decomposition above.  
Then
\begin{align*}
\U(\HH)&\simeq(\SL(\HH)\times\CC^1)/\Del\mu_2, 
\end{align*}
where $\SL(\HH)=\{h\in\HH\;|\;\bnu(h)=1\}$, and $\Del\mu_2$ is the diagonally embedded copy of $\{\pm 1\}$ in $\HH^\times\times\CC^\times$. 

Let $r=2$, $s=1$ and $n_0=1$. 
Lemma 7.10 of \cite{Ichino} states that 
\begin{align*}
\calp^{\bvep,\addchar'}_{L_0,\HH}=&\bigoplus_{l=0}^\infty\bvep^{l+1}\boxtimes\rho_{(l+1,1)}, \\
\calp^{1,\addchar'}_{V',\HH}=&\bigoplus_{b,c\geq 0} D_{\bigl(b+\frac{1}{2};-c-\frac{1}{2}\bigl)}\boxtimes\rho_{(b,-c)}, \\
\calp^{\bvep,\addchar'}_{V,\HH}
=&\bigoplus_{a\geq a'\geq 1}D_{(a+1,a';0)}\boxtimes\rho_{(a+1,a'+1)}\oplus\bigoplus_{a,d\geq 0}D_{(a+1,0;-d)}\boxtimes\rho_{(a+1,1-d)}.  
\end{align*}

\begin{remark}\label{rem:b2}
In the decomposition above the representation $D_{(a+1,0;0)}$, which is a theta lift of $\rho_{(a+1,1)}$, is a limit of discrete series (cf. \cite{P2}). 
\end{remark}

The space $\calp_{V,\HH}^{\bvep,\addchar'}$ consists of polynomials in $z=\begin{bmatrix} x_{11} & x_{12} \\ x_{21} & x_{22} \\ y_1 & y_2 \end{bmatrix}$. 
Let $\frkg,\frkg',\frkk,\frkk'$ be the Lie algebras of $\U(V),\U(V'),K,K'$, respectively.  
Let 
\begin{align*}
\frkg_\CC&=\frkp_+\oplus\frkk_\CC\oplus\frkp_-, &
\frkg'_\CC&=\frkp'_+\oplus\frkk'_\CC\oplus\frkp'_-
\end{align*}
 be the Harish-Chandra decompositions. 
Recall that $\ome^{\bvep,\addchar'}_{V,\HH}(\frkp_-)$ is spanned by 
\begin{align*}
\Del_{1,1}&=\frac{\partial^2}{\partial x_{11}\partial y_1}+\frac{\partial^2}{\partial x_{12}\partial y_2}, &
\Del_{2,1}&=\frac{\partial^2}{\partial x_{21}\partial y_1}+\frac{\partial^2}{\partial x_{22}\partial y_2}.  
\end{align*}
We write $\frkH_{(\lam^+;\lam^-)}$ for the space of minimal $K$-type of $D_{(\lam^+;\lam^-)}$. 
Then 
\begin{align*}
\frkH^{1,\addchar'}_{V',\HH}=&\bigoplus_{b,c\geq0}\frkH_{\bigl(b+\frac{1}{2};-c-\frac{1}{2}\bigl)}\boxtimes\rho_{(b,-c)}, \\
\frkH_{V,\HH}^{\bvep,\addchar'}
=&\bigoplus_{a\geq a'\geq 1}\frkH_{(a+1,a';0)}\boxtimes\rho_{(a+1,a'+1)}\oplus\bigoplus_{a,d\geq 0}\frkH_{(a+1,0;-d)}\boxtimes\rho_{(a+1,1-d)}. 
\end{align*}

Following (4.14) of \cite{KK}, we put 
\begin{align*}
U_{2,1}&=\left[\begin{array}{cc|c} 0 & 0 & 0 \\ 1 & 0 & 0 \\ \hline 0 & 0 & 0 \end{array}\right]\in\frkk_\CC, & 
Y_+&=Y_{2,1}=\left[\begin{array}{cc|c} 0 & 0 & 0 \\ 0 & 0 & 1 \\ \hline 0 & 0 & 0 \end{array}\right]\in\frkp_+, \\
U_{1,2}&=\left[\begin{array}{cc|c} 0 & -1 & 0 \\ 0 & 0 & 0 \\ \hline 0 & 0 & 0 \end{array}\right]\in\frkk_\CC, & 
X_+&=X_{1,2}=\left[\begin{array}{cc|c} 0 & 0 & 0 \\ 0 & 0 & 0 \\ \hline 0 & 1 & 0 \end{array}\right]\in\frkp_-. 
\end{align*}
Proposition 4.4(iii) of \cite{KK} shows that 
\begin{align*}
\ome^{\bvep,\addchar'}_{V,\HH}(Y_+)&=-(x_{21}y_1+x_{22}y_2), &
\ome^{\bvep,\addchar'}_{V,\HH}(U_{2,1})&=x_{21}\frac{\partial}{\partial x_{11}}+x_{22}\frac{\partial}{\partial x_{12}}, \\
\ome^{\bvep,\addchar'}_{V,-\HH}(X_+)&=-(\bar x_{21}\bar y_1+\bar x_{22}\bar y_2), &
\ome^{\bvep,\addchar'}_{V,-\HH}(U_{1,2})&=\bar x_{21}\frac{\partial}{\partial\bar x_{11}}+\bar x_{22}\frac{\partial}{\partial\bar x_{12}}.    
\end{align*}


\subsection{Formal degree}\label{ssec:b9}

Let $\tau$ be an irreducible square-integrable representation of a unitary group $\U(\calh)$. 
Fix a $\U(\calh)$-invariant pairing $\ell_\tau:\tau\otimes\tau^\vee\to\CC$. 
Given $\calf_1\otimes \calf_2\in\tau\otimes\tau^\vee$, we have a matrix coefficient
\[\ell_\tau(h,\calf_1\otimes\calf_2)=\ell_\tau(\tau(h)\calf_1\otimes\calf_2). \] 
The formal degree $\d(\tau)$ of $\tau$ is a positive real number characterized by 
\[\int_{\U(\calh)}\ell_\tau(h,\calf_1^{}\otimes\calf_2^{})\ell_{\tau_1^\vee}(h,\calf_1'\otimes\calf_2')\;\d h=\d(\tau)^{-1}\ell_\tau(\calf_1^{}\otimes\calf_1')\ell_{\tau_1^\vee}(\calf_2^{}\otimes\calf_2')\]
for $\calf_1^{}\otimes\calf_2^{}\in\tau\otimes\tau^\vee$ and $\calf_1'\otimes\calf_2'\in\tau^\vee\otimes\tau$. 
 
When $\U(\calh)$ is compact and $\d h$ has total volume 1, we have 
\[\d(\tau)=\dim\tau. \]
When $\sig=D_{(\mu_1;\mu_2)}$ and $\d h$ is defined by (\ref{tag:74}), we have  
\[\d(\sig)=(4\pi)^{-1}(\mu_1-\mu_2) \]
by Proposition A.7 of \cite{KL19} and its third remark. 




\subsection{Representations of $\U(2)$}\label{ssec:b10}

Fix a commutative integral domain $A$ of characteristic zero. 
Let $\calp'(A)$ be the set of polynomials in $\begin{bmatrix} x_{11} & x_{12} \\ y_1 & y_2 \end{bmatrix}$. 
The decomposition $\calp'(A)=\oplus_{b,c\geq 0}^{}\calp'_{b,c}(A)$ holds, where the submodule $\calp'_{b,c}(A)$ of $\calp'(A)$ consists of homogeneous polynomials of degree $b$ in $x_{11},x_{12}$ and of degree $c$ in $y_1,y_2$. 
The group $\GL_2(A)$ acts on the module $\calp'_{b,c}(A)$ by  
\[\rho(h)P\left(\begin{matrix} x \\ y \end{matrix}\right)=P\left(\begin{matrix} xh \\ y\trs h^{-1}\end{matrix}\right). \]
Define a submodule $\calt'_{b,c}(A)$ of $\calp'_{b,c}(A)$ by
\[\calt'_{b,c}(A)=(x_{11}y_1+x_{12}y_2)\calp'_{b-1,c-1}(A). \]
Since $\calt'_{b,c}(A)$ is stable under the action of $\GL_2(A)$, the group $\GL_2(A)$ acts on the quotient module $\frkH'_{b,c}(A)=\calp'_{b,c}(A)/\calt'_{b,c}(A)$. 

We define a bilinear form 
\[l_{\calp'}:\calp'(A)\otimes\calp'(A)\to A\] 
by 
\[l_{\calp'}(P\otimes Q)=\left[Q\begin{pmatrix} \frac{\partial}{\partial y_1} & \frac{\partial}{\partial y_2} \\ \frac{\partial}{\partial x_{11}} & \frac{\partial}{\partial x_{12}} \end{pmatrix}P\right]\begin{pmatrix} 0 & 0 \\ 0 & 0 \end{pmatrix}. \]
More explicitly, the bilinear form $l_{\calp'}$ is given by 
\[l_{\calp'}\biggl(\prod_{j=1}^2x_{1j}^{n_j^{}}y_j^{m_j^{}}\otimes x_{1j}^{\prime n_j'}y_j^{\prime m_j'}\biggl)=\begin{cases}
\prod_{j=1}^2n_j!m_j! &\text{if $n_j^{}=m_j'$ and $m_j^{}=n_j'$, }\\
0 &\text{otherwise. }
\end{cases}\]

Put $\frkH_{c,b}^{\prime\vee}(A)=\{P\in\calp_{b,c}'(A)\;|\;\Del' P=0\}$, where 
\[\Del'=\frac{\partial^2}{\partial x_{11}\partial y_1}+\frac{\partial^2}{\partial x_{12}\partial y_2}. \]
If $A$ is a field, then since 
\[\frkH_{c,b}^{\prime\vee}(A)=\{P\in\calp'_{b,c}(A)\;|\;l_{\calp'}(P\otimes Q)=0\text{ for }Q\in\calt'_{c,b}(A)\},  \]
the restriction of $\rho$ to $\frkH_{c,b}^{\prime\vee}(A)$ is the contragredient representation of $\frkH_{c,b}'(A)$. 
The linear form $l_{\calp'}$ induces a perfect pairing 
\[l_{\calp'}:\frkH_{c,b}^{\prime\vee}(A)\otimes\frkH'_{c,b}(A)\to A. \]

\begin{remark}\label{rem:b3}
When we view $\frkH_{c,b}^{\prime\vee}(\CC)$ as a representation of $\U(2)$, it is a theta lift of the discrete series of $\U(1,1)$ with Harish-Chandra parameter $\bigl(b+\frac{1}{2};c-\frac{1}{2}\bigl)$ (see \S \ref{ssec:b8}). 
Hence $\frkH'_{c,b}(\CC)$ is an irreducible representation of $\GL_2(\CC)$ with highest weight $(c,-b)$. 
\end{remark}

One can apply the same computation as in the proofs of Lemmas A.2 and A.4 of \cite{HY} to prove the following lemma. 

\begin{lemma}\label{lem:b1} 
\begin{enumerate}
\item\label{lem:b11} For $P\in\calp'(A)$, $Q\in\calp'(A)$ and $h\in\GL_2(A)$  
\[l_{\calp'}(\rho(h)Q\otimes\rho(h) P)
=l_{\calp'}(Q\otimes P). \]
\item\label{lem:b12} $\calp'_{b,c}(\QQ)$ is a direct sum of $\frkH_{c,b}^{\prime\vee}(\QQ)$ and $\calt'_{b,c}(\QQ)$. 
\end{enumerate}
\end{lemma}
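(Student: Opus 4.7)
The plan mirrors the method of \cite[Lemmas A.2, A.4]{HY} and rests on two calculations: a chain-rule identity for part (\ref{lem:b11}) and an adjointness identity between $\Delta'$ and multiplication by $\Phi:=x_{11}y_1+x_{12}y_2$ for part (\ref{lem:b12}).

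For (\ref{lem:b11}), I will show that the matrix of operators
\[
M=\begin{pmatrix} \partial/\partial y_1 & \partial/\partial y_2 \\ \partial/\partial x_{11} & \partial/\partial x_{12} \end{pmatrix}
\]
transforms under $\GL_2(A)$ by the chain rule in exactly the same way as $\begin{pmatrix} x_{11} & x_{12} \\ y_1 & y_2 \end{pmatrix}$ does under $\rho(h)$. Setting $(x',y')=(xh,\,y\trs h^{-1})$, the chain rule gives $(\partial/\partial y'_1,\partial/\partial y'_2)=(\partial/\partial y_1,\partial/\partial y_2)\,h$ and $(\partial/\partial x'_{11},\partial/\partial x'_{12})=(\partial/\partial x_{11},\partial/\partial x_{12})\,\trs h^{-1}$, which match the transformation laws of $(x_{11},x_{12})$ and $(y_1,y_2)$, respectively. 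Consequently, evaluating $\rho(h)Q$ at $M$ and then applying the result to $\rho(h)P$ is equivalent to applying $Q(M)P$ and then composing with the linear change of variables $\phi_h(x,y)=(xh,\,y\trs h^{-1})$; since $\phi_h(0)=0$, the value at the origin is unchanged, proving the invariance.

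For (\ref{lem:b12}), the essential input is the adjointness property that, under $l_{\calp'}$, multiplication by $x_{1j}$ is dual to $\partial/\partial y_j$ and multiplication by $y_j$ is dual to $\partial/\partial x_{1j}$; both are immediate monomial checks from the explicit formula for $l_{\calp'}$. Composing yields
\[
l_{\calp'}(P\otimes\Phi Q)=l_{\calp'}(\Delta'P\otimes Q),
\]
so that $\ker\Delta'|_{\calp'_{b,c}(\QQ)}=\frkH^{\prime\vee}_{c,b}(\QQ)$ and $\calt'_{b,c}(\QQ)$ are mutual annihilators under the perfect pairing $\calp'_{b,c}(\QQ)\otimes\calp'_{c,b}(\QQ)\to\QQ$. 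A direct commutator calculation gives $[\Delta',\Phi]=2+E_x+E_y$ (where $E_x,E_y$ are the degree operators in $x$ and $y$), so that $\Delta'$, $\Phi$, and the degree operator span an $\mathfrak{sl}_2$-triple acting on $\calp'(\QQ)$. The classical harmonic decomposition for such a triple then produces
\[
\calp'_{b,c}(\QQ)=\bigoplus_{k=0}^{\min(b,c)}\Phi^k\cdot\frkH^{\prime\vee}_{c-k,b-k}(\QQ),
\]
and separating the $k=0$ summand from the remaining $k\ge 1$ summands yields the desired splitting $\calp'_{b,c}(\QQ)=\frkH^{\prime\vee}_{c,b}(\QQ)\oplus\calt'_{b,c}(\QQ)$.

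No serious obstacle is expected: the whole lemma is an elementary instance of $(\U(2),\U(1,1))$ Howe duality in the polynomial (Fock) model and runs identically to the arguments of \cite[Appendix A]{HY}. The step requiring the most care is the surjectivity $\Delta':\calp'_{b,c}(\QQ)\to\calp'_{b-1,c-1}(\QQ)$ underlying the harmonic decomposition; this follows by induction on $\min(b,c)$ from the identity $\Delta'(\Phi^{k}R)=k(b'+c'+k+1)\Phi^{k-1}R$ for $R\in\frkH^{\prime\vee}_{c',b'}(\QQ)$, whose coefficient is a nonzero rational as long as $b'+c'\ge 0$ and $k\ge 1$.
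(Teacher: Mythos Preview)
Your proposal is correct and follows essentially the same route as the paper, which simply defers to the computations in \cite[Lemmas A.2, A.4]{HY}. The chain-rule invariance for (\ref{lem:b11}) and the $\mathfrak{sl}_2$-triple/harmonic-decomposition argument for (\ref{lem:b12}) are exactly what those lemmas do, and your identification of the surjectivity of $\Delta'$ as the one point needing care is on target.
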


Let $\bfP:\calp'_{b,c}(\QQ)\to\frkH_{c,b}^{\prime\vee}(\QQ)$ be the projection given by Lemma \ref{lem:b1}(\ref{lem:b12}). 
Define a homomorphism $\wp:\calp'_{c,b}(A)\to \call_{b+c}(A)$ by 
\[\wp(P)=P\begin{pmatrix} X & Y \\ -Y & X \end{pmatrix}. \]
Since $\wp(\rho(h)P)=(\det h)^{-b}\rho_{b+c}(h)\wp(P)$, it factors though an equivariant morphism $\frkH'_{c,b}(A)\to\rho_{(c,-b)}(A)$ and its restriction gives an equivariant morphism $\frkH_{b,c}^{\prime\vee}(A)\to\rho_{(c,-b)}(A)$. 

In view of (\ref{tag:41}) we normalize perfect pairings $\ell_\pi,\ell_\sig$ and $\ell_{\sig^\vee}$ by 
\beq
\ell_\pi(\frkv_\pi^{(0)})=\ell_\sig(\frkv_\sig^{})=\ell_{\sig^\vee}(\bar\frkv_{\sig^\vee}^{})=1. \label{tag:b2}
\eeq  
Then 
\beq
\ell_\pi(\frkv_\pi^{(i)})=\binom{k_2-k_1}{i}^{-1}. \label{tag:b3}
\eeq 

\begin{lemma}\label{lem:b2}
For $P\in\calp'_{b,c}(\QQ)$ and $Q\in\calp'_{c,b}(\QQ)$ we have 
\[l_{\calp'}(\bfP(P)\otimes Q)=(-1)^cb!c!\ell_{b+c}(\wp(P)\otimes\wp(Q)). \]  
\end{lemma}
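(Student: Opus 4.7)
I would treat both sides as bilinear forms that factor through $\frkH_{c,b}^{\prime\vee}(\QQ) \otimes \frkH_{b,c}^{\prime\vee}(\QQ)$ and are $\GL_2(\QQ)$-invariant, hence proportional by Schur's lemma, with the constant fixed by a single monomial evaluation.

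The first step is the reduction to harmonic pieces. On the left, $\bfP(P)$ already lies in $\frkH_{c,b}^{\prime\vee}(\QQ)$ by definition, and by the orthogonality characterization
\[
\frkH_{c,b}^{\prime\vee}(\QQ) = \{R \in \calp'_{b,c}(\QQ) \mid l_{\calp'}(R \otimes \calt'_{c,b}(\QQ)) = 0\}
\]
recorded just after the definition of $\frkH'_{b,c}$, the LHS depends on $Q$ only through its image in the complementary summand, equivalently through the component $\bfP'(Q) \in \frkH_{b,c}^{\prime\vee}(\QQ)$ of the splitting in Lemma \ref{lem:b1}(\ref{lem:b12}) with $b$ and $c$ interchanged. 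On the right, the collapse $\wp(x_{11}y_1 + x_{12}y_2) = X\cdot(-Y) + Y\cdot X = 0$ shows that $\wp$ annihilates $\calt'_{b,c}(\QQ)$ and $\calt'_{c,b}(\QQ)$, so both $\wp(P)$ and $\wp(Q)$ only see the harmonic projections.

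Next I would verify $\GL_2(\QQ)$-invariance of both pairings on $\frkH_{c,b}^{\prime\vee}(\QQ) \otimes \frkH_{b,c}^{\prime\vee}(\QQ)$. The LHS invariance is Lemma \ref{lem:b1}(\ref{lem:b11}). For the RHS, a direct substitution using $J\trs h^{-1} J^{-1} = h/\det h$ yields the twisted equivariance
\[\wp(\rho(h)P) = (\det h)^{-c}\rho_{b+c}(h)\wp(P),\qquad \wp(\rho(h)Q) = (\det h)^{-b}\rho_{b+c}(h)\wp(Q)\]
for $P \in \calp'_{b,c}$ and $Q \in \calp'_{c,b}$, which together with the scaling $\ell_{b+c}(\rho_{b+c}(h)\cdot\otimes\rho_{b+c}(h)\cdot) = (\det h)^{b+c}\ell_{b+c}(\cdot\otimes\cdot)$ makes the $(\det h)^{\pm(b+c)}$ factors cancel. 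Since $\frkH_{c,b}^{\prime\vee}(\CC)$ and $\frkH_{b,c}^{\prime\vee}(\CC)$ are contragredient irreducible $\GL_2(\CC)$-modules of highest weights $(b,-c)$ and $(c,-b)$ respectively (by Remark \ref{rem:b3} applied on each side), Schur's lemma gives a one-dimensional space of invariant pairings, so the two sides are proportional.

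Finally, I would pin down the scalar by evaluating both sides on the harmonic pair
\[P_0 = x_{11}^b y_2^c \in \frkH_{c,b}^{\prime\vee}(\QQ), \qquad Q_0 = x_{12}^c y_1^b \in \frkH_{b,c}^{\prime\vee}(\QQ),\]
both of which are manifestly killed by $\Delta'$ and are the highest/lowest weight vectors for the standard Borel. The monomial formula for $l_{\calp'}$ gives $l_{\calp'}(P_0 \otimes Q_0) = b!\,c!$ directly, while $\wp(P_0) = X^{b+c}$ and $\wp(Q_0) = (-1)^b Y^{b+c}$ (read as $(-1)^b Y'^{b+c}$ in the second slot of $\ell_{b+c}$) yield $\ell_{b+c}(\wp(P_0) \otimes \wp(Q_0)) = (-1)^b \cdot (-1)^0\binom{b+c}{0}^{-1} = (-1)^b$. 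Comparing the two and reconciling the sign conventions in the definitions of $\wp$, $\ell_{b+c}$ and $\bfP$ recovers the prefactor $(-1)^c b!\,c!$. The only real obstacle is this bookkeeping of signs --- the $-Y$ in the substitution matrix, the $(-1)^i\binom{\kap}{i}^{-1}$ in $\ell_{b+c}$, and the orientation of $\bfP$ must all be tracked consistently; the representation-theoretic skeleton --- reduction to harmonic pieces, $\GL_2$-invariance, Schur, evaluation at one pair --- is routine.
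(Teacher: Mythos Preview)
Your proposal is correct and follows essentially the same approach as the paper's proof: both sides are $\GL_2$-invariant pairings on the irreducible, mutually contragredient modules $\frkH_{c,b}^{\prime\vee}\otimes\frkH_{b,c}^{\prime\vee}$, hence proportional by Schur, and the constant is fixed by evaluating at the same explicit pair $P_0=x_{11}^by_2^c$, $Q_0=x_{12}^cy_1^b$. Your treatment is more detailed than the paper's (you spell out why both sides depend only on the harmonic projections and verify the invariance explicitly), but the skeleton is identical; the paper simply asserts these points and goes straight to the evaluation.
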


\begin{proof}
Since both sides give invariant pairings on $\rho_{(b,-c)}\otimes\rho_{(c,-b)}$, they are proportional. 
Let $P=x_{11}^by_2^c\in\frkH_{c,b}^{\prime\vee}(\QQ)$ and $Q=x_{12}^cy_1^b\in\frkH_{b,c}^{\prime\vee}(\QQ)$. 
Then 
\begin{align*}
l_{\calp'}(P\otimes Q)&=b!c!, &
\wp(P)&=X^{b+c}, &
\wp(Q)&=Y^{\prime c}(-Y)^{\prime b}. 
\end{align*}
Since $\ell_{b+c}(\wp(P)\otimes\wp(Q))=(-1)^c$, the proof is complete.  
\end{proof}

We define the isomorphism $^\natural:\calp_{b,c}'(A)\to\calp_{c,b}'(A)$ by 
\[P^\natural\begin{pmatrix} x_{11} & x_{12} \\ y_1 & y_2 \end{pmatrix}=P\begin{pmatrix} y_1' & y_2' \\ x_{11}' & x_{12}' \end{pmatrix} \]
and the action $\rho^\natural$ by $\rho^\natural(h)=\rho(\trs h^{-1})$ of $\GL_2(A)$. 
Then 
\begin{align*}
(\rho^\natural(h)P)^\natural&=\rho(h)P^\natural, &
\ell_{\ome_{V',\HH}^{1,\addchar'}}(P\otimes Q)&=l_{\calp'}(P\otimes Q^\natural). 
\end{align*}

\begin{lemma}\label{lem:b3} 
Let $P,Q\in\calp'_{b,c}(\CC)$.  
We view $P$ (resp. $Q$) as an element of $\calp^{1,\addchar'}_{V',\HH}$ (resp. $(\calp^{1,\addchar'}_{V',\HH})^\vee$). 
Put $\sig=D_{\bigl(b+\frac{1}{2};-c-\frac{1}{2}\bigl)}$. 
Then for $h\in\U(\HH)$ 
\begin{align*}
&\int_{\U(V')}\ell_{\ome^{1,\addchar'}_{V',\HH}}((g,h),P\otimes Q)\frac{\ell_{\sig^\vee}(g,\bar\frkv_{\sig^\vee})}{b!c!}\,\d g=\frac{(-1)^c}{\d(\sig)}\ell_{b+c}(h,\wp(P)\otimes\wp(Q^\natural)). 
\end{align*}
\end{lemma}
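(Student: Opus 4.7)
The plan is to combine the irreducible decomposition
\[
\calp^{1,\addchar'}_{V',\HH} = \bigoplus_{b',c'\geq 0} D_{(b'+\frac{1}{2};-c'-\frac{1}{2})}\boxtimes\rho_{(b',-c')}
\]
of the Weil representation with Schur orthogonality for $\sig$ on $\U(V')$. By Lemma \ref{lem:b1}(\ref{lem:b12}), $P = \bfP(P) + R$ with $R\in\calt'_{b,c}(\CC)$; the summand $R$ has trivial projection onto the $\sig$-isotypic subspace of $\calp^{1,\addchar'}_{V',\HH}$, so after pairing with the matrix coefficient $\ell_{\sig^\vee}(g,\bar\frkv_{\sig^\vee})$ and integrating over $\U(V')$ it vanishes by Schur orthogonality. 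Thus $P$ may be replaced by $\bfP(P)\in\frkH_{c,b}^{\prime\vee}(\CC)$, which under the above decomposition lies inside $\sig\boxtimes\rho_{(b,-c)}$; no analogous replacement is required on $Q$, since the $g$-integral automatically selects its $\sig^\vee$-isotypic component.

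On the summand $\sig\boxtimes\rho_{(b,-c)}$ the Weil-representation pairing factors as a tensor of invariant pairings for $\sig$ and $\rho_{(b,-c)}$. Expanding $\bfP(P)$ and the $\sig^\vee$-isotypic part of $Q$ in dual bases for $\rho_{(b,-c)}$ and $\rho_{(b,-c)}^\vee$, the matrix coefficient $\ell_{\ome^{1,\addchar'}_{V',\HH}}((g,h), \bfP(P)\otimes Q)$ becomes a finite sum of products of the form $\ell_\sig(g,\cdot)\cdot\ell_{\rho_{(b,-c)}}(h,\cdot)$. Applying Schur orthogonality
\[
\int_{\U(V')}\ell_\sig(g, v\otimes v^*)\,\ell_{\sig^\vee}(g,\bar\frkv_{\sig^\vee})\,\d g = \frac{1}{\d(\sig)}\,\ell_\sig(v\otimes \bar v_{\sig^\vee})\,\ell_{\sig^\vee}(v_\sig\otimes v^*),
\]
together with the normalization $\ell_\sig(\frkv_\sig)=\ell_{\sig^\vee}(\bar\frkv_{\sig^\vee})=1$ from \eqref{tag:b2}, collapses the $g$-integral into $\d(\sig)^{-1}$ times a single $\U(\HH)$-matrix coefficient for $\rho_{(b,-c)}$ evaluated at $h$.

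To identify this matrix coefficient with the right-hand side of the claim, I would invoke the identity $\ell_{\ome^{1,\addchar'}_{V',\HH}}(\cdot\otimes\cdot)=l_{\calp'}(\cdot\otimes\cdot^\natural)$ and apply Lemma \ref{lem:b2} to $\bfP(P)\in\calp'_{b,c}(\CC)$ and $Q^\natural\in\calp'_{c,b}(\CC)$, which yields
\[
l_{\calp'}(\bfP(P)\otimes Q^\natural)=(-1)^c b!c!\,\ell_{b+c}(\wp(P)\otimes\wp(Q^\natural));
\]
here one uses $\wp(P)=\wp(\bfP(P))$ since $\wp$ sends the generator $x_{11}y_1+x_{12}y_2$ of $\calt'$ to $-XY+YX=0$. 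The factor $b!c!$ cancels the $(b!c!)^{-1}$ on the left of the target identity, and the $\GL_2$-equivariance of $\wp$ transports the $h$-dependence consistently, producing the claimed formula.

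The main obstacle will be the careful bookkeeping of Schur-orthogonality constants and the tensor-factor decomposition of the Weil-representation pairing: verifying that the four $\ell_\sig,\ell_{\sig^\vee}$-evaluations recombine, after summing over the chosen bases of $\rho_{(b,-c)}$ and its dual, into exactly $l_{\calp'}(\bfP(P)\otimes Q^\natural)/(b!c!)$ without extraneous scalar factors. Once this matching is in place, the sign $(-1)^c$ from Lemma \ref{lem:b2} and the formal-degree factor $\d(\sig)^{-1}$ from Schur orthogonality emerge cleanly, yielding the asserted identity.
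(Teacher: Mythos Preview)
Your proposal is correct and follows essentially the same route as the paper. Both arguments project $P$ onto its $\sig$-isotypic component (which is $\bfP(P)$, since $\frkH_{c,b}^{\prime\vee}(\CC)$ is precisely the minimal $K'$-type subspace inside $\calp'_{b,c}(\CC)$), factor the resulting matrix coefficient as a product over $\U(V')$ and $\U(\HH)$, apply the definition of formal degree, and finish with Lemma~\ref{lem:b2}. The paper makes the bookkeeping you are worried about trivial by observing that $\sig^\chi$ and $(\sig^\vee)^{\chi^{-1}}$ are one-dimensional, so $P_0$ and the relevant component of $Q$ are automatically pure tensors $v_\sig\otimes(\cdot)$ and $\bar v_{\sig^\vee}\otimes(\cdot)$; the normalization $\ell_\sig(\frkv_\sig)=1$ then collapses the Schur-orthogonality identity without any basis expansion.
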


\begin{proof}
Define the character $\chi$ of $K'$ by $\chi(t_1,t_2)=t_1^{b+1}t_2^{-c-1}$. 
Given a representation $(\vPi,V)$ of $\U(1,1)$, we write 
\[V^\chi=\{v\in V\;|\;\vPi(k)v=\chi(k)v\text{ for }k\in K'\}. \]
Recall that 
\[\calp'_{b,c}(\CC)
=\calp'(\CC)^\chi
=\bigoplus_{i=0}^m \sig_i^\chi\boxtimes\rho_{(b_i,-c_i)}, \]
where $\sig_i=D_{\bigl(b_i+\frac{1}{2};-c_i-\frac{1}{2}\bigl)}$ are discrete series of $\U(V')$ such that $\sig_i^\chi\neq\{0\}$. 
Set $\sig_0=\sig$. 
We write $P=\sum_{i=0}^mP_i$ with $P_i\in\sig_i^\chi\boxtimes\rho_{(b_i,-c_i)}$. 
Since $\chi$ is the minimal $K$-type of $\sig_0$, we have $P_0=\bfP(P)$. 
Lemma \ref{lem:b2} and (\ref{tag:b2}) give 
\begin{align*}
\ell_{\ome^{1,\addchar'}_{V',\HH}}((g,h),P_0\otimes Q)
&=\ell_\sig(g,\frkv_\sig)l_{\calp'}(h,P_0\otimes Q^\natural)\\
&=(-1)^cb!c!\ell_\sig(g,\frkv_\sig)\ell_{b+c}(h,\wp(P)\otimes\wp(Q^\natural)). 
\end{align*}
The proof is complete by the definition of the formal degree. 
\end{proof}


\subsection{Reduction to the trilinear form}\label{ssec:b11}

To prove Proposition \ref{prop:b1}, we may assume that $\lam_2=0$, replacing $\pi$ by $\pi\otimes\bvep^{-\lam_2}$ and $\sig$ by $\sig\otimes\bvep^{-\lam_2}$. 
Recall the interlacing relation: 
\[\lam_1^{}=-k_1^{}>\mu_1^{}=-k_1'-\frac{1}{2}>\lam_2^{}=-k_2^{}-1=0\geq\lam_3^{}=1-k_3^{}>\mu_2^{}=-k_2'+\frac{1}{2}. \]
Equivalently, we rewrite it as 
\begin{align*}
&k_1^{}\leq k_1'\leq k_2^{}=-1, & 
&1\leq k_3^{}\leq k_2'. 
\end{align*}
We include the case $\lam_2=\lam_3$, i.e., $k_3-k_2=2$. 

Define non-negative integers $n_1,n_2,n_3$ by 
\begin{align*}
n_1&=\lam_1-\lam_3-1, \\
n_2&=\mu_1-\mu_2-1, \\
n_3&=\lam_1+\lam_3-\mu_1-\mu_2-1. 
\end{align*}
Observe that 
\begin{align*}
n_1+n_2&\geq n_3, & 
n_2+n_3&\geq n_1, & 
n_3+n_1&\geq n_2.  
\end{align*}
Define non-negative integers by $n_i^*=\frac{n_1+n_2+n_3}{2}-n_i$. 
Note that 
\begin{align*}
n_1^*&=k_2'-k_3^{}, &
n_2^*&=k_1'-k_1^{}. 
\end{align*}
It is worth noting that 
\begin{align*}
n_1^*+n_2^*&=n_3^{}, & 
n_2^*+n_3^*&=n_1^{}, & 
n_3^*+n_1^*&=n_2^{}.  
\end{align*}

Define non-negative integers $b$ and $c$ by
\begin{align*}
b&=-k_1'-1, & c&=k_2'-1. 
\end{align*}
It follows from (\ref{tag:b1}) that 
\begin{multline*}
\frac{L\left(\frac{1}{2},\pi\times\sig^\vee\right)}{L(1,\pi,\mathrm{Ad})L(1,\sig^\vee,\mathrm{Ad})}\\
=4\pi^5\frac{\Gam\bigl(\frac{n_1+n_2+n_3}{2}+2\bigl)\Gam(n_1^*+1)\Gam(n_2^*+1)\Gam(n_3^*+1)\Gam(-k_1')\Gam(k_2')}{(2\pi)^{2n_1^*}\Gam(n_1+2)\Gam(n_2+2)\Gam(1-k_1)\Gam(k_3)}. 
\end{multline*}
Define the representations $\tau_1,\tau_2,\tau_3$ of $\U(\HH)$ by 
\begin{align*}
\tau_1&=\rho_{(\lam_1,\lam_3+1)}, & 
\tau_2&=\rho_{(-k_1'-1,-k_2'+1)}, & 
\tau_3&=\rho_{(n_3+1,1)}.  
\end{align*}

Given an irreducible representation $\tau$ of dimension $n+1$, we put 
\begin{align*}
v_\tau^{(i)}&=X^{n-i}Y^i, &
\bar v_\tau^{(j)}&=X^{\prime j}(-Y')^{n-j}, &
\frkv_\tau^{(i,j)}&=v_\tau^{(i)}\otimes \bar v_{\tau^\vee}^{(j)}. 
\end{align*}
Recall that  
\beq
\ell_\tau^{}(\frkv_\tau^{(i,j)})=\binom{n}{i}^{-1}\del_{i,j} \label{tag:b4}
\eeq
by (\ref{tag:41}), where $\del_{i,j}$ denotes the Kronecker's delta. 

We now consider the following seesaw: 

\begin{picture}(300,70)
\put(191,10){$\U(V')\times \U(L_0)$}
\put(210,55){$\U(V)$}
\put(270,55){$\pi$}
\put(270,10){$\sig^\vee$}
\put(110,10){$\U(\HH)$}
\put(122,50){\line(0,-1){30}}
\put(90,55){$\U(\HH)\times\U(\HH)$}
\put(45,55){$\tau_2\otimes\tau_3$}
\put(60,10){$\tau^\vee_1$}
\put(141,20){\line(2,1){67}}
\put(142,50){\line(2,-1){58}}
\put(225,20){\line(0,1){30}}
\end{picture}

The Ichino-Ikeda integral is related to the trilinear form.  

\begin{lemma}\label{lem:b4}
Notations and assumptions being as above, we have 
\begin{align*}
J(Y_+^{n_1^*}v_\pi^{(n_2^*)},\bar v_{\sig^\vee}^{})
=&\frac{b!c!n_3!(n_1+1)}{\d(\sig)(k_3-1)!(-k_1-1)!}\sum_{i,j=0}^{n_1^*}(-1)^{i+j}\binom{n_1^*}{i}\binom{n_1^*}{j}\\
&\times\int_{\SL(\HH)}\ell_{\tau_1^\vee}(h,\frkv_{\tau_1^\vee}^{(n_1,n_1)})\ell_{\tau_2}(h,\frkv_{\tau_2}^{(i,j)})\ell_{\tau_3}(h,\frkv_{\tau_3}^{(n_1^*-i,n_1^*-j)})\,\d h. 
\end{align*}
\end{lemma}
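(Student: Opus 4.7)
The plan is to translate $J$ into a triple product integral on $\U(\HH)$ via the seesaw
\[
\begin{matrix} \U(V) & & \U(\HH)\times\U(\HH) \\ \cup & & \cup \\ \U(V')\times\U(L_0) & & \U(\HH)_{\mathrm{diag}} \end{matrix}
\]
combined with Lemma \ref{lem:b3}. The two essential ingredients are (i) the Howe duality embedding $\pi\boxtimes\tau_1 \hookrightarrow \ome^{\bvep,\addchar'}_{V,\HH}$ recorded in \S \ref{ssec:b8}, realized concretely via pluriharmonic polynomials in the Fock model, and (ii) the canonical tensor product factorization
\[
\ome^{\bvep,\addchar'}_{V,\HH} \;\simeq\; \ome^{1,\addchar'}_{V',\HH}\otimes\ome^{\bvep,\addchar'}_{L_0,\HH}
\]
as a representation of $\U(V')\times\U(L_0)\times\U(\HH)_{\mathrm{diag}}$ arising from $V=V'\oplus L_0$, under which Fock variables split as $(x_{11},x_{12},y_1,y_2)$ on the $V'$-side and $(x_{21},x_{22})$ on the $L_0$-side.

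First I realize $v_\pi^{(n_2^*)}$ as a pluriharmonic polynomial $\calf^{(n_2^*)}\in\frkH^{\bvep,\addchar'}_{V,\HH}$ of bidegree $(-k_1-1,k_3-1)$ in $(x_{11},y_1)$, paired with a distinguished vector $\phi_0\in\tau_1$ under Howe duality. Using the Fock model formula $\ome(Y_+)=-(x_{21}y_1+x_{22}y_2)$ from \S \ref{ssec:b8}, the binomial theorem gives
\[
\ome(Y_+^{n_1^*})\calf^{(n_2^*)} \;=\; (-1)^{n_1^*}\sum_{i=0}^{n_1^*}\binom{n_1^*}{i}\, x_{21}^{i}x_{22}^{n_1^*-i}\cdot y_1^{i}y_2^{n_1^*-i}\cdot\calf^{(n_2^*)},
\]
each term factorizing cleanly as (element of $\ome^{1,\addchar'}_{V',\HH}$)$\,\otimes\,$(element of $\ome^{\bvep,\addchar'}_{L_0,\HH}$ of degree $n_3$ in $(x_{21},x_{22})$). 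The corresponding formula for the dual vector, using $\ome(X_+)=-(\bar x_{21}\bar y_1+\bar x_{22}\bar y_2)$, produces the second binomial sum indexed by $j$, and the sign $(-1)^{i+j}$ collects the $(-1)^{n_1^*}$ factors with the conventions built into $^\natural$.

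Substituting into $J(W,W')$, I unfold $\ell_\pi(g,W\otimes W^\natural)$ as an $\U(\HH)$-integral against $\ell_{\tau_1^\vee}$ via Howe duality (introducing the factor $d(\tau_1)^{-1}=(n_1+1)^{-1}$) and then use the Weil representation tensor factorization to split the Fock pairing as a product of $\ell_{\ome^{1,\addchar'}_{V',\HH}}$ and $\ell_{\ome^{\bvep,\addchar'}_{L_0,\HH}}$. Switching the order of integration, the inner $\U(V')$-integral is exactly of the form treated by Lemma \ref{lem:b3}, which collapses it to $\frac{(-1)^c}{d(\sig)}b!c!\cdot\ell_{\tau_2}(h,\frkv_{\tau_2}^{(i,j)})$. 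The $L_0$-side Fock pairing, restricted to the $l=n_3$ summand of $\calp^{\bvep,\addchar'}_{L_0,\HH}=\bigoplus_l \bvep^{l+1}\boxtimes\rho_{(l+1,1)}$, contributes $n_3!\cdot\ell_{\tau_3}(h,\frkv_{\tau_3}^{(n_1^*-i,n_1^*-j)})$ after identifying the standard monomials $x_{21}^{i}x_{22}^{n_1^*-i}$ with $\tau_3$-weight vectors via $\wp$. Finally, the $\CC^1$-center of $\U(\HH)$ acts by a definite character forced by weight matching and integrates trivially, leaving the integration over $\SL(\HH)$.

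The main obstacle is bookkeeping: tracking the factorials $(k_3-1)!(-k_1-1)!$ coming from the explicit pluriharmonic realization of the minimal $K$-type basis $\{v_\pi^{(i)}\}$ (these are the $y_1$ and $x_{11}$ degrees inside $\calf^{(n_2^*)}$, which appear in the Fock pairing via $\frac{\partial^a}{\partial x^a}x^a=a!$), reconciling the normalization $\ell_\pi(\frkv_\pi^{(i)})=\binom{k_2-k_1}{i}^{-1}$ from (\ref{tag:b3}) with the polynomial pairing $l_{\calp'}$ of Lemma \ref{lem:b2}, and confirming that the $\wp$ map realizes the theta correspondence $\ome^{1,\addchar'}_{V',\HH}\supset \sig\boxtimes\tau_2$ with the sign conventions built into Lemma \ref{lem:b3}. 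Once these normalizations are pinned down, the combinatorial coefficients $\binom{n_1^*}{i}\binom{n_1^*}{j}$ from the binomial expansions, together with $(-1)^{i+j}$ from the sign accounting, produce exactly the stated formula.
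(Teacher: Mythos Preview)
Your proposal is correct and follows essentially the same route as the paper: realize the minimal $K$-type vector explicitly in the Fock model, use the formal degree of $\tau_1$ to trade $\ell_\pi$ for an $\U(\HH)$-integral against $\ell_{\tau_1^\vee}$, split $\ome^{\bvep,\addchar'}_{V,\HH}\simeq\ome^{1,\addchar'}_{V',\HH}\otimes\ome^{\bvep,\addchar'}_{L_0,\HH}$ along $V=V'\oplus L_0$, expand $(-x_{21}y_1-x_{22}y_2)^{n_1^*}$ binomially, and apply Lemma \ref{lem:b3} to the $\U(V')$-integral. One small slip: the explicit pluriharmonic polynomial realizing $v_\pi^{(n_2^*)}\otimes v_{\tau_1}^{(0)}$ is $x_{11}^{-k_1'-1}x_{21}^{n_2^*}y_2^{k_3-1}$ (obtained from $w_0=x_{11}^{-k_1-1}y_2^{k_3-1}$ by applying $U_{2,1}^{n_2^*}$), so the relevant $y$-variable is $y_2$, not $y_1$; this is what makes the $V'$-factor land in $\calp'_{b,c}$ with $c=k_2'-1$ after multiplying by $y_2^{n_1^*-i}$ from the binomial term.
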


\begin{proof}
We write $-\HH$ for the space $\HH$ equipped with the skew Hermitian form $-\La\;,\;\Ra$. 
We employ the idea of \cite{HX}, which uses the theta correspondence. 
Let 
\begin{align*}
w^{}_0&=x_{11}^{\lam_1-1}y_2^{-\lam_3}\in \frkH_{(\lam_1,0;\lam_3)}\boxtimes\tau_1\subset\frkH^{\bvep,\addchar}_{V,\HH}, \\
\bar w_0&=\bar x_{11}^{\lam_1-1}\bar y_2^{-\lam_3}\in \frkH_{(\lam_1,0;\lam_3)}^\vee\boxtimes\tau_1^\vee\subset\frkH^{\bvep^{-1},\addchar}_{V,-\HH} 
\end{align*} 
(cf. Proposition 6.1 of \cite{KV}). 
Put 
\begin{align*}
w^{}_{n_2^*}&=\frac{(-k_1')!}{(\lam_1-1)!}\ome^{\bvep,\addchar}_{V,\HH}(U_{2,1})^{n_2^*}w^{}_0=x_{11}^{-k_1'-1}x_{21}^{n_2^*}y_2^{k_3-1}\in \frkH_{(\lam_1,0;\lam_3)}\boxtimes\tau_1, \\
\bar w^{}_{n_2^*}&=\frac{(-k_1')!}{(\lam_1-1)!}\ome^{\bvep^{-1},\addchar}_{V,-\HH}(U_{1,2})^{n_2^*}\bar w^{}_0=\bar x_{11}^{-k_1'-1}\bar x_{21}^{n_2^*}\bar y_2^{k_3-1}\in \frkH_{(\lam_1,0;\lam_3)}^\vee\boxtimes\tau_1^\vee, \\
\upsilon_{n_1^*,n_2^*}&=\ome^{\bvep,\addchar}_{V,\HH}(Y_+)^{n_1^*}w_{n_2^*}=(-x^{}_{21}y^{}_1-x^{}_{22}y^{}_2)^{n_1^*}x_{11}^{-k_1'-1}x_{21}^{n_2^*}y_2^{k_3-1}\in \pi\boxtimes\tau_1, \\
\bar \upsilon_{n_1^*,n_2^*}&=\ome^{\bvep,\addchar}_{V,-\HH}(X_+)^{n_1^*}\bar w_{n_2^*}=(-\bar x^{}_{21}\bar y^{}_1-\bar x^{}_{22}\bar y^{}_2)^{n_1^*}\bar x_{11}^{-k_1'-1}\bar x_{21}^{n_2^*}\bar y_2^{k_3-1}\in \pi^\vee\boxtimes\tau_1^\vee, \\
\Ups_{n_1^*,n_2^*}&=\upsilon_{n_1^*,n_2^*}\otimes\bar\upsilon_{n_1^*,n_2^*}. 
\end{align*}
Note that $\bar\upsilon_{n_1^*,n_2^*}\in\calp_{b,n_3,c}(\CC)$. 
Given $Q\otimes P\in\calp_{b,n_1^*,c}(\CC)\otimes\calp_{b,n_3,c}(\CC)$, we set 
\[(Q\otimes P)^\natural=Q\otimes P^\natural\in\calp_{b,n_1^*,c}(\CC)\otimes\calp_{c,n_3,b}(\CC). \]

Since $\ell_{\ome^{\bvep,\addchar}_{V,\HH}}(w_{n_2^*}\otimes\bar w_{n_2^*})=b!n_2^*!(k_3^{}-1)!$, 
\[\ell_{\ome^{\bvep,\addchar}_{V,\HH}}((g,h),w_{n_2^*}\otimes\bar w_{n_2^*})=b!n_2^*!(k_3^{}-1)!\binom{-k_1-1}{n_2^*}\ell_\pi(g,\frkv_\pi^{(n_2^*)})\ell_{\tau_1}(h,\frkv_{\tau_1}^{(0,0)}) \]
by (\ref{tag:b3}) and (\ref{tag:b4}), and hence 
\[\ell_{\ome^{\bvep,\addchar}_{V,\HH}}((g,h),\Ups_{n_1^*,n_2^*})=(k_3-1)!(-k_1-1)!\ell_\pi(g,Y_+^{n_1^*}v_\pi^{(n_2^*)}\otimes Y_+^{n_1^*}\bar v_\pi^{(n_2^*)})\ell_{\tau_1}(h,\frkv_{\tau_1}^{(0,0)}). \]
By the definition of the formal degree of $\tau_1$, for $g\in\U(V)$ we have
\[\int_{\U(\HH)}\ell_{\tau_1^\vee}(h,\frkv_{\tau_1^\vee}^{(n_1,n_1)})\frac{\ell_{\ome^{\bvep,\addchar}_{V,\HH}}((g,h),\Ups_{n_1^*,n_2^*})}{(k_3-1)!(-k_1-1)!}\,\d h
=\frac{\ell_\pi(g,Y_+^{n_1^*}v_\pi^{(n_2^*)}\otimes Y_+^{n_1^*}\bar v_\pi^{(n_2^*)})}{\d(\tau_1)}, \]
where $\d(\tau_1)=n_1+1$.  
We get 
\begin{align*}
&(k_3-1)!(-k_1-1)!\frac{J(Y_+^{n_1^*}v_\pi^{(n_2^*)},\bar v_{\sig^\vee})}{n_1+1}\\
=&\int_{\U(V')}\int_{\U(\HH)}\ell_{\tau_1^\vee}(h,\frkv_{\tau_1^\vee}^{(n_1,n_1)})\cdot\ell_{\ome^{\bvep,\addchar}_{V,\HH}}((g,h),\Ups_{n_1^*,n_2^*})\,\d h\cdot\ell_{\sig^\vee}(g,\bar\frkv_{\sig^\vee})\,\d g\\
=&\int_{\U(\HH)}\ell_{\tau_1^\vee}(h,\frkv_{\tau_1^\vee}^{(n_1,n_2)})\int_{\U(V')}\ell_{\ome^{\bvep,\addchar}_{V,\HH}}((g,h),\Ups_{n_1^*,n_2^*})\cdot\ell_{\sig^\vee}(g,\bar\frkv_{\sig^\vee})\,\d g\d h,  
\end{align*}
switching the order of integration. 

Recall the orthogonal decomposition $V=V'\oplus L_0$ and  
\begin{align*}
\ome^{\bvep,\addchar}_{V,\HH}|_{\U(V')\times\U(\HH)}&\simeq\ome^{1,\addchar}_{V',\HH}\otimes \ome^{\bvep,\addchar}_{L_0,\HH}, & 
(\ome^{1,\addchar}_{V',\HH})^\vee&\simeq \ome^{1,\addchar}_{V',-\HH}. 
\end{align*}
We write $\upsilon_{n_1^*,n_2^*}^{}=\sum_{i=0}^{n_1^*}\binom{n_1^*}{i}\upsilon_i'\otimes\upsilon''_i$ and $\bar \upsilon_{n_1^*,n_2^*}=\sum_{j=0}^{n_1^*}\binom{n_1^*}{j}\bar\upsilon_j'\otimes\bar\upsilon''_j$ with  
\begin{align*}
\upsilon_i'&=x_{11}^by_1^iy_2^{c-i}\in\calp'_{b,c}(\CC), &
\upsilon''_i&=x_{21}^{n_2^*+i}x_{22}^{n_1^*-i}\in\bvep^{n_3+1}\boxtimes\tau_3\subset\calp^{\bvep,\addchar}_{L_0,\HH}, \\
\bar\upsilon_j'&=\bar x_{11}^b\bar y_1^j\bar y_2^{c-j}\in\calp'_{b,c}(\CC), & 
\bar\upsilon''_j&=\bar x_{21}^{n_2^*+j}\bar x_{22}^{n_1^*-j}\in\bvep^{-n_3-1}\boxtimes\tau_3^\vee\subset\calp^{\bvep,\addchar}_{L_0,-\HH}.  
\end{align*}
Then $\ell_{\ome^{\bvep,\addchar}_{V,\HH}}((g,h),\Ups_{n_1^*,n_2^*})$ is equal to  
\[\sum_{i,j=0}^{n_1^*}\binom{n_1^*}{i}\binom{n_1^*}{j}\ell_{\ome^{1,\addchar}_{V',\HH}}((g,h),\upsilon_i'\otimes\bar\upsilon_j')\ell_{\ome^{\bvep,\addchar}_{L_0,\HH}}(h,\upsilon''_i\otimes\bar\upsilon''_j) \]
for $g\in\U(V')$ and $h\in\U(\HH)$. 
Observe that 
\[\ell_{\ome^{\bvep,\addchar}_{L_0,\HH}}(h,\upsilon''_i\otimes\bar\upsilon''_j)=n_3!\ell_{\tau_3}(h,\frkv_{\tau_3}^{(n_1^*-i,n_1^*-j)}). \] 

Since 
\begin{align*}
\wp(\upsilon'_i)&=X^{b+c-i}(-Y)^i=(-1)^iv_{\tau_2}^{(i)}, \\
\wp(\bar\upsilon^{\prime\natural}_j)&=\wp(y_1^bx_{11}^jx_{12}^{c-j})
=(-1)^bX^{\prime j}Y^{\prime c-j+b}=(-1)^{c-j}\bar v_{\tau_2}^{(j)}, 
\end{align*}
Lemma \ref{lem:b3} gives 
\begin{align*}
&\int_{\U(V')}\ell_{\ome^{1,\addchar}_{V',\HH}}((g,h),\upsilon'_i\otimes\bar\upsilon'_j)\frac{\ell_{\sig^\vee}(g,\bar\frkv_{\sig^\vee})}{b!c!}\,\d g
=\frac{(-1)^{i+j}}{\d(\sig)}\ell_{\tau_2}(h,\frkv_{\tau_2}^{(i,j)}). 
\end{align*} 
We conclude that 
\begin{multline*}
\int_{\U(V')}\ell_{\ome^{\bvep,\addchar}_{V,\HH}}((g,h),\Ups_{n_1^*})\frac{\ell_{\sig^\vee}(g,\bar\frkv_{\sig^\vee})}{b!c!n_3!}\,\d g\\
=\frac{1}{\d(\sig)}\sum_{i,j=0}^{n_1^*}(-1)^{i+j}\binom{n_1^*}{i}\binom{n_1^*}{j}\ell_{\tau_2}(h,\frkv_{\tau_2}^{(i,j)})\ell_{\tau_3}(h,\frkv_{\tau_3}^{(n_1^*-i,n_1^*-j)}) 
\end{multline*}
for $h\in\U(\HH)$. 
Hence $\frac{\d(\sig)(k_3-1)!(-k_1-1)!}{b!c!n_3!(n_1+1)}J(Y_+^{n_1^*}\frkv_\pi^{(n_2^*)},\bar\frkv_{\sig^\vee})$ is the sum of 
\[(-1)^{i+j}\binom{n_1^*}{i}\binom{n_1^*}{j}\int_{\U(\HH)}\ell_{\tau_1^\vee}(h,\frkv_{\tau_1^\vee}^{(n_1,n_1)})\ell_{\tau_2}(h,\frkv_{\tau_2}^{(i,j)})\ell_{\tau_3}(h,\frkv_{\tau_3}^{(n_1^*-i,n_1^*-j)})\,\d h. \]
Since the product of the central characters of $\tau_2$ and $\tau_3$ coincides with the central character of $\tau_1$, we can replace $\U(\HH)$ by $\SL(\HH)$. 
\end{proof}


\subsection{Computation of the trilinear form}\label{ssec:b12}
Put 
\begin{align*}
\call_{\underline{n}}(A)&=\call_{n_1}(A)\otimes \call_{n_2}(A)\otimes \call_{n_3}(A), & 
\ell_{\underline{n}}&=\ell_{n_1}\otimes\ell_{n_2}\otimes\ell_{n_3}.  
\end{align*}
Therefore 
\[\dim_\CC\Hom_{\HH^\times}(\tau_1\otimes\tau_2\otimes\tau_3,\CC)
=\dim_\CC\Hom_{\SL(\HH)}(\rho_{\underline{n}},\CC)=1. \]
Define $P_{\underline{n}}\in \call_{\underline{n}}(\CC)$ by 
\[P_{\underline{n}}=(X_1Y_2-X_2Y_1)^{n_3^*}(X_3Y_1-X_1Y_3)^{n_2^*}(X_2Y_3-X_3Y_2)^{n_1^*}. \]
Then $P_{\underline{n}}$ is a basis of the line $\call_{\underline{n}}(\CC)$ fixed by $\SL(\HH)$.\footnote{There are misprints in (4.9) of \cite{MH}. } 

\begin{lemma}\label{lem:b5}
Notations and assumptions being as above, we have 
\[\int_{\SL(\HH)}\ell_{\tau_1^\vee}(h,\frkv_{\tau_1^\vee}^{(n_1,n_1)})\ell_{\tau_2}(h,\frkv_{\tau_2}^{(i,j)})\ell_{\tau_3}(h,\frkv_{\tau_3}^{(n_1^*-i,n_1^*-j)})\,\d h
=\frac{(-1)^{i+j}a_ia_j}{\ell_{\underline{n}}(P_{\underline{n}}\otimes P_{\underline{n}})}, \]
where $a_i=\binom{n_1^*}{i}\binom{n_2}{i}^{-1}\binom{n_3}{n_1^*-i}^{-1}$. 
\end{lemma}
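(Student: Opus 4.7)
The plan is to apply Schur orthogonality on $V := \tau_1 \otimes \tau_2 \otimes \tau_3$, viewed as a representation of $\SL(\HH) \simeq SU(2)$. Each $\tau_i$ is self-dual through $\ell_{n_i}$, so $V$ carries the invariant bilinear form $\ell_V = \ell_{\underline{n}}$. The triangle inequalities $n_i \leq n_j + n_k$ (equivalent to $n_i^* \geq 0$) force $V^{\SL(\HH)}$ to be one-dimensional, spanned by $P_{\underline{n}}$, and $\ell_V(P_{\underline{n}} \otimes P_{\underline{n}})$ is non-zero. Orthogonal projection onto $V^{\SL(\HH)}$ with respect to $\ell_V$ then yields the Peter--Weyl identity
\[
\int_{\SL(\HH)} \ell_V((\tau_1\otimes\tau_2\otimes\tau_3)(h)\, u \otimes w)\, \d h = \frac{\ell_V(P_{\underline{n}} \otimes u) \cdot \ell_V(P_{\underline{n}} \otimes w)}{\ell_V(P_{\underline{n}} \otimes P_{\underline{n}})}
\]
for all $u, w \in V$.

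The first task is to translate the integrand into this form. The matrix coefficient $\ell_\tau(h, \frkv_\tau^{(p,q)}) = \langle \tau(h) v_\tau^{(p)}, \bar v_{\tau^\vee}^{(q)}\rangle$, via the identification $\tau^\vee \simeq \tau$ induced by $\ell_n$, equals an explicit sign multiple of $\ell_n(v_\tau^{(n-q)}, \tau(h) v_\tau^{(p)})$, with the sign fixed by the normalization $\ell_\tau(\frkv_\tau^{(i,j)}) = \binom{n}{i}^{-1}\delta_{i,j}$ from (5.5); the same procedure handles the $\tau_1^\vee$ factor after first identifying $\tau_1^\vee$ with $\tau_1$. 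The integrand then becomes a matrix coefficient of $V$ evaluated at $u_i := v_{\tau_1}^{(n_1)} \otimes v_{\tau_2}^{(i)} \otimes v_{\tau_3}^{(n_1^* - i)}$ and the analogue $u_j$, multiplied by an explicit sign whose $(i,j)$-dependence is purely through $(-1)^{i+j}$ (the remaining factors depend only on $n_1, n_2, n_3$).

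The second task is to compute $\ell_V(P_{\underline{n}} \otimes u_i)$ by direct binomial expansion. Since $\ell_V$ pairs $u_i$ nontrivially only with the complementary weight vector $X_1^{n_1} \otimes X_2^i Y_2^{n_2 - i} \otimes X_3^{n_1^* - i} Y_3^{n_2^* + i}$, and the $X_1$-degree in $P_{\underline{n}} = (X_1 Y_2 - X_2 Y_1)^{n_3^*}(X_3 Y_1 - X_1 Y_3)^{n_2^*}(X_2 Y_3 - X_3 Y_2)^{n_1^*}$ is bounded by $n_3^* + n_2^* = n_1$, we must extract $(X_1 Y_2)^{n_3^*}$ from the first factor and $(-X_1 Y_3)^{n_2^*}$ from the second. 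Binomial expansion of the remaining $(X_2 Y_3 - X_3 Y_2)^{n_1^*}$ produces the coefficient $(-1)^{n_3 - i}\binom{n_1^*}{i}$ of the target monomial, and pairing via $\ell_{n_k}(v_{\tau_k}^{(a)}, v_{\tau_k}^{(b)}) = (-1)^{n_k - b}\binom{n_k}{b}^{-1}\delta_{a+b,n_k}$ introduces the inverse binomial factors $\binom{n_2}{i}^{-1}\binom{n_3}{n_1^* - i}^{-1}$ along with a further $i$-independent sign. All told, $\ell_V(P_{\underline{n}} \otimes u_i) = (-1)^{c_{\underline{n}}}(-1)^i a_i$ for some $i$-independent sign $(-1)^{c_{\underline{n}}}$, and analogously for $u_j$. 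Substituting into the Schur formula, the squared constant sign cancels, and the product $(-1)^{i+j} a_i a_j$ emerges in the numerator over $\ell_{\underline{n}}(P_{\underline{n}} \otimes P_{\underline{n}})$.

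The principal technical obstacle is careful bookkeeping of the various signs: the $\tau_k^\vee \simeq \tau_k$ identifications via $\ell_{n_k}$, the factors of $-1$ implicit in $\bar v_{\tau^\vee}^{(q)} = X^{\prime q}(-Y')^{n-q}$, and the $(-1)^n$-symmetry of $\ell_n$. Their combination must be shown to produce no net contribution beyond $(-1)^{i+j}$ in the final formula.
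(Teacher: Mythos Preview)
Your proposal is correct and follows essentially the same approach as the paper: both arguments use the one-dimensionality of $\call_{\underline{n}}(\CC)^{\SL(\HH)}=\CC P_{\underline{n}}$ to reduce the integral to the projection formula $\ell_{\underline{n}}(\cdot\otimes P_{\underline{n}})\ell_{\underline{n}}(P_{\underline{n}}\otimes\cdot)/\ell_{\underline{n}}(P_{\underline{n}}\otimes P_{\underline{n}})$, and then evaluate these pairings by binomially expanding $P_{\underline{n}}$ and picking off the unique monomial that pairs nontrivially with the given weight vector. The paper writes the tensor of matrix-coefficient data directly as a sign times a pair of monomials in $\call_{\underline{n}}$ (recovering your $u_i$, $u_j$ up to the global sign $(-1)^{(n_1+n_2+n_3)/2}$), so your identification of $\tau_1^\vee$ with $\tau_1$ via $\ell_{n_1}$ is exactly what the paper is doing implicitly; the sign bookkeeping you flag as the main obstacle is indeed routine and works out as stated.
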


\begin{proof}
Recall that 
\begin{multline*}
\frkv_{\tau^\vee_1}^{(n_1,n_1)}\otimes\frkv_{\tau_2}^{(i,j)}\otimes\frkv_{\tau_3}^{(n_1^*-i,n_1^*-j)} \\
=(-1)^{\frac{n_1+n_2+n_3}{2}}Y_1^{n_1}X_2^{n_2-i}Y_2^iX_3^{n_2^*+i}Y_3^{n_1^*-i}\otimes X_1^{n_1}X_2^jY_2^{n_2-j}X_3^{n_1^*-j}Y_3^{n_2^*+j}. 
\end{multline*}
Since $\call_{\underline{n}}(\CC)^{\SL(\HH)}=\CC P_{\underline{n}}$, the left hand side equals 
\begin{align*}
\frac{\ell_{\underline{n}}(Y_1^{n_1}X_2^{n_2-i}Y_2^iX_3^{n_2^*+i}Y_3^{n_1^*-i}\otimes P_{\underline{n}})\ell_{\underline{n}}(P_{\underline{n}}\otimes X_1^{n_1}X_2^jY_2^{n_2-j}X_3^{n_1^*-j}Y_3^{n_2^*+j})}{(-1)^{\frac{n_1+n_2+n_3}{2}}\ell_{\underline{n}}(P_{\underline{n}}\otimes P_{\underline{n}})}
\end{align*}
(cf. (4.20) of \cite{MH}). 
Since 
\begin{align*}
P_{\underline{n}}
=&\sum_{l_1=0}^{n_1^*}\sum_{l_2=0}^{n_2^*}\sum_{l_3=0}^{n_3^*}\binom{n_1^*}{l_1}\binom{n_2^*}{l_2}\binom{n_3^*}{l_3}(-1)^{n_1^*+n_2^*+n_3^*-l_1^{}-l_2^{}-l_3^{}}\\
&\times X_1^{n_2^*-l_2^{}+l_3^{}}Y_1^{n_3^*+l_2^{}-l_3^{}}X_2^{n_3^*+l_1^{}-l_3^{}}Y_2^{n_1^*-l_1^{}+l_3^{}}X_3^{n_1^*-l_1^{}+l_2^{}}Y_3^{n_2^*+l_1^{}-l_2^{}}, 
\end{align*}
we have 
\begin{align*}
\ell_{\underline{n}}(Y_1^{n_1}X_2^{n_2-i}Y_2^iX_3^{n_2^*+i}Y_3^{n_1^*-i}\otimes P_{\underline{n}})&=\frac{\binom{n_1^*}{i}(-1)^{n_3+i}}{(-1)^{n_2-i}\binom{n_2}{i}(-1)^{n_2^*+i}\binom{n_3}{n_1^*-i}}, \\
\ell_{\underline{n}}(P_{\underline{n}}\otimes X_1^{n_1}X_2^jY_2^{n_2-j}X_3^{n_1^*-j}Y_3^{n_2^*+j})&=\frac{\binom{n_1^*}{n_1^*-j}(-1)^{n_3^*+j}}{(-1)^{n_2-j}\binom{n_2}{j}(-1)^{n_2^*+j}\binom{n_3}{n_1^*-j}},
\end{align*} 
from which we get the stated formula.  
\end{proof}


\subsection{Proof of Proposition \ref{prop:b1}}\label{ssec:b13}

Lemmas \ref{lem:b4} and \ref{lem:b5} give 
\[J(Y_+^{n_1^*}v_\pi^{(n_2^*)},\bar v_{\sig^\vee}^{})
=\frac{b!c!n_3!(n_1+1)\Big(\sum_{i=0}^{n_1^*}\binom{n_1^*}{i}a_i\Big)^2}{\d(\sig)(k_3-1)!(-k_1-1)!\ell_{\underline{n}}(P_{\underline{n}}\otimes P_{\underline{n}})}. \]

For non-negative integers $A$, $B$ and $C$, Lemma 5 of \cite{DO} gives the formula 
\[\sum_{i=0}^{A}\binom{A+B-i}{B}\binom{C+i}{C}
=\binom{A+B+C+1}{A}. \]
This formula applied with $A=n_1^*$, $B=n_3^*$ and $C=n_2^*$ gives 
\begin{align*}
\sum_{i=0}^{n_1^*}\binom{n_1^*}{i}a_i
&=\sum_{i=0}^{n_1^*}\binom{n_1^*}{i}^2\binom{n_2}{i}^{-1}\binom{n_3}{n_1^*-i}^{-1}\\
&=\frac{\Gam(n_1^*+1)^2\Gam(n_2^*+1)\Gam(n_3^*+1)}{\Gam(n_2+1)\Gam(n_3+1)}\sum_{i=0}^{n_1^*}\binom{n_2-i}{n_3^*}\binom{n_2^*+i}{n_2^*}\\
&=\frac{\Gam\bigl(\frac{n_1+n_2+n_3}{2}+2\bigl)\Gam(n_1^*+1)\Gam(n_2^*+1)\Gam(n_3^*+1)}{\Gam(n_1+2)\Gam(n_2+1)\Gam(n_3+1)}. 
\end{align*}
Lemma 4.11 of \cite{MH} gives the formula
\[\ell_{\underline{n}}(P_{\underline{n}}\otimes P_{\underline{n}})
=\frac{\Gam\bigl(\frac{n_1+n_2+n_3}{2}+2\bigl)\Gam(n_1^*+1)\Gam(n_2^*+1)\Gam(n_3^*+1)}{\Gam(n_1+1)\Gam(n_2+1)\Gam(n_3+1)}. \]
Put $c_H'=\frac{\d(\sig)}{n_2+1}$. 
Since 
\beq
\ell_{\ulk}(\bfP_{\ulk})=\dim\frkH_{(\lam_1,\lam_2;\lam_3)}=-k_1\label{tag:b5}
\eeq
by Lemma A.1 of \cite{HY}, we finally get  
\begin{align*}
J(Y_+^{n_1^*} v_\pi^{(n_2^*)},\bar v_{\sig^\vee}^{})
&=\frac{\Gam\bigl(\frac{n_1+n_2+n_3}{2}+2\bigl)\Gam(n_1^*+1)\Gam(n_2^*+1)\Gam(n_3^*+1)\Gam(-k_1')\Gam(k_2')}{c_H'\Gam(n_1+2)\Gam(n_2+2)\Gam(-k_1)\Gam(k_3)}\\
&=\frac{(2\pi)^{2n_1^*}}{4\pi^5 c_H'}\ell_{\ulk}(\bfP_{\ulk})\frac{L\left(\frac{1}{2},\pi\times\sig^\vee\right)}{L(1,\pi,\mathrm{Ad})L(1,\sig^\vee,\mathrm{Ad})}.  
\end{align*}


\section{The Petersson norm of newforms on $\U(1,1)$}\label{sec:c}


\subsection{The formula of Lapid and Mao}\label{ssec:c1}

Recall the maximal compact subgroup $K'=\prod_lK_l'$ of $H(\widehat{\QQ})$ defined by $K_l'=\GL_2(\frko_{E,l})\cap H(\QQ_l)$. 
Define the Haar measure of $H(\AA)$ by $\d_{K'} h=\d h_\infty\prod_l\d h_l$, where $\d h_\infty$ is defined in (\ref{tag:74}) and where $\d h_l$ is the Haar measure of $H(\QQ_l)$ giving $K'_l$ volume 1.
On the other hand, let $\d^\tau h$ be the Tamagawa measure on $H(\AA)$. 
Lemma \ref{lem:70} gives 
\[\d_{K'}h=2^{-2}D_EL(1,\eps_{E/\QQ})\zeta_\QQ(2)\,\d^\tau\! h. \]
We normalize the Petersson norm of $\vph\in\scra^0(H)$ by 
\[\|\vph\|^2_{K'}=2^{-2}D_EL(1,\eps_{E/\QQ})\zeta_\QQ(2)\,\|\vph\|^2. \] 

Let $\sigma\simeq\otimes_v'\sigma_v^{}$ be an irreducible $\addchar$-generic unitary cuspidal automorphic representation of $H(\AA)$ generated by a cusp form in $S_{\ulk'}^H(p^\ell\frkN',\chi';\CC)$. 
Then $\sig$ is tempered by the Ramanujan--Petersson conjecture in view of Remark \ref{rem:51}. 

Fix a perfect Hermitian paring $\calb_v:\sigma_v\times\sigma_v\to \CC$ for each place $v$ of $\QQ$. 
We define the local Whittaker period $I_v:\sigma_v\times \sigma_v\to \CC$ by 
\[I_v^{}(W_v^{},W_v')=\int_{\QQ_v}^{\rm st}\calb_v(\sigma_v(\bfn'(z_v))W_v^{},W_v')\addchar_v(z_v)^{-1}\d z_v\]
Define the normalized local period by 
\[I^\sharp_v(W_v)=\frac{L(1,\sigma_v,\Ad)}{L(1,\eps_{E_v/\QQ_v})\zet_{\QQ_v}(2)}I_v(W_v,W_v). \]

The following result is a special case of the conjecture of Lapid and Mao, studied in \S 6.2 of \cite{LM15}. 

\begin{theorem}[Lapid-Mao]\label{thm:c1}
Notation being as above, if $\varphi=\otimes_vW_v\in\sig$ is not zero, then 
\[\frac{|W_{\addchar}(\varphi)|^2}{\|\vph\|^2}=2^{-1-\vka_\sig}\frac{L(1,\eps_{E/\QQ})\zeta_\QQ(2)}{L(1,\sigma,\Ad)}\prod_v \frac{I^\sharp_v(W_v)}{\calb_v(W_v,W_v)},\]
where $\vka_\sig=0$ if the functorial lift of $\sig$ to $\GL_2(\EE)$ is cuspidal and $\vka_\sig=1$ otherwise. 
\end{theorem}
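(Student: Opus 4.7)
The plan is to deduce Theorem \ref{thm:c1} directly from the Whittaker--Petersson formula of Lapid--Mao, cited as [LM15, \S 6.2], specialized to the rank-one quasi-split unitary group $H = \U(1,1)$. Lapid and Mao establish a formula of the form
\[\frac{|W_\addchar(\vph)|^2}{\|\vph\|_\tau^2} = |S_\sig|^{-1}\cdot\frac{1}{L(1,\sig,\Ad)}\prod_v \frac{\til I_v^\sharp(W_v)}{\calb_v(W_v,W_v)}\]
for irreducible tempered $\addchar$-generic cuspidal automorphic representations of quasi-split classical groups, where $\|\cdot\|_\tau$ is the Petersson norm with respect to the Tamagawa measure, $|S_\sig| = 2^{\vka_\sig}$ is the order of the component group attached to the $L$-parameter, and $\til I_v^\sharp$ is the local normalized Whittaker period in the Tamagawa conventions. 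The representation $\sig$ is tempered by the Ramanujan--Petersson property for $\U(1,1)$ (established via the base change $\BC(\sig)$ to $\GL_2(\EE)$, combined with Deligne's theorem for holomorphic cusp forms in our cohomological-weight setting) and $\addchar$-generic by assumption, so Lapid--Mao applies.

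First, I would translate the measure normalization from the Tamagawa side to our side. By Lemma \ref{lem:70} we have $\d_{K'} h = 2^{-2}D_E L(1,\eps_{E/\QQ})\zet_\QQ(2)\, \d^\tau\! h$, hence
\[\|\vph\|_{K'}^2 = 2^{-2}D_E L(1,\eps_{E/\QQ})\zet_\QQ(2)\,\|\vph\|_\tau^2.\]
Substituting this into the Lapid--Mao formula and absorbing the factor $2^{-2} D_E$ into the discrepancy between the local period $\til I_v^\sharp$ (which uses self-dual measures with respect to $\addchar^E$, introducing a factor of $D_E^{-1/2}$ per place folded into the global discriminant) and our normalized $I_v^\sharp$ (which uses the self-dual measures with respect to $\addchar_v$) produces the expected prefactor $2^{-1-\vka_\sig}L(1,\eps_{E/\QQ})\zet_\QQ(2)/L(1,\sig,\Ad)$.

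Second, I would match the definition of $I_v^\sharp$ with the local period appearing in Lapid--Mao. Up to the explicit $L$-factor normalization $L(1,\sig_v,\Ad)/L(1,\eps_{E_v/\QQ_v})\zet_{\QQ_v}(2)$, the two quantities are built from the same stable integral of matrix coefficients against the Whittaker character, so this match is essentially tautological once the local measures on $\caln'(\QQ_v)\simeq\QQ_v$ are reconciled. At split primes $v=l$ one uses $H(\QQ_l)\simeq\GL_2(\QQ_l)$ and the classical formula of Jacquet--Shalika/Waldspurger; at non-split primes the result for $\U(1,1)$ is contained in Lapid--Mao's general framework.

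The main obstacle will be accounting for the component-group factor $2^{\vka_\sig}$ correctly. By Lemma \ref{lem:51}, when $\mu|_{\QQ_q^\times} = \eps_{E_q/\QQ_q}$ the induced representation $I'(\mu)$ splits into two constituents distinguished by their Whittaker data, and globally such primes assemble into the endoscopic case $\vka_\sig = 1$ (where $\BC(\sig)$ is an isobaric sum) versus the stable case $\vka_\sig = 0$ (where $\BC(\sig)$ is cuspidal). Tracking this factor through the Arthur multiplicity formula for $\U(1,1)$ and ensuring it lines up with the $2^{\vka_\sig}$ in Lapid--Mao's normalization (rather than being absorbed elsewhere) is where the bookkeeping is most delicate; fortunately this is precisely the case treated explicitly in \S 6.2 of [LM15], so the verification reduces to comparing their conventions with ours.
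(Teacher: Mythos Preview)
The paper does not prove this theorem at all: it is stated as a citation, introduced by ``The following result is a special case of the conjecture of Lapid and Mao, studied in \S 6.2 of [LM15],'' and no proof is given. Your proposal attempts something the paper does not do --- namely, to sketch the reduction to the general Lapid--Mao framework --- so there is nothing to compare against.

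That said, your sketch has a genuine confusion in the measure bookkeeping. The norm $\|\vph\|^2$ appearing in the theorem \emph{is} the Tamagawa norm: the paper writes $\|\vph\|_{K'}^2 = 2^{-2}D_E L(1,\eps_{E/\QQ})\zet_\QQ(2)\,\|\vph\|^2$ just before the statement, so $\|\vph\|^2 = \|\vph\|_\tau^2$. Your first step, converting from $\|\vph\|_{K'}^2$ to $\|\vph\|_\tau^2$, is therefore irrelevant to the theorem as stated, and the subsequent claim that one must ``absorb the factor $2^{-2}D_E$'' into a discrepancy between local periods is unmotivated --- there is no such factor to absorb. The stable integral $I_v$ and the normalized period $I_v^\sharp$ are defined directly in the paper using the self-dual measure $\d z_v$ on $\QQ_v$, which is exactly the convention in [LM15]; the factor $L(1,\eps_{E/\QQ})\zet_\QQ(2)$ in the global constant arises because the paper's $I_v^\sharp$ is normalized by $L(1,\sig_v,\Ad)/(L(1,\eps_{E_v/\QQ_v})\zet_{\QQ_v}(2))$, not from any measure conversion. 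If you want to write out an honest derivation, the content is entirely in matching the local normalizations and the $|S_\sig| = 2^{\vka_\sig}$ factor, not in global measure comparisons.
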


Let $\vph_\sig\in\sig$ be the normalized newform associated to $\sig$ defined in Definition \ref{def:54}. 
Let $l$ a split prime. 
We denote by $\scrw_{\addchar_l}(\sig_l)$ the Whittaker model of $\sig_l$ with respect to $\addchar_l$, by $W_{\sig_l}\in\scrw_{\addchar_l}(\sig_l)$ the normalized essential Whittaker vector (see Definition \ref{def:81}) and by $\La\;,\;\Ra_l':\scrw_{\addchar_l^{}}(\sig_l^{})\times\scrw_{\addchar_l^{-1}}(\sig_l^\vee)\to\CC$ the pairing defined in (\ref{tag:83}). 
The quantity $\calb_{\sig_l}$ is defined in (\ref{tag:79}). 

\begin{corollary}\label{cor:c1}
Notation being as above, we have  
\[\|\vph_\sig\|^2_{K'}= 2^{-2+\vka_\sig+k_1'-k_2'}D_EL(1,\sigma,\Ad)\prod_{l|N_\sig}\calb_{\sigma_l}. \]
\end{corollary}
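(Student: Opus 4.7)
The strategy is to apply the Lapid--Mao identity (Theorem \ref{thm:c1}) to $\varphi=\vph_\sig$ and evaluate each local factor individually, exploiting the fact that $\vph_\sig=\otimes_v W_{\sig_v}$ is a factorizable vector built from essential/spherical/lowest-weight Whittaker vectors with the explicit normalizations recorded in \S\ref{ssec:75}. By the factorization $W_{\addchar}(\sig(h)\vph_\sig)=\prod_v W_{\sig_v}(h_v)$, the global Whittaker value is $W_{\addchar}(\vph_\sig)=\prod_v W_{\sig_v}(\ono_2)=e^{-2\pi}$, since $W_{\sig_v}(\ono_2)=1$ at every finite place (by essential/spherical normalization) and $W_{\sig_\infty}(\ono_2)=e^{-2\pi}$. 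Thus $|W_{\addchar}(\vph_\sig)|^2=e^{-4\pi}$.

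Next I will compute the local ratios $I^\sharp_v(W_{\sig_v})/\calb_v(W_{\sig_v},W_{\sig_v})$ place by place. At every non-split finite prime $q$, hypothesis (splt) forces $\sig_q$ to be spherical, and a direct unfolding of the stable integral defining $I_v$ against the spherical Whittaker function gives $I^\sharp_q(W_{\sig_q})/\calb_q(W_{\sig_q},W_{\sig_q})=1$, matching the analogous computation in the Rankin--Selberg integral. At split primes $l$, we identify $\sig_l$ with a representation of $\GL_2(\QQ_l)$ via $\imath_\frkl$; the local Whittaker period $I_l$ is then essentially the Hermitian Whittaker pairing of \eqref{tag:83}, and by design of the normalization in \eqref{tag:79} the ratio equals $\calb_{\sig_l}$ when $l\mid N_\sig$ and equals $1$ otherwise (essential vectors coincide with spherical vectors outside $N_\sig$).

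The archimedean calculation is the main obstacle and the one requiring genuine work. Here $\sig_\infty$ is the holomorphic discrete series $D_{(k_1';k_2')}$, and $W_{\sig_\infty}$ is its lowest weight Whittaker vector. One writes the matrix coefficient $\calb_\infty(\sig_\infty(\bfn'(z))W_{\sig_\infty},W_{\sig_\infty})$ explicitly, converts the stable integral to a Bessel/gamma integral through the Iwasawa decomposition of $\bfn'(z)$, and evaluates it using the known formulas for lowest-weight Whittaker functions on $\U(1,1)$ (essentially a Gamma--integral of type $\int_0^\infty t^{k_2'-k_1'-1}e^{-4\pi t}\,dt$). Combined with the archimedean adjoint $L$-factor from \eqref{tag:b0} and with the factor $e^{-4\pi}/\calb_\infty(W_{\sig_\infty},W_{\sig_\infty})$, this yields the archimedean ratio $I^\sharp_\infty(W_{\sig_\infty})/\calb_\infty(W_{\sig_\infty},W_{\sig_\infty})=2^{k_1'-k_2'-1}\cdot e^{-4\pi}\cdot L(1,\sig_\infty,\Ad)^{-1}\cdot L(1,\eps_{\RR/\RR})\zet_\RR(2)$ up to an explicit constant (the detailed powers of $2$ and $\pi$ must be tracked against the conventions for $\Gam_\RR$ and $\Gam_\CC$).

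Finally I will substitute all of these into Theorem \ref{thm:c1}, cancel $|W_{\addchar}(\vph_\sig)|^2=e^{-4\pi}$ against its appearance in $I^\sharp_\infty$, and use $\|\vph_\sig\|^2_{K'}=2^{-2}D_E L(1,\eps_{E/\QQ})\zet_\QQ(2)\|\vph_\sig\|^2$ from the normalization of measures. The Euler-product factors $L(1,\eps_{E/\QQ})\zet_\QQ(2)$ in the prefactor of the Lapid--Mao formula cancel against the corresponding measure-normalization constants, leaving the stated equality
\[\|\vph_\sig\|^2_{K'}=2^{-2+\vka_\sig+k_1'-k_2'}D_E L(1,\sig,\Ad)\prod_{l\mid N_\sig}\calb_{\sig_l}.\]
The principal technical care is in bookkeeping powers of $2$ and $\pi$ at the real place, since these must conspire precisely with the $(k_1',k_2')$-dependent exponent in the final formula.
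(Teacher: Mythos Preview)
Your overall strategy matches the paper's exactly: apply Theorem~\ref{thm:c1} to $\vph_\sig$ and evaluate the local ratios $I^\sharp_v(W_{\sig_v})/\calb_v(W_{\sig_v},W_{\sig_v})$ case by case (non-split, split, archimedean), then convert $\|\vph_\sig\|^2$ to $\|\vph_\sig\|^2_{K'}$ via the measure comparison. However, there is a genuine error at the split primes that propagates to the final formula.

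You claim that at a split prime $l$ ``the local Whittaker period $I_l$ is then essentially the Hermitian Whittaker pairing of \eqref{tag:83}'' and hence that the ratio equals $\calb_{\sig_l}$. This is backwards. It is the \emph{abstract} Hermitian pairing $\calb_l$ that is realized (up to the factor $\zet_l(1)^{-1}$) as the Whittaker integral \eqref{tag:83}; the stable integral $I_l$, by contrast, is computed by \cite[Lemma~4.4]{LM15} to satisfy $I_l(W,W')=W(\ono_2)\overline{W'(\ono_2)}$. Thus $I_l(W_{\sig_l},W_{\sig_l})=1$, and
\[
\frac{I^\sharp_l(W_{\sig_l})}{\calb_l(W_{\sig_l},W_{\sig_l})}
=\frac{L^\GL(1,\sig_l^{}\times\sig_l^\vee)}{\zet_l(1)\zet_l(2)}\cdot\frac{1}{\calb_l(W_{\sig_l},W_{\sig_l})}
=\calb_{\sig_l}^{-1},
\]
not $\calb_{\sig_l}$. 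With your sign, solving for $\|\vph_\sig\|^2_{K'}$ yields $\prod_{l\mid N_\sig}\calb_{\sig_l}^{-1}$ instead of $\prod_{l\mid N_\sig}\calb_{\sig_l}$.

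Your archimedean ratio has the same inversion problem: the exponent of $2$ is reversed, and the extra factors $L(1,\sig_\infty,\Ad)^{-1}\cdot L(1,\eps)\zet_\RR(2)$ are spurious, since the normalization by $L(1,\sig_v,\Ad)/L(1,\eps_{E_v/\QQ_v})\zet_{\QQ_v}(2)$ is already built into the definition of $I^\sharp_\infty$. The correct outcome of the lowest-weight Whittaker computation you sketch is $I^\sharp_\infty(W_{\sig_\infty})/\calb_\infty(W_{\sig_\infty},W_{\sig_\infty})=2^{1+k_2'-k_1'}e^{-4\pi}$; it is this, together with the split-prime factors $\calb_{\sig_l}^{-1}$, that produces the stated formula after inversion.
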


\begin{remark}
Though $\vka_\sig=0$ by (splt), we include the factor $2^{\vka_\sig}$. 
\end{remark}


\subsection{Proof of Corollary \ref{cor:c1}}\label{ssec:c2}

\begin{lemma}\label{lem:c1}
If $q$ does not split in $E$, then $I_q^\sharp(W_{\sig_q})=\calb_q^{}(W_{\sig_q},W_{\sig_q})$. 
\end{lemma}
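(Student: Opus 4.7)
The statement reduces to a purely local unramified computation. By hypothesis (splt), the representation $\sig_q$ is unramified at every non-split prime $q$, so by Lemma \ref{lem:51} we may realize it as a tempered principal series $I'(\mu)$ and take $W_{\sig_q}$ to be the normalized spherical Whittaker vector. Since $I_q$ scales linearly and $\calb_q(W_{\sig_q},W_{\sig_q})$ scales identically under rescaling of $\calb_q$, the assertion $I_q^\sharp(W_{\sig_q})=\calb_q(W_{\sig_q},W_{\sig_q})$ is equivalent to the identity
\[
I_q(W_{\sig_q},W_{\sig_q})\;=\;\frac{L(1,\eps_{E_q/\QQ_q})\zet_{\QQ_q}(2)}{L(1,\sig_q,\Ad)}\,\calb_q(W_{\sig_q},W_{\sig_q}),
\]
which is independent of the choice of $\calb_q$.

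The plan is to verify this identity by a direct spherical computation. Fixing $\calb_q$ as the canonical pairing on the Whittaker model induced by the local analogue of (\ref{tag:83}), I would expand the matrix coefficient $\calb_q(\sig_q(\bfn'(z))W_{\sig_q},W_{\sig_q})$ via the Iwasawa decomposition of $H(\QQ_q)$ into a geometric series over the dominant cocharacters of the diagonal torus (indexed, for $\U(1,1)$, by $\ZZ_{\geq 0}$), with terms evaluated by the Casselman--Shalika formula. Summing the series produces an explicit ratio of local $L$-factors. By Remark \ref{rem:71}, and using that $n=2$ is even, the adjoint $L$-factor $L(s,\sig_q,\Ad)$ is the Asai $L$-factor of $\BC(\sig_q)$, and a direct check identifies the resulting ratio with $L(1,\eps_{E_q/\QQ_q})\zet_{\QQ_q}(2)/L(1,\sig_q,\Ad)$. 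This is precisely the unramified non-split case of the Lapid--Mao formula discussed in \S 6.2 of \cite{LM15}.

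The main obstacle will be consistent bookkeeping of normalizations across the two cases of $E_q/\QQ_q$ (inert versus ramified): the self-dual Haar measure on $\QQ_q$ relative to $\addchar_q$, the stable integral defining $I_q$, the volume normalization of $K_q'$, and the precise relation between $\calb_q$ and the canonical Whittaker pairing. A sign or measure-theoretic factor entering asymmetrically between the inert and ramified cases would spoil the identity, so some care is required; however, once the normalizations are aligned, the geometric sum telescopes to the claimed ratio of $L$-factors and the lemma follows at once.
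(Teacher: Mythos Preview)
Your approach is correct but differs from the paper's. You propose to compute the stable integral $I_q$ by expanding the spherical matrix coefficient via Casselman--Shalika and summing a geometric series over the dominant cone. The paper instead works directly in the induced model $I'(\chi)$: by uniqueness of Whittaker functionals, $I_q(W,W')$ factors as a product of Jacquet integrals $J_{\addchar}(f)J_{\addchar^{-1}}(f')$ (up to a constant absorbed into the pairing $\calb'$), so there is no matrix coefficient to compute at all. One then evaluates $J_{\addchar}$ on the spherical section by a one-line Gindikin--Karpelevich computation, getting $1-\chi(q)|q|_E$, and evaluates the pairing $\calb'$ on spherical sections to get $\zet_F(1)/\zet_F(2)$; plugging in $L(s,\sig_q,\Ad)=\zet_F(s)L(s,\eps_{E/F})L(s,\chi|_{F^\times})L(s,\chi^{-1}|_{F^\times})$ finishes the identity.

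The paper's route is structurally cleaner: the factorization via uniqueness sidesteps the geometric-series bookkeeping and the inert/ramified case analysis you anticipate, since the Jacquet integral and $\calb'$ are computed uniformly. Your approach, on the other hand, is the standard ``unfold the integral and sum'' method and would certainly succeed; it is just more labor-intensive here.
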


\begin{proof}
Write $E=E_q$, $F=\QQ_q$, $\addchar=\addchar_q$ and $\sigma=\sigma_q$ for simplicity. 
Then $\sigma$ is the unramified component of an unramified principal series $I'(\chi)$ for some unramified character $\chi$ of $E^\times$ (see \S \ref{ssec:52}). 
Define the Whittaker functional $J_\addchar: I'(\chi)\to \CC$ by  
\[J_\addchar(f)=\int_F f(w\bfn'(z))\addchar(z)^{-1}\d z.\]
 We also define the invariant pairing $\calb':I'(\chi)\times I'(\chi^{-1})\to \CC$ by 
\[\calb'(f,f')=\int_F f(w\bfn'(z))f'(w\bfn'(z))\,\d z.\]
By uniqueness of Whittaker functionals we have 
\[\frac{I^\sharp(f,f')}{\calb(f,f')}=\frac{L(1,\sigma,\Ad)}{L(1,\eps_{E/F})\zeta_F(2)}\cdot\frac{J_\addchar(f)J_{\addchar^{-1}}(f')}{\calb'(f,f')}\]
for $f\in I'(\chi)$ and $f'\in I'(\chi^{-1})$, where
\[L(s,\sigma,\Ad)=\zeta_F(s)L(s,\eps_{E/F})L(s,\chi|_{F^\times})L(s,\chi^{-1}|_{F^\times}).\]
If $f$ and $f'$ are both spherical elements in $I'(\chi)$ and $I'(\chi^{-1})$ with $f(\ono_2)=f'(\ono_2)=1$, then $J_\addchar(f)=1-\chi(q)|q|_E$ and $\calb'(f,f')=\frac{\zeta_F(1)}{\zeta_F(2)}$.
\end{proof}

Next we consider the split case. 

\begin{lemma}\label{lem:c2}
If $l$ is split in $E$, then $I_l^\sharp(W_{\sig_l})=\calb_{\sig_l}^{-1}\calb_l^{}(W_{\sig_l},W_{\sig_l})$. 
\end{lemma}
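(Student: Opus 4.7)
The plan is to exploit the isomorphism $H(\QQ_l) \simeq \GL_2(\QQ_l)$ furnished by $\imath_\frkl$ at split $l$ and reduce the assertion to a Fourier-theoretic matrix coefficient computation in the Whittaker model of $\sig_l$. First I would normalize the Hermitian pairing $\calb_l$ so that, via the Whittaker realization of $\sig_l$,
\[\calb_l(W_1, W_2) = \La W_1, \overline{W_2}\Ra_l',\]
where $\La\cdot,\cdot\Ra_l'$ is the pairing from (\ref{tag:83}); this is permissible since the claimed identity $I_l^\sharp(W_{\sig_l}) = \calb_{\sig_l}^{-1}\calb_l(W_{\sig_l}, W_{\sig_l})$ is invariant under rescaling $\calb_l$. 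Since $\sig_l$ is unitary, $\overline{W_{\sig_l}}$ lies in $\scrw_{\addchar_l^{-1}}(\sig_l^\vee)$, is fixed by $\calk_0^{(2)}(\frkp^{c(\sig_l)})$ up to the character $\ome_{\sig_l^\vee}^\downarrow$, and satisfies $\overline{W_{\sig_l}}(\ono_2)=1$; uniqueness of the essential vector (Definition \ref{def:81}) then forces $\overline{W_{\sig_l}} = W_{\sig_l^\vee}$, so $\calb_l(W_{\sig_l}, W_{\sig_l}) = B_{\sig_l}$ and, by (\ref{tag:79}), the right-hand side of the lemma equals $L^\GL(1, \sig_l\times\sig_l^\vee)/\zet_l(2)$.

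Since $l$ splits in $E$, one has $L(1, \eps_{E_l/\QQ_l}) = \zet_l(1)$, and by Remark \ref{rem:71}, $L(1, \sig_l, \Ad) = L^\GL(1, \sig_l\times\sig_l^\vee)$, so the desired identity reduces to the single equality $I_l(W_{\sig_l}, W_{\sig_l}) = \zet_l(1)$. To verify this, I would unfold the pairing. Using the Whittaker transformation law via $\bfm'(a,1)\bfn'(z) = \bfn'(az)\bfm'(a,1)$, the inner product rewrites as the Fourier-type integral
\[\La \sig_l(\bfn'(z))W_{\sig_l}, W_{\sig_l^\vee}\Ra_l' = \int_{\QQ_l^\times} \addchar_l(az)\, f(a)\, d^\times a, \qquad f(a) := W_{\sig_l}(\bfm'(a,1))\, W_{\sig_l^\vee}(\bfm'(a,1)).\]
Inserting this into $I_l$ and formally interchanging the order of integration yields
\[I_l(W_{\sig_l}, W_{\sig_l}) = \int_{\QQ_l^\times} f(a)\left(\int_{\QQ_l}^{\mathrm{st}} \addchar_l((a-1)z)\, dz\right) d^\times a.\]
Interpreting the inner $z$-integral in the stable-integral sense as the Dirac distribution at $a=1$ and converting via $d^\times a = \zet_l(1)\, da/|a|$, together with $f(1) = W_{\sig_l}(\ono_2)\, W_{\sig_l^\vee}(\ono_2) = 1$, this yields $I_l(W_{\sig_l}, W_{\sig_l}) = \zet_l(1)\, f(1) = \zet_l(1)$, as required.

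The main technical point is justifying the non-absolutely-convergent double integral and the Fourier-inversion step, which must be carried out within the stable-integral regularization of Lapid-Mao from \cite{LM15}; the distributional identity $\int^{\mathrm{st}} \addchar_l((a-1)z)\, dz = \delta_{a=1}$ is rigorously valid on smooth functions such as $f$ in that framework. The emergence of precisely the factor $\zet_l(1)$, rather than a local $L$-value correction, reflects that essential Whittaker vectors carry exactly the normalization under which the Lapid--Mao local identity trivializes at split places.
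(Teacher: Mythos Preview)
Your proof is correct and follows essentially the same route as the paper. The paper normalizes $\calb_l$ with an extra factor $\zet_l(1)^{-1}$ in front of the torus integral, so that citing \cite[Lemma~4.4]{LM15} gives $I_l(W_{\sig_l},W_{\sig_l})=1$ rather than your $\zet_l(1)$; this is exactly the cosmetic rescaling you note at the outset. Your heuristic Fourier-inversion computation of $I_l$ is precisely the content of that Lapid--Mao lemma, which the paper simply quotes, and you correctly flag that the rigorous justification of the interchange and of the Dirac-mass interpretation must appeal to the same reference.
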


\begin{proof}
In this case $\sig_l$ is a unitary generic irreducible representation of $H(\QQ_l)\stackrel{\sim}{\to}\GL_2(\QQ_l)$. 
Define $\calb_l$ by 
\[\calb_l(W,W')=\frac{1}{\zet_l(1)}\int_{\QQ_l^\times}W\biggl(\begin{bmatrix} t_l & \\ & 1 \end{bmatrix}\biggl)\overline{W'\biggl(\begin{bmatrix} t_l & \\ & 1 \end{bmatrix}\biggl)}\,\d^\times t_l \]
for $W,W'\in\scrw_\addchar(\sig_l^{})$. 
Then Lemma 4.4 of \cite{LM15} gives 
\[I_l(W,W')=W(\ono_2)W'(\ono_2).\]
Note that 
\[\calb_l^{}(W_{\sig_l}^{},W_{\sig_l}^{})=\frac{1}{\zet_l(1)}\La W_{\sig_l}^{},W_{\sig_l^\vee}\Ra_l'. \] 

It follows that 
\[I^\sharp(W_{\sig_l})=\frac{L(1,\sig_l,\Ad)}{\zeta_l(1)\zet_l(2)}\cdot \frac{I_l^{}(W_{\sig_l}^{},W_{\sig_l}^{})}{\calb_l(W_{\sig_l}^{},W_{\sig_l}^{})}=\frac{L^\GL(1,\sigma_l^{}\times\sig_l^\vee)}{\zeta_l(2)\La W_{\sig_l}^{},W_{\sig_l^\vee}\Ra_l'}=\calb_{\sig_l}^{-1} \]
(cf. Remark \ref{rem:71}). 
\end{proof}

Finally, we consider the archimedean case. 

\begin{lemma}\label{lem:c3}
$I^\sharp_\infty(W_{\sig_\infty})=2^{1+k_2'-k_1'}e^{-4\pi}\calb_\infty(W_{\sig_\infty},W_{\sig_\infty})$.
\end{lemma}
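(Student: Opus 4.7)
The plan is to reduce the identity to an explicit computation in the Kirillov model of $\sig_\infty$. First I identify $\sig_\infty$ with the holomorphic discrete series $D_{(\mu_1;\mu_2)}$ of $H(\RR)=\U(1,1)$, where $\mu_1=-k_1'-\tfrac12$ and $\mu_2=-k_2'+\tfrac12$, so $\mu_1-\mu_2=k_2'-k_1'-1$. Realizing $\sig_\infty$ in its Whittaker model with respect to $\addchar_\infty$, the lowest $K'_\infty$-type Whittaker function $W_{\sig_\infty}$, pinned down by $W_{\sig_\infty}(\ono_2)=e^{-2\pi}$, is supported on the open orbit and, on the torus $\{\bfm'(a,1)\mid a\in\CC^\times\}$, takes the Gaussian-type form $W_{\sig_\infty}(\bfm'(a,1))=|a|^{k_2'-k_1'}e^{-2\pi|a|^2}$. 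The natural invariant sesquilinear pairing $\calb_\infty$ is the $L^2$-inner product on the Kirillov model, and the unipotent action $\sig_\infty(\bfn'(z))$ is multiplication by $\addchar_\infty(|a|^2z)$.

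Next I compute $\calb_\infty(W_{\sig_\infty},W_{\sig_\infty})$ as a Mellin integral in $|a|^2\in\RR_+$, which reduces by a standard Gamma integral to an expression proportional to $\Gam(k_2'-k_1')(4\pi)^{-(k_2'-k_1')}$. For the stable integral
\[
I_\infty(W_{\sig_\infty},W_{\sig_\infty})=\int_\RR^{\rm st}\int_{\CC^\times}|W_{\sig_\infty}(\bfm'(a,1))|^2\,\addchar_\infty\big((|a|^2-1)z\big)\,d^\times a\,dz,
\]
I would exchange the two integrations (justified by the Hadamard regularization built into $\int^{\rm st}$) and apply Fourier inversion $\int_\RR\addchar_\infty((|a|^2-1)z)\,dz=\del(|a|^2-1)$. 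The $a$-integration then localizes to the circle $|a|^2=1$, producing $W_{\sig_\infty}(\ono_2)^2=e^{-4\pi}$ as an essential factor.

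Finally I combine these evaluations with the explicit archimedean $L$-factors from \eqref{tag:b0}, namely
\[
L(1,\sig_\infty,\mathrm{Ad})=\Gam_\RR(2)\Gam_\CC(k_2'-k_1'),\quad L(1,\eps_{E_\infty/\QQ_\infty})=\Gam_\RR(2),\quad \zet_{\QQ_\infty}(2)=\Gam_\RR(2),
\]
together with $\Gam_\CC(s)=2(2\pi)^{-s}\Gam(s)$, and form the ratio $I_\infty^\sharp(W_{\sig_\infty})/\calb_\infty(W_{\sig_\infty},W_{\sig_\infty})$. The $\Gam(k_2'-k_1')$ in $L(1,\sig_\infty,\mathrm{Ad})$ cancels against the one emerging from $\calb_\infty$, the normalization $\frac{L(1,\sig_\infty,\mathrm{Ad})}{L(1,\eps_{E_\infty/\QQ_\infty})\zet_{\QQ_\infty}(2)}$ supplies a factor $2\pi(2\pi)^{-(k_2'-k_1')}$, and the various powers of $2$ and $\pi$ from $(4\pi)^{-(k_2'-k_1')}$ and the unit-circle volume combine to give the clean factor $2^{1+k_2'-k_1'}$. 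The residue $e^{-4\pi}$ is precisely the contribution from the localization at $|a|^2=1$.

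The main technical obstacle is the rigorous evaluation of the stable integral $I_\infty$: the double integral is not absolutely convergent, so one cannot naively swap the order of integration. One must either follow the cut-off procedure of Lapid--Mao \cite{LM15} (introducing a Schwartz damping factor in $z$ and taking a limit) or carry out analytic continuation in an auxiliary spectral parameter, exploiting the meromorphic continuation of the intertwining integral. Once this swap is legitimized, Fourier inversion produces the Dirac mass at $|a|^2=1$ and the remainder of the computation is a bookkeeping exercise in Gamma identities; the matching of all normalizing constants and measure conventions (particularly the choice of Haar measure on $\CC^\times$ used to define the Kirillov inner product vs.\ the one implicit in $\calb_\infty$) is where numerical errors are most likely to creep in and must be carefully audited.
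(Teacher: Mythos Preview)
Your approach is sound in outline but takes the harder of the two natural routes. The paper proceeds in the \emph{opposite} order of integration from the one you propose: rather than doing the $z$-integral first (which, as you correctly flag, is not absolutely convergent and requires a cutoff or regularization), it first evaluates the matrix coefficient
\[
z\ \longmapsto\ \calb_\infty\bigl(\sig_\infty(\bfn'(z))W_{\sig_\infty},W_{\sig_\infty}\bigr)
\]
as an absolutely convergent Gamma integral over $\RR_+^\times$, obtaining the closed form $\Gam(k)\,(2\pi(2-\sqrt{-1}z))^{-k}$ with $k=k_2'-k_1'$. The outer $z$-integral is then a standard Cauchy-type integral $\int_\RR(z+2\sqrt{-1})^{-k}e^{-2\pi\sqrt{-1}z}\,dz$, evaluated by shifting the contour and picking up the contribution at $z=-2\sqrt{-1}$; this yields $I_\infty(W_{\sig_\infty},W_{\sig_\infty})=e^{-4\pi}$ directly, while $\calb_\infty(W_{\sig_\infty},W_{\sig_\infty})=(4\pi)^{-k}\Gam(k)$ is just the value of the matrix coefficient at $z=0$. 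Combining with $L(1,\sig_\infty,\Ad)=\Gam_\RR(2)\Gam_\CC(k)$ finishes the computation.

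This route sidesteps entirely the technical obstacle you identify: there is no swap of divergent integrals to justify, no stable-integral regularization, and no ``unit-circle volume'' to track. Your Fourier-inversion-to-delta argument is formally equivalent and would ultimately work via the Lapid--Mao cutoff, but the delta-function bookkeeping (Jacobian of $|a|^2\mapsto 1$, circle volume, choice of $d^\times a$ on $\CC^\times$ versus the paper's integral over $\RR_+^\times$) is exactly where the stray factors of $\pi$ you warn about would appear, and the paper's approach simply never generates them.
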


\begin{proof}
Put $k=k_2'-k_1'$ and $w=-(k_1'+k_2')$. 
The normalized Whittaker function $W_{\sigma_\infty}\in\scrw_{\addchar_\infty}(\sigma_\infty)$ of minimal $K$-type $-\ulk'$ is given by 
\[W_{\sig_\infty}\left(\bfn'(z)\bfd'(t)\begin{bmatrix} \cos\theta_1 & \sin\theta_1\\-\sin\theta_1 & \cos\theta_1\end{bmatrix} e^{\sqrt{-1}\theta_2}\right)=\addchar_\infty(z)t^ke^{-2t^2\pi}e^{\sqrt{-1}(k\theta_1+w\theta_2)} \]
for $z,\tht_1,\tht_2\in\RR$ and $t\in\RR^\times_+$ (cf. \cite[(2.10)]{MH}). 
Define $\calb_\infty$ by 
\[\calb_\infty(W,W')=2\int_{\RR^\times_+}W\biggl(\begin{bmatrix} t_\infty & \\ & t_\infty^{-1} \end{bmatrix}\biggl)\overline{W'\biggl(\begin{bmatrix} t_\infty & \\ & t_\infty^{-1} \end{bmatrix}\biggl)}\,\d^\times t_\infty \]
for $W,W'\in\scrw_{\addchar_\infty}(\sigma_\infty)$. 
Then 
\begin{align*}
\calb_\infty(\sigma_\infty(\bfn'(z_\infty))W_{\sig_\infty},W_{\sig_\infty})
&=\int_{\RR^\times_+}t_\infty^k e^{-4\pi t_\infty}\addchar_\infty(t_\infty z_\infty)\, \d^\times t_\infty\\
&=\Gam(k)(2\pi(2-\sqrt{-1}z_\infty))^{-k} 
\end{align*}
and 
\[I_\infty^{}(W_{\sigma_\infty},W_{\sigma_\infty})=(-2\pi\sqrt{-1})^{-k}\Gamma(k)\int_\RR \frac{e^{-2\pi\sqrt{-1}z_\infty}}{(z_\infty+2\sqrt{-1})^k}\,\d z_\infty=e^{-4\pi}. \]
We have 
\[\frac{I^\sharp_\infty(W_{\sig_\infty}^{})}{\calb_\infty(W_{\sig_\infty},W_{\sig_\infty})}
=\frac{L(1,\sigma_\infty,\Ad)}{\Gam_\RR(1)\Gam_\RR(2)}\cdot \frac{e^{-4\pi}}{(4\pi)^{-k}\Gam(k)}
=2^{1+k}e^{-4\pi}\]
by (\ref{tag:b0}). 
\end{proof}

We are now ready to prove Corollary \ref{cor:c1}. 
Theorem \ref{thm:c1} gives
\[\frac{|W_{\addchar}(\varphi_\sig)|^2}{\|\vph_\sig\|_{K'}^2}=\frac{2^{1-\vka_\sig}}{D_EL(1,\sigma,\Ad)}\prod_v \frac{I^\sharp_v(W_{\sig_v})}{\calb_v^{}(W_{\sig_v},W_{\sig_v})}. \]
This combined with Lemmas \ref{lem:c1}, \ref{lem:c2} and \ref{lem:c3} yield
\[\frac{e^{-4\pi}}{\|\vph_\sig\|_{K'}^2}=\frac{2^{1-\vka_\sig}}{D_EL(1,\sigma,\Ad)}2^{1+k_2'-k_1'}e^{-4\pi}\prod_{l|N_\sig}\calb_{\sig_l}^{-1}, \]
which complete our proof of Corollary \ref{cor:c1}. 


\section{Comparison of periods}\label{sec:d}

Let $\Pi$ and $\Pi'$ be cohomological conjugate self-dual cuspidal automorphic representations of $\GL_3(\EE)$ and $\GL_2(\EE)$. 
In this appendix we explicate the periods introduced in \cite{GHL} for $\Pi\times \Pi'$ and make a comparison with the period in Theorem \ref{cor:71} when $\Pi^{}_\infty$ and $\Pi'_\infty$ satisfy the intertwining relation in Remark \ref{rem:b1}. 
For $a,b\in \CC$ we write $a\sim b$ if $b\neq 0$ and $a/b\in \overline{\QQ}$. 
We write 
\[L(s,\Pi\times\Pi')=L(s,\Pi_\infty^{}\times\Pi'_\infty)L_{\rm fin}(s,\Pi\times\Pi')\] 
for the Rankin-Selberg $L$-function defined in \cite{JPSS2}.  

Let $\chi$ be a Hecke character with conductor $\frkf$ and infinity type $(k,-k)$, where $k$ is a non-zero integer.  
In other words  $\chi((\alp))=(\alp/\alp^c)^k$ if $\alp\equiv 1\pmod{\frkf}$. 
The Hecke $L$-series associated to $\chi$ is defined by
\[L_{\rm fin}(s,\chi)=\sum_{(\frka,\frkf)=1}\frac{\chi(\frka)}{N(\frka)^s}, \]
where the sum is over integral ideals of $\frko_E$ prime to $\frkf$. 
This series is absolutely convergent if $\Re s > 1$. 
The Hecke $L$-series is known to have a meromorphic continuation to the whole complex $s$-plane. 

We fix embeddings $\iot=\iot_\infty: E\hookrightarrow\overline{\QQ}\hookrightarrow \CC$. 
Let $\Omega_\infty$ be the CM period defined in \S \ref{ssec:67}. 
Let $p(\chi,\iota)$ be the period associated to $\chi$ in \cite[\S 2.1.1]{GHL}. 
Theorem 2.3 of \cite{GHL} says that if $k>0$, then 
\[p(\widecheck{\chi},\iota^c)\sim L_{\rm fin}(0,\chi), \]
where $\widecheck{\chi}:=(\chi^c)^{-1}$. 
If $k<0$, then 
\[p(\widecheck{\chi},\iota)\sim p(\widecheck{\chi}^c,\iota^c)\sim L_{\rm fin}(0,\chi^c). \]
By Damerell's theorem \cite{D70,D71}, if $k>0$, then 
\[L_{\rm fin}(0,\chi)\sim \pi^{-k}\Omega_\infty^{2k} \]
(cf. \cite[Corollary 2.12]{BK10Duke}). 
In any case we have  
\[p(\widecheck{\chi},\iota)\sim \pi^k\Omega_\infty^{-2k}. \]

We denote the central characters of $\Pi$ and $\Pi'$ by $\ome_\Pi$ and $\ome_{\Pi'}$. 
Under certain assumptions, Theorem 2.6 of \cite{GHL} associates to $\Pi$ local arithmetic automorphic periods $\{P^{(i)}(\Pi,\iot)\}_{i=0,1,2,3}$ such that 
\begin{align*}
P^{(0)}(\Pi,\iot)&\sim p(\widecheck{\ome}_\Pi,\iot^c), & 
P^{(3)}(\Pi,\iot)&\sim p(\widecheck{\ome}_\Pi,\iot)  
\end{align*}
and such that  
\[L_{\rm fin}(1,\Pi,\mathrm{As}^-)\sim(2\pi\sqrt{-1})^6P^{(0)}(\Pi,\iot)P^{(1)}(\Pi,\iot) P^{(2)}(\Pi,\iot)P^{(3)}(\Pi,\iot) \]
by Theorem 5.14 of \cite{GHL}. 
Similarly, there are periods $\{P^{(i)}(\Pi',\iot)\}_{i=0,1,2}$ such that 
\begin{align*}
P^{(0)}(\Pi',\iot)&\sim p(\widecheck{\ome}_{\Pi'},\iot^c), \qquad\quad  
P^{(2)}(\Pi',\iot)\sim p(\widecheck{\ome}_{\Pi'},\iot), \\
L_{\rm fin}(1,\Pi',\mathrm{As})&\sim(2\pi\sqrt{-1})^3P^{(0)}(\Pi',\iot)P^{(1)}(\Pi',\iot) P^{(2)}(\Pi',\iot). 
\end{align*}

Put $G=\U(2,1)$. 
Let $\pi(0)$ be a square-integrable automorphic representations of $G(\AA)$ whose functorial lift is $\Pi$ and whose archimedean part $\pi_{\lam,0}$ is holomorphic (see Lemma 4.9 of \cite{GHL}).
Let $Q(\pi(0))$ be the automorphic $Q$-period defined in \cite[\S 4.3]{GHL}. 
By definition $Q(\pi(0))$ is the Petersson norm of a \emph{deRham rational} automorphic form in $\pi(0)$. 
Theorem 6.3 and Lemma 2.2(c) of \cite{GHL} give 
\[P^{(1)}(\Pi,\iot)=P^{(0)}(\Pi,\iot)Q(\pi(0))p(\widecheck{\ome}_\Pi,\iot)=Q(\pi(0)). \] 
It follows that 
\begin{align*}
P^{(0)}(\Pi,\iot)P^{(2)}(\Pi,\iot)&\sim\frac{L_{\rm fin}(1,\Pi,\mathrm{As}^-)}{\pi^6P^{(1)}(\Pi,\iot)P^{(3)}(\Pi,\iot)}
\sim\frac{L_{\rm fin}(1,\Pi,\mathrm{As}^-)}{\pi^6Q(\pi(0))p(\widecheck{\ome}_\Pi,\iot)}, \\
P^{(0)}(\Pi',\iot)P^{(1)}(\Pi',\iot)^2&\sim\frac{L_{\rm fin}(1,\Pi',\mathrm{As})^2}{\pi^6P^{(0)}(\Pi',\iot)P^{(2)}(\Pi',\iot)^2}
\sim\frac{L_{\rm fin}(1,\Pi',\mathrm{As})^2}{\pi^6p(\widecheck{\ome}_{\Pi'},\iot)}. 
\end{align*}

Let
\[(a_1,a_2,a_3)=(-k_1,-k_2-1,1-k_3);\quad (b_1,b_2)=\left(k_2'-\frac{1}{2},k_1'+\frac{1}{2}\right)\]
be the Langlands parameters of $\Pi_\infty^{}$ and $\Pi'_\infty$. 
Note that 
\begin{align*}
\ome_{\Pi,\infty}(z/z^c)&=(z/z^c)^{-(k_1+k_2+k_3)}, & 
\ome_{\Pi',\infty}(z/z^c)&=(z/z^c)^{k_1'+k_2'} 
\end{align*}
for $z\in\CC^\times$. 
The interlacing relation is 
\[a_1>-b_2>a_2>a_3>-b_1.\]
The split indices in \cite[Definition 2.12]{GHL} are given by 
\begin{gather*}
{\rm sp}(0,\Pi;\Pi',\iot)={\rm sp}(2,\Pi;\Pi',\iot)=1,\qquad 
{\rm sp}(1,\Pi;\Pi',\iot)={\rm sp}(3,\Pi;\Pi',\iot)=0; \\
{\rm sp}(0,\Pi';\Pi,\iot)=1,\qquad
{\rm sp}(1,\Pi';\Pi,\iot)=2,\qquad 
{\rm sp}(2,\Pi';\Pi,\iot)=0.
\end{gather*}
Theorem 5.19 of \cite{GHL} gives 
\[L_{\rm fin}\biggl(\frac{1}{2},\Pi\times\Pi'\biggl)\sim(2\pi\sqrt{-1})^{3}P^{(0)}(\Pi,\iot)P^{(2)}(\Pi,\iot) P^{(0)}(\Pi',\iot)P^{(1)}(\Pi',\iot)^2. \]
We conclude that 
\begin{align*}
L_{\rm fin}\biggl(\frac{1}{2},\Pi\times\Pi'\biggl)&\sim\pi^3\frac{L_{\rm fin}(1,\Pi,\mathrm{As}^-)}{\pi^6Q(\pi(0))p(\ome^\vee_\Pi,\iota)}\cdot\frac{L_{\rm fin}(1,\Pi',\mathrm{As})^2}{\pi^6p(\ome_{\Pi'}^\vee,\iota)}\\
&\sim\pi^{-9}\frac{L_{\rm fin}(1,\Pi,\mathrm{As}^-)}{(\pi^{-1}\Omega_\infty^2)^{k_1^{}+k_2^{}+k_3^{}}Q(\pi(0))}\cdot\frac{L_{\rm fin}(1,\Pi',\mathrm{As})^2}{(\pi^{-1}\Omega_\infty^2)^{-k_1'-k_2'}}. 
\end{align*}

By (\ref{tag:b1}) and (\ref{tag:b2}) we have
\[\frac{L\bigl(\frac{1}{2},\Pi_\infty\times\Pi'_\infty\bigl)}{L(1,\Pi_\infty,\mathrm{As}^-)L(1,\Pi_\infty',\mathrm{As})^2}\sim \pi^{6-2(a_3+b_1)}\pi^{3+b_1-b_2}
=\pi^{7+2k_3-k_1'-k_2'}. \]
Putting these together, we find that 
\begin{align*}
\frac{L\bigl(\frac{1}{2},\Pi\times\Pi'\bigl)}{L(1,\Pi,\mathrm{As}^-)L(1,\Pi',\mathrm{As})^2}
&\sim\pi^{2k_3-2-(k_1'+k_2')}\frac{(\pi^{-1}\Omega_\infty^2)^{k_1'+k_2'-k_1^{}-k_2^{}-k_3^{}}}{Q(\pi(0))}\\
&=\pi^{-2+k_3-k_1-k_2}\frac{(\pi^{-2}\Omega_\infty^2)^{k_1'+k_2'-k_1^{}-k_2^{}-k_3^{}}}{Q(\pi(0))}. 
\end{align*} 

Suppose that $\pi(0)$ is cuspidal and that $\pi(0)$ and $\Pi'$ are unramified at all non-split primes. 
Let $f^\circ\in S^G_{\ulk}(\frkN_{\pi(0)},\chi,\overline{\QQ})$ be a $\overline{\QQ}$-rational newform associated to $\pi(0)$. 
Theorem \ref{cor:71}, Corollary \ref{cor:c1}, (\ref{tag:711}) and Remark \ref{rem:71} give
\[\frac{L\bigl(\frac{1}{2},\Pi\times\Pi'\bigl)}{L(1,\Pi,\mathrm{As}^-)L(1,\Pi',\mathrm{As})^2}\sim
\frac{(\pi^{-2}\Omega_\infty^2)^{k_1'+k_2'-k_1^{}-k_2^{}-k_3^{}}}{(f^\circ,f^\circ)_{\frkN_{\pi(0)}}}. \]
We therefore see that 
\[\pi^{k_1+k_2-k_3}Q(\pi(0))\sim\pi^{-2}(f^\circ,f^\circ)_{\frkN_{\pi(0)}}\sim\La f^\circ,f^\circ\Ra, \]
where $\La f^\circ,f^\circ\Ra$ is the pairing defined by replacing the classical measure by the Tamagawa measure (see Lemma \ref{lem:70}). 

\begin{remark}
Let $\sig$ be a cuspidal automorphic representation of $\GL_2(\AA)$ whose archimedean component $\sig_\infty$ is a discrete series with extremal weight $\pm k$. 
Let $g^\circ$ be the normalized newform associated to $\sig$. 
Then $\pi^{k/2}g^\circ$ is deRham rational in view of \cite[(1.6.5)]{Harris89}. 
We expect that $\pi^{(k_3-k_1-k_2)/2}f^\circ$ is deRham rational. 
\end{remark}


\printindex

\bibliographystyle{amsalpha}
\bibliography{GGP}

\end{document}